\newcommand{\uS}{\ensuremath{\underline{\Sigma}}}
\newcommand{\PP}{\mathbb{P}}
\newcommand{\NN}{\mathbb{N}}
\renewcommand{\AA}{\mathbb{A}}
\newcommand{\C}{\mathbb{C}}
\newcommand{\CC}{\mathbb{C}}
\newcommand{\FF}{\mathbb{F}}
\newcommand{\GG}{\mathbb{G}}
\newcommand{\Q}{\mathbb{Q}}
\newcommand{\QQ}{\mathbb{Q}}
\newcommand{\Z}{\mathbb{Z}}
\newcommand{\ZZ}{\mathbb{Z}}
\newcommand{\sA}{\mathcal{A}}
\newcommand{\sB}{\mathcal{B}}
\newcommand{\sC}{\mathcal{C}}
\newcommand{\sD}{\mathcal{D}}
\newcommand{\sI}{\mathcal{I}}
\newcommand{\sJ}{\mathcal{J}}
\newcommand{\sL}{\mathcal{L}}
\newcommand{\sO}{\mathcal{O}}
\newcommand{\OO}{\mathcal{O}}
\newcommand{\sP}{\mathcal{P}}
\newcommand{\sQ}{\mathcal{Q}}
\newcommand{\sR}{\mathcal{R}}
\newcommand{\sS}{\mathcal{S}}
\newcommand{\sT}{\mathcal{T}}
\newcommand{\sU}{\mathcal{U}}
\newcommand{\sV}{\mathcal{V}}
\newcommand{\sW}{\mathcal{W}}
\newcommand{\sX}{\mathcal{X}}
\newcommand{\sY}{\mathcal{Y}}
\newcommand{\sZ}{\mathcal{Z}}
\newcommand{\bZ}{\mathbb{Z}}
\newcommand{\m}{\mathfrak{m}}
\newcommand{\p}{\mathfrak{p}}
\newcommand{\q}{\mathfrak{q}}
\newcommand{\Cor}{\operatorname{\mathbf{Cor}}}
\newcommand{\SmCor}{\operatorname{\mathbf{SmCor}}}
\newcommand{\hA}{\ol{A}}
\newcommand{\hB}{\ol{B}}
\newcommand{\hC}{\ol{C}}
\newcommand{\hM}{\ol{M}}
\newcommand{\hN}{\ol{N}}
\newcommand{\hP}{\ol{P}}
\newcommand{\hQ}{\ol{Q}}
\newcommand{\hR}{\ol{R}}
\newcommand{\hS}{\ol{S}}
\newcommand{\hT}{\ol{T}}
\newcommand{\hU}{\ol{U}}
\newcommand{\hV}{\ol{V}}
\newcommand{\hW}{\ol{W}}
\newcommand{\hX}{\ol{X}}
\newcommand{\hY}{\ol{Y}}
\newcommand{\hZ}{\ol{Z}}
\newcommand{\iA}{A^\circ}
\newcommand{\iM}{M^\circ}
\newcommand{\iP}{P^\circ}
\newcommand{\iQ}{Q^\circ}
\newcommand{\iR}{R^\circ}
\newcommand{\iS}{S^\circ}
\newcommand{\iT}{T^\circ}
\newcommand{\iU}{U^\circ}
\newcommand{\iV}{V^\circ}
\newcommand{\iW}{W^\circ}
\newcommand{\iX}{X^\circ}
\newcommand{\iY}{Y^\circ}
\newcommand{\iZ}{Z^\circ}
\newcommand{\mA}{A^\infty}
\newcommand{\mB}{B^\infty}
\newcommand{\mC}{C^\infty}
\newcommand{\mM}{M^\infty}
\newcommand{\mP}{P^\infty}
\newcommand{\mQ}{Q^\infty}
\newcommand{\mS}{S^\infty}
\newcommand{\mT}{T^\infty}
\newcommand{\mU}{U^\infty}
\newcommand{\mW}{W^\infty}
\newcommand{\mX}{X^\infty}
\newcommand{\mY}{Y^\infty}
\newcommand{\mZ}{Z^\infty}
\newcommand{\ul}[1]{{\underline{#1}}}
\newcommand{\Set}{{\operatorname{\mathbf{Set}}}}
\newcommand{\Pro}{{\operatorname{\mathbf{Pro}}}}
\newcommand{\PSh}{{\operatorname{\mathbf{PSh}}}}
\newcommand{\Shv}{{\operatorname{\mathbf{Shv}}}}
\newcommand{\ulMH}{\operatorname{\mathbf{\underline{M}H}}}
\newcommand{\ulMDM}{\operatorname{\mathbf{\underline{M}DM}}}
\newcommand{\ulMDA}{\operatorname{\mathbf{\underline{M}DA}}}
\newcommand{\Hom}{\operatorname{Hom}}
\newcommand{\Pic}{\operatorname{Pic}}
\newcommand{\Frac}{\operatorname{Frac}}
\newcommand{\Spec}{\operatorname{Spec}}
\newcommand{\Sm}{\operatorname{\mathbf{Sm}}}
\newcommand{\Sch}{\operatorname{\mathbf{Sch}}}
\newcommand{\Fppf}{{\operatorname{\mathbf{Fppf}}}}
\newcommand{\Qf}{{\operatorname{\mathbf{Qf}}}}
\newcommand{\Et}{{\operatorname{\mathbf{Et}}}}
\newcommand{\Op}{{\operatorname{\mathbf{Op}}}}
\newcommand{\tr}{{\operatorname{tr}}}
\newcommand{\eff}{{\operatorname{eff}}}
\renewcommand{\min}{{\operatorname{min}}}
\newcommand{\fin}{{\operatorname{fin}}}
\newcommand{\qcqs}{{\operatorname{qcqs}}}
\newcommand{\qcs}{{\operatorname{qc.sep}}}
\newcommand{\amb}{{\operatorname{amb}}}
\renewcommand{\o}{{\operatorname{o}}}
\newcommand{\op}{{\operatorname{op}}}
\newcommand{\red}{{\operatorname{red}}}
\newcommand{\Zar}{{\operatorname{Zar}}}
\newcommand{\Nis}{{\operatorname{Nis}}}
\newcommand{\et}{{\operatorname{\acute{e}t}}}
\newcommand{\qfh}{{\operatorname{qfh}}}
\newcommand{\fppf}{{\operatorname{fppf}}}
\newcommand{\id}{{\operatorname{Id}}}
\newcommand{\pr}{{\operatorname{pr}}}
\renewcommand{\lim}{\operatornamewithlimits{\varprojlim}}
\newcommand{\amblim}{\operatornamewithlimits{{\varprojlim^p}}}
\newcommand{\colim}{\operatornamewithlimits{\varinjlim}}
\newcommand{\ol}{\overline}
\renewcommand{\epsilon}{\varepsilon}
\newcommand{\length}{{\operatorname{length}}}
\newcommand{\Bl}{{\mathbf{Bl}}}
\newcommand{\ulP}{\operatorname{\mathrm{\underline{P}}}}
\newcommand{\ulM}{\operatorname{\mathrm{\underline{M}}}}
\newcommand{\ulPtau}{\operatorname{\mathrm{\tau}}}
\newcommand{\ulPZar}{\operatorname{\mathrm{Zar}}}
\newcommand{\ulPNis}{\operatorname{\mathrm{Nis}}}
\newcommand{\ulPet}{\operatorname{\mathrm{\acute{e}t}}}
\newcommand{\ulPfppf}{\operatorname{\mathrm{fppf}}}
\newcommand{\ulPrqfh}{\operatorname{\mathrm{rqfh}}}
\newcommand{\ulPfin}{\operatorname{\mathrm{fin}}}
\newcommand{\ulMtau}{\operatorname{\mathrm{\underline{M}\tau}}}
\newcommand{\ulMZar}{\operatorname{\mathrm{\underline{M}Zar}}}
\newcommand{\ulMNis}{\operatorname{\mathrm{\underline{M}Nis}}}
\newcommand{\ulMet}{\operatorname{\mathrm{\underline{M}\acute{e}t}}}
\newcommand{\ulMfppf}{\operatorname{\mathrm{\underline{M}fppf}}}
\newcommand{\ulMfpqc}{\operatorname{\mathrm{\underline{M}fpqc}}}
\newcommand{\ulMrqfh}{\operatorname{\mathrm{\underline{M}rqfh}}}
\newcommand{\rqfh}{\operatorname{\mathrm{rqfh}}}
\newcommand{\finite}{\operatorname{\mathrm{fin}}}
\newcommand{\Fun}{\operatorname{\mathrm{Fun}}}
\newcommand{\mc}{\ZZ_{\tr}}
\newcommand{\mcq}{\QQ_{\tr}}
\newcommand{\ZZtr}{\ZZ_{\tr}}
\newcommand{\QQtr}{\QQ_{\tr}}
\newcommand{\bcube}{{\ol{\square}}}
\newcommand{\SCH}{\operatorname{\mathbf{SCH}}}
\newcommand{\ulPSm}{\operatorname{\mathbf{\underline{P}Sm}}}
\newcommand{\ulPQf}{\operatorname{\mathbf{\underline{P}Qf}}}
\newcommand{\ulPFppf}{\operatorname{\mathbf{\underline{P}Fppf}}}
\newcommand{\ulPEt}{\operatorname{\mathbf{\underline{P}Et}}}
\newcommand{\ulPOp}{\operatorname{\mathbf{\underline{P}Op}}}
\newcommand{\ulPSch}{\operatorname{\mathbf{\underline{P}Sch}}}
\newcommand{\ulPSCH}{\operatorname{\mathbf{\underline{P}SCH}}}
\newcommand{\ulMSm}{\operatorname{\mathbf{\underline{M}Sm}}}
\newcommand{\ulMQf}{\operatorname{\mathbf{\underline{M}Qf}}}
\newcommand{\ulMFppf}{\operatorname{\mathbf{\underline{M}Fppf}}}
\newcommand{\ulMEt}{\operatorname{\mathbf{\underline{M}Et}}}
\newcommand{\ulMOp}{\operatorname{\mathbf{\underline{M}Op}}}
\newcommand{\ulMSch}{\operatorname{\mathbf{\underline{M}Sch}}}
\newcommand{\ulMSCH}{\operatorname{\mathbf{\underline{M}SCH}}}
\newcommand{\ulMAGP}{\operatorname{\mathbf{\underline{M}AGP}}}
\newcommand{\ulMCor}{\operatorname{\mathbf{\underline{M}Cor}}}
\newcommand{\ulMSmCor}{\operatorname{\mathbf{\underline{M}SmCor}}}
\newcommand{\ambtimes}[1][]{\stackrel{p}{\times_{#1}}}
\newcommand{\Comp}{\operatorname{\mathbf{Comp}}}
\newcommand{\KMSY}{\operatorname{\mathbf{KMSY}}}
\def\bZ{\mathbb{Z}}
\def\Ztr{\bZ_\tr}
\newcounter{spec}
{\end{list}}%
\newcommand{\aab}{abstract admissible blowup{}}
\newcommand{\abb}{abstract admissible blowup{}}
\newcommand{\amm}{ambient minimal morphism}
\renewcommand{\mp}{modulus pair}
\newtheorem{lemma}{Lemma}[chapter]
\newtheorem{lemm}[lemma]{Lemma}
\newtheorem{thm}[lemma]{Theorem}
\newtheorem{theo}[lemma]{Theorem}
\newtheorem*{thm*}{Theorem}
\newtheorem{prop}[lemma]{Proposition}
\newtheorem{cor}[lemma]{Corollary}
\newtheorem{coro}[lemma]{Corollary}
\theoremstyle{definition}
\newtheorem{defi}[lemma]{Definition}
\newtheorem{defn}[lemma]{Definition}
\newtheorem{constr}[lemma]{Construction}
\newtheorem{axio}[lemma]{Axiom}
\newtheorem{property}[lemma]{Property}
\theoremstyle{remark}
\newtheorem{warn}[lemma]{Warning}
\newtheorem{ques}[lemma]{Question}
\newtheorem{rema}[lemma]{Remark}
\newtheorem{remark}[lemma]{Remark}
\newtheorem{exam}[lemma]{Example}
\newtheorem{claim}[lemma]{Claim}
\numberwithin{equation}{chapter}
\def\Comp{\Comp^{\fin}}
\def\ulMSm{\operatorname{\mathbf{\ul{M}Sm}}}
\def\Comp{\operatorname{\mathbf{Comp}}}
\begin{document}

\title[]
{Modulus sheaves with transfers}
\author[S. Kelly]{Shane Kelly}
\address{
Tokyo Institute of Technology, 
2-12-1 Ookayama, Meguro-ku, 
Tokyo 152-8551, Japan}
\email{shanekelly@math.titech.ac.jp}
\author[H. Miyazaki]{Hiroyasu Miyazaki}
\address{RIKEN iTHEMS, Wako, Saitama 351-0198, Japan}
\email{hiroyasu.miyazaki@riken.jp}
\date{\today}

\begin{abstract}
We generalise Kahn, Miyazaki, Saito, Yamazaki's theory of modulus pairs to pairs $(\hX, \mX)$ consisting of a qcqs scheme $\hX$ equipped with an effective Cartier divisor $\mX$ representing a ramification bound. 

We develop theories of sheaves on such pairs for modulus versions of the Zariski, Nisnevich, étale, fppf, and qfh-topologies. We extend the Suslin-Voevodsky theory of correspondances to modulus pairs, under the assumption that the interior $\iX = \hX \setminus \mX$ is Noetherian. %

The resulting point of view highlights connections to (Raynaud-style) rigid geometry, and potentially provides a setting where wild ramification can be compared with irregular singularities.

This framework leads to a homotopy theory of modulus pairs $\ulMH(\sS)$ and a theory of motives with modulus $\ulMDM^\eff(\sS)$ over a general base $\sS = (\hS, \mS)$. For example, the case where $\hS$ is the spectrum of a rank one valuation ring (of mixed or equal characteristic) equipped with a choice $\mS$ of pseudo-uniformiser is allowed.
%
\end{abstract}


\maketitle
\setcounter{tocdepth}{2}
\tableofcontents
%
%

\section{Introduction}

\textbf{Background.} Singular cohomology, $\ell$-adic cohomology, and crystalline cohomology all have very similar behaviour, suggesting that they are shadows of some underlying theory. This observation lead Grothendieck to introduce the category of pure motives in the 60's. An object of this category is simply a symbol 
\[ (X, p, n) \]
where $X$ is a smooth projective variety, $p$ a projector,\footnote{More precisely, $p$ is an element of $CH_{\dim X}(X \times X)$ which satisfies $p \circ p = p$, where $\circ$ is a certain map $CH_{\dim X}(X \times Y) \times CH_{\dim Y}(Y \times Z) \to CH_{\dim X}(X \times Z)$ generalising composition.} and $n$ an integer. This symbol represents the piece of the cohomology of $X$ cut out by $p$ and twisted by $n$, \cite{Sch94}.\footnote{The interpretation of ``twisted'' depends on the cohomology theory in question. For $\ell$-adic cohomology, it means tensoring with the Galois module $\varprojlim_n \mu_{\ell^n}$. For de Rham cohomology, it means tensoring with the Hodge structure $2 \pi i \QQ$.} Anything proven about pure motives can then be converted into a statement about the relevant cohomology theory by applying a realisation functor.

A shortcoming of Grothendieck's theory of pure motives is that it is only valid for smooth projective varieties. In the 90's, a theory of mixed motives was developed by Voevodsky and others, valid for not necessarily smooth, not necessarily projective varieties. From the modern point of view, a Voevodsky motive is also represented by a symbol---a diagram of smooth varieties\footnote{Readers familiar with Voevodsky's work may complain that a Voevodsky motive is a complex of presheaves with transfers, not a diagram, but this is the same thing: a presheaf $F: SmCor \to D(Ab)$ with values in the $\infty$-category of complexes of abelian groups correspond to the diagram $\int F \to SmCor$ indexed by the slice $\infty$-category $(SmCor \downarrow F)$.}---which represents a recipe to construct a group out of pieces of cohomology groups of various smooth varieties using morphisms of smooth varieties, and various transfer maps. Again, anything proven about Voevodsky motives can then be converted into a statement about the relevant cohomology theory by applying a realisation functor.

A shortcoming of Voevodsky's theory is that it cannot see ``additive'' information (in the sense of $\GG_a$ vs{.} $\GG_m$). For example, Voevodsky motives cannot see the unipotent part of the Picard group of a cusp, or $H^\ast(-, \OO_-)$ in general characteristic, and cannot see wild ramification or $H^\ast_{et}(-, \ZZ / p^n)$ in characteristic $p$.

An ever present tool in the study of cohomology in general, and ramification in particular, is filtration. An important example would be Deligne's weight filtration on the $\CC$-linear cohomology of a smooth $\CC$-variety, which measures the behaviour of cohomology classes ``at infinity''. Another important example is the Brylinski-Kato filtration on $\QQ_p / \ZZ_p$-linear étale cohomology of a smooth $\FF_p$-variety. This also measures behaviour of cohomology classes ``at infinity''. A motivating example in our story is the generalised Jacobian $J(\hX,\mX)$ associated to a curve $\hX$ with a divisor $\mX$, which was introduced by Rosenlicht \cite{Rosenlicht} and plays a fundamental role in Rosenlicht-Serre's  class field theory for curves \cite{SerreCFT}: as one considers various $\mX$ with the same support, one gets a system of extensions $0 \to U(\hX,\mX) \to J(\hX,\mX) \to J(\hX) \to 0$ of the Jacobian of the total space by various (mostly) unipotent algebraic groups.

In recent years, a theory of motives ``with modulus'' was introduced in a series of works \cite{kmsy1}, \cite{kmsy2} \cite{kmsy3} by Kahn-Miyazaki-Saito-Yamazaki (based on  \cite{nistopmod} by the second author and \cite{kami} by Kahn-Miyazaki), in the case that the base is the spectrum of a field. 
Similar to the way Grothendieck's theory uses symbols $(X, p, n)$ to represent pieces of cohomology, KMSY's theory uses symbols $(\hX, \mX)$ to represent a filtered piece of the cohomology of $\iX := \hX \setminus \mX$. In their work, $\hX$ is a normal variety, $\mX$ is an effective Cartier divisor, $\iX$ is asked to be smooth, and $(\hX, \mX)$ represents the cohomology of $\iX$ whose ramification at infinity is bounded by $\mX$.

\textbf{The current work.} The main purpose of this paper is to establish a foundation to construct the theory of modulus pairs \emph{over an arbitrary base}, which generalises the theory over the field. As motivation, let us point out two targets our theory can be applied to:
\begin{enumerate}
 \item The construction of a homotopy category of modulus pairs $\ulMH(\sS)$ over a general qcqs basic pair $\sS$, satisfying a localisation theory à la Morel-Voevodsky.

 \item An Ayoub-style category of étale motives $\ulMDA^\eff(\sS)$ (again over a general qcqs base pair $\sS$) which recovers KMSY's category with rational coefficients over a field. Notably, this gives a construction of $\ulMDM^\eff(k)$ that does not explicitly use transfers, and therefore makes realisation functors much easier to define.
\end{enumerate}

\textbf{Future directions.} As we just mentioned, the material in this work can be used to develop modulus versions of the Morel-Voevodsky unstable homotopy category and Ayoub's $\mathbf{DA}^\eff$. One great advantage of not having transfers explicitly built into the definitions is that realisation functors become easier to define. In work in progress we can define a de Rham realisation functor which recovers R{\"u}lling-Saito's presheaf $\tilde{\Omega}^q$ of K\"ahler differentials with modulus \cite[\S 6]{RS18} which have smooth total space and modulus with sncd support. The definition leads naturally to a definition of Hochschild homology for modulus pairs, with an evident HKR isomorphism. There also seems to be a theory of noncommutative motives with modulus coming into view.

We mention three more future directions which are more speculative. Firstly, there are strong analogies between wild ramification (in positive characteristic) and irregular singularities (in characteristic zero). Having a good theory of motives which can see both of these non-$\AA^1$-invariant phenomenon \emph{and} exists in mixed characteristic provides a setting where one might hope to directly compare these two phenomenon.

Secondly, there is a history of cycle theoretic / motivic interpretations of various abelianised fundamental groups of schemes. One state of the art result is Kerz-Saito's isomorphism for smooth varieties $\iX$ over a finite field $\FF_q$ (with $q$ odd), \cite[Thm.III]{KS16},
\[ \varprojlim_{(\hX, \mX)} C(\hX, \mX)^0 \cong \pi_1^{ab}(\iX)^0.  \]
Here, the limit is over proper $\hX$ equipped with an effective Cartier divisor $\mX$ such that $\iX = \hX \setminus \mX$, the group $C(\hX, \mX)$ is a certain quotient of zero cycles on $\iX := \hX \setminus \mX$ sensitive to the multiplicity of $\mX$, the group $\pi_1^{ab}(\iX)$ is the abelianised étale fundamental group of $\iX$, and the $(-)^0$ means the contributions from $\overline{\FF}_q / \FF_q$ are ignored. Notice that wild ramification information is visible. 
Having access to a \emph{non-abelian} motivic homotopy category suggests that one might be able to git a motivic description of the entire (i.e., not abelianised) fundamental group.

Thirdly, from Artin's point of view, moduli spaces and stacks are best handled when arbitrary rings (or qcqs schemes) are allowed. In Chapter~\ref{chap:relCyc} we develop the modlus version of the Suslin-Voevodsky paper on relative cycles, \cite{SVRelCyc}. We only develop the parts we need to define motives with modulus over a general base, but there seems to be no serious obstacle to obtaining modulus versions of everything in \cite{SVRelCyc}. There is certainly another paper to be written on this topic, leading to a modulus theory of Hilbert schemes and Chow varieties.

\textbf{Definitions.} To save the reader the pain of chasing definitions, let's give an executive summary of the construction of $\ulMDA^\eff(\sS)$ (the construction of $\ulMH(\sS)$ is analogous).

A \emph{modulus pair} is a pair $(\hX, \mX)$ where $\hX$ is \emph{any} scheme, and $\mX$ an effective Cartier divisor. An \emph{ambient morphism} $(\hX, \mX) \to (\hY, \mY)$ of modulus pairs is a morphism $f: \hX \to \hY$ of the underlying schemes, such that $\mX \geq f^*\mY$. Modulus pairs together with ambient morphisms form an obvious category $\ulPSCH$, and we obtain the \emph{category $\ulMSCH$ of modulus pairs} by formally inverting the class $\Sigma$ of \emph{abstract admissible blowups}: ambient morphisms $(\hX, \mX) \to (\hY, \mY)$ such that $f: \hX \to \hY$ is proper, $\mX = f^*\mY$, and $\iX := \hX \setminus \mX \to \iY := \hY \setminus \mY$ is an isomorphism. So 
\[ \ulMSCH := \ulPSCH[\Sigma^{-1}]. \]
Given a modulus pair $\sS$, we write $\ulMSm_{\sS} \subseteq \ulMSCH / \sS$ for the full subcategory of the slice category $\ulMSCH / \sS$, whose objects are isomorphic to ambient morphism $f: (\hX, \mX) \to (\hS, \mS)$ such that $f: \hX {\to} \hS$ is of finite type, and $\iX {\to} \iS$ is smooth. The \emph{$\ulMet$-topology} on $\ulMSm_\sS$ is the topology generated by families $\{f_i: (\hU_i, \mU_i) \to (\hX_i, \mX_i)\}_{i \in I}$ of ambient morphisms such that $\{\hU_i \to \hX_i\}_{i \in I}$ is an étale covering in the classical sense, and $\hU_i = f_i^*\hX$ for each $i$. Finally, to a modulus pair $\sX = (\hX, \mX)$ we associate 
\[ \bcube_\sX = (\PP^1 {\times} \hX, \quad \PP^1 {\times}\mX {+} \hX {\times} \{\infty\}), \]
and define
\[ \ulMDA^\eff(\sS) := \frac{D(\Shv_{\ulMet}(\ulMSm_\sS, \QQ))}{\langle \bcube_\sX \to \sX \rangle } \]
as the Verdier quotient of the derived category of $\ulMet$-sheaves of $\QQ$-vector spaces on $\ulMSm_\sS$ by the subcategory generated by cones of the canonical morphisms $\bcube_\sX \to \sX$ as $\sX$ ranges over objects of $\ulMSm_\sS$. Notice that the functor $\ulMSm_\sS \to \Sm_{\iS}; \sX \mapsto \iX$ induces a canonical functor
\[ \ulMDA^\eff(\sS) \to \mathbf{DA}^\eff(\iS) := \frac{D(\Shv_{\et}(\Sm_{\iS}, \QQ))}{\langle \AA^1_X \to X \rangle }. \]
This functor has a right adjoint which is fully faithful and induces a semi-orthogonal decomposition
\[ \ulMDA^\eff(\sS) = \langle \mathcal{U}, \mathbf{DA}^\eff(\iS) \rangle. \] 
We expect $\mathcal{U}$ is the subcategory generated by ``unipotent'' objects in some appropriate to-be-discovered sense.

\textbf{Connection to other works.} The KMSY theory of motives with modulus has a close relationship with Kahn-Saito-Yamazaki's \emph{reciprocity sheaves} \cite{KSY16}, a generalisation of Ivorra-R\"ulling's \emph{reciprocity functors} \cite{IR16}.
Reciprocity sheaves include all of the following notions: Voevodsky's $\AA^1$-homotopy invariant sheaf, any commutative algebraic group, the sheaf of K\"ahler differentials and the de Rham-Witt sheaf. 
Note that all of the above examples (except the first one) are not $\AA^1$-homotopy invariant, and therefore cannot be captured by Voevodsky motives.

The theory of motives with modulus already has several applications.
For example, in \cite{RS18}, R\"ulling-Saito developed the theory of \emph{motivic conductors}, which gives a universal constructions of various conductors which capture information of ramification of coverings  and irregularity of rank 1 integrable connections. 
On the other hand, R\"ulling-Sugiyama-Yamazaki made in \cite{RSY19} a detailed study of the tensor structure of modulus sheaves, and applied it to the computation of \emph{zero cycles with modulus}. 
Cycles with modulus have been studied by many authors.
Bloch-Esnault introduced \emph{additive higher Chow group} for $0$-dimensional varieties in \cite{BE03} to give a cycle-theoretic description of the sheaf of K\"ahler differential. 
Jinhyun Park extended their construction for general varieties in \cite{Park08}. Park, Krishna and Levine studied many properties of the additive higher Chow groups in \cite{KL08}, \cite{KP} etc.
On the other hand, Kerz-Saito introduced \emph{Chow group of zero cycles with modulus} in \cite{KS16} to establish higher dimensional class field theory over finite fields. 
The above two groups of ``cycles with modulus'' are simultaneously generalised to \emph{higher Chow groups with modulus} by Binda-Saito in \cite{BS19}.
It is expected that higher Chow group with modulus should have a close relation with motives with modulus, as Bloch's higher Chow group with modulus described the Hom-group of Voevodsky motives. 

We expect there is also a connection to the theory of log-motives developed by Binda, Park, {\O}stv{\ae}r, \cite{BPO}.

\textbf{Outline.} Now let us give an outline of this work.

Chapter~\ref{chap:modPairs} can be seen as the beginning of an EGA for modulus pairs. The first main achievement is the existence of a categorical fibre product of modulus pairs. This is much more than a curiosity; for example it unlocks functoriality for modulus sites. It also encodes a modulus version of ``strict transform'' which we use to prove a Raynaud-Gruson-esque ``finite-ification'' result, Thm.~\ref{thm:finHeavy}. Chap.~\ref{chap:modPairs} Sec.~\ref{sec:proModPairs} can be seen as a modulus version of the section ``Limites projectives de préschémas'', \cite[\S 8]{EGAIV3}.

Chapter~\ref{chap:coverings} develops modulus versions of the Zariski, Nisnevich, étale, fppf, and qfh topologies. They are the default choices in the sense that they make the canonical functors $\ulPSCH \to \ulMSCH$ and $\ulPSCH \to \SCH; \sX \mapsto \hX$  continuous. As we already mentioned, Thm.~\ref{thm:finHeavy} is the ``heaviest'' technical result, using quasi-compactness of the Riemann-Zariski space, and special properties of valuation rings, cf.Lem.~\ref{lemm:valRingProperFlatGenFin}. 

Chapter~\ref{chap:locProMod} is one half of the modulus version of the first author's paper with Gabber, \cite{GK15}. We identify the local rings for the various modulus topologies. Basically, they are local rings for the non-modulus topologies with a valuation ring glued to the closed point, cf. semi-valuation rings, \cite[Rem.2.1.2]{Tem11}. One way to understand this result would be to think of the small modulus sites\footnote{Cf. $\ulMQf_\sS$, $\ulMFppf_\sS$, $\ulMEt_\sS$, $\ulMOp_\sS $ in Def.\ref{defi:mtop}.} as Nisnevich (resp. étale, fppf, qfh) versions of relative Riemann-Zariski spaces, \cite{Tem11}. The proofs in this chapter may look longish, but there is nothing particularly surprising in them, except maybe for the fact that its possible to give a usable description of the $\ulMfppf$-local modulus pairs even though there is no concrete description of $\fppf$-local schemes. The chapter finishes with the modulus version of the result that a finite scheme over a hensel local ring is a finite product of hensel local rings, Lem.~\ref{lemm:finiteulMNislocal}. This lemma is a basic ingredient in Voevodsky's proof that sheafification preserves transfers. 

In Chapter~\ref{sec:sites} we define various sites, and do the other half of the modulus version of \cite{GK15}. That is, we use local modulus pairs to describe conservative families of fibre functors for our modulus sites. This was surprisingly fiddly, due to the fact that the modulus version of the qfh-topology uses finite type morphisms instead of finite presentation morphisms, so working with pro-objects becomes more delicate.

In Chap.~\ref{sec:sites}, Sec.~\ref{sec:extending} we apply our theory of points to show that given a modulus pair $\sX = (\hX, \mX)$, every non-modulus covering of the interior $\iX$ can be extended to a modulus covering of $\sX$. As consequence, we know that the various functors of the form  $\Shv(\Sch_{{\iS}}) \to \Shv(\ulMSch_{\sS})$ are exact.

In Chap.~\ref{sec:sites}, Sec.~\ref{sec:hypercompleteness} we prove hypercompleteness of the modulus Zariski and modulus Nisnevich sites. Due to the fact that the known cd-structures are not bounded, we cannot use Voevodsky's proof. Instead, we observe that the small modulus Zariski (resp. Nisnevich) sites are the filtered limits of the non-modulus ones over all admissible blowups. Then we use Clausen-Mathew's result \cite[Cor.3.11]{2019arXiv190506611C} that filtered limits preserve homotopy dimension. Hypercompleteness is a consequence of homotopy dimension being locally finite.

Chapter~\ref{chap:relCyc} is the modulus version of the Suslin-Voevodsky paper on relative cycles, \cite{SVRelCyc}. We only develop the parts we need to define motives with modulus over a general base, but there seems to be no serious obstacle to obtaining modulus versions of everything in \cite{SVRelCyc}. There is certainly another paper to be written on this topic, leading to a modulus theory of Hilbert schemes and Chow varieties.

Chapter~\ref{chap:shTr} develops the modulus theory of sheaves with transfers. In this chapter and the previous chapter we move from qcqs generality to qc separated total space, Noetherian interior. This is because there are at least two divergent theories of cycles on non-Noetherian schemes, exhibited by the following phenomenon: on an infinite profinite set $S = \lim {S_{λ}}$, the free abelian group of points $\ZZ S$ is not the filtered colimit $\varinjlim \ZZ S_{λ}$ of the free abelian groups $\ZZ S_{λ}$ of the finite quotients.

In Chapter~\ref{chap:box} we begin looking towards motives. There is an alternative non-categorical product of modulus pairs that has traditionally been used to set up the theory. We call this product the ``box product'' and use the notation $\boxtimes$ to highlight its similarity to the exterior product of sheaves. One of the main obstacles in developing the theory of motives with modulus along Voevodsky's lines is that the diagonal $\bcube \to \bcube \boxtimes \bcube$ is not a well-defined morphism of modulus pairs. This problem disappears if we use the categorical fibre product, but the box product is still useful. In Morel-Voevodsky's proof of localisation, a certain contraction is performed which only works with the box product. In Chapter~\ref{chap:box} we show that, up to admissible blowup and Zariski localisation, the functors $\bcube \boxtimes - $ and $\bcube \times - $ are isomorphic. So they can be used interchangeably when developping the theory of motives with modulus (or homotopy theory of modulus pairs).

\subsection*{Acknowledgements.} 

{\small
The first author is supported by the JSPS KAKENHI Grant (19K14498). The second author is supported by RIKEN Special Postdoctoral Researchers (SPDR) Program, by RIKEN Interdisciplinary Theoretical and Mathematical Sciences Program (iTHEMS), and by JSPS KAKENHI Grants (19K23413) and (21K13783).}


\chapter{Categories of modulus pairs} \label{chap:modPairs}

\section{The category ${\protect \ulPSCH}$}

\newcommand{\newterm}[1]{\emph{#1}\marginpar{\fbox{#1}}\index{#1}}
\newcommand{\newsymb}[2]{$#2$\marginpar{\fbox{$#2$}}\index[not]{#1@$#2$}} %

\begin{defn} \label{defi:modulusPair}
A %
\emph{modulus pair}\marginpar{\fbox{modulus pair}}\index{modulus pair} %
is a pair 
\[ \sX = (\hX,\mX) \]
 with ${\hX}$ a scheme and ${\mX}$ an effective Cartier divisor on ${\hX}$. %
For a modulus pair $\sX$, we write 
\[ {\iX} := {\hX} - {\mX}. \]
We call %
${\iX}$%
\marginpar{\fbox{${\iX}$}}%
\index[not]{X@${\iX}$} %
the %
\emph{interior}%
\marginpar{\fbox{interior}}%
\index{modulus pair!interior} %
of $\sX$.
We call 
${\hX}$%
\marginpar{\fbox{${\hX}$}}%
\index[not]{X@${\hX}$} %
the %
\emph{total space}%
\marginpar{\fbox{total space}}%
\index{modulus pair!total space} %
and %
${\mX}$%
\marginpar{\fbox{${\mX}$}}%
\index[not]{X@${\mX}$} %
the %
\emph{modulus}.%
\marginpar{\fbox{modulus}}%
\index{modulus pair!modulus} %
\end{defn}

\begin{rema}
The symbol $\sX$ is to be thought of as a formal ``quotient'' of $\iX$ (or the ``motif'' or ``homotopy type'' or \dots of $\iX$) where ramification larger than $\mX$ is killed.
\end{rema}

\begin{exam}
A key case to keep in mind is when $\hX$ is the spectrum of a rank one valuation ring $R$, and $\mX$ is generated by a pseudo-uniformiser $π$. 
\end{exam}

\begin{defn}\label{def:ambientmorph} \ 
\begin{enumerate}
\item Define the ambient category of modulus pairs %
$\ulPSCH$%
\marginpar{\fbox{$\ulPSCH$}}%
\index[not]{MSCHamb@$\ulPSCH$} %
as follows. 
The objects of $\ulPSCH$ are all modulus pairs. 
For two modulus pairs $\sX = ({\hX},{\mX})$ and $\sY = ({\hY},{\mY})$, a morphism $f : \sX \to \sY$ is a morphism of schemes $\ol{f} : {\hX} \to {\hY}$ which satisfies the following \emph{modulus condition}:
\begin{itemize}
\item the closed immersion ${\mY} \times_{{\hY}} {\hX} \to {\hX}$ factors through ${\mX}$.
\end{itemize} 

Obviously, the modulus condition is preserved by the composition of morphisms of schemes.
The morphisms in $\ulPSCH$ are called %
\emph{ambient morphisms}.%
\marginpar{\fbox{ambient morphism}}%
\index{morphism of modulus pairs!ambient} %

\item For a modulus pair $\sS$, we define %
$\ulPSCH_{\sS}$%
\marginpar{\fbox{$\ulPSCH_{\sS}$}}%
\index[not]{MSCHambS@$\ulPSCH_{\sS}$} %
as the category whose objects are morphisms $\sX \to \sS$ in $\ulPSCH$ and whose morphisms are commutative triangles
\[\xymatrix{
\sX \ar[rr] \ar[dr] && \sY \ar[dl] \\
&\sS.&
}\]

For a commutative ring $R$, we write %
$\ulPSCH_R$%
\marginpar{\fbox{$\ulPSCH_R$}}%
\index[not]{MSCHambR@$\ulPSCH_R$}. %
Since $(\Spec (\Z) , \emptyset)$ is a terminal object in $\ulPSCH$ (by Corollary \ref{lem:coincide} below), we have %
$\ulPSCH_\ZZ$%
\marginpar{\fbox{$\ulPSCH_\ZZ$}}%
\index[not]{MSCHambZ@$\ulPSCH_\ZZ$}.

\item \label{item:morphprop} Let $(P)$ be a property of morphisms of schemes. We say that ambient morphism $\sX \to \sS$ of modulus pairs has the property $(P)$ if the underlying morphism $\hX \to \hS$ does. For example, $\sX \to \sS$ is said to be separated (resp. quasi-compact, quasi-separeted, of finite type, of finite presentation) if $\hX \to \hS$ is.
\end{enumerate} 
\end{defn}

\begin{rema}\label{rem:mapint}
Note that any ambient morphism $f : \sX {\to} \sY$ induces a morphism %
$f^\o : {\iX} {\to} {\iY}$%
\marginpar{\fbox{$f^\o$}}%
\index[not]{fcirc@$f^\o$}.
\end{rema}

\begin{defi}[{\cite[Tag 01WV]{stacks-project}}]
Given a morphism of schemes $f: Y \to X$ and an effective Cartier divisor $D$ on $X$, we say the \emph{pullback of $D$ is defined} if $Y \times_X D$ is again an effective Cartier divisor. In this situation we will write $f^*D$ or $D|_Y$ for $Y \times_X D$.
\end{defi}

\begin{prop} \label{prop:pullbackOK}
Let $f : \sX \to \sY$ be a morphism in $\ulPSCH$.
Then the pullback $\ol{f}^\ast {\mY}$ of the effective Cartier divisor ${\mY}$ by $\ol{f} : {\hX} \to {\hY}$ is defined. 
\end{prop}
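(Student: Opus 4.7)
The plan is to prove this locally on $\hX$. The statement is that if $f : \hX \to \hY$ satisfies the modulus condition, then for every $x \in \hX$ the pullback $f^\#(t)$ of any local equation $t$ for $\mY$ near $y := f(x)$ is a non-zero-divisor in $\OO_{\hX,x}$. Since being an effective Cartier divisor is a Zariski-local condition, this is all we need to verify.

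So first I would fix $x \in \hX$ and set $y = f(x)$. Since $\mY$ is an effective Cartier divisor, after shrinking $\hY$ to an open affine neighbourhood of $y$ we may assume $\mY = V(t)$ for some non-zero-divisor $t \in \OO_\hY(\hY)$. Similarly, after shrinking $\hX$ to an open affine neighbourhood of $x$ (inside $f^{-1}(\hY)$), we may assume $\mX = V(s)$ for some non-zero-divisor $s \in \OO_\hX(\hX)$. Then $\mY \times_\hY \hX$ is precisely the closed subscheme $V(f^\#(t)) \subseteq \hX$, and the problem reduces to showing that $f^\#(t)$ is a non-zero-divisor in $\OO_{\hX,x}$ (after possibly shrinking once more around $x$).

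The key observation is that the modulus condition translates algebraically into a divisibility statement. Indeed, the factorisation $\mY \times_\hY \hX \hookrightarrow \mX \hookrightarrow \hX$ at the level of closed subschemes corresponds to the inclusion of ideal sheaves $(s) \subseteq (f^\#(t))$ in $\OO_{\hX,x}$. Hence there exists $u \in \OO_{\hX,x}$ with
\[ s = u \cdot f^\#(t). \]

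Given this equality, showing that $f^\#(t)$ is a non-zero-divisor is immediate: if $a \in \OO_{\hX,x}$ satisfies $a \cdot f^\#(t) = 0$, then $a \cdot s = a \cdot u \cdot f^\#(t) = u \cdot (a \cdot f^\#(t)) = 0$, and since $s$ is a non-zero-divisor we conclude $a = 0$. I do not expect any serious obstacle here; the only slightly delicate point is simply bookkeeping, namely checking that the modulus condition (stated as factorisation of the closed immersion $\mY \times_\hY \hX \to \hX$ through $\mX$) really corresponds to the ideal inclusion $(s) \subseteq (f^\#(t))$ at each stalk, which is immediate from the definition of a closed immersion of affine schemes.
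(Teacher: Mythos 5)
Your proof is correct and follows essentially the same route as the paper's: reduce to the affine/local principal case, translate the modulus condition into the divisibility statement $s = u\cdot f^\#(t)$ with $s$ a local generator of $\mX$, and conclude that $f^\#(t)$ is a non-zero-divisor because $s$ is. The only (immaterial) difference is that you work stalkwise at $\OO_{\hX,x}$ while the paper works on affine opens.
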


\begin{proof}
Since the question is local on ${\hX}$, we are reduced to the case that ${\hX}=\Spec (R)$, ${\hY}=\Spec (S)$ are affine and that ${\mX} = \Spec (R/rR)$ and ${\mY} = \Spec (S/sS)$ are principal.
Let $\varphi : S \to R$ be the homomorphism corresponding to $\ol{f}$.
It suffices to prove that $\varphi (r)$ is a nonzerodivisor.
The assumption that ${\mY} \times_{{\hY}} {\hX} \subset {\mX}$ is equivalent to that $sS \subset \varphi (r)S$, i.e., that there exists $s' \in S$ such that $s=\varphi (r)s'$.
Therefore, since $s$ is a nonzerodivisor by assumption, so is $\varphi (r)$. 
This finishes the proof.
\end{proof}

\begin{defn} \ 
An ambient morphism $f : \sX \to \sY$ is %
 \emph{minimal}\marginpar{\fbox{minimal morphism}}\index{morphism of modulus pairs!minimal} %
 if the natural morphism ${\mY} \times_{{\hY}} {\hX} \to {\mX}$ is an isomorphism. 
\end{defn}

In order to make locally principal closed subschemes into effective Cartier divisors, we will frequently use the following notions.

\begin{defi}[{\cite[Tags 01RA, 01R5]{stacks-project}, \cite[Def.11.10.2]{EGAIV3}}] \label{defi:SchDom}
Let $f: Y \to X$ be a morphism of schemes. The \emph{scheme theoretic image} of $f$ is the closed subscheme $\underline{\Spec}(\OO_X / \sI)$ corresponding to the kernel $\sI = ker(\OO_X \to f_*\OO_Y)$. The morphism $f$ is called \emph{scheme theoretically dominant} or \emph{schematically dominant} if $\OO_X \to f_*\OO_Y$ is injective (or equivalently, if the scheme theoretical image is $X$). A scheme theoretically dominant open immersion is said to be \emph{scheme theoretically dense}. The scheme theoretical image of a subscheme is called the \emph{scheme theoretical closure}.
\end{defi}

\begin{prop}\label{prop:always-dense}
For any modulus pair $\sX = ({\hX},{\mX})$, the interior ${\iX} = {\hX}-{\mX}$ of $\sX$ is scheme theoretically dense in ${\hX}$, i.e., for any open subscheme $U \subset {\hX}$, the scheme theoretic image of $U \cap {\iX} \to U$ is equal to $U$. %
In particular, ${\iX}$ is topologically dense in ${\hX}$.
\end{prop}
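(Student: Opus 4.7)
The plan is to reduce to the local situation where the effective Cartier divisor is principal, and then observe that the definition of ``nonzerodivisor'' is essentially the content of the proposition.

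First, I would reformulate the conclusion: since $j_{*}\OO_{\iX}|_V = (j|_{V \cap \iX})_*\OO_{V \cap \iX}$ for any open $V \subset \hX$, the condition ``for every open $U \subset \hX$, the scheme-theoretic image of $U \cap \iX \to U$ is $U$'' is equivalent to the single condition that $\OO_{\hX} \to j_*\OO_{\iX}$ is injective as a sheaf map, where $j : \iX \hookrightarrow \hX$ is the open immersion. Injectivity of a sheaf map is a local property, so the statement can be checked on any affine open cover.

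Next I would cover $\hX$ by affine opens $V = \Spec(R)$ on which $\mX|_V$ is cut out by a single nonzerodivisor $r \in R$; such a cover exists by the definition of effective Cartier divisor. On such a $V$, we have $V \cap \iX = D(r) = \Spec(R[1/r])$, and the relevant map of sections is the localisation $R \to R[1/r]$. This map is injective precisely because $r$ is a nonzerodivisor: if $a \in R$ maps to $0$, then $r^n a = 0$ for some $n$, and iterating the nonzerodivisor property of $r$ forces $a = 0$. This proves scheme-theoretic density.

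For the ``in particular'' statement, topological density follows formally from scheme-theoretic density: if some nonempty open $V \subset \hX$ were disjoint from $\iX$, we could shrink $V$ to a nonempty affine, and then $\Gamma(V, \OO_{\hX})$ would contain $1 \neq 0$ whereas $\Gamma(V \cap \iX, \OO_{\iX}) = \Gamma(\emptyset, \OO_{\iX}) = 0$, contradicting injectivity of $\Gamma(V,\OO_{\hX}) \to \Gamma(V \cap \iX, \OO_{\iX})$.

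There is no real obstacle here; the only subtlety is checking that the paper's ``open-by-open'' definition of scheme-theoretic density agrees with injectivity of $\OO_{\hX} \to j_*\OO_{\iX}$ as a sheaf map, which uses the standard fact that $f_*$ commutes with restriction to opens of the target. Once that is in hand, the proof is a direct unpacking of the nonzerodivisor condition.
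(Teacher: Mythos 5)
Your proof is correct and follows essentially the same route as the paper's: localise to an affine where the divisor is principal and observe that $R \to R[1/r]$ is injective because $r$ is a nonzerodivisor (the paper phrases density via closed subschemes through which $\iX \to \hX$ factors, you via injectivity of $\OO_{\hX} \to j_*\OO_{\iX}$, which are equivalent). The only cosmetic difference is in the ``in particular'': the paper invokes quasi-compactness of $\iX \to \hX$ plus the general fact that a quasi-compact scheme-theoretically dense open immersion is topologically dense, while your direct argument with $1 \neq 0$ on a nonempty affine avoiding $\iX$ is equally valid and does not even need quasi-compactness.
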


\begin{proof}
We prove the first assertion.
Clearly, we may assume that $U = {\hX}$.
Let $Z$ be a closed subscheme of ${\hX}$ such that ${\iX} \to {\hX}$ factors through $Z$.
It suffices to show that $Z={\hX}$.
Since the problem is local on ${\hX}$, we may assume that ${\hX}=\Spec (R)$ is affine and ${\mX} = \Spec (R/rR)$ for some nonzerodivisor $r \in R$.
Note that we have ${\iX} = {\hX}-{\mX} = \Spec (R[1/r])$.
Let $I \subset R$ be the ideal which defines $Z$, i.e., $Z = \Spec (R/I)$. 
Then the assumption implies that there exists a natural map of $R$-algebras $R/I \to R[1/r]$, i.e., that the natural homomorphism $R \to R[1/r]$ sends $I$ to zero.
This means that for any $x \in I$, there exists $n \in \Z_{>0}$ such that $r^n x=0$, which shows $x=0$ since $r$ is a nonzerodivisor.
Therefore, we have $I=0$ and hence $Z={\hX}$.

The second assertion follows from the first assertion since ${\iX} \to {\hX}$ is affine and hence quasi-compact.%
\footnote{If an open immersion is quasi-compact and scheme theoretically dense, then it is topologically dense.}
This finishes the proof.
\end{proof}

\begin{lemma}\label{lem:coincide}
Let $f,g: X \rightrightarrows Y$ be two morphisms of schemes and $D$ an effective Cartier divisor on $X$.
Assume that $f$ and $g$ coincide on $U:=X - D$.
Then we have $f=g$.
\end{lemma}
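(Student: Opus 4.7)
The plan is the classical equalizer argument, with Proposition~\ref{prop:always-dense} supplying the density input. First, I would form the scheme-theoretic equalizer $E := X \times_{(f,g),\, Y \times_{\Spec\ZZ} Y,\, \Delta_Y} Y$, where $\Delta_Y$ is the diagonal of $Y$. Since $\Delta_Y$ is an immersion, its base change $e : E \to X$ is an immersion, which factors canonically as a closed immersion $E \hookrightarrow V$ into some open subscheme $V \hookrightarrow X$. The hypothesis $f|_U = g|_U$ says precisely that the open immersion $j : U \hookrightarrow X$ factors through $E$, producing a chain of monomorphisms $U \hookrightarrow E \hookrightarrow V \hookrightarrow X$.

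Next, I would invoke Proposition~\ref{prop:always-dense}: the interior $U = X - D$ is scheme-theoretically dense in $X$, and hence also in the open subscheme $V \supseteq U$. Letting $\sI \subseteq \OO_V$ denote the ideal sheaf cutting out $E$ inside $V$, the inclusion $U \hookrightarrow E$ forces the restriction $\sI|_U$ to be zero; then the injectivity of $\OO_V \to (j|_V)_* \OO_U$ granted by scheme-theoretic density yields $\sI = 0$, so $E = V$ as closed subschemes of $V$.

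Finally, I would conclude that $V = X$. This step is automatic whenever $\Delta_Y$ is a closed immersion, as is the case when $Y$ is separated (or, more generally, after reducing locally to pairs of morphisms landing in a common affine open of $Y$). The delicate point is this last reduction: one must first argue that $f(x) = g(x)$ topologically for every $x \in X$, so that a single affine neighborhood of the two images can be chosen to contain both. I expect this to be the main technical obstacle, and it can be handled by exploiting that every point of $X$ is a generization of some point of $U$---a consequence of $D$ being locally cut out by a nonzerodivisor, so that the local equation of $D$ cannot lie in every prime specializing to a given one---combined with the continuity of $f$ and $g$.
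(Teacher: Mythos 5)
Your argument is sound, and parallel to the paper's, up to the point where you conclude $E = V$: forming the equalizer, factoring the immersion $E \to X$ as a closed immersion into an open $V$, and using scheme-theoretic density of $U$ (Proposition~\ref{prop:always-dense}) to kill the ideal sheaf is exactly the sheaf-level version of the paper's affine computation, where injectivity of $R \to R[1/r]$ plays the role of density. The genuine gap is the last step $V = X$, and the patch you sketch does not close it. The fact you want (stated with the direction reversed: every point of $X$ is a \emph{specialization} of, i.e.\ admits a generization in, $U$) is true, since a local equation of $D$ stays a nonzerodivisor in each $\OO_{X,x}$ by flatness of localization and hence is not nilpotent there. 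But it only tells you that $f(x)$ and $g(x)$ are both specializations of the single point $f(u)=g(u)$; in a non-separated $Y$ this does not force $f(x)=g(x)$. Concretely, let $Y$ be the affine line with doubled origin and let $f,g:\AA^1 \rightrightarrows Y$ be the two canonical open immersions: they agree on the complement of the effective Cartier divisor $D = V(t)$, every point of $\AA^1$ has a generization in that complement, yet $f(0)\neq g(0)$ are two distinct specializations of the common generic point. So no argument along these lines can succeed; one needs $Y$ separated (or at least that $f$ and $g$ land in a common affine open around each point, equivalently that the equalizer immersion is closed), after which your density argument finishes immediately.

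For what it is worth, you have put your finger on a step that the paper's own proof passes over silently: it begins with ``we may assume $X=\Spec(R)$ and $Y=\Spec(S)$ are affine,'' and while localizing on $X$ is harmless, reducing to affine $Y$ presupposes exactly the pointwise equality $f(x)=g(x)$ that is at issue. So the weak point you identified is real and is shared by the paper; but your proposed repair via generization fails, and the example above shows the statement as written (with no separatedness hypothesis on $Y$) is not salvageable by any argument.
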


\begin{proof}
Since the problem is local on $X$, we may assume that $X=\Spec (R)$ and $Y = \Spec (S)$ are affine, and that $D = \Spec (R/rR)$ for some nonzerodivisor $r \in R$.
Let $\varphi , \psi : S \to R$ be the ring maps corresponding to $f,g$, and let $\xi : R \to R[1/r]$ be the localisation map, which is injective since $r$ is a nonzerodivisor.
The assumption implies $\xi \circ \varphi = \xi \circ \psi$, and hence $\varphi = \psi$. This finishes the proof.
\end{proof}

\begin{cor}\label{cor:coincide}
Let $f,g : \sX \rightrightarrows \sY$ be two morphisms in $\ulPSCH$.
If $f^\o = g^\o$, then $f=g$. \qed
\end{cor}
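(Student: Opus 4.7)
The plan is straightforward: this is a direct corollary of Lemma~\ref{lem:coincide} applied to the underlying scheme morphisms, so the task is essentially just to unpack the definitions and verify the hypotheses of that lemma.

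First I would recall that a morphism in $\ulPSCH$ is, by Definition~\ref{def:ambientmorph}, nothing more than a morphism $\ol{f}: \hX \to \hY$ of underlying schemes satisfying the modulus condition. So writing $f = g$ is the same as asking $\ol{f} = \ol{g}$ as morphisms of schemes.

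Next I would observe that, by Remark~\ref{rem:mapint}, the induced maps $f^\o, g^\o: \iX \to \iY$ are nothing but the restrictions of $\ol{f}, \ol{g}$ along the open immersion $\iX \hookrightarrow \hX$ (composed with $\iY \hookrightarrow \hY$). Hence the hypothesis $f^\o = g^\o$ says precisely that $\ol{f}$ and $\ol{g}$ agree on the open complement $\iX = \hX - \mX$ of the effective Cartier divisor $\mX$.

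Then I would apply Lemma~\ref{lem:coincide} with $X = \hX$, $Y = \hY$, $D = \mX$, and the pair of morphisms $\ol{f}, \ol{g}$; this immediately yields $\ol{f} = \ol{g}$, hence $f = g$. There is no real obstacle here since Lemma~\ref{lem:coincide} has already done all the work by reducing to the affine case and using that the localisation map $R \to R[1/r]$ is injective when $r$ is a nonzerodivisor.
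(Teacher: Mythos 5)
Your proposal is correct and is exactly the argument the paper intends: the corollary is stated with a bare \qed precisely because it is the immediate application of Lemma~\ref{lem:coincide} to $\ol{f},\ol{g}:\hX\rightrightarrows\hY$ with $D=\mX$, which is what you spell out. No gaps.
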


\begin{cor}\label{cor:twoforgetfuncs}
For any modulus pair $\sS$, there are two canonical faithful ``forgetful'' functors 
\begin{align*}
\ulPSCH_{\sS} &\to \SCH_{\iS} ; \quad \sX \to \iX ,\\
\ulPSCH_{\sS} &\to \SCH_{\hS} ; \quad \sX \to \hX .
\end{align*}
\end{cor}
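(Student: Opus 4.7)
The plan is to construct each functor on objects and morphisms, verify functoriality, and then deduce faithfulness from results already in the excerpt.

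For the functor $\ulPSCH_{\sS} \to \SCH_{\hS}$, I would send $\sX = (\hX, \mX)$ over $\sS$ to its total space $\hX$ equipped with the structure morphism $\hX \to \hS$, and an ambient morphism $f : \sX \to \sY$ over $\sS$ to its underlying morphism of schemes $\ol{f} : \hX \to \hY$. Functoriality is immediate from the definition of $\ulPSCH$ (morphisms are morphisms of schemes satisfying an extra condition, composed as morphisms of schemes). Faithfulness is tautological: an ambient morphism is by Definition~\ref{def:ambientmorph} specified by its underlying morphism of schemes, so two ambient morphisms with the same $\ol{f}$ are literally equal.

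For the functor $\ulPSCH_{\sS} \to \SCH_{\iS}$, I would send $\sX$ to its interior $\iX = \hX \setminus \mX$ (which inherits a structure morphism to $\iS$, since any ambient morphism $\sX \to \sS$ restricts to a morphism $\iX \to \iS$ as noted in Remark~\ref{rem:mapint}), and an ambient morphism $f$ to $f^\o$. Functoriality follows because $(g \circ f)^\o = g^\o \circ f^\o$: both sides are obtained by restricting the same morphism of underlying schemes $\ol{g} \circ \ol{f}$ to the open subscheme $\iX$. Faithfulness is the content of Corollary~\ref{cor:coincide}: if $f^\o = g^\o$ as morphisms $\iX \to \iY$, then $f = g$ in $\ulPSCH_\sS$.

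There is no substantive obstacle here; everything has already been done. The only small point worth noting is that for the interior functor, one should check that $\iX \to \iS$ agrees for isomorphic objects of $\ulPSCH_\sS$, but this is immediate since the assignment $\sX \mapsto \iX$ is defined at the level of underlying schemes and the structure morphism to $\iS$ is the restriction of $\ol{f} : \hX \to \hS$.
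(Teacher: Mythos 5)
Your proposal is correct and follows essentially the same route as the paper: existence of the total-space functor is immediate from the definition of ambient morphisms, existence of the interior functor is Remark~\ref{rem:mapint}, and faithfulness of the interior functor (the only non-tautological case) is exactly Corollary~\ref{cor:coincide}. You have merely spelled out the routine functoriality checks that the paper leaves implicit.
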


\begin{proof}
The first functor obviously exists, and the latter does by Remark \ref{rem:mapint}.
The faithfulness follows from Cor.~\ref{cor:coincide}.
\end{proof}

The following result shows that the modulus condition can be checked fpqc-locally on the ambient space.

\begin{lemma}\label{lem:moduluscond-fpqc}
Let $X$ be a scheme and $Y, Z$ closed subschemes of $X$. 
Let $\{U_i \to X\}_i$ be a fpqc-covering of $X$, and assume that $Y \times_X U_i \to U_i$ factors through $Z \times_X U_i$.
Then $Y \to X$ factors through $Z$. 
\end{lemma}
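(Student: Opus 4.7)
The question of whether $Y \to X$ factors through $Z$ is local on $X$ (both for the source of the factorization and for the target), so the plan is first to reduce to the case where $X = \Spec A$ is affine, with $Y = V(I)$ and $Z = V(J)$ for ideals $I, J \subseteq A$; the factorization $Y \to Z$ exists iff $J \subseteq I$. Under this reduction, the hypothesis translates to a condition checked fpqc-locally: for the fpqc cover $\{U_i \to X\}$, the pullback $Y \times_X U_i \hookrightarrow U_i$ factors through $Z \times_X U_i$ iff the ideal $J \cdot \sO_{U_i}$ is contained in $I \cdot \sO_{U_i}$ as ideal sheaves on $U_i$.

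Next I would reduce to the case of a single faithfully flat morphism. By the standard fpqc axiom \cite[Tag 022B]{stacks-project}, since $X$ is affine (in particular quasi-compact), the cover admits a refinement of the form $\coprod_{j=1}^n V_j \to X$ with each $V_j$ quasi-compact open in some $U_{i_j}$ and the total map faithfully flat and quasi-compact. Covering each $V_j$ by finitely many affines and taking their disjoint union, we obtain a single faithfully flat morphism $\Spec B \to \Spec A = X$ through which the hypothesis propagates; so we need only show that $JB \subseteq IB$ implies $J \subseteq I$.

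The key step is then faithful flatness: the induced map $A/I \to B/IB = (A/I) \otimes_A B$ is faithfully flat, hence injective. The composite $J \hookrightarrow A \twoheadrightarrow A/I$ has image in $A/I$ which becomes zero in $B/IB$ precisely because $JB \subseteq IB$. By injectivity of $A/I \to B/IB$, the image of $J$ in $A/I$ is already zero, i.e.\ $J \subseteq I$, as required.

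There is no real obstacle here; the only mild care needed is the reduction from a general fpqc cover to a single faithfully flat affine morphism (so that we can apply faithful flatness of the quotient $A/I \to B/IB$ directly), and this is exactly what the fpqc axiom provides over an affine base. Everything else is a one-line application of the fact that a faithfully flat ring map is injective after quotienting by an ideal.
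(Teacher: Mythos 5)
Your proof is correct and is essentially the same argument as the paper's, which simply asserts the lemma as "a consequence of fpqc-descent for quasi-coherent sheaves": your reduction to a single faithfully flat ring map $A \to B$ and the injectivity of $A/I \to B/IB$ is exactly the module-level content of that descent statement, just spelled out in detail.
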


\begin{proof}
This is a consequence of fpqc-descent for quais-coherent sheaves. 
\end{proof}

We also prepare the following lemma for later use.

\begin{lemma}\label{lem:qc-interior}
For any modulus pair $\sX = (\hX, \mX)$, the induced open immersion $\iX = \hX \setminus \mX \to \hX$ is a quasi-compact morphism.
\end{lemma}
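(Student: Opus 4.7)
The plan is to reduce to an affine situation and exploit the fact that an effective Cartier divisor is, Zariski-locally and in a uniform way, cut out by a single nonzerodivisor.

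First, I would observe that quasi-compactness of a morphism of schemes is Zariski-local on the target. So it suffices to show that whenever $\hX = \Spec(R)$ is affine, the interior $\iX = \hX \setminus \mX$ is a quasi-compact scheme.

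Next, I would unravel what it means for $\mX$ to be an effective Cartier divisor in this affine setting. By definition, there is an open cover of $\hX$ on which the ideal sheaf $\sI_{\mX}$ is generated by a single nonzerodivisor. Since $\hX$ is quasi-compact, I can refine this to a finite cover by principal opens $D(g_1), \ldots, D(g_n)$ with $(g_1, \ldots, g_n) = R$, and pick elements $r_i \in R$ whose images generate $\sI_{\mX}|_{D(g_i)}$ (after multiplying by a suitable power of $g_i$ to clear denominators, we may assume $r_i$ itself lies in the ideal $I := \Gamma(\hX, \sI_{\mX})$). The ideal $I' := (r_1, \ldots, r_n) \subset I$ then satisfies $I'_{g_i} = I_{g_i}$ for every $i$, so $I' = I$ by a standard local-to-global argument over a finite cover. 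Thus the ideal of $\mX$ is finitely generated by $r_1, \ldots, r_n$.

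Finally, I would conclude that
\[
\iX = \hX \setminus V(I) = \bigcup_{i=1}^{n} D(r_i) = \bigcup_{i=1}^{n} \Spec(R_{r_i}),
\]
a finite union of affine schemes, hence quasi-compact. Since the composition with the open immersion into $\hX$ is a morphism from a quasi-compact scheme into an affine (hence quasi-separated) scheme, it is automatically quasi-compact.

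There is essentially no obstacle here; the only point requiring mild care is verifying that an invertible (or locally principal) ideal on an affine scheme is globally finitely generated, which follows from the quasi-compactness of $\hX = \Spec(R)$ together with the local-to-global principle for finite generation. Equivalently, one could cite the general fact that an invertible $\OO_{\hX}$-module on a quasi-compact scheme is of finite type, applied to $\sI_{\mX}$.
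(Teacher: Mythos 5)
Your proof is correct and takes essentially the same approach as the paper: reduce via Zariski-locality of quasi-compactness and exploit local principality of the effective Cartier divisor. The paper simply localizes one step further, to affines on which $\mX$ is globally principal, so that $\iX$ restricts to a single principal open $\Spec(A_a)$ and your finite-generation argument for the ideal becomes unnecessary.
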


\begin{remark}
Note that it is not always the case that an open immersion is quasi-compact. 
The above lemma helps us avoid such a pathological situation. 
In fact, an open immersion is quasi-compact if and only if it's closed complement is the support of a coherent sheaf of ideals.
\end{remark}

\begin{proof}
Since quasi-compactness is a Zariski-local property, we may assume that $\hX = \Spec (A)$ is affine and $\mX = \Spec (A/aA)$ for some nonzerodivisor $a \in A$. Then $\iX = \Spec (A_f)$ is also affine, hence quasi-compact. 
\end{proof}

\subsubsection{Abstract admissible blow-ups}

The category $\ulPSCH$ does not admit finite limits. However, we gain categorical finite limits if we invert blowups inside the modulus. 

\begin{rema}
This construction follows the one in \cite{kmsy1}. The idea of localising a class of admissible blowup also appears in early (2015) work of Binda, \cite{Bin20}.
\end{rema}

\begin{defn} \label{defn:Sigma}
Define 
$\ul{\Sigma}$%
\marginpar{\fbox{$\ul{\Sigma}$}}%
\index[not]{Sigma@$\ul{\Sigma}$} %
as the class of morphisms of $\ulPSCH$ consisting of those minimal morphisms $s : \sX' \to \sX$ with $\ol{s} : {\hX}' \to {\hX}$ a proper morphism such that $\ol{s}^{-1} ({\iX}) \xrightarrow{\sim} {\iX}$. Morphisms in $\ul{\Sigma}$ are called %
\emph{abstract admisible blowups}.%
\marginpar{\fbox{abstract admisible blowups}}%
\index{morphism of modulus pairs!abstract admisible blowups} %
\end{defn}

\begin{lemma}\label{lem:pb-dense}
Let $\sX = ({\hX},{\mX})$ be a modulus pair, and let $f : {\hX}' \to {\hX}$ be a morphism of schemes 
such that the scheme theoretic image of the open immersion $j:f^{-1} ({\iX} ) \to {\hX}'$ is equal to ${\hX}'$.
Then, $f^{-1} ({\iX} )$ is scheme theoretically dense in ${\hX}'$. 
Moreover, ${\mX} \times_{{\hX}} {\hX}'$ is an effective Cartier divisor on ${\hX}'$.
\end{lemma}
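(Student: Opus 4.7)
The plan is to establish the effective Cartier divisor assertion first by a direct affine-local computation, and then to deduce the scheme-theoretic density as an immediate consequence of Proposition~\ref{prop:always-dense} applied to the newly constructed modulus pair $(\hX', f^*\mX)$.

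Since being an effective Cartier divisor is Zariski-local on $\hX'$, I would reduce to the case $\hX = \Spec R$, $\mX = \Spec(R/rR)$ for a nonzerodivisor $r \in R$, and $\hX' = \Spec R'$ affine. Writing $\varphi : R \to R'$ for the ring map corresponding to $f$, one has $f^{-1}(\iX) = \Spec R'_{\varphi(r)}$, and the open immersion $j$ corresponds to the localisation map $\ell : R' \to R'_{\varphi(r)}$. The complement of $j$ in $\hX'$ is the locally principal closed subscheme $V(\varphi(r))$, so by Lemma~\ref{lem:qc-interior} the immersion $j$ is quasi-compact; hence $j_* \OO_{f^{-1}(\iX)}$ is quasi-coherent and its formation is compatible with restriction to affine opens, so the scheme-theoretic image of $j$ is computed locally by the kernel of $\ell$. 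The hypothesis that this image equals $\hX'$ therefore translates precisely to the injectivity of $\ell$. Given this, any $x \in R'$ with $\varphi(r) x = 0$ maps to zero in $R'_{\varphi(r)}$ and is therefore zero; so $\varphi(r)$ is a nonzerodivisor, and $\mX \times_\hX \hX' = \Spec(R'/\varphi(r)R')$ is an effective Cartier divisor on $\hX'$.

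Globally, this shows $(\hX', f^*\mX)$ is a genuine modulus pair in the sense of Definition~\ref{defi:modulusPair} whose interior coincides with $f^{-1}(\iX)$. Proposition~\ref{prop:always-dense} then yields scheme-theoretic density of $f^{-1}(\iX)$ in $\hX'$ for free. The only mildly delicate point in the whole argument is the translation in the local setup between schematic dominance of $j$ (a statement about a morphism of schemes) and injectivity of $\ell$ (a single ring-theoretic condition); this works cleanly precisely because of the quasi-compactness of $j$ supplied by Lemma~\ref{lem:qc-interior}. Everything else is essentially formal.
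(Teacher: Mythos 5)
Your proof is correct, and the core of it is the same affine-local computation as the paper's: injectivity of $R' \to R'[1/\varphi(r)]$ forces $\varphi(r)$ to be a nonzerodivisor. The only real difference is organisational — the paper proves density first (getting quasi-compactness of $j$ by observing that $j$ is the base change of the affine morphism $\iX \to \hX$) and then uses density to feed the affine computation, whereas you prove the Cartier claim first and then harvest density from Proposition~\ref{prop:always-dense} applied to the modulus pair $(\hX', f^*\mX)$; both orderings work, and yours arguably makes the logical dependence on Proposition~\ref{prop:always-dense} cleaner. One small caveat: you invoke Lemma~\ref{lem:qc-interior} to get quasi-compactness of $j$, but that lemma is stated for modulus pairs, i.e.\ it presupposes that the complement is an effective Cartier divisor — exactly what you are in the middle of proving. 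This is cosmetic rather than a gap: quasi-compactness of $j$ follows either because $j$ is the base change of the affine morphism $\iX \to \hX$ (the paper's route), or because the proof of Lemma~\ref{lem:qc-interior} only uses local principality, but you should cite one of those facts rather than the lemma as stated.
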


\begin{proof}
We prove the first assertion. 
Since ${\iX} \to {\hX}$ is affine as the complement of the effective Cartier divisor ${\mX}$, so is $j$ as its base change by $f$.
In particular, $j$ is quasi-compact, and hence $f^{-1}({\iX})$ is scheme theoretically dense in ${\hX}'$.

We prove the second assertion.
Since the problem is local on ${\hX}'$, we may assume that ${\hX}=\Spec (R)$ and ${\hX}'=\Spec (R')$ are affine, and that ${\mX} = \Spec (R/rR)$ for some nonzerodivisor $r \in R$.
Then ${\iX} = \Spec (R[1/r])$ and $f^{-1} ({\iX}) = \Spec (R[1/r] \otimes_R R')$.
Since $f^{-1} ({\iX})$ is scheme theoretically dense in ${\hX}'$ by assumption, the map $R' \to R[1/r] \otimes_R R'$ is injective. 
This implies that the image of $r$ under $R \to R'$ is a nonzerodivisor, finishing the proof.
\end{proof}

\begin{lemma}\label{lem:pb-bu}
Let $X$ be a scheme, and let $Z$, $D$ be closed subschemes of $X$ such that $D$ is an effective Cartier divisor. 
Let $\pi : X' \to X$ be the blow up of $X$ along $Z$.
Then $D \times_X X'$ is an effective Cartier divisor on $X'$. 
\end{lemma}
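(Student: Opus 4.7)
The plan is to apply Lemma~\ref{lem:pb-dense} with the modulus pair $(X, D)$ and the scheme morphism $\pi : X' \to X$. The conclusion of that lemma is precisely that $D \times_X X'$ is an effective Cartier divisor on $X'$, so the task reduces to verifying its hypothesis: that the open immersion $\pi^{-1}(X \setminus D) \hookrightarrow X'$ is scheme-theoretically dense. The two standard properties of the blowup I would invoke are that the exceptional locus $E := \pi^{-1}(Z)$ is an effective Cartier divisor on $X'$, and that $\pi$ restricts to an isomorphism $X' \setminus E \xrightarrow{\sim} X \setminus Z$.

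First I would set $U := X' \setminus \pi^{-1}(Z \cup D)$ and observe $U \subseteq \pi^{-1}(X \setminus D)$. Since any open immersion containing a scheme-theoretically dense open immersion is itself scheme-theoretically dense (the injection $\OO_{X'} \hookrightarrow j_{U,\ast}\OO_U$ factors through pushforward from the larger open), it is enough to show that $U \hookrightarrow X'$ is scheme-theoretically dense. I would factor this as $U \hookrightarrow X' \setminus E \hookrightarrow X'$ and check each piece separately. Density of $X' \setminus E \hookrightarrow X'$ is Proposition~\ref{prop:always-dense} applied to the modulus pair $(X', E)$. Density of $U \hookrightarrow X' \setminus E$ follows by using the isomorphism $X' \setminus E \xrightarrow{\sim} X \setminus Z$ to identify it with $(X \setminus Z) \setminus (D \cap (X \setminus Z)) \hookrightarrow X \setminus Z$, then applying Proposition~\ref{prop:always-dense} to the modulus pair $(X \setminus Z, D \cap (X \setminus Z))$; note that $D \cap (X \setminus Z)$ is an effective Cartier divisor on $X \setminus Z$ because it is the pullback of $D$ along the open immersion $X \setminus Z \hookrightarrow X$. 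Composition of scheme-theoretically dense open immersions remains scheme-theoretically dense because pushforward along an open immersion is left exact.

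I do not foresee any serious obstacle here, since the two standard properties of blowups used above are entirely classical. The step needing the most care is tracking the identifications under the isomorphism $X' \setminus E \cong X \setminus Z$ and verifying that restriction of an effective Cartier divisor to an open subscheme remains an effective Cartier divisor, but both are routine.
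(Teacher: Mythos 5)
Your argument is correct, but it is a genuinely different route from the one the paper takes. The paper's proof of Lemma~\ref{lem:pb-bu} is a direct local computation: it reduces to $X=\Spec(R)$, $D=\Spec(R/rR)$, $Z=\Spec(R/I)$, covers the blowup by the affine blowup algebras $R[\tfrac{I}{a}]$ for $a\in I$, and verifies by hand (using the explicit description of equality of fractions $\tfrac{x}{a^n}$) that the image of the nonzerodivisor $r$ in each chart is again a nonzerodivisor. You instead reduce the statement to scheme-theoretic density of $\pi^{-1}(X\setminus D)$ in $X'$ via Lemma~\ref{lem:pb-dense}, and establish that density by factoring through $X'\setminus E$ and invoking Proposition~\ref{prop:always-dense} twice, together with the two classical facts that the exceptional locus $E$ is an effective Cartier divisor on $X'$ and that $\pi$ is an isomorphism over $X\setminus Z$. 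All the intermediate steps you flag (enlarging a schematically dense open, composing schematically dense open immersions, restricting a Cartier divisor to an open) are indeed routine and correct, and there is no circularity since Lemma~\ref{lem:pb-dense} and Proposition~\ref{prop:always-dense} precede this lemma. What each approach buys: the paper's computation is self-contained and does not quote any property of blowups beyond their chart description (indeed the statement that $E$ is Cartier is essentially the same computation with $r$ replaced by a generator of $I$); your argument is shorter and reuses the density machinery already set up, at the cost of treating those two blowup facts as black boxes.
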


\begin{proof}
Since the problem is local on $X$, we may assume that $X=\Spec (R)$ is affine, $D=\Spec (R/rR)$ and $Z=\Spec (R/I)$ for some nonzerodivisor $r \in R$ and an ideal $I \subset R$.
Then the blow up $X'$ is covered by the spectra of ``affine blow up algebras'': $X'=\cup_{a \in I} \Spec (R[\frac{I}{a}])$.
Here, any element of $R[\frac{I}{a}]$ is represented by as $\frac{x}{a^n}$ for some $n \geq 0$ and $x \in I^n \subset \oplus_{i \geq 0} I^i$.
Two elements $\frac{x}{a^n}$ and $\frac{y}{a^m}$ coincide if and only if there exists $k \geq 0$ such that $a^k (a^m x - a^n y)=0$ in $R$.
Fix $a \in I$ and let $\varphi : R \to R[\frac{a}{I}]$ be the natural ring map.
It suffices to show that $\varphi (r)$ is a nonzerodivisor.
Let $\frac{x}{a^n} \in R[\frac{I}{a}]$ be an element and assume $\varphi (r) \cdot \frac{x}{a^n} =0$.
Noting that $\varphi (r) = r \in R = I^0 \subset \oplus_{i \geq 0} I^i$, we have $\varphi (r) \cdot \frac{x}{a^n} = \frac{rx}{a^n}$, where we regard $rx$ as an element of $\oplus_{i \geq 0} I^i$ of degree $n$.
Then $\frac{rx}{a^n} = 0 = \frac{0}{1}$ implies that there exists $k \geq 0$ such that $a^k (rx \cdot 1 - 0 \cdot a^n) = r a^{k} x = 0$. 
Since $r$ is a nonzerodivisor, we have $a^k x = a^k (x \cdot 1 - 0 \cdot a^n) = 0$.
This in turn show that $\frac{x}{a^n} = 0$, and hence that $\varphi (r)$ is a nonzerodivisor, as desired.
\end{proof}

\begin{lemma}\label{lem:pb-sch-dense}
(1) Let $f : \sY \to \sX$ and $g : \sZ \to \sX$ be two morphisms in $\ulPSCH$, and set $U:={\iY} \times_{{\iX}} {\iZ}$.
Let ${\hW}_0$ be the scheme theoretic image of the open immersion $U \to {\hY} \times_{{\hX}} {\hZ}$.
Consider the commutative diagram
\[\xymatrix{
{\hW}_0 \ar[r]^{\ol{i}} \ar[d]_{\ol{h}}& {\hY} \ar[d]^{\ol{f}}\\
{\hZ} \ar[r]^{\ol{g}}& {\hX}.
}\]

Then $U=\ol{i}^{-1}({\iY}) \cap \ol{h}^{-1}({\iZ})$.

(2) Consider the open immersion  $j : U \to {\hW}_0$. Then $U$ is scheme theoretically dense in ${\hW}_0$.
\end{lemma}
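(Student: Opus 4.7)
\textbf{Proof plan for Lemma \ref{lem:pb-sch-dense}.}

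The plan is to handle (1) as a direct set-theoretic identity exploiting the modulus condition, and then deduce (2) by promoting $\hW_0$ to a modulus pair whose interior is $U$, so that Proposition~\ref{prop:always-dense} applies.

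For (1), I would begin by observing that the modulus condition on $f$ and $g$ forces $\ol{f}(\iY) \subseteq \iX$ and $\ol{g}(\iZ) \subseteq \iX$, so the canonical map $\iY \times_{\iX} \iZ \to \iY \times_{\hX} \iZ$ is an isomorphism (the fibre product over an open subscheme agrees with the fibre product over the ambient scheme when both factors map into the open). Denoting by $p_1,p_2$ the projections from $\hY \times_{\hX} \hZ$, this identifies $p_1^{-1}(\iY) \cap p_2^{-1}(\iZ)$ with $U$ as an open subscheme of $\hY \times_{\hX} \hZ$. Pulling back along the closed immersion $\hW_0 \hookrightarrow \hY \times_{\hX} \hZ$, one gets $\ol{i}^{-1}(\iY) \cap \ol{h}^{-1}(\iZ) = U \cap \hW_0$. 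Because $U \to \hY \times_{\hX} \hZ$ factors through $\hW_0$ by the very definition of scheme-theoretic image, $U \subseteq \hW_0$, so the right-hand side equals $U$.

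For (2), the key is to build an effective Cartier divisor on $\hW_0$ with complement $U$. Since $U \subseteq \ol{i}^{-1}(\iY) \subseteq \hW_0$ are open immersions and $U \hookrightarrow \hW_0$ is scheme-theoretically dominant by construction, monotonicity of the scheme-theoretic image shows that $\ol{i}^{-1}(\iY) \hookrightarrow \hW_0$ is also scheme-theoretically dominant. Lemma~\ref{lem:pb-dense} applied to $\ol{i}: \hW_0 \to \hY$ then produces an effective Cartier divisor $\ol{i}^{\ast} \mY$ on $\hW_0$; the symmetric argument with $\ol{h}: \hW_0 \to \hZ$ produces an effective Cartier divisor $\ol{h}^{\ast} \mZ$.

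Now I would set $D := \ol{i}^{\ast} \mY + \ol{h}^{\ast} \mZ$. Locally, $D$ is cut out by a product of two nonzerodivisors, hence is again an effective Cartier divisor, and its support is $|\ol{i}^{\ast}\mY| \cup |\ol{h}^{\ast}\mZ|$. Taking complements and invoking (1) gives $\hW_0 \setminus D = \ol{i}^{-1}(\iY) \cap \ol{h}^{-1}(\iZ) = U$. Therefore $(\hW_0, D)$ is a bona fide modulus pair with interior $U$, and Proposition~\ref{prop:always-dense} immediately yields the scheme-theoretic density of $U$ in $\hW_0$. The only genuinely non-formal step is recognising that summing the two pulled-back divisors gives a single Cartier divisor with the correct support, after which everything reduces to prior lemmas.
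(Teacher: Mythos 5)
Your argument is correct, and part (1) matches what the paper leaves as ``obvious'': the modulus conditions force $\iY$ and $\iZ$ to land in $\iX$, so $U$ is identified with the open subscheme $p_1^{-1}(\iY)\cap p_2^{-1}(\iZ)$ of $\hY\times_{\hX}\hZ$, which already lies inside $\hW_0$. For part (2), however, you take a genuinely different route from the paper. The paper argues directly: $\iY\to\hY$ and $\iZ\to\hZ$ are affine (complements of effective Cartier divisors), hence so are $\ol{i}^{-1}(\iY)\to\hW_0$ and $\ol{h}^{-1}(\iZ)\to\hW_0$, hence so is $j\colon U\to\hW_0$ by (1); quasi-compactness of $j$ then upgrades the scheme-theoretic dominance (which holds by construction) to scheme-theoretic density. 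You instead use the dominance-by-construction plus monotonicity of scheme-theoretic images to verify the hypothesis of Lemma~\ref{lem:pb-dense} for both $\ol{i}$ and $\ol{h}$, obtain the two effective Cartier divisors $\ol{i}^{\ast}\mY$ and $\ol{h}^{\ast}\mZ$, sum them to get a divisor $D$ with complement exactly $U$ by (1), and then quote Proposition~\ref{prop:always-dense} for the modulus pair $(\hW_0,D)$. Both proofs are sound and ultimately rest on the same quasi-compactness mechanism (yours hides it inside the proof of Lemma~\ref{lem:pb-dense}); the paper's version is shorter and avoids constructing the auxiliary divisor, while yours has the small bonus of exhibiting $(\hW_0,\,\ol{i}^{\ast}\mY+\ol{h}^{\ast}\mZ)$ as an honest modulus pair with interior $U$, which is essentially the structure exploited later in Construction~\ref{cons:admBlowup}.
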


\begin{proof}
(1): Obvious.

(2): Since ${\iY} \to {\hY}$ and ${\iZ} \to {\hZ}$ are affine morphisms as the complements of effective Cartier divisors. 
Therefore, $\ol{i}^{-1}({\iY}) \to {\hW}_0$ and $\ol{h}^{-1}({\iZ}) \to {\hW}_0$ are also affine, and hence so is $j$ by (1).
In particular, $j$ is quasi-compact. 
Since the scheme theoretic image of $j$ is equal to ${\hW}_0$ by construction, we conclude that $U$ is scheme theoretically dense in ${\hW}_0$, as desiered.
\end{proof}

\begin{prop}\label{prop:cal-frac}
The pair $(\ulPSCH , \ul{\Sigma})$ enjoys a calculus of right fractions, i.e., the following assertions hold:
\begin{enumerate}
\item The identities of $\ulPSCH$ are in $\ul{\Sigma}$.
\item $\ul{\Sigma}$ is closed under composition.
\item Any diagram $\sX' \xrightarrow{s} \sX \xleftarrow{f} \sY$ with $s \in \ul{\Sigma}$ can be completed to a commutative square in $\ulPSCH$
\[\xymatrix{
\sY'\ar[r]^{f'}\ar[d]_t&\sX' \ar[d]^s\\
\sY\ar[r]^f&\sX
}\]
such that $t \in \ul{\Sigma}$.
\item If $f,g \in \ulPSCH (\sX,\sY)$ and $s : \sY \to \sZ$ is a morphism in $\ul{\Sigma}$ such that $sf=sg$, then there exists a morphism $t : \sX' \to \sX$ in $\ul{\Sigma}$ such that $ft=gt$.
\end{enumerate}
\end{prop}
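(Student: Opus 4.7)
Items (1) and (2) are routine. The identity is minimal, proper, and an isomorphism on the interior, so it lies in $\ul{\Sigma}$. For closure under composition: properness composes; interior-isomorphism composes; and if $s: \sX'' \to \sX'$ and $t: \sX' \to \sX$ are both minimal, then $\mX \times_{\hX} \hX'' = (\mX \times_{\hX} \hX') \times_{\hX'} \hX'' = \mX' \times_{\hX'} \hX'' = \mX''$, so the composition is minimal.

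For item (4), I would exploit the fact that any $s \in \ul{\Sigma}$ restricts to an isomorphism $s^\o$ on interiors. Given $sf = sg$, applying $(-)^\o$ gives $s^\o f^\o = s^\o g^\o$, hence $f^\o = g^\o$ since $s^\o$ is an isomorphism. By Corollary~\ref{cor:coincide}, $f = g$ in $\ulPSCH$. So one may take $t$ to be the identity of $\sX$, which lies in $\ul{\Sigma}$ by (1).

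The main content is item (3). Write $\sX = (\hX,\mX)$, $\sX' = (\hX',\mX')$, $\sY = (\hY,\mY)$, and set $U := \iY \times_{\iX} \iX' = \iY$ (using that $s^\o$ is an isomorphism). Let $\hW_0$ be the scheme-theoretic image of the open immersion $U \to \hY \times_{\hX} \hX'$, as in Lemma~\ref{lem:pb-sch-dense}. That lemma gives that $U \to \hW_0$ is scheme-theoretically dense and identifies $U$ with $\bar{i}^{-1}(\iY) \cap \bar{h}^{-1}(\iZ)$; in particular $U = \hW_0 \setminus (\mY \times_\hY \hW_0)$ set-theoretically. Lemma~\ref{lem:pb-dense} then shows that $\mY' := \mY \times_\hY \hW_0$ is an effective Cartier divisor on $\hW_0$, so $\sY' := (\hW_0, \mY')$ is a modulus pair with interior $U = \iY$.

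I would then verify that the two projections assemble into the required commutative square in $\ulPSCH$. The map $t : \sY' \to \sY$ induced by $\hW_0 \to \hY \times_\hX \hX' \to \hY$ is minimal by construction of $\mY'$; it is proper because $\hX' \to \hX$ is proper (hence so is its base change $\hY \times_\hX \hX' \to \hY$) and $\hW_0 \hookrightarrow \hY \times_\hX \hX'$ is a closed immersion; and $t^\o : U \to \iY$ is the identity, so $t \in \ul{\Sigma}$. For the map $f' : \sY' \to \sX'$ induced by $\hW_0 \to \hY \times_\hX \hX' \to \hX'$, the modulus condition follows since $\mX' = \mX \times_\hX \hX'$ (minimality of $s$), so $\mX' \times_{\hX'} \hW_0 = \mX \times_\hX \hW_0$; the ambientness of $f$ means $\mX \times_\hX \hY$ factors through $\mY$, and base-changing to $\hW_0$ shows this pullback factors through $\mY'$. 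Commutativity of the square is inherited from $\hW_0 \to \hY \times_\hX \hX'$. The main obstacle in this item is ensuring that the naive fibre product $\hY \times_\hX \hX'$ need not give a modulus pair (the Cartier-divisor condition can fail), which is precisely what forces us to pass to the scheme-theoretic closure $\hW_0$ of $U$ and invoke Lemmas~\ref{lem:pb-dense} and~\ref{lem:pb-sch-dense}.
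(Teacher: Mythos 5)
Your proposal is correct and follows essentially the same route as the paper: (1), (2) are routine; (4) reduces to $f=g$ via Corollary~\ref{cor:coincide} with $t=\id$; and for (3) you build $\sY'$ as the scheme-theoretic image of $\iY\times_{\iX}\iX'$ in $\hY\times_{\hX}\hX'$ with modulus $\mY|_{\hY'}$, exactly as the paper does. The only loose point is your deduction that $U=\hW_0\setminus(\mY\times_{\hY}\hW_0)$ "in particular" from Lemma~\ref{lem:pb-sch-dense} (where your $\iZ$ should read $\iX'$): one still needs the small computation, done explicitly in the paper, that the preimage of $\iY$ equals $\iY\times_{\iX}\iX'$, using $\iX\times_{\hX}\hX'\simeq\iX$ and the fact that $\iY\to\hX$ factors through $\iX$.
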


\begin{proof}
(1) and (2) are obvious.
We prove (3). Let ${\hY}'$ be the scheme theoretic image of the open immersion $j:(\sX')^\o \times_{{\iX}} {\iY} \to {\hX}' \times_{{\hX}} {\hY}$, and let $\ol{t}$ and $\ol{f}'$ be the composite maps ${\hY}' \to {\hX}' \times _{{\hX}} {\hY} \to {\hY}$ and ${\hY}' \to {\hX}' \times _{{\hX}} {\hY} \to {\hX}'$, respectively.

We claim that $\ol{t}^{-1} ({\iY}) = (\sX')^\o \times_{{\iX}} {\iY}$.
Indeed, noting ${\iX} \times_{{\hX}} {\hX}' \simeq {\iX}$, we have
\[\ol{t}^{-1} ({\iY}) = {\iY} \times_{{\hX}} {\hX}' \simeq {\iY} \times_{{\iX}} {\iX}  \times_{{\hX}} {\hX}' \simeq {\iY} \simeq  (\sX')^\o \times_{{\iX}} {\iY} ,\]
which shows the claim.

Combined with the claim, Lemma \ref{lem:pb-dense} shows that ${\mY} \times_{{\hY}} {\hY}'$ is an effective Cartier divisor on ${\hY}'$.
Set $(Y')^\infty := {\mY} \times_{{\hY}} {\hY}'$ and $\sY' := ({\hY}',(Y')^\infty)$.
Then $\ol{t}$ defines a minimal morphism $t : \sY' \to \sY$.
Since $\ol{t}$ is proper and $\ol{t}^{-1} ({\iY}) = (\sX')^\o \times_{{\iX}} {\iY} \simeq {\iY}$ by construction, we have $t \in \ul{\Sigma}$.
Moreover, the map $\ol{f}'$ defines an ambient morphism $f' : \sY' \to \sX'$ since 
\begin{align*}
(X')^\infty \times_{{\hX}'} {\hY}' &= ({\mX} \times_{{\hX}}{\hX}' ) \times_{{\hX}'} {\hY}' \\
&\simeq ({\mX} \times_{{\hX}} {\hY}) \times_{{\hY}} {\hY}' \\
&\subset {\mY} \times_{{\hY}} {\hY}' \\
&= (Y')^\infty ,
\end{align*}
where the first equality follows from the minimality of $s$, and the inclusion follows from the modulus condition for $f$.
This finishes the proof of (3).

Finally, we check (4). The assumption shows that $f^\o = g^\o$. Therefore, we have $f=g$ by Corollary \ref{cor:coincide}, and we can take $t=\id_{\sX}$.
This finishes the proof.
\end{proof}

\section{The category $\protect\ulMSCH$}

\begin{defn}
Let $\ulPSCH \to \ulPSCH [\ul{\Sigma}^{-1}]$ be the localisation functor. 
We write 
\[
\ulMSCH := \ulPSCH [\ul{\Sigma}^{-1}].
\]

We say that a morphism $f$ in $\ulMSCH$ is \emph{ambient} if it comes from a morphism in $\ulPSCH$ (cf. Def.~\ref{def:ambientmorph}).
\end{defn}

By Proposition \ref{prop:cal-frac}, any morphism $f : \sX \to \sY$ in $\ulMSCH$ is represented by a diagram in $\ulPSCH$ of the form $\sX \xleftarrow{s} \sX' \xrightarrow{g} \sY$ such that $s \in \ul{\Sigma}$.
Note that the diagram induces a morphism $F_{s,g} : {\iX} \xleftarrow{\sim} (\sX')^\o \to {\iY}$.

\begin{defn}
For a modulus pair $\sS$, we define 
$\ulMSCH_{\sS}$
\marginpar{\fbox{$\ulMSCH_{\sS}$}}%
\index[not]{$\ulMSCH_{\sS}$} %
 as a category whose objects are morphisms $\sX \to \sS$ in $\ulMSCH$ and whose morphisms are commutative triangles 
\[\xymatrix{
\sX \ar[rr] \ar[dr] && \sY \ar[dl] \\
&\sS&
}\]
in $\ulMSCH$.
Since $(\Spec (\Z),\emptyset)$ is a terminal object in $\ulMSCH$ by Lemma \ref{lem:coincidence2} (2) below, we have $\ulMSCH = \ulMSCH_\Z$.
\end{defn}

The following lemma ensures that there is no ambiguity in the definition of $\ulMSCH_{\sS}$.

\begin{lemm} \label{lemm:invertSliceGeneral}
Suppose that $\sC$ is a category and $\Sigma$ a class of morphisms satisfying a calculus of fractions. Given an object $S \in \sC$, consider the subclass $\Sigma_S$ of $\Sigma$ consisting of those morphisms in $\sC_S$. Then
\[ \bigl ( \sC_S \bigr ) [\ul{\Sigma}_S^{-1}] \stackrel{\sim}{\to} \bigr (\sC[\ul{\Sigma}^{-1}]\bigl)_S \] 
is an equivalence of categories. 
\end{lemm}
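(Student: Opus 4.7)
The plan is to construct the canonical comparison functor
\[ F: \bigl(\sC_S\bigr)[\Sigma_S^{-1}] \to \bigl(\sC[\Sigma^{-1}]\bigr)_S \]
and then verify it is essentially surjective, full, and faithful, using the calculus of right fractions to transport representative roofs into the slice $\sC_S$ at each step.

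First I would observe that the forgetful functor $\sC_S \to \sC$ sends $\Sigma_S$ into $\Sigma$ and preserves the structural map to $S$; by the universal property of localisation this yields $F$. For essential surjectivity, I would take $(X, \alpha) \in (\sC[\Sigma^{-1}])_S$ and represent $\alpha: X \to S$ by a roof $X \xleftarrow{s} X' \xrightarrow{f} S$ with $s \in \Sigma$. Then $(X', f) \in \sC_S$, and in $(\sC[\Sigma^{-1}])_S$ the morphism $s^{-1}: X' \to X$ provides an isomorphism $F((X', f)) \simeq (X, \alpha)$.

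For fullness, let $\varphi: (X, \alpha) \to (Y, \beta)$ be a morphism in $(\sC[\Sigma^{-1}])_S$ with both objects coming from $\sC_S$, and represent $\varphi: X \to Y$ by a roof $X \xleftarrow{s} W \xrightarrow{g} Y$ with $s \in \Sigma$. The compatibility $\beta \circ \varphi = \alpha$, after composing on the right with $s$, becomes the statement that the two morphisms $\beta g, \alpha s: W \to S$ of $\sC$ agree in $\sC[\Sigma^{-1}]$. By the standard consequence of the calculus of right fractions derivable from (4) of Proposition \ref{prop:cal-frac}, there is $t: W' \to W$ in $\Sigma$ with $\beta g t = \alpha s t$ already in $\sC$. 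Replacing $(s, g)$ by $(s t, g t)$, we may assume this equality holds in $\sC$, so $W'$ inherits a canonical structure map to $S$, and both legs of the roof lie in $\sC_S$ with the left leg in $\Sigma_S$; this lifts $\varphi$ to a morphism in $(\sC_S)[\Sigma_S^{-1}]$. Faithfulness is entirely analogous: two morphisms $\varphi_1, \varphi_2: (X, \alpha) \to (Y, \beta)$ in $\sC_S$ that become equal in $\sC[\Sigma^{-1}]$ are equalised by some $t: X' \to X$ in $\Sigma$ in $\sC$, and endowing $X'$ with $\alpha \circ t$ makes $t$ a morphism in $\Sigma_S$, so the two morphisms already coincide in $(\sC_S)[\Sigma_S^{-1}]$.

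The only real subtlety is the step in fullness where the representing roof must be moved inside $\sC_S$: given that $\varphi$ is initially only a posteriori compatible with the structural maps to $S$ (i.e., compatible only in the localisation), one must use (4) of Proposition \ref{prop:cal-frac} to promote this compatibility to an honest equality in $\sC$. Once this is handled, everything else reduces to formal manipulation with roofs.
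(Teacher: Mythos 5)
Your proof is correct and follows essentially the same route as the paper's: essential surjectivity by representing the structure morphism as a roof, fullness by using the filtered-colimit description of hom-sets in the localisation (equivalently the equalisation axiom of the calculus of right fractions) to refine the roof until compatibility with the maps to $S$ holds on the nose in $\sC$, and faithfulness by observing that the equalising morphism in $\Sigma$ automatically acquires a structure map to $S$. The only cosmetic point is that the general lemma should invoke the abstract axioms of a calculus of right fractions rather than Proposition~\ref{prop:cal-frac}, which states those axioms only for the specific pair $(\ulPSCH,\ul{\Sigma})$.
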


\begin{proof}
Essential surjectivity is clear since objects in $\sC[\Sigma^{-1}]_S$ can be represented by roofs $X \stackrel{\in\Sigma}{\leftarrow} X' \to S$, so up to isomorphism, every object is in the image of $\sC_S$. 

For faithfulness, note that two morphisms $X \leftarrow X' \to Y$ and $X \leftarrow X'' \to Y$ in $\sC_S[\ul{\Sigma}_S^{-1}]$ represent the same morphism if and only if there is a commutative diagram of $S$-objects of the form
\[ \xymatrix{
& X' \ar[dl]_{\in\Sigma} \ar[dr] & \\
X & \ar[l]^{\in\Sigma} \ar[d]_{\in\Sigma} \ar[r] \ar[u]^{\in\Sigma} X''' & Y \\
& X'' \ar[ul]^{\in\Sigma} \ar[ur] & 
} \]
The same is true in $\sC[\ul{\Sigma}^{-1}]$ without the condition that the diagram be in the slice category $\sC_S$.

For fullness, consider a morphism of $\bigr (\sC[\ul{\Sigma}^{-1}]\bigl)_S$ between two objects $X \to S$, $Y \to S$ in the image of $\sC_S$. As a morphism of $\sC[\ul{\Sigma}^{-1}]$ it can be represented by a roof $X \stackrel{\in\Sigma}{\leftarrow} X' \to Y$. Since $\hom_{\sC[\ul{\Sigma}^{-1}]}(X, S) = \varinjlim_{W{\to}X \in \Sigma}(W, S)$,  a necessary and sufficient condition that this roof represent a morphism of the slice category $\bigr (\sC[\ul{\Sigma}^{-1}]\bigl)_S$ is that the there exists some $X'' \to X'$ in $\Sigma$ such that the two compositions
\[ X'' {\to} X' {\to} X {\to} S \qquad \textrm{ and } \qquad X'' {\to} X' {\to} Y {\to} S \]
agree in $\sC$. In other words, there exists a roof $X \stackrel{\in\Sigma}{\leftarrow} X'' \to Y$ representing our original morphism, which is contained in the slice category $\sC_S$. Hence, our original morphism of $\sC[\Sigma^{-1}]_S$ comes from $\sC_S[\Sigma_S^{-1}]$.
\end{proof}

\begin{coro} \label{coro:invertSliceMSCH}
The functor 
\[ \biggl ( \ulPSCH_\sS \biggr ) [\ul{\Sigma}_\sS^{-1}] \stackrel{\sim}{\to} \biggr (\ulPSCH[\ul{\Sigma}^{-1}]\biggl)_\sS = \ulMSCH_\sS \] 
is an equivalence of categories.
\end{coro}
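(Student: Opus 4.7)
The plan is to deduce this immediately from Lemma~\ref{lemm:invertSliceGeneral} applied to $\sC = \ulPSCH$, $\Sigma = \ul{\Sigma}$, and $S = \sS$. The only hypothesis of that lemma that needs verification is that $(\ulPSCH, \ul{\Sigma})$ admits a calculus of right fractions, and this is exactly the content of Proposition~\ref{prop:cal-frac}.

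More concretely, the first step is to observe that the class $\ul{\Sigma}_\sS$ appearing in the statement coincides with the class $\Sigma_S$ defined in Lemma~\ref{lemm:invertSliceGeneral}: both consist of those ambient morphisms over $\sS$ whose underlying morphism in $\ulPSCH$ is an abstract admissible blowup. Then I would cite Proposition~\ref{prop:cal-frac} to verify the calculus-of-fractions hypothesis, and invoke Lemma~\ref{lemm:invertSliceGeneral} to conclude the canonical comparison functor
\[
\bigl(\ulPSCH_\sS\bigr)[\ul{\Sigma}_\sS^{-1}] \longrightarrow \bigl(\ulPSCH[\ul{\Sigma}^{-1}]\bigr)_\sS = \ulMSCH_\sS
\]
is an equivalence.

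There is no real obstacle here: all the work has already been done, first in establishing the calculus of fractions (Prop.~\ref{prop:cal-frac}) and then in proving the abstract categorical statement (Lemma~\ref{lemm:invertSliceGeneral}). The corollary is essentially just a naming/specialisation, so the proof can be a single sentence.
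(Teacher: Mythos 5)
Your proposal is correct and is exactly how the paper obtains this corollary: it is stated without proof precisely because it is the specialisation of Lemma~\ref{lemm:invertSliceGeneral} to $\sC = \ulPSCH$ and $\Sigma = \ul{\Sigma}$, with the calculus-of-fractions hypothesis supplied by Proposition~\ref{prop:cal-frac}. Nothing further is needed.
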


\begin{lemma}\label{lem:coincidence2}\ 
\begin{enumerate} 
 \item Let $\sX \stackrel{s}{\leftarrow} \stackrel{f}{\to} \sY$ represent a morphism in $\ulMSCH$, so $s$ is an abstract admissible blowup. The morphism $F_{s,g} : {\iX} \to {\iY}$ is independent of the choice of $(s,g)$.
So, we write $f^\o := F_{s,g}$.

 \item Let $f_1,f_2 : \sX \rightrightarrows \sY$ be two morphisms in $\ulMSCH$. 
If $f_1^\o = f_1^\o$, then $f_1=f_2$.
\end{enumerate}
\end{lemma}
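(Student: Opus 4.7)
The plan is to use the universal property of the localisation $\ulMSCH = \ulPSCH[\ul{\Sigma}^{-1}]$ for part~(1) and, for part~(2), to exploit the rigidity statement of Corollary~\ref{cor:coincide} (an ambient morphism of modulus pairs is determined by its restriction to interiors) together with the calculus of right fractions from Proposition~\ref{prop:cal-frac}.

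For part~(1), I would observe that the functor $\omega^\o : \ulPSCH \to \SCH$, $\sX \mapsto \iX$, $f \mapsto f^\o$, supplied by Corollary~\ref{cor:twoforgetfuncs}, sends every morphism in $\ul{\Sigma}$ to an isomorphism by the very definition of $\ul{\Sigma}$. Hence by the universal property of localisation it factors uniquely through $\ulMSCH$. The value of the induced functor $\ulMSCH \to \SCH$ at a morphism $f$ represented by a roof $\sX \xleftarrow{s} \sX' \xrightarrow{g} \sY$ is forced to be $F_{s,g} = g^\o \circ (s^\o)^{-1}$, so $F_{s,g}$ is independent of the chosen roof. We may then unambiguously write $f^\o$ for this common value.

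For part~(2), represent $f_i$ by roofs $\sX \xleftarrow{s_i} \sX'_i \xrightarrow{g_i} \sY$ with $s_i \in \ul{\Sigma}$ and apply Proposition~\ref{prop:cal-frac}(3) to the cospan $\sX'_1 \xrightarrow{s_1} \sX \xleftarrow{s_2} \sX'_2$ to obtain a commutative square
\begin{equation*}
\xymatrix{
\sX_3 \ar[r]^{u_1} \ar[d]_{u_2} & \sX'_1 \ar[d]^{s_1} \\
\sX'_2 \ar[r]^{s_2} & \sX
}
\end{equation*}
with $u_2 \in \ul{\Sigma}$. Setting $s_3 := s_1 u_1 = s_2 u_2 \in \ul{\Sigma}$, the roofs $(s_3, g_1 u_1)$ and $(s_3, g_2 u_2)$ still represent $f_1$ and $f_2$ respectively, so it suffices to prove $g_1 u_1 = g_2 u_2$ as ambient morphisms $\sX_3 \to \sY$. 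By Corollary~\ref{cor:coincide} this reduces to equality on the interior $\iX_3$, and by part~(1) we have $(g_i u_i)^\o = f_i^\o \circ s_3^\o$, which agree by hypothesis.

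The essential work has already been done in Proposition~\ref{prop:cal-frac} and Corollary~\ref{cor:coincide}, so no step presents a genuine obstacle; the only content is to bring two arbitrary roofs to a common denominator, which is exactly what the right calculus of fractions provides.
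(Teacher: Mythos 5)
Your proof is correct and follows essentially the same route as the paper's: both parts come down to the fact that the interior functor inverts abstract admissible blowups, combined with Proposition~\ref{prop:cal-frac}(3) to bring the two roofs over a common denominator and Corollary~\ref{cor:coincide} to upgrade equality on interiors to equality of ambient morphisms. The only cosmetic difference is that you package part (1) via the universal property of the localisation, whereas the paper directly exhibits the commutative diagram witnessing equivalence of roofs and takes interiors.
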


\begin{proof}
We prove (1).
Let $\sX \xleftarrow{t} \sX'' \xrightarrow{h} \sY$ be another diagram which represents $f$.
Then there exists a commutative diagram in $\ulPSCH$
\begin{equation}\label{diag1.1}\begin{gathered}\xymatrix{
& \ar[ld]_s \sX' \ar[rd]^g &\\
\sX & \ar[l]_c \sZ \ar[u]_a \ar[d]^b & \sY ,\\
& \ar[lu]^t \sX'' \ar[ru]_h &
}\end{gathered}\end{equation}
such that $c \in \ul{\Sigma}$.
Taking the interiors, we obtain a commutative diagram of schemes
\begin{equation}\label{eq1.2}\begin{gathered}\xymatrix{
& \ar[ld]_{s^\o} (\sX')^\o \ar[rd]^{g^\o} &\\
{\iX} & \ar[l]_{c^\o} {\iZ} \ar[u]_{a^\o} \ar[d]^{b^\o} & {\iY} ,\\
& \ar[lu]^{t^\o} (\sX'')^\o \ar[ru]_{h^\o} &
}\end{gathered}\end{equation}
where all the arrows appearing in the left two triangles are isomorphisms. 
This shows $F_{s,g} = F_{t,h}$, finishing the proof of (1).

We prove (2). Suppose that $f_1$ and $f_2$ are represented by diagrams $\sX \xleftarrow{s} \sX' \xrightarrow{g} \sY$ and $\sX \xleftarrow{t} \sX'' \xrightarrow{h} \sY$ such that $s,t \in \ul{\Sigma}$.
Then, by the Proposition \ref{prop:cal-frac} (3), we can find a diagram of the form \eqref{diag1.1} such that the left two triangles are commutative and such that $a \in \ul{\Sigma}$.
Then $c = sa \in \ul{\Sigma}$.

We claim $ga=hb$.
Indeed, the assumption that $f^\o = g^\o$ shows that this diagram \eqref{eq1.2}, which is obtained by taking interiors, is commutative. 
In particular, $(ga)^o = g^\o a^\o = h^\o b^\o = (hb)^\o$. 
Therefore, Corollary \ref{cor:coincide} proves the claim.

Thus, the diagram \eqref{diag1.1} is commutative, and hence the diagrams $\sX \xleftarrow{s} \sX' \xrightarrow{g} \sY$ and $\sX \xleftarrow{t} \sX'' \xrightarrow{h} \sY$ are equivalent, i.e., $f_1=f_2$, as desired.
\end{proof}

\begin{cor}
For any modulus pair $\sS$, there exists a faithful forgetful functor
\[
\ulMSCH_{\sS} \to \SCH_{\iS}; \sX \to \iX.
\]
\end{cor}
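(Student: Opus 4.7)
The plan is to construct the functor using Lemma \ref{lem:coincidence2}(1) to define it on objects and morphisms, then invoke Lemma \ref{lem:coincidence2}(2) for faithfulness. The key observation is that the previous lemma already does all the real work: the assignment $f \mapsto f^\o$ is well-defined on morphisms of $\ulMSCH$, and is determined by its behavior on interiors.

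First I would define the functor on objects. Given an object of $\ulMSCH_{\sS}$, that is, a morphism $p : \sX \to \sS$ in $\ulMSCH$, I would send it to the morphism $p^\o : \iX \to \iS$ in $\SCH_{\iS}$, using Lemma \ref{lem:coincidence2}(1) to know that $p^\o$ is well-defined independently of the chosen roof representation $\sX \stackrel{s}{\leftarrow} \sX' \stackrel{g}{\to} \sS$. Next, on morphisms, a morphism in $\ulMSCH_{\sS}$ is a commutative triangle
\[
\xymatrix{\sX \ar[rr]^{f} \ar[dr]_{p} && \sY \ar[dl]^{q} \\ & \sS &}
\]
in $\ulMSCH$, which I send to $f^\o : \iX \to \iY$, again well-defined by Lemma \ref{lem:coincidence2}(1).

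Second, I need to check functoriality. Identities are obvious (the roof $\sX \stackrel{\id}{\leftarrow} \sX \stackrel{\id}{\to} \sX$ yields $\id_{\iX}$), and for composition, given $f : \sX \to \sY$ and $h : \sY \to \sZ$ represented by roofs $\sX \stackrel{s}{\leftarrow} \sX' \stackrel{g}{\to} \sY$ and $\sY \stackrel{t}{\leftarrow} \sY' \stackrel{k}{\to} \sZ$, the calculus of fractions (Prop.~\ref{prop:cal-frac}(3)) produces a commutative square with $\sX'' \to \sX'$ in $\uS$ lifting $g$ through $t$, and the resulting roof $\sX \leftarrow \sX'' \to \sZ$ computes $h \circ f$. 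Taking interiors of this whole diagram in $\SCH$ and using that morphisms in $\uS$ induce isomorphisms on interiors shows $(h \circ f)^\o = h^\o \circ f^\o$. I would also verify that the commutative triangle condition $q \circ f = p$ in $\ulMSCH$ yields $q^\o \circ f^\o = p^\o$ in $\SCH_{\iS}$; this reduces to part (1) of the lemma applied to both sides.

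Finally, for faithfulness, suppose $f_1, f_2 : \sX \rightrightarrows \sY$ are two morphisms in $\ulMSCH_{\sS}$ that map to the same morphism in $\SCH_{\iS}$. Then $f_1^\o = f_2^\o$ as morphisms $\iX \to \iY$, and Lemma \ref{lem:coincidence2}(2) immediately gives $f_1 = f_2$ in $\ulMSCH$, hence in $\ulMSCH_{\sS}$. There is no substantial obstacle here; the whole content has been extracted into the preceding lemma, and this corollary is essentially just packaging. The only mildly delicate point is verifying functoriality on composition, but the calculus of right fractions and the fact that $\uS$-morphisms restrict to isomorphisms on interiors make this routine.
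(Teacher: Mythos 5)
Your proof is correct and follows essentially the same route as the paper, which simply cites Corollary~\ref{cor:twoforgetfuncs} together with Lemma~\ref{lem:coincidence2}: well-definedness of $f \mapsto f^\o$ comes from part (1) of that lemma and faithfulness from part (2). You have merely unpacked the functoriality checks that the paper leaves implicit.
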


\begin{proof}
This follows from Cor.~\ref{cor:twoforgetfuncs} and Lemma \ref{lem:coincidence2}.
\end{proof}

\begin{lemma}\label{lem:key}
Let $X$ be a scheme, and let $D_1,D_2$ be two effective Cartier divisors on $X$.
Assume that $E:=D_1 \times_X D_2$ is an effective Cartier divisor on $X$.
Then the following assertions hold.
\begin{enumerate}
\item The Cartier divisors $D'_1 := D_1 - E$ and $D'_2 := D_2 - E$ are effective, and $D'_1 \cap D'_2 = \emptyset$.
\item Consider the effective Cartier divisor $D_3 := D'_1 + D'_2 +E = D_1 + D_2 - E$.
Then, for any effective Cartier divisor $D$ on $X$ such that $D_1 \subset D$ and $D_2 \subset D$, we have $D_3 \subset D$.
\end{enumerate}
\end{lemma}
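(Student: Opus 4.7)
The plan is to reduce everything to an affine local computation where the statements become elementary commutative algebra. Since both assertions can be tested locally on $X$, I may assume $X = \Spec A$, $D_1 = V(f_1)$, $D_2 = V(f_2)$ for nonzerodivisors $f_1, f_2 \in A$, and the hypothesis on $E$ gives a nonzerodivisor $g \in A$ with $(f_1, f_2) = (g)$. The crucial identity I would establish first is this: writing $f_i = a_i g$ (possible since $(f_i) \subset (g)$) and $g = c_1 f_1 + c_2 f_2$ (possible since $g \in (f_1, f_2)$), the substitution yields $g = (c_1 a_1 + c_2 a_2) g$, so using that $g$ is a nonzerodivisor,
\[ c_1 a_1 + c_2 a_2 = 1 \qquad \text{in } A. \]
Moreover each $a_i$ is itself a nonzerodivisor: if $a_i x = 0$ then $f_i x = g(a_i x) = 0$ forces $x = 0$.

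For (1), I would set $D'_i := V(a_i)$, which are effective Cartier divisors by the nonzerodivisor property of $a_i$. The factorisation $f_i = a_i g$ translates to $D_i = D'_i + E$, so $D'_i = D_i - E$ is indeed effective. The Bezout identity exhibits $(a_1) + (a_2) = A$, i.e.\ $V(a_1) \cap V(a_2) = \emptyset$, as required. For global well-definedness I would observe that the local generator $g$ of $\sI_E$ is determined only up to a unit $u$, which replaces each $a_i$ by $u^{-1} a_i$, leaving the principal ideal $(a_i)$ unchanged; equivalently, $\sI_{D'_i}$ admits the intrinsic description as the image of the sub-invertible-sheaf $\sI_{D_i} \otimes \sI_E^{-1} \hookrightarrow \sO_X$, so the local pieces glue globally.

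For (2), write $D = V(h)$ locally with $h$ a nonzerodivisor. The hypotheses $D_i \subset D$ mean $(h) \subset (f_i) = (a_i g)$, so we have $h = \lambda_1 a_1 g = \lambda_2 a_2 g$ for some $\lambda_i \in A$. Multiplying the Bezout identity by $h$ and substituting each occurrence of $h$ using the expression involving the \emph{other} $a_j$ gives
\[ h = (c_1 a_1 + c_2 a_2)\, h = c_1 a_1 (\lambda_2 a_2 g) + c_2 a_2 (\lambda_1 a_1 g) = (c_1 \lambda_2 + c_2 \lambda_1)\, a_1 a_2 g. \]
Since $\sI_{D_3} = (a_1 a_2 g)$ locally, this shows $(h) \subset \sI_{D_3}$, i.e.\ $D_3 \subset D$, as claimed.

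The argument is essentially elementary once one has the Bezout-type relation $c_1 a_1 + c_2 a_2 = 1$; the only slightly delicate point is the global well-definedness of $D'_1$ and $D'_2$, which I handle via the intrinsic description of $\sI_{D'_i}$ as $\sI_{D_i} \otimes \sI_E^{-1}$ embedded in $\sO_X$. I do not anticipate any substantive obstacle beyond bookkeeping.
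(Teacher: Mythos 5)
Your proof is correct. Part (1) is essentially the paper's argument: both reduce to $X=\Spec A$ with $D_i = V(f_i)$, $E=V(g)$, extract the relation $(f_1,f_2)=(g)$, divide by $g$ to get the comaximality $(f_1/g)+(f_2/g)=A$, and conclude $D_1'\cap D_2'=\emptyset$; your extra remarks (that each $a_i=f_i/g$ is a nonzerodivisor, and that $(a_i)$ is independent of the choice of local generator $g$) are worth having and are implicit in the paper. Part (2) is where you genuinely diverge. The paper observes that the comaximality of $d_1/e$ and $d_2/e$ covers $\Spec R$ by the two localisations $\Spec(R[\tfrac{e}{d_1}])$ and $\Spec(R[\tfrac{e}{d_2}])$, invokes the sheaf condition to identify $R$ with the intersection of these two subrings of the total quotient ring, and then checks that $\tfrac{de}{d_1d_2}$ lies in both, hence in $R$. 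You instead stay entirely inside $A$: from the Bezout relation $c_1a_1+c_2a_2=1$ and the two factorisations $h=\lambda_1 a_1 g=\lambda_2 a_2 g$ you substitute crosswise to land $h$ directly in $(a_1a_2g)=\sI_{D_3}$. The two arguments prove the same local statement; yours is more elementary in that it never passes to localisations or the total quotient ring (and so never needs the sheaf-condition exact sequence), while the paper's version makes visible the geometric picture that $D_1'$ and $D_2'$ live on complementary opens of the cover. Either is acceptable; there is no gap in yours.
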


\begin{proof}

(1) is proven in \cite[Lemma 1.10.1]{kmsy1}, but we provide a proof here for the convenience of the reader.
Since the problem is local on $X$, we may assume that $X=\Spec (R)$ is affine, and that $D_i = \Spec (R/d_i R)$ and $E=\Spec (R/eR)$ for some nonzerodivisors $d_i \in R$, $i=1,2$ and $e \in R$.
Then the equality $D_1 \times_X D_2 = E$ implies $d_1R + d_2R  = eR$, i.e., 
\begin{equation}\label{eq1.5}
\frac{d_1}{e} R + \frac{d_1}{e} R = R.
\end{equation}
This means $D'_1 \times_X D'_2 = \Spec (R/1 \cdot R) = \emptyset$, which proves (1).

We prove (2). We make the same assumption as in the proof of (1).
Then $D_3 = \Spec (R/d_3)$, where $d_3 = \frac{d_1 d_2}{e}$.
By shrinking $X$ again if necessary, we may also assume that $D=\Spec (R/dR)$ for some nonzerodivisor $d \in R$.
The equality \eqref{eq1.5} shows that $X=\Spec (R)$ is covered by Zariski affine opens $\Spec (R[\frac{e}{d_1}])$ and $\Spec (R[\frac{e}{d_2}])$.
Therefore, by the sheaf condition for the structure sheaf $\mathcal{O}_X$, 
the following sequence of $R$-modules
\[
0 \to R \xrightarrow{a} \frac{e}{d_1} R \oplus \frac{e}{d_1} R \xrightarrow{b} \frac{e}{d_1 d_2} R
\]
is exact, where $a(x)=(x,x)$ and $b(x,y)=x-y$.
In particular, $R = \frac{e}{d_1} R \cap \frac{e}{d_1} R$, where the intersection is taken in the total quotient ring of $R$.
Now, the assumptions $D_1 \subset D$ and $D_2 \subset D$ imply $\frac{d}{d_1},\frac{d}{d_2} \in R$. 
Therefore, we have
$
R[\frac{e}{d_1}] \ni \frac{e}{d_1} \cdot \frac{d}{d_2} = \frac{de}{d_1d_2} = \frac{d}{d_1} \cdot \frac{e}{d_2} \in R[\frac{e}{d_2}],
$
and hence $\frac{d}{d_3}=\frac{de}{d_1d_2} \in R$, which implies $D_3 \subset D$. 
This finishes the proof.
\end{proof}

\section{The ambient product}

In this subsection, we introduce an auxiliary product structure of modulus pairs, which we call \emph{the ambient product}. 
In general, this does not represent a fiber product on $\ulPSCH_{\sS}$ (unless one morphism is minimal, cf.~Prop.\ref{prop:minFibPro}), but it is often convenient in the following discussions. 

\begin{constr}[Ambient product]\label{cons:admBlowup}
Let $\sT \to \sS, \sX \to \sS$ be two ambient morphisms of modulus pairs. %
Let ${\hY}_0$ be the scheme theoretic image of the open immersion ${\iT} \times_{{\iS}} {\iX} \to {\hT} \times_{{\hS}} {\hX}$. %
Then we have a commutative diagram
\[\xymatrix{
{\hY}_0 \ar[r]^{\ol{g}} \ar[d]_{\ol{q}} & {\hX} \ar[d]^{\ol{p}} \\
{\hT} \ar[r]^{\ol{f}} & {\hS}.
}\]

By Lemma \ref{lem:pb-sch-dense}, we have ${\iT} \times_{{\iS}} {\iX} = \ol{q}^{-1} ({\iT}) \cap \ol{g}^{-1} ({\iX})$, and ${\iT} \times_{{\iS}} {\iX}$ is scheme theoretically dense in ${\hY}_0$.
In particular, $\ol{q}^{-1} ({\iT})$ and $\ol{g}^{-1} ({\iX})$ are also scheme theoretically dense in ${\hY}_0$.
Lemma \ref{lem:pb-dense} shows that the two subschemes $\ol{q}^\ast {\mT} := {\mT} \times_{{\hT}} {\hY}_0$ and $\ol{g}^\ast {\mX} := {\mX} \times_{{\hX}} {\hY}_0$ are effective Cartier divisors on ${\hY}_0$.
Set 
\begin{equation}\label{eq:1.4}
F:=(\ol{q}^\ast {\mT} ) \times_{{\hY}_0} (\ol{g}^\ast {\mX} ),
\end{equation}
and consider the blow up of ${\hY}_0$ along $F$:
\[
{\hY} := \Bl_F ({\hY}_0) \xrightarrow{\pi} {\hY}_0.
\]
By Lemma \ref{lem:pb-bu}, the pullbacks of effective Cartier divisors $D_T := \pi^\ast \ol{g}^\ast {\mT}$ and $D_X := \pi^\ast \ol{g}^\ast {\mX}$ are defined.
Moreover, 
\begin{equation}\label{eq1.6}
D_T \times_{{\hY}} D_X = E
\end{equation}
holds, where $E=\pi^{-1} (F)$ denotes the exceptional divisor. 
Indeed, 
\begin{align*}
D_T \times_{{\hY}} D_X &= (\ol{q}^\ast {\mT} \times_{{\hY}_0} {\hY}) \times_{{\hY}} (\ol{g}^\ast {\mX} \times_{{\hY}_0} {\hY}) \\
&= (\ol{q}^\ast {\mT} \times_{{\hY}_0} \ol{g}^\ast {\mX} ) \times_{{\hY}_0} {\hY} \\
&= F \times_{{\hY}_0} {\hY} \\
&= E.
\end{align*}
Therefore, by Lemma \ref{lem:key}, we have $|D_T - E| \cap |D_X - E| = \emptyset$.
Define an effective Cartier divisor ${\mY} \subset {\hY}$ by 
\[
{\mY} := D_T + D_X - E,
\]
and set 
\[
\sY := ({\hY},{\mY} ).
\]
Note that by construction, we have 
\begin{equation}\label{eq:1.3}
{\iY} \simeq {\iT} \times_{{\iS}} {\iX}.
\end{equation}
Since $\pi^\ast \ol{q}^\ast {\mT} = D_T \leq {\mY}$ and $\pi^\ast \ol{g}^\ast {\mX} = D_X \leq {\mY}$, we obtain a commutative square in $\ulPSCH$:
\[\xymatrix{
\sY \ar[r]^{g'} \ar[d]_{q'} & \sX \ar[d]^p \\
\sT \ar[r]^f & \sS.
}\]
\end{constr}

\begin{defi} \label{defi:admBlowup}
We will write 
\[ \sY \ambtimes[\sX] \sZ \]
\marginpar{\fbox{$\sY \ambtimes[\sX] \sZ $}}%
\index[not]{$\sY \ambtimes[\sX] \sZ $} %
for the $\sW$ constructed above in Construction~\ref{cons:admBlowup}, and call it the 
\emph{ambient product}
\marginpar{\fbox{ambient product}}%
\index{product of modulus pairs!ambient product} %
 of $\sY$ and $\sZ$ over $\sX$.
\end{defi}

\subsubsection{Some elementary properties of ambient product}

We begin with a special case. 

\begin{lemm} \label{lemm:minProduct}
Let $\sX \to \sS$ be an ambient morphism and $\sT \to \sS$ be a minimal ambient morphism. 
Then 
\[ \sT \ambtimes[\sS] \sX = (\hY, \mX|_{\ol{Y}}). \]
where $\hY$ is the scheme theoretic image of $\iT \times_{\iS} \iX \to \hT \times_{\hS} \hX$.

If moreover, $\hT \to \hS$ is flat, then 
\[ \sT \ambtimes[\sS] \sX = (\hT \times_{\hS} \hX, \mX|_{\hT \times_{\hS} \hX}). \]
\end{lemm}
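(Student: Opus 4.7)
The plan is to run through Construction~\ref{cons:admBlowup} under the minimality hypothesis and observe that the blow-up step becomes trivial. I would begin by computing the scheme $F = \ol{q}^*\mT \times_{\hY_0} \ol{g}^*\mX$ from \eqref{eq:1.4}. Minimality of $\sT \to \sS$ gives $\mT = \mS \times_{\hS} \hT$, so
\[ \ol{q}^*\mT = \mT \times_{\hT} \hY_0 = \mS \times_{\hS} \hY_0 = (\mS \times_{\hS} \hX) \times_{\hX} \hY_0. \]
The modulus condition for $p: \sX \to \sS$ provides a closed-subscheme inclusion $\mS \times_{\hS} \hX \hookrightarrow \mX$ in $\hX$, and pulling back to $\hY_0$ yields $\ol{q}^*\mT \hookrightarrow \ol{g}^*\mX$ as closed subschemes of $\hY_0$. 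Now scheme-theoretic containment $A \hookrightarrow B$ of closed subschemes of a common ambient scheme forces $A \times B = A$ (concretely: $(I+J)=I$ when $J \subseteq I$), so $F = \ol{q}^*\mT$.

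Next I would use that $F = \ol{q}^*\mT$ is already an effective Cartier divisor on $\hY_0$. Blowing up along an effective Cartier divisor is an isomorphism, so $\pi : \hY \to \hY_0$ is an isomorphism; consequently $E = \pi^{-1}(F) = D_T$ and $D_X = \pi^* \ol{g}^*\mX = \mX|_{\hY_0}$. Substituting into the definition $\mY = D_T + D_X - E$ gives $\mY = D_X = \mX|_{\hY_0}$, establishing the first formula (where the $\hY$ of the lemma is the $\hY_0$ of the construction).

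For the second formula I need to identify $\hY_0$ with $\hT \times_{\hS} \hX$ when $\hT \to \hS$ is additionally flat. Using minimality, $\iT = \iS \times_{\hS} \hT$, hence
\[ \iT \times_{\iS} \iX \;=\; \hT \times_{\hS} \iX. \]
By Lemma~\ref{lem:qc-interior} and Proposition~\ref{prop:always-dense}, the open immersion $\iX \hookrightarrow \hX$ is quasi-compact and scheme theoretically dense. Flatness of $\hT \to \hS$, and hence of $\hT \times_{\hS} \hX \to \hX$, preserves quasi-coherent kernels and therefore preserves scheme-theoretic density of quasi-compact open immersions under base change; so $\hT \times_{\hS} \iX \to \hT \times_{\hS} \hX$ is still scheme theoretically dense. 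Its scheme-theoretic image is thus all of $\hT \times_{\hS} \hX$, which identifies $\hY_0$ with $\hT \times_{\hS} \hX$ and yields the second formula.

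The main friction point I anticipate is cleanly executing the identification $F = \ol{q}^*\mT$: it demands chaining two base changes (over $\hT$ and over $\hX$) and then invoking the modulus-condition inclusion in the ambient scheme $\hX$. Everything else—the triviality of the blow-up along a Cartier divisor, and the flat-base-change preservation of scheme-theoretic density—is standard, so no serious obstacle remains.
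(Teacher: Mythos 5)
Your proposal is correct and follows essentially the same route as the paper's proof: both use minimality and the modulus condition to get $\ol{q}^*\mT \subseteq \ol{g}^*\mX$, hence $F = \ol{q}^*\mT$ is already Cartier and the blow-up is trivial, giving $\mY = D_X = \mX|_{\hY}$; and both handle the flat case by identifying $\hT \times_{\hS} \iX$ with $\iT \times_{\iS} \iX$ and invoking preservation of scheme-theoretic density under flat base change. No gaps.
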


\begin{proof}
We use the notation of Construction~\ref{cons:admBlowup}. We have $\mT = \mS|_{\hT}$, so since 
$\mX \geq \mS|_{\hX}$ it follows that 
$\mX|_{{\hY}_0} \geq \mS|_{{\hY}_0} = \mT|_{{\hY}_0}$, i.e., 
$\ol{q}^\ast {\mT} \subseteq \ol{g}^\ast {\mX}$, so 
$F := (\ol{q}^\ast {\mT} ) \times_{{\hY}_0} (\ol{g}^\ast {\mX} ) = \ol{q}^\ast {\mT}$. 
Since this is already an effective Cartier divisor, ${\hY} := \Bl_F ({\hY}_0) \xrightarrow{\pi} {\hY}_0$ is an isomorphism, hence the total space in the statement. Continuing with the calculation, we see $E = D_T \times_{{\hY}} D_X = D_T$, so $\mY = D_T + D_X - E = D_X = \mX|_{\hY}$. 

If $\hT \to \hS$ is flat, the pullback $\hT \times_{\hS} \iX \to \hT \times_{\hS} \hX$ of the scheme theoretically dense morphism $\iX \to \hX$ is also scheme theoretically dense. Since $\hX \to \hS$ is admissible, $\iX$ factors through $\iS$ so $\hT \times_{\hS} \iX = (\hT \times_{\hS} \iS) \times_{\iS} \iX$ and since $\sT \to \sS$ is minimal, $\iT = \hT \times_{\hS} \iS$ so $\hT \times_{\hS} \iX = \iT \times_{\iS} \iX$. That is, the pullback of $\iX \to \hX$ along $\hT \to \hS$ is $\iT \times_{\iS} \iX \to \hT \times_{\hS} \hX$, so this is scheme theoretically dense.
\end{proof}

\begin{prop} \label{prop:minFibPro}
Let $\sX \to \sS$ be an ambient morphism and $\sT \to \sS$ be a minimal ambient morphism. 
Then $\sT \ambtimes_\sS \sX$ is the categorical fibre product in $\ulPSCH$.
\end{prop}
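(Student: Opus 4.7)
The plan is to verify the universal property of a categorical fibre product directly, leveraging Lemma~\ref{lemm:minProduct}. By that lemma, $\sY := \sT \ambtimes_\sS \sX$ has total space $\hY$ equal to the scheme theoretic image of the open immersion $\iT \times_\iS \iX \hookrightarrow \hT \times_\hS \hX$, and its modulus is $\mX|_\hY$, with projections $\sY \to \sT$ and $\sY \to \sX$ coming from the composites $\hY \hookrightarrow \hT \times_\hS \hX \to \hT$ and $\hY \hookrightarrow \hT \times_\hS \hX \to \hX$. I will take a modulus pair $\sZ$ equipped with ambient morphisms $u : \sZ \to \sT$ and $v : \sZ \to \sX$ satisfying $pu = fv$ (with $p, f$ the structure maps to $\sS$), and construct a unique ambient morphism $w : \sZ \to \sY$ compatible with these data.

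The universal property of the fibre product of schemes produces a unique morphism $\bar w_0 : \hZ \to \hT \times_\hS \hX$ whose compositions with the two projections are $\bar u$ and $\bar v$. The first key step is to check that $\bar w_0$ factors through the closed subscheme $\hY$. For this, observe that $u$ and $v$ restrict to morphisms $u^\o : \iZ \to \iT$ and $v^\o : \iZ \to \iX$ (Rem.~\ref{rem:mapint}) compatible over $\iS$, hence $\bar w_0|_{\iZ}$ factors through the open subscheme $\iT \times_\iS \iX \subseteq \hT \times_\hS \hX$, and in particular through $\hY$. Letting $\sI \subseteq \OO_{\hT \times_\hS \hX}$ denote the ideal sheaf cutting out $\hY$, the image of the pullback $\bar w_0^* \sI \to \OO_\hZ$ vanishes after restriction to $\iZ$; since $\iZ \hookrightarrow \hZ$ is scheme theoretically dense by Prop.~\ref{prop:always-dense}, the map $\OO_\hZ \to j_*\OO_\iZ$ (with $j : \iZ \to \hZ$) is injective, and we conclude that $\bar w_0^* \sI \to \OO_\hZ$ is itself zero. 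Hence $\bar w_0$ factors uniquely through $\hY$, giving the underlying scheme morphism $\bar w : \hZ \to \hY$.

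It remains to verify the modulus condition for $w$. Since the modulus of $\sY$ is $\mX|_\hY = \mX \times_\hX \hY$ and $\bar v = \bar g' \circ \bar w$, we compute
\[
(\mX|_\hY) \times_\hY \hZ \;=\; \mX \times_\hX \hY \times_\hY \hZ \;=\; \mX \times_\hX \hZ \;\subseteq\; \mZ,
\]
where the final inclusion is the modulus condition for the ambient morphism $v : \sZ \to \sX$. Thus $w : \sZ \to \sY$ is a well-defined ambient morphism, and it clearly satisfies $q' w = u$ and $g' w = v$ at the level of underlying scheme maps, hence in $\ulPSCH$ by faithfulness (Cor.~\ref{cor:twoforgetfuncs}).

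For uniqueness, any ambient morphism $w' : \sZ \to \sY$ with the required compatibilities has underlying scheme morphism whose composition with the closed immersion $\hY \hookrightarrow \hT \times_\hS \hX$ equals $\bar w_0$, and since this immersion is a monomorphism, we obtain $\bar w' = \bar w$, hence $w' = w$. The mildly delicate step is the factorization of $\bar w_0$ through $\hY$, which is the only place where the special form of $\hY$ as a scheme theoretic image (rather than the naive fibre product $\hT \times_\hS \hX$) is used; the rest is formal.
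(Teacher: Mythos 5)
Your proof is correct and follows essentially the same route as the paper's: factor the induced map $\hZ \to \hT \times_{\hS} \hX$ through the scheme theoretic image $\hY$ using scheme theoretic density of $\iZ \hookrightarrow \hZ$ (Prop.~\ref{prop:always-dense}), then deduce admissibility from the fact that $\mY = \mX|_{\hY}$ via Lemma~\ref{lemm:minProduct}. The only difference is presentational — you verify the universal property directly while the paper phrases the same argument as an equality of subsets of a scheme-level hom fibre product.
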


\begin{rema}
We will often use the symbol $\times$ instead of $\times^p$ to denote the ambient product when the assumption in Prop.~\ref{prop:minFibPro} is satisfied. 
\end{rema}

\begin{proof}
Let $\sY := \sT \ambtimes_\sS \sX$ and let $\sW$ be any other modulus pair. By definition we have
\begin{align*}
\hom_{\ulPSCH}(\sW, \sY) &\subseteq \hom_{\SCH}(\hW, \hY) \\
&\subseteq \hom_{\SCH}(\hW, \hT \times_{\hS} \hX) \\
&= 
\hom_{\SCH}(\hW, \hT) \times_{\hom_{\SCH}(\hW, \hS)} \hom_{\SCH}(\hW, \hX) 
\end{align*}
and
\begin{align*}
&
\hom_{\ulPSCH}(\sW, \sT) \times_{\hom_{\ulPSCH}(\sW, \sS)} \hom_{\ulPSCH}(\sW, \sX) 
\\&\subseteq 
\hom_{\SCH}(\hW, \hT) \times_{\hom_{\SCH}(\hW, \hS)} \hom_{\SCH}(\hW, \hX) 
\end{align*}
so it suffices to show that these two subsets agree. Moreover, due to the canonical commutative square as on the left
\begin{equation} \label{coro:minFibPro:eq}
 \xymatrix{
\sY \ar[r] \ar[d] & \sX \ar[d] 
&&
\sW \ar[r] \ar[d] & \sX \ar[d] \\
\sT \ar[r] & \sS 
&&
\sT \ar[r] & \sS
} 
\end{equation}
there is a canonical inclusion 
\[
\hom_{\ulPSCH}(\sW, \sY) 
\subseteq 
\hom_{\ulPSCH}(\sW, \sT) \times_{\hom_{\ulPSCH}(\sW, \sS)} \hom_{\ulPSCH}(\sW, \sX) 
\]
We wish to show that this inclusion is surjective. Suppose that we are given an element on the right or equivalently, a commutative square as on the right of \eqref{coro:minFibPro:eq}. This square induces a canonical morphism $\hW \to \hT \times_{\hS} \hX$. We claim that $\hW \to \hT \times_{\hS} \hX$ factors through the closed subscheme $\hY \subseteq \hT \times_{\hS} \hX$. Indeed, the sheaf of ideals defining $\hY$ is the kernel $\sI$ of $\OO_{\hT \times_{\hS} \hX} \to i_*\OO_{\iT \times_{\iS} \iX}$ (where $i$ is the obvious inclusion) so it suffices to show the elements of $\sI$ are sent to zero on $\hW$. But they are sent to zero on $\iW$ by commutivity of 
\[ \xymatrix{
\iW \ar[r] \ar[d] & \iT \times_{\iS} \iX \ar[d] \\
\hW \ar[r] & \hT \times_{\hS} \hX
} \]
and since $\iW \to \hW$ is schematically dominant, this suffices to deduce that $\sI$ is sent to zero on $\hW$. So we have produced a (unique) morphism
\[ \hW \to \hY \]
compatible with the two squares \eqref{coro:minFibPro:eq}. Now we use minimality of $\sT \to \sS$, or rather, minimality of $\sY \to \sX$, Lem.~\ref{lemm:minProduct}, to see that this morphism we have produced is admissible.
\end{proof}

\begin{cor}\label{cor:ambprod-func2}
If $\sT \to \sS$ is a minimal ambient morphism, then the ambient product defines a functor
\[
\sT \ambtimes_{\sS} -  : \ulPSCH_\sS \to \ulPSCH_\sS.
\]
\end{cor}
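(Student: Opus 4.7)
The plan is to deduce this corollary immediately from Proposition~\ref{prop:minFibPro}. Under the minimality hypothesis on $\sT \to \sS$, that proposition identifies $\sT \ambtimes_\sS \sX$ with the categorical fibre product $\sT \times_\sS \sX$ in $\ulPSCH$ (equipped with the canonical structure morphism to $\sS$). Since categorical fibre products, when they exist, are automatically functorial in each variable, the result follows from general category-theoretic nonsense.

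Concretely, given a morphism $\varphi : \sX \to \sX'$ in $\ulPSCH_\sS$, I would define $\sT \ambtimes_\sS \varphi : \sT \ambtimes_\sS \sX \to \sT \ambtimes_\sS \sX'$ as the unique morphism supplied by the universal property of the fibre product applied to the pair
\[
\bigl(\sT \ambtimes_\sS \sX \xrightarrow{q'} \sT,\qquad \sT \ambtimes_\sS \sX \xrightarrow{g'} \sX \xrightarrow{\varphi} \sX'\bigr),
\]
which is visibly compatible with the structure morphisms to $\sS$. Preservation of identities and of compositions is then immediate from the uniqueness clause in the universal property: the candidate morphism $\mathrm{id}_{\sT \ambtimes_\sS \sX}$ satisfies the universal property that characterises $\sT \ambtimes_\sS \mathrm{id}_\sX$, and given a second morphism $\psi : \sX' \to \sX''$ in $\ulPSCH_\sS$, the composite $(\sT \ambtimes_\sS \psi) \circ (\sT \ambtimes_\sS \varphi)$ satisfies the universal property characterising $\sT \ambtimes_\sS (\psi \circ \varphi)$.

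There is no substantive obstacle here; the entire weight of the corollary is carried by Proposition~\ref{prop:minFibPro}. The only cosmetic point to check is that the target of the functor is indeed $\ulPSCH_\sS$, i.e.\ that the structure morphism $\sT \ambtimes_\sS \sX \to \sS$ is itself ambient. This is built into the construction (via either of the two factorisations through $\sT$ or $\sX$), so no additional argument is needed.
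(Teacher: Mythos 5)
Your proposal is correct and matches the paper's own argument, which likewise derives the corollary immediately from Proposition~\ref{prop:minFibPro} and the uniqueness (up to unique isomorphism) of categorical fibre products. The extra details you spell out — constructing $\sT \ambtimes_\sS \varphi$ via the universal property and checking identities and composites by uniqueness — are exactly the standard category-theoretic content the paper leaves implicit.
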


\begin{proof}
This follows directly from Prop.~\ref{prop:minFibPro} since the categorical fibre product is unique up to unique isomorphism.
\end{proof}

\begin{coro} \label{coro:ambtimesambtimes}
Let $\sX \to \sS$ and $\sS_2 \to \sS_1 \to \sS$ be ambient morphisms, and suppose one of:
\begin{enumerate}
 \item $\sX \to \sS$ is minimal, or
 \item $\sS_2 \to \sS_1 \to \sS$ are both minimal.
\end{enumerate}
is satisfied. Then there is an isomorphism of modulus pairs
\[ \sS_2 \ambtimes[\sS_1] (\sS_1 \ambtimes[\sS] \sX) \cong \sS_2 \ambtimes[\sS] \sX. \]
\end{coro}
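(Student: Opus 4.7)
The plan is to identify both sides of the claimed isomorphism with categorical fibre products in $\ulPSCH$, after which the result becomes an instance of the standard natural isomorphism for iterated fibre products. The work consists of invoking Prop.~\ref{prop:minFibPro} at the right moments, which in turn hinges on arranging the correct minimality hypotheses.

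For case (1), assume $\sX \to \sS$ is minimal. Then Prop.~\ref{prop:minFibPro} immediately gives that $\sS_1 \ambtimes[\sS] \sX$ and $\sS_2 \ambtimes[\sS] \sX$ are categorical fibre products in $\ulPSCH$. The remaining point is to show that the projection $\sS_1 \ambtimes[\sS] \sX \to \sS_1$ is itself minimal, so that Prop.~\ref{prop:minFibPro} can be applied once more to interpret $\sS_2 \ambtimes[\sS_1] (\sS_1 \ambtimes[\sS] \sX)$ as a categorical fibre product. This minimality follows from the manifest symmetry of Construction~\ref{cons:admBlowup} in its two factors: applying Lem.~\ref{lemm:minProduct} with the roles of $\sT$ and $\sX$ reversed, $\sS_1 \ambtimes[\sS] \sX$ has total space equal to the scheme theoretic image of $\iS_1 \times_{\iS} \iX \to \hS_1 \times_{\hS} \hX$ and modulus equal to $\mS_1$ restricted to this total space, so the projection to $\sS_1$ is minimal by inspection.

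Case (2) is cleaner. Minimality of $\sS_1 \to \sS$ and Prop.~\ref{prop:minFibPro} show $\sS_1 \ambtimes[\sS] \sX$ is a categorical fibre product, and minimality of $\sS_2 \to \sS_1$ combined with Prop.~\ref{prop:minFibPro} gives the same for $\sS_2 \ambtimes[\sS_1] (\sS_1 \ambtimes[\sS] \sX)$. The composition of two minimal morphisms is again minimal: if $\mS_2 = \mS_1|_{\hS_2}$ and $\mS_1 = \mS|_{\hS_1}$, then $\mS_2 = \mS|_{\hS_2}$. Hence $\sS_2 \to \sS$ is minimal and Prop.~\ref{prop:minFibPro} applies one last time to make $\sS_2 \ambtimes[\sS] \sX$ a categorical fibre product.

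Once all ambient products in sight have been reinterpreted as categorical fibre products, the desired isomorphism is the familiar natural bijection
\[ \hom(\sW, \sS_2) \times_{\hom(\sW, \sS_1)} \bigl( \hom(\sW, \sS_1) \times_{\hom(\sW, \sS)} \hom(\sW, \sX) \bigr) \cong \hom(\sW, \sS_2) \times_{\hom(\sW, \sS)} \hom(\sW, \sX), \]
which is valid for any $\sW \in \ulPSCH$ and yields the comparison by Yoneda. The main obstacle in the whole argument is the minimality verification in case (1); the rest is formal.
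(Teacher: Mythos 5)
Your proof is correct and follows essentially the same route as the paper's: the paper's argument is precisely to reduce everything to categorical fibre products via Lem.~\ref{lemm:minProduct} and Prop.~\ref{prop:minFibPro} and conclude by uniqueness of fibre products, and your verification of the minimality of $\sS_1 \ambtimes[\sS] \sX \to \sS_1$ in case (1) (via the symmetry of Construction~\ref{cons:admBlowup}) is exactly the point the paper leaves implicit. No gaps.
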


\begin{proof}
This follows directly from Lem.~\ref{lemm:minProduct} and Prop.~\ref{prop:minFibPro} since the categorical fibre product is unique up to unique isomorphism.
\end{proof}

\begin{prop}
The construction $\sT \ambtimes[\sS] \sX$ of Def.\ref{defi:admBlowup} is functorial in $\sS$ for all ambient morphisms. 
Moreover, for any ambient morphism $\sS_1 \to \sS_0$, the induced morphism $\sT \ambtimes[\sS_1] \sX \to \sT \ambtimes[\sS_0] \sX$ is minimal. 
\end{prop}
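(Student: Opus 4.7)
Given ambient morphisms $\sS_1 \to \sS_0$ together with compatible factorisations $\sT \to \sS_1 \to \sS_0$ and $\sX \to \sS_1 \to \sS_0$, I write $\sY_i := \sT \ambtimes[\sS_i] \sX$ for $i = 0, 1$, and decorate the data of Construction~\ref{cons:admBlowup} with the corresponding index $i$. The goal is to produce a canonical minimal morphism $\sY_1 \to \sY_0$, and verify compatibility with composition and identities.

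The plan is to construct the underlying map of schemes in three stages. First, I would build a morphism $\hY_{0,1} \to \hY_{0,0}$: the natural map $\hT \times_{\hS_1} \hX \to \hT \times_{\hS_0} \hX$, combined with the fact that $\iT \times_{\iS_1} \iX \to \iT \times_{\iS_0} \iX$ factors through $\hY_{0,0}$, shows that $\iT \times_{\iS_1} \iX$ lands inside the closed subscheme $\hY_{0,0} \times_{\hT \times_{\hS_0} \hX} (\hT \times_{\hS_1} \hX)$ of $\hT \times_{\hS_1} \hX$. By the universal property of scheme theoretic image, $\hY_{0,1}$ is forced into this closed subscheme, producing the desired morphism. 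Second, I would lift this to a morphism $\hY_1 \to \hY_0$ via the universal property of blowups. The key computation is that $F_i = \mT \times_{\hT} \mX \times_{\hX} \hY_{0,i}$, so the pullback of $F_0$ along $\hY_{0,1} \to \hY_{0,0}$ equals $F_1$, which is already an effective Cartier divisor on $\hY_{0,1}$. Hence the map $\hY_{0,1} \to \hY_{0,0}$ lifts uniquely to $\hY_{0,1} \to \hY_0 = \Bl_{F_0}(\hY_{0,0})$, and composition with $\pi_1 : \hY_1 \to \hY_{0,1}$ yields $\hY_1 \to \hY_0$.

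Third, I would verify that the induced morphism underlies an ambient morphism $\sY_1 \to \sY_0$ which is in fact minimal. Tracking pullbacks along $\hY_1 \to \hY_0$, the divisors $D_{T,0} = \pi_0^\ast \ol{q}_0^\ast \mT$, $D_{X,0} = \pi_0^\ast \ol{g}_0^\ast \mX$, and $E_0 = \pi_0^{-1}(F_0)$ pull back respectively to $D_{T,1}$, $D_{X,1}$, and $E_1$, because each is obtained by pulling back $\mT$, $\mX$, or $F_0$ along compositions that agree with the direct maps defining the index-$1$ counterparts. Using the local formula of Lemma~\ref{lem:key} for $\mY_i = D_{T,i} + D_{X,i} - E_i$, the pullback of $\mY_0$ coincides with $\mY_1$, giving minimality. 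Finally, functoriality under composition $\sS_2 \to \sS_1 \to \sS_0$ and preservation of identity morphisms follow from the uniqueness built into the universal properties of scheme theoretic image and blowup used above.

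I expect the main technical care to lie in the pullback bookkeeping for the subtracted term $-E$ in the formula for $\mY_i$, where the local description from Lemma~\ref{lem:key} is essential; the remaining points reduce to standard universal-property manipulations.
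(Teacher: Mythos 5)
Your overall architecture (map of scheme-theoretic images, then a map of blowups, then divisor bookkeeping to get minimality) is the same as the paper's, and your steps 1 and 3 are fine. But step 2 contains a genuine error: you assert that $F_1 = F_0|_{\hY_{0,1}}$ ``is already an effective Cartier divisor on $\hY_{0,1}$'' and use this to lift $\hY_{0,1} \to \hY_{0,0}$ directly to $\hY_{0,1} \to \hY_0 = \Bl_{F_0}(\hY_{0,0})$. That is false in general: $F_1$ is the scheme-theoretic intersection $(\mT|_{\hY_{0,1}}) \times_{\hY_{0,1}} (\mX|_{\hY_{0,1}})$ of two effective Cartier divisors, which typically has codimension two (already for $\hT = \hX = \AA^1$ over a point with $\mT = \mX = \{0\}$, where $F$ is the origin in $\AA^2$). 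Indeed, if $F_1$ were Cartier the blowup $\pi_1 : \hY_1 \to \hY_{0,1}$ would be an isomorphism and the entire blowup step of Construction~\ref{cons:admBlowup} would be vacuous; making the intersection Cartier is precisely what the blowup is for. So the lift $\hY_{0,1} \to \hY_0$ you invoke does not exist in general.

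The repair is small but changes which object the universal property is applied to: the pullback of $F_0$ that \emph{is} Cartier lives on $\hY_1$, not on $\hY_{0,1}$. Since $F_0 \times_{\hY_{0,0}} \hY_1 = F_1 \times_{\hY_{0,1}} \hY_1 = E_1$ is the exceptional divisor of $\pi_1$, the composite $\hY_1 \to \hY_{0,1} \to \hY_{0,0}$ lifts uniquely through $\hY_0 = \Bl_{F_0}(\hY_{0,0})$ by the universal property of the blowup; equivalently, $\hY_1 = \Bl_{F_0|_{\hY_{0,1}}}(\hY_{0,1})$ is the strict transform of $\hY_{0,1}$ under $\hY_0 \to \hY_{0,0}$, which is how the paper phrases it. With the map $\hY_1 \to \hY_0$ obtained this way, your step 3 goes through unchanged: $D_{T,0}$, $D_{X,0}$, $E_0$ pull back to $D_{T,1}$, $D_{X,1}$, $E_1$, so $\mY_0|_{\hY_1} = \mY_1$ and the morphism is minimal.
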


\begin{proof}
Let $\sT \to \sS_1$, $\sX \to \sS_1$ and $\sS \to \sS_0$ be three ambient morphisms. Define $\sY_1 = \sT \ambtimes_{\sS_1} \sX$ and $\sY_0 := \sT \ambtimes[\sS_0] \sX$. The construction of $\sY_ε$ proceeds by first taking the scheme theoretical closure $\hY_{0ε}$ of $\iT \times_{\iS_ε} \iX$ in $\hT \times_{\hS_ε} \hX$. In particular, the canonical commutative diagram
\[ \xymatrix@R=12pt{
\iT \times_{\iS_1} \iX \ar[d] \ar[r] &  \iT \times_{\iS_0} \iX \ar[d] \\
\hY_{00} \ar@{-->}[r] \ar[d] & \hY_{01} \ar[d] \\
\hT \times_{\hS_1} \hX  \ar[r] &  \hT \times_{\hS_0} \hX 
} \]
induces the canonical dashed morphism. Next, $\hY_ε$ is defined to be the blowup of $\hY_{0\epsilon}$ along $\mT|_{\hY_{0ε}} \times_{\hY_{0ε}} \mX|_{\hY_{0ε}}$. In particular, $\hY_1 \to \hY_{01}$ is the strict transform of $\hY_0 \to \hY_{00}$. So we obtain a canonical morphism $\hY_1 \to \hY_0$. Finally, by definition,
\[ \mY_ε := \mT|_{\hY_ε} + \mX|_{\hY_ε} - E_ε \]
where $E_ε = (\mT|_{\hY_ε}) \times_{\hY_ε} (\mX|_{\hY_ε})$ is the exceptional divisor. As 
\[ E_1 = E_0|_{\hY_1} \]
we find that
\begin{align*}
\mY_1 
&= \mT|_{\hY_1} + \mX|_{\hY_1} - E_1 \\
&= \mT|_{\hY_1} + \mX|_{\hY_1} - E_0|_{\hY_1} \\
&= \biggl ( \mT|_{\hY_0} + \mX|_{\hY_0} - E_0 \biggr ) |_{\hY_1} \\
&= \mY_0|_{\hY_1},
\end{align*}
which shows that $\hY_1 \to \hY_0$ defines an ambient minimal morphism $\sY_1 \to \sY_0$.
\end{proof}

\begin{lemm} \label{lemm:ambtimesProper}
Let $\sT \to \sS \leftarrow \sX$ be two ambient morphisms, and $\sY := \sT \ambtimes[\sS] \sX$.
\begin{enumerate}
 \item \label{lemm:lemm:ambtimesProper:Int} In general, $\iY = \iT \times_{\iS} \iX$.
 \item \label{lemm:lemm:ambtimesProper:ft} If $\hX \to \hS$ is finite type, then $\hY \to \hT$ is finite type.
 \item \label{lemm:lemm:ambtimesProper:3a} If $\hX \to \hS$ is universally closed, then $\hY \to \hT$ is universally closed.
 \item \label{lemm:lemm:ambtimesProper:3} If $\hX \to \hS$ is proper, then $\hY \to \hT$ is proper.
 \item \label{lemm:lemm:ambtimesProper:4} If $\hX \to \hS$ is finite (resp. closed, resp. affine) and $\sT \to \sS$ is minimal, then $\hY \to \hT$ is finite (resp. closed).
 \item \label{lemm:lemm:ambtimesProper:5} If $\hX \to \hS$ is finite (resp. closed, resp. affine) and minimal, then $\hY \to \hT$ is also finite (resp. closed, resp. affine) and minimal.
 \item \label{lemm:lemm:ambtimesProper:6} If $\{\sU_i \to \sS\}_{i \in I}$ is a family of ambient morphisms such that $\amalg \hU_i \to \hS$ is surjective and universally closed, then for any $\sX \to \sS$ the family %
 $\{\sX \ambtimes_\sS \sU_i \to \sX\}_{i \in I}$ is also jointly surjective on total spaces. 
\end{enumerate}
\end{lemm}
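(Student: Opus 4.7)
I will treat (1)--(6) as direct consequences of Construction~\ref{cons:admBlowup} and the minimal-case analysis in Lem.~\ref{lemm:minProduct}, reserving the main effort for (7).

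\emph{Parts (1)--(4).} For (1), I would unpack the construction: the blowup $\pi \colon \hY \to \hY_0$ is an isomorphism over the complement of its center $F \subseteq |\ol{q}^\ast\mT| \cap |\ol{g}^\ast\mX|$, hence over $\iT \times_\iS \iX = \hY_0 \setminus (|\ol{q}^\ast\mT| \cup |\ol{g}^\ast\mX|)$. Combined with the identity $|\mY| = |D_T + D_X - E| = |D_T| \cup |D_X|$ (immediate from Lem.~\ref{lem:key}), this gives $\iY = \pi^{-1}(\iT \times_\iS \iX) \cong \iT \times_\iS \iX$. For (2)--(4), I would factor
\[
  \hY \xrightarrow{\pi} \hY_0 \hookrightarrow \hT \times_\hS \hX \to \hT.
\]
The last arrow inherits ``of finite type'', ``universally closed'', and ``proper'' from $\hX \to \hS$ by base change; the middle arrow is a closed immersion; and $\pi$ is a blowup along an ideal locally generated by two elements (the local equations of $\ol{q}^\ast\mT$ and $\ol{g}^\ast\mX$), hence projective, in particular proper and of finite type. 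Composition then yields (2), (3), (4).

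\emph{Parts (5)--(6).} Construction~\ref{cons:admBlowup} is symmetric in its two inputs, so $\sT \ambtimes_\sS \sX \cong \sX \ambtimes_\sS \sT$, and Lem.~\ref{lemm:minProduct} applies in either of (5) or (6) to trivialise the blowup step: $\hY$ is a closed subscheme of $\hT \times_\hS \hX$. Hence $\hY \to \hT$ is a closed immersion composed with a base change of $\hX \to \hS$, so ``finite'', ``closed immersion'', and ``affine'' each propagate from $\hX \to \hS$ to $\hY \to \hT$. For (6), the formula $\mY = \mT|_{\hY}$ furnished by Lem.~\ref{lemm:minProduct} gives the minimality of $\sY \to \sT$ for free.

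\emph{Part (7).} This is where the main work lies. Writing $\hY_{0,i}$ for the scheme-theoretic image of $\iU_i \times_\iS \iX$ in $\hU_i \times_\hS \hX$, I would factor the total-space morphism as
\[
  \hY_i \xrightarrow{\pi_i} \hY_{0,i} \hookrightarrow \hU_i \times_\hS \hX \to \hX,
\]
with the $\pi_i$ surjective, reducing the problem to showing that $\amalg_i \hY_{0,i} \to \hX$ is surjective. Two ingredients drive the argument: (a) this composite is a closed map, because $\amalg \hU_i \to \hS$ is universally closed (so its base change along $\hX \to \hS$ is closed) and $\amalg \hY_{0,i} \hookrightarrow \amalg (\hU_i \times_\hS \hX)$ is a closed immersion; and (b) every point $x \in \hX$ admits a generalisation $\eta \in \iX$, since the minimal primes of $\hX$ avoid the effective Cartier divisor $\mX$. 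Given $x$, the plan is to lift $\eta$ to a point $y_\eta$ of $\amalg_i(\iU_i \times_\iS \iX) \subseteq \amalg_i \hY_{0,i}$ using joint surjectivity of $\amalg \hU_i \to \hS$ together with the density of each $\iU_i$ in $\hU_i$, and then apply (a): the image of $\overline{\{y_\eta\}}$ in $\hX$ is closed, contains $\eta$, and hence contains its specialisation $x$. The delicate step, which I expect to consume most of the proof, is producing $y_\eta$ in the interior fibre product, where the interaction between the modulus condition on each $\sU_i \to \sS$, the universal closedness of $\amalg \hU_i \to \hS$, and generic-point arguments must be carefully orchestrated.
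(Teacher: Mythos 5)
Your parts (1)--(6) are correct and follow the same route as the paper: (1) is the identification $\iY = \iT \times_{\iS} \iX$ already recorded in Construction~\ref{cons:admBlowup}; (2)--(4) use the factorisation $\hY \to \hY_0 \hookrightarrow \hT \times_{\hS} \hX \to \hT$ into a blowup, a closed immersion, and a base change of $\hX \to \hS$; and (5)--(6) invoke Lem.~\ref{lemm:minProduct} (via the symmetry of the construction in the second case) to suppress the blowup, so that $\hY \to \hT$ is a closed immersion followed by a base change.

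The gap is in part (7), and it sits precisely at the step you flag as ``delicate'': lifting a generic point $\eta$ of $\hX$ to a point of $\amalg_i (\iU_i \times_{\iS} \iX)$. This requires the images of the $\iU_i$ in $\iS$ to cover the image of $\eta$, and that does \emph{not} follow from surjectivity of $\amalg \hU_i \to \hS$: the modulus $\mU_i$ is only constrained by $\mU_i \geq \mS|_{\hU_i}$, so its support may meet the preimage of $\iS$, and then $\iU_i \to \iS$ can fail to be surjective even when $\hU_i \to \hS$ is an isomorphism. Concretely, take $\hS = \hU = \Spec(k[t])$ with $\mS = (t)$ and $\mU = (t(t-1))$, and let $\sX = (\Spec(k), \emptyset) \to \sS$ be the point $t=1$; then $\iU \times_{\iS} \iX = \emptyset$, so $\sU \ambtimes_{\sS} \sX$ has empty total space and the conclusion of (7) fails. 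Density of $\iU_i$ in $\hU_i$ cannot rescue you, because the whole fibre of $\hU_i$ over the relevant point of $\hS$ may lie in $\mU_i$. You are in good company: the paper's own proof disposes of this step with the single assertion that joint surjectivity on interiors ``follows from case (1)'', which is the same unjustified leap. The statement, and both arguments, are repaired by adding the hypothesis that the $\sU_i \to \sS$ are minimal (equivalently, that $\amalg \iU_i \to \iS$ is surjective); this holds in every application in the paper, e.g.\ in Lem.~\ref{lemm:ambtimesAab} where $\iT \to \iS$ is an isomorphism, and for the $\ulPtau$-coverings of \S\ref{sec:modPairCov}, which consist of minimal morphisms by definition. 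Under that hypothesis your closed-image argument (each image is closed and the union contains the dense subset $\iX$) closes the proof.
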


\begin{proof}
We use notation from Construction~\ref{cons:admBlowup}. Case \eqref{lemm:lemm:ambtimesProper:Int} is Eq.~\ref{eq:1.3}. For case \eqref{lemm:lemm:ambtimesProper:ft}, \eqref{lemm:lemm:ambtimesProper:3a}, and \eqref{lemm:lemm:ambtimesProper:3}, $\hY \to \hT$ is the composition $\Bl_F(\ol{W}_0) \to \ol{W}_0 \subseteq \hT \times_{\hS} \hX \to \hT$ of a blowup, and a closed immersion, and the pullback of a proper morphism. Case \eqref{lemm:lemm:ambtimesProper:4} and \eqref{lemm:lemm:ambtimesProper:5} is similar except without the first blowup, cf. Lem.~\ref{lemm:minProduct}. Finally, for case \eqref{lemm:lemm:ambtimesProper:6}, we learn from case \eqref{lemm:lemm:ambtimesProper:3a} that on total spaces, each $\sX \ambtimes_\sS \sU_i \to \sX$ is closed, so it suffices to know that the family is jointly surjective on interiors. This follows from case \eqref{lemm:lemm:ambtimesProper:Int}.
\end{proof}

\begin{lemm} \label{lemm:ambtimesAab}
Let $\sT \to \sS$ be an ambient morphism and $\sX \to \sS$ an abstract admissible blowup. Then $\sY := \sX \ambtimes[\sS] \sT \to \sT$ is also an abstract admissible blowup.
\end{lemm}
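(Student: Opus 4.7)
The plan is to verify the three defining properties of an abstract admissible blowup for the projection $\sY \to \sT$, namely that it is minimal, that the induced morphism $\hY \to \hT$ on total spaces is proper, and that the induced morphism $\iY \to \iT$ on interiors is an isomorphism. The fact that $\sY \to \sT$ is ambient is already built into Construction~\ref{cons:admBlowup}. All three properties will follow essentially at once from the preceding two lemmas; the one point requiring a sentence of justification is the symmetry of $\ambtimes[\sS]$ in its two arguments, since Lem.~\ref{lemm:minProduct} and Lem.~\ref{lemm:ambtimesProper} are stated with a fixed convention on which factor plays which role.

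For the interior, the first item of Lem.~\ref{lemm:ambtimesProper} gives $\iY = \iX \times_{\iS} \iT$. Since $\sX \to \sS$ is an abstract admissible blowup, $\iX \to \iS$ is an isomorphism, whence $\iY \to \iT$ is an isomorphism by base change. For properness of $\hY \to \hT$, since $\hX \to \hS$ is proper by hypothesis, the corresponding item of Lem.~\ref{lemm:ambtimesProper} applies directly and yields the conclusion.

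For minimality of $\sY \to \sT$, I apply Lem.~\ref{lemm:minProduct} with the roles of $\sT$ and $\sX$ exchanged relative to that lemma's statement; this is legitimate because the construction in Construction~\ref{cons:admBlowup} is symmetric in its two factors up to canonical isomorphism (both the scheme theoretic image of $\iT \times_{\iS} \iX$, the exceptional divisor $F$, and the final modulus $D_T + D_X - E$ are manifestly symmetric). In our setting the minimal factor is $\sX \to \sS$ and the ambient factor is $\sT \to \sS$, so the lemma yields
\[ \sY \;=\; \sX \ambtimes[\sS] \sT \;=\; (\hY,\; \mT|_{\hY}), \]
i.e. $\mY = \mT|_{\hY}$ as Cartier divisors on $\hY$. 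Consequently $\mT \times_{\hT} \hY = \mT|_{\hY} = \mY$, which is precisely the minimality condition for the projection $\sY \to \sT$.

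I do not expect any genuine obstacle: all the needed machinery is in place, and the argument reduces to invoking the two preceding lemmas with attention to the symmetry of the ambient product. The mildly fiddly step, if any, is ensuring that the reader is comfortable swapping the two factors when citing Lem.~\ref{lemm:minProduct}, which is why I would include an explicit sentence on that symmetry rather than leaving it implicit.
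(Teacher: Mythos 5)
Your proof is correct and follows essentially the same route as the paper's: minimality of $\sY \to \sT$ from Lem.~\ref{lemm:minProduct} (the modulus of the product is $\mT|_{\hY}$ when the other factor is minimal), properness of $\hY \to \hT$ from Lem.~\ref{lemm:ambtimesProper} (equivalently, as the composition of the closed immersion $\hY \to \hT \times_{\hS} \hX$ with the base change of the proper $\hX \to \hS$), and the interior isomorphism from $\iY = \iX \times_{\iS} \iT$ by base change. Your explicit remark on the symmetry of $\ambtimes[\sS]$ when citing the two auxiliary lemmas is a reasonable (if slightly pedantic) precaution that the paper leaves implicit.
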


\begin{proof}
By Lem.~\ref{lemm:minProduct}, we see that $\sY \to \sT$ is minimal. As the composition $\hY \to \hT \times_{\hS} \hX \to \hT$ of two proper morphisms, $\hY \to \hT$ is proper. By Lem.~\ref{lemm:ambtimesProper}, as $\iT \to \iS$ is an isomorphism, it is surjective, so $\hY \to \hX$ is surjective. Finally, by construction, cf. Lem.~\ref{lemm:ambtimesProper} \eqref{lemm:lemm:ambtimesProper:Int} , $\iY \to \iX$ is an isomorphism.
\end{proof}

\section{Fibre products in ${\protect \ulMSCH}$}

The goal of this subsection is the following result, which is a refinement of \cite[Corollary 1.10.8]{kmsy1}.

\begin{thm}\label{thm:pullback}
Let $\sS$ be a modulus pair, and $\sX \to \sS \leftarrow \sT$ any two ambient morphisms. Then 
\begin{equation}\label{eq1.7statement}
\begin{gathered}
\xymatrix{
\sT \ambtimes[\sS] \sX \ar[r] \ar[d] & \sX \ar[d] \\
\sT \ar[r] & \sS.
}\end{gathered}
\end{equation}
is a categorical pullback square in $\ulMSCH_{\sS}$. In particular, all finite limits exist in $\ulMSCH_{\sS}$.
\end{thm}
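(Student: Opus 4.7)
My plan is to verify the universal property of the pullback square \eqref{eq1.7statement} in $\ulMSCH_\sS$ by reducing to a construction in $\ulPSCH$ via the calculus of right fractions (Prop.~\ref{prop:cal-frac}), and then to deduce existence of all finite limits formally. Uniqueness of a morphism $\sW \to \sY := \sT \ambtimes[\sS] \sX$ with prescribed compositions to $\sT$ and $\sX$ is immediate: by Lem.~\ref{lem:coincidence2}(2) a morphism in $\ulMSCH$ is determined by its interior, and since $\iY = \iT \times_\iS \iX$ (Lem.~\ref{lemm:ambtimesProper}(1)), the interior is pinned down by the two projections.

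For existence, given $a : \sW \to \sT$ and $b : \sW \to \sX$ in $\ulMSCH_\sS$ with equal composition to $\sS$, I would first represent each by a roof in $\ulPSCH$, then use Prop.~\ref{prop:cal-frac}(3) to pass to a common admissible blowup $\sV \to \sW$ through which both $a, b$ are represented by ambient morphisms $\alpha : \sV \to \sT$ and $\beta : \sV \to \sX$. A further application of Prop.~\ref{prop:cal-frac}(4) lets me replace $\sV$ by yet another admissible blowup so that $\alpha, \beta$ have equal compositions to $\sS$ already in $\ulPSCH$. Then $(\alpha, \beta)$ induces a scheme morphism $\hV \to \hT \times_\hS \hX$; since $\iV \to \hV$ is scheme-theoretically dense (Prop.~\ref{prop:always-dense}) and the interior lands in $\iT \times_\iS \iX$, this factors uniquely through the scheme-theoretic closure $\hY_0$ of $\iT \times_\iS \iX$ in $\hT \times_\hS \hX$ from Construction~\ref{cons:admBlowup}.

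The central technical step, and the main obstacle, is to lift $\hV \to \hY_0$ to a map into $\hY = \Bl_F(\hY_0)$. Because the blowup centre $F$ need not be Cartier on $\hV$, no direct lift exists; my remedy is to form the scheme-theoretic closure $\hU$ of $\iV$ inside $\hV \times_{\hY_0} \hY$. Then $\hU \to \hV$ is proper and an isomorphism over the scheme-theoretically dense open $\iV \simeq \iU$, so by Lem.~\ref{lem:pb-dense} the pullback $\mV|_{\hU}$ is Cartier and defines a modulus pair $\sU = (\hU, \mV|_{\hU})$ together with an abstract admissible blowup $\sU \to \sV$. To check that the induced morphism $\hU \to \hY$ is ambient, I would invoke the decomposition $\mY = D_T + D_X - E$ of Construction~\ref{cons:admBlowup}: the restrictions $D_T|_{\hU}$ and $D_X|_{\hU}$ are Cartier and dominated by $\mV|_{\hU}$ (by admissibility of $\alpha, \beta$), and since $E|_{\hU}$ is still Cartier (as the pullback of a Cartier divisor), Lem.~\ref{lem:key}(2) yields $\mY|_{\hU} \leq \mV|_{\hU}$. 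Composing the resulting ambient map $\sU \to \sY$ with the admissible blowup $\sU \to \sV \to \sW$ produces the desired morphism $\sW \to \sY$ in $\ulMSCH_\sS$.

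For the ``in particular'' clause: $(\Spec\Z, \emptyset)$ is terminal in $\ulMSCH$ by Lem.~\ref{lem:coincidence2}(2), hence $\sS$ is terminal in $\ulMSCH_\sS$; together with the binary pullbacks just constructed (whence also equalisers, obtained as pullbacks of diagonals), this yields all finite limits in $\ulMSCH_\sS$.
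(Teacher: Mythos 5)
Your proof is correct and follows essentially the same route as the paper's: reduce to ambient morphisms by the calculus of fractions, factor through the scheme-theoretic closure $\hY_0$ using density of the interior, replace the source by an abstract admissible blowup in order to lift to $\hY = \Bl_F(\hY_0)$, and verify admissibility of the lift via Lemma~\ref{lem:key}(2). The only cosmetic difference is that you take the strict transform (the closure of $\iV$ in $\hV \times_{\hY_0} \hY$) where the paper blows up the source along the pulled-back centre $F_A = \ol{c}_1^{-1}(F)$ --- these constructions agree canonically --- and your aside that ``$E|_{\hU}$ is still Cartier (as the pullback of a Cartier divisor)'' needs a better justification, since pullbacks of Cartier divisors are not Cartier in general: identify $E|_{\hU}$ with the exceptional divisor of $\Bl_{F_A}(\hV)$, or apply Lemma~\ref{lem:pb-dense} to the pair $(\hY, E)$ using that $\iU$ is scheme-theoretically dense in $\hU$ and misses $E$.
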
\marginpar{\fbox{fiber product}}\index{product of modulus pairs!fiber product} 

\begin{proof}
By definition, $\sS_0$ is a terminal object in $\ulMSCH_{\sS_0}$, so the existence of all finite limits follows from existence of all fibre products.  
Since fibre products are defined up to unique isomorphism, it suffices to prove the first claim in the statement, namely that any two ambient morphisms have a categorical fibre product in $\ulMSCH_{\sS_0}$. 

We will freely use the notation from Construction~\ref{cons:admBlowup}, in particular, $\sY = \sT \ambtimes[\sS] \sX$. 
Suppose that we are given two morphisms $a,b$ which makes the following diagram commute:
\begin{equation}\label{eq1.7b}\begin{gathered}\xymatrix{
\sA \ar@{.>}[rd]^c \ar@/^10pt/[rrd]^a \ar@/_10pt/[ddr]_b \\
&\sY \ar[r]^{i'} \ar[d]_{h'} & \sX \ar[d]^f \\
&\sT \ar[r]^g & \sS.
}\end{gathered}\end{equation}

It suffices to show that there exists a unique morphism $c: \sA \to \sY$ in $\ulMSCH_{\sS_0}$ which makes the diagram commute.
The uniqueness of $c$ follows from the uniqueness of $c^\o$ (which is a consequence of \eqref{eq:1.3}) and from Lemma \ref{lem:coincidence2} (2).

We prove the existence of $c$.
By calculus of fractions, we may assume that $a$ and $b$ are ambient without loss of generality.
Then we obtain a commutative diagram of schemes
\[\xymatrix{
{\hA} \ar@/^10pt/[rrd]^{\ol{a}} \ar@/_10pt/[ddr]_{\ol{b}} \\
&{\hY}_0 \ar[r]^{\ol{i}} \ar[d]_{\ol{h}} & {\hX} \ar[d]^{\ol{f}} \\
&{\hT} \ar[r]^{\ol{g}} & {\hS}.
}\]

Recall that ${\hY}_0$ is the scheme theoretic image of ${\iX} \times_{{\iS}} {\iT} \to {\hX} \times_{{\hS}} {\hT}$. 
We claim that the induced morphism $\ol{c}_0 : {\hA} \to {\hX} \times_{{\hS}} {\hT}$ factors through ${\hY}_0$.
Indeed, since $a$ and $b$ are morphisms in $\ulPSCH_{\sS_0}$, we have ${\iA} \to {\iX} \times_{{\iS}} {\iT} = {\iY} \subset {\hY}_0$.
Therefore, ${\iA} \subset d^{-1} ({\hY}_0) \subset {\hA}$.
Since ${\iA}$ is scheme theoretically dense in ${\hA}$ by Lemma \ref{prop:always-dense}, we have $\ol{c}_0^{-1} ({\hY}_0) = {\hA}$, and hence the claim.
Thus, $\ol{c}_0$ induces a morphism $\ol{c}_1 : {\hA} \to {\hY}_0$:
\[\xymatrix{
{\hA} \ar[rd]^{\ol{c}_1} \ar@/^10pt/[rrd]^{\ol{a}} \ar@/_10pt/[ddr]_{\ol{b}} \\
&{\hY}_0 \ar[r]^{\ol{i}} \ar[d]_{\ol{h}} & {\hX} \ar[d]^{\ol{f}} \\
&{\hT} \ar[r]^{\ol{g}} & {\hS}.
}\]

Consider the blow up of ${\hA}$ along $F_A :=\ol{c}_1^{-1} (F)$ (recall \eqref{eq:1.4}):
\[
{\hB}:= \Bl_{F_A} ({\hA}) \xrightarrow{\pi_A} {\hA},
\]
and set 
\[
{\mB} := {\mA} \times_{{\hA}} {\hB}, \ \ \sB := ({\hB},{\mB}),
\]
where ${\mA} \times_{{\hA}} {\hB}$ is an effective Cartier divisor by Lemma \ref{lem:pb-bu} and by the inclusions
\[
F_A = \ol{c}_1^{-1} (F) \subset  \ol{c}_1^{-1} \ol{i}^{-1} {\mY} = \ol{a}^{-1} ({\mY}) \subset {\mA} .
\]

Then $\pi_A$ induces a minimal morphism \[p :\sB \to \sA,\] and $p \in \ul{\Sigma}$ by construction. 
Therefore, $p$ is an isomorphism in $\ulMSCH_{\sS_0}$.

Moreover, by the universal property of blow up, there exists a morphism $\ol{c}_2 : {\hB} \to {\hY}$ which makes the diagram 
\begin{equation}\label{eq:1.28}\begin{gathered}\xymatrix{
{\hB} \ar[r]^{\ol{c}_2} \ar[d]_{\ol{p}=\pi_A} & {\hY} \ar[d]^{\pi} \\
{\hA} \ar[r]^{\ol{c}_1} & {\hY}_0
}\end{gathered}\end{equation}
commute.

We claim $\ol{c}_2^{-1} {\mW} \subset {\mB}$.
Indeed, by the commutativity of the above diagrams, we have
\[
\ol{c}_2^{-1} D_Y 
=\ol{c}_2^{-1} (\pi^{-1} \ol{i}^{-1} {\mY}) 
=\pi_A^{-1} (\ol{c}_1^{-1} \ol{i}^{-1} {\mY}) 
=\pi_A^{-1} (\ol{a}^{-1} {\mY} ),
\]
and 
\[
\ol{c}_2^{-1} D_Z 
=\ol{c}_2^{-1} (\pi^{-1} \ol{h}^{-1} {\mZ}) 
=\pi_A^{-1} (\ol{c}_1^{-1} \ol{h}^{-1} {\mZ}) 
=\pi_A^{-1} (\ol{b}^{-1} {\mZ}),
\]
and since $\ol{a}^{-1} {\mY} \subset {\mA}$ and $\ol{b}^{-1} {\mZ} \subset {\mA}$ by assumption, we have 
\[
\ol{c}_2^{-1} D_Y , \ol{c}_2^{-1} D_Z \subset \pi_A^{-1} {\mA} = {\mB} .
\]
Therefore, noting that \eqref{eq1.6} implies 
\[
(\ol{c}_2^{-1} D_Y) \times_{{\hB}} (\ol{c}_2^{-1} D_Y) = (D_Y \times_{{\hY}} D_Z) \times_{{\hY}} {\hB} = \ol{c}_2^{-1} E,
\]
we obtain by Lemma \ref{lem:key} (2) that
\[
\ol{c}_2^{-1} {\mW} = \ol{c}_2^{-1} (D_Y + D_Z - E) \subset {\mB} ,
\]
which proves the claim.

The above claim let $\ol{c}_2$ define an ambient morphism $c_2 : \sB \to \sY$.
Thus, we obtain a morphism $c : \sA \to \sY$ in $\ulMSCH_{\sS_0}$ by the diagram
\[
c : \sA \xleftarrow{p} \sB \xrightarrow{c_2} \sY ,
\]
and it makes \eqref{eq:1.28} commute. 
This finishes the proof.
\end{proof}

\section{Coproducts}

\begin{lemm} \label{prop:finitePCoprod}
The category $\ulPSCH$ admit coproducts. They are preserved and detected by the forgetful functor $\ulPSCH \to \SCH$; $\sX \mapsto \hX$ from Corollary \ref{cor:twoforgetfuncs}. 
\end{lemm}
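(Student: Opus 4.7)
The plan is to construct the coproduct of any family $\{\sX_i = (\hX_i, \mX_i)\}_{i \in I}$ of modulus pairs explicitly as
\[ \sX := \biggl( \coprod_{i \in I} \hX_i,\ \coprod_{i \in I} \mX_i \biggr), \]
where $\coprod_i \hX_i$ denotes the coproduct in $\SCH$ (disjoint union) and $\coprod_i \mX_i$ is the closed subscheme formed by the disjoint union of the closed immersions $\mX_i \hookrightarrow \hX_i$, equipped with the obvious inclusions $\iota_i : \sX_i \to \sX$.

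First I would verify that $\coprod_i \mX_i$ is actually an effective Cartier divisor on $\coprod_i \hX_i$. Since the subschemes $\hX_i \subseteq \coprod_j \hX_j$ are both open and closed, being an effective Cartier divisor is a Zariski-local property on the ambient scheme, and the restriction of $\coprod_j \mX_j$ to $\hX_i$ is exactly $\mX_i$, local defining regular elements for $\mX_i$ furnish local defining regular elements for $\coprod_j \mX_j$. The same computation shows each inclusion $\iota_i$ is a minimal ambient morphism, since the pullback of $\coprod_j \mX_j$ along the open-and-closed immersion $\hX_i \hookrightarrow \coprod_j \hX_j$ is precisely $\mX_i$.

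Next I would verify the universal property. For any modulus pair $\sY = (\hY, \mY)$, the coproduct property in $\SCH$ yields a bijection
\[ \hom_{\SCH}\biggl( \coprod_i \hX_i,\ \hY \biggr) \cong \prod_i \hom_{\SCH}(\hX_i, \hY). \]
Under this bijection a morphism $\ol{f}$ on the left corresponds to a family $(\ol{f}_i)_i$ on the right, and the modulus condition $\ol{f}^{-1}(\mY) \subseteq \coprod_i \mX_i$ decomposes as the conjunction of the modulus conditions $\ol{f}_i^{-1}(\mY) \subseteq \mX_i$, because containment of closed subschemes can be tested Zariski-locally on the cover $\{\hX_i\}_i$ of the disjoint union, and the pullback along $\hX_i \hookrightarrow \coprod_j \hX_j$ sends $\coprod_j \mX_j$ to $\mX_i$. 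This gives the desired bijection $\hom_{\ulPSCH}(\sX, \sY) \cong \prod_i \hom_{\ulPSCH}(\sX_i, \sY)$.

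Preservation by the forgetful functor $\sX \mapsto \hX$ is immediate from the construction, since the underlying scheme of $\sX$ is exactly $\coprod_i \hX_i$. For detection, the key observation is that any cocone $\{\sX_i \to \sZ\}$ in $\ulPSCH$ whose underlying cocone exhibits $\hZ$ as $\coprod_i \hX_i$ forces $\mZ \leq \coprod_i \mX_i$ (by ambient-ness tested on the Zariski cover $\{\hX_i\}_i$); testing the coproduct universal property of $\sZ$ against the explicit $\sX$ constructed above produces a map $\sZ \to \sX$ whose underlying morphism is the identity and whose ambient-ness forces the reverse inequality $\coprod_i \mX_i \leq \mZ$, so $\sZ$ coincides with $\sX$. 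The only mildly subtle point is this last equality of moduli, but it is purely formal once the explicit coproduct is in hand.
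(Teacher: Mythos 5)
Your construction of the coproduct as $\bigl(\coprod_i \hX_i,\ \coprod_i \mX_i\bigr)$ and the verification of its universal property by comparing $\hom_{\ulPSCH}$ with $\hom_{\SCH}$ and checking that the modulus condition on the disjoint union is the conjunction of the modulus conditions on the pieces is exactly the paper's argument; the paper leaves the check that $\coprod_i \mX_i$ is an effective Cartier divisor implicit, and your Zariski-local justification of it is correct. So the existence and preservation claims are handled correctly and in essentially the same way.

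The ``detection'' paragraph, however, contains a genuine gap. To obtain the reverse inequality $\coprod_i \mX_i \leq \mZ$ you appeal to ``the coproduct universal property of $\sZ$'' to produce a map $\sZ \to \sX$ --- but $\sZ$ having that universal property is precisely what detection is supposed to establish, so the step is circular. The universal property you actually have at that point (that of the explicit $\sX$) only produces a map $\sX \to \sZ$ lifting the identity, and its ambient-ness gives $\mZ \leq \coprod_i \mX_i$ once more, not the reverse. Indeed the strong reflection statement is false: for a one-element family take $\sX_1 = (\hX,\mX)$ with $\mX \neq \emptyset$ and the cocone $\id_{\hX} : \sX_1 \to \sZ := (\hX,\emptyset)$; this is a legitimate ambient morphism whose underlying cocone trivially exhibits $\hX$ as the coproduct in $\SCH$, yet $\sZ$ is not the coproduct in $\ulPSCH$, since $\hom_{\ulPSCH}(\sZ,\sX_1) \to \hom_{\ulPSCH}(\sX_1,\sX_1)$ fails to hit $\id_{\hX}$ (ambient-ness for $\sZ \to \sX_1$ would force $\mX \subseteq \emptyset$). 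The paper's own proof makes no attempt at detection in this strong sense either; the honest content of ``preserved and detected'' is just that the coproduct exists and is computed on total spaces as the disjoint union, which your explicit construction already delivers. A correct reflection statement would require the additional hypothesis that the cocone maps $\sX_i \to \sZ$ are minimal, which forces $\mZ|_{\hX_i} = \mX_i$ and hence $\sZ = \sX$.
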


\begin{proof}
The first part follows from the definitions. Explicitly, we have a canonical commutative square
\[ \xymatrix{
\hom_{\ulPSCH}((\coprod \hX_i, \coprod \mX_i), \sZ) 
\ar[d] \ar@{}[r]|\subseteq & 
\hom_{\SCH}(\coprod \hX_i, \hZ) \ar[d]^\cong \\
\Pi \hom_{\ulPSCH}(\sX_i, \sZ)  \ar@{}[r]|\subseteq & \Pi \hom_{\SCH}(\hX_i, \sZ) 
} \]
and one checks easily that a morphism on the upper left satisfies the modulus condition if and only if it satisfies the modulus condition on the lower left.
\end{proof}

\begin{prop} \label{prop:finiteCoprod}
The category $\ulMSCH$ admits finite coproducts. They are preserved and detected by the functor $\ulMSCH \to \SCH$; $\sX \mapsto \iX$. More explicitly: 
\begin{enumerate}
 \item For any two modulus pairs $\sX$ and $\sY$ the modulus pair $(\hX \sqcup \hY, \mX \sqcup \mY)$ satisfies the property of categorical coproduct in $\ulMSCH$.
 \item For any modulus pair $\sX$ such that $\iX = \iX_0 \sqcup \iX_1$, there exist modulus pairs $\sX_0$ and $\sX_1$ with interiors $\iX_0$ and $\iX_1$ respectively, and an abstract admissible blowup $(\hX_0 \sqcup \hX_1, \mX_0 \sqcup \mX_1) \to \sX$.
\end{enumerate}
\end{prop}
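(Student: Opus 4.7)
The plan is to prove (1) by calculus of fractions, (2) by scheme-theoretic closures, and derive the ``preserved and detected'' assertion as a corollary. The binary case of (1) together with the obvious empty coproduct (the initial object $(\emptyset, \emptyset)$, which is clearly preserved under localisation) handles all finite coproducts by induction.

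For (1), given $f \colon \sX \to \sZ$ and $g \colon \sY \to \sZ$ in $\ulMSCH$, represent them as roofs $\sX \xleftarrow{s} \sX' \xrightarrow{f'} \sZ$ and $\sY \xleftarrow{t} \sY' \xrightarrow{g'} \sZ$ with $s, t \in \ul\Sigma$. The canonical ambient morphism $(\hX' \sqcup \hY', \mX' \sqcup \mY') \to (\hX \sqcup \hY, \mX \sqcup \mY)$ lies in $\ul\Sigma$: minimality and the isomorphism on interiors are inherited componentwise, and properness passes through finite disjoint union. By Lem.~\ref{prop:finitePCoprod}, the pair $(f', g')$ assembles into a unique ambient morphism $(\hX' \sqcup \hY', \mX' \sqcup \mY') \to \sZ$, so the resulting roof represents a morphism $(\hX \sqcup \hY, \mX \sqcup \mY) \to \sZ$ in $\ulMSCH$. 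This is independent of representatives and yields a bijection between $\ulMSCH(\sX \sqcup \sY, \sZ)$ and $\ulMSCH(\sX, \sZ) \times \ulMSCH(\sY, \sZ)$, since both sides are determined on the interior $\iX \sqcup \iY$ by Lem.~\ref{lem:coincidence2}(2).

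For (2), let $\hX_i \subseteq \hX$ be the scheme-theoretic closure of $\iX_i$ in $\hX$. Since $\iX_i$ is clopen in $\iX$ (in particular closed there), one verifies $\hX_i \cap \iX = \iX_i$ scheme-theoretically. By construction the open immersion $\iX_i \hookrightarrow \hX_i$ has scheme-theoretic image $\hX_i$, so Lem.~\ref{lem:pb-dense} shows that $\mX_i := \mX \times_{\hX} \hX_i$ is an effective Cartier divisor on $\hX_i$, with $\iX_i = \hX_i \setminus \mX_i$. Set $\sX_i := (\hX_i, \mX_i)$. The natural morphism $(\hX_0 \sqcup \hX_1, \mX_0 \sqcup \mX_1) \to \sX$ is minimal by construction, finite (hence proper; a finite disjoint union of closed immersions is finite), and restricts to the identity $\iX_0 \sqcup \iX_1 = \iX$ on interiors, so lies in $\ul\Sigma$.

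Preservation of coproducts under $(-)^\o$ is immediate from (1) since $(\hX \sqcup \hY, \mX \sqcup \mY)^\o = \iX \sqcup \iY$. For detection, given $\sA \to \sW \leftarrow \sB$ in $\ulMSCH$ whose image under $(-)^\o$ is a coproduct $\iA \to \iW \leftarrow \iB$, apply (2) to $\sW$ with $\iW = \iA \sqcup \iB$ to obtain an isomorphism $\sW \xrightarrow{\sim} \sW_0 \sqcup \sW_1$ in $\ulMSCH$ with $\sW_0^\o = \iA$ and $\sW_1^\o = \iB$; faithfulness of $(-)^\o$ (Lem.~\ref{lem:coincidence2}(2)) then identifies the given morphisms with the canonical coproduct inclusions composed with this isomorphism. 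The subtlest technical point is the scheme-theoretic identification $\hX_i \cap \iX = \iX_i$ in (2), relying on the fact that a clopen subscheme coincides with its own scheme-theoretic closure inside any ambient scheme in which it is closed in some open neighbourhood.
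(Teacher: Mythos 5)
Your proposal is correct and follows essentially the same route as the paper: part (1) reduces to the coproduct in $\ulPSCH$ (Lem.~\ref{prop:finitePCoprod}) via the observation that disjoint unions of abstract admissible blowups are again abstract admissible blowups (the paper packages this as the statement that the category of \aab{}s of $\sX \sqcup \sY$ is the product of the two blowup categories), and part (2) takes scheme-theoretic closures of the $\iX_i$, invokes Lem.~\ref{lem:pb-dense} for the Cartier condition, and uses quasi-compactness of $\iX \to \hX$ so that scheme-theoretic image commutes with restriction to the interior. The one point worth stating explicitly, as the paper does via Lem.~\ref{lem:qc-interior}, is that the identification $\hX_i \times_{\hX} \iX = \iX_i$ rests on this quasi-compactness.
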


\begin{proof}
The first part follows from the statement for $\ulPSCH$, Lem.~\ref{prop:finitePCoprod}, since the category of abstract admissible blowups of $(\hX \sqcup \hY, \mX \sqcup \mY)$ is the product of the categories of abstract admissible blowups of $\sX$ and $\sY$.

We prove the second part. 
Let $\ol{\iota_i} : \hX_i \to \hX$ be the schematic closure of $\iX_i$ in $\hX$ for $i=0,1$.
Then the closed subschemes $\mX \times_{\hX} \hX_i$ is an effective Cartier divisor for each $i=0.1$. Indeed, by Lemma \ref{lem:pb-dense}, it suffices to show that the schematic image of the open immersion $\ol{\iota}_i^{-1} (\iX) \to \hX_i$ equals $\hX_i$, which is obvious by $\iX_i \subset \iota_i^{-1} (\iX)$.
Therefore, we obtain modulus pairs $\sX_i := (\hX_i , \mX_i)$, where $\mX_i := \mX \times_{\hX} \hX_i$ for all $i$.
Then, for each $i$, the map $\ol{\iota_i}$ induces a minimal morphism $\iota_i : \sX_i \to \sX$. 
Let $p : \sX_0 \sqcup \sX_1 \to \sX$ be the induced morphism. 

We will show that $p$ is an abstract admissible blowup, i.e., that $p$ is minimal, $\ol{p}$ is proper, and the induced morphism $\ol{p}^{-1}(\iX) \to \iX$ is an isomorphism. 
The minimality is immediate by construction.
Since $\ol{p} : \hX_0 \sqcup \hX_1 \to \hX$ is induced by the closed immersions $\ol{\iota}_i$, it is proper (to show this fact, we use the commutativity of coproduct and fiber product of schemes). 
Finally, we prove an isomorphism $\ol{p}^{-1}(\iX) \cong \iX$.
Recall the fact that the construction of schematic image of a quasi-compact morphism commutes with the restriction to the open subscheme.
Then, since the open immersion $j : \iX \to \hX$ is quasi-compact by Lemma \ref{lem:qc-interior},  the following two objects are the same: \begin{enumerate}
\item the schematic image of $\iX_i = j^{-1}(\iX_i)$ in $\iX$,
\item the pullback along $j$ of the schematic image of $\iX_i \to \hX$.
\end{enumerate}
But (1) is nothing but $\iX_i$ since it is already closed in $\iX = \iX_0 \sqcup \iX_1$, and 
(2) equals $\hX_i \times_{\hX} \iX$ by definition of $\hX_i$.
Therefore, we have $\iX \times_{\hX} \hX_i = \iX_i$ for all $i$, which implies $\ol{p}^{-1}(\iX) \cong \iX$. 
This finishes the proof. 
\end{proof}


\section{Pro-modulus pairs} \label{sec:proModPairs}

Throughout this section, we fix a base-modulus pair $\sS$, and often assume that $\sS$ is qcqs, that is, $\hS$ is quasi-compact and quasi-separated.
We show that, for such $\sS$ and nice pro-objects $(\sX_{λ})$ in $\ulPSCH_\sS$, we have
\[ 
\varinjlim \hom_{\ulMSCH_\sS}(\sX_λ, \sY) 
\cong 
\hom_{\ulMSCH_\sS}(\amblim \sX, \sY)
\]
if $\hY \to \hS$ is finite type and separated, cf.,Prop.~\ref{prop:limXYMsch}. This is used in Section~\ref{sec:locProMod} to connect pro-modulus pairs that are local for a topology, and modulus pairs that are local for a topology. We also use it in Thm.~\ref{thm:finHeavy} in conjunction with the relative Riemann-Zariski space.

Under the assumption that the total space $\hS$ is noetherian, we can get a fully faithful embedding 
\[ \ulPSCH_\sS^{\qcqs} \subseteq \Pro(\ulPSch_\sS), \]
where $\ulPSCH_{\sS}^{\qcqs}$ denotes the full subcategory of $\ulPSCH_{\sS}$ consisting of qcqs $\sS$-schemes (cf. Def.~\ref{def:ambientmorph}~\eqref{item:morphprop}).

\subsection{The category ${{\protect\ulMSch_\sS}}$}

\begin{defi}\ 
\begin{enumerate}
 \item An ambient morphism $\sX \to \sY$ is called \emph{separated}\marginpar{\fbox{separated}}\index{morphism of modulus pairs!ambient morphism!separated} %
 (resp. \emph{finite type}\marginpar{\fbox{finite type}}\index{morphism of modulus pairs!ambient morphism!finite type})
 if the morphism ${\hX} \to {\hY}$ is separated (resp. finite type). 
 
 \item A morphism $\sX \to \sY$ in $\ulMSCH$ is called 
 \emph{separated}\marginpar{\fbox{separated}}\index{morphism of modulus pairs!separated} %
 (resp. \emph{finite type}\marginpar{\fbox{finite type}}\index{morphism of modulus pairs!finite type})
  if it is of the form $f \circ s^{-1}$ with $f$ a separated (resp. finite type) ambient morphism and $s$ an abstract admissible blowup. 

 \item $\ulMSch_\sS$
  \marginpar{\fbox{$\ulMSch_\sS$}}%
  \index[not]{$\ulMSch_\sS$} %
  denotes the full subcategory of $\ulMSCH_\sS$ whose objects are isomorphic to $\sS$-modulus pairs of finite type.
\end{enumerate}
\end{defi}

\begin{prop}
Let $\sS$ be a modulus pair, and let $\ul{\Sigma}_{f.t.}$ be the class of abstract admissible blowups $\sX \to \sY$ fitting into commutative triangles %
$\underset{\sY}{\stackrel{\sX}{\downarrow}} {}^{\searrow}_{\nearrow} {\sS}$ in $\ulPSch_{\sS}$. Then there is a canonical equivalence of categories
\[ 
\ulMSch_\sS \cong \ulPSch_\sS[\ul{\Sigma}_{f.t.}^{-1}]
\]
where $\ulPSch_\sS$ 
is the full category of $\ulPSCH_{\sS}$ consisting of $\sS$-schemes of finite type. 
\end{prop}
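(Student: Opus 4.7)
The approach is to invoke a variant of Lem.~\ref{lemm:invertSliceGeneral}, replacing the slice category by the full subcategory of finite type objects. The key observation that makes everything work is that if $s : \sX' \to \sX$ is an abstract admissible blowup, then $\ol{s} : \hX' \to \hX$ is proper and hence of finite type, so whenever $\sX \to \sS$ is of finite type, so is $\sX' \to \sS$. In particular, $\ul{\Sigma}_{f.t.}$ is intrinsically characterised as the restriction of $\ul{\Sigma}_\sS$ to $\ulPSch_\sS$.

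First I would verify that $(\ulPSch_\sS, \ul{\Sigma}_{f.t.})$ enjoys a calculus of right fractions. Conditions (1) and (2) of Prop.~\ref{prop:cal-frac} are immediate. For condition (3), given a roof $\sX' \stackrel{s}{\to} \sX \stackrel{f}{\leftarrow} \sY$ with $\sX,\sX',\sY$ of finite type over $\sS$ and $s \in \ul{\Sigma}_{f.t.}$, the completion $\sY' \to \sY$ constructed in the proof of Prop.~\ref{prop:cal-frac} lies in $\ul{\Sigma}_\sS$, so $\hY' \to \hY$ is proper; composing with $\hY \to \hS$ shows $\sY' \to \sS$ is of finite type, hence $\sY' \in \ulPSch_\sS$ and the pullback $\sY' \to \sX'$ is a morphism of $\ulPSch_\sS$. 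Condition (4) passes to the subcategory trivially.

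Next I would construct the canonical comparison functor. The inclusion $\ulPSch_\sS \hookrightarrow \ulPSCH_\sS$ composed with the localisation $\ulPSCH_\sS \to \ulMSCH_\sS$ sends $\ul{\Sigma}_{f.t.}$ into isomorphisms, and its image lands in $\ulMSch_\sS$ by the very definition of the latter. By the universal property of Gabriel--Zisman localisation this yields a canonical functor
\[ \Phi \colon \ulPSch_\sS[\ul{\Sigma}_{f.t.}^{-1}] \longrightarrow \ulMSch_\sS. \]
Essential surjectivity is then tautological: every object of $\ulMSch_\sS$ is, by definition, isomorphic in $\ulMSCH_\sS$ to one in the image of $\ulPSch_\sS$.

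For fullness and faithfulness, take two finite type objects $\sX, \sY \in \ulPSch_\sS$. By the calculus of fractions in $\ulPSCH_\sS$, a morphism $\sX \to \sY$ in $\ulMSCH_\sS$ is represented by a roof $\sX \stackrel{s}{\leftarrow} \sX' \stackrel{g}{\to} \sY$ with $s \in \ul{\Sigma}_\sS$. Since $\ol{s}$ is proper and $\hX \to \hS$ is of finite type, $\hX' \to \hS$ is of finite type, so $\sX' \in \ulPSch_\sS$, $s \in \ul{\Sigma}_{f.t.}$, and $g$ is a morphism of $\ulPSch_\sS$; this gives fullness. For faithfulness, if two such roofs $\sX \stackrel{s}{\leftarrow} \sX' \stackrel{g}{\to} \sY$ and $\sX \stackrel{t}{\leftarrow} \sX'' \stackrel{h}{\to} \sY$ in $\ulPSch_\sS[\ul{\Sigma}_{f.t.}^{-1}]$ become equal in $\ulMSCH_\sS$, then by calculus of fractions in $\ulPSCH_\sS$ there is a common refinement $\sX''' \to \sX', \sX''$ in $\ul{\Sigma}_\sS$ making the compositions to $\sX$ and $\sY$ coincide. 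But properness of $\sX''' \to \sX'$ (or $\sX''$) forces $\sX''' \in \ulPSch_\sS$, so the refinement lies in $\ul{\Sigma}_{f.t.}$, proving equality already in $\ulPSch_\sS[\ul{\Sigma}_{f.t.}^{-1}]$.

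There is no genuine obstacle; the only step requiring care is the verification that abstract admissible blowups behave well with respect to the finite type condition. This reduces entirely to the elementary fact that properness implies finite type and that finite type morphisms compose, so all ``completion'' and ``refinement'' steps in the calculus of fractions remain inside $\ulPSch_\sS$.
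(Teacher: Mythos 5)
Your proof is correct and follows essentially the same route as the paper: the whole statement reduces to the observation that an abstract admissible blowup has proper (hence finite type) total space map, so every roof and every refinement used to compute hom-sets in $\ulMSCH_\sS$ between finite type objects already lives in $\ulPSch_\sS$, whence the hom-colimits agree. The paper states this as a terse direct comparison of the colimit diagrams; your version merely makes the calculus-of-fractions bookkeeping explicit.
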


\begin{proof}
First notice that since elements of $\ul{\Sigma}_{f.t.}$ are isomorphisms in $\ulMSch_\sS$, every object can be represented by one whose structural morphism is ambient. Then we calculate directly that in all cases, the hom's are calculated as colimits over commutative diagrams of the form
\[ \xymatrix{
\sX \ar[dr] & \ar[l]_{\in \ul{\Sigma}} \ar[d] \ar[r] \sX' & \sY \ar[dl] \\
&\sS
} \]
where transition morphisms between two diagrams are of the form
\[ \xymatrix{
& \sX'' \ar[d] \ar[dl]_{\in \ul{\Sigma}} \ar[dr] & \\
\sX \ar[dr] & \ar[l]_{\in \ul{\Sigma}} \ar[d] \ar[r] \sX' & \sY \ar[dl] \\
&\sS,
} \]
which finishes the proof.
\end{proof}

\subsection{Pro-modulus pairs over a qcqs base pair}

We begin with a scheme-theoretic result that we do not know a reference for. Recall that we provided the definition of schematically dominant in Def.~\ref{defi:SchDom}. The result Prop.~\ref{prop:SCHLIMIT} below is well-known when $Y \to S$ is \emph{of finite presentation}, without assuming that the transition morphisms of $(X_λ)_{λ \in \Lambda}$ are schematically dominant. Our observation is that if we strength the conditions on $(X_λ)_{λ \in \Lambda}$ we can weaken the conditions on $Y$.

\begin{prop} \label{prop:SCHLIMIT}
Let $S$ be a scheme, $(X_λ)_{λ \in \Lambda}$ a filtered system of $S$-schemes with affine \emph{schematically dominant} transition morphisms, $Y \to S$ a morphism locally of finite type, and suppose all $S, X_λ, Y$ are qcqs. Then the canonical morphism
\[ 
\varinjlim \hom_{\SCH_S}(X_λ, Y) 
\to 
\hom_{\SCH_S}(\lim X_λ, Y)
\]
is an isomorphism.
\end{prop}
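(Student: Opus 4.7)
The plan is to adapt the standard limit-vs-hom argument (e.g., \cite[IV$_3$, Th.~8.8.2]{EGAIV3} or \cite[Tag 01ZC]{stacks-project}), which gives the conclusion when $Y \to S$ is of finite presentation. In our situation the schematic dominance hypothesis on the transition morphisms will substitute for the usual role of finite presentation; the only substantive novelty appears in the affine case, and the globalisation is standard scheme-theoretic bookkeeping.

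First I would reduce to the affine case: $S = \Spec A$, $X_\lambda = \Spec C_\lambda$, $\lim X_\lambda = \Spec C$ with $C = \colim C_\lambda$, and $Y = \Spec B$ with $B = A[t_1, \dots, t_n]/I$ a finitely generated (but not necessarily finitely presented) $A$-algebra. Schematic dominance of each transition morphism amounts to injectivity of $C_\lambda \hookrightarrow C_\mu$, and filtered colimits of injections are injections, so $C_\lambda \hookrightarrow C$ is injective as well. Injectivity of the colimit map on homs is then immediate: if two $A$-algebra maps $B \rightrightarrows C_\lambda$ agree after composing with $C_\lambda \hookrightarrow C$, they already agree in $C_\lambda$. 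For surjectivity, given $\varphi : B \to C$, lift the finitely many images $\varphi(t_i) \in C$ to some $c_i \in C_\lambda$ (possible by filteredness, since there are only finitely many of them) and let $\tilde\varphi : A[t_1, \dots, t_n] \to C_\lambda$ send $t_i$ to $c_i$; for each relation $h \in I$, $\tilde\varphi(h) \in C_\lambda$ maps to $\varphi(h) = 0$ in $C$, so it vanishes already in $C_\lambda$ by injectivity, and hence $\tilde\varphi$ factors through $B$. This step crucially does \emph{not} require $I$ to be finitely generated, which is exactly where schematic dominance replaces finite presentation.

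To globalise, I would follow the familiar pattern of \cite[IV$_3$, \S 8]{EGAIV3}. For injectivity of the hom map in general, cover $X_\lambda$ and $Y$ by finitely many affine opens and apply the affine case. For surjectivity, given $f : X \to Y$, choose a finite affine open cover $\{V_j\}$ of $Y$ with each $V_j$ landing in an affine open of $S$, and a finite affine open cover $\{U_{j,k}\}$ of each $f^{-1}(V_j) \subseteq X$. The quasi-compact opens $U_{j,k}$ descend to affine opens $U_{j,k,\lambda_0} \subseteq X_{\lambda_0}$ for some $\lambda_0$ by the standard descent of qc opens in cofiltered affine limits (\cite[IV$_3$, 8.2.11]{EGAIV3}, \cite[Tag 01ZM]{stacks-project}); the affine case lifts each $U_{j,k} \to V_j$ to $f_{j,k,\lambda_1} : U_{j,k,\lambda_1} \to V_j$ for some $\lambda_1 \geq \lambda_0$; the injectivity just proved forces these to agree on pairwise overlaps after passing to some $\lambda_2 \geq \lambda_1$; and after further enlarging $\lambda$ so that the $U_{j,k,\lambda}$ actually cover $X_\lambda$, the finitely many pieces glue to a morphism $f_\lambda : X_\lambda \to Y$ lifting $f$.

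The main obstacle is really concentrated in the affine case: using injectivity of $C_\lambda \to C$ coming from schematic dominance to force each relation $h \in I$ to vanish already in $C_\lambda$, thereby dispensing with the usual appeal to only finitely many relations. The rest is EGA IV \S 8 bookkeeping; the only care required is coordinating the index shifts when descending opens, lifting affine morphisms, and gluing via injectivity, but no new ideas should be needed.
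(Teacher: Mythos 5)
Your proof is correct, but it takes a genuinely different route from the one in the paper. The paper handles surjectivity by approximating $Y$ itself: it writes $Y = \lim_\mu Y_\mu$ as a cofiltered limit of finitely presented $S$-schemes with closed-immersion transition maps (\cite[Tag 09ZQ]{stacks-project}), invokes the finite-presentation statement \cite[Prop.8.13.2]{EGAIV3} to descend $f$ to some $f_\mu: X_{\lambda_\mu} \to Y_\mu$, and then produces the lift $X_{\lambda_\mu} \to Y$ as the unique diagonal filler in the square formed by the closed immersion $Y \to Y_\mu$ and the schematically dominant $X \to X_{\lambda_\mu}$; injectivity is quoted from \cite[Prop.8.13.1]{EGAIV3}. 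You instead redo the affine argument from scratch, observing that injectivity of $C_\lambda \hookrightarrow C$ lets you kill each relation $h \in I$ individually without any finiteness hypothesis on $I$, and then globalise by the usual descent of quasi-compact opens, lifting on pieces, and gluing. Both arguments exploit the same underlying mechanism --- schematic dominance means vanishing in the colimit is detected at a finite stage --- but package it differently: the paper's version is shorter because \cite[Prop.8.13.2]{EGAIV3} absorbs all of the globalisation bookkeeping, while yours is more self-contained and avoids the appeal to \cite[Tag 09ZQ]{stacks-project} and the lifting-property argument, at the cost of carrying the EGA~IV~\S8 descent steps explicitly (in particular, arranging that the descended opens $U_{j,k,\lambda}$ land in the chosen affine of $S$ and that they eventually cover $X_\lambda$, which deserves at least a citation of the spectral-space limit argument). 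Note also that the restrictions of the transition morphisms to the preimages of the $U_{j,k,\lambda}$ remain schematically dominant since injectivity of $\OO_{X_\lambda} \to (f_{\lambda\mu})_*\OO_{X_\mu}$ is local; this is needed for your affine case to apply piecewise, and is worth a sentence.
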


\begin{proof}
By \cite[Prop.8.13.1]{EGAIV3} the morphism is injective, so it suffices to prove surjectivity. We can write $Y$ as a filtered limit $Y = \lim Y_\mu$ of $S$-schemes of finite presentation, with all $Y \to Y_\mu$ and $Y_\mu \to Y_{\mu'}$ closed immersions, \cite[Tag 09ZQ]{stacks-project}. Then by \cite[Prop.8.13.2]{EGAIV3}, the canonical morphism 
\[ \hom_S(X, Y) \to \lim \varinjlim \hom_S(X_λ, Y_\mu) \]
is a bijection, where $X = \lim X_λ$. Let $(f_\mu: X_{λ_\mu} \to Y_\mu)$ be any representative of an $S$-morphism $f: X \to Y$ that we would like to lift to the colimit in the statement. In particular, for each $λ$ we have a commutative square 
\[ \xymatrix@!=6pt{
X \ar[r]^f \ar[d] & Y \ar[d] \\
X_{λ_\mu} \ar[r]_{f_\mu} & Y_\mu.
} \]
Now since $Y \to Y_\mu$ is a closed immersion and $X \to X_{λ_\mu}$ is schematically dominant, we find a unique diagonal morphism $X_{λ_\mu} \to Y$ making both new triangles commute. Hence, $f: X \to Y$ factors through some $X \to X_{λ_\mu} \to Y$.
\end{proof}

\begin{warn}
The above does \emph{not} imply that
\[ \lim: 
\left \{ 
\begin{array}{c}
\textrm{pro-objects of } \Sch_S \textrm{ with} \\
\textrm{affine, schematically dominant} \\
\textrm{transition morphisms} 
\end{array}
\right\} 
\to \SCH_S \]
is fully faithful (where the source is the full subcategory of $\Pro(\Sch_S)$ described). Indeed, let $\NN \sqcup \{∞\}$ be the one point compactification of the discrete space $\NN$, and consider the system $(U_{λ})_λ$ of open subspaces containing $∞$. Then $\lim U_λ = \{∞\}$, but as a pro-object $(U_λ)_λ$ is not isomorphic to the constant pro-object $\{∞\}$. Now note that everything we just said can be transferred to $\Sch_\ZZ$ by the canonical fully faithful functor sending a profinite set $\lim_i S_i$ to the affine scheme $\colim_i \prod_{s \in S_i} \mathbb{C}$.
\end{warn}

\begin{lemm}[Some filtered limits exist] \label{lemm:limitExists}
Let $\sS$ be a modulus pair, let $(\sX_λ)_{λ \in \Lambda}$ be a filtered system in $\ulPSCH_\sS$ such that $\hX := \lim \hX_λ$ exists in the category of schemes, the system of ind-closed subschemes $(\mX_λ|_{\hX})$ stabilises to some $\mX$, and this colimit $\mX$ is an effective Cartier divisor. Then each $(\hX, \mX) \to (\hX_λ, \mX_λ)$ is an admissible morphism of ambient modulus pairs and for any $\sY \in \ulPSCH_\sS$ the canonical morphism
\[ 
\hom_{\ulPSCH_\sS}(\sY, (\hX, \mX)) 
\to 
\lim 
\hom_{\ulPSCH_\sS}(\sY, \sX_λ) 
\] 
is an isomorphism. In this case we will write 
\[ \amblim \sX_λ := (\lim \hX_λ, \lim \mX_λ|_{\lim \hX_λ}) \]
to emphasize that the limit property holds in $\ulPSCH_\sS$. 
\end{lemm}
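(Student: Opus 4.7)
The argument splits into two parts: checking that each $(\hX,\mX) \to (\hX_\lambda,\mX_\lambda)$ satisfies the modulus condition, and then upgrading the scheme-theoretic universal property of $\hX = \lim \hX_\lambda$ to the claimed universal property in $\ulPSCH_\sS$. Nothing should be difficult here; the real content is packaged into the stabilisation hypothesis and the assumption that $\mX$ is an effective Cartier divisor.

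For the admissibility claim, the first observation is that, since every transition morphism in the system is ambient, the family $(\mX_\lambda|_\hX)_\lambda$ of ind-closed subschemes of $\hX$ is directed: whenever there is a transition $\sX_\lambda \to \sX_\mu$, the modulus condition on transitions combined with pulling back along $\hX \to \hX_\lambda \to \hX_\mu$ gives $\mX_\mu|_\hX \subseteq \mX_\lambda|_\hX$. The stabilisation hypothesis then produces an index $\lambda_0$ such that $\mX_\lambda|_\hX = \mX$ for all $\lambda \geq \lambda_0$, and hence $\mX_\lambda|_\hX \subseteq \mX$ for every $\lambda \in \Lambda$ by directedness. Since $\mX$ is assumed to be an effective Cartier divisor, this is precisely the modulus condition for $(\hX,\mX) \to (\hX_\lambda,\mX_\lambda)$ to be ambient.

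For the hom-set bijection, I would invoke the universal property of $\hX = \lim \hX_\lambda$ in the category of schemes: a compatible family $(f_\lambda: \hY \to \hX_\lambda)$ of $\hS$-morphisms corresponds bijectively to a single $\hS$-morphism $f: \hY \to \hX$. It then suffices to check that the modulus condition $f^\ast \mX \subseteq \mY$ on the left is equivalent to the conjunction of the modulus conditions $f_\lambda^\ast \mX_\lambda \subseteq \mY$ on the right. Using $f_\lambda^\ast \mX_\lambda = f^\ast(\mX_\lambda|_\hX)$ (factoring $f_\lambda$ through $\hX$) together with $\mX_\lambda|_\hX \subseteq \mX$ gives one implication; for the converse, pick any $\lambda \geq \lambda_0$, where the equality $\mX_\lambda|_\hX = \mX$ yields $f^\ast \mX = f_\lambda^\ast \mX_\lambda \subseteq \mY$.

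There is no real obstacle: the only delicate point is that one must check $\mX$ can be pulled back through $f$ (i.e., that the modulus condition is well-posed on the left), but this is guaranteed by the hypothesis that $\mX$ is already an effective Cartier divisor on $\hX$. The stabilisation of $(\mX_\lambda|_\hX)$ is what makes both the admissibility of the projections and the equivalence of modulus conditions work cleanly at every finite stage $\lambda$.
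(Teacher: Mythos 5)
Your proof is correct and is essentially the same (routine) argument the paper intends: the paper's own proof of this lemma is the single line ``All claims follow from the hypotheses and the definitions,'' and what you have written — directedness and stabilisation of the system $(\mX_\lambda|_{\hX})$ to get admissibility of each projection, then the scheme-level universal property of $\lim \hX_\lambda$ together with the equality $\mX = \mX_{\lambda}|_{\hX}$ for $\lambda$ beyond the stabilisation index to transfer the modulus condition in both directions — is exactly the verification being deferred to. No gaps; nothing further is needed.
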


\begin{proof} \label{lemm:limitExistsExists}
All claims follow from the hypotheses and the definitions.
\end{proof}

\begin{lemm}[Affine constant modulus filtered limits exist]
In the notation of Lem.~\ref{lemm:limitExists}, the hypotheses of that lemma are satisfied if there exists $λ_0$ such that the morphisms $\sX_λ \to \sX_{λ'}$ are minimal, and the morphisms $\hX_λ \to \hX_{λ'}$ are %
affine morphisms for all $λ' \geq λ_0$. %
In this case $\amblim \sX_λ \to \sX_λ$ is minimal for every $λ \geq λ_0$.
\end{lemm}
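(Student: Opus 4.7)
The plan is to invoke the standard theory of cofiltered limits of schemes along affine transition morphisms. Since $\{λ : λ \geq λ_0\}$ is cofinal in $\Lambda$, we may replace $\Lambda$ by this subsystem, so that all transition morphisms are minimal and affine on total spaces. Then $\hX := \lim \hX_λ$ exists in the category of schemes by \cite[\S 8.2]{EGAIV3} (or \cite[Tag 01YX]{stacks-project}), and the structural morphisms $\hX \to \hX_λ$ are affine. Because each $\sX_λ \to \sX_{λ_0}$ is minimal we have $\mX_λ = \mX_{λ_0} \times_{\hX_{λ_0}} \hX_λ$ as closed subschemes of $\hX_λ$, and pulling back further to $\hX$ gives $\mX_λ|_{\hX} = \mX_{λ_0}|_{\hX}$. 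Hence the ind-system $(\mX_λ|_{\hX})_{λ \geq λ_0}$ is constant, equal to $\mX := \mX_{λ_0}|_{\hX}$.

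The main (and only genuine) technical point is to verify that this stable pullback $\mX$ is an effective Cartier divisor on $\hX$. The condition is Zariski local on $\hX$, and since $\hX \to \hX_{λ_0}$ is affine we may pull back a trivialising affine open from $\hX_{λ_0}$: we reduce to the case where $\hX_{λ_0} = \Spec A_{λ_0}$ and $\mX_{λ_0} = V(f)$ for some non-zero-divisor $f \in A_{λ_0}$. For $λ \geq λ_0$ write $\hX_λ = \Spec A_λ$; minimality of $\sX_λ \to \sX_{λ_0}$ exactly says that the image $f_λ$ of $f$ in $A_λ$ is a non-zero-divisor, i.e.~multiplication $A_λ \xrightarrow{f_λ} A_λ$ is injective. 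The corresponding affine open of $\hX$ is $\Spec \varinjlim_{λ \geq λ_0} A_λ$, and we must check that multiplication by (the image of) $f$ on $\varinjlim A_λ$ is injective. But this is immediate from exactness of filtered colimits of abelian groups: if $f \cdot a = 0$ for $a$ represented by some $a_{λ_1} \in A_{λ_1}$, then $f_{λ_2} \cdot a_{λ_2} = 0$ in some $A_{λ_2}$ with $λ_2 \geq λ_1$, whence $a_{λ_2} = 0$ and so $a = 0$.

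With $\mX$ now established as an effective Cartier divisor, Lem.~\ref{lemm:limitExists} applies and produces $\amblim \sX_λ = (\hX, \mX)$. Minimality of $\amblim \sX_λ \to \sX_λ$ for every $λ \geq λ_0$ is then nothing but the equality $\mX = \mX_λ|_{\hX}$ already noted above. The only step requiring a real argument is the preservation of the non-zero-divisor property under the filtered colimit of rings, and as explained this is a one-line consequence of exactness of filtered colimits.
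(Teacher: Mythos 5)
Your proof is correct and follows essentially the same route as the paper's: existence of $\lim \hX_λ$ from affineness of the transition morphisms, reduction to the affine globally principal case, and the observation that a filtered colimit of rings preserves the non-zero-divisor property by exactness of filtered colimits. The only additions are the explicit remark that minimality makes the ind-system $(\mX_λ|_{\hX})$ constant and the closing verification of minimality of $\amblim \sX_λ \to \sX_λ$, both of which the paper leaves implicit.
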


\begin{proof}
The limit $\lim \hX_λ$ exists in the category of schemes by affineness of the transition morphisms. For the claim that $\mX_λ|_{\hX}$ is an effective Cartier divisor, it suffices to consider the case that $\hX_λ$ is affine and $\mX_λ$ is globally principal. Now the claim follows from the fact that for any filtered colimit of rings $A = \varinjlim A_λ$, and $f_λ \in A_λ$ the image $f \in A$ of $f_λ$ is a zero divisor if $f_λ|_{A_\mu}$ is a nonzero divisor for all $\mu \geq λ$.
\end{proof}

\begin{lemm} \label{lemm:pullbacklimamb}
Let $\sS_{α}$ be a modulus pair, $(\sS_λ)_λ$ a filtered system in $\ulPSCH_\sS$ with affine minimal transition morphisms, $\sX_α \in \ulPSCH_{\sS_α}^\qcqs$, and set $\sX_λ := \sS_λ \ambtimes_{\sS_α} \sX_α$. Then the system $(\sX_λ)_λ$ also has affine minimal transition morphisms, and there is a canonical isomorphism
\[
\amblim \sX_λ \cong (\amblim \sS_λ) \ambtimes_{\sS_α} \sX_α
\]
in $\ulPSCH_\sS$.
\end{lemm}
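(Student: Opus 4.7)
The plan is to set $\sS_\infty := \amblim \sS_\lambda$, which exists thanks to the affine minimal transition morphisms, and to define $\sX_\infty := \sS_\infty \ambtimes_{\sS_\alpha} \sX_\alpha$. I would then verify that $\sX_\infty$, together with the canonical maps $\sX_\infty \to \sX_\lambda$, realises the ambient limit $\amblim \sX_\lambda$ by applying Lemma~\ref{lemm:limitExists}.

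First I would describe the transition morphisms of the system $(\sX_\lambda)$. For $\lambda \geq \lambda' \geq \alpha$, the chain $\sS_\lambda \to \sS_{\lambda'} \to \sS_\alpha$ consists of minimal morphisms, so Corollary~\ref{coro:ambtimesambtimes} gives an isomorphism $\sX_\lambda \cong \sS_\lambda \ambtimes_{\sS_{\lambda'}} \sX_{\lambda'}$. By Lemma~\ref{lemm:minProduct}, the transition $\sX_\lambda \to \sX_{\lambda'}$ is then minimal and $\hX_\lambda$ is a closed subscheme of $\hS_\lambda \times_{\hS_{\lambda'}} \hX_{\lambda'}$; consequently $\hX_\lambda \to \hX_{\lambda'}$ is affine, being the composition of a closed immersion with the affine base change of $\hS_\lambda \to \hS_{\lambda'}$. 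The same reasoning applied to $\sS_\infty \to \sS_\lambda \to \sS_\alpha$ shows that the morphisms $\sX_\infty \to \sX_\lambda$ are likewise minimal with affine underlying scheme map. This settles the first assertion of the lemma.

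Next I would reduce the second assertion to a scheme-theoretic identification. By minimality of $\sX_\lambda \to \sX_\alpha$ we have $\mX_\lambda = \mX_\alpha|_{\hX_\lambda}$ for every $\lambda$, including $\infty$; so the ind-closed subschemes $(\mX_\lambda|_{\lim \hX_\lambda})_\lambda$ are all equal to $\mX_\alpha|_{\lim \hX_\lambda}$, which is an effective Cartier divisor by Proposition~\ref{prop:pullbackOK}. Hence Lemma~\ref{lemm:limitExists} applies once we identify $\hX_\infty$ with $\lim \hX_\lambda$ as schemes, and it yields $\amblim \sX_\lambda \cong (\lim \hX_\lambda, \mX_\alpha|_{\lim \hX_\lambda}) = \sX_\infty$ as required.

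The identification $\hX_\infty \cong \lim \hX_\lambda$ may be checked Zariski-locally on $\hS_\alpha$ and $\hX_\alpha$. By minimality $\iS_\lambda = \hS_\lambda \times_{\hS_\alpha} \iS_\alpha$, so $\hX_\lambda$ is the scheme-theoretic closure of the open $\hS_\lambda \times_{\hS_\alpha} \iX_\alpha$ inside $\hS_\lambda \times_{\hS_\alpha} \hX_\alpha$, and likewise for $\hX_\infty$. Writing $\hS_\lambda = \Spec A_\lambda$, $\hX_\alpha = \Spec B$, and $\mS_\alpha = V(a)$ for a nonzerodivisor $a \in A_\alpha$, this closure is cut out by the ideal $I_\lambda = \ker\bigl(A_\lambda \otimes_{A_\alpha} B \to (A_\lambda \otimes_{A_\alpha} B)[1/a]\bigr)$. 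The one substantive point, and the main obstacle if any, is the commutation of scheme-theoretic closure with filtered limits along affine transitions; this reduces to the elementary fact that filtered colimits of commutative rings commute with tensor products, localisations, and formation of kernels, so that $I_\infty = \colim_\lambda I_\lambda$ and hence $\hX_\infty = \lim \hX_\lambda$, completing the proof.
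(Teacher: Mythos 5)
Your proof is correct, but it takes a genuinely different route from the paper's for the main isomorphism. The paper's argument is purely formal: since all the morphisms $\sS_\lambda \to \sS_{\lambda'} \to \sS_\alpha$ are minimal, Prop.~\ref{prop:minFibPro} identifies every $\ambtimes$ in sight with the \emph{categorical} fibre product in $\ulPSCH$, and Lem.~\ref{lemm:limitExists} identifies $\amblim$ with the categorical limit; the isomorphism is then immediate from uniqueness of limits (limits commute with limits), with no computation at all. You establish the first assertion essentially as the paper does (your closed-immersion-into-affine-base-change argument is exactly the content of Lem.~\ref{lemm:ambtimesProper}(5), which the paper cites), but for the isomorphism you instead identify the total spaces by hand, reducing to the commutation of scheme-theoretic closure with a filtered colimit of rings along the affine transitions. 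That is a valid and more explicit argument — it makes visible \emph{why} the total spaces agree rather than deducing it from universal properties — at the cost of a local computation. One small slip in that computation: the open whose closure you are taking is $\hS_\lambda \times_{\hS_\alpha} \iX_\alpha$ with $\iX_\alpha = \hX_\alpha \setminus \mX_\alpha$, so the ideal should be $I_\lambda = \ker\bigl(A_\lambda \otimes_{A_\alpha} B \to (A_\lambda \otimes_{A_\alpha} B)[1/b]\bigr)$ where $b$ is a local generator of $\mX_\alpha$, not of $\mS_\alpha$; since $a \mid b$ these localisations differ in general. This does not affect your argument, because exactness of filtered colimits and their compatibility with tensor product and localisation give $I_\infty = \colim I_\lambda$ for either choice, but the formula as written computes the closure of the wrong open.
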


\begin{proof}
The transition morphisms of $(\sX_λ)_λ$ are minimal by Lem.~\ref{lemm:minProduct} and affine by Lem.~\ref{lemm:ambtimesProper}\eqref{lemm:lemm:ambtimesProper:5}. The isomorphism follows directly from Lem.~\ref{lemm:limitExists}, Prop.~\ref{prop:minFibPro}, and uniqueness of categorical limits.
\end{proof}

\begin{prop}[Approximating finite type morphisms] \label{prop:cannotLimitBlowups}
Suppose that $\sX$, $\sY$ are qcqs modulus pairs and $\hX \to \hY$ is a morphism of finite type. Then ${\hX} \to {\hY}$ is a filtered limit 
\[ {\hX} = \lim {\hX}_λ \to {\hY} \]
of ${\hY}$-schemes such that:
\begin{enumerate}
 \item ${\hX}_λ \to {\hY}$ is \emph{of finite presentation} for all $λ$, and all transition morphisms ${\hX}_λ \to {\hX}_\mu$ and the morphisms ${\hX} \to {\hX}_λ$ are closed immersions.

 \item \label{prop:cannotLimitBlowups:Modulus} Each $\hX_λ$ is equipped with a locally principal closed subscheme $\mX_λ$ such that $\mX = \hX \times_{\hX_λ} \mX_λ$ and $\mX_\mu = \hX_\mu \times_{\hX_λ} \mX_λ$ for all $\mu \geq λ$. 
 
 \item If $\mX \supseteq \mY|_{\hX}$ (resp, $\mX = \mY|_{\hX}$) then we can assume $\mX_λ \supseteq \mY|_{\hX}$ (resp, $\mX_λ = \mY|_{\hX_\lambda}$) for all $λ$.

 \item If $\hX \to \hY$ is proper we can assume all $\hX_λ \to \hY$ are proper.
 
 \item \label{prop:cannotLimitBlowups:centreIso}If $\iX \to \iY$ is an isomorphism, we can assume all $\iX_λ := \hX_λ \setminus \mX_λ \to \iY$ are isomorphims.
\end{enumerate}
\end{prop}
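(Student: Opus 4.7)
The plan is to combine the standard approximation of finite-type morphisms between qcqs schemes by finitely presented ones (EGA IV~\S8; cf.\ \cite[Tag~09ZR]{stacks-project}) with a gluing argument that spreads out the locally principal modulus along the approximating system. For (1), I would write $\hX = \lim_\lambda \hX_\lambda$ as a cofiltered limit of finitely presented $\hY$-schemes with closed-immersion transition morphisms and closed immersions $\hX \hookrightarrow \hX_\lambda$, which is the standard approximation. The same machinery yields (4), since properness is preserved by this approximation.

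For (2), I would cover $\hX$ by finitely many affine opens $U_i$ with $\mX|_{U_i} = V(f_i)$. By approximation these descend to affine opens $U_{i,\lambda_0} \subset \hX_{\lambda_0}$ for some large $\lambda_0$; since closed immersions are affine one has $\Gamma(U_i, \OO) = \colim_\lambda \Gamma(U_{i,\lambda}, \OO)$, so each $f_i$ lifts to some $f_{i,\lambda_0} \in \Gamma(U_{i,\lambda_0}, \OO)$ after enlarging $\lambda_0$. On each double overlap the equality of ideals $(f_i) = (f_j)$ supplies witnesses $f_i = u_{ij} f_j$ and $f_j = u_{ji} f_i$; lifting these and applying the colimit formula to $\Gamma$ on the intersection arranges $f_{i,\lambda} = u_{ij,\lambda} f_{j,\lambda}$ at a common large $\lambda$, since there are only finitely many pairs. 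The lifted ideals $(f_{i,\lambda_0}) \subset \OO_{U_{i,\lambda_0}}$ then glue to a locally principal $\mX_{\lambda_0}$ on $\hX_{\lambda_0}$ restricting to $\mX$, and I would set $\mX_\mu := \mX_{\lambda_0}|_{\hX_\mu}$ for $\mu \geq \lambda_0$ so the pullback compatibility $\mX_\mu = \hX_\mu \times_{\hX_{\lambda_0}} \mX_{\lambda_0}$ holds by construction.

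For (3) in the $\supseteq$ case, writing $g_i|_{U_i} = h_i f_i$ (with $g_i$ a local generator of $\mY|_{\hX}$) and lifting $h_i$ by the same procedure forces $\mY|_{\hX_\lambda} \subseteq \mX_\lambda$; the equality case is handled by taking $\mX_\lambda := \mY|_{\hX_\lambda}$ directly. For (5), after arranging $\mY|_{\hX_\lambda} \subseteq \mX_\lambda$ the induced map $\iX_\lambda \to \iY$ exists and is of finite presentation. Affine-locally, with $\hY = \Spec B$, $\hX_\lambda = \Spec B[\mathbf{x}]/I_\lambda$, and $\mY = V(g)$, the hypothesis that $\iX \to \iY$ is an isomorphism translates to the existence of $b_k \in B$ and $N_k \in \NN$ with $g^{N_k}(x_k - b_k) \in I = \colim I_\lambda$; adjoining these finitely many elements to $I_\lambda$ keeps it finitely generated and forces $\iX_\lambda \to \iY$ to be an isomorphism. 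The main technical point I foresee is the synchronisation in (2) --- arranging finitely many overlap relations to hold simultaneously at a single stage --- but this reduces to a standard use of the colimit formula for sections of $\OO$ on qcqs opens; the rest is a direct invocation of the approximation machinery.
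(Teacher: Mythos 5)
Your proof is correct and rests on the same limit machinery as the paper's, but where the paper cites the EGA~IV \S 8 descent theorems wholesale, you unpack them by hand. Concretely: for (1) and (4) both arguments invoke \cite[Tag 09ZQ, 09ZR]{stacks-project}; for (2)--(3) the paper descends the finitely presented closed immersion $\mX \to \hX$ directly via \cite[Thm.8.8.2(ii), Thm.8.10.5(i),(iv)]{EGAIV3} and only then arranges local principality by quasi-compactness, whereas you descend the local generators $f_i$ and the transition units $u_{ij}$ over a finite affine cover and glue. Both work; the paper's route is shorter and avoids the one point your write-up elides, namely that the descended opens $U_{i,\lambda}$ must be shown to \emph{cover} $\hX_\lambda$ for large $\lambda$ (they cover the closed subscheme $\hX$ automatically, but not a priori all of $\hX_\lambda$). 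This does follow from the standard spectral-space argument --- the closed complement of $\bigcup_i U_{i,\lambda}$ has empty limit, hence is empty for some $\lambda$ --- but it needs saying, because the naive fix of shrinking $\hX_\lambda$ to $\bigcup_i U_{i,\lambda}$ would destroy properness in (4). For (5) the paper's argument is cleaner and coordinate-free: $\iX \to \iY$ is an isomorphism hence of finite presentation, each $\iX_\lambda \to \iY$ is of finite type, so the closed immersion $\iX \to \iX_\lambda$ is of finite presentation and cut out by a finite-type ideal that dies in the colimit, hence at a finite stage; your coordinate argument (adjoining the relations $g^{N_k}(x_k-b_k)$) achieves the same thing but requires globalising over charts and, as stated, only arranges surjectivity of $B[1/g] \to (B[\mathbf{x}]/I_\lambda)[1/g]$ --- injectivity then does follow formally from the composite being an isomorphism, but you should record that step.
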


\begin{proof}
\begin{enumerate}
 \item This is none-other-than \cite[Tag 09ZQ]{stacks-project}.

 \item Effective cartier divisors are of finite presentation so $\mX \to \hX$ descends to some $\mX_λ \to \hX_λ$ of finite presentation, \cite[Thm.8.8.2(ii)]{EGAIV3}, which we can assume is a closed immersion, \cite[Thm.8.10.5(iv)]{EGAIV3}. Since $\hX_λ$ is quasi-compact, up to refining $λ$ we can assume that this of finite presentation closed immersion is locally principal, since it's limit is locally principal. Then replace the system $(\hX_\mu)$ with the subsystem of $\mu \geq λ$ for this $λ$, and equip all other $\hX_\mu$ with the pullback of $\mX_λ$.

 \item The $\supseteq$ case follows from \cite[Thm.8.8.2(i)]{EGAIV3} and the equality from \cite[Thm.8.10.5(i)]{EGAIV3}.

 \item The proper case is \cite[Tag~09ZR]{stacks-project}.
 
 \item The morphism $X^\circ \to Y^\circ$ is of finite presentation because it is an isomorphism, and each $X^\circ_λ \to Y^\circ$ is finite type (since it is of finite presentation), so the closed immersion $X^\circ \to X^\circ_λ$ is of finite presentation \cite[Tag 00F4(4)]{stacks-project} and therefore it's defined by a coherent sheave of ideals, \cite[Tag 01TV]{stacks-project}. Since $X^\circ_λ$ are all quasi-compact, it follows that there is some $λ$ for which $X^\circ_{λ'} = X^\circ$ for all $λ' \geq λ$.
\end{enumerate}
\end{proof}

\begin{rema}
In the situation of Prop.~\ref{prop:cannotLimitBlowups}, if \eqref{prop:cannotLimitBlowups:centreIso} is satisfied, then we have ${\mY}|_{{\hX}_λ}$ is an effective Cartier divisor if and only if ${\hX} = {\hX}_λ$. 

Indeed, since $\mX = \mX_λ|_{\hX}$ is an effective Cartier divisor, we have $\OO_{{\hX}} \subseteq \OO_{X^\circ}$ and it follows that 
\[ \ker(\OO_{{\hX}_λ} {\to} \OO_{{\hX}}) = \ker(\OO_{{\hX}_λ} {\to} \OO_{X^\circ_λ}) \]
Hence, the surjection $\OO_{{\hX}_λ} {\to} \OO_{{\hX}}$ is an isomorphism if and only if $\OO_{{\hX}_λ} {\to} \OO_{X^\circ_λ}$ is a monomorphism, i.e., if and only if $\mX_λ$ is an effective Cartier divisor.
\end{rema}

\begin{prop}\label{prop:qcqs-limcolim}
Let $\sS$ be a modulus pair, $\sY \in \ulPSch_\sS$, let $(\sX_λ)_{λ \in \Lambda} \in \Pro(\ulPSCH_\sS)$ be a filtered system with affine, minimal transition morphisms, and suppose all $\sS, \sX_λ, \sY$ are qcqs. 
Assume that one of the following conditions is satisfied:
\begin{enumerate}
 \item The transition morphisms of $(\sX_λ)_{λ \in \Lambda}$ are schematically dominant.
 \item $\hY \to \hS$ is of finite presentation.
\end{enumerate}
Then
\[ 
\varinjlim \hom_{\ulPSCH_\sS}(\sX_λ, \sY) 
\to 
\hom_{\ulPSCH_\sS}(\amblim \sX_λ, \sY)
\]
is an isomorphism, where $\amblim \sX_λ$ is the categorical limit in $\ulPSCH_\sS$, cf.Lem.~\ref{lemm:limitExists}.
\end{prop}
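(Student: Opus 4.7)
The idea is to split the problem into a scheme-theoretic limit--hom comparison and a separate descent argument for the modulus condition. Set $\hX := \lim \hX_\lambda$, which exists by affineness of the transition maps; by Lem.~\ref{lemm:limitExists} and minimality of the transitions, $\amblim \sX_\lambda = (\hX, \mX)$ with $\mX = \mX_\lambda|_{\hX}$ for every $\lambda$. In particular the structural morphisms $\amblim \sX_\lambda \to \sX_\lambda$ are minimal ambient morphisms.

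First I would establish the underlying scheme isomorphism
\[
\varinjlim \hom_{\SCH_{\hS}}(\hX_\lambda, \hY) \iso \hom_{\SCH_{\hS}}(\hX, \hY) :
\]
under hypothesis (1), this is exactly Prop.~\ref{prop:SCHLIMIT}; under (2), it is the classical result \cite[Prop.~8.14.2]{EGAIV3} on approximating morphisms to finitely presented targets by morphisms from a qcqs filtered limit along affine transition maps. So the underlying scheme map of any ambient morphism $f : \amblim \sX_\lambda \to \sY$ factors as $\hX \to \hX_\mu \xrightarrow{f_\mu} \hY$ for some $\mu$, and any two such factorisations agree after enlarging $\mu$. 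It then suffices to show that the modulus condition $\mY|_{\hX} \subseteq \mX$ descends to $\mY|_{\hX_{\mu'}} \subseteq \mX_{\mu'}$ for some $\mu' \geq \mu$; the converse direction being immediate by pulling back along the minimal morphism $\sX \to \sX_{\mu'}$. Injectivity of the main map reduces to the analogous scheme-level injectivity, already contained in the above.

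For this modulus descent I would work locally. Using the qcqs hypotheses on $\sS, \sX_\mu, \sY$, cover $\hX_\mu$ by finitely many affine opens $\Spec A_{\mu,k}$ each mapping into an affine $\Spec B_k \subseteq \hY$, arranged so that $\mY \cap \Spec B_k = V(t_k)$ and $\mX_\mu \cap \Spec A_{\mu,k} = V(s_{\mu,k})$ are both principal. Set $A_k := \varinjlim_{\mu'' \geq \mu} A_{\mu'',k}$ and let $s_k$ denote the image of $s_{\mu,k}$; minimality identifies $\mX \cap \Spec A_k$ with $V(s_k)$. The hypothesis $\mY|_{\hX} \subseteq \mX$ now reads $f^{\sharp}(t_k) = s_k \, a_k$ in $A_k$ for some $a_k \in A_k$. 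Lifting each $a_k$ to some $a_{\mu_1,k} \in A_{\mu_1,k}$ and then using that the finitely many equations $f_{\mu'}^{\sharp}(t_k) - s_{\mu',k}\, a_{\mu',k} = 0$ eventually hold at a common stage $\mu' \geq \mu_1$ (filtered colimits of modules preserve vanishing) produces the required $\mu'$.

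The principal obstacle is this last step: arranging a finite affine cover of $\hX_\mu$ that simultaneously (a) respects a finite affine cover of $\hY$, (b) trivialises $\mX_\mu$ locally, and (c) sees the pullback of $\mY$ as principal. The finiteness of the cover (guaranteed by the qcqs hypotheses) is precisely what allows the infinitely many local modulus conditions over $\hX$ to be captured at a single finite stage $\mu'$; without qcqs hypotheses on both $\sY$ and the $\sX_\lambda$, the argument would break down.
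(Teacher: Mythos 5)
Your proof is correct, and its skeleton coincides with the paper's: both arguments first reduce to the scheme-level comparison $\varinjlim \hom_{\SCH_{\hS}}(\hX_\lambda,\hY)\cong\hom_{\SCH_{\hS}}(\hX,\hY)$ (Prop.~\ref{prop:SCHLIMIT} in case (1), the EGA limit theorem in case (2)), observe that injectivity follows because filtered colimits preserve monomorphisms, and then reduce to descending the modulus condition to a finite stage. Where you genuinely diverge is in how that descent is executed. The paper works globally: it writes $\mX_\lambda|_{\hX}=\mY|_{\hX}+\mZ$ for an effective Cartier divisor $\mZ$ on $\hX$, descends $\mZ$ to a locally principal, finitely presented closed subscheme $\mZ_\lambda$ of $\hX_\lambda$ using the EGA descent results for closed subschemes, and then checks that the ideal identity $(\sI_{\mY}|_{\hX_\lambda})\cdot\sI_{\mZ_\lambda}=\sI_{\mX_\lambda}$ holds at a finite stage, verified locally on a finite affine cover; this simultaneously shows $\mZ_\lambda$ is Cartier and that $\hX_\lambda\to\hY$ is admissible. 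You skip the global complementary divisor entirely and descend the local divisibility witnesses $a_k$ directly, using only that finitely many equations in a filtered colimit of rings hold at a common finite stage. This is more elementary (no descent of closed subschemes is needed) and is adequate, because admissibility is the local condition $\sI_{\mX_{\mu'}}\subseteq\sI_{\mY}\cdot\OO_{\hX_{\mu'}}$ and can be checked affine by affine on your finite cover; no gluing of the $a_k$ is required. One slip to correct: the modulus condition $\mY|_{\hX}\subseteq\mX$ (as closed subschemes, i.e.\ $\mX\geq\mY|_{\hX}$ as divisors) translates to the ideal containment $(s_k)\subseteq(f^{\sharp}(t_k))$, that is $s_k=f^{\sharp}(t_k)\,a_k$, not $f^{\sharp}(t_k)=s_k\,a_k$ as written; the lifting argument is unaffected since it is symmetric in the two factors, but the equation as stated asserts the reverse inequality of divisors.
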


\begin{proof}
Set $\sX = \amblim_\lambda \sX_\lambda$.
Since $\hom_{\ulPSCH}(\sX_λ, \sY) \subseteq \hom_{\SCH}(\hX_λ, \hY)$ (and similar for $\sX$), and filtered colimits preserve monomorphisms, we have 
\[\xymatrix{
\varinjlim \hom_{\ulPSCH_\sS}(\sX_λ, \sY) \ar[d] \ar@{}[r]|\subseteq 
&
\varinjlim \hom_{\SCH_{\hS}}(\hX_λ, \hY) \ar[d]^\cong 
\\
\hom_{\ulPSCH_\sS}(\sX, \sY)  \ar@{}[r]|\subseteq 
&
\hom_{\SCH_{\hS}}(\hX, \hY)
} \]
Here the isomorphism on the right comes from Prop.~\ref{prop:SCHLIMIT} in the first case and \cite[Thm.8.13.1]{EGAIV3} in the second case. So it suffices to prove that any admissible morphism $\hX \to \hY$ can be descended to an admissible morphism $\hX_λ \to \hY$ for some $λ$. Suppose that $\hX_λ \to \hY$ is a morphism such that $\mX_λ|_{\hX} \geq \mY|_{\hX}$. Let $\mZ$ be an effective Cartier divisor of $\hX$ such that $\mX_λ|_{\hX} = \mY|_{\hX} + \mZ$. Since $\mZ$ is a closed immersion of finite presentation, up to refining $λ$, we can find some $\mZ_λ \to \hX_λ$ which is also a closed immersion of finite presentation, \cite[Thm.8.8.2(ii), Thm.8.10.5(iv)]{EGAIV3}. Refining $λ$ further, we can assume that $\mZ_λ$ is locally principal. Now consider the locally principal ideal $(\sI_{\mY}|_{\hX_λ}) \cdot \sI_{\mZ_λ}$. On $\hX$ we have $((\sI_{\mY}|_{\hX_λ}) \cdot \sI_{\mZ_λ})|_{\hX} = \sI_{\mX_λ}|_{\hX}$.\footnote{That is, if $a, b, c$ are locally generators for $\sI_{\mX_λ}$, $(\sI_{\mY}|_{\hX_λ}$, $\sI_{\mZ_λ}$, respectively, then $ua = bc$ for some unit $u$.} This implies that up to changing $λ$ we can assume $(\sI_{\mY}|_{\hX_λ}) \cdot \sI_{\mZ_λ} = \sI_{\mX_λ}$ on $\hX_λ$, since this can be checked locally on affines, and all our schemes are quasi-compact. As local generators for $\mX_λ$ are nonzero divisors, this latter equality implies that local generators for $\mZ_λ$ must also be nonzero divisors. In other words, $\mZ_λ$ is an effective Cartier divisor. Since $\mX_λ = \mY|_{\hX_λ} + \mZ_λ$ by construction, $\hX_λ \to \hY$ is admissible.
\end{proof}

\begin{coro} \label{coro:descendMorphisms}
Suppose that $\sS$ is a qcqs modulus pair and %
$(\sX_λ)_{λ}$, $(\sY_\mu)_{\mu}$ are filtered systems in $\ulPSch_\sS$ with affine minimal transition morphisms. %
Assume that one of the following conditions is satisfied:
\begin{enumerate}
 \item The transition morphisms of $(\sX_λ)_{λ \in \Lambda}$ are schematically dominant.
 \item The morphisms $\hY_μ \to \hS$ are all of finite presentation.\footnote{This can be weakened to: $\iY_μ \to \iS$ are all flat of finite presentation and $\hom_{\Pro(\ulPSch_\sS)}(\sX, -)$ sends \aab{}s to isomorphisms, cf.Prop.\ref{prop:allIsoToAmbEtale}.}
\end{enumerate}
Then
\[ 
\hom_{\ulPSCH_\sS}(\amblim \sX_λ, \amblim \sY_\mu) 
= 
\lim_\mu \varinjlim_λ \hom_{\ulPSch_\sS}(\sX_λ, \sY_\mu). \]
where $\amblim$ indicates the categorical limit in $\ulPSch_\sS$, cf.Lem.~\ref{lemm:limitExists}, Lem.~\ref{lemm:limitExistsExists}.
\end{coro}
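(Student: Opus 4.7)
The plan is to chain together the universal property of $\amblim \sY_\mu$ as a categorical limit with Prop.~\ref{prop:qcqs-limcolim} applied one $\mu$ at a time.

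First I would observe that under our hypotheses, $\amblim \sY_\mu$ exists in $\ulPSCH_\sS$ and is a genuine categorical limit there. Indeed, the system $(\sY_\mu)$ has affine minimal transition morphisms with each $\sY_\mu$ qcqs (since $\hY_\mu \to \hS$ is finite type over a qcqs base), so Lem.~\ref{lemm:limitExistsExists} guarantees the total space and effective Cartier divisor exist, and Lem.~\ref{lemm:limitExists} then gives the universal property
\[
\hom_{\ulPSCH_\sS}(\sZ, \amblim \sY_\mu) \cong \lim_\mu \hom_{\ulPSCH_\sS}(\sZ, \sY_\mu)
\]
for any $\sZ \in \ulPSCH_\sS$. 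The same remark applies to $(\sX_\lambda)$, so $\amblim \sX_\lambda$ is a well-defined object of $\ulPSCH_\sS^{\qcqs}$.

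Next, applying the displayed isomorphism with $\sZ = \amblim \sX_\lambda$, which is qcqs, yields
\[
\hom_{\ulPSCH_\sS}(\amblim \sX_\lambda, \amblim \sY_\mu) \cong \lim_\mu \hom_{\ulPSCH_\sS}(\amblim \sX_\lambda, \sY_\mu).
\]
For each fixed $\mu$, the object $\sY_\mu$ lies in $\ulPSch_\sS$ and is qcqs, and the system $(\sX_\lambda)$ satisfies the hypotheses of Prop.~\ref{prop:qcqs-limcolim}: the transition morphisms are affine and minimal, and either (1) they are schematically dominant, or (2) $\hY_\mu \to \hS$ is of finite presentation, matching the two alternatives of that proposition. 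Hence
\[
\hom_{\ulPSCH_\sS}(\amblim \sX_\lambda, \sY_\mu) \cong \varinjlim_\lambda \hom_{\ulPSch_\sS}(\sX_\lambda, \sY_\mu).
\]
Substituting this back and using that limits over $\mu$ commute with themselves, we get the claimed formula.

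The entire argument is really just a bookkeeping combination of two prior results, so I do not expect any genuine obstacle; the only point requiring a moment of care is verifying that the hypotheses of Prop.~\ref{prop:qcqs-limcolim} transfer to each $\sY_\mu$ individually (rather than needing some uniformity in $\mu$), but this is immediate since the two alternative conditions in Cor.~\ref{coro:descendMorphisms} were deliberately stated so as to hold uniformly in $\mu$.
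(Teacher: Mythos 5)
Your argument is correct and is exactly the intended deduction: the paper states this corollary without a written proof precisely because it follows immediately by combining the universal property of $\amblim \sY_\mu$ from Lem.~\ref{lemm:limitExists} with Prop.~\ref{prop:qcqs-limcolim} applied for each fixed $\mu$, which is what you do. Your side remarks (that each $\sX_\lambda$, $\sY_\mu$, and $\amblim \sX_\lambda$ are qcqs, and that the two alternative hypotheses transfer to each $\mu$ individually) are the right points to check and they hold.
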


\begin{coro} \label{coro:embedAmb}
Let $\sS$ be a qcqs modulus pair. Then the category $\ulPSCH_\sS^{\qcs}$ of qc sparated ambient $\sS$-modulus pairs embeds fully faithfully into the category of separated ambient pro-$\sS$-modulus pairs of finite type:
\[ \ulPSCH_\sS^{\qcs} \subseteq \Pro(\ulPSch_\sS). \]
It is identified as the full subcategory of pro-objects which can be represented by systems whose transition morphisms are affine, minimal, and schematically dominant. 
\end{coro}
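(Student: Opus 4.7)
The plan splits into two halves: (a) full faithfulness of the candidate embedding, and (b) identification of the essential image with the described pro-objects. For (a), suppose $\sX = \amblim_\lambda \sX_\lambda$ and $\sY = \amblim_\mu \sY_\mu$ are presentations of the described form (existence is handled in (b)). Since $\amblim$ is a categorical limit in $\ulPSCH_\sS$ by Lem.~\ref{lemm:limitExists}, we have $\hom_{\ulPSCH_\sS}(\sX, \sY) = \lim_\mu \hom_{\ulPSCH_\sS}(\sX, \sY_\mu)$. For each $\mu$ the target $\sY_\mu$ lies in $\ulPSch_\sS$, so Prop.~\ref{prop:qcqs-limcolim} (first case, using schematic dominance of the transition morphisms of $(\sX_\lambda)$) identifies $\hom_{\ulPSCH_\sS}(\sX, \sY_\mu)$ with $\varinjlim_\lambda \hom_{\ulPSch_\sS}(\sX_\lambda, \sY_\mu)$. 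Taking the limit over $\mu$ recovers the pro-hom and yields full faithfulness.

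For (b), the easy direction is that any pro-object with affine, minimal transition morphisms admits a categorical limit in $\ulPSCH_\sS$ (by Lem.~\ref{lemm:limitExists} and the affine-minimal lemma following it), and the resulting ambient modulus pair is qc separated because these properties descend along affine filtered limits of qcqs separated schemes. For the converse, given $\sX = (\hX, \mX) \in \ulPSCH_\sS^{\qcs}$, I would proceed in three steps: (i) apply standard limit theorems (EGA IV \S8 / Thomason--Trobaugh) to write $\hX = \lim \hX_\lambda$ as a filtered limit of separated finite type $\hS$-schemes with affine transition morphisms; (ii) replace each $\hX_\lambda$ by the scheme theoretic image $\hX'_\lambda$ of $\hX \to \hX_\lambda$, which forces both the transition morphisms $\hX'_\lambda \to \hX'_\mu$ and the maps $\hX \to \hX'_\lambda$ to be schematically dominant (this preserves finite-typeness over $\hS$ and affineness of transitions, since closed subschemes of finite type schemes are finite type and closed immersions composed with affine morphisms are affine); (iii) lift $\mX$ to the system.

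The main obstacle is step (iii). For $\alpha$ sufficiently large, $\mX$ spreads out to a locally principal closed subscheme $\mX_\alpha \subset \hX'_\alpha$ with $\mX_\alpha \times_{\hX'_\alpha} \hX = \mX$, by the finite-presentation spreading-out argument already used in Prop.~\ref{prop:cannotLimitBlowups}. The crucial observation is that such a lift is automatically an \emph{effective Cartier} divisor: locally, if $A_\alpha \hookrightarrow A$ is injective (schematic dominance of $\hX \to \hX'_\alpha$) and a local generator $f_\alpha \in A_\alpha$ maps to a nonzerodivisor $f \in A$, then any $g_\alpha \in A_\alpha$ with $f_\alpha g_\alpha = 0$ has image $g \in A$ satisfying $fg = 0$, forcing $g = 0$ and hence $g_\alpha = 0$. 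The same argument applied to the schematically dominant morphisms $\hX'_\lambda \to \hX'_\alpha$ shows that $\mX_\lambda := \mX_\alpha \times_{\hX'_\alpha} \hX'_\lambda$ is effective Cartier for all $\lambda \ge \alpha$, and the induced ambient morphisms $\sX_\lambda \to \sX_\mu$ are minimal by construction. Restricting to $\lambda \ge \alpha$ and applying Lem.~\ref{lemm:limitExists} produces $\sX = \amblim_\lambda \sX_\lambda$ as required.
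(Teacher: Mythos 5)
Your argument follows the paper's strategy closely (approximation of the total space, scheme-theoretic images to force schematic dominance, spreading out of the divisor, and the observation that schematic dominance makes the Cartier condition descend), and the full-faithfulness half is exactly the paper's Cor.~\ref{coro:descendMorphisms} unwound into Lem.~\ref{lemm:limitExists} plus Prop.~\ref{prop:qcqs-limcolim}. However, there is one genuine omission in step (iii): you never verify that the structure morphisms $\sX_\lambda \to \sS$ are \emph{admissible}, i.e.\ that $\mS|_{\hX'_\lambda}$ factors through $\mX_\lambda$. Without this, $\sX_\lambda$ is not an object of $\ulPSch_\sS$ at all, so the pro-object you construct does not live in the claimed category. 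The point is not automatic: locally, admissibility of $\sX \to \sS$ gives $s = f h$ with $s$ a pullback of a generator of $\mS$, $f$ the image of your generator $f_\alpha$, and $h \in A$; but the witness $h$ need not lie in the subring $A_\alpha$, so $f_\alpha \mid s$ can fail in $A_\alpha$ even though it holds in $A$. The paper devotes an explicit step (and a footnote) to exactly this: it enlarges the finite-type subalgebra so that $\mX_0 \supseteq \hX_0 \times_{\hS} \mS$ before proceeding.

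The repair is short and uses only what you already have: since $\OO_{\hX} = \colim_\lambda \OO_{\hX'_\lambda}$ is a filtered union (the $\OO_{\hX'_\lambda}$ being images in $\OO_{\hX}$), the finitely many local witnesses $h$ lie in $\OO_{\hX'_\lambda}$ for $\lambda$ large, and the relation $s = f_\alpha h$ then holds in $\OO_{\hX'_\lambda}$ automatically because $\OO_{\hX'_\lambda} \to \OO_{\hX}$ is injective. After discarding the indices below this $\lambda$, every $\sX_\mu \to \sS$ is admissible and the rest of your argument goes through. I would also note, as a minor point, that your phrase ``$f_\alpha$ maps to a nonzerodivisor $f$'' deserves the one-line justification that the image of $f_\alpha$ generates the same ideal as a local generator of $\mX$ and hence differs from it by a unit; with that and the admissibility fix inserted, the proof is correct and is essentially the paper's.
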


\begin{proof}
Let $\sP$ be the subcategory of $\Pro(\ulPSch_\sS)$ described in the statement. %
By Lem.~\ref{lemm:limitExists} the categorical limit in $\ulPSCH$ of pro-systems $(\sX_\lambda)_\lambda$, $(\sY_\mu)_\mu$ exist and by Cor.~\ref{coro:descendMorphisms} we have
\[ \hom_{\ulPSCH}(\amblim \sX_λ, \amblim \sY_\mu) 
= \hom_{\Pro(\ulPSch_\sS)}((\sX_λ), (\sY_\mu)).
\]

So it suffices to show that every object of $\ulPSCH_\sS^{\qcs}$ is of the form $\amblim \sX_λ$ for a pro-object of $\ulPSch_\sS)$ with affine, minimal, schematically dominant transition morphisms. Let $\sX \in \ulPSCH_\sS^{\qcs}$. Since $\hX$ and $\hS$ are both qcqs, by Temkin's version of Thomason's approximation theorem \cite[Thm.1.1.2]{Tem11}, $\hX \to \hS$ factors as an affine morphism $\hX \to \hX_0$ and a separated morphism of finite presentation $\hX_0 \to \hS$. By \cite[Cor.6.9.9]{EGAI}, the $\OO_{\hX_0}$-algebra $\OO_{\hX}$ is the filtered union of its sub-$\OO_{\hX_0}$-algebras of finite type. In particular, replacing $\hX_0$ if necessary, we can assume that the effective Cartier divisor $\mX$ is pulled back from some locally principal closed subscheme $\mX_0 \subseteq \hX_0$ which satisfies $\mX_{λ_0} \supseteq \hX_{λ_0} \times_{\hS} \mS$.%
\footnote{An effective Cartier divisor can be described by an open affine cover $(Spec A_i)$ and nonzero divisors $f_i \in A_i$ such that on the intersections there is a unit $u_{ij}$ such that $f_i = u_{ij}f_j$. Taking a finite open cover, all $f_i, u_{ij}$ are eventually contained in the same sub-algebra. For the divisibility claim, the ring version is: if $B$ is an $A$-algebra and $A$, $B$ are equipped with elements $f$, $g$, such that (the image of) $f$ divides $g$ then there is a finite type sub-$A$-algebra of $B$ where (the image of) $f$ divides $g$. This also extends directly to the non-affine setting since our schemes are quasi-compact.} %
Now replace $\hX_0$ with the scheme theoretic image of $\hX \to \hX_0$ (we still have an affine morphism $\hX \to \hX_0$ because $\mX$ is an effective Cartier divisor), Prop.~\ref{prop:always-dense}. Then $\mX$ becomes an effective Cartier divisor and we get an admissible morphism $\sX_0 := (\hX_0,\mX_0) \to \sS$. Now apply \cite[Cor.6.9.9]{EGAI} again to get a system $(\hX_λ)_{λ}$ of finite type $\hS$-schemes with schematically dominant affine transition morphisms whose limit is $\hX$. Since the transition morphisms are schematically dominant, each $\mX_0|_{\hX_λ}$ remains an effective Cartier divisor, and we get the desired pro-object of $\ulPSch_\sS$.
\end{proof}

\begin{prop} \label{prop:limlimlim}
Let $\sS_α$ be a qcqs modulus pair, and $(\sS_λ)_λ$ a filtered system in $\ulPSCH_\sS$ with affine minimal transition morphisms. Consider $\sX_α \in \ulPSCH_\sS^\qcqs$ and $\sY_α \in \ulPSch_\sS$, and set $\sX_λ := \sS_λ \ambtimes_{\sS_α} \sX_α$ and $\sY_λ := \sS_λ \ambtimes_{\sS_α} \sY_α$. Suppose that either:
\begin{enumerate}
 \item $\hY_α \to \hS_α$ is of finite presentation, or
 \item all transition morphisms $\sX_μ \to \sX_λ$ are schematically dominant.
\end{enumerate}
Then the canonical morphism
\[ \colim \hom_{\ulPSCH_{\sS_λ}}(\sX_λ, \sY_λ) 
\to 
\hom_{\ulPSCH_{\amblim \sS_λ}}(\amblim \sX_λ, \amblim \sY_λ) \]
is an isomorphism. 
\end{prop}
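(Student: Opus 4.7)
The strategy is to rewrite both sides of the claimed isomorphism as $\sS_\alpha$-morphisms into the fixed object $\sY_\alpha$ by exploiting the universal property of ambient products with minimal structure maps (Prop.~\ref{prop:minFibPro}), and then to invoke Prop.~\ref{prop:qcqs-limcolim} directly to commute the colimit past the Hom. Since $(\sS_\lambda)_\lambda$ has minimal transition morphisms and $\sS_\alpha$ sits at the bottom of the system, the structural morphisms $\sS_\lambda \to \sS_\alpha$ are minimal as well, so all subsequent ambient products are categorical fibre products.

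First I would rewrite the left-hand side. Since $\sY_\lambda = \sS_\lambda \ambtimes_{\sS_\alpha} \sY_\alpha$ with $\sS_\lambda \to \sS_\alpha$ minimal, Prop.~\ref{prop:minFibPro} gives
\[ \hom_{\ulPSCH_{\sS_\lambda}}(\sX_\lambda, \sY_\lambda) \;\cong\; \hom_{\ulPSCH_{\sS_\alpha}}(\sX_\lambda, \sY_\alpha). \]
For the right-hand side I would combine Lem.~\ref{lemm:pullbacklimamb}, which identifies $\amblim \sY_\lambda \cong (\amblim \sS_\lambda) \ambtimes_{\sS_\alpha} \sY_\alpha$ and $\amblim \sX_\lambda \cong (\amblim \sS_\lambda) \ambtimes_{\sS_\alpha} \sX_\alpha$, with a direct check that the structural morphism $\amblim \sS_\lambda \to \sS_\alpha$ is again minimal (at the level of moduli this says $\mS_\alpha|_{\amblim \hS_\lambda}$ agrees with the limit of the $\mS_\lambda|_{\amblim \hS_\lambda} = \mS_\alpha|_{\amblim \hS_\lambda}$, which is obvious). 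Applying Prop.~\ref{prop:minFibPro} once more then yields
\[ \hom_{\ulPSCH_{\amblim \sS_\lambda}}(\amblim \sX_\lambda, \amblim \sY_\lambda) \;\cong\; \hom_{\ulPSCH_{\sS_\alpha}}(\amblim \sX_\lambda, \sY_\alpha). \]

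With both sides rewritten, the proposition reduces to
\[ \colim_\lambda \hom_{\ulPSCH_{\sS_\alpha}}(\sX_\lambda, \sY_\alpha) \;\cong\; \hom_{\ulPSCH_{\sS_\alpha}}(\amblim \sX_\lambda, \sY_\alpha), \]
which is precisely the conclusion of Prop.~\ref{prop:qcqs-limcolim} applied with base $\sS_\alpha$, system $(\sX_\lambda)$, and target $\sY_\alpha$. I would verify the hypotheses as follows: $\sS_\alpha$ is qcqs by assumption; $\sY_\alpha$ is qcqs since it is finite-type over a qcqs base; each $\sX_\lambda$ is qcqs because by Lem.~\ref{lemm:minProduct} it sits as a closed subscheme of $\hS_\lambda \times_{\hS_\alpha} \hX_\alpha$, which is qcqs over the qcqs $\hS_\lambda$; and the transition morphisms of $(\sX_\lambda)$ are affine and minimal by Lem.~\ref{lemm:pullbacklimamb}. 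Finally, in case~(1) the hypothesis $\hY_\alpha \to \hS_\alpha$ of finite presentation is exactly Prop.~\ref{prop:qcqs-limcolim}(2), while in case~(2) the schematic dominance of transition morphisms in $(\sX_\lambda)$ is exactly Prop.~\ref{prop:qcqs-limcolim}(1).

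The main subtlety I expect is making sure the minimality needed to invoke the categorical fibre-product property in Prop.~\ref{prop:minFibPro} genuinely passes to the limit, i.e.\ that $\amblim \sS_\lambda \to \sS_\alpha$ is still minimal; everything else is a formal bookkeeping of universal properties, qcqs conditions, and the finite presentation/schematic dominance dichotomy already built into Prop.~\ref{prop:qcqs-limcolim}.
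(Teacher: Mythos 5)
Your proposal is correct and follows essentially the same route as the paper's proof: the paper chains the three isomorphisms $\hom_{\ulPSCH_{\sS}}(\sX,\sY)\cong\hom_{\ulPSCH_{\sS_\alpha}}(\sX,\sY_\alpha)\cong\colim_\lambda\hom_{\ulPSCH_{\sS_\alpha}}(\sX_\lambda,\sY_\alpha)\cong\colim_\lambda\hom_{\ulPSCH_{\sS_\lambda}}(\sX_\lambda,\sY_\lambda)$ using exactly the same three ingredients (Lem.~\ref{lemm:pullbacklimamb}, Prop.~\ref{prop:qcqs-limcolim}, and Prop.~\ref{prop:minFibPro}), while you merely reorganise this as "rewrite both sides, then apply Prop.~\ref{prop:qcqs-limcolim}". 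Your extra verifications (minimality of $\amblim\sS_\lambda\to\sS_\alpha$, the qcqs conditions, the case dichotomy) are sound and only make explicit what the paper leaves implicit.
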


\begin{proof}
Set $\sX := \amblim \sX_λ$, $\sY := \amblim \sY_λ$, and $\sS := \amblim \sS_λ$. We have
\begin{align}
\hom_{\ulPSCH_{\sS}}(\sX, \sY)
&\cong
\hom_{\ulPSCH_{\sS_α}}(\sX, \sY_α) \label{prop:limlimlim:A}
\\&\cong
\colim_λ \hom_{\ulPSCH_{\sS_α}}(\sX_λ, \sY_α) \label{prop:limlimlim:B}
\\&\cong
\colim_λ \hom_{\ulPSCH_{\sS_λ}}(\sX_λ, \sY_λ) \label{prop:limlimlim:C}
\end{align}
where \eqref{prop:limlimlim:A} is because $\sY \cong \sS \ambtimes_{\sS_α} \sY_α$, Lem.~\ref{lemm:pullbacklimamb}, the isomorphism \eqref{prop:limlimlim:B} is by Prop.~\ref{prop:qcqs-limcolim} and the isomorphism \eqref{prop:limlimlim:C} comes from $\sY_λ := \sS_λ \ambtimes_{\sS_α} \sY_α$ for all $λ$ since $\ambtimes$ is the categorical fibre product in $\ulPSCH$, Prop.~\ref{prop:minFibPro}.
\end{proof}

\begin{prop}[{Abstract admissible blowups descend through limits}] \label{prop:descendAdmBlowup}
Let $\sS$ be a qcqs modulus pair, and suppose that $(\sX_λ)_λ$ is a pro-object in $\ulPSCH^{\qcqs}$ with affine minimal transition morphisms. %
Suppose that $\sY \to \sX := \amblim \sX_λ$, is an \abb{}.
Then there exists some $λ$ and $\sY_λ \to \sX_λ$ which is also an \abb{} and an isomorphism %
$\sY \cong \sX \ambtimes_{\sX_λ} \sY_λ$.
\end{prop}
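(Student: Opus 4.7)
The plan is to first approximate $\hY \to \hX$ by proper morphisms of finite presentation using Prop.~\ref{prop:cannotLimitBlowups}, then descend one such approximation through the tower $(\hX_\lambda)_\lambda$ using the standard EGA~IV machinery \cite[\S 8]{EGAIV3}, and finally invoke Lem.~\ref{lemm:minProduct} to identify $\sY$ with $\sX \ambtimes_{\sX_\alpha} \sY_\alpha$ for the descended $\sY_\alpha$.

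More precisely, since $\hX$ is qcqs (being a filtered limit of qcqs schemes with affine transitions) and $\hY$ is proper over $\hX$, both are qcqs. I would apply Prop.~\ref{prop:cannotLimitBlowups} to $\hY \to \hX$ to write $\hY = \lim_\beta \hY_\beta$ as a filtered limit of proper finite presentation $\hX$-schemes with $\hY \to \hY_\beta$ closed immersions, each $\hY_\beta$ carrying a locally principal closed subscheme $\mY_\beta$ pulling back compatibly from $\mY = \mX|_{\hY}$, and with $\iY_\beta \to \iX$ an isomorphism (part~(\ref{prop:cannotLimitBlowups:centreIso}) of loc.~cit.). Fixing any such $\beta$, the morphism $\hY_\beta \to \hX$ is proper of finite presentation, so by \cite[Thm.~8.8.2]{EGAIV3} it descends to a finite presentation morphism $\hY_{\beta,\alpha_0} \to \hX_{\alpha_0}$ with $\hY_\beta \cong \hY_{\beta,\alpha_0} \times_{\hX_{\alpha_0}} \hX$, and by \cite[Thm.~8.10.5]{EGAIV3} I may refine $\alpha_0$ to some $\alpha$ so that $\hY_{\beta,\alpha} := \hY_{\beta,\alpha_0} \times_{\hX_{\alpha_0}} \hX_\alpha \to \hX_\alpha$ is proper and the induced $\iY_{\beta,\alpha} \to \iX_\alpha$ (where $\iY_{\beta,\alpha} := \hY_{\beta,\alpha} \setminus \mX_\alpha|_{\hY_{\beta,\alpha}}$) is an isomorphism.

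The main obstacle is that the locally principal $\mX_\alpha|_{\hY_{\beta,\alpha}}$ need not be an effective Cartier divisor, since being a nonzero divisor is not preserved under the filtered colimit computing $\Gamma(\hY_\beta,\OO) = \varinjlim \Gamma(\hY_{\beta,\alpha'}, \OO)$. I resolve this by replacing $\hY_{\beta,\alpha}$ with the scheme theoretic closure $\hZ_\alpha$ of $\iY_{\beta,\alpha}$ inside $\hY_{\beta,\alpha}$; this is well-defined as a scheme theoretic image since $\iY_{\beta,\alpha} \hookrightarrow \hY_{\beta,\alpha}$ is quasi-compact by Lem.~\ref{lem:qc-interior}. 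Since $\iY_{\beta,\alpha}$ is then scheme theoretically dense in $\hZ_\alpha$, Lem.~\ref{lem:pb-dense} guarantees $\mZ_\alpha := \mX_\alpha|_{\hZ_\alpha}$ is an effective Cartier divisor. Setting $\sY_\alpha := (\hZ_\alpha,\mZ_\alpha)$, the morphism $\sY_\alpha \to \sX_\alpha$ is minimal by construction, proper as a composition of the closed immersion $\hZ_\alpha \hookrightarrow \hY_{\beta,\alpha}$ with the proper $\hY_{\beta,\alpha} \to \hX_\alpha$, and satisfies $\iY_\alpha = \iY_{\beta,\alpha} \xrightarrow{\sim} \iX_\alpha$; hence it is an abstract admissible blowup.

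To conclude I would verify $\sX \ambtimes_{\sX_\alpha} \sY_\alpha \cong \sY$ via Lem.~\ref{lemm:minProduct}, applicable because both $\sX \to \sX_\alpha$ and $\sY_\alpha \to \sX_\alpha$ are minimal. That lemma describes the total space of the ambient product as the scheme theoretic image of $\iY_\alpha \times_{\iX_\alpha} \iX = \iX$ inside $\hZ_\alpha \times_{\hX_\alpha} \hX \subseteq \hY_{\beta,\alpha} \times_{\hX_\alpha} \hX = \hY_\beta$, with modulus the restriction of $\mX$. Since $\hY \subseteq \hY_\beta$ is closed and contains $\iY = \iX$ as a scheme theoretically dense open subset (Prop.~\ref{prop:always-dense}), this scheme theoretic image is exactly $\hY$, and the modulus equals $\mX|_{\hY} = \mY$ by minimality of the original $\sY \to \sX$. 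This gives the desired isomorphism.
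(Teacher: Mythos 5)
Your proposal is correct and follows essentially the same route as the paper: approximate $\hY \to \hX$ by a proper finite-presentation $\hX$-scheme via Prop.~\ref{prop:cannotLimitBlowups}, descend it through the tower by \cite[\S 8]{EGAIV3}, restore the effective Cartier condition by taking the scheme-theoretic closure of the interior, and identify the ambient product with $\sY$ via Lem.~\ref{lemm:minProduct} and schematic density of the interior. The only cosmetic difference is that you keep the whole approximating system $(\hY_\beta)_\beta$ before fixing one term, whereas the paper fixes a single $\hW$ from the outset.
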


\begin{proof}
Factor $\hY \to \hX$ as a closed immersion $\hY \to \hW$ and a %
proper surjective morphism %
 $\hW \to \hX$ of finite presentation such that $\hW \to \hX$ is an isomorphism over $\iX$, Prop.~\ref{prop:cannotLimitBlowups}. By \cite[Thm.8.8.2(ii), Thm.8.10.5(i), (vi), (xii)]{EGAIV3}, %
for some $λ$ there is a proper morphism %
of finite presentation $\hW_λ \to \hX_λ$ such that $\iW_λ \to \iX_λ$ is an isomorphism %
and $\hW = \hX \times_{\hX_λ} \hW_λ$, where $\iW_λ := \hW_λ \times_{\hX_λ} \iX_λ$. %
Define $\hY_λ$ to be the scheme theoretic closure of $\iW_λ$ in $\hW_λ$, and $\mY_λ := \mX_λ|_{\hY_λ}$. %
So we have the following diagram where the vertical morphisms on the right (resp. left) are isomorphisms over $\iX_λ$ (resp. $\iX$). %
\[ \xymatrix{
& \hY \ar[d]_-{(\ast)} && \\
& \hX \times_{\hX_λ} \hY_λ \ar[d] \ar[r] & \hY_λ \ar[d] \\
& \hW \ar[d] \ar[r] & \hW_λ \ar[d] \\
& \hX \ar[r] & \hX_λ
} \]
Then $\sY_λ \to \sX_λ$ satisfies the requirements in the statement. %
Indeed, $\hY_λ \to \hX_λ$ is the composition of a closed immersion and a proper morphism so it is proper. %
We have already noted that $\iY_λ = \iX_λ$. 
It remains to show that $\sY = \sX \ambtimes_{\sX_λ} \sY_λ$. Since the total space of $\sX \ambtimes_{\sX_λ} \sY_λ$ is %
the scheme theoretic closure of $\iX \times_{\iX_λ} \iY_λ$ in $\hX \times_{\hX_λ} \hY_λ$, Lem.~\ref{lemm:minProduct}, %
it suffices to show that $\hY$ is this same closure. %
Certainly, $\hY$ is the scheme theoretic closure of $\iY$ by Prop.~\ref{prop:always-dense}, but then since the vertical morphisms are isomorphisms on the interiors, it follows that $\iX \times_{\iX_λ} \iY_λ = \iY$.
\end{proof}

Next we consider the non-ambient version of Prop.~\ref{prop:qcqs-limcolim}.

\begin{prop} \label{prop:limXYMsch}
Let $\sS$ be a modulus pair, let $\sY \in \ulPSch_\sS$, let $(\sX_λ)_{λ \in \Lambda}$ be a filtered system in $\ulPSCH_\sS$ with affine, minimal transition morphisms, %
and suppose all $\sS, \sX_λ, \sY$ are qcqs. If one of:
\begin{enumerate}
 \item The transition morphisms are schematically dominant.
 \item $\hY \to \hS$ is finite presentation.
\end{enumerate}
is satisfied, then
\[ 
\varinjlim \hom_{\ulMSCH_\sS}(\sX_λ, \sY) 
\to 
\hom_{\ulMSCH_\sS}(\amblim \sX_λ, \sY)
\]
is an isomorphism, where $\amblim \sX_λ$ is the categorical limit in $\ulPSCH_\sS$, cf.Lem.~\ref{lemm:limitExists}. 
\end{prop}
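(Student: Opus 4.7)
The plan is to bootstrap from the ambient version Prop.~\ref{prop:qcqs-limcolim} using the calculus of fractions Prop.~\ref{prop:cal-frac}, the descent of abstract admissible blowups Prop.~\ref{prop:descendAdmBlowup}, and Lem.~\ref{lem:coincidence2}(2) (morphisms in $\ulMSCH$ are detected on the interior). Set $\sX := \amblim \sX_\lambda$. By calculus of fractions, $\hom_{\ulMSCH_\sS}(\sX, \sY) = \varinjlim_{s \in \ul{\Sigma}} \hom_{\ulPSCH_\sS}(\sX', \sY)$ with $s : \sX' \to \sX$, and similarly $\varinjlim_\lambda \hom_{\ulMSCH_\sS}(\sX_\lambda, \sY) = \varinjlim_\lambda \varinjlim_{s_\lambda \in \ul{\Sigma}} \hom_{\ulPSCH_\sS}(\sX'_\lambda, \sY)$ with $s_\lambda : \sX'_\lambda \to \sX_\lambda$. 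The natural comparison is induced by base change $(s_\lambda) \mapsto (\sX \ambtimes_{\sX_\lambda} \sX'_\lambda \to \sX)$, which lies in $\ul{\Sigma}$ by Lem.~\ref{lemm:ambtimesAab}.

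For \emph{surjectivity}, a representing roof $\sX \xleftarrow{s} \sX' \xrightarrow{g} \sY$ of $f : \sX \to \sY$ has $s$ descending by Prop.~\ref{prop:descendAdmBlowup} to some $s_\lambda : \sX'_\lambda \to \sX_\lambda$ in $\ul{\Sigma}$ with $\sX' \cong \sX \ambtimes_{\sX_\lambda} \sX'_\lambda$. Setting $\sX'_\mu := \sX_\mu \ambtimes_{\sX_\lambda} \sX'_\lambda$ for $\mu \geq \lambda$, Lem.~\ref{lemm:pullbacklimamb} produces a pro-system in $\ulPSCH_\sS$ with affine minimal transition morphisms and $\amblim_\mu \sX'_\mu \cong \sX'$. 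To invoke Prop.~\ref{prop:qcqs-limcolim} on $(\sX'_\mu)$ and $\sY$, the qcqs hypothesis is inherited; case~(2) (finite presentation of $\hY \to \hS$) transfers verbatim; and for case~(1) one checks that base-changing schematically dominant transitions along the minimal morphism $\sX'_\lambda \to \sX_\lambda$ preserves schematic dominance, using Lem.~\ref{lemm:minProduct} (which, since $\iX'_\lambda = \iX_\lambda$, realises $\sX'_\mu$ as the scheme-theoretic closure of $\iX_\mu$ inside $\hX_\mu \times_{\hX_\lambda} \hX'_\lambda$) together with stability of schematic dominance under composition. Prop.~\ref{prop:qcqs-limcolim} then descends $g$ to some $g_\mu : \sX'_\mu \to \sY$, and since $s_\mu : \sX'_\mu \to \sX_\mu$ lies in $\ul{\Sigma}$ by Lem.~\ref{lemm:ambtimesAab}, the roof $\sX_\mu \xleftarrow{s_\mu} \sX'_\mu \xrightarrow{g_\mu} \sY$ furnishes the required lift.

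For \emph{injectivity}, suppose $f_1, f_2 \in \hom_{\ulMSCH_\sS}(\sX_\lambda, \sY)$ have the same image in $\hom_{\ulMSCH_\sS}(\sX, \sY)$. By Lem.~\ref{lem:coincidence2}(2), the compositions $\iX \to \iX_\lambda \rightrightarrows \iY$ via $f_1^\o, f_2^\o$ agree. For $\mu$ past the stabilisation of the modulus one has $\iX = \lim_\mu \iX_\mu$ in $\SCH_{\iS}$ with affine transition morphisms, and $\iY \to \iS$ is of finite type; thus \cite[Prop.~8.13.1]{EGAIV3} makes the canonical map $\varinjlim_\mu \hom_{\Sch_{\iS}}(\iX_\mu, \iY) \to \hom_{\Sch_{\iS}}(\iX, \iY)$ injective, so the two compositions $\iX_\mu \to \iX_\lambda \rightrightarrows \iY$ coincide at some $\mu \geq \lambda$. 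A second application of Lem.~\ref{lem:coincidence2}(2) identifies $f_1$ and $f_2$ as morphisms $\sX_\mu \to \sY$ in $\ulMSCH_\sS$.

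The main technical obstacle is the schematic-dominance verification in case~(1): since $\sX_\mu \ambtimes_{\sX_\lambda} \sX'_\lambda$ is constructed via a scheme-theoretic closure rather than a flat pullback, schematic dominance of the new transitions is not a formal base-change statement and has to be recovered by tracing through the explicit description in Lem.~\ref{lemm:minProduct}.
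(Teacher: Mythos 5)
Your proof is correct and follows essentially the same route as the paper's: both establish injectivity by embedding everything into $\hom_{\SCH_{\iS}}(-,\iY)$ and invoking the scheme-level limit statement, and both establish surjectivity by descending the abstract admissible blowup via Prop.~\ref{prop:descendAdmBlowup}, pulling it back along the system with $\ambtimes$ (Lem.~\ref{lemm:ambtimesAab}, Lem.~\ref{lemm:pullbacklimamb}), and then descending the ambient leg via Prop.~\ref{prop:qcqs-limcolim}. Your explicit verification that the pulled-back system $(\sX'_\mu)$ still has schematically dominant transition morphisms in case~(1) is a point the paper leaves implicit, and your argument for it is correct.
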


\begin{proof}
Set $\sX := \amblim_\lambda \sX_\lambda$.
Since $\hom_{\ulPSCH}(\sX_λ, \sY) \subseteq \hom_{\SCH}(\iX_λ, \iY)$ (and similar for $\sX$), and filtered colimits preserve monomorphisms, we have 
\[\xymatrix{
\varinjlim_λ \varinjlim_{\sW_λ \in \ul{\Sigma}/\sX_λ} \hom_{\ulPSCH_\sS}(\sW_λ, \sY) \ar[d] \ar@{}[r]|-\subseteq 
&
\varinjlim \hom_{\SCH_{\iS}}(\iX_λ, \iY) \ar[d]^\cong 
\\
\varinjlim_{\sW \in \ul{\Sigma}/\sX} \hom_{\ulPSCH_\sS}(\sW, \sY)  \ar@{}[r]|-\subseteq 
&
  \hom_{\SCH_{\iS}}(\iX, \iY)
} \]
Here the isomorphism on the right comes from Prop.~\ref{prop:SCHLIMIT} in the first case and \cite[Thm.8.13.1]{EGAIV3} in the second case. %
So it suffices to prove that a hat of admissible morphisms $\sX \leftarrow \sW \to \sY$ with $\sX \leftarrow \sW$ an abstract admissible blowup can be completed to a commutative diagram of admissible morphisms
\[ \xymatrix{
\sW \ar[d] \ar[r] & \sW_λ \ar[d] \ar[r] & \sY \\
\sX \ar[r] & \sX_λ
} \]
in $\ulPSCH_\sS$ for some $λ$ with $\hX_λ \leftarrow \hW_λ$ an admissible blowup.

The \abb{} descends to some $\sX_α \leftarrow \sW_α$ by Prop.~\ref{prop:descendAdmBlowup}. Defining $\sW_λ := \sX_λ \ambtimes_{\sX_α} \sW_α$ produces a filtered system of \abb{}s, cf.~Lem.\ref{lemm:ambtimesAab}, of which $\sW$ is the $\amblim$, cf.~Lem.\ref{lemm:pullbacklimamb}. Now the $\sS$-morphism $\sW \to \sY$ factors through some $\sS$-morphism $\sW_λ \to \sY$ for some $λ \geq α$ by Prop.~\ref{prop:qcqs-limcolim}.
\end{proof}

We finish this subsection by showing that it is never possible to approximate a non-of finite presentation abstract admissible blowup by ones of finite presentation.

\begin{prop}
Let $\sX \to \sY$ be an abstract admissible blowup of qcqs modulus pairs. Suppose that there is a filtered system $(\sX_λ \to \sY)$ of asbtract admissible blowups with transition morphisms ${\hX}_λ \to {\hX}_\mu$ affine, and each ${\hX}_λ \to {\hY}$ of finite presentation, such that
\[ {\hX} = \lim {\hX}_λ. \]
Then ${\hX} \to {\hY}$ is of finite presentation.
\end{prop}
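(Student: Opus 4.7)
The strategy is to apply Prop.~\ref{prop:qcqs-limcolim} to exhibit a section of the canonical projection $\sX \to \sX_\lambda$ for some $\lambda$, and then show this projection is an isomorphism, which will give $\hX \cong \hX_\lambda$ as $\hY$-schemes and hence $\hX \to \hY$ of finite presentation.

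First I would verify that $(\sX_\lambda)$ forms a pro-system in $\ulPSCH_\sY$ satisfying hypothesis (1) of Prop.~\ref{prop:qcqs-limcolim}. The transition morphisms $\sX_\lambda \to \sX_\mu$ are affine by hypothesis, and minimal because $\mX_\mu|_{\hX_\lambda} = \mY|_{\hX_\lambda} = \mX_\lambda$ (using that each $\sX_\lambda \to \sY$ is minimal as an abstract admissible blowup). They are schematically dominant: the composition $\iY = \iX_\lambda \hookrightarrow \hX_\lambda \to \hX_\mu$ coincides with the open immersion $\iY = \iX_\mu \hookrightarrow \hX_\mu$, whose scheme-theoretic image equals $\hX_\mu$ by Prop.~\ref{prop:always-dense}. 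Since the scheme-theoretic image of a composition is contained in that of the last map, $\hX_\lambda \to \hX_\mu$ is also schematically dominant.

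Next, I would identify $\sX$ with $\amblim \sX_\lambda$ in $\ulPSCH_\sY$. By Lem.~\ref{lemm:limitExists}, it suffices to observe that $\mX_\lambda|_{\hX} = \mY|_{\hX} = \mX$ is constant (using minimality of $\sX_\lambda \to \sY$ and of $\sX \to \sY$) and is an effective Cartier divisor. Since $\sX \to \sY$ is proper, it is of finite type, so $\sX \in \ulPSch_\sY$ qualifies as the target object in Prop.~\ref{prop:qcqs-limcolim}. Applying that proposition with base $\sS = \sY$ and taking its ``$\sY$'' to be our $\sX$, I obtain
\[ \hom_{\ulPSCH_\sY}(\sX, \sX) \;=\; \hom_{\ulPSCH_\sY}(\amblim_\lambda \sX_\lambda, \sX) \;\cong\; \varinjlim_\lambda \hom_{\ulPSCH_\sY}(\sX_\lambda, \sX). \]
Under this isomorphism, $\id_\sX$ corresponds to some ambient $\sY$-morphism $s : \sX_\lambda \to \sX$, such that the composition $\sX \xrightarrow{\pr_\lambda} \sX_\lambda \xrightarrow{s} \sX$ with the canonical projection $\pr_\lambda$ from the limit equals $\id_\sX$.

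Finally I would deduce that the reverse composition $\sX_\lambda \xrightarrow{s} \sX \xrightarrow{\pr_\lambda} \sX_\lambda$ also equals $\id_{\sX_\lambda}$. Indeed, on interiors both $s$ and $\pr_\lambda$ are the identity of $\iY$ (since $\iX = \iY = \iX_\lambda$), so the composition agrees with $\id_{\sX_\lambda}$ on the schematically dense open $\iX_\lambda$; hence the two ambient morphisms are equal by Cor.~\ref{cor:coincide}. Therefore $\pr_\lambda: \sX \to \sX_\lambda$ is an isomorphism in $\ulPSCH_\sY$, so $\hX \cong \hX_\lambda$ as $\hY$-schemes, and since $\hX_\lambda \to \hY$ is of finite presentation by hypothesis, so is $\hX \to \hY$. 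The only real subtlety is verifying the hypotheses of Prop.~\ref{prop:qcqs-limcolim}, in particular the schematic dominance of the transition morphisms and the inclusion $\sX \in \ulPSch_\sY$; once these are in hand the rest is formal.
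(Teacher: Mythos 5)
Your proof is correct, but it takes a different route from the paper's. The paper argues directly at the level of total spaces: it writes $\hX$ (via Prop.~\ref{prop:cannotLimitBlowups}, i.e.\ \cite[Tag 09ZR]{stacks-project}) as a second filtered limit $\lim_{\lambda'}\hX'_{\lambda'}$ of proper finitely presented $\hY$-schemes with $\hX\to\hX'_{\lambda'}$ closed immersions, uses the equality of the two limits to produce a factorisation $\hX\to\hX_\lambda\to\hX'_{\lambda'}$, deduces that $\hX\to\hX_\lambda$ is a closed immersion by separatedness, and then observes that $\OO_{\hX_\lambda}\to\OO_{\hX}$ is injective (because it embeds into $\OO_{\iX_\lambda}\cong\OO_{\iX}$) as well as surjective, hence an isomorphism. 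You instead package everything through Prop.~\ref{prop:qcqs-limcolim}: after checking that the system has affine, minimal, schematically dominant transitions and that $\sX=\amblim\sX_\lambda$, you factor $\id_\sX$ through some $\sX_\lambda$ and verify that the resulting section is a two-sided inverse using Cor.~\ref{cor:coincide}. Your verifications (minimality from $\mX_\lambda=\mY|_{\hX_\lambda}$, schematic dominance from Prop.~\ref{prop:always-dense} plus the fact that a composite being schematically dominant forces the outer map to be, and the constancy of the pulled-back moduli needed for Lem.~\ref{lemm:limitExists}) are all sound, and the qcqs/finite-type hypotheses of Prop.~\ref{prop:qcqs-limcolim} hold since $\hX\to\hY$ is proper. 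What each approach buys: yours is shorter on the page and reuses the paper's limit machinery cleanly, at the cost of being less self-contained — Prop.~\ref{prop:qcqs-limcolim} rests on Prop.~\ref{prop:SCHLIMIT}, whose proof is essentially the same closed-immersion-factorisation trick the paper deploys directly here, so the underlying idea is the same; the paper's version makes that mechanism visible and avoids invoking the heavier statement.
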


\begin{proof}
Write ${\hX}$ as in Prop.~\ref{prop:cannotLimitBlowups} (or rather, as in \cite[Tag 09ZR]{stacks-project}) as a filtered limit ${\hX} = \lim_{λ' \in \Lambda'} {\hX}'_{λ'}$ of proper ${\hY}$-schemes of finite presentation such that each ${\hX} \to {\hX}_λ'$ is a closed immersion. Since $\lim_{λ' \in \Lambda'} {\hX}'_{λ'} = \lim_{λ \in \Lambda} {\hX}_λ$, for each $λ' \in \Lambda'$, there is some $λ \in \Lambda$ and a factorisation ${\hX} \to {\hX}_λ \to {\hX}'_{λ'}$. Since ${\hX} \to {\hX}'_{λ'}$ is a closed immersion and all three are ${\hY}$-separated, it follows that ${\hX} \to {\hX}_λ$ is also a closed immersion. On the other hand, (as sheaves of $\OO_{{\hX}_λ}$-algebras, for example), $\OO_{X^\circ_λ} \cong \OO_{X^\circ}$ and $\OO_{{\hX}_λ} \subseteq \OO_{X^\circ_λ}$, so $\OO_{{\hX}_λ} \to \OO_{{\hX}}$ is monomorphism. Since it is also a surjective, we discover that
\[ {\hX} \cong {\hX}_λ. \]
\end{proof}

\begin{ques}
Are all abstract admissible blowups between qcqs schemes of finite presentation?
\end{ques}


\chapter{Coverings} \label{chap:coverings}

In this section, we will introduce several topologies on categories of modulus pairs. The Nisnevich topology is used in the construction of motives with modulus. 

\section{Disclaimer on conventions of coverings}

In SGA, Suslin--Voevodsky's works, and all the first author's previous works ``covering'' means a morphism (or family of morphisms) in the site which becomes an epimorphism (or jointly surjective family) in the topos. However, when working with multiple sites equipped with highly related topologies (e.g., the Zariski topology on $X_{Zar}$, $\Sm_X$, $\Sch_X$, $\SCH_X^{\qcqs}$, $\SCH_X, \dots$) it makes more sense to follow the Stacks Project and define coverings as specific concretely defined families of morphisms independent of any site / topos. Then these families are used to define various sites / topoi. 

In practice, the reader will probably notice no difference to whichever convention they are used to, since 
for all coverings and sites we consider in this work, a morphism (of family of morphisms) of the site becomes surjective in the topos if and only if it is refinable by a covering.

\section{Covering families of schemes}

\begin{defi} \label{defi:coveringsSch}
Let $\sU = \{f_i: U_i \to X\}_{i \in I}$ be a \emph{jointly surjective}\marginpar{\fbox{jointly surjective}}\index{jointly surjective} family of morphisms of schemes.
\begin{enumerate}
 \item $\sU$ is a \emph{Zariski covering}\marginpar{\fbox{Zariski covering}}\index{covering!Zariski covering} or $\Zar$-covering if each $f_i$ is an open immersion of finite presentation.
 \item $\sU$ is a \emph{Nisnevich covering}\marginpar{\fbox{Nisnevich covering}}\index{covering!Nisnevich covering} or $\Nis$-covering if each $f_i$ is {\'e}tale and of finite presentation, and there exists a finite sequence $\varnothing = Z_0 \to Z_1 \to \dots Z_n = X$ of closed immersions of finite presentation such that for each $j$ there is an $i_j$ such that $Z_j \times_X U_{i_j} \to Z_j$ admits a section. 
 \item $\sU$ is an \emph{{\'e}tale covering}\marginpar{\fbox{{\'e}tale covering}}\index{covering!{\'e}tale covering} %
 or $\et$-covering %
 if each $f_i$ is {\'e}tale and of finite presentation.
 \item $\sU$ is an \emph{$\fppf$-covering}\marginpar{\fbox{$\fppf$-covering}}\index{covering!$\fppf$-covering} if each $f_i$ is flat and of finite presentation.
 \item $\sU$ is a \emph{closed covering}\marginpar{\fbox{closed covering}}\index{covering!closed covering} if each $f_i$ is a closed immersion.
 \item $\sU$ is a \emph{finite covering}\marginpar{\fbox{finite covering}}\index{covering!finite covering} %
 or $\finite$-covering %
 if each $f_i$ is a finite morphism.
 \item $\sU$ is an \emph{$\rqfh$-covering}\marginpar{\fbox{$\rqfh$-covering}}\index{covering!$\rqfh$-covering} if it can be written as a finite composition of Zariski coverings and finite coverings. That is, if $\sU$ is of the form %
 $$\{U_{i_0 i_1 \dots i_n} \to \dots \to U_{i_0 i_1} \to U_{i_0} \to X\}_{i_0 \in I_0, i_1 \in I_{i_0}, i_2 \in I_{i_1}, \dots, i_n \in I_{i_{n-1}}}$$ %
 where each $\{U_{i_0 i_1 \dots i_j} \to U_{i_0 i_1 \dots i_{j-1}}\}_{i_j \in I_{i_{j-1}}}$ is either a Zariski covering or a finite covering.
\end{enumerate}
\end{defi}

\begin{rema} \label{rema:qfhvsrqfh}
On Noetherian schemes, the $\rqfh$-topology is just the qfh-topology, \cite[Lem.3.4.2]{Voe96}, \cite[Thm.8.4]{Ryd10}, but in general the $\rqfh$-topology is finer. For example, the reduction of $\Spec(k[t_1, t_2, t_3, \dots] / \langle t_1^2, t_2^2, t_3^2, \dots \rangle)$ is an $\rqfh$-covering, but not a $\qfh$-covering. 

We choose ``r'' for ``reduced''. There is no direct connection between the $\rqfh$-topology and the rh-topology,\footnote{i.e., the topology generated by the Zariski topology and completely decomposed proper morphisms.} whose ``r'' probably stands for Riemann-Zariski. 
\end{rema}

\begin{rema} \label{rema:fppfQfh}
Despite appearances to the contrary, every $\fppf$-covering can be refined by a $\rqfh$-covering, \cite[\S IV.6.3]{SGA3}, \cite[Cor.17.16.2, Thm.18.5.11(c)]{EGAIV4}. So the $\rqfh$-topology (on $\Sch_\ZZ$ for example) is finer than the $\fppf$-topology.
\end{rema}

\section{Descending flat coverings through limits}

\begin{prop} \label{prop:descendFlatCoverings}
Let $(X_{λ})_{λ}$ be a pro-object in $\SCH$ with affine transition morphisms such that $X_\lambda$ is quasi-compact and quasi-separated for some $\lambda$.\footnote{Since any affine morphism is quasi-compact and separated, this condition implies that $X_{\lambda'}$ is qcqs for all $\lambda' \geq \lambda$.}
Set $X := \varprojlim X_{λ}$, and 
let $τ$ be one of $\Zar$, $\Nis$, $\et$, or $\fppf$. %
Then, for any $τ$-covering family $\sU = \{U_i \to X\}_{i \in I}$ inside $\Sch_{X}$, there exists some $λ$ and a $τ$-covering family $\{U_{jλ} \to X_λ\}_{j = 1}^n$ such that $U_{i_j} = X \times_{X_λ} U_{jλ}$ for some $i_j = 1, \dots, n$.
\end{prop}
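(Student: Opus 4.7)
The plan is to apply the standard approximation results of \cite[\S 8, \S 11, \S 17]{EGAIV3}, \cite[\S IV]{EGAIV4} topology-by-topology, descending each piece of data in turn. First I would reduce to a finite index set: since the transition morphisms are affine, $X \to X_{\lambda}$ is affine for all $\lambda$ sufficiently large, and in particular $X$ is quasi-compact. Each $f_i:U_i \to X$ is an open map in all four topologies (for $\fppf$ this uses that flat of finite presentation is open), so $\{f_i(U_i)\}_{i\in I}$ is an open cover of the quasi-compact space $X$ and admits a finite subcover. Replace $I$ with a finite subset.

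Next I would descend each $U_i \to X$ individually. By \cite[Thm.8.8.2, Thm.8.10.5]{EGAIV3} there exist $\lambda_i$ and $U_{i\lambda_i} \to X_{\lambda_i}$ of finite presentation such that $U_i \cong X \times_{X_{\lambda_i}} U_{i\lambda_i}$. Passing to a common (cofinal) $\lambda$, pulling back each $U_{i\lambda_i}$ to $X_\lambda$, and applying \cite[Thm.11.2.6, Thm.17.7.8]{EGAIV3} (descent of flatness, smoothness, étaleness, and the property of being an open immersion from finite presentation pullback to finite presentation), I can assume each $U_{i\lambda} \to X_\lambda$ has the required local property (open immersion for $\Zar$, étale for $\Nis$ and $\et$, flat for $\fppf$). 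Joint surjectivity of $\{U_{i\lambda} \to X_\lambda\}$ holds after possibly enlarging $\lambda$: the image is a constructible subset (each $U_{i\lambda} \to X_\lambda$ is of finite presentation, hence $\coprod U_{i\lambda} \to X_\lambda$ has constructible image), and by \cite[Cor.8.3.4]{EGAIV3} a constructible set descending one that is all of $X$ becomes all of $X_\lambda$ for some $\lambda$.

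Finally I would handle the Nisnevich splitting data. Given the filtration $\varnothing = Z_0 \subset Z_1 \subset \dots \subset Z_n = X$ of closed immersions of finite presentation, each $Z_j$ descends to a closed immersion $Z_{j,\lambda} \hookrightarrow X_\lambda$ of finite presentation (\cite[Thm.8.8.2, Thm.8.10.5]{EGAIV3}), and after refining $\lambda$ one can arrange $Z_{j-1,\lambda} \subset Z_{j,\lambda}$ for all $j$ and $Z_{0,\lambda} = \varnothing$, $Z_{n,\lambda} = X_\lambda$ (again by finite presentation approximation and \cite[Cor.8.3.4]{EGAIV3}). Each section $s_j: Z_j \to Z_j \times_X U_{i_j}$ is a morphism of finite presentation $Z_j$-schemes, so by Prop.~\ref{prop:SCHLIMIT} applied to the system $(Z_{j,\mu})_{\mu \geq \lambda}$ (whose affine transition morphisms are schematically dominant since each $Z_{j,\mu} \to Z_{j,\lambda}$ is the identity on the limit and these are closed immersions between qcqs schemes) it descends to a section $Z_{j,\lambda} \to Z_{j,\lambda} \times_{X_\lambda} U_{i_j,\lambda}$ for some $\lambda$. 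Taking the supremum over the finitely many $j$ yields the desired $\lambda$.

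The main obstacle is purely bookkeeping: making sure the same $\lambda$ works for joint surjectivity, the local property on each $f_i$, the filtration, and the splittings simultaneously. Each piece is a standard application of \cite[\S 8, \S 11, \S 17]{EGAIV3}, and the finiteness of $I$ (after the initial reduction) and the finiteness of the Nisnevich filtration ensures that only finitely many cofinality refinements are needed.
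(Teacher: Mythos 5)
Your proposal follows essentially the same route as the paper's proof: descend each $U_i$ individually by finite-presentation approximation, descend the local property (open immersion / étale / flat), extract a finite jointly surjective subfamily using quasi-compactness of $X$, push surjectivity down to some $X_\lambda$, and finally descend the Nisnevich splitting stratification; the paper cites Stacks Project tags where you cite EGA, but the content is identical. The one slip is in your treatment of the sections $Z_j \to Z_j \times_X U_{i_j}$: you invoke Prop.~\ref{prop:SCHLIMIT} and assert that the transition morphisms of $(Z_{j,\mu})_\mu$ are schematically dominant, but the justification given (``the identity on the limit'', ``closed immersions'') does not establish this and the claim is false in general. Fortunately no dominance is needed here, since $Z_{j,\lambda} \times_{X_\lambda} U_{i_j,\lambda} \to Z_{j,\lambda}$ is of finite presentation, so the standard limit formula $\hom(\lim Z_{j,\mu}, Y) = \colim \hom(Z_{j,\mu}, Y)$ of \cite[Thm.8.13.1 / Prop.8.14.2]{EGAIV3} applies directly; replacing the citation fixes the argument.
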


\begin{proof}
Descending the $U_i$ individually is by 
\cite[Thm.8.8.2(ii)]{EGAIV3}, noting that each $U_i \to X$ is of finite presentation by definition. 
We may assume each $U_{λ_i} \to X_λ$ is an open immersion (resp. étale, flat of finite presentation) since $X$ has the limit topology (resp. by \cite[Tag 07RP]{stacks-project}, by \cite[Tag 04AI]{stacks-project}). Since $X$ is quasi-compact (as the limit of quasi-compact spaces), there is a finite subfamily such that $\amalg_{j = 1}^n U_{i_j} \to X$ is surjective, then refining, we can assume that all $λ_{i_j}$ are the same. Then surjectivity comes from \cite[Tag 07RR]{stacks-project}. Finally, for the Nisnevich case we want to descend a splitting stratification. This comes from \cite[Thm.8.8.2(i,ii), Thm.8.10.5(iv, vi)]{EGAIV3}.
\end{proof}

\begin{rema} \label{rema:qfhNoGood}
In general, we cannot descend closed, (and therefore finite or $\rqfh$-) coverings through limits as above, even if the transition morphisms are schematically dominant. For example, let $A_n := k[x_1, \dots, x_n] / \langle x_ix_j : i \neq j\rangle$, so $\colim A_n = k[x_1, x_2, \dots] / \langle x_ix_j : i \neq j\rangle$. Then the irreducible components of $\Spec(\colim A_n)$ correspond to the ideals $I_n = \langle x_i : i \neq n\rangle$, and they do not descend to any $\Spec(A_n)$. 

Or for a simpler example, take any non-finite profinite set $\lim S_i$ (with any scheme structure, e.g. $\Spec(\colim \prod_{S_i} k)$).

Consequently, $\rqfh$-sheafification does not commute in general with left Kan extension.

If we restrict to Noetherian interior we are ok though, cf.Prop.~\ref{prop:descendPQfhCoverings} below.
\end{rema}

\section{Covering families of modulus pairs} \label{sec:modPairCov}

\begin{defi} \label{defi:coveringsMSchamb}
Let $\tau$ be any of the classes of coverings from Def.~\ref{defi:coveringsSch}. %

We say that a family of minimal ambient morphisms $\{\sU_i \to \sX\}_{i \in I}$ in $\ulPSCH$ is a 
\emph{$\ulPtau$-covering}%
\marginpar{\fbox{$\ulPtau$-covering}}%
\index{covering@$\ulPtau$-covering} %
if $\{\hU_i \to \hX\}_{i \in I}$ is a $\tau$-covering. 

The class of \emph{$\ul{\textrm{M}}\tau$-coverings}%
\marginpar{\fbox{$\ul{\textrm{M}}\tau$-covering}}%
\index{covering@$\ul{\textrm{M}}\tau$-covering} %
in $\ulMSCH$ is the smallest class of families of morphisms containing the images of $\ulPtau$-coverings, and closed under isomorphism and composition.
\end{defi}

\begin{exam}\label{exa:covering}
So for example, if $\sY \to \sX$ is an abstract blowup and $\{\hV_i \to \hY\}_{i \in I}$ is a Nisnevich covering, then 
\[ \{(\hV_i, \mY|_{\hV_i}) \to \sY\}_{i \in I} \]
is a 
\emph{$\ulPNis$-covering}%
\marginpar{\fbox{$\ulPNis$-covering}}%
\index{covering@$\ulPNis$-covering} %
in $\ulPSCH$ and
\[ \{(\hV_i, \mY|_{\hV_i}) \to \hX\}_{i \in I} \]
is a 
\emph{$\ulMNis$-covering}%
\marginpar{\fbox{$\ulMNis$-covering}}%
\index{covering@$\ulMNis$-covering} %
in $\ulMSCH$.
\end{exam}

\begin{rema}
We will see later on, Cor.~\ref{coro:normalForm}, that up to isomorphism and refinement, for $\tau = \Zar$, $\Nis$, $\et$, $\fppf$, $\rqfh$, all $\ul{\textrm{M}}\tau$-coverings of qcqs modulus pairs are (the image of) a composition of a $\ulPtau$-covering $\{\sU_i \to \sY\}$ and an abstract admissible blowup $\sY \to \sX$ as in Example \ref{exa:covering}.
\end{rema}

\begin{prop} \label{prop:coveringAmbtimes}
Suppose that $\sX$ is a modulus pair, $\tau$ is any of $\Zar$, $\Nis$, $\et$, $\fppf$, $\finite$, $\rqfh$, and $\{\sU_i \to \sX\}_{i \in I}$ is a $\ulPtau$-covering. Then for any ambient morphism $\sY \to \sX$, the ambient product $\sY \ambtimes_{\sX} \sU_i$ represents the fibre product in $\ulPSCH$, and the family $\{\sY \ambtimes[\sX] \sU_i \to \sY\}_{i \in I}$ is again a $\ulPtau$-covering.
\end{prop}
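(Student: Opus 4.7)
The plan is to handle the two assertions in turn. For the first, note that by Def.~\ref{defi:coveringsMSchamb} every $\sU_i \to \sX$ in a $\ulPtau$-covering is minimal ambient, so Prop.~\ref{prop:minFibPro} immediately identifies $\sY \ambtimes[\sX] \sU_i$ with the categorical fibre product in $\ulPSCH$ (and hence in the slice $\ulPSCH_\sS$ for any choice of base). Lem.~\ref{lemm:minProduct} further tells us that $\sY \ambtimes[\sX] \sU_i \to \sY$ is itself minimal, with total space $\hY_i$ the scheme-theoretic closure of $\iY \times_\iX \iU_i$ inside $\hY \times_\hX \hU_i$ and modulus $\mY|_{\hY_i}$.

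For the second assertion, I must show that $\{\hY_i \to \hY\}$ is a $\tau$-covering, by case analysis on $\tau$. For the flat topologies $\tau \in \{\Zar, \Nis, \et, \fppf\}$ the morphisms $\hU_i \to \hX$ are flat, so the second part of Lem.~\ref{lemm:minProduct} gives $\hY_i = \hY \times_\hX \hU_i$, and stability of each of these topologies under base change finishes the job. For $\tau = \finite$, the base change $\hY \times_\hX \hU_i \to \hY$ is finite by preservation of finiteness under base change, hence the composite $\hY_i \to \hY$ (closed immersion followed by a finite morphism) is finite; joint surjectivity is then provided by Lem.~\ref{lemm:ambtimesProper}\eqref{lemm:lemm:ambtimesProper:6}, whose hypothesis is met since finite morphisms are universally closed and $\coprod \hU_i \to \hX$ is surjective.

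For $\tau = \rqfh$, I proceed by induction on the length of a decomposition of $\{\hU_i \to \hX\}$ as a composition of Zariski and finite coverings; the base case is precisely the two cases just handled. For the inductive step, factor the covering as $\{\hU_i \to \hV_j \to \hX\}$ where $\{\hV_j \to \hX\}$ is either Zariski or finite. When the outer stage is Zariski, each $\hV_j$ is flat over $\hX$ and so inherits a canonical minimal modulus pair structure $\sV_j := (\hV_j, \mX|_{\hV_j})$; the factorisations $\sU_i \to \sV_j \to \sX$ are then both minimal, so Cor.~\ref{coro:ambtimesambtimes}(2) identifies $\sY \ambtimes[\sX] \sU_i$ with $(\sY \ambtimes[\sX] \sV_j) \ambtimes[\sV_j] \sU_i$, and the already-established Zariski case together with the inductive hypothesis closes the step.

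The main obstacle will be the case in which the outermost stage of the $\rqfh$-composition is finite: then $\mX|_{\hV_j}$ need not be an effective Cartier divisor, so the intermediate schemes do not admit minimal modulus pair structures and the above inductive device breaks down. In this situation I plan to argue directly with the scheme-theoretic closures defining the $\hY_i$: the $\rqfh$-property of $\{\hY_i \to \hY\}$ is extracted by exhibiting it as the composition of the (base-changed) outer finite covering $\{\hY \times_\hX \hV_j \to \hY\}$ with the closed immersions $\hY_i \hookrightarrow \hY \times_\hX \hU_i$ followed by the pullback of the length-$(n-1)$ piece, while joint surjectivity is recovered by applying Lem.~\ref{lemm:ambtimesProper}\eqref{lemm:lemm:ambtimesProper:6} stagewise — this is legitimate because each individual stage of the composition is either flat (Zariski, hence base-change-friendly on schematic closures) or universally closed (finite).
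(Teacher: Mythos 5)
Your treatment of the first assertion and of the cases $\tau=\Zar,\Nis,\et,\fppf,\finite$ is correct and matches the paper's argument; your explicit appeal to Lem.~\ref{lemm:ambtimesProper}\eqref{lemm:lemm:ambtimesProper:6} for joint surjectivity in the finite case is a welcome supplement to the paper's rather terse treatment of that point. The gap is in the $\rqfh$ case when the outermost stage of the decomposition is finite. The tower you propose, $\hY_i \hookrightarrow \hY\times_{\hX}\hU_i \to \hY\times_{\hX}\hV_j \to \hY$, is not a valid $\rqfh$-decomposition: by Def.~\ref{defi:coveringsSch} \emph{every} stage of such a composition must be a jointly surjective Zariski or finite covering, whereas your top stage is a closed immersion that is in general far from surjective ($\hY\to\hX$ need not be flat, so $\iY\times_{\iX}\iU_i$ need not be dense in $\hY\times_{\hX}\hU_i$, and its closure $\hY_i$ can miss whole components lying over the modulus). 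Lem.~\ref{lemm:ambtimesProper}\eqref{lemm:lemm:ambtimesProper:6} only yields surjectivity of the composite family onto $\hY$, not stagewise surjectivity, so invoking it ``stagewise'' cannot repair this; nor does the intermediate family $\{\hY_i\to\hY\times_{\hX}\hV_j\}$ cover $\hY\times_{\hX}\hV_j$, since it only hits the closure of the interior.

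The correct fix, which is what the paper does (and isolates as Lem.~\ref{lemm:pqrfhIsFinZar}), is to first replace every intermediate scheme $\hV_j$ in the chosen decomposition of $\{\hU_i\to\hX\}$ by the schematic closure of $\hV_j\times_{\hX}\iX$ in $\hV_j$. This makes $\mX|_{\hV_j}$ an effective Cartier divisor, so each stage becomes a $\ulPZar$- or $\ulPfin$-covering of genuine modulus pairs (surjectivity survives because the interiors are dense and finite morphisms satisfy going up, and the top of the tower is unchanged since its modulus is already Cartier). Once all stages $\sU_i\to\sV_j\to\sX$ are minimal ambient morphisms, Cor.~\ref{coro:ambtimesambtimes} applies uniformly to give $\sY\ambtimes_{\sX}\sU_i\cong(\sY\ambtimes_{\sX}\sV_j)\ambtimes_{\sV_j}\sU_i$, and your induction then runs for finite outer stages exactly as it does for Zariski ones --- the separate workaround you were constructing is unnecessary. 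This is the content of the paper's one-line reduction ``the case $\tau=\rqfh$ follows from the cases $\tau=\Zar,\fin$ by functoriality of the ambient product.''
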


\begin{proof}
The case $\tau = \Zar, \Nis, \et, \fppf$ follows from Lemma \ref{lemm:minProduct} for flat morphisms. 
It remains to show the cases $\tau = \fin, \rqfh$. 
By the functoriality of $\sY \ambtimes_{\sX} -$ (which holds by the minimality of $\sY \to \sX$; see Corollary \ref{cor:ambprod-func2}), the case $\tau = \rqfh$ follows from the cases $\tau = \Zar, \fin$. Therefore, we are reduced showing the case $\tau = \fin$.

Let $\{\sU_i \to \sX \}_i$ be a $\ulP\fin$-covering, and set $\sV_i := \sY \ambtimes_{\sX} \sU_i$.
By the minimality of $\sY \to \sX$ and by Lemma \ref{lemm:minProduct}, the ambient space $\sV_i$ is the schematic closure of $\iY \times_{\iX} \iU_i$ inside $\hY \times_{\hX} \hU_i$ for each $i$. Then, since $\hY \times_{\hX} \hU_i \to \hY$ are finite as pullback of finite morphisms, the induced morphisms $\hV_i \to \hY$ between closed subschemes are also finite. This shows $\tau=\fin$ case, finishing the proof.
\end{proof}

\begin{lemm} \label{lemm:pqrfhIsFinZar}
Any $\ulPrqfh$-covering can be written as a composition of $\ulPZar$-coverings and $\ulPfin$-coverings.
\end{lemm}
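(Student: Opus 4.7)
My plan is to fix a scheme-level decomposition of $\{\hU_i \to \hX\}$ as a tree of Zariski and finite coverings
\[ \{U_{i_0 \dots i_n} \to \dots \to U_{i_0} \to \hX\} \]
(guaranteed by the definition of $\rqfh$-covering in Def.~\ref{defi:coveringsSch}, with $U_{i_0 \dots i_n} = \hU_i$), and then equip each intermediate scheme with a modulus so that each layer becomes a $\ulPZar$- or $\ulPfin$-covering, matching the type at the scheme level. For each node $V = U_{i_0 \dots i_j}$ in the tree I set $V^\circ := V \times_\hX \iX$ and define $\tilde{V}$ to be the scheme-theoretic image of the open immersion $V^\circ \to V$. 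Lemma~\ref{lem:qc-interior} ensures this immersion is quasi-compact, and Lemma~\ref{lem:pb-dense} then yields that $\mX \times_\hX \tilde{V}$ is an effective Cartier divisor, making $\tilde{\sV} := (\tilde{V}, \mX|_{\tilde{V}})$ a modulus pair. At the leaves and at the root, Proposition~\ref{prop:always-dense} (together with the minimality of $\sU_i \to \sX$, which gives $\mU_i = \mX|_{\hU_i}$) forces $\tilde{V} = V$, so these nodes coincide with the given $\sU_i$ and $\sX$.

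For each edge $V \to W$ in the tree, the composition $V^\circ \to V \to W$ factors through $W^\circ$, hence through $\tilde{W}$. Consequently the pullback to $V$ of the ideal sheaf defining $\tilde{W}$ vanishes on the open $V^\circ$ and, by the universal property of the scheme-theoretic image, is contained in the ideal sheaf defining $\tilde{V}$. This yields a canonical factorization $\tilde{V} \to \tilde{W}$, and by construction of the moduli as pullbacks from $\sX$, the resulting morphism $\tilde{\sV} \to \tilde{\sW}$ is minimal ambient.

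The step I expect to require the most care is verifying that each layer $\{\tilde{V}_\alpha \to \tilde{W}\}_\alpha$ remains a Zariski or finite covering of schemes. In the Zariski case, the commutation of scheme-theoretic image with restriction to quasi-compact opens (applicable thanks to Lemma~\ref{lem:qc-interior}) yields $\tilde{V}_\alpha = V_\alpha \cap \tilde{W}$, so joint surjectivity is immediate. In the finite case, each $\tilde{V}_\alpha \to \tilde{W}$ is finite as the composition of a closed immersion and a finite morphism, but joint surjectivity is subtler: given $w \in \tilde{W}$, topological density of $W^\circ$ in $\tilde{W}$ produces some $w' \in W^\circ$ with $w' \leadsto w$, and joint surjectivity of $\{V_\alpha^\circ \to W^\circ\}$ (which is a base change of the original layer, since $V_\alpha^\circ = V_\alpha \times_W W^\circ$) provides some $v' \in V_\alpha^\circ$ over $w'$; properness of the finite map $V_\alpha \to W$ then lifts the specialisation $w' \leadsto w$ to a specialisation $v' \leadsto v$ in $V_\alpha$, and $v$ automatically lies in $|\tilde{V}_\alpha|$, which is the topological closure of $|V_\alpha^\circ|$. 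Composing the layers exhibits $\{\sU_i \to \sX\}$ as a composition of $\ulPZar$- and $\ulPfin$-coverings, as desired.
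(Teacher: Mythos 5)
Your proof is correct and follows essentially the same route as the paper: replace each intermediate scheme in the scheme-level tower by the scheme-theoretic closure of the preimage of $\iX$, equip it with the modulus pulled back from $\mX$, and check that each layer remains a covering of the same type. The paper delegates that last verification to the proof of Proposition~\ref{prop:coveringAmbtimes}; your explicit treatment of joint surjectivity in the finite case (via specialisation-lifting, which is legitimate here because $W^\circ$ is the complement of an effective Cartier divisor, so the nonzerodivisor condition guarantees that every point of $\tilde{W}$ is a specialisation of a point of $W^\circ$ --- plain topological density alone would not suffice) fills in a detail the paper leaves implicit.
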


\begin{proof}
Recall that we defined $\rqfh$-coverings as compositions of $\finite$-coverings and $\Zar$-coverings. The problem that might occur is that pullback of effective Cartier divisors along finite morphisms are not guaranteed to be effective Cartier divisors any more. So we have have some $\sY \to \sX$ such that we can write $\hY \to \hX$ as a sequence $\hY = \hU_n \to \dots \to \hU_1 \to \hU_0 = \hX$ of finite coverings and Zariski coverings, by $\mX|_{\hU_i}$ is not an effective Cartier divisor for some $i$. However, in this case, we replace each $\hU_i$ with the scheme theoretic closure of $\hU_i \times_{\hX} \iX$ in $\hU_i$. 

Since $\mY = \mX|_{\hY}$, this does not alter $\mY$. Finally one checks that this operation preserves finite coverings and Zariski coverings, as we have already seen in the proof of Proposition \ref{prop:coveringAmbtimes}.
\end{proof}

\begin{lemm} \label{lemm:schRqfhGivesmpRqfh}
Let $\sP$ be a modulus pair, and $\{\hU_i \to \hP\}_{i \in I}$ a $\rqfh$-covering. Then there exists a $\ulPrqfh$-covering $\{\sV_i \to \sP\}_{i \in I}$ such that each $\hV_i$ is a closed subscheme of $\hU_i$.
\end{lemm}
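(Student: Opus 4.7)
The plan is to follow the inductive construction used in the proof of Lem.~\ref{lemm:pqrfhIsFinZar} and Prop.~\ref{prop:coveringAmbtimes}, adapted to start from a scheme-theoretic $\rqfh$-covering rather than an already-modulus $\ulPrqfh$-covering. First, unfold the definition to express $\{\hU_i \to \hP\}_{i \in I}$ as an $n$-fold tree whose layers are each either a Zariski covering or a finite covering of schemes, and whose leaves are indexed by $I$. I then proceed by induction on $n$, building a modulus pair $\sV_{i_0\dots i_j}$ at each node of the tree with the property that $\hV_{i_0\dots i_j}$ is a closed subscheme of $\hU_{i_0\dots i_j}$ and $\sV_{i_0\dots i_j} \to \sP$ is a minimal ambient morphism.

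For a Zariski layer $\{\hU_{i_0\dots i_j} \to \hU_{i_0\dots i_{j-1}}\}_{i_j}$, simply take the open subscheme $\hV_{i_0\dots i_j} := \hV_{i_0\dots i_{j-1}} \times_{\hU_{i_0\dots i_{j-1}}} \hU_{i_0\dots i_j}$; the restriction of $\mP$ remains an effective Cartier divisor because pulling back a Cartier divisor along an open immersion is Cartier, and joint surjectivity is preserved under base change. For a finite layer, first form the pullback $\hW_{i_j} := \hV_{i_0\dots i_{j-1}} \times_{\hU_{i_0\dots i_{j-1}}} \hU_{i_0\dots i_j}$, and then let $\hV_{i_0\dots i_j}$ be the scheme-theoretic closure of $\iV_{i_0\dots i_{j-1}} \times_{\hV_{i_0\dots i_{j-1}}} \hW_{i_j}$ inside $\hW_{i_j}$. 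Lem.~\ref{lem:pb-dense} then turns the pulled-back modulus into an effective Cartier divisor on $\hV_{i_0\dots i_j}$, so the induced $\sV_{i_0\dots i_j} \to \sV_{i_0\dots i_{j-1}}$ is ambient and minimal, and a closed subscheme of the finite $\hV_{i_0\dots i_{j-1}}$-scheme $\hW_{i_j}$ is again finite over $\hV_{i_0\dots i_{j-1}}$.

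The main obstacle is checking that the finite-layer family $\{\hV_{i_0\dots i_j} \to \hV_{i_0\dots i_{j-1}}\}_{i_j}$ remains jointly surjective after the scheme-theoretic closure step. Over $\iV_{i_0\dots i_{j-1}}$ this is automatic because $\hV_{i_0\dots i_j}$ and $\hW_{i_j}$ agree there, and $\{\hW_{i_j} \to \hV_{i_0\dots i_{j-1}}\}_{i_j}$ is the base change of the jointly surjective family $\{\hU_{i_0\dots i_j} \to \hU_{i_0\dots i_{j-1}}\}_{i_j}$. Over the modulus one exploits that each $\hV_{i_0\dots i_j} \to \hV_{i_0\dots i_{j-1}}$ is closed (finite morphisms being closed), so its image equals the closure of the corresponding image on the interior, and that $\iV_{i_0\dots i_{j-1}}$ is dense in $\hV_{i_0\dots i_{j-1}}$ by Prop.~\ref{prop:always-dense}; this mirrors the unwritten surjectivity step tacitly used in the proofs of Prop.~\ref{prop:coveringAmbtimes} and Lem.~\ref{lemm:pqrfhIsFinZar}. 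Composing all the layers and using that a composition of minimal ambient morphisms is minimal produces the desired $\ulPrqfh$-covering $\{\sV_i \to \sP\}_{i \in I}$ with each $\hV_i$ closed in $\hU_i$.
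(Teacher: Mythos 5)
Your proof is correct and is essentially the paper's own argument written out in full: the paper defines $\hV_i$ in one shot as the scheme-theoretic closure of $\hU_i \times_{\hP} \iP$ in $\hU_i$ and then verifies the covering property layer by layer (Zariski layers are unchanged by flatness, finite layers stay jointly surjective by density of the interior), which coincides with your inductive construction because a scheme-theoretic image computed inside an intermediate closed subscheme agrees with the one computed in the ambient scheme. The only difference is expository, and your explicit handling of the surjectivity step at finite layers fills in exactly the detail the paper leaves tacit.
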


\begin{proof}
Define $\hV_i$ to be the scheme theoretical closure of $\hU_i \times_{\hP} \iP$ in $\hU_i$. Then each $\mP|_{\hU_i}$ is an effective Cartier divisor, and we get a family of minimal morphisms $\{(\hV_i, \mP|_{\hV_i}) \to \sP\}_{i \in I}$. To finish, we must show that $\{\hV_i \to \hP\}_{i \in I}$ is still an $\rqfh$-covering. In light of the definition of $\rqfh$-coverings as being compositions of Zariski and finite coverings, and the fact that pullback preserves interiors since all morphisms are minimal, it suffices to consider the two cases separately that $\{\hU_i \to \hP\}_{i \in I}$ has a Zariski covering, or a finite covering. If it was a Zarisk covering, then all morphisms are flat, so $\mP|_{\hU_i}$ is already an effective Cartier divisor, so $\hV_i = \hU_i$. If it was a finite covering, then $\{\hV_i \to \hP\}$ is still a family of finite morphisms, and since we didn't change the interiors, and $\iP \to \hP$ is schematically dense, the new family is still jointly surjective. 
\end{proof}

\begin{prop} \label{prop:descendPQfhCoverings}
Let $(\sP_λ)_λ$ be a pro-object of $\ulPSCH^{\qcqs}$ with affine minimal transition morphisms, set $\sP := \amblim \sP_λ$ and suppose that $\iP$ is Noetherian. 
Suppose 
$\{\sW_i \to \sP\}_{i \in I}$ 
is an $\ulPrqfh$-covering. Then there exists a finite subset $J \subseteq I$, a $λ$, and a $\rqfh$-covering family $\{\sW_{jλ} \to \sP_λ\}_{j \in J}$ such that $\sW_j \cong \sP \ambtimes_{\sP_λ} \sW_{jλ}$ for each $j$.
\end{prop}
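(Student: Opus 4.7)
By Lem.~\ref{lemm:pqrfhIsFinZar} any $\ulPrqfh$-covering factors as a composition of $\ulPZar$- and $\ulPfin$-coverings, so I would argue by induction on the length of this composition. The observation making the induction go through is that once a single-step cover $\{\sV_k \to \sP\}$ has been reduced to a finite subfamily and descended to some $\{\sV_{k,\alpha} \to \sP_\alpha\}$, the pulled-back system $\sV_{k,\lambda} := \sP_\lambda \ambtimes[\sP_\alpha] \sV_{k,\alpha}$ has affine minimal transitions with $\amblim \sV_{k,\lambda} \cong \sV_k$ by Lem.~\ref{lemm:pullbacklimamb}, while the interior $\iV_k$ remains Noetherian in both the Zariski case (it is open in $\iP$) and the finite case (it is finite over the Noetherian scheme $\iP$). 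Hence the hypotheses of the proposition are preserved, the inductive hypothesis applies to the remaining steps above each $\sV_k$, and after taking a common index $\lambda$ and composing one obtains an $\rqfh$-covering of $\sP_\lambda$. The compatibility $\sW_j \cong \sP \ambtimes[\sP_\lambda] \sW_{j\lambda}$ then follows from iterated application of associativity of $\ambtimes$ along minimal morphisms, Cor.~\ref{coro:ambtimesambtimes}, using that compositions of minimal morphisms are minimal.

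For the Zariski base case, when $\{\sV_k \to \sP\}$ is a single $\ulPZar$-covering, the ambient family $\{\hV_k \to \hP\}$ is a Zariski covering of the qcqs limit $\hP = \lim \hP_\lambda$, so Prop.~\ref{prop:descendFlatCoverings} directly yields a finite subfamily descending to $\{\hV_{j,\lambda} \to \hP_\lambda\}_{j \in J}$. The pulled-back modulus $\mP_\lambda|_{\hV_{j,\lambda}}$ remains an effective Cartier divisor by flatness of $\hV_{j,\lambda} \to \hP_\lambda$, and Lem.~\ref{lemm:minProduct} identifies $\sV_j$ with $\sP \ambtimes[\sP_\lambda] \sV_{j,\lambda}$.

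The main obstacle is the finite base case. For a single $\ulPfin$-covering $\{\sV_k \to \sP\}_{k \in K}$, I would first use quasi-compactness of the Noetherian $\iP$ to pick a finite sub-index $J \subseteq K$ with $\{\iV_j \to \iP\}_{j \in J}$ jointly surjective; since each $\hV_j \to \hP$ is closed and $\iP$ is topologically dense in $\hP$ by Prop.~\ref{prop:always-dense}, the corresponding ambient family $\{\hV_j \to \hP\}_{j \in J}$ still covers $\hP$. Each $\iV_j \to \iP$ is now finite of finite presentation over the Noetherian base $\iP$ and so descends to a finite, finitely-presented $\iV_{j,\mu} \to \iP_\mu$ at a common $\mu$ by the standard limit machinery of EGA~IV \S 8. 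The technical heart of the argument is extending this to a finite morphism on the total space: working locally on $\Spec A_\mu \subseteq \hP_\mu$ where $\mP_\mu$ is cut out by some $t_\mu$, the finite $A_\mu[1/t_\mu]$-algebra $\Gamma(\iV_{j,\mu})$ admits generators satisfying monic polynomial equations whose coefficients can be cleared of denominators; rescaling the generators by suitable powers of $t_\mu$ yields a finite $A_\mu$-subalgebra $D_\mu$ with $D_\mu[1/t_\mu] = \Gamma(\iV_{j,\mu})$. After refining $\mu$ so that these local constructions glue and joint surjectivity descends (via \cite[Tag~07RR]{stacks-project}), the resulting $\hV_{j,\mu} \to \hP_\mu$ is finite, and Lem.~\ref{lemm:minProduct} combined with schematic density of $\iW_j$ in $\hW_j$ (Prop.~\ref{prop:always-dense}) yields $\sV_j \cong \sP \ambtimes[\sP_\mu] \sV_{j,\mu}$, completing the argument.
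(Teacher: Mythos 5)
Your overall architecture agrees with the paper's: split the $\ulPrqfh$-covering into Zariski and finite layers (Lem.~\ref{lemm:pqrfhIsFinZar}), settle the Zariski layer by Prop.~\ref{prop:descendFlatCoverings}, extract a finite subfamily of the finite layer using Noetherianity of $\iP$ plus topological density of $\iP$ in $\hP$ and going up, and reassemble with Lem.~\ref{lemm:pullbacklimamb} and Cor.~\ref{coro:ambtimesambtimes}. The gap is in the heart of the finite case. Your $D_\mu$ is built by taking algebra generators of the \emph{interior} algebra $\Gamma(\iV_{j,\mu})$ and rescaling them by powers of $t_\mu$ until they become integral over $A_\mu$; nothing in this recipe refers to the actual total space $\hV_j$. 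The total space of $\sP \ambtimes[\sP_\mu] \sV_{j,\mu}$ is the schematic closure of $\iV_j$ in $\hP \times_{\hP_\mu} \hV_{j,\mu}$, i.e.\ the image of $A \otimes_{A_\mu} D_\mu$ in $\Gamma(\iV_j)$, which is the sub-$A$-algebra generated by the rescaled generators; there is no reason for this to coincide with the given finite subalgebra $\Gamma(\hV_j) \subseteq \Gamma(\iV_j)$, and in general neither contains the other, so there is not even a canonical comparison map $\hV_j \to \hP \times_{\hP_\mu}\hV_{j,\mu}$. A concrete failure with a \emph{constant} pro-system: $A = A_\mu = k[t]$, $t_\mu = t$, $\Gamma(\iV) = k[t,1/t]\times k[t,1/t]$, $\Gamma(\hV) = \{(u,v)\in k[t]^2 : u \equiv v \bmod t\}$ (a finite $A$-algebra in which $t$ is a nonzerodivisor). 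The idempotent $(1,0)$ generates $\Gamma(\iV)$ over $A[1/t]$ and is already integral over $A$, so your recipe returns $D_\mu = k[t]^2 \neq \Gamma(\hV)$, and the asserted isomorphism fails. Schematic density of $\iV_j$ in $\hV_j$ does not repair this: it identifies $\hV_j$ with the closure of $\iV_j$ in \emph{itself}, not with the closure inside $\hP\times_{\hP_\mu}\hV_{j,\mu}$.

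The paper's proof proceeds top-down precisely to avoid this. It presents $\OO_{\hW_j}$ globally as a quotient $\sA/\sI$ of an $\OO_{\hP}$-algebra of finite presentation \cite[Prop.6.9.16(i)]{EGAI}, and uses Noetherianity of $\iP$ to find a finite-type ideal $\sJ\subseteq\sI$ with $\sJ|_{\iP} = \sI|_{\iP}$ \cite[Cor.6.9.15]{EGAI}; this exhibits $\hW_j$ as a closed subscheme of the finite, finitely presented $\hP$-scheme $\hY_j := \underline{\Spec}(\sA/\sJ)$, with $\hW_j \to \hY_j$ an isomorphism over $\iP$, so that $\hW_j$ is the schematic closure of $\iW_j$ \emph{inside} $\hY_j$. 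Being of finite presentation, $\hY_j$ descends on the nose to some finite $\hY_{j\lambda}\to\hP_\lambda$, and $\hW_{j\lambda}$ is then \emph{defined} as the schematic closure of the interior at level $\lambda$; the identity $\sW_j \cong \sP\ambtimes[\sP_\lambda]\sW_{j\lambda}$ holds because both sides are schematic closures of $\iW_j$ inside $\hY_j$. If you want to keep your bottom-up construction you must at least take your generators to be descents of $A$-algebra generators of $\Gamma(\hV_j)$ itself (finite over $A$, hence finitely many, locally) rather than of $\Gamma(\iV_{j,\mu})$, and you still owe an argument that the local subalgebras $D_\mu$ glue to a global finite $\OO_{\hP_\mu}$-algebra; the paper's global finite-type-ideal formulation sidesteps both issues at once.
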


\begin{proof}
By definition, $\rqfh$-coverings are finite compositions of Zariski coverings and finite coverings, so it suffices to treat the two cases separately. The Zariski case is Prop.~\ref{prop:descendFlatCoverings}. 

So suppose that $\{\sW_i \to \sP\}_{i \in I}$ is a finite covering. %
Since $\iP$ is Noetherian, there is a finite subfamily which is still jointly surjective on the interior, and since $\iP \to \hP$ is topologically dense, by going up for finite morphisms, there is in fact a finite subfamily $\{\sW_j \to \hP\}_{j \in J}$ which is still a finite covering (i.e., jointly surjective on total spaces).

Now we proceed as in the proof of Prop.~\ref{prop:descendAdmBlowup}. By \cite[Prop.6.9.16(i)]{EGAI}, since $\hP$ is qcqs, we can write $\OO_{\hW_j}$ as a quotient $\OO_{\hW_j} = \sA / \sI$ of an $\OO_{\hP}$-algebra of finite presentation. %
By \cite[Cor.6.9.15]{EGAI}, $\sI$ is the filtered colimit of the finite type ideals it contains. %
Since $\iP$ is Noetherian, $\sI|_{\iP}$ is finite type, so there is in fact a finite type ideal $\sJ \subseteq \sI$ such that $\sJ|_{\iP} = \sI|_{\iP}$. Thus, we find a factorisation $\hW_j \to \hY_j \to \hP$ as a closed immersion followed by a(n affine) morphism of finite presentation such that $\hW_j \to \hY_j$ is an isomorphism over $\iP$. %
Moreover, we can take $\hY_i \to \hP$ to be finite by \cite[Thm.~8.10.5~(x)]{EGAIV3}.
Now by \cite[Thm.8.8.2(ii), Thm.8.10.5(x)]{EGAIV3}, we can descend $\hY_j$ to some morphism $\hY_{jλ} \to \hP_λ$, which is finite. Define $\hW_{jλ} \subseteq \hY_{jλ}$ to be the scheme theoretic closure of $\iP_λ \times_{\hP_λ} \hY_{jλ}$ in $\hY_{jλ}$. Then $\mP_λ|_{\hW_{jλ}}$ is an effective Cartier divisor by construction, so we obtain a minimal morphism $\sW_{jλ} \to \sP_λ$ of modulus pairs which is finite on the total spaces. Since the total space of $\sP \ambtimes_{\sP_λ} \sW_{jλ}$ is the closure of 
\[ \iP \times_{\iP_λ} \iW_{jλ} = \iP \times_{\iP_λ} \iY_{jλ} = \iY_{i} = \iW_j \]
in
\[ \hP \times_{\hP_λ} \hW_{jλ} \underset{imm.}{\overset{cl.}{\subseteq}} \hP \times_{\hP_λ} \hY_{jλ} = \hY, \]
it follows that the total space of $\sP \ambtimes_{\sP_λ} \sW_{jλ}$ is canonically identified with $\hW_j$. 

Now do this for each $j$, and choose some $λ$ so that the descended morphisms are all defined on the same $λ$. Then by \cite[Thm.8.10.5(vi)]{EGAIV3}, up to further refining $λ$ we can assume that $\{\sW_{jλ} \to \sP_λ\}_{j\in J}$ is jointly surjective on the interiors. But since $\iP_λ \to \hP_λ$ is topologically dense, by the going up theorem for finite morphisms it is jointly surjective on total spaces too.
\end{proof}


\section{${\protect\ulMtau}$-coverings of normal form} \label{sec:heaviness}

In this subsection we move towards the result that for the topologies $τ$ that we are interested in, any $\ulMtau$-covering can be represented as a composition of a $τ$-covering on total spaces, and a single abstract admissible blowup, Cor.~\ref{coro:normalForm}.

We also prove Lem.~\ref{lemm:valRingProperFlatGenFin} which, in addition to being used to show the $\rqfh$-case of the result just mentioned above, will be used to show that the modulus version of ``every finite algebra over a hensel local ring is a finite product of hensel rings'' is true, Lem.\ref{lemm:finiteulMNislocal}.

\begin{thm} \label{thm:etaleHeavy}
Let $\sX$ be a qcqs \mp, $\sY \to \sX$ an \amm, $\sY' \to \sY$ an \aab, and suppose that $\hY \to \hX$ is $(\ast)$ where 
$(\ast)$ is one of:
\begin{enumerate}
 \item \label{thm:etaleHeavy:op} a quasi-compact open immersion,
 \item \label{thm:etaleHeavy:etm} \'etale,
 \item \label{thm:etaleHeavy:fppfm} flat of finite presentation,
 \item \label{thm:etaleHeavy:Zar} a quasi-compact Zariski covering,
 \item \label{thm:etaleHeavy:Nis} a Nisnevich covering,
 \item \label{thm:etaleHeavy:etc} an \'etale covering,
 \item \label{thm:etaleHeavy:fppfc} an fppf covering.
\end{enumerate}
Then there exists an \aab{} $\sX' \to \sX$, 
an \amm{} $\sV \to \sX'$ such that $\hV \to \hX'$ is $(\ast)$, and a commutative diagram as on the left, inducing a commutative diagram with isomorphisms as indicated on the right.
\[ \xymatrix@!=3pt{
\sV \ar@{-->}[dr]_{(\ast)} \ar@{-->}[rrrr]^{adm.b.u} &&&& \sY' \ar[dl]^{adm.b.u} &&
\iV \ar@{-->}[dr]_{(\ast)} \ar@{=}[rrrr] &&&& Y'^\circ \ar@{=}[dl] \\
& \sX' \ar@{-->}[dr]_{adm.b.u} && \sY \ar[dl]^{(\ast)} & &&
& X'^\circ \ar@{=}[dr] && \iY \ar[dl]^{(\ast)} & \\
&& \sX &&&& 
&& \iX.
} \]
Moreover, $\sV \to \sY'$ is also an {\aab}s.
In particular, if $\iX, \iY, Y'^\circ$ are smooth over some lower base modulus pair, then so are $Y'^\circ$ and $\iV$. 
\end{thm}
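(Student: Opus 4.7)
The plan is to construct an abstract admissible blowup $\sX' \to \sX$ so that $\sV := \sX' \ambtimes_\sX \sY$ does the job. Because $\hY \to \hX$ is flat in cases (\ref{thm:etaleHeavy:etm})--(\ref{thm:etaleHeavy:fppfc}) and a qc open immersion in (\ref{thm:etaleHeavy:op}), (\ref{thm:etaleHeavy:Zar}) (also flat), the pullback $\hX' \times_\hX \hY \to \hX'$ is flat, and the schematic denseness of $\iX' \to \hX'$ (Prop.~\ref{prop:always-dense}) transfers to $\iY = \iX' \times_\hX \hY \to \hX' \times_\hX \hY$. Combined with Lem.~\ref{lemm:minProduct} this identifies $\hV = \hX' \times_\hX \hY$, so $\hV \to \hX'$ is the base change of $\hY \to \hX$ and automatically inherits property $(\ast)$. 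Moreover, for any abstract admissible blowup $\sX' \to \sX$ the interior satisfies $\iV = \iX' \times_\iX \iY = \iY$, so once we arrange $\sV \to \sY$ to factor through $\sY'$ the induced $\sV \to \sY'$ is automatically proper, minimal, and an isomorphism on interiors, hence itself an abstract admissible blowup. The task thus reduces to choosing $\sX'$ cleverly enough that the universal property of blowups produces the required factorization $\sV \to \sY'$.

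In the Noetherian case one first applies Chow's lemma to further blow up $\sY'$ so that $\hY' \to \hY$ is a projective blowup $\Bl_I(\hY) \to \hY$ along a coherent ideal $I \subseteq \OO_{\hY}$ with $V(I) \subseteq \mY$. Flatness of $\hY \to \hX$ together with the ambient condition $\mY \supseteq \mX|_{\hY}$ force the schematic image of $V(I)$ in $\hX$ to lie in $\mX$. One then produces a coherent ideal $J \subseteq \OO_{\hX}$, cosupported in $\mX$, such that $J \cdot \OO_{\hY} = I$ (possibly after further blowing up $\sY'$): for $\hY \to \hX$ a qc open immersion this is achieved by taking the schematic closure of $V(I)$ in $\hX$ and extending trivially outside; for $\hY \to \hX$ faithfully flat of finite presentation a Raynaud--Gruson-style descent argument for quasi-coherent ideal sheaves produces $J$; the étale and Nisnevich cases fall between these extremes. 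Setting $\hX' := \Bl_J(\hX)$ with $\mX' := \mX|_{\hX'}$ gives the required abstract admissible blowup $\sX' \to \sX$, and compatibility of blowup with flat base change together with the universal property of $\hY' = \Bl_I(\hY)$ yields the factorization $\sV \to \sY'$.

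The general qcqs case reduces to the Noetherian one by writing $\sX = \amblim_\alpha \sX_\alpha$ with each $\hX_\alpha$ of finite type over $\ZZ$ via Cor.~\ref{coro:embedAmb}, descending $\hY \to \hX$ (preserving $(\ast)$) through Prop.~\ref{prop:descendFlatCoverings} and descending $\sY' \to \sY$ through Prop.~\ref{prop:descendAdmBlowup}, running the Noetherian argument on $\sX_\alpha$ to obtain $\sX'_\alpha$ and $\sV_\alpha$, and finally pulling back along $\sX \to \sX_\alpha$ via the ambient product using Lem.~\ref{lemm:pullbacklimamb} and Lem.~\ref{lemm:ambtimesAab}. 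The covering cases (\ref{thm:etaleHeavy:Zar})--(\ref{thm:etaleHeavy:fppfc}) reduce to the single-morphism cases (\ref{thm:etaleHeavy:op})--(\ref{thm:etaleHeavy:fppfm}) by viewing the family as a single morphism out of the disjoint union; surjectivity is preserved under the base change $\hV = \hX' \times_\hX \hY$. The hardest step, and the main technical obstacle, is the extension of the ideal $I$ from $\hY$ to the ideal $J$ on $\hX$ whose blowup is compatible with $\Bl_I(\hY)$: this is precisely where the flatness (or openness) of $\hY \to \hX$ is used essentially, and where either a schematic-closure argument or a flat-descent argument for quasi-coherent ideals is required.
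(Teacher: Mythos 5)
Your high-level plan --- produce a single abstract admissible blowup $\sX' \to \sX$ of the \emph{base} and take (a strict transform of) the pullback as $\sV$ --- is the right one, and your reductions (coverings to single morphisms via disjoint unions, qcqs to a manageable situation by limit arguments, Prop.~\ref{prop:descendAdmBlowup}, Lem.~\ref{lemm:pullbacklimamb}) are all in the spirit of the paper. But the central step of your argument has a genuine gap: the production of an ideal $J \subseteq \OO_{\hX}$, cosupported in $\mX$, with $J\cdot\OO_{\hY} = I$ (or at least with $I\cdot\OO_{\Bl_J(\hX)\times_{\hX}\hY}$ invertible, which is what the universal property of the blowup actually needs for the factorisation $\sV \to \sY'$). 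A quasi-coherent ideal on $\hY$ descends along an fppf (or even finite \'etale) morphism $\hY \to \hX$ only if its two pullbacks to $\hY\times_{\hX}\hY$ agree, and a blowup centre $I$ has no reason to satisfy this cocycle condition; ``replace $I$ by a better ideal with the same blowup-dominating property'' is precisely the hard content of the theorem, not something supplied by a ``Raynaud--Gruson-style descent argument for quasi-coherent ideal sheaves'' --- Raynaud--Gruson flatification is a statement about flattening a \emph{morphism} by blowing up its \emph{target}, not about descending ideals along flat covers. A secondary issue: an \aab{} in this paper is merely a proper, minimal morphism that is an isomorphism on interiors, so reducing $\sY' \to \sY$ to an honest $\Bl_I(\hY)$ with $V(I)\subseteq\mY$ already requires a nontrivial argument (and ``Chow's lemma'' is not quite the right tool even in the Noetherian case).

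The paper's proof sidesteps the descent problem entirely, and it is worth seeing how, because it uses your missing ingredient in a different place. First, Prop.~\ref{prop:cannotLimitBlowups} factors $\hY' \to \hY$ as a closed immersion $\hY' \to \hY_\lambda$ followed by a proper morphism $\hY_\lambda \to \hY$ \emph{of finite presentation}, both isomorphisms over $\iY$. Then Raynaud--Gruson flatification is applied to the composite $\hY_\lambda \to \hX$ (finitely presented, and flat over $\iX$): this directly yields the \aab{} $\sX' \to \sX$ such that the strict transform $\hY_\lambda^{st} \to \hX'$ is flat of finite presentation. One sets $\sV := \sY_\lambda^{st}$; the factorisation $\sV \to \sY'$ is then automatic because a schematic-density argument shows ${\hY'}^{st} \to \hY_\lambda^{st}$ is an isomorphism, so $\sV$ is literally the strict transform of $\sY'$. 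In the fppf case one stops there ($\hV \to \hX'$ need not equal the base change $\hX'\times_{\hX}\hY$); in the open-immersion/\'etale/Nisnevich cases an extra argument (flatness is detected after an \'etale map, plus ``proper quasi-finite birational flat $\Rightarrow$ isomorphism'') shows $\hV \cong \hX'\times_{\hX}\hY$, recovering the description you were aiming for. If you want to salvage your route, you should replace the ideal-descent step by exactly this flatification of $\hY_\lambda \to \hX$.
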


\begin{proof} 
Cf. \cite[Prop.5.9]{SV00}. %
Since $\hY$ is qcqs, the proper morphism $\hY' \to \hY$ factors as a closed immersion $\hY' \to \hY_\lambda$ and a proper morphism of finite presentation $\hY_\lambda \to \hY$ such that both morphisms are isomorphisms over $\iY$ (but $\mY|_{\hY_\lambda}$ is probably not Cartier), Prop.~\ref{prop:cannotLimitBlowups}. %
Now that $\hY_\lambda \to \hX$ is finite presentation, by Raynaud-Gruson flatification, \cite[Thm.5.2.2]{RG71} or \cite[Tag 0815]{stacks-project}, there exists an admissible blowup $\sX' \to \sX$ such that the strict transform $\hY_\lambda^{st} \to {\hX'}$ of $\hY_\lambda \to \hX$ is flat and of finite presentation. So we have the following diagram of strict transforms. 
\[ \xymatrix{
{\hY'}^{st} \ar[r] \ar[d]^{(I)} 
& \hY' \ar[d]^{cl.imm} \\
\hY_\lambda^{st} \ar[r] \ar[d]^{(II)} \ar@/_18pt/[dd]_{flat, f.p.} 
& \hY_\lambda \ar[d]^{prop., f.p.} \\
\hY^{st} \ar[r] \ar[d]^-{(\ast)} 
& \hY \ar[d]^-{(\ast)} \\
\ol{X}' \ar[r]_{adm.b.u.} & \ol{X}
} \]
We claim that (I) is an isomorphism. Indeed, the curved morphism is flat, so $\mX|_{\hY_\lambda^{st}}$ is an effective Cartier divisor.%
\footnote{Pulling back along a blowup inside the divisor remains Cartier: It suffices to check on charts so consider some $A \to A[\tfrac{I}{g}] = \colim(\dots \stackrel{g}{\to} I^n \stackrel{g}{\to}  I^{n+1} \stackrel{g}{\to} \to \dots)$. Now we want to see that multiplication by $f$ is injective on $A[\tfrac{I}{g}]$ assuming it is injective on $A$. This is clear since filtered colimits preserve monomorphisms.

Pulling back along flat morphisms preserves Cartier divisors: Cartier means $A \to A; a \mapsto af$ is injective. So if $A \to B$ is flat, then $B \to B; b \mapsto bf$ is also injective.}
 Note $\mX|_{{\hY'}^{st}} = {Y'}^\infty|_{{\hY'}^{st}}$ is also an effective Cartier divisor, and (I) is an isomorphism over $\iX$. So in the following square,
\[ \xymatrix{
\iX \times_{\hX} {\hY'}^{st} \ar[r] \ar[d] & {\hY'}^{st} \ar[d]^{(I)} \\
\iX \times_{\hX} \hY_\lambda^{st} \ar[r] & \hY^{st}_\lambda 
} \]
both horizontal morphisms are schematically dominant, and have isomorphic sources, so (I) is a schematically dominant closed immersion, cf.~\cite[Tag 080D]{stacks-project}, that is, (I) is an isomorphism, so we can choose $\sV := \sY_\lambda^{st} = \sY_\lambda'^{st}$.

This finishes the case~\eqref{thm:etaleHeavy:fppfm}. To get to \eqref{thm:etaleHeavy:fppfc} it remains to show that the curved morphism is surjective. In the case \eqref{thm:etaleHeavy:fppfc}, the two starred morphisms are surjective (the bottom square is a pullback square), so it suffices to show that ${\hY'}^{st} \to \hY^{st}$ is surjective. But it's an admissible blowup, so it's surjective.

In the cases %
\eqref{thm:etaleHeavy:op}, 
\eqref{thm:etaleHeavy:etm}, 
\eqref{thm:etaleHeavy:Zar}, 
\eqref{thm:etaleHeavy:Nis}, and
\eqref{thm:etaleHeavy:etc}, 
the left $(\ast)$ is \'etale, so it's diagonal $\hY^{st} \to \hY^{st} \times_{\hX'} \hY^{st}$ is open, %
so the forgetful functor $\sO_{\hY^{st}}$-mod $\to \sO_{\hX'}$-mod \emph{detects} flatness, cf.\cite[Pf.of Cor.2.15]{HK18}. %
So $\hY_\lambda^{st} \to \hY^{st}$ is flat. It is an isomorphism over a dense subscheme, so it is of relative dimension zero. It is also proper. But quasi-finite proper morphisms are finite \cite[Tag 02LS]{stacks-project}, so it is finite flat. Again, it is an isomorphism over a dense subscheme, so it is finite flat of rank one. In other words, (II) is also an isomorphism. 
\end{proof}

\begin{thm} \label{thm:finHeavy}
Let $\sX$ be a qcqs modulus pair, $\sV \to \sX$ a finite surjective ambient morphism, and $\sW \to \sV$ an {\aab}. Then there exists a finite surjective ambient morphism $\sZ \to \sY$ and an admissible blowup $\sY \to \sX$ that fit into a commutative diagram
\[ \xymatrix{
& \sW \ar[d]^{adm.b.u.} \\
\sZ \ar[d]_{fin.surj.} \ar[ur]^{adm.b.u.}   & \sV \ar[d]^{fin.surj.} \\
\sY \ar[r]_{adm.b.u.} & \sX
} \]
where $\sZ \to \sW$ is also an {\aab}.
\end{thm}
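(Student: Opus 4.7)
The plan is to descend the admissible blowup $\sW \to \sV$ through the finite morphism $\sV \to \sX$ by a valuative argument. Consider the cofiltered system $(\sX_\lambda)_{\lambda \in \Lambda}$ of all abstract admissible blowups $\sX_\lambda \to \sX$, and for each $\lambda$ set $\sV_\lambda := \sX_\lambda \ambtimes[\sX] \sV$ and $\sW_\lambda := \sV_\lambda \ambtimes[\sV] \sW$. By Lem.~\ref{lemm:ambtimesProper}\eqref{lemm:lemm:ambtimesProper:5} the morphism $\sV_\lambda \to \sX_\lambda$ is finite surjective, and by Lem.~\ref{lemm:ambtimesAab} the morphism $\sW_\lambda \to \sV_\lambda$ is an abstract admissible blowup; all these modulus pairs have constant interior $\iV_\lambda = \iV$.

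It suffices to produce some $\lambda_0$ such that the canonical ambient morphism $\sV_{\lambda_0} \to \sV$ factors through $\sW \to \sV$, for then setting $\sY := \sX_{\lambda_0}$ and $\sZ := \sV_{\lambda_0}$ gives the required data. The resulting $\sZ \to \sW$ is automatically an abstract admissible blowup: it is an isomorphism on the interior $\iZ = \iV = \iW$, and $\hZ \to \hW$ is proper since the finite $\hZ \to \hV$ composed with the separated $\hW \to \hV$ is proper.

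To produce the factorization, one passes to the pro-level. The key observation is that each admissible blowup of $\hV$ is dominated, after pullback through the finite morphism $\hV \to \hX$, by some admissible blowup of $\hX$: admissible blowups of $\hV$ correspond (via quasi-compactness of the Riemann--Zariski space $\mathrm{RZ}(\iV/\hV)$) to certain conditions on valuation rings of $\iV$ centred on $\hV$, and Lem.~\ref{lemm:valRingProperFlatGenFin} lifts each such valuation to one of $\iX$ centred on $\hX$ through the finite morphism $\hV \to \hX$. The admissible blowup $\sW \to \sV$, being finitely presented (Prop.~\ref{prop:cannotLimitBlowups}), is then recovered at some finite stage $\lambda_0$; formally, this descent is carried out via Prop.~\ref{prop:qcqs-limcolim} once the pro-level factorization has been established using the valuative criterion for the proper morphism $\hW \to \hV$.

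The main obstacle is the construction of the pro-level factorization. While the valuative criterion and quasi-compactness of the Riemann--Zariski space are classical ingredients, one must verify that the finite morphism $\hV \to \hX$ transports enough valuation-ring data in the qcqs setting to control all admissible blowups of $\hV$; this is exactly the content of Lem.~\ref{lemm:valRingProperFlatGenFin}. Unlike the flat/\'etale cases treated in Thm.~\ref{thm:etaleHeavy}, Raynaud--Gruson flatification cannot be used here since the flatifying blowup is not guaranteed to lie inside the modulus, which is what forces the valuation-theoretic approach.
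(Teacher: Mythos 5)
Your overall skeleton---reduce to the semi-valuation-ring points of the relative Riemann--Zariski space, apply Lem.~\ref{lemm:valRingProperFlatGenFin} there, and spread out using quasi-compactness of $RZ_{\iX}\hX$---is the same as the paper's, and your closing observation about why Raynaud--Gruson flatification cannot be used here is on target. However, the reduction in your second paragraph contains a genuine error. You claim it suffices to find $\lambda_0$ such that $\sV_{\lambda_0} := \sX_{\lambda_0}\ambtimes[\sX]\sV \to \sV$ factors through $\sW \to \sV$, i.e.\ that strict transforms of \aab{}s of $\sX$ are cofinal among \aab{}s of $\sV$. This is false. Take $\hX = \Spec k[[t]]$ with $\mX = (t)$, let $\hV = \Spec\bigl(k[[t]][x]/(x^2 - t^3)\bigr)$ with the minimal modulus (finite free of rank $2$ and surjective over $\hX$), and let $\sW \to \sV$ be the normalisation $\hW = \Spec\bigl(k[[t]][y]/(y^2-t)\bigr)$, $x \mapsto y^3$: this is finite, an isomorphism after inverting $t$, minimal, and \emph{not} an isomorphism, hence a nontrivial \aab{} (in fact the blowup of the ideal $(x,t)$, which is supported in the modulus). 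But $(\Spec k[[t]],(t))$ satisfies conditions (1)--(4) of Prop.~\ref{prop:bigZarPoint}, so it is $\uS$-local and every \aab{} of $\sX$ is an isomorphism; thus $\sV_{\lambda_0} = \sV$ for all $\lambda_0$, and a factorisation $\sV \to \sW \to \sV$ would force $\sW \cong \sV$ (a section of an \aab{} is a schematically dominant closed immersion). No such $\lambda_0$ exists, even though the theorem holds trivially here with $\sY = \sX$ and $\sZ = \sW$.

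The object to transform is $\sW$, not $\sV$: set $\sZ := \sX_\lambda \ambtimes[\sX]\sW$, which is automatically an \aab{} over $\sW$ by Lem.~\ref{lemm:ambtimesAab} and is proper and generically finite over $\sX_\lambda$; the whole content of the theorem is that $\hZ \to \hX_\lambda$ becomes \emph{finite} for suitable $\lambda$, which is strictly weaker than your domination claim (in the example above, $\sZ = \sW$ is finite over $\sX$ without $\sW$ being dominated by any $\sV_\lambda$). That finiteness is exactly what the semi-valuation-ring computation provides: at a point $(x,R)$ of $RZ_{\iX}\hX$ the local pair $\sP$ satisfies that $\sP\ambtimes[\sX]\sW \to \sP$ is proper and generically finite, and Lem.~\ref{lemm:ulMZarambtimesFin} shows it is finite---this is where Lem.~\ref{lemm:valRingProperFlatGenFin} actually enters, via $f$-torsion-freeness over the valuation-ring part of $\sP$; it does not ``lift valuations through $\hV \to \hX$'' as you describe. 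Since the local ring of $RZ_{\iX}\hX$ at $(x,R)$ is the filtered colimit of the $\Gamma(\hU_{\lambda,\mu},\OO)$ and affineness (hence finiteness) descends through such limits, finiteness holds over an open neighbourhood of $(x,R)$, and quasi-compactness of $RZ_{\iX}\hX$ glues finitely many such neighbourhoods into a single \aab{} $\sY = \sX_\lambda \to \sX$. With this replacement your argument becomes the paper's proof.
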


\begin{rema}
Before starting a rigorous proof, we give a heuristic idea behind it.
For every ``small'' $\ulMZar$-point $\sP$ of $\sX$ (which we will introduce in \S \ref{chap:locProMod}), the ``strict transform'' $\sP \ambtimes[{\sX}] \sW \to \sP$ of $\hW \to \hX$ is finite surjective. From this we deduce that there is some element $\sP_\lambda$ in the pro-system defining $\sP$ where the strict transform is already finite, and by quasi-compactness of the relative Riemann-Zariski space, we can glue these $\sP_\lambda$ together to get the desired $\hY$. Now lets fill in the details of this strategy. Note that  alternatively we could avoid references to the Riemann-Zariski and \cite{Tem11} by using Prop.~\ref{prop:conservativeLocalPairs}.
\end{rema}

\begin{proof}
First we recall some facts about the relative Riemann-Zariski space $RZ_{\iX}\hX$. By definition, it is the inverse limit 
\[ RZ_{\iX}\hX = \varprojlim_{\sX_\lambda \to \sX} \hX_\lambda \]
in the category of locally ringed spaces over all {\aab}s $\sX_\lambda \to \sX$, \cite[\S2.1]{Tem11}. The points of $RZ_{\iX}\hX$ are in bijection with pairs $(x, R)$ where $x \in \iX$ and $R \subseteq k(x)$ is a valuation ring of $k(x)$ centred on $\hX$ such that $\Spec(R) \times_{\hX} \iX = x$, \cite[Cor.3.2.5]{Tem11}. The local ring of $RZ_{\iX}\hX$ at $(x, R)$ is $R \times_{k(x)} \OO_{\iX, x}$, and it is obtained as the filtered colimit 
\begin{equation} \label{equa:RZlocalRings}
 R \times_{k(x)} (\OO_{\iX, x}) = \varinjlim_{\Spec(R) \to \hU_{\lambda, \mu} \to \hX_\lambda \to \hX} \Gamma(\hU_{\lambda, \mu}, \OO_{\hU_{\lambda, \mu}})
\end{equation}
over factorisations with $\sX_\lambda \to \sX$ an admissible blowup, and $\hU_{\lambda, \mu} \to \hX_\lambda$ an open immersion.

Now we study ``strict transforms''. 
Set $\hP = \Spec(R \times_{k(x)} (\OO_{\iX, x}))$ and $\sP = (\hP, \mX|_{\hP})$ for a point $(x, R)$ of the relative Riemann Zariski space. Recall that for any ambient morphism $\sY \to \sX$, the total space of $\sY \ambtimes[{\sX}] \sW$ is the scheme theoretic closure of $\iY \times_{\iX} \iW$ in $\hY \times_{\hX} \hW$, equipped with the pullback of $\mY$, Lem.~\ref{lemm:minProduct}. Hence, for $\sW \to \sX$ as in the statement, $\sY \ambtimes[{\sX}] \sW \to \sY$ is always finite over $\iY$, and proper over $\hY$. In the case $\sY = \sP$, it is in fact finite on all of $\hP$, see Lem.~\ref{lemm:ulMZarambtimesFin} below. This implies that there is $\lambda, \mu$ as in Eq.~\ref{equa:RZlocalRings} with $\hU_{\lambda', \mu'} \ambtimes[{\sX}] \sW \to \hU_{\lambda', \mu'}$ finite for all $\lambda', \mu' \geq \lambda, \mu$, since $\ambtimes$ commutes with nice filtered limits, Lem.~\ref{lemm:pullbacklimamb}, and a filtered limit of schemes is affine if and only if the terms all become affine at some point in the system, \cite[01Z6]{stacks-project}. Choosing one $(\lambda_{(x,R)}, \mu_{(x,R)})$ for each $(x, R)$ we obtain an open covering of $RZ_{\iX}\hX$. Since $RZ_{\iX}\hX$ is quasi-compact, \cite[Prop.3.1.5]{Tem11}, this covering contains a finite subcovering $\{\hU_{\lambda_1, \mu_1}, \dots, \hU_{\lambda_n, \mu_n} \}$. Choosing a $\lambda$ bigger than all $\lambda_1, \dots, \lambda_n$ and pulling back the $\hU_{\lambda_i, \mu_i}$ we obtain an admissible blowup $\sX_\lambda \to \sX$, and an open covering $\{\hU_{\lambda, \mu_1}, \dots, \hU_{\lambda, \mu_n}\}$ of $\hX_\lambda$ such that each 
$\sU_{\lambda, \mu_i} \ambtimes[{\sX}] \sW \to \sU_{\lambda, \mu_i}$
is finite. This implies that 
$\sX_{\lambda} \ambtimes[{\sX}] \sW \to \sX_\lambda$ 
is finite, Lem.~\ref{lemm:minProduct}, Cor.~\ref{coro:ambtimesambtimes}. Setting 
$\sZ = \sX_{\lambda} \ambtimes[{\sX}] \sW$ and $\sY = \sX_\lambda$ we have found the desired commutative diagram depicted in the statement.
\end{proof}

\begin{lemm} \label{lemm:valRingProperFlatGenFin}
Suppose that $R$ is a valuation ring and $f: Y \to \Spec(R)$ is a flat, proper, generically finite morphism. Then $f$ is finite.
\end{lemm}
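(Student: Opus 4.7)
The plan is to show $f$ is quasi-finite; combined with properness this yields $f$ finite (\cite[Tag 02LS]{stacks-project}). Equivalently, the function $y \mapsto \dim_y Y_{f(y)}$ should vanish identically on $Y$.

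This function certainly vanishes on $Y_K$, which is $0$-dimensional since $Y_K$ is finite over $K$. Moreover $Y_K \hookrightarrow Y$ is topologically dense: $R$ is an integral domain and $\OO_Y$ is $R$-flat hence $R$-torsion-free, so $\OO_Y$ injects into its localization $j_\ast \OO_{Y_K}$, making the open immersion $Y_K \hookrightarrow Y$ scheme-theoretically, and hence topologically, dense. The key idea is then to invoke that for a flat morphism \emph{locally of finite presentation} the fiber-dimension function is locally constant (EGA IV 14.2, or \cite[Tag 0BCY]{stacks-project}); since a locally constant function that vanishes on a topologically dense subset must vanish everywhere (each open on which it is constant meets the dense subset and hence has constant value $0$), this yields quasi-finiteness of $f$.

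The main obstacle is that a priori $f$ is only of finite type, not of finite presentation, so the locally-constant fiber-dimension result does not immediately apply. The input that bridges this gap is that a valuation ring is a Pr\"ufer domain, and by \cite[Tag 053E]{stacks-project} every flat finite-type algebra over a Pr\"ufer domain is automatically of finite presentation. Applied to an affine cover of $Y$, combined with quasi-compactness and quasi-separatedness of $Y$ (both consequences of properness), this shows $f$ is of finite presentation, which unlocks the argument above and completes the proof.
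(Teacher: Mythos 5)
Your reduction to quasi-finiteness, the density of $Y_K$ in $Y$ (flatness over the domain $R$ means $\OO_Y$ is torsion-free, so every generic point of $Y$ lies in $Y_K$), and the observation that $f$ is automatically of finite presentation (flat $+$ finite type over a valuation ring, or indeed over any integral domain, is of finite presentation, by Nagata/Raynaud--Gruson) are all correct, and the last of these is a genuinely useful remark. But the step carrying all the weight is false as stated: for a flat morphism locally of finite presentation, the function $y \mapsto \dim_y Y_{f(y)}$ is \emph{not} locally constant in general. Take $S = \Spec(k)$ and $Y = V(xz,yz) \subseteq \AA^3_k$, the union of a plane and a line meeting at the origin: this is flat and of finite presentation over $k$, yet the local fibre dimension is $2$ at the origin and $1$ at nearby points of the line. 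What is true in that generality is only Chevalley's upper semicontinuity, i.e.\ $\{y : \dim_y Y_{f(y)} \geq 1\}$ is closed --- and a closed set can be nonempty while missing a dense subset, so ``locally constant and zero on a dense set'' does not follow. Showing that this closed set is empty here is essentially the entire content of the lemma.

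The argument can be repaired, but along the lines of the Remark following the lemma rather than via density in $Y$: the correct local-constancy statement is for $s \mapsto \dim Y_s$ \emph{on the base}, for morphisms that are \emph{proper}, flat and of finite presentation, cf.\ \cite[Tag 0D4J]{stacks-project}; properness is essential there, and your argument never uses it before the last line, which should itself be a warning sign. Combining your finite-presentation observation with that result, $\Spec(R)$ is connected and the generic fibre is finite, so all fibres are finite and $f$ is finite --- this is precisely the route the authors mention and decline to take, and your real contribution is noting that the finite-presentation hypothesis is free over a valuation ring. The paper instead argues by hand: it dominates the fibres of $Y$ by those of the integral closure $\widetilde{Y}$ of $\Spec(R)$ in the generic points of $Y$, maps $\widetilde{Y}$ to $Y$ via the valuative criterion, and uses that an extension of valuation rings induces a bijection on primes, thereby avoiding both finite presentation and the semicontinuity machinery. (A minor further point: $Y_K \hookrightarrow Y$ need not be an open immersion when $R$ has no height-one prime, since the generic point of $\Spec(R)$ is then not open; density still holds, but not for the reason ``scheme-theoretically dense open immersion''.)
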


\begin{rema}
If we add ``finite presentation'' to the hypotheses on $f$, then we can just apply \cite[Tag 0D4J]{stacks-project}. However, we don't want to add this hypothesis, so we provide a proof.
\end{rema}

\begin{proof}
We will show that $f$ is finite by showing that it is quasi-finite.
By \cite[Tag 02LS]{stacks-project}, $f$ is quasi-finite iff. it is locally of finite type, is quasi-compact and has finite fibers. 
Since $f$ is proper, it already has the first two properties.
Therefore, we are reduced to showing that $f$ has finite fibers. 

To do this, we will find a surjective morphism $\widetilde{Y} \to Y$ such that $\widetilde{Y} \to \Spec(R)$ has finite fibres. Let $\widetilde{Y}$ be the integral closure of $\Spec(R)$ in the points $\{y_1, \dots, y_n\}$ of $Y$ in the generic fibre of $f$. Note that by the going down theorem for flat morphisms, $\{y_1, \dots, y_n\}$ are exactly the generic points of $Y$. %
That is, their closure is all of $Y$, i.e.,  $\overline{\{y_1, \dots, y_n\}} = Y$ (topologically). %
Since the local rings at closed points of $\widetilde{Y}$ are in bijection with the extensions of valuation of $R$ to the $k(y_i)$, 
by the valuative criterion for properness, there exists a (unique) factorisation $\widetilde{Y} \to Y \to \Spec(R)$. %
Since $Y \to \Spec(R)$ is generically finite, all field extensions $k(y_i) / \Frac(R)$ are finite, so there are finitely many extensions of $R$ to each $k(y_i)$. Moreover, an extension of valuation rings induces a bijection on primes, 
so the morphism $\widetilde{Y} \to \Spec(R)$ has finite fibres. 

\begin{center} \label{grap:YYR}
\includegraphics[width=7cm]{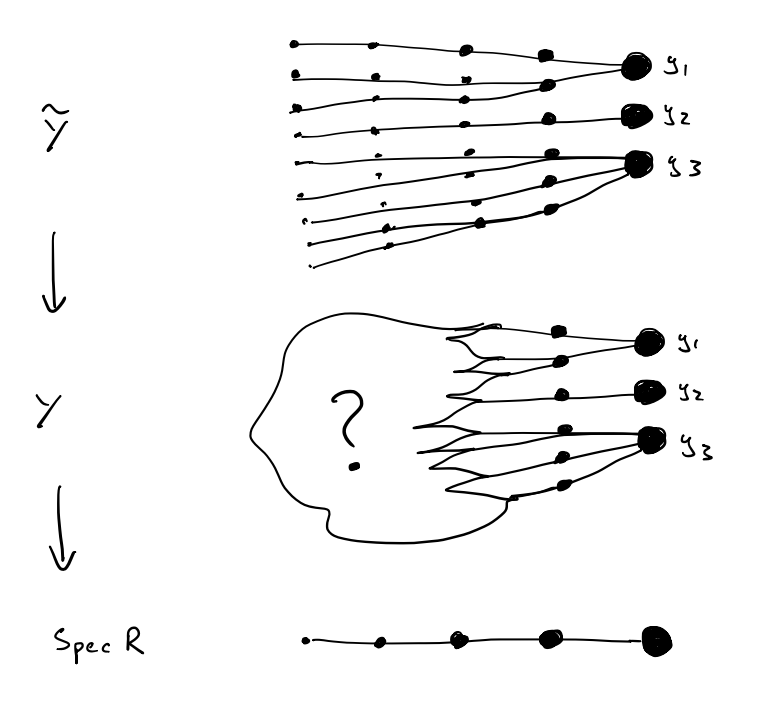}
\end{center}

It remains to show that $\widetilde{Y} \to Y$ is surjective. Since the image contains all generic points of $Y$, it suffices to show that it is universally closed. 
Now the composition $\widetilde{Y} \to \Spec(R)$ is integral, so it is universally closed, \cite[Tag 01WM]{stacks-project}, %
so it satisfies the existence part of the valuative criterion, \cite[Tag 01KF]{stacks-project}. %
But $Y \to \Spec(R)$ is proper, so it also satisfies the uniqueness part of the valuative criterion, so $\widetilde{Y} \to Y$ satisfies the existence part, and is therefore universally closed, \cite[Tag 01KF]{stacks-project}. %
\end{proof}

In the following lemma, $\sP$ is a modulus pair of the form $(\Spec(R \times_{\kappa} \OO), (f_0, f_1))$ where 
\begin{enumerate}
 \item $\OO$ is a local ring with maximal ideal $\m$ and residue field $\kappa := \OO/\m$,
 \item $R$ is a valuation ring of $\kappa$,
 \item $(R \times_{\kappa} \OO)[(f_0, f_1)^{-1}] = \OO$, where $(f_0,f_1) \in R \times_{\kappa} \OO$ is a non-zero divisor. 
\end{enumerate}

\begin{rema}
Rings of the form $R \times_{\kappa} \OO$ are called \emph{semi-valuation rings},%
\marginpar{\fbox{semi-valuation rings}}%
\index{semi-valuation rings} %
 cf.~\cite[Above Rem.3.1.2]{Tem11}. Later on, in \S \ref{chap:locProMod} we will call such modulus pairs $\ulMZar$-local modulus pairs, cf.~Prop.~\ref{prop:bigZarPoint}.
\end{rema}

Note that we deduce immediately from the above that:
\begin{enumerate} \setcounter{enumi}{3}
 \item The preimage $\p \subseteq R \times_\kappa \OO$ of $\m \subseteq \OO$ is $\p := \{0\} \times \m$.%
 \footnote{If $(a_0, a_1) \in R \times_\kappa \OO$ is an element such that $a_1 \in \m$, then the image of $a_1$ under $R \to \kappa = \OO/\m$ is zero, so the image of $a_0$ under $R \to \kappa = Frac(R)$ is zero, so $a_0$ is zero.}

 \item $R[f_0^{-1}] = \kappa$.\footnote{$R = (R \times_\kappa \OO) / (\{0\} \times \m)$ so $R[f_0^{-1}] = (R \times_\kappa \OO)[(f_0,f_1)^{-1}] / (\{0\} \times \m) = \OO / \m = \kappa$.}

 \item $\p \subsetneq ((f_0, f_1))$.\footnote{$(R \times_{\kappa} \OO)[(f_0, f_1)^{-1}] = \OO$ implies that $f_1 \in \OO^* = \OO \setminus \m$ so $(f_0, f_1) \notin \p = \{0\} \times \m$. On the other hand, for any $g \in \m$ we have $(0,g) = (f_0, f_1)(0, f_1^{-1}g)$ so $\p = \{0\} \times \m \subseteq ((f_0, f_1))$.}
 
 \item $\ol{\{p\}} = \Spec R$.
\end{enumerate}

\begin{lemm} \label{lemm:ulMZarambtimesFin}
Suppose that $\sP = (\Spec(R \times_{\kappa} \OO), (f_0, f_1))$ in the above notation, 
$\sQ \to \sP$ an ambient minimal morphism with $\iQ \to \iP$ finite and $\hQ \to \hP$ proper. Then $\hQ \to \hP$ is finite. 
\end{lemm}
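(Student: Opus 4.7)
The plan is to use Zariski's Main Theorem in the form \cite[Tag~02LS]{stacks-project}: a proper morphism of schemes is finite iff all of its fibres are finite. Since $\hQ \to \hP$ is already proper, I only need to exhibit finite fibres. The key structural observation is that $\hP$ decomposes topologically as $\iP \cup \Spec R$. Indeed, the list of properties of $A = R \times_\kappa \OO$ recorded just before the statement shows that every prime of $A$ not containing $\p$ is automatically disjoint from $(f_0,f_1)$, because $(0,b) = u \cdot (0, f_1^{-1}b)$ for any $b \in \m$ (using $f_1 \in \OO^*$), where I write $u := (f_0,f_1)$. Over $\iP$ the fibres of $\hQ \to \hP$ are finite by hypothesis, so everything reduces to showing the base change $Y := \hQ \times_\hP \Spec R \to \Spec R$ is finite.

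The morphism $Y \to \Spec R$ is proper as a pullback, and its generic fibre $\hQ \times_\hP \Spec \kappa$ coincides with a fibre of $\iQ \to \iP$ (because the generic point of $\Spec R$ is $\p \in \iP$), so it is finite. Lemma \ref{lemm:valRingProperFlatGenFin} will then give finiteness of $Y \to \Spec R$ as soon as I check that $\OO_Y = \OO_\hQ / \p\OO_\hQ$ is $R$-flat.

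The flatness step is the heart of the argument and I expect it to be the main obstacle. Since $R$ is a valuation ring, $R$-flatness of $\OO_Y$ amounts to showing that multiplication by each nonzero $r \in R$ is injective on $\OO_Y$. Ambient minimality of $\sQ \to \sP$ provides that $\widetilde u := u|_\hQ$ is a non-zero divisor in $\OO_\hQ$. The identity $u \cdot \p = \p$ inside $A$---the same identity used in the topological decomposition above---lets me rewrite any relation $\widetilde u \cdot s \in \p\OO_\hQ$ in the form $\widetilde u s = \widetilde u s'$ with $s' \in \p\OO_\hQ$, whence $s \in \p\OO_\hQ$ by NZD-ness of $\widetilde u$; this gives injectivity for the action of $f_0$ on $\OO_Y$. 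For a general nonzero $r \in R$, I will exploit $R[f_0^{-1}] = \kappa$ (a direct consequence of the setup) to write $f_0^n = r r'$ in $R$ for some $r' \in R$, lift $r$ to a unit $\widetilde r \in \OO^*$, and verify that $u^n = v \cdot (r, \widetilde r)$ in $A$ for the choice $v = (r', f_1^n/\widetilde r)$. Thus $(r, \widetilde r)|_\hQ$ divides the NZD $\widetilde u^n$ in $\OO_\hQ$ and so is itself a NZD, and the same rewriting trick then propagates injectivity to the action of $r$ on $\OO_Y$.

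Once $R$-flatness is established, Lemma \ref{lemm:valRingProperFlatGenFin} yields that $Y \to \Spec R$ is finite, hence all fibres of $\hQ \to \hP$ are finite, hence $\hQ \to \hP$ is finite. The delicate point is the flatness argument of the third paragraph: it relies on the simultaneous use of ambient minimality (to produce the NZD $\widetilde u$) and the semi-valuation structure of $A$ (for $u \p = \p$ and $R[f_0^{-1}] = \kappa$); dropping either ingredient would break the transfer of injectivity through the quotient $\OO_\hQ \twoheadrightarrow \OO_Y$.
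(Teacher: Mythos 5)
Your proof is correct and follows essentially the same route as the paper's: reduce via [Tag 02LS] to showing finite fibres, split $\hP$ topologically into $\iP$ and $\Spec R = \ol{\{p\}}$ using $\p \subsetneq ((f_0,f_1))$, and apply Lemma~\ref{lemm:valRingProperFlatGenFin} to $Y \to \Spec R$ after establishing $R$-flatness of $\OO_Y$ as torsion-freeness. The only (harmless) divergence is in the flatness step: the paper uses $\Frac(R) = R[f_0^{-1}] = \kappa$ to reduce torsion-freeness to $f$-torsion-freeness alone and checks just that one case by the rewriting trick, whereas you verify injectivity of every nonzero $r \in R$ directly by factoring $f_0^n = rr'$ and lifting $r$ to a nonzerodivisor of $\OO_{\hQ}$ --- both arguments rest on the same two inputs, namely minimality of $\sQ \to \sP$ and the semi-valuation structure of $R \times_\kappa \OO$.
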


\begin{proof}
It suffices to show that $\hQ \to \hP$ is quasi-finite \cite[02LS]{stacks-project}. Over $\iP$ it is quasi-finite by assumption, so it suffices to show that $\hQ \to \hP$ is finite over $\ol{\{p\}} \subseteq \hP$, where $p \in \hP$ is the closed point of $\iP$. For this, we want to apply Lem.~\ref{lemm:valRingProperFlatGenFin}. We already know that $\ol{\{p\}} \times_{\hP} \hQ \to \ol{\{p\}}$ is proper and generically finite so it remains to show that it is flat.

Being flat over a valuation ring is equivalent to being torsion-free. Since $R[f_0^{-1}] = \kappa$, being flat over $R$ is equivalent to being $f_0$-torsion free.\footnote{An $R$-module is flat if and only if it is $R$-torsion free if and only if $M \to M \otimes_R \Frac(R)$ is injective.} So it remains to show that the structure sheaf of $\ol{\{p\}} \times_{\hP} \hQ$ is $f_0$-torsion-free, or equivalently, $f$-torsion-free where $f = (f_0, f_1)$. Note that we already know the structure sheaf of $\hQ$ is $f$-torsion-free because $\sQ \to \sP$ is minimal, so the image of $f$ generates an effective Cartier divisor.

Write $A = R \times_{\kappa} \OO$ 
and choose an open affine $\Spec(B)$ of $\hQ$. So $B$ is an $f$-torsion free $A$-algebra, and our goal is to show that $R \otimes_A B$ is also $f$-torsion-free. Indeed, $A \to R$ is surjective so $B \to R \otimes_A B$ is surjective. So $R \otimes_A B$ has an $f$-torsion element if and only if there is some $b \in B \setminus \p B$ such that $fb \in \p B$. That is, $fb = gb'$ for some $g \in \p, b' \in B$. Now, $\p \subsetneq (f)$ so $g = fg'$ for some $g' \in A$. Since $\p$ is prime and $f \not\in \p$, we have $g' \in \p$. By assumption $B$ is $f$-torsion-free, so $fb = gb' = fg'b'$ implies $b = g'b' \in \p B$. But this contradicts the initial assumption that $b \not\in \p B$. So we deduce that $B \otimes_A R$ is $f$-torsion-free. 

Hence, $\ol{\{p\}} \times_{\hP} \hQ$ has $f$-torsion-free structure sheaf, so $\ol{\{p\}} \times_{\hP} \hQ \to \ol{\{p\}}$ is flat.
\end{proof}

In fact, abstract admisible blowups are also ``lighter'' than finite surjective morphisms. We do not use the following anywhere, but include it for interest.

\begin{prop} \label{prop:fsadmbulighter}
Let $\sX$ be a qcqs modulus pair, $f: \sV \to \sX$ an admissible blowup, and $g: \sW \to \sV$ a finite surjective ambient morphism. Then there exists a finite surjective ambient morphism $a: \sY \to \sX$ and an admissible blowup $b: \sW \to \sY$ such that $ab = fg$.
\[ \xymatrix{
\sW \ar[r]_g^{fin.surj.} \ar[d]_{adm.b.u.}^b & \sV \ar[d]_f^{adm.b.u.} \\
\sY \ar[r]_{fin.surj.}^a & \sX
} \]
\end{prop}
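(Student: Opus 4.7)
The plan is to define $\sY$ via Stein factorisation. Setting $h := fg \colon \hW \to \hX$, I would put
\[
\hY := \uSpec_{\hX}(h_*\OO_{\hW}), \qquad \sY := (\hY, \mX|_{\hY}),
\]
obtaining a canonical factorisation $\hW \xrightarrow{b} \hY \xrightarrow{a} \hX$ with $a$ affine (by construction) and $b$ proper (by cancellation, since $h$ is proper and $a$ is separated).

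First I would dispatch the routine verifications: (i) $\iY = \iW$, because $h|_{\iX}$ is affine (finite) and $h_*$ commutes with restriction to the quasi-compact open $\iX$; (ii) the interior $\iY \hookrightarrow \hY$ is schematically dense---apply the left-exact functor $h_*$ to the injection $\OO_{\hW} \hookrightarrow j_{\hW *}\OO_{\iW}$ provided by Prop.~\ref{prop:always-dense}---so by Lem.~\ref{lem:pb-dense} the subscheme $\mX|_{\hY}$ is an effective Cartier divisor; (iii) $\sW \to \sY$ is proper, minimal (because $\mW = \mX|_{\hW} = \mY|_{\hW}$ by minimality of $\sW \to \sX$), and induces the identity on $\iW = \iY$, so it is an abstract admissible blowup; (iv) $a$ is surjective because $\hW \to \hX$ is.

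The hard part will be to show that $a$ is \emph{finite}, and for this I would imitate the strategy of Thm.~\ref{thm:finHeavy}, working on the relative Riemann--Zariski space $RZ_{\iX}\hX$. For $(x,R) \in RZ_{\iX}\hX$ with associated semi-valuation pair $\sP$, the valuative criterion of properness applied to $\Spec R \to \hX$ (whose generic point lies in $\iV = \iX$) produces a unique lift $\Spec R \to \hV$, which glues with the canonical $\Spec \OO_{\iX, x} \to \iV$ to yield an ambient lift $\sigma \colon \sP \to \sV$. Minimality of $\sV \to \sX$ makes $\sV \ambtimes[\sX] \sP$ a categorical fibre product (Prop.~\ref{prop:minFibPro}), and the section of this abstract admissible blowup induced by $\sigma$ must be an isomorphism by schematic density of the interior (Prop.~\ref{prop:always-dense}). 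Then Cor.~\ref{coro:ambtimesambtimes}(2) identifies $\sW \ambtimes[\sX] \sP \cong \sW \ambtimes[\sV] \sP$, which is finite over $\sP$ by Lem.~\ref{lemm:ambtimesProper}\eqref{lemm:lemm:ambtimesProper:5}.

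Propagating this local finiteness via Prop.~\ref{prop:qcqs-limcolim}, Prop.~\ref{prop:descendAdmBlowup} and quasi-compactness of $RZ_{\iX}\hX$ (\cite[Prop.~3.1.5]{Tem11}) exactly as in the proof of Thm.~\ref{thm:finHeavy}, I would obtain an admissible blowup $\hX' \to \hX$ for which $\hW \times_\hX \hX' \to \hX'$ is already finite. The remaining technical obstacle is to descend finiteness of $a \times_\hX \hX'$ across the non-flat proper cover $\hX' \to \hX$: universal closedness of $a$ follows formally from universal closedness of $\hW \to \hX$ together with surjectivity of $b$ (given a closed $Z \subseteq \hY \times_\hX T$, its preimage $(b \times \id)^{-1}(Z)$ is closed in $\hW \times_\hX T$, surjects onto $Z$, and has the same image in $T$, which is closed since $h$ is universally closed); finite type of $a$ can be extracted by proper-surjective descent from finite type of $\hY \times_\hX \hX' \to \hX'$, which itself follows from finiteness of $\hW \times_\hX \hX' \to \hX'$ via the canonical affine surjection $\hW \times_\hX \hX' \twoheadrightarrow \hY \times_\hX \hX'$. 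Combined with affineness, these force $a$ to be proper and affine, hence finite.
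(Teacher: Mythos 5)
Your construction of $\sY$ via the Stein factorisation and the routine verifications in your first two paragraphs coincide with the paper's proof. The gap is in how you establish finiteness of $a$. Note first that you already prove, without flagging it, that $a$ is \emph{integral}: it is affine by construction and universally closed by your cancellation argument (universal closedness of $fg$ plus universal surjectivity of $b$), and an affine universally closed morphism is integral \cite[Tag 01WM]{stacks-project}. After that the only missing ingredient is \emph{finite generation} of $\OO_{\hY}$ over $\OO_{\hX}$, and your route to it does not close. The claim that $\hY\times_{\hX}\hX'\to\hX'$ is finite does not follow from the surjection $\hW\times_{\hX}\hX'\twoheadrightarrow\hY\times_{\hX}\hX'$: the injection $\OO_{\hY}\hookrightarrow b_*\OO_{\hW}$ need not survive the non-flat base change along $\hX'\to\hX$, and even where it does, a sub-$\OO_{\hX'}$-algebra of a finite one need not be finite in the absence of Noetherian hypotheses. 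Worse, ``finite type descends along proper surjective morphisms'' is not proved anywhere in the paper and is not an off-the-shelf fact in the qcqs setting; it is precisely the step that would need a proof. Your Riemann--Zariski argument essentially reproduces the conclusion of Thm.~\ref{thm:finHeavy} (finiteness after blowing up the base), but the whole point of Prop.~\ref{prop:fsadmbulighter} is that the finite surjective morphism must live over the \emph{original} $\sX$, and you have no valid mechanism for removing the auxiliary blowup $\hX'\to\hX$.

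The paper's proof avoids all of this. Having observed that $a$ is integral (equivalently, that $(fg)_*\OO_{\hW}$ is integral over $\OO_{\hX}$ because $fg$ is universally closed), it replaces $\OO_{\hY}$, should it fail to be of finite type, by a finite sub-$\OO_{\hX}$-algebra $\OO_{\hT}\subseteq\OO_{\hY}$ containing the local generators of $\mX$ and satisfying $\OO_{\hT}|_{\iX}=\OO_{\hY}|_{\iX}$, which exists by \cite[Prop.6.9.14(i)]{EGAI}. All the properties you verified for $\sY$ persist for $\sT$: the restriction $\mX|_{\hT}$ is still an effective Cartier divisor (a subalgebra of an algebra in which the generators are nonzerodivisors), $\hW\to\hT$ is proper ($\hW\to\hX$ is proper and $\hT\to\hX$ is affine, hence separated) and still an isomorphism over $\iX$, and $\hT\to\hX$ is finite and surjective. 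Replace your third and fourth paragraphs by this truncation step; the Riemann--Zariski machinery is not needed here.
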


\begin{proof}
Consider the Stein factorisation $\hY := \ul{\Spec}((fg)_*\OO_{\hW})$, \cite[03H2]{stacks-project}, so that we obtain a commutative square
\[ \xymatrix{
\hW \ar[r]_g^{fin.surj.} \ar[d]_{proper}^b & \hV \ar[d]_f^{adm.b.u.} \\
\hY \ar[r]_{integral, surj.}^a & \hX
} \]
where $b$ is proper, $a$ is integral, and $\hY$ is the normalisation of $\hX$ in $\hW$. Note $a$ is surjective because $fg$ is surjective. Since $f$ is an isomorphism over $\iX$, the normalisation claim implies that $b$ is also an isomorphism over $\iX$. Moreover, the generators of $\mX$ are non-zero divisors in $\OO_{\hY} = (fg)_*\OO_{\hW}$, since their restrictions to all affine opens of $\hW$ are nonzero divisors. So if $a$ is finite type we are finished. If not,\footnote{I would like to see an example of a proper morphism towards an affine whose global sections is not finite!} take a finite sub-$\OO_{\hX}$-algebra $\OO_{\hT} \subseteq \OO_{\hY}$ which contains the generators of $\mX$, and such that $\OO_{\hT}|_{\iX} = \OO_{\hY}|_{\iX}$, \cite[Prop.6.9.14(i)]{EGAI}, and replace $\hY$ with $\hT = \ul{\Spec}(\OO_{\hT})$. Since $\OO_{\hT} \subseteq \OO_{\hY}$, the restriction $\mX|_{\hT}$ continues to be an effective Cartier divisor, and since $\hW \to \hX$ is proper and $\hT \to \hX$ separated (since it's affine), $\hW \to \hT$ is proper.
\end{proof}


\chapter{Local (pro)modulus pairs} \label{chap:locProMod}

In Section~\ref{sec:sites} we will define various sites of modulus pairs. An obvious question which arises is: Can we describe their fibre functors? By \cite[IV.6.8.7]{SGA41}, fibre functors correspond to pro-objects satisfying the following locality condition.

\begin{defi} \label{defi:tauLocal}
Suppose that $C$ is a category equipped with a class $τ$ of families of morphisms called ``coverings''. We say a pro-object $(P_λ)_{λ \in \Lambda}$ of $C$ is 
\emph{$τ$-local}%
\marginpar{\fbox{$τ$-local-pro-object}}%
\index{local@$τ$-local-pro-object} %
if for every covering family $\{U_i \to X\}_{i \in I}$ the morphism of sets
\[ 
\coprod_{i \in I} \varinjlim_{λ \in \Lambda} \hom_C(P_λ, U_i) \to 
\varinjlim_{λ \in \Lambda} \hom_C(P_λ, X)
\]
is surjective. 
\end{defi}

We can of course use this definition for constant pro-objects:

\begin{defi} \label{defi:tauLocalObject}
Suppose that $C$ is a category equipped with a class $τ$ of families of morphisms called ``coverings''. We say an object $P$ of $C$ is 
\emph{$τ$-local}%
\marginpar{\fbox{$τ$-local object}}%
\index{local@$τ$-local object} %
if for every covering family $\{U_i \to X\}_{i \in I}$ the morphism of sets
\[ 
\coprod_{i \in I}\hom_C(P, U_i) \to 
\hom_C(P, X)
\]
is surjective. 
\end{defi}

First we describe local modulus pairs in Section~\ref{sec:locModPair}. Then we will upgrade this to descriptions of pro-modulus pairs in Section~\ref{sec:locProMod}. 

\section{Local modulus pairs} \label{sec:locModPair}

\subsection{The Zariski case}

We begin with Zariski topology. 

\begin{prop} \label{prop:bigZarPoint}
A modulus pair $\sP \in \ulPSCH$ is $\ulPZar$-local and $\uS$-local%
, cf.~Def.\ref{defi:tauLocalObject}, Def.\ref{defi:coveringsMSchamb}, if and only if
\begin{enumerate}
 \item $\hP$ is the spectrum of a local ring,
 \item $\iP$ is the spectrum of a local ring,
 \item $\ol{\{p\}} \subseteq \hP$ is the spectrum of a valuation ring, where $p$ is the closed point of the local scheme $\iP$.
 \item $\ol{P} = \ol{\{p\}} \sqcup_{\{p\}} P^\circ$ in the category of schemes.
\end{enumerate}
\end{prop}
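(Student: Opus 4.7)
The plan is to prove the two implications separately. The backward direction $(1)$--$(4) \Rightarrow$ locality reduces to a gluing argument using the valuative criterion of properness and the pushout $(4)$. The forward direction extracts the four structural conditions by testing locality against judiciously chosen Zariski covers and abstract admissible blowups.

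For the ``if'' direction, assume $(1)$--$(4)$. For $\ulPZar$-locality, any Zariski open cover of the local scheme $\hP$ must contain $\hP$ itself, whence the corresponding minimal ambient morphism $\sU_i \to \sP$ is an isomorphism, yielding a section. For $\uS$-locality, let $s : \sQ \to \sP$ be an abstract admissible blowup. I would construct a section $\sP \to \sQ$ as follows: the inverse of $\iQ \xrightarrow{\sim} \iP$ provides $\iP \to \hQ$; by $(3)$, $\ol{\{p\}} = \Spec R$ is a valuation ring whose fraction field is the residue field $\kappa$ of $\iP$ at $p$, so the valuative criterion of properness applied to the proper morphism $\hQ \to \hP$ yields $\Spec R \to \hQ$ extending $\Spec \kappa = \{p\} \to \hQ$; by $(4)$ these two morphisms agree on $\{p\}$ and glue to $\hP \to \hQ$, which is a section because it restricts to the identity on both pieces of the pushout. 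The modulus condition on $\sP \to \sQ$ is automatic because $s$ is minimal, so the pullback of $\mQ$ to $\hP$ along the section equals $\mP$.

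For the ``only if'' direction, assume $\sP$ is $\ulPZar$-local and $\uS$-local. For $(1)$: $\ulPZar$-locality against the cover of $\hP$ by quasi-compact affine opens forces $\hP = \Spec A$ to be affine and quasi-compact; then applied to covers of the form $\{D(f_i)\}$ extracted by quasi-compactness from the union $\Spec A \setminus V(\m_1) \,\cup\, \Spec A \setminus V(\m_2)$ associated to two putative distinct maximal ideals of $A$, it forces $A$ to be local. For $(2)$--$(4)$: the cleanest route is to identify $\sP$ with a point of the relative Riemann--Zariski space $RZ_\iP \hP$ in the sense of \cite[\S3]{Tem11}. Indeed, by $\uS$-locality, for every abstract admissible blowup $\sX' \to \sP$ there is a section $\sP \to \sX'$, and varying $\sX'$ assembles $\sP$ into a compatible system, i.e., a point of $\varprojlim_{\sX' \to \sP} \hX' = RZ_\iP \hP$. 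Temkin's description of such points as semi-valuation rings $R \times_\kappa \OO$ then yields $(2)$, $(3)$, and $(4)$. Alternatively, one can construct explicit admissible blowups violating $\uS$-locality whenever any of $(2)$--$(4)$ fails: for $(2)$, given distinct maximal ideals of $\iP$ with necessarily incomparable contractions $\p_1, \p_2 \subset A$ and a suitable $g \in \p_1 \setminus \p_2$, the blowup of $\hP$ along $(f,g)$ (with $f$ generating $\mP$) is abstractly admissible but admits no section, since a section would force either $g \mid f$ or $f \mid g$ in $A$, each of which is excluded by the choice of $g$.

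The main obstacle is extracting $(3)$ and $(4)$ in the forward direction: translating ``every abstract admissible blowup admits a section'' into the concrete statements that $\ol{\{p\}}$ is the spectrum of a valuation ring and $A = R \times_\kappa \OO$. Passing through Riemann--Zariski keeps this clean at the cost of relying on Temkin's work, while a direct approach would instead construct, for any putative failure of $(3)$ or $(4)$, an explicit admissible blowup admitting no section, using that finitely generated sub-$(A/\p)$-algebras of $\kappa$ correspond to rational charts on blowups of $\Spec(A/\p)$.
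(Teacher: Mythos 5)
Your backward direction and your treatment of condition (1) coincide with the paper's proof (the paper uses the $g$ versus $1-g$ trick for locality of $A$ where you use the two complements $\Spec A\setminus V(\m_1)$, $\Spec A\setminus V(\m_2)$, but these are equivalent). Where you genuinely diverge is in extracting (2)--(4): the paper never mentions Riemann--Zariski here and instead builds, for each putative failure, an explicit composite of a $\ulPZar$-covering and an \aab{} (blowup of $\langle g_{\epsilon i}\rangle$ for a finite family covering $\iP\setminus\{p_\epsilon\}$ for (2); blowup of $\langle f,g\rangle$ with $f,g\notin\p$ for (3); blowup of $\langle a,s\rangle$ for (4)), and reads off a divisibility from the chart the section lands in. Your route through $RZ_{\iP}\hP$ is attractive and can be made to work, but as written it elides the actual content: a compatible system of sections $\sP\to\sX'$ gives a morphism $\hP\to RZ_{\iP}\hP$ over $\hP$, hence only exhibits $A=\Gamma(\hP,\OO_{\hP})$ as a \emph{retract} of the semi-valuation local ring $R\times_{k(x)}\OO_{\iP,x}$; to conclude $A$ \emph{equals} it you must also show the other composite is the identity (using that $\iP$ is schematically dense in every open of every admissible blowup, so all the rings in play embed compatibly into $\Gamma(\iP,\OO)$), and then still derive (2) from the identity $A[f^{-1}]=\OO_{\iP,x}$. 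That is roughly the same amount of work the paper does directly, so "Temkin's description then yields (2), (3), and (4)" is a placeholder, not a proof.

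Your fallback "direct" argument for (2) has a genuine gap. A section of $\Bl_{\langle f,g\rangle}\hP\to\hP$ lands in one of the two charts and yields either $f=gh$ or $g=fh$ in $A$. The case $f=gh$ is indeed excluded (it makes $g$ invertible on $\iP$, contradicting $g\in\p_1$), but the case $g=fh$ is \emph{not} excluded by choosing $g\in\p_1\setminus\p_2$: it only forces $h\in\p_1$, $h\notin\p_2$, which is consistent. A single auxiliary function cannot separate two closed points of $\iP$; this is exactly why the paper blows up the ideal generated by a whole finite family $\{g_{\epsilon i}\}$ whose basic opens cover $\iP\setminus\{p_\epsilon\}$ for both $\epsilon$, so that a section forces $\iP=\iP[g_{\epsilon i}^{-1}]$ for some $\epsilon,i$, which visibly omits $p_\epsilon$. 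If you intend the direct route as your actual proof of (2), it needs to be replaced by this covering-family argument.
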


\begin{proof}\ 
 {($\Rightarrow 1$)}
 Any open affine covering $\{U_i \to \hP\}_{i \in I}$ induces a $\ulPZar$-covering. Since $\sP$ is $\ulPZar$-local, there is a section $\hP \to U_i \to \hP$ for some $i$. But since $U_i \to \hP$ is an open immersion, this implies $U_i = \hP$. That is, $\hP$ is affine. Let $A = \Gamma(\hP, \OO_{\hP})$. We want to show that for all $g \in A$, we have $g \in A^*$ or $1{-}g \in A^*$. For any $g \in A$, the morphisms $A \to A[g^{-1}]$ and $A \to A[(1{-}g)^{-1}]$ induce a $\ulPZar$-covering.\footnote{Since for any ideal $\p$ if $g \in \p$ and $1{-}g \in \p$ then $1 \in \p$ so $\p$ is not a prime ideal.} As in the first part, this implies that $A = A[g^{-1}]$ or $A = A[(1{-}g)^{-1}]$. Hence, $g \in A^*$ or $1{-}g \in A^*$.

 {($\Rightarrow 2$)}
  Set $A = \Gamma(\hP, \OO_{\hP})$ and $\mP = (f)$ so $A[f^{-1}] = \Gamma(\iP, \OO_{\iP})$. For a scheme $X$ and global section $g \in \Gamma(X, \OO_X)$ we will write $X[g^{-1}]$ for $\underline{\Spec} \left (\OO_{X}[g^{-1}]\right )$.
 
We will show by contradiction that there do not exists two distinct closed points $p_1, p_2$ of $\iP$. Indeed, choose subsets $\{g_{1i}\}_{i \in I}$, $\{g_{2i}\}_{i \in I}$ of $A$ such that for $\epsilon = 1, 2$, the $\{\iP[g_{\epsilon i}^{-1}]\}_{i \in I}$ form an open cover of $\iP \setminus \{p_\epsilon\}$. As $\iP$ is quasi-compact, there are finite subfamilies $\{g_{\epsilon 1}, \dots, g_{\epsilon n}\} \subseteq \{g_{\epsilon i}\}_{i \in I}$ which still have the property that the $\{\iP[g_{\epsilon 1}^{-1}], \dots, \iP[g_{\epsilon n}^{-1}]\}$ form a cover of $\iP \setminus \{p_\epsilon\}$.
 
Now consider the blowup $Bl_{A} I \to \Spec(A)$ of $\Spec(A)$ at the ideal $I = \langle g_{\epsilon i} \rangle_{\epsilon \in  \{1, 2\}, i \in \{1, \dots, n\}}$ equipped with its canonical open covering $\hU_{\epsilon i} = \Spec (A[\tfrac{I}{g_{\epsilon i}}])$.\footnote{Here $A[\tfrac{I}{g_{\epsilon i}}]$ is the stacks project's notation for $\varinjlim (A \stackrel{g_{\epsilon i}}{\to} I \stackrel{g_{\epsilon i}}{\to} I^2 \dots )$ where transition maps are multiplication by $g_{\epsilon i}$.} Since $I$ becomes the unit ideal on $\iP$, the blowup is an isomorphism over $\iP$. So when equipped with the minimal modulus structures, the family $\{\hU_{\epsilon, i} \to \hP\}$ is the composition of a $\ulPZar$-covering and an \abb. Since $\sP$ is local for these two classes, there exists a factorisation $\hP \to \hU_{\epsilon i} \to \hP$ for some $\epsilon$, $i$, and pulling back, a factorisation $\iP \to \iP[g_{\epsilon i}^{-1}] \to \iP$. Since $p_{3-\epsilon}$ is not in $\iP[g_{\epsilon i}^{-1}]$ we have a contradiction.

 {($\Rightarrow 3$)}
  Let $\p \subseteq \Gamma(\ol{P}, \OO_{\ol{P}})$ be the prime ideal of the closed point $p$ of $P^\circ$, and choose $f, g \in \Gamma(\ol{P}, \OO_{\ol{P}}) \setminus \p$. The goal is to find $h$ such that $fh = g$ or $f = gh$ (mod $\p$). Since $f, g \notin \p$, the closed subscheme of $\hP$ associated to $\langle f, g \rangle$ does not intersect $P^\circ$. Setting $I:=\langle f,g \rangle$ and $A:=\Gamma(\hP,\OO_{\hP})$, we have the standard open covering %
 $\{%
 \Spec A[\tfrac{I}{f}], %
 \Spec A[\tfrac{I}{g}] %
 \}$ %
 of the blowup $Bl_{A}I$. Equipping these with the minimal moduli, we obtain a composition of a $\ulPZar$-covering and an \aab. Since $\sP$ is local for these two classes, we get a factorisation $\hP \to \hU \to \hP$ where $\hU$ is $\Spec A[\tfrac{I}{f}]$ or $\Spec A[\tfrac{I}{g}]$. Taking $h$ to be the image of $\tfrac{g}{f}$ (resp. $\tfrac{f}{g}$), we obtain $fh = g$ (resp. $f = gh$).

 {($\Rightarrow 4$)}
  We want to show that the canonical morphism $A \to A/\p \times_{k(\p)} A_\p$ is an isomorphism where $A = \Gamma(\ol{P}, \OO_{\ol{P}})$, and $\p$ is the prime corresponding to the closed point of $P^\circ$, \cite[Thm.5.1]{Fer03}. For this, it suffices to show that the induced map $\p \to \p A_\p$ is an isomorphism, cf.~\cite[Lem.3.12]{HK18}. Since $\mP$ is an effective Cartier divisor, $A \to A_\p$ is injective, so $\p \to \p A_\p$ is injective. To show surjectivity, choose some $\tfrac{a}{s} \in \p A_\p$, that is, $a \in \p, s \in A \setminus \p$. Since $s$ is invertible on $\iP$, the ideal $\langle s \rangle$, and therefore $\langle a, s \rangle$, produces an admissible blowup of $\sP$. It has standard open covering 
  $\{%
 \Spec A[\tfrac{\langle a, s \rangle}{a}], %
 \Spec A[\tfrac{\langle a, s \rangle}{s}] %
 \}$ %
and since $\sP$ is $\ulPZar$-local and $\uS$-local, we obtain a factorisation and we deduce that there is $b \in A$ with either $ab = s$ or $a = bs$. But in the former case, $ab = s \in \p$, contradicting the initial assumption, so $a = bs$. That is, $\tfrac{a}{s} = b \in A$. We deduce that $b \in \p$ from injectivity of $A/\p \to k(\p)$. So $\p \to \p A_\p$ is surjective.

 {($\Leftarrow)$}
  Suppose that $\sP$ is a modulus pair satisfying (1)-(4). Any $\ulPZar$-covering admits a section by (1) so consider an {\abb} $\sQ \to \sP$. As $\iQ = \iP$ and $p \in \iP$, we obtain a commutative square
 \[ \xymatrix{
\{p\} \ar[r] \ar[d] & \hQ \ar[d] \\
\ol{\{p\}} \ar[r] \ar@{-->}[ur] & \hP
 } \]
and by the valuative criterion for properness, there is a unique diagonal morphism making the diagram commutative. Then by (4) we can glue this with the factorisation $\iP \to \hQ \to \hP$ to obtain a lifting $\hP \to \hQ$. Finally, $\hP \to \hQ$ is admissible because $\id: \hP \to \hP$ and $\hQ \to \hP$ are admissible. 
\end{proof}

\subsection{The Nisnevich case}

\begin{prop} \label{prop:bigNisPoint}
A modulus pair $\sP \in \ulPSCH$ is $\ulPNis$-local and $\uS$-local%
, cf.~Def.\ref{defi:tauLocalObject}, Def.\ref{defi:coveringsMSchamb}, if and only if
\begin{enumerate}
 \item $\hP$ is the spectrum of a henselian local ring,
 \item $\iP$ is the spectrum of a henselian local ring,
 \item $\ol{\{p\}} \subseteq \hP$ is the spectrum of a henselian valuation ring, where $p$ is the closed point of the local scheme $\iP$.
 \item $\ol{P} = \ol{\{p\}} \sqcup_{\{p\}} P^\circ$ in the category of schemes.
\end{enumerate}
\end{prop}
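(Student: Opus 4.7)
The proof parallels Prop~\ref{prop:bigZarPoint}, with the henselian refinements requiring additional Nisnevich-specific input. Since every $\ulPZar$-covering is a $\ulPNis$-covering (open immersions are \'etale), $\ulPNis$-locality of $\sP$ combined with $\uS$-locality implies $\ulPZar$-locality, so Prop~\ref{prop:bigZarPoint} applies to give: $\hP = \Spec A$ and $\iP = \Spec B$ are spectra of local rings, $\overline{\{p\}} = \Spec R$ where $R$ is a valuation ring, and $A \cong R \times_{k(p)} B$ (condition~(4)). To upgrade (1) to henselian, apply $\ulPNis$-locality to the $\ulPNis$-covering $\{(U_i, \mP|_{U_i}) \to \sP\}$ arising from an arbitrary scheme-theoretic Nisnevich cover $\{U_i \to \hP\}$ equipped with minimal modulus; the resulting section shows $A$ is Nisnevich-local, i.e., henselian. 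For (3), $R$ is a surjective quotient of the henselian local ring $A$, and quotients of henselian rings are henselian.

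For (2), let $V \to \iP$ be \'etale of finite presentation with section $\sigma_p \colon \Spec k(p) \to V$ at the closed point $p$. Let $V_0 \subseteq V$ be the connected component containing $\sigma_p(p)$. Since $V_0 \to \iP$ is \'etale, its image is open and contains $p$; by locality of $\iP$, the map $V_0 \to \iP$ is surjective, and $V_0|_p = \Spec k(p)$. Apply Ferrand's equivalence for \'etale morphisms along the pushout square~(4), cf.~\cite{Fer03}: the triple $(V_0 \to \iP,\ \id \colon \overline{\{p\}} \to \overline{\{p\}},\ V_0|_p = \overline{\{p\}}|_p)$ defines an \'etale morphism $\hV_0 \to \hP$ whose restriction to $\overline{\{p\}}$ is the identity and which in particular admits a section over the closed point $z \in \overline{\{p\}} \subseteq \hP$. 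Henselianness of $A$ (just established) produces a global section $\hP \to \hV_0$; its restriction to $\iP$ is a section of $V_0 \hookrightarrow V \to \iP$ extending $\sigma_p$.

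Conversely, assume (1)--(4). The valuative criterion of properness applied to $\overline{\{p\}} = \Spec R$ combined with the pushout presentation (4) yields $\uS$-locality exactly as in the proof of Prop~\ref{prop:bigZarPoint}~($\Leftarrow$). For $\ulPNis$-locality, given a $\ulPNis$-covering $\{\sU_i \to \sP\}$, the underlying $\{\hU_i \to \hP\}$ is a Nisnevich covering of $\hP$, so henselianness of $A$ supplies some $\hU_{i_0} \to \hP$ with a section $s \colon \hP \to \hU_{i_0}$. By minimality of $\sU_{i_0} \to \sP$ we have $\mU_{i_0} = \mP \times_{\hP} \hU_{i_0}$, hence $s^\ast \mU_{i_0} = \mP$, and $s$ is automatically admissible. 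The main technical point is the Ferrand gluing step for (2): identifying the right connected component of $V$ so the resulting \'etale $\hV_0 \to \hP$ admits a section over the closed point of $\hP$, after which everything reduces to the already-established henselianness of $\hP$.
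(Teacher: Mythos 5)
Your forward direction for (1) and (3) and your converse match the paper's argument (for the converse, note that Def.~\ref{defi:tauLocalObject} quantifies over coverings of arbitrary targets $\sX$, so you still need the reduction to coverings of $\sP$ itself via $\ambtimes$ and Prop.~\ref{prop:coveringAmbtimes}, as in the paper). The genuine problem is your treatment of (2), henselianness of $\iP$. The claim that the connected component $V_0$ of $V$ containing $\sigma_p(p)$ satisfies $V_0|_p = \Spec k(p)$ is false: take $\iP = \Spec \ZZ_{(5)}$ and $V = \Spec \ZZ_{(5)}[i]$, which is \'etale, connected (it is a domain), admits a section over the closed point, and has a two-point fibre over $(5)$. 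This particular defect is repairable by shrinking to an open neighbourhood of $\sigma_p(p)$ that misses the rest of the (finite, discrete) fibre over $p$. The unrepaired load-bearing step is the "Ferrand gluing for \'etale morphisms": you need that a (non-finite, merely finitely presented) \'etale $A_\p$-algebra glued with $R$ along $k(\p)$ yields an \'etale $A$-algebra, where $A = R \times_{k(\p)} A_\p$. The paper only ever invokes \cite[Thm.2.2(iv)]{Fer03} for \emph{finite flat} algebras (in the fppf case, Prop.~\ref{prop:bigFppfPoint}); a Milnor-patching statement for arbitrary \'etale algebras of finite presentation over a semi-valuation ring is not established anywhere in the paper and is not an off-the-shelf citation. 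As written, this is where your proof of (2) fails.

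The paper's route for (2) avoids \'etale gluing entirely by using the characterisation of henselian local rings via \emph{finite} algebras (\cite[Tag 04GG(1)$\Leftrightarrow$(9)]{stacks-project}): it suffices to show that a connected finite $A_\p$-algebra $B_0$ has a unique prime over $\p$. Writing $\q_1, \dots, \q_n$ for those primes, one forms the finite $A$-algebra $B := B_0 \times_{\prod k(\q_i)} \prod_{i=1}^n A/\p$, checks via Ferrand that $\Spec(B)$ is the amalgamated sum $\Spec(B_0) \amalg_{\{\q_i\}} (\sqcup \Spec(A/\p))$ and hence connected; henselianness of $A$ (already proved) forces $B$ to be local, while total ordering of the primes of the valuation ring $A/\p$ identifies the closed points of $\Spec(B)$ with $\{\q_1,\dots,\q_n\}$, whence $n = 1$. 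If you want to keep your strategy, you would need to either prove the \'etale patching lemma over $A = R \times_{k(\p)} A_\p$ (flatness and vanishing of $\Omega$ follow from surjectivity of $\Spec A_\p \sqcup \Spec R \to \Spec A$, but finite presentation over $A$ requires a separate argument), or reduce to the paper's finite-algebra criterion.
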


\begin{proof}
($\Rightarrow$) By Prop.~\ref{prop:bigZarPoint} it suffices to show that $\ol{P}$, $P^\circ$, and $\ol{\{p\}}$ are henselian. Closed subschemes of henselian schemes are henselian, \cite[Tag 04GG(1)$\Leftrightarrow$(9)]{stacks-project}, so $\ol{\{p\}}$ follows from the $\hP$ case.

To show that $\hP$ is henselian it suffices to show that every Nisnevich covering admits a section, cf.~\cite[Tag 04GG(1)$\Leftrightarrow$(7)]{stacks-project}. Since $\ulPNis$-coverings of $\sP$ correspond to $\Nis$-coverings of $\hP$, this follows directly from the assumption that $\sP$ is $\ulPNis$-local.

Showing $P^\circ$ is henselian is a little more involved. We will show: if $Q^\circ \to P^\circ$ is a finite morphism with $Q^\circ$ connected then $Q^\circ$ is local. Or rather that, what amounts the same thing by the going up property, there is a unique point of $Q^\circ$ lying over the closed point of $P^\circ$, cf.~\cite[Tag 04GG(1)$\Leftrightarrow$(9)]{stacks-project}. We will prove this by extending $Q^\circ \to P^\circ$ to a finite morphism $\hQ \to \hP$ with $\hQ$ connected, and deducing $\iQ$ is local from $\hQ$ being local.

The proof is algebraic so let $A = \Gamma(\ol{P}, \OO_{\ol{P}})$, let $\p$ be the prime corresponding to the closed point of $P^\circ$ so $P^\circ = \Spec(A_\p)$ and let $A_\p \to B_0$ be a finite morphism with $Q^\circ = \Spec(B_0)$ connected. Our goal is to show that $B_0$ has a unique prime lying over $\p$.

Let $\q_1, \dots, \q_n$ be the primes of $B_0$ lying over $\p$ (which are incidentally, precisely the maximal ideals of $B_0$ by the going up theorem for finite morphisms) and let $B := B_0 \times_{\prod k(\q_i)} \prod_{i = 1}^n A/\p \subseteq B_0$, equipped with its canonical morphism $A \to B$. One calculates directly that $B / \p B = \prod_{i = 1}^n A / \p$ and $A_\p \subseteq B \otimes_A A_\p \subseteq B_0$, so $A \to B$ is a finite morphism, \cite[Thm.2.2]{Fer03}. On the other hand, $\Spec(B)$ has the amalgamated sum topological space $\Spec(B_0) \amalg_{\{q_1, \dots, q_n\}} (\sqcup_{i = 1}^n \Spec(A/\p))$ \cite[Scl.4.3, Thm.5.1]{Fer03}, so it is connected since $\iQ$ is. It follows that $B$ has a unique maximal ideal, as $A$ is a hensel local ring. However, studying the amalgamated sum we see that the closed points of $\Spec(B)$ are in canonical bijection with the closed points $\{q_1, \dots, q_n\}$ of $\Spec(B_0)$, since the primes of the valuation ring $A/\p$ are totally ordered. Hence, $n = 1$.

($\Leftarrow$) Noting that Conditions (1)-(4) are stronger than the assumptions in Proposition \ref{prop:bigZarPoint}, $\uS$-locality follows from that proposition. 
For $\ulPNis$-locality, suppose that $\sP \to \sX$ is a morphism, and $\{\sU_i \to \sX\}_{i \in I}$ is a $\ulPNis$-covering. Since $\ambtimes$ preserves $\ulPNis$-coverings, Prop.~\ref{prop:coveringAmbtimes}, in order to show that $\sP \to \sX$ lifts through some $\sU_i$, it suffices to consider the case $\sX = \sP$. But $\ulPNis$-coverings of $\sP$ correspond to $\Nis$-coverings of $\hP$, so since all morphisms in play are minimal, the result follows from the classical version, cf.~\cite[Tag 04GG(1)$\Leftrightarrow$(7)]{stacks-project}.
\end{proof}

\subsection{The {\'e}tale case}

\begin{coro} \label{coro:bigEtPoint}
A modulus pair $\sP \in \ulPSCH$ is $\ulPet$-local and $\uS$-local%
, cf.~Def.\ref{defi:tauLocalObject}, Def.\ref{defi:coveringsMSchamb}, if and only if
\begin{enumerate}
 \item $\hP$ is the spectrum of a strictly henselian local ring,
 \item $\iP$ is the spectrum of a henselian local ring,
 \item $\ol{\{p\}} \subseteq \hP$ is the spectrum of a strictly henselian valuation ring, where $p$ is the closed point of the local scheme $\iP$.
 \item $\ol{P} = \ol{\{p\}} \sqcup_{\{p\}} P^\circ$ in the category of schemes.
\end{enumerate}
\end{coro}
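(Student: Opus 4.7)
The plan is to mirror the proof of Prop.~\ref{prop:bigNisPoint}, the only new ingredient being the passage from ``henselian'' to ``strictly henselian'' for $\hP$ (and hence for $\ol{\{p\}}$).

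For the forward direction, I would first observe that every $\ulPNis$-covering is a $\ulPet$-covering, so $\ulPet$-locality of $\sP$ implies $\ulPNis$-locality. Applying Prop.~\ref{prop:bigNisPoint} then yields conditions (2), (4), together with (1) and (3) in their merely henselian form. To upgrade $\hP$ to strictly henselian, it suffices to show that the residue field $\kappa$ at the closed point $s$ of $\hP$ is separably closed. Given a finite separable extension $L/\kappa$, presented as $\kappa[t]/(\bar f)$ with $\bar f$ monic irreducible separable, I would lift $\bar f$ to a monic $f \in A[t]$, where $A := \Gamma(\hP, \OO_{\hP})$, and invoke Hensel's lemma in the henselian local ring $A$ to see that $\hU := \Spec(A[t]/(f)) \to \hP$ is a connected finite étale cover with a unique closed point of residue field $L$. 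Equipping $\hU$ with the minimal modulus pulled back from $\sP$ gives a singleton $\ulPet$-covering $\{\sU \to \sP\}$, which by hypothesis admits a section $\sP \to \sU$; composing residue field maps forces $L = \kappa$. Condition (3) is then immediate, since closed subschemes of strictly henselian local schemes are strictly henselian local.

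For the reverse direction, conditions (1)--(4) here are strictly stronger than the conditions appearing in Prop.~\ref{prop:bigNisPoint}, so $\uS$-locality comes for free from that proposition. For $\ulPet$-locality, given a $\ulPet$-covering $\{\sU_i \to \sX\}_{i \in I}$ and an ambient morphism $\sP \to \sX$, I would apply Prop.~\ref{prop:coveringAmbtimes} to obtain that $\{\sP \ambtimes[\sX] \sU_i \to \sP\}_{i \in I}$ is again a $\ulPet$-covering, reducing to the case $\sX = \sP$. Since each $\sU_i \to \sP$ is a minimal ambient morphism, $\mU_i = \mP|_{\hU_i}$, so any scheme-theoretic section $\hP \to \hU_i$ automatically satisfies the modulus condition and defines an ambient morphism $\sP \to \sU_i$. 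The existence of such a scheme section is then the classical fact that étale coverings of a strictly henselian local scheme admit sections.

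The main conceptual point to flag, rather than a technical obstacle, is the asymmetry between (1) and (2): although $\hP$ (and hence $\ol{\{p\}}$) is strictly henselian, $\iP$ is only required to be henselian. This is forced by the geometry: the closed point of $\iP$ is the generic point of the valuation ring $\ol{\{p\}}$, whose residue field is $k(p) = \Frac(\ol{\{p\}})$, and this fraction field need not be separably closed even when the residue field of $\ol{\{p\}}$ at its closed point is. So no additional input about $\iP$ is needed beyond what is inherited from Prop.~\ref{prop:bigNisPoint}.
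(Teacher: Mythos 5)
Your proposal is correct and follows essentially the same route as the paper: both directions reduce to Prop.~\ref{prop:bigNisPoint}, with the forward direction upgraded by showing the closed point of $\hP$ has separably closed residue field via sections of connected finite étale covers equipped with the minimal modulus, and the reverse direction handled verbatim as in the Nisnevich case via Prop.~\ref{prop:coveringAmbtimes}. The only cosmetic difference is that you build the finite étale cover by lifting a monic separable polynomial, whereas the paper invokes the equivalence of categories of finite étale covers over $p_0$ and over the henselian local scheme $\hP$ (\cite[Tag 04GK]{stacks-project}); both are standard and your version does work since $A[t]/(f)$ is local and its étale locus is open and contains the closed point.
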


\begin{proof}
($\Rightarrow$) By Prop.~\ref{prop:bigNisPoint}, it suffices to show that the closed point of $\hP$ is separably closed. For this, it suffices to show that every finite étale morphism $q_0 \to p_0$ towards the closed point $p_0$ admits a section. But $\hP$ is a hensel local ring, so the category of finite étale morphisms over $p_0$ is equivalent to the category of finite étale morphisms over $\hP$, \cite[Tag 04GK]{stacks-project}, so it suffices to show any finite étale morphism $\hQ \to \hP$ has a section. Suppose $\hQ \to \hP$ is a finite étale morphism with $\hQ$ connected. Then $(\hQ, \mP|_{\hQ}) \to \sP$ is an $\ulPet$-covering, and so it has a section.

($\Leftarrow$) The proof of Prop.~\ref{prop:bigNisPoint}($\Leftarrow$) works verbatim.
\end{proof}

\begin{exam}
In the above corollary, one might expect that $P^\circ$ be strictly local, but this is not necessarily the case: $(\Spec \C[[t]], t)$ is $\ulMet$-local, for example.
\end{exam}

\subsection{The $\protect \fppf$ case}

Recall that an \emph{$\fppf$-covering} %
\marginpar{\fbox{$\fppf$-covering}} %
\index{covering!$\fppf$-covering} %
 is a family of morphisms of schemes $\{f_i: U_i \to X\}_{i \in I}$ such that each $f_i$ is flat and locally of finite presentation and $\cup_i f_i(U_i) = X$. 

\begin{lemm} \label{lemm:fppfRefine}
Let $X$ be a scheme. Every $\fppf$-covering $\{W_j \to X\}$ is refinable by one of the form
\[ \{ V_i \to U_i \to X\}_{i \in I} \]
where $\{U_i \to X\}_{i \in I}$ is a Nisnevich covering, and $V_i \to U_i$ are finite flat and surjective.
\end{lemm}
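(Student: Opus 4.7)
The strategy is to reduce to a local statement on $X$, apply the Raynaud–Gruson-style structure theorem for flat finitely presented morphisms to produce a finite flat subscheme after étale base change, and then upgrade étale to Nisnevich using the standard local structure of étale morphisms as open pieces of finite flat ones.

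First I would reduce the problem to the case where $X$ is affine (since refinement is local on $X$) and to a single morphism: for each $x \in X$ it suffices to produce, for some $j$, a Nisnevich neighbourhood $(U,u) \to (X,x)$ and a finite flat surjective $V \to U$ whose composite to $X$ factors through some $W_j$. Pick $j$ and $y \in W_j$ mapping to $x$; by taking the closure of $y$ in the fibre and shrinking $W_j$ Zariski-locally, arrange that $y$ is isolated in the fibre $(W_j)_x$. (This uses that a flat locally of finite presentation morphism factors locally as a smooth morphism followed by a closed immersion, allowing us to cut down to a quasi-finite piece; alternatively, one appeals to EGA IV.17.16.)

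Next, I would invoke the local structure theorem for flat finitely presented morphisms at an isolated fibre point (\cite[EGA IV.17.16.2]{EGAIV4}, or equivalently \cite[Tag 05WS]{stacks-project}): there exists an étale neighbourhood $(U',u') \to (X,x)$ and a closed subscheme $V' \subseteq W_j \times_X U'$ containing a point over $(y,u')$ such that $V' \to U'$ is finite flat. Since $V' \to U'$ is both open (flat and locally of finite presentation) and closed (finite), after replacing $U'$ by the connected component of $u'$ we may arrange $V' \to U'$ to be finite flat \emph{surjective}, and it factors through $W_j$ by construction.

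The remaining step—and the main obstacle—is upgrading the étale neighbourhood $U' \to X$ to a Nisnevich one. The idea is to use the standard local structure of étale morphisms (\cite[Tag 02GH]{stacks-project}): around $u'$ the morphism $U' \to X$ factors as an open immersion $U' \hookrightarrow X' := \Spec \OO_{X,x}[T]/(f)$ followed by the finite free morphism $X' \to \Spec \OO_{X,x}$, where $f$ is monic and $f'(u')$ is a unit. The point of $X'$ above $x$ corresponding to $u'$ has residue field $k(u') = k(y)$ lying over $k(x)$; factoring $f$ over the Henselisation $\OO_{X,x}^h$, we obtain a Nisnevich neighbourhood $(U, u) \to (X,x)$ over which $X'$ splits off a clopen component carrying a unique point with residue field $k(x)$. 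Base-changing $V' \to U'$ along $U \to X$ and restricting to the component sitting inside $U' \times_X U$, we obtain a finite flat surjective $V \to U$ factoring through $V' \to W_j$. Covering $X$ by such $(U,u)$ and extracting a suitable subfamily (by quasi-compactness, if $X$ is qc) produces the required refinement.
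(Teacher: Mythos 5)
Your strategy is essentially the paper's: both proofs rest on \cite[Cor.17.16.2]{EGAIV4} to produce a quasi-finite flat refinement, on the structure of quasi-finite schemes over henselian local rings to split off a finite flat surjective piece, and on limit arguments to descend from $\Spec(\OO_{X,x}^h)$ to an actual Nisnevich neighbourhood; the paper merely henselises \emph{first} and applies 17.16.2 over $\OO_{X,x}^h$, whereas you apply it over $X$ and henselise afterwards to repair the \'etale-versus-Nisnevich discrepancy. One intermediate assertion in your write-up is false, though harmlessly so: the clopen component of $X' = \Spec\,\OO_{X,x}[T]/(f)$ that splits off over the henselisation is the \emph{local} factor at $u'$, whose closed point has residue field $k(u')$, which is in general a nontrivial separable extension of $k(x)$ --- it does not carry a point with residue field $k(x)$. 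This does not break the argument, because that property is never used: the local factor is still finite flat surjective over the (genuinely Nisnevich) neighbourhood $U$, it is contained in the open $U'\times_X U$ because an open subset of a local scheme containing the closed point is everything, and the residue field extension $k(u')/k(x)$ is simply absorbed into the finite flat layer $V \to U$ rather than the Nisnevich layer $U \to X$. You should rephrase that sentence accordingly; with that correction the proof is sound.
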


\begin{proof}
If $X$ is the spectrum of a hensel local ring, then by \cite[Cor. 17.16.2]{EGAIV4} the morphism $\amalg W_j \to X$ is refinable by a morphism $V \to X$ which is finite presentation, flat, and quasi-finite. Then since $X$ is the spectrum of a hensel local ring, $V = V_0 \amalg V_1$ where $V_0 \to X$ is finite surjective, and $V_1 \to X$ does not hit the closed point, \cite[Thm. 18.5.11(c)]{EGAIV4}. Since $V_0$ is a disjoint union of local rings, \cite[Thm. 18.5.11(a)]{EGAIV4}, and satisfies the going down theorem by virtue of being flat, we find a finite flat surjective morphism $V' \to X$ from a local scheme $V'$ which factors through one of the $W_j$ (because $V'$ is connected).

If $X$ is affine (or rather qcqs) then for each $x \in X$ consider the filtered system of {\'e}tale morphisms $(U_{x,λ})_{λ \in \Lambda}$ defining $\Spec(\OO_{X, x}^h)$. Pulling $\{W_j \to X\}$ back to $\Spec(\OO_{X, x}^h)$ we reduce to the previous case, and find a finite flat surjective morphism $V' \to \Spec(\OO_{X, x}^h)$ factoring through one of the $W_j$. This refinement lifts to some $V_{x,λ} \to U_{x, λ}$ by \cite[Thm.8.8.2]{EGAIV3}. The morphism $V_{x,λ} \to U_{x, λ}$ can be assumed to be surjective, finite \cite[Thm.8.10.5(vi,x)]{EGAIV3}, and flat \cite[04AI]{stacks-project}. Combining these we obtain a covering $\{V_{λ, x} \to U_{λ, x} \to X\}_{x \in X}$ as in the statement.

In general, pull $\{W_j \to X\}$ back along an open affine covering of $X$, apply the previous step, and then combine the Zariski and Nisnevich coverings.
\end{proof}

\begin{lemm} \label{lemm:fppfAlgClRes}
If $A$ is an $\fppf$-local ring, then every residue field is algebraically closed.
\end{lemm}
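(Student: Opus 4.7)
Fix a prime $\mathfrak{p} \subseteq A$ and a monic polynomial $f(x) = x^n + a_{n-1}x^{n-1} + \dots + a_0 \in k(\mathfrak{p})[x]$ of degree $n \geq 1$; the goal is to produce a root in $k(\mathfrak{p})$. The strategy is to construct a single monic $G(y) \in A[y]$ that reduces modulo $\mathfrak{p}$ to a polynomial whose roots in $k(\mathfrak{p})$ correspond to roots of $f$, and then exhibit $\Spec A[y]/(G(y)) \to \Spec A$ as a one-arrow $\fppf$-covering of $\Spec A$; $\fppf$-locality then forces this covering to split and yields the desired root.

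First I would handle the fact that $A \to k(\mathfrak{p})$ is generally \emph{not} surjective, so the coefficients $a_i$ need not lift to $A$ individually. The trick is to clear denominators: write $a_i = \alpha_i/\beta$ with a common denominator $\beta \in (A/\mathfrak{p}) \setminus \{0\}$ and $\alpha_i \in A/\mathfrak{p}$. Then $\beta^n f(y/\beta)$ equals
\[
 g(y) := y^n + \alpha_{n-1} y^{n-1} + \beta\alpha_{n-2} y^{n-2} + \dots + \beta^{n-1}\alpha_0 \in (A/\mathfrak{p})[y],
\]
which is monic, and any root $\bar c \in k(\mathfrak{p})$ of $g$ gives a root $\bar c/\beta$ of $f$ in $k(\mathfrak{p})$. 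Lifting $\alpha_i$ to $\tilde\alpha_i \in A$ and, crucially, $\beta$ to some $\tilde\beta \in A \setminus \mathfrak{p}$ (so that $\bar c/\beta$ makes sense in $k(\mathfrak{p})$), I obtain a monic $G(y) \in A[y]$ reducing to $g$ modulo $\mathfrak{p}$.

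Next I would set $B := A[y]/(G(y))$. Because $G$ is monic of degree $n \geq 1$, $B$ is a free $A$-module of rank $n$; in particular $A \to B$ is injective, $B$ is of finite presentation, flat, and finite over $A$, and $\Ann_A(B) = 0$, so $\Spec B \to \Spec A$ is finite, flat, of finite presentation, and surjective. Therefore $\{\Spec B \to \Spec A\}$ is an $\fppf$-covering, and by the defining property of an $\fppf$-local ring it admits a section $B \to A$, which sends $y$ to an element $c \in A$ with $G(c) = 0$. Reducing modulo $\mathfrak{p}$ yields $\bar c \in A/\mathfrak{p}$ with $g(\bar c) = 0$, and finally $\bar c / \beta \in k(\mathfrak{p})$ is a root of $f$.

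The only nonformal step is the clearing-of-denominators reduction in the second paragraph, which side-steps the obstruction that $f$ may fail to admit a coefficient-wise lift to $A[x]$. Once past that, the rest is entirely mechanical: monic polynomials produce free, hence $\fppf$, algebras, and the hypothesis supplies the section. I do not expect any topological or limit-theoretic input to be required beyond what is already available in this chapter.
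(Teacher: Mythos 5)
Your proof is correct and follows essentially the same route as the paper's: clear denominators via the substitution $x \mapsto y/\beta$ to obtain a monic over $A/\p$, lift it to a monic $G \in A[y]$, use that $A \to A[y]/(G)$ is finite free (hence an $\fppf$-covering) to extract a root in $A$, then reduce mod $\p$ and divide by $\beta$. Your write-up just makes explicit the step the paper leaves implicit (that a monic yields a free, hence $\fppf$, algebra); the only cosmetic remark is that you do not actually need the lift $\tilde\beta$ to lie outside $\p$ — any lift works, since invertibility of $\beta$ is only used in $k(\p)$.
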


\begin{proof}
Let $\p$ be a prime of $A$, and $f(T) = \sum_{i = 0}^n a_iT^i \in k(\p)[T]$ a monic. We can write it as $\sum_{i = 0}^n \tfrac{b_i}{s^{n-i}} T^i$ for some $b_i, s \in A/\p$ with $s \neq 0$. Note $f(T)$ has a solution in $k(\p)$ if and only if $s^nf(T) = \sum b_i (sT)^i$ has a solution in $k(\p)$, if and only if the monic $\sum b_i X^i \in (A/\p)[X]$ has a solution in $k(\p)$. The monic $\sum b_i X^i$ lifts to a monic in $A[T]$ which has a solution in $A$ by virtue of $A$ being $\fppf$-local. 
\end{proof}

\begin{prop} \label{prop:bigFppfPoint}
A modulus pair $\sP \in \ulPSCH$ is $\ulPfppf$-local and $\uS$-local%
, cf.~Def.\ref{defi:tauLocalObject}, Def.\ref{defi:coveringsMSchamb}, if and only if
\begin{enumerate}
 \item $\hP$ is an $\fppf$-local scheme,
 \item $\iP$ is an $\fppf$-local scheme,
 \item $\ol{\{p\}} \subseteq \hP$ is the spectrum of a valuation ring with algebraically closed function field (and therefore is an $\fppf$-local scheme).
 \item $\ol{P} = \ol{\{p\}} \sqcup_{\{p\}} P^\circ$ in the category of schemes.
\end{enumerate}
\end{prop}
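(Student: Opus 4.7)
The plan is to model the proof on those of Prop~\ref{prop:bigZarPoint}, Prop~\ref{prop:bigNisPoint}, and Coro~\ref{coro:bigEtPoint}. First I would observe that every $\ulPet$-covering is a $\ulPfppf$-covering, so under the hypotheses of $(\Rightarrow)$, $\sP$ is in particular $\ulPet$-local, and Coro~\ref{coro:bigEtPoint} already supplies property (4) together with the weaker forms of (1)-(3): $\hP$ strictly henselian local, $\iP$ henselian local, and $\ol{\{p\}}$ a strictly henselian valuation ring. The remaining task in the forward direction is to upgrade each of (1), (2), (3) to its fppf form.

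Upgrade of (1) is immediate: any $\fppf$-covering $\{\hU_i \to \hP\}$ becomes a $\ulPfppf$-covering of $\sP$ once each $\hU_i$ is equipped with the pullback modulus $\mP|_{\hU_i}$ (which is Cartier by flatness), and $\ulPfppf$-locality of $\sP$ provides a section. Upgrade of (3) uses that the function field of $\ol{\{p\}}$ is the residue field $k(p)$ of $\hP$ at the prime corresponding to the closed point $p$ of $\iP$; by Lem~\ref{lemm:fppfAlgClRes} applied to (1), this is algebraically closed. To see that $\ol{\{p\}} = \Spec R$ is itself fppf-local I would invoke Lem~\ref{lemm:fppfRefine} to reduce to splitting a finite flat surjective $\Spec S \to \Spec R$; since $R$ is henselian, $S$ decomposes as a product of henselian local rings, each a domain integral over $R$ sitting inside a finite product of copies of the algebraically closed $\Frac R$. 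Integral closedness of the valuation ring $R$ then forces each factor to equal $R$, so $S \cong R^n$ and the cover splits.

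Upgrade of (2), the main step, again goes via Lem~\ref{lemm:fppfRefine}: reduce to splitting a finite flat surjective $V = \Spec B \to \iP$. Since $\iP$ is henselian local, $B$ decomposes as a finite product $\prod B_i$ of henselian local rings. Each residue field $B_i/\mathfrak{m}_{B_i}$ is a finite extension of $k(p)$, which is algebraically closed by the already-established (1) together with Lem~\ref{lemm:fppfAlgClRes}, so $B_i/\mathfrak{m}_{B_i} = k(p)$. Nakayama then forces the structural map $A' := \Gamma(\iP, \OO_{\iP}) \to B_i$ to be surjective, and combined with flatness this makes $\Spec B_i \hookrightarrow \Spec A'$ simultaneously a closed and an open immersion, hence a clopen subscheme; as $\Spec A'$ is local and therefore connected, one concludes $B_i = A'$, and projection to any factor is the desired section. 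This flat-quotient-equals-open-immersion step is the main obstacle, and it depends essentially on algebraic closedness of $k(p)$ supplied by (1).

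For $(\Leftarrow)$, $\uS$-locality is immediate from Prop~\ref{prop:bigZarPoint}, since (1)-(4) imply its hypotheses (fppf-local forces Zariski-local). For $\ulPfppf$-locality, given any $\sP \to \sX$ and $\ulPfppf$-covering $\{\sU_i \to \sX\}$, I would use Prop~\ref{prop:coveringAmbtimes} to reduce by ambient product to the case $\sX = \sP$, where $\ulPfppf$-coverings are (by minimality) just $\fppf$-coverings $\{\hU_i \to \hP\}$ equipped with the pullback modulus. Then (1) directly furnishes a section $\hP \to \hU_i$, whose admissibility is automatic because the modulus on $\sU_i$ was pulled back from $\sP$.
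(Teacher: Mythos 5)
Your reduction of the forward direction to upgrading (1)--(3), your treatment of (1), and the entire $(\Leftarrow)$ argument all match the paper. But the key step, the upgrade of (2), has a genuine gap, and the strategy behind it cannot be repaired. After reducing to a finite flat surjective $\Spec B \to \iP$ and decomposing $B = \prod B_i$ into henselian local rings with residue field $k(p)$, you claim Nakayama forces $A' := \Gamma(\iP,\OO_{\iP}) \to B_i$ to be surjective. This is false: Nakayama would require $B_i \otimes_{A'} \kappa = \kappa$, whereas in general $B_i/\m B_i$ is an Artinian local $\kappa$-algebra with residue field $\kappa$ of length possibly $>1$. Concretely, for $A' = k[[t]]$ with $k$ algebraically closed, $B = k[[t]][x]/(x^2-t) \cong k[[x]]$ is local henselian, finite free of rank $2$, with residue field $k$, yet $A' \to B$ is not surjective and admits no retraction at all. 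This example shows more: ``henselian local with algebraically closed residue field'' does \emph{not} imply $\fppf$-local, so no intrinsic argument about $\iP$ of the kind you propose can succeed. The whole content of (2) is that rings like $k[[t]]$ are excluded, and the exclusion comes from the boundary, not from $\iP$ alone.

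The paper's proof of (2) therefore goes global. Given a finite flat surjective $Q^\circ \to P^\circ$, it forms the pseudo-integral closure $\breve{Q}$ of the valuation ring $\ol{\{p\}}$ inside the fibre $q := p \times_{P^\circ} Q^\circ$, checks that $\breve{Q} \to \ol{\{p\}}$ is finite, flat (flat $=$ torsion-free over a valuation ring) and surjective, and then uses the pushout description (4) together with Ferrand's gluing theorem to assemble $\breve{Q}$ and $Q^\circ$ along $q$ into a single finite flat surjective morphism $\ol{Q} \to \ol{P}$. Fppf-locality of $\hP$ --- your already-established (1) --- splits this global cover, and the section restricts to one of $Q^\circ \to P^\circ$. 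Separately, your upgrade of (3) is also imprecise: $S \otimes_R \Frac(R)$ need not be reduced, so the local factors of $S$ need not be domains nor equal to $R$ (take $S = R[x]/(x^2)$); the correct argument is that a $\Frac(R)$-point of $S \otimes_R \Frac(R)$, which exists since $\Frac(R)$ is algebraically closed, lands in $R$ because a valuation ring is integrally closed in its fraction field. That fix is minor; the gap in (2) is not.
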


\begin{proof}
($\Rightarrow$) By the Zariski version, Prop.~\ref{prop:bigZarPoint}, it suffices to show the $\fppf$-local claims, (1), (2), and (3). %
For (1), if $\{\hU_i \to \hX\}_{i \in I}$ is an $\fppf$-covering and $\hP \to \hX$ a morphism, then there is a factorisation $\sP \to (\hU_i, \varnothing) \to (\hX, \varnothing)$ for some $i$ by $\ulPfppf$-locality, hence, $\hP$ is $\fppf$-local.

For (3), by the Zariski version, Prop.~\ref{prop:bigZarPoint}, we know that it is the spectrum of a valuation ring, so it suffices to show that $k(p)$ is algebraically closed, but this follows from (1), since residue fields of $\fppf$-local schemes are algebraically closed, Lem.~\ref{lemm:fppfAlgClRes}.

For (2), by the Nisnevich version, Prop.~\ref{prop:bigNisPoint}, we know that $P^\circ$ is a local hensel scheme. So by the refinement of Lemma~\ref{lemm:fppfRefine} it suffices to show that every finite flat surjective morphism $Q^\circ \to P^\circ$ admits a section. Let $Q^{ic}$ be the integral closure of $\ol{\{p\}}$ in $q := p \times_{P^\circ} Q^\circ$.\footnote{$q$ is not necessarily a single point, but we find the notation convenient and readable.} %
Since $q \to p$ is finite, there is some finite sub extension $Q^{ic} \to \breve{Q} \to \ol{\{p\}}$ such that $q \to \breve{Q}$ is an isomorphism over $p$ (i.e., we have taken a ``pseudo-integral closure'' in the sense of \cite{Kel19}). 

In fact, $\breve{Q} \to \overline{\{p\}}$ is finite, flat and surjective: Since $\Gamma(\breve{Q}, \OO_{\breve{Q}}) \to \Gamma(q, \OO_{q})$ is injective (by definition), and $\Gamma(\ol{\{p\}}, \OO_{\ol{\{p\}}})$ is a valuation ring, $\breve{Q} \to \ol{\{p\}}$ is flat, since being flat is equivalent to being torsion free for valuation rings. It's also surjective since it is finite (so satisfies going up) and its generic fibre is non-empty. 

Now by (4) and \cite[Thm.2.2(iv)]{Fer03} we can glue our two finite flat surjective morphisms $\breve{Q} \to \ol{\{p\}}$ and $\iQ \to \iP$ along $q$ to obtain a globally finite flat surjective morphism $\ol{Q} \to \ol{P}$ which is $Q^\circ$ over $P^\circ$. Since $\ol{P}$ is $\fppf$-local, this global morphism has a section, and therefore $Q^\circ \to P^\circ$ has a section.

($\Leftarrow$) By Prop.~\ref{prop:bigZarPoint} we know that any $\sP$ satisfying (1)-(4) is $\uS$-local, so it suffice to show $\ulPfppf$-locality. Let $\sP \to \sX$ be a morphism and $\{\sU_i \to \sX\}_{i \in I}$ a $\ulPfppf$-covering. We want to show that $\sP \to \sX$ factors through some $\sU_i$. Since $\ambtimes$ preserves $\ulPfppf$-coverings, Prop.~\ref{prop:coveringAmbtimes}, we can assume $\sX = \sP$. But $\ulPfppf$-coverings of $\sP$ correspond to $\fppf$-coverings of $\hP$, so since all morphisms in play are minimal, the result follows from assumption (1).
\end{proof}

\subsection{The ${\protect \rqfh}$ case}

Recall that a ring $A$ is an \emph{absolutely integrally closed domain} if it is integral and every monic $f(T) \in A[T]$ has a solution in $A$. Equivalently, $A$ is an absolutely integrally closed domain if it is integral, normal, and the fraction field is algebraically closed.

\begin{lemm} \label{lemm:LaicdLocalSchemes}
Let $P$ be a scheme. Then $P$ is $\rqfh$-local if and only if it is the spectrum of a local absolutely integrally closed domain.
\end{lemm}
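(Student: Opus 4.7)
For the backward direction, assume $P = \Spec(A)$ with $A$ a local absolutely integrally closed domain. By Def.~\ref{defi:tauLocalObject} I must show every $\rqfh$-covering of $P$ admits a section. By Def.~\ref{defi:coveringsSch}(7) such a covering is a composition (of some length $n$) of Zariski and finite coverings, and I induct on $n$. The base case $n=1$ has two subcases. For a Zariski covering, the closed point of $\Spec(A)$ lies in some open member, which must therefore equal all of $\Spec(A)$. For a finite covering, first consider a single finite surjection $\Spec(B) \to \Spec(A)$: picking a prime $\q \subseteq B$ above $(0) \subseteq A$, the quotient $B/\q$ is a domain integral over $A$ with fraction field algebraic over the algebraically closed field $\Frac(A)$, hence equal to $\Frac(A)$; integral closedness of $A$ in $\Frac(A)$ yields $B/\q = A$, giving a retraction $B \twoheadrightarrow A$. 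The case of a jointly surjective family of finite morphisms reduces to this: extract a finite subfamily by quasi-compactness of $P$, take the disjoint union to produce a single finite surjection, and use connectedness of $\Spec(A)$ (valid since $A$ is local) to localise the resulting section to one member. For the inductive step on $\{W_{i_0 \ldots i_n} \to \cdots \to W_{i_0} \to P\}$, the base case yields a section $s \colon P \to W_{i_0}$; base-changing the remaining length-$(n{-}1)$ covering along $s$ (base change preserves Zariski coverings, finite coverings, and joint surjectivity, hence preserves $\rqfh$-coverings) produces an $\rqfh$-covering of $P$ of length $n{-}1$, to which the inductive hypothesis applies, and composing with the canonical projections gives the desired section.

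For the forward direction, suppose $P$ is $\rqfh$-local. Since Zariski coverings are special $\rqfh$-coverings, $P$ is also $\Zar$-local; mimicking argument $(\Rightarrow 1)$ in the proof of Prop.~\ref{prop:bigZarPoint} (an open affine cover forces $P$ to be affine, and then the cover $\{\Spec A[g^{-1}], \Spec A[(1{-}g)^{-1}]\}$ forces $g$ or $1{-}g$ to be a unit) gives $P = \Spec(A)$ with $A$ local. Next, $A$ is reduced: $\Spec(A_{\red}) \to \Spec(A)$ is a singleton finite covering, so the resulting section provides a retraction of the surjection $A \twoheadrightarrow A_{\red}$, forcing $A \cong A_{\red}$. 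To see $A$ is a domain, assume for contradiction that two distinct minimal primes $\p_1 \neq \p_2$ exist; since the localisation of a reduced local ring at a minimal prime is a field, $A_{\p_1}$ is a field, so every element of $\p_1$ is a zero-divisor in $A$. Pick $a \in \p_1 \setminus \p_2$ (necessarily nonzero) and $b \neq 0$ with $ab = 0$; then $b \in \p_2$ since $a \notin \p_2$, and $V(a) \cup V(b) = V(ab) = \Spec(A)$, so $\{V(a), V(b)\} \to \Spec(A)$ is a finite covering; $\rqfh$-locality yields a section through one of them, contradicting $a, b \neq 0$. Finally, for any monic $f \in A[T]$ the algebra $B := A[T]/(f)$ is free of rank $\deg f$ over $A$, so $\Spec(B) \to \Spec(A)$ is a finite surjection; the resulting retraction $B \to A$ sends $T$ to a root of $f$ in $A$, showing $A$ is absolutely integrally closed.

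The main technical subtleties are as follows. In the backward direction, the inductive pullback step requires verifying that base change preserves the $\rqfh$-covering property and that sections at successive levels compose correctly (both are routine but require explicit verification). In the forward direction, the proof that $A$ is a domain must avoid the naive appeal to $\p_1 \cap \p_2 = 0$ (which need not hold when $A$ has infinitely many minimal primes), and instead exploits the zero-divisor structure coming from $A_{\p_1}$ being a field.
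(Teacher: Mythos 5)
Your overall strategy coincides with the paper's: reduce $\rqfh$-locality to producing sections of Zariski coverings and finite coverings by induction on the length of the composition; use that an open subset containing the closed point of a local scheme is everything; use the algebraically closed fraction field together with normality of $A$ to split a finite surjection; and, in the forward direction, read off locality, integrality and absolute integral closedness from sections of the coverings $\{\Spec A[g^{-1}], \Spec A[(1{-}g)^{-1}]\}$, $\{V(a), V(b)\}$ and $\Spec(A[T]/f) \to \Spec(A)$. Your minimal-primes detour for the domain property is correct but unnecessary: the paper's argument applies the covering $\{V(a),V(b)\}$ to an \emph{arbitrary} pair with $ab=0$ and concludes $a=0$ or $b=0$ outright, with no need for the reducedness step or for locating $a$ inside a minimal prime.

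There is one step in the backward direction that fails as written: you ``extract a finite subfamily by quasi-compactness of $P$'' from a jointly surjective family of finite morphisms. Quasi-compactness yields finite subcovers of \emph{open} covers, not of covers by closed images; for instance $\{\Spec(A/\p) \to \Spec(A)\}_{\p \in \Spec A}$ is a jointly surjective family of finite morphisms with no jointly surjective finite subfamily when $A$ has infinitely many primes, and Definition~\ref{defi:coveringsSch}(6) imposes no finiteness condition on the index set. Fortunately the reduction to a single finite surjection via a disjoint union is not needed: joint surjectivity puts the generic point $(0)$ of $\Spec(A)$ in the image of some single member $\Spec(B_i) \to \Spec(A)$, and your single-morphism argument (choose $\q \subseteq B_i$ over $(0)$; then $B_i/\q$ is a domain integral over $A$ inside the algebraically closed field $\Frac(A)$, hence equals $A$ by normality) applies verbatim to that member. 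This is exactly how the paper proceeds: it takes a $\Frac(A)$-point of the generic fibre of some $Y_i$ and identifies its scheme-theoretic image with $P$ using \cite[Tag~0AB1]{stacks-project}.
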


\begin{proof}
Suppose $P$ is $\rqfh$-local.

\textit{$P$ is local.} $P$ is affine because any open affine covering of $P$ admits a section. $P$ has a unique closed point because given two closed points $p_0, p_1$, if $p_0 \neq p_1$, the family $\{P \setminus \{p_0\}, P \setminus \{p_1\}\}$ is an open covering of $P$. But this covering has a section which is impossible.

Let $P = \Spec(A)$.

\textit{$A$ is integral.} Given $a, b\in A$ such that $ab = 0$ we get an $\rqfh$-covering $\{\Spec(A/a) \to \Spec(A), \Spec(A/b) \to \Spec(A)\}$. Since $P$ is $\rqfh$-local, this covering has a section, so $a = 0$ or $b = 0$.

\textit{$A$ is a.i.c.} Suppose that $f(T) \in A[T]$ is a monic. The morphism $\Spec(A[T]/f) \to \Spec(A)$ is a $\rqfh$-covering, and therefore has a section, that is, $f(T)$ has a solution.

Conversely, let $A$ be a local absolutely integrally closed domain, and $P = \Spec(A)$. To show that $P$ is $\rqfh$-local, it suffices to show that every Zariski covering admits a section, and every finite covering admits a section. The Zariski case follows from locality, so let $\{Y_i \to P\}_{i \in I}$ be a jointly surjective family of finite morphisms.
Since the faction field of $A$ is algebraically closed, there is a section $\eta := \Spec(\Frac(A)) \to Y_i \to P$ for some $i$. Let $Z \subseteq Y_i$ be the scheme theoretic image of $\eta$. In particular, $Z$ is integral, and has the same fraction field as $P$. But $P$ is normal and $Z \to P$ is finite, so $Z = P$ by \cite[Tag~0AB1]{stacks-project}. So we have a factorisation $P = Z \to Y_i \to P$.
\end{proof}

\begin{prop} \label{prop:bigLaicdPoint}
A modulus pair $\sP \in \ulPSCH$ is $\ulPrqfh$-local and $\uS$-local%
, cf.~Def.\ref{defi:tauLocalObject}, Def.\ref{defi:coveringsMSchamb}, if and only if
\begin{enumerate}
 \item $\hP$ is the spectrum of a local absolutely integrally closed domain,
 \item $\iP$ is the spectrum of a local absolutely integrally closed domain,
 \item $\ol{\{p\}} \subseteq \hP$ is the spectrum of a valuation ring with algebraically closed function field (and therefore is an $\fppf$-local scheme).
 \item $\ol{P} = \ol{\{p\}} \sqcup_{\{p\}} P^\circ$ in the category of schemes.
\end{enumerate}
\end{prop}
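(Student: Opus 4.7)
The proof will follow the pattern of the previous analogues (Prop.~\ref{prop:bigZarPoint}, \ref{prop:bigNisPoint}, \ref{coro:bigEtPoint}, \ref{prop:bigFppfPoint}), organised as ($\Rightarrow$) and ($\Leftarrow$). The key new ingredients are the characterisation of $\rqfh$-local schemes in Lem.~\ref{lemm:LaicdLocalSchemes} and the decomposition of $\ulPrqfh$-coverings into $\ulPZar$- and $\ulPfin$-coverings in Lem.~\ref{lemm:pqrfhIsFinZar}.

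For ($\Rightarrow$), since any $\ulPZar$-covering is in particular a $\ulPrqfh$-covering, Prop.~\ref{prop:bigZarPoint} immediately gives the local structure: $\hP = \Spec A$ and $\iP = \Spec A[f^{-1}]$ are local schemes, $\ol{\{p\}}$ is the spectrum of a valuation ring, and the gluing property (4) holds. To upgrade (1), I would show $\hP$ is $\rqfh$-local as a scheme (and apply Lem.~\ref{lemm:LaicdLocalSchemes}): given any $\rqfh$-covering $\{\hV_i \to \hP\}_i$, Lem.~\ref{lemm:schRqfhGivesmpRqfh} produces a $\ulPrqfh$-covering $\{\sV_i' \to \sP\}_i$ with $\hV_i'$ a closed subscheme of $\hV_i$; a lift of $\id_\sP$ through some $\sV_i'$ composes with the closed immersion $\hV_i' \hookrightarrow \hV_i$ to give a section on total spaces. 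For (2), $A[f^{-1}]$ is a localisation of the a.i.c.~domain $A$, hence is itself a normal domain with algebraically closed fraction field. For (3), $\ol{\{p\}} = \Spec(A/\p)$ where $\p$ is the prime corresponding to $p \in \iP$; since $A_\p$ is a localisation of an a.i.c.~domain it is again a.i.c., and therefore its residue field $k(\p) = \Frac(A/\p)$ is algebraically closed (lift any monic in $k(\p)[T]$ to a monic in $A_\p[T]$ and solve there).

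For ($\Leftarrow$), $\uS$-locality comes for free from Prop.~\ref{prop:bigZarPoint}. For $\ulPrqfh$-locality, given $\sP \to \sX$ and a $\ulPrqfh$-covering $\{\sU_i \to \sX\}_i$, by base change along $\sP \to \sX$ (which preserves $\ulPrqfh$-coverings by Prop.~\ref{prop:coveringAmbtimes}) we reduce to $\sX = \sP$. By Lem.~\ref{lemm:pqrfhIsFinZar}, it suffices to handle $\ulPZar$-coverings and $\ulPfin$-coverings of $\sP$ separately. The Zariski case is immediate from locality of $\hP$. For a $\ulPfin$-covering $\{\sU_i \to \sP\}_i$, the family $\{\hU_i \to \hP\}_i$ is a finite surjective family on the spectrum of a local a.i.c.~domain, hence admits a section $\hP \to \hU_i$ by Lem.~\ref{lemm:LaicdLocalSchemes}; minimality of $\sU_i \to \sP$ ensures the section is automatically admissible (indeed minimal).

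There is no real hard step: the main subtlety is simply making sure that the various characterisations of local absolutely integrally closed domains interact correctly with the modulus structure, in particular that the a.i.c.~property transfers from $A$ to both $A[f^{-1}]$ and to the residue field $k(\p)$, and that minimal finite sections of $\hP \to \hU_i$ automatically satisfy the modulus condition.
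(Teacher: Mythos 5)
Your proof is correct and follows essentially the same route as the paper: both directions reduce to Prop.~\ref{prop:bigZarPoint}, the forward direction upgrades (1) via Lem.~\ref{lemm:schRqfhGivesmpRqfh} and Lem.~\ref{lemm:LaicdLocalSchemes} and deduces (2)--(3) from closure of local a.i.c.\ domains under localisation and quotient by primes, and the reverse direction reduces to $\sX = \sP$ via Prop.~\ref{prop:coveringAmbtimes}. The only cosmetic difference is that in ($\Leftarrow$) the paper applies $\rqfh$-locality of $\hP$ (Lem.~\ref{lemm:LaicdLocalSchemes}) to the whole $\rqfh$-covering at once rather than splitting into Zariski and finite pieces via Lem.~\ref{lemm:pqrfhIsFinZar}; both are fine.
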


\begin{proof}
($\Rightarrow$) By the Zariski version, Prop.~\ref{prop:bigZarPoint}, it suffices to show the claims about local absolutely integrally closed domains. Since this class of rings is closed under quotient by primes, and localisation at a prime, it suffices to consider $\ol{P}$. By the corresponding result for the $\rqfh$-topology on the category of schemes, 
Lem.~\ref{lemm:LaicdLocalSchemes}, it suffices to show that $\hP$ is $\rqfh$-local. For this, it suffices to show every $\rqfh$-covering $\{\hU_i \to \hP\}_{i \in I}$ admits a section. Up to refinement, by Lem.~\ref{lemm:schRqfhGivesmpRqfh} we can assume that $\{(\hU_i, \mP|_{\hU_i}) \to \sP\}_{i \in I}$ is is a $\ulPrqfh$-covering. Then the section comes from $\ulPrqfh$-locality of $\sP$.

($\Leftarrow$) By Prop.~\ref{prop:bigZarPoint} we know that any $\sP$ satisfying (1)-(4) is $\uS$-local, so it suffice to show $\ulPrqfh$-locality. Let $\sP \to \sX$ be a morphism and $\{\sU_i \to \sX\}_{i \in I}$ a $\ulPrqfh$-covering. We want to show that $\sP \to \sX$ factors through some $\sU_i$. Since $\ambtimes$ preserves $\ulPrqfh$-coverings, Prop.~\ref{prop:coveringAmbtimes}, we can assume $\sX = \sP$. But then $\{\hU_i \to \hP\}_{i \in I}$ a $\rqfh$-covering, so has a section by the assumption (1). Since the $\sU_i \to \sP$ are minimal, this section is admissible, and we get a section of the original family.
\end{proof}

The following lemma is an analogue of the decomposition of finite algebras over henselian local rings. 

\begin{lemm} \label{lemm:finiteulMNislocal}
Let $\tau$ be $\Nis$ or $\et$.
Suppose $\sP$ is a $\ulPtau$-local and $\uS$-local modulus pair, and $\sQ \to \sP$ is an ambient minimal morphism such that $Q^\circ \to P^\circ$ is finite and $\ol{Q} \to \ol{P}$ is proper. Then in fact $\ol{Q} \to \ol{P}$ is finite and there is an abstract admissible blowup 
\[ \sqcup_{i = 1}^n \sQ_{i} \stackrel{}{\to} \sQ \]
such that each $\sQ_{i}$ represents an $\ulMtau$-local modulus pair in $\ulMSCH$. In other words, up to isomorphism in $\ulMSCH$, the pair $\sQ$ is a finite sum of $\ulMtau$-local pairs.
\end{lemm}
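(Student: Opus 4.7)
The plan is to handle the two assertions in sequence. First, by Prop.~\ref{prop:bigNisPoint} and Cor.~\ref{coro:bigEtPoint}, the hypothesis that $\sP$ is $\ulPtau$- and $\uS$-local forces $\sP$ to be of the semi-valuation form $(\Spec(V_0 \times_\kappa \OO), (f_0, f_1))$, where $V_0 = \OO(\ol{\{p\}})$ is a (strict, in the \'etale case) hensel valuation ring, $\OO = \OO(\iP)$ is a hensel local ring, and the two projections surject onto the common residue field $\kappa$. This is precisely the setup of Lem.~\ref{lemm:ulMZarambtimesFin}, so applying that lemma to the minimal ambient morphism $\sQ \to \sP$ (which has finite interior and proper total space by hypothesis) immediately yields that $\ol Q \to \ol P$ is finite.

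To produce the decomposition, I would use the henselian structure of $\iP$ together with Prop.~\ref{prop:finiteCoprod}(2). Since $\iQ \to \iP$ is finite and $\iP$ is hensel local, the classical decomposition of finite algebras over hensel local rings \cite[Tag 04GH]{stacks-project} writes $\iQ = \sqcup_{j = 1}^n \Spec(C_j)$ with each $C_j$ hensel local and finite over $\OO$. Iterated application of Prop.~\ref{prop:finiteCoprod}(2) then produces an abstract admissible blowup $\sqcup_{j = 1}^n \sQ_j \to \sQ$ in which $\sQ_j$ has interior $\Spec(C_j)$ and total space $\hQ_j$ equal to the schematic closure of $\Spec(C_j)$ in $\hQ$. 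Since $\ol Q \to \ol P$ is finite and $\hP$ is hensel local, $\hQ$ is itself a finite disjoint union of spectra of hensel local rings, and $\hQ_j$ is contained in one such component, hence is hensel local as well.

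Finally, I must check that each $\sQ_j$ represents a $\ulMtau$-local object of $\ulMSCH$. Since abstract admissible blowups are isomorphisms in $\ulMSCH$, it suffices to exhibit a further abstract admissible blowup $\sQ_j' \to \sQ_j$ that is $\ulPtau$- and $\uS$-local, as the argument of Prop.~\ref{prop:bigNisPoint}($\Leftarrow$) then yields $\ulMtau$-locality in $\ulMSCH$. Of the four conditions of Prop.~\ref{prop:bigNisPoint}/Cor.~\ref{coro:bigEtPoint}, conditions (1) and (2) are already visible on $\sQ_j$, and the pushout condition (4) is inherited from the analogous condition for $\sP$ through the finite morphism $\hQ_j \to \hP$ by Ferrand-type compatibility. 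The main technical point is condition (3): the closure of the closed point $q_j$ of $\iQ_j$ in $\hQ_j$ is $\Spec(R_j)$ with $R_j$ a local integral domain finite over $V_0$, which need not itself be a valuation ring (non-maximal orders in the fraction field are counterexamples). To remedy this, I would normalise: the hensel property of $V_0$ provides a unique valuation ring $W_j \supseteq R_j$ extending $V_0$ to $\Frac(R_j)$, and $W_j$ is again hensel (resp.\ strict hensel). A Ferrand-style gluing \cite{Fer03} that replaces the closed subscheme $\Spec(R_j) \subseteq \hQ_j$ by $\Spec(W_j)$ while leaving $\iQ_j$ unchanged yields the desired $\sQ_j'$. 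The hardest part will be verifying, in the non-Noetherian setting, that the gluing produces a \emph{proper} (hence finite) morphism $\hQ_j' \to \hQ_j$; this amounts to finiteness of $W_j$ over $R_j$, which follows because $R_j$ is already finite over $V_0$ and $V_0$ is hensel.
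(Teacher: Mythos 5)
Your first two steps are sound and coincide with the paper's own argument: finiteness of $\ol{Q} \to \ol{P}$ does follow from Lem.~\ref{lemm:ulMZarambtimesFin} once Prop.~\ref{prop:bigNisPoint}/Cor.~\ref{coro:bigEtPoint} identify $\sP$ as a semi-valuation pair, and the decomposition $\sqcup \sQ_j \to \sQ$ via the henselian splitting of $\iQ$ and Prop.~\ref{prop:finiteCoprod}(2) is exactly the paper's reduction. The gap is in your last step. You try to upgrade each $\sQ_j$ to a $\ulPtau$-local \emph{and} $\uS$-local object of $\ulPSCH$ by one further abstract admissible blowup, replacing $\ol{\{q_j\}} = \Spec(R_j)$ by the valuation ring $W_j$ of $\Frac(R_j)$ extending $V_0$, and you justify properness of this replacement by asserting that $W_j$ is finite over $R_j$ ``because $R_j$ is already finite over $V_0$ and $V_0$ is hensel.'' That implication is false: a henselian valuation ring need not be Japanese, so its integral closure in a finite extension of its fraction field need not be module-finite (defect extensions in characteristic $p$, and non-Noetherian valuation rings, give counterexamples). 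If $W_j$ is not finite over $R_j$, your glued $\hQ_j' \to \hQ_j$ is not proper, hence is not an abstract admissible blowup, and the reduction to Prop.~\ref{prop:bigNisPoint}($\Leftarrow$) collapses. The paper explicitly flags this phenomenon in the second remark following Thm.~\ref{theo:locProIsRep}: the pairs arising in Lem.~\ref{lemm:finiteulMNislocal} can be $\ulMNis$-local in $\ulMSCH$ while \emph{not} dominated by any single $\uS$-local object of $\ulPSCH$ --- the associated $\uS$-local object is genuinely a non-constant pro-object (the system of finite subalgebras of the integral closure). So the blowup $\sQ_j' \to \sQ_j$ you need does not exist in general.

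The way to close the gap is to prove $\ulMtau$-locality of $\sQ_j$ in $\ulMSCH$ directly, without ever verifying condition (3) of Prop.~\ref{prop:bigNisPoint}. By Prop.~\ref{prop:heavyRefine} and Thm.~\ref{thm:etaleHeavy} (i.e., the normal form of Cor.~\ref{coro:normalForm}), every $\ulMtau$-covering is, up to refinement, an abstract admissible blowup followed by a $\ulPtau$-covering. The blowup preserves all hypotheses of the lemma and is invertible in $\ulMSCH$, so after replacing $\sQ_j$ by it one still has, via Lem.~\ref{lemm:ulMZarambtimesFin}, a total space finite over the (strictly) henselian local $\hP$; such a total space is a finite disjoint union of (strictly) henselian local schemes, so the $\tau$-covering of total spaces splits, and minimality makes the section admissible. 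This argument never needs $\ol{\{q_j\}}$ to be a valuation ring.
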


\begin{proof}
First note that replacing $\sQ$ by abstract admissible blowup $\sQ' \to \sQ$ does not change any of the hypotheses. 
This observation shows that we can assume $\iQ$ is local: Since $\iQ \to \iP$ is finite over a hensel local ring, it is a sum of hensel local schemes. Then by Prop.~\ref{prop:finiteCoprod}, up to admissible blowup, we can assume that $\hQ$ is a sum whose components are in bijection with the components of $\iQ$. In this way, we reduce to the case $\iQ$ is local.
So, we are reduced to showing that $\sQ$ is a $\ulMtau$-local modulus pair. 
For this, it suffices to show that every $\ulMtau$-covering $\sV \to \sQ$ admits a section in $\ulMSCH$ by Prop.~\ref{prop:descendFlatCoverings}. 
Up to refinement and abstract admissible blowup $\sQ' \to \sQ$, any $\ulMtau$-covering is represented by $\ulPtau$-coverings by Prop.~\ref{prop:heavyRefine} and Thm.~\ref{thm:etaleHeavy}\eqref{thm:etaleHeavy:Nis}.
So, we may assume that $\sV \to \sQ$ is a $\ulPtau$-covering, and it suffices to show that $\sV \to \sQ$ admits a section in $\ulPSCH$.
By Lem.~\ref{lemm:ulMZarambtimesFin}, the morphism $\hQ \to \hP$ is finite.
If $\tau = \Nis$ (resp. $\et$), then $\hP$ is henselian local (resp. strictly henselian local) by Prop.\ref{prop:bigNisPoint} (resp. Cor.\ref{coro:bigEtPoint}).
Therefore, $\hQ$ is a finite disjoint union of henselian local (resp. strictly henselian local) schemes \cite[Tag 04GE(1)$\Leftrightarrow$(10)]{stacks-project}.
Hence, any $\Nis$-covering $\hV \to \hQ$ has a section, so any $\ulPNis$-covering $\sV \to \sQ$ has a section.
\end{proof}

\section{Local pro-modulus pairs} \label{sec:locProMod}

Now we upgrade the results from Section~\ref{sec:locProMod} to ``nice'' pro-objects. 

\begin{prop} \label{prop:prouSLocLim}
Let $\sS$ be a qcqs modulus pair. 
Let $(\sP_λ)_λ \in \Pro(\ulPSCH^\qcqs)$ be a pro-object with affine minimal transition morphisms. %
Then $(\sP_λ)_λ$ is $\uS$-local if and only if ${\amblim} \sP_λ$ is $\uS$-local.
\end{prop}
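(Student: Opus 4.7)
The plan is to reformulate both $\uS$-locality conditions symmetrically using two observations: abstract admissible blowups (aabs) are preserved under ambient pullback (Lem.~\ref{lemm:ambtimesAab}), and this ambient pullback computes the categorical pullback in $\ulPSCH$ since aabs are minimal (Prop.~\ref{prop:minFibPro}); moreover, every aab over $\amblim \sP_\lambda$ descends to an aab over some $\sP_\lambda$ (Prop.~\ref{prop:descendAdmBlowup}, available because $\amblim \sP_\lambda$ is qcqs as an affine limit of qcqs modulus pairs). Together, these reduce the equivalence to proving that $(\sP_\lambda)$ is $\uS$-local iff for every $\lambda$ and every aab $\sZ \to \sP_\lambda$ there exist $\mu \geq \lambda$ and an $\sP_\lambda$-ambient morphism $\sP_\mu \to \sZ$, while $\amblim \sP_\lambda$ is $\uS$-local iff for every $\lambda$ and every aab $\sZ \to \sP_\lambda$ there exists an $\sP_\lambda$-ambient morphism $\amblim \sP_\lambda \to \sZ$.

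The forward direction is then immediate: compose the lift $\sP_\mu \to \sZ$ with the structural morphism $\amblim \sP_\lambda \to \sP_\mu$ from Lem.~\ref{lemm:limitExists} to obtain $\amblim \sP_\lambda \to \sZ$ over $\sP_\lambda$.

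For the backward direction, starting from an $\sP_\lambda$-morphism $g \colon \amblim \sP_\lambda \to \sZ$, we need to descend $g$ to some $\sP_\mu \to \sZ$. This is the subtle step: although $\hZ \to \hP_\lambda$ is proper (hence of finite type), it need not be of finite presentation, and the transitions of $(\sP_\lambda)$ are not assumed schematically dominant, so Prop.~\ref{prop:qcqs-limcolim} does not apply directly. The plan is to invoke Prop.~\ref{prop:cannotLimitBlowups} (in particular parts (4) and (5), the latter applicable since $\iZ \to \iP_\lambda$ is an isomorphism) to write $\hZ = \lim_\alpha \hZ_\alpha$ with each $\hZ_\alpha \to \hP_\lambda$ proper of finite presentation, each $\iZ_\alpha := \hZ_\alpha \setminus \mP_\lambda|_{\hZ_\alpha} \to \iP_\lambda$ an isomorphism, and each $\hZ \hookrightarrow \hZ_\alpha$ a closed immersion; in particular $\iZ = \iZ_\alpha$ as subschemes of $\hZ_\alpha$. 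Fixing any single $\alpha$, we descend the scheme morphism $\amblim \hP_\lambda \to \hZ \hookrightarrow \hZ_\alpha$ over $\hP_\lambda$ to some $\hP_\mu \to \hZ_\alpha$ by \cite[Thm.~8.13.1]{EGAIV3} (which requires only finite presentation of $\hZ_\alpha \to \hP_\lambda$ and affine transitions). This morphism automatically factors through $\hZ \hookrightarrow \hZ_\alpha$: on the interior it lands in $\iZ_\alpha = \iZ \subseteq \hZ$, and by schematic density of $\iP_\mu$ in $\hP_\mu$ (Prop.~\ref{prop:always-dense}) the defining ideal of the closed immersion $\hZ \hookrightarrow \hZ_\alpha$ must pull back to zero on $\hP_\mu$. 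Minimality of $\sZ \to \sP_\lambda$ and of the transition $\sP_\mu \to \sP_\lambda$ then forces the modulus condition on $\hP_\mu \to \hZ$ to hold automatically, yielding the required $\sP_\lambda$-ambient morphism $\sP_\mu \to \sZ$.

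The main obstacle is thus the possible non-finite-presentation of the aab $\sZ \to \sP_\lambda$; the key technical device overcoming it is the approximation provided by Prop.~\ref{prop:cannotLimitBlowups} combined with schematic density of the interior.
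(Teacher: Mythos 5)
Your proposal is correct and follows essentially the same route as the paper's proof: both directions are reduced via Lem.~\ref{lemm:ambtimesAab}, Prop.~\ref{prop:minFibPro} and Prop.~\ref{prop:descendAdmBlowup} to lifting against abstract admissible blowups over a single $\sP_\lambda$, and the backward direction handles the non-finite-presentation issue exactly as the paper does, by factoring $\hZ \to \hP_\lambda$ through a proper finite-presentation $\hZ_\alpha$ via Prop.~\ref{prop:cannotLimitBlowups}, descending with \cite[Prop.8.13.1]{EGAIV3}, and using schematic density of the interior plus minimality to land in $\sZ$. No gaps.
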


\begin{proof}
Set $\sP = \amblim \sP_λ$.

\textit{$(\Rightarrow)$.} %
First we will show that $(\sP_λ)_λ$ is $\uS$-local implies $\sP$ is $\uS$-local. %
Since $\ambtimes$ preserves $\uS$, Lem.\ref{lemm:ambtimesAab} it suffices to show that any \abb{} $\sY \to \sP$ admits a section. %
By Prop.~\ref{prop:descendAdmBlowup}, for some $λ$ there exists an \abb{} $\sY_λ \to \sP_λ$ such that $\sY = \sP \ambtimes[\sP_λ] \sY_λ$. Then since $(\sP_λ)_λ$ is $\uS$-local, we get a factorisation $\sP_μ \to \sY \to \sP_μ$ for some $μ \geq λ$, %
and hence, the desired section.

\textit{$(\Leftarrow)$.} Now assume $\sP$ is $\uS$-local. We will show that $(\sP_λ)_λ$ is $\uS$-local. %
As above, since $\ambtimes$ preserves \abb{}s, it suffices to show that given an \abb{} $\sY \to \sP_λ$ for some $λ$, there exists a factorisation $\sP_μ \to \sY \to \sP_λ$ for some $μ \geq λ$. %
Since $\sP$ is $\uS$-local, we get a factorisation $\sP \to \sY \to \sP_λ$. %
Choose a factorisation $\hY \to \hN \to \hP_λ$ where $\hY \to \hN$ is a closed immersion which is an isomorphism over $\iP_λ$ and $\hN \to \hP_λ$ is a proper morphism of finite presentation, Prop.~\ref{prop:cannotLimitBlowups}. Notice that we get a factorisation
\[ \iP_λ \to \hN \to \hP_λ, \]
and since $\mY$ is an effective Cartier divisor, $\OO_{\hY} \to \OO_{\iY} = \OO_{\iP_λ}$ is injective, so the ideal defining the closed immersion $\hY \to \hN$ is 
\[ \sI := ker(\OO_{\hN} \to \OO_{\iP_λ}). \]
Now since $\hN \to \hP_\lambda$ is of finite presentation, we can use \cite[Prop.8.13.1]{EGAIV3} to descend $\lim \hP_λ {\to} \hN {\to} \hP_λ$ to some $\hP_μ {\to} \hN {\to} \hP_λ$ for some $μ \geq λ$. Since all morphisms in question are minimal, to finish it suffices to show that this factors through the closed subscheme $\hP_μ \to \hY \subseteq \hN$, or equivalently, that $\sI$ is sent to zero in $\OO_{\hP_μ}$. Since $\OO_{\hP_μ} \to \OO_{\iP_μ}$ is injective, it suffices to show that $\sI$ is sent to zero in $\OO_{\iP_μ}$, but this follows from the factorisation $\iP_μ \to \iP_λ \to \hN$.
\end{proof}

\begin{prop} \label{prop:proFlatLocLim}
Let $\sS$ be a qcqs modulus pair. 
Let $(\sQ_λ)_λ \in \Pro(\ulPSCH^\qcqs)$ be a pro-object with affine minimal transition morphisms. %
Suppose $\tau$ is one of $\Zar$, $\Nis$, $\et$, $\fppf$. 

Then $(\sQ_λ)_λ$ is $\ulPtau$-local if and only if ${\amblim} \sQ_λ$ is $\ulPtau$-local.
\end{prop}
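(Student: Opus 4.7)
The plan is to set $\sQ := \amblim_\lambda \sQ_\lambda$ and handle both directions by combining Prop.~\ref{prop:descendFlatCoverings}, which descends $\tau$-coverings of schemes through affine filtered limits, with Prop.~\ref{prop:qcqs-limcolim}, which descends morphisms into targets of finite presentation. This mirrors the structure of Prop.~\ref{prop:prouSLocLim}, with families of flat covering morphisms replacing the single-morphism class $\uS$.

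For $(\Rightarrow)$, given a $\ulPtau$-covering $\{\sV_j \to \sQ\}_{j \in J}$, I would apply Prop.~\ref{prop:descendFlatCoverings} to the underlying $\tau$-covering $\{\hV_j \to \hQ\}$ to obtain a finite subfamily $\{\hV_{j_k}\}_{k=1}^n$ and a descended $\tau$-covering $\{\hV_{k\lambda} \to \hQ_\lambda\}_{k=1}^n$ for some $\lambda$. Flatness of each $\hV_{k\lambda} \to \hQ_\lambda$ ensures that $\mV_{k\lambda} := \mQ_\lambda|_{\hV_{k\lambda}}$ is an effective Cartier divisor, upgrading this to a $\ulPtau$-covering $\{\sV_{k\lambda} \to \sQ_\lambda\}$; moreover, by Lem.~\ref{lemm:minProduct} together with Prop.~\ref{prop:minFibPro}, one recovers $\sV_{j_k} \cong \sQ \ambtimes_{\sQ_\lambda} \sV_{k\lambda}$ as a categorical fibre product in $\ulPSCH$. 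Applying $\ulPtau$-locality of the pro-system $(\sQ_\lambda)$ to the identity on $\sQ_\lambda$ then produces a $\sQ_\lambda$-morphism $\sQ_\mu \to \sV_{k\lambda}$ for some $\mu \geq \lambda$ and some $k$, and composing with $\sQ \to \sQ_\mu$ yields the required section $\sQ \to \sV_{j_k}$ over $\sQ$.

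For $(\Leftarrow)$, given an ambient morphism $\sQ_\lambda \to \sX$ and a $\ulPtau$-covering $\{\sU_i \to \sX\}$, I would set $\sU_{i\lambda} := \sQ_\lambda \ambtimes_\sX \sU_i$. By two applications of Prop.~\ref{prop:coveringAmbtimes}, both $\{\sU_{i\lambda} \to \sQ_\lambda\}$ and $\{\sQ \ambtimes_{\sQ_\lambda} \sU_{i\lambda} \to \sQ\}$ are $\ulPtau$-coverings, so $\ulPtau$-locality of $\sQ$ yields a $\sQ_\lambda$-morphism $\sQ \to \sU_{i\lambda}$ for some $i$. The key step is now to descend this morphism to the pro-system: since $\tau \in \{\Zar, \Nis, \et, \fppf\}$ the morphism $\hU_{i\lambda} \to \hQ_\lambda$ is of finite presentation, and all modulus pairs involved are qcqs, so condition~(2) of Prop.~\ref{prop:qcqs-limcolim} applies and gives
\[ \hom_{\ulPSCH_{\sQ_\lambda}}(\sQ, \sU_{i\lambda}) \;\cong\; \varinjlim_{\mu \geq \lambda} \hom_{\ulPSCH_{\sQ_\lambda}}(\sQ_\mu, \sU_{i\lambda}). \]
From this I would extract a $\sQ_\lambda$-morphism $\sQ_\mu \to \sU_{i\lambda}$ and compose with $\sU_{i\lambda} \to \sU_i$ to obtain the desired factorisation $\sQ_\mu \to \sU_i \to \sX$.

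The main obstacle is precisely this last descent-of-morphisms step, which is exactly where the flatness hypothesis on $\tau$ enters: for coarser topologies such as $\rqfh$, covering morphisms need not be of finite presentation, so Prop.~\ref{prop:qcqs-limcolim} would not be directly available. For the four topologies listed in the statement, however, both ingredients (Prop.~\ref{prop:descendFlatCoverings} and Prop.~\ref{prop:qcqs-limcolim}) apply cleanly, and I expect no further difficulty beyond keeping track of minimality and the identification of ambient products with categorical fibre products.
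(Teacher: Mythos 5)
Your proposal is correct and follows essentially the same route as the paper: one direction descends the covering through the limit via Prop.~\ref{prop:descendFlatCoverings} and uses locality of the pro-system, the other descends the lift $\sQ \to \sU_{i\lambda}$ using finite presentation of the covering morphisms (the paper invokes \cite[Prop.8.13.1]{EGAIV3} directly on total spaces, with admissibility automatic from minimality, where you route through Prop.~\ref{prop:qcqs-limcolim}, which rests on the same result). The extra care you take in identifying the ambient products as categorical fibre products and in reducing to coverings of $\sQ_\lambda$ itself is implicit in the paper's terser argument.
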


\begin{proof}
Set $\sP = \amblim \sQ_λ$.

\textit{$(\Leftarrow)$} %
Suppose that $\sP$ is $\ulPtau$-local and %
let $\{\hU_{i} \to \hQ_λ\}_{i \in I}$ be a $\ulPtau$-covering for some $λ$. %
Since $\sP$ is $\ulPtau$-local, there is a factorisation $\hP \to \hU_i \to \hQ_λ$ for some $i$. Then by \cite[Prop.8.13.1]{EGAIV3}, since $\hU_i \to \hQ_\lambda$ is of finite presentation, this descends to a factorisation $\hQ_μ \to \hU_i \to \hQ_λ$ for some $μ \geq λ$.

\textit{$(\Rightarrow)$} %
Suppose that $\sQ = (Q_\lambda)_\lambda$ is $\ulPtau$-local and %
let $\{\hU_i \to \hP\}_{i \in I}$ be a $\ulPtau$-covering. %
By Prop.~\ref{prop:descendFlatCoverings}, up to taking a subcovering, we can assume our covering is pulled back from a covering of some $\hQ_λ$. Since $\sQ$ is $\ulPtau$-local, up to changing $λ$, this descended covering has a section. Therefore the original covering also has a section.
\end{proof}

\begin{prop} \label{prop:prorqfhLocLim}
Let $\sS$ be a qcqs modulus pair. 
Let $(\sQ_λ)_λ \in \Pro(\ulPSCH^\qcqs)$ be a pro-object with affine minimal schematically dominant transition morphisms. 

Then $(\sQ_λ)_λ$ is $\ulPrqfh$-local if and only if ${\amblim} \sQ_λ$ is $\ulPrqfh$-local.
\end{prop}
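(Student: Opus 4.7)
Set $\sP = \amblim \sQ_\lambda$. The plan is to follow the proof of Prop.~\ref{prop:proFlatLocLim} closely, with the schematic-dominance hypothesis playing the role of the finite-presentation property available for the flat topologies $\tau \in \{\Zar, \Nis, \et, \fppf\}$.

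For $(\Leftarrow)$: Given $\sP$ is $\ulPrqfh$-local and a $\ulPrqfh$-covering $\{\sU_i \to \sQ_{\lambda_0}\}_{i \in I}$, the pulled-back family $\{\sP \ambtimes[\sQ_{\lambda_0}] \sU_i \to \sP\}$ is again $\ulPrqfh$ by Prop.~\ref{prop:coveringAmbtimes} and admits a section by $\ulPrqfh$-locality of $\sP$; composing with the projection gives $\sP \to \sU_i$ in $\ulPSCH_{\sQ_{\lambda_0}}$. Each $\hU_i \to \hQ_{\lambda_0}$ is of finite type (as a composition of finite-presentation open immersions and finite morphisms), and the transitions of $(\sQ_\mu)_{\mu \geq \lambda_0}$ are affine, minimal, and schematically dominant, so Prop.~\ref{prop:qcqs-limcolim}~(1) descends $\sP \to \sU_i$ to some $\sQ_\mu \to \sU_i$. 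Schematic dominance is essential here because finite morphisms appearing in $\ulPrqfh$-coverings need not be finitely presented, so condition~(2) of Prop.~\ref{prop:qcqs-limcolim} is unavailable.

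For $(\Rightarrow)$: By Lem.~\ref{lemm:pqrfhIsFinZar} every $\ulPrqfh$-covering decomposes as an iterated composition of $\ulPZar$- and $\ulPfin$-coverings, so by recursive lifting it suffices to establish $\ulPZar$- and $\ulPfin$-locality of $\sP$. Since $(\sQ_\lambda)$ being $\ulPrqfh$-local implies both, the Zariski case follows from Prop.~\ref{prop:proFlatLocLim}. For the finite case, given $\{\sU_i \to \sP\}_{i \in I}$, one uses Lem.~\ref{lem:qc-interior}, Prop.~\ref{prop:always-dense}, and going-up for finite morphisms to pass to a finite subfamily, which must then be descended to some $\sQ_\lambda$.

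This descent is the main technical obstacle: since each $\hU_i \to \hP$ is only finite (not finitely presented), Prop.~\ref{prop:descendFlatCoverings} is inapplicable and Prop.~\ref{prop:descendPQfhCoverings} would require Noetherian interior. The idea is to adapt the latter construction using schematic dominance instead: write $\sB_i := \OO_{\hU_i}$ as a quotient of a finite, finitely-presented $\OO_{\hP}$-algebra $\sA_i$ via \cite[Prop.6.9.16]{EGAI} together with the standard monic-plus-multiplication presentation; descend $\sA_i$ to $\sA_{i,\lambda}$ on $\hQ_\lambda$ via \cite[Thm.8.8.2]{EGAIV3} and \cite[Thm.8.10.5(x)]{EGAIV3}; and use schematic dominance to identify $\sA_i = \colim \sA_{i,\lambda}$ as a filtered union. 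Setting $\sI_{i,\lambda} := \ker(\sA_i \twoheadrightarrow \sB_i) \cap \sA_{i,\lambda}$, the quotients $\sB_{i,\lambda} := \sA_{i,\lambda}/\sI_{i,\lambda}$ are automatically $f$-torsion-free (inherited from $\sB_i$ by minimality of $\sU_i \to \sP$), so equipping $\underline{\Spec}(\sB_{i,\lambda})$ with modulus $\mQ_\lambda|_-$ yields minimal finite ambient morphisms $\sU_{i,\lambda} \to \sQ_\lambda$, jointly surjective after further enlarging $\lambda$ via \cite[Thm.8.10.5(vi)]{EGAIV3}. Pro-$\ulPfin$-locality of $(\sQ_\lambda)$ then provides a section $\sQ_\mu \to \sU_{i_0,\lambda}$ at some $\mu \geq \lambda$; composing with $\sP \to \sQ_\mu$ and passing through the natural identification $\sP \ambtimes[\sQ_\lambda] \sU_{i_0,\lambda} \cong \sU_{i_0}$ yields the desired section $\sP \to \sU_{i_0}$.
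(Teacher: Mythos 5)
Your $(\Leftarrow)$ direction is fine and is essentially the paper's argument: schematic dominance feeds Prop.~\ref{prop:qcqs-limcolim} exactly as you say. The gap is in $(\Rightarrow)$, in the $\ulPfin$ half. Your plan requires (a) extracting a finite jointly surjective subfamily from an arbitrary jointly surjective family of finite morphisms over $\hP$, and (b) descending that finite covering to some $\sQ_\lambda$. Neither step goes through. For (a): images of finite morphisms are closed, so quasi-compactness of $\hP$ buys nothing; the extraction in Prop.~\ref{prop:descendPQfhCoverings} works because a Noetherian interior has finitely many generic points, but here the $\sQ_\lambda$ are merely qcqs and you have not established that $\hP$ has finitely many irreducible components (that would require already knowing $\hP$ is a domain, which is the heart of the matter). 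For (b): Rem.~\ref{rema:qfhNoGood} states explicitly that closed (hence finite and $\rqfh$-) coverings do \emph{not} descend through such limits ``even if the transition morphisms are schematically dominant''; your candidate descent $\sB_{i,\lambda}=\sA_{i,\lambda}/\sI_{i,\lambda}$ produces finite but non-finitely-presented algebras (the ideals $\sI_{i,\lambda}$ need not be finite type), so \cite[Thm.8.10.5(vi)]{EGAIV3} cannot be invoked to recover joint surjectivity at a finite level, and joint surjectivity genuinely fails to descend (the profinite-set example of the remark).

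The paper sidesteps all of this by never descending a covering of $\hP$ at all. It invokes Lem.~\ref{lemm:LaicdLocalSchemes}: it suffices to show $\hP$ is the spectrum of a local absolutely integrally closed domain. Locality and the root-of-monics property are inherited from the already-proved flat cases, since $\ulPrqfh$-locality of $(\sQ_\lambda)$ implies $\ulPZar$- and $\ulPfppf$-locality (Rem.~\ref{rema:fppfQfh}), whence $\sP$ is $\ulPZar$- and $\ulPfppf$-local by Prop.~\ref{prop:proFlatLocLim}. The only descent needed is for integrality: a relation $ab=0$ in $\Gamma(\hP,\OO_{\hP})$ is finitely much ring data, so it descends to $a_\lambda b_\lambda=0$ on some $\hQ_\lambda$; one then builds the two-element closed covering $\{V(a_\lambda),V(b_\lambda)\}$ \emph{downstairs}, upgrades it to a $\ulPrqfh$-covering of $\sQ_\lambda$ via scheme-theoretic closure of the interior, and applies pro-locality there to conclude $a=0$ or $b=0$. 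If you want to rescue your outline, you must prove integrality of $\hP$ first by some such finite-data argument; as written, the finite-covering descent is the missing (and unprovable in this generality) step.
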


\begin{proof}
Set $\sP = \amblim \sQ_λ$.

\textit{$(\Rightarrow)$} %
Suppose $(\sQ_λ)_λ$ is $\ulPrqfh$-local. By Lem.~\ref{lemm:LaicdLocalSchemes} it suffices to show that $\hP$ is the spectrum of a local absolutely integrally closed domain. By Prop.~\ref{prop:proFlatLocLim} we know that $\sP$ is $\ulPZar$-local and $\ulPfppf$-local.\footnote{Since every $\fppf$-covering is a $\rqfh$-covering, cf.~Rem.\ref{rema:fppfQfh}} 
In particular, it suffices to show that $\hP$ is integral ($\fppf$-locality implies that every monic in $\Gamma(\hP, \OO_{\hP})$ has a solution). Suppose we have $a, b \in \Gamma(\hP, \OO_{\hP})$ such that $ab = 0$. Descend these to some $a_λ, b_λ \in \Gamma(\hQ_λ, \OO_{\hQ_λ})$ such that $a_λb_λ = 0$. Since $a_λb_λ = 0$, the family $\{V(a_λ) \to \hQ_λ, V(b_λ) \to \hQ_λ\}$ of closed immersions is jointly surjective. Up to replacing $V(a_λ)$ and $V(b_\lambda)$ with the scheme theoretic closure of the preimage of $\iQ_λ$, this induces a $\ulPrqfh$-covering of $\sQ_λ$. Therefore since $\sQ$ is $\ulPrqfh$-local, $\sQ_μ \to \sQ_λ$ factors through one of its terms for some $μ \geq λ$. Therefore $a_λ|_{\hP_μ} = 0$ or $b_λ|_{\hP_μ} = 0$, and consequently $a = 0$ or $b = 0$.

\textit{$(\Leftarrow)$} %
Suppose that $\sP$ is $\ulPrqfh$-local and %
let $\{\hU_{i} \to \hQ_λ\}_{i \in I}$ be a $\ulPrqfh$-covering for some $λ$. %
Since $\sP$ is $\ulPrqfh$-local, there is a factorisation $\hP \to \hU_i \to \hQ_λ$ for some $i$. Then, since $\hQ_\mu \to \hQ_\lambda$ are scheme theoretically dominant by assumption, this descends by 
Prop.~\ref{prop:qcqs-limcolim} to a factorisation $\hQ_μ \to \hU_i \to \hQ_λ$ for some $μ \geq λ$.
\end{proof}

Now we show that the hypotheses above 
that the pro-objects have affine (resp. schematically dominant) transition morphisms are very often satisfied.

\begin{lemm} \label{lemm:ZarLocIsAff}
Let $\sS$ be a modulus pair and let $\sC$ be a full subcategory of $\ulPSCH_\sS$ satisfying:
\begin{enumerate}
 \item[(Op)] For every minimal morphism $\sU \to \sX$ such that $\sX \in \sC$ and $\hU \to \hX$ is an open immersion, we have $\sU \in \sC$.
\end{enumerate}
Then every $\ulPZar$-local object of $\Pro(\sC)$ is in the image of 
\[ \Pro(\sA) \to \Pro(\sC) \]
where $\sA \subseteq \sC$ is the full subcategory of objects $\sX$ such that $\hX$ is affine. In particular, every $\ulPZar$-local object of $\Pro(\sC)$ is isomorphic to one with affine transition morphisms.
\end{lemm}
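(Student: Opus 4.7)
My plan is to replace the given $\ulPZar$-local pro-system $(\sP_\lambda)_{\lambda \in \Lambda}$ by an isomorphic pro-system in $\Pro(\sA)$ indexed by affine opens sitting inside the $\sP_\lambda$. The key enabling observation will be: applying the $\ulPZar$-locality hypothesis to the $\ulPZar$-covering of $\sP_\lambda$ by the minimal modulus structures on its affine opens (which lie in $\sC$ by hypothesis (Op)) and to the identity class of $\mathrm{id}_{\sP_\lambda}$ yields, for every $\lambda$, some $\mu \geq \lambda$ and an affine open $\sU \subseteq \sP_\lambda$ such that the transition $\sP_\mu \to \sP_\lambda$ factors through $\sU$.

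\textbf{Construction and identification.} I will introduce a category $J$ whose objects are pairs $(\lambda, \sU)$, with $\lambda \in \Lambda$ and $\sU \to \sP_\lambda$ the minimal modulus structure on an affine open of $\hP_\lambda$, subject to the condition that some transition $\sP_\mu \to \sP_\lambda$ factors through $\sU$; morphisms $(\lambda_1, \sU_1) \to (\lambda_2, \sU_2)$ will be morphisms $\lambda_1 \to \lambda_2$ in $\Lambda$ sending $\hU_1$ into $\hU_2$, which induce a unique $\sU_1 \to \sU_2$ in $\sC$ since open immersions are monomorphisms. Filteredness of $J$ will follow from the enabling observation: given two objects $(\lambda_i, \sU_i)$, pick $\mu \in \Lambda$ above both witnessing indices, note that the whole of $\hP_\mu$ is then carried into each $\hU_i$, and reapply the observation to $\sP_\mu$ to produce the common upper bound. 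The forgetful functor $F: J \to \Lambda$ is cofinal, yielding $(\sP_\lambda)_\Lambda \cong (\sP_{F(j)})_J$ in $\Pro(\sC)$. The level-wise open immersions $\sU \hookrightarrow \sP_{F(\lambda, \sU)}$ assemble into a morphism $(\sU)_J \to (\sP_{F(j)})_J$ of pro-objects which is an isomorphism: for each $(\lambda, \sU) \in J$, the defining witness provides the inverse-direction morphism $\sP_\mu \to \sU$, and all coherence checks are automatic because $\sU \to \sP_\lambda$ is a monomorphism. Composing gives $(\sP_\lambda)_\Lambda \cong (\sU)_J$ in $\Pro(\sC)$; since each $\sU$ lies in $\sA$, this exhibits $(\sP_\lambda)$ in the image of $\Pro(\sA) \to \Pro(\sC)$. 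The ``in particular'' statement is then immediate, since any morphism in $\sC$ between two objects of $\sA$ has affine underlying morphism of schemes.

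\textbf{Main obstacle.} The chief subtlety is that $\ulPZar$-locality only produces a factorisation through \emph{some} member of a given cover, rather than through a prescribed one. This obstruction to naively replacing each $\sP_\lambda$ by a single chosen affine open is sidestepped by restricting $J$ to pairs $(\lambda, \sU)$ in which the factorisation through $\sU$ is actually realised somewhere in the system; verifying filteredness of $J$ and the pro-level isomorphism in tandem is the principal (rather formal) bookkeeping challenge.
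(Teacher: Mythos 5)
Your proof is correct and rests on the same key observation as the paper's: $\ulPZar$-locality applied to the cover of a member of the system by its affine opens (which lie in $\sC$ by (Op)) forces a factorisation through an affine object. The paper packages the reindexing more economically — it invokes the canonical presentation of a pro-object by its undercategory (Thm.~\ref{theo:proBasics}, Rem.~\ref{rema:underPro}) and simply checks that $(\sQ_\lambda)_\lambda/\sA$ is initial in $(\sQ_\lambda)_\lambda/\sC$ — whereas you build the auxiliary filtered category $J$ of pairs $(\lambda,\sU)$ by hand; the bookkeeping you describe (filteredness, cofinality, the levelwise inverse via the witnessing factorisations and monomorphy of open immersions) all goes through.
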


\begin{proof}
Let $(\sQ_\lambda)_\lambda$ be an $\ulPZar$-local object of $\sC$. It suffices to show that $(\sQ_\lambda)_\lambda / \sA$ is initial in $(\sQ_\lambda)_\lambda / \sC$. Given any $(\sQ_\lambda)_\lambda \to \sX \to \sS$, choose an open affine covering $\{\hU_i \to \hX\}$ of $\hX$. Then this induces a $\ulPZar$-covering of $\hX$, and since $(\sQ_\lambda)_\lambda$ is $\ulPZar$-local, there is a factorisation $(\sQ_\lambda)_\lambda \to \sU_i \to \sX$ for some $i$.

Alternatively, modulus keeping track of size issues, we have the following conceptual argument: Under the hypothesis (Op), the canonical induced morphism of topoi $\Shv_{\ulPZar}(\sA) \to \Shv_{\ulPZar}(\sC)$ is an equivalence. Hence, there is an equivalence of fibre functors, and therefore, and equivalence of $\ulPZar$-local pro-objects, cf.Prop.~\ref{prop:fibArePro}.
\end{proof}

\begin{lemm} \label{lemm:NoethQfDom}
Let $\sS$ be a qcqs modulus pair with Noetherian interior and let $\sC$ be a full subcategory of $\ulPSch_\sS^\min$ satisfying:
\begin{enumerate}
 \item[(Cl)] For every minimal morphism $\sZ \to \sX$ such that $\sX \in \sC$ and $\hZ \to \hX$ is a closed immersion, we have $\sZ \in \sC$.
\end{enumerate}
Then every pro-object of $\sC$ is isomorphic to one with schematically dominant transition morphisms. 

If $\sC$ also satisfies condition (Op) of Lem.~\ref{lemm:ZarLocIsAff} then every $\ulPZar$-local pro-object of $\sC$ is isomorphic to one with affine schematically dominant transition morphisms.
\end{lemm}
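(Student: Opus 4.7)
My approach is to replace a given pro-object $(\sX_\lambda)_{\lambda \in \Lambda}$ of $\sC$ with the pro-system $(\sZ_\lambda)_\lambda$ obtained by passing to ``eventual scheme-theoretic images''. The crucial input is that, since each $\sX_\lambda \to \sS$ is minimal and of finite type, the identification $\iX_\lambda = \hX_\lambda \times_{\hS} \iS$ makes $\iX_\lambda$ of finite type over the Noetherian scheme $\iS$, hence Noetherian. This Noetherianness of the interiors is precisely what forces the stabilization phenomenon that delivers schematic dominance; the total spaces $\hX_\lambda$ themselves need not be Noetherian, which is why the construction must be set up on the interior first and then transported to the total space.

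For fixed $\lambda$, let $\iZ_\lambda^{(\mu)} \subseteq \iX_\lambda$ denote the scheme-theoretic image of $\iX_\mu \to \iX_\lambda$ as $\mu$ varies over $\mu \geq \lambda$. For $\mu' \geq \mu$, the factorization $\iX_{\mu'} \to \iX_\mu \to \iX_\lambda$ gives $\iZ_\lambda^{(\mu')} \subseteq \iZ_\lambda^{(\mu)}$; by Noetherianness this decreasing chain stabilizes at some $\mu_0 = \mu_0(\lambda)$ to a closed subscheme $\iZ_\lambda \subseteq \iX_\lambda$. Let $\hZ_\lambda \subseteq \hX_\lambda$ be the scheme-theoretic closure of $\iZ_\lambda$ in $\hX_\lambda$ (well-defined since $\iZ_\lambda \hookrightarrow \iX_\lambda \hookrightarrow \hX_\lambda$ is quasi-compact by Lem.~\ref{lem:qc-interior}). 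Since $\iZ_\lambda$ is schematically dense in $\hZ_\lambda$ by construction, Lem.~\ref{lem:pb-dense} shows that $\mX_\lambda|_{\hZ_\lambda}$ is an effective Cartier divisor, producing a modulus pair $\sZ_\lambda := (\hZ_\lambda, \mX_\lambda|_{\hZ_\lambda})$, minimal and of finite type over $\sS$. The closed immersion $\sZ_\lambda \to \sX_\lambda$ is minimal, so by hypothesis (Cl) we have $\sZ_\lambda \in \sC$.

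To produce the transition maps and exhibit the pro-isomorphism, I exploit that $\iX_\mu \hookrightarrow \hX_\mu$ is a quasi-compact schematically dense open immersion, so the scheme-theoretic image of $\hX_\mu \to \hX_\lambda$ coincides with that of $\iX_\mu \to \hX_\lambda$, which in turn equals the scheme-theoretic closure in $\hX_\lambda$ of $\iZ_\lambda^{(\mu)}$. For $\mu \geq \mu_0(\lambda)$ this image is exactly $\hZ_\lambda$; hence $\hX_\mu \to \hX_\lambda$ factors as a schematically dominant morphism $\hX_\mu \to \hZ_\lambda$ followed by the closed immersion $\hZ_\lambda \hookrightarrow \hX_\lambda$, and precomposing with $\hZ_\mu \hookrightarrow \hX_\mu$ yields a schematically dominant transition $\sZ_\mu \to \sZ_\lambda$. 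The families $\{\sZ_\lambda \hookrightarrow \sX_\lambda\}_\lambda$ and $\{\sX_\mu \to \sZ_\lambda\}_{\mu \geq \mu_0(\lambda)}$ then furnish mutually inverse morphisms in $\Pro(\sC)$, giving $(\sZ_\lambda) \cong (\sX_\lambda)$.

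For the ``in particular'' clause, given a $\ulPZar$-local pro-object, I would first apply Lem.~\ref{lemm:ZarLocIsAff} to replace it with an isomorphic pro-object whose terms have affine total space, and then run the construction above: each $\hZ_\lambda \subseteq \hX_\lambda$ is closed in an affine, hence affine, and morphisms between affines are automatically affine. The main technical obstacle throughout is the bookkeeping that transfers schematic dominance on interiors (where Noetherianness applies) to schematic dominance on total spaces; this is the step where the hypothesis on $\iS$ is indispensable, since without Noetherianness of $\iX_\lambda$ the defining chain $\iZ_\lambda^{(\mu)}$ need not stabilize and the construction collapses.
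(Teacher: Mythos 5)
Your proof is correct and is essentially the paper's argument: the paper forms the same replacement pro-object by taking $\underline{\Spec}$ of the colimit of the images $\mathrm{im}(\OO_{\hX_\lambda}\to u_*\OO_{\hX_\mu})$, which coincides with your scheme-theoretic closure of the stabilised interior image, and both proofs use Noetherianness of $\iX_\lambda$ only to force the stabilisation and then transfer it to the total space via schematic density of the interior (with the same treatment of the affine case via Lem.~\ref{lemm:ZarLocIsAff}). The one point you gloss is that schematic dominance of $\sZ_\mu\to\sZ_\lambda$ does not follow merely from precomposing the schematically dominant $\hX_\mu\to\hZ_\lambda$ with the closed immersion $\hZ_\mu\hookrightarrow\hX_\mu$; it requires the additional (easy) observation that the scheme-theoretic image of $\iZ_\mu\to\iX_\lambda$ is already all of $\iZ_\lambda$, which follows by cofilteredness from the stabilisation defining $\iZ_\mu$.
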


\begin{rema}
This is actually if and only if. That is, if every pro-object can be represented by one with schematically dominant transition morphisms, then $\iS$ is Noetherian. Indeed, $\iS$ is Noetherian if and only if every decreasing sequence $\iZ_0 \supseteq \iZ_1 \supseteq \dots$ of closed subschemes stabilises. If $\iS$ is not Noetherian, then choosing a non-stabilising sequence, and letting $\hZ_i$ be the closure of $\iZ_i$ in $\hS$ produces a pro-object $((\hZ_i, \mS|_{\hZ_i}))_{i \in \NN}$ which is not isomorphic to one with schematically dominant transition morphisms.
\end{rema}

\begin{proof}
Let $(\sQ_\lambda)_{\lambda \in \Lambda}$ be a pro-object of $\sC$. %
We will define a new pro-object $(\sR_\lambda)_{\lambda \in \Lambda}$ (same indexing category) and a natural transformation $\sR \to \sQ$ such that the transition morphisms of $\sR$ are schematically dominant, and for every $λ$, there is some ${λ'} \geq λ$ such that the square below admits a dashed morphism making the diagram commutative.\footnote{This diagram implies that the natural transformation of corpresentable functors $L(\sR) \to L(\sQ)$ is an isomorphism, and therefore $\sR \to \sQ$ is an isomorphism of pro-objects, cf.
\S\ref{sec:Proreminders}.} 
\begin{equation} \label{eq:RRQQ}
\xymatrix{
\sR_{λ'} \ar[d] \ar[r] & \sQ_{λ'} \ar[d] \ar@{-->}[dl] \\
\sR_λ \ar[r] & \sQ_λ
}
\end{equation}
In the case (Op) is satisfied we can assume all objects of $\sC$ are affine, cf.Lem.\ref{lemm:ZarLocIsAff}, so we automatically get affine transition morphisms.

Given $u: μ \to λ$ in $\Lambda$, define 
\[ \sA_u = im(\OO_{\hQ_λ} {\to} u_*\OO_{\hQ_μ}) \qquad (\textrm{resp. } \sB = \colim_{u: μ \to λ} \sA_{u}). \]
By construction, the pullback $\sI_{\mQ_λ} \cdot \sA_u$ (resp. $\sI_{\mQ_λ} \cdot \sB$) is locally principal, and locally generated by nonzero divisors.%
\footnote{The claim for $\sA_u$ follows from the assumption that the transition morphisms are minimal. Let $f$ be a local generator for $\mQ_λ$. Since $f|_{\hQ_μ}$ is nonzero for all $u: μ \to λ$, the image of $f$ in $\sA_u$ is nonzero for all $u$ so the image in $\sB$ is nonzero. If there is $g \in \sB$ such that $fg = 0$ in $\sB$, then lifting $g$ to $\OO_{\hQ_λ}$ we have $fg \in \ker(\OO_{\hQ_λ} \to u_*\OO_{\hQ_μ})$ for some $u$ which means $g$ is zero in $\sA_u$, and therefore in $\sB$.} %
Equipping 
\[ \hZ_{u} := \underline{\Spec} (\sA_μ) \qquad (\textrm{resp. } \hR_λ := \underline{\Spec} (\sB)) \]
with the divisor induced by $\mQ_λ$, we obtain a system of morphisms of modulus pairs 
\[ \sR_λ \underset{imm.}{\overset{cl.}{\to}}  \sZ_{u} \underset{imm.}{\overset{cl.}{\to}} \sQ_λ \]
indexed by $u \in \Lambda / \lambda$, which are all closed immersions on the total spaces.   
Now since $\iS$ is noetherian, $\iQ_λ$ is also noetherian since $\iQ_\lambda \to \iS$ is of finite type by assumption, and so the sequence $(\dots \to \sA_μ \to \dots)|_{\iQ_λ}$ stabilises. That is, there is some $u: λ' \to λ$ such that $\sA_μ|_{\iQ_λ} = \sB|_{\iQ_λ}$ for all $μ \geq λ'$. We now have the following diagram on the left with its ``mirror'' diagram of structure sheaves on the right.
\[ \xymatrix{
\iQ_{λ'} \ar[d] \ar[r] & \iR_λ \ar@{=}[r] \ar[d] & \iZ_{u} \ar[r]^{cl.imm.} \ar[d] & \iQ_λ \ar[d] &
 f_*\OO_{\iQ_{λ'}} 
& \sB|_{\iQ_λ} \ar[l] 
& \sA_{λ'}|_{\iQ_λ} \ar@{=}[l] 
& \OO_{\iQ_λ} \ar[l]_{epi.} \\
\hQ_{λ'} \ar@{-->}[r] \ar@/_6pt/[rr] & \hR_λ \ar[r]^{cl.imm.} & \hZ_{u} \ar[r]^{cl.imm.} & \hQ_λ &
f_*\OO_{\hQ_{λ'}} \ar[u]^{monic} &
\sB  \ar@{-->}[l] \ar[u]^{monic} & 
\sA_{λ'} \ar[l]_{epi.} \ar[u]^{monic} \ar@/^6pt/[ll] & 
\OO_{\hQ_λ} \ar[u]^{monic} \ar[l]_{epi.} 
} \]
To obtain the dashed morphism, it suffices to show that $ker(\OO_{\hQ_λ} \to \sB)$ is sent to zero in $f_*\OO_{\hQ_{λ'}}$. This follows from the diagram on the right. So we obtain the lower triangle from the square \eqref{eq:RRQQ} above.

Now noticing that the construction of $\sR_λ$ is functorial in $λ$, we have constructed a pro-object $(\sR_λ)_λ$ equipped with a natural transformation $\sR \to \sQ$. Notice as well that by construction, the transition morphisms of $\sR$ are schematically dominant. Finally, to see that we also have commutativity of the upper triangle in the square \eqref{eq:RRQQ} above, it suffices to notice that $\hR_λ \to \hQ_λ$ is a closed immersion, and therefore a categorical monomorphism. So commutivity of the upper triangle follows from commutivity of the lower triangle and the outside square.
\end{proof}

Finally, we get to the main result of this section.

\begin{theo} \label{theo:locProIsRep}
Let $\sS$ be a qcqs modulus pair,
let $\sC$ be one of the symbols in the left column, 
let $τ$ be one of the symbols in the top row, 
and suppose that the $(\sC, τ)$-entry is non-empty.
\[
\begin{array}{c|ccccc}
{_\sC} \diagdown{^τ}    & \Zar & \Nis & \et & \fppf & \rqfh \\ \hline
\Sch^\min_\sS        & (N)  & (N)  & (N) & (N)   & (N)   \\ 
\Sm^\min_\sS         & (F)  & (F)  & (F) &       &       \\ 
\end{array}
\]
In case (N) assume $\iS$ is Noetherian.

Then there is a canonical adjunction
\[ 
L
:
\left \{ 
\begin{array}{c}
\ulPtau\textrm{-local } \uS\textrm{-local } \\
\textrm{ objects of } \\
\ulPSCH_\sS^\min
\end{array}
\right \}
\rightleftarrows
\left \{ 
\begin{array}{c}
\ulMtau\textrm{-local } \\
\textrm{ objects of } \\
\Pro(\ulM\sC)
\end{array}
\right \}
:
\amblim
\]
\begin{enumerate}
 \item The left adjoint $L$ sends a modulus pair $\sP$ to the pro-object indexed by the undercategory $\sP/\ulM\sC$. Moreover, $L$ is a localization. 
 \item The right adjoint is fully faithful and satisfies 
 \begin{equation} \label{eq:limIntlim}
 \left ( \amblim(\sQ_λ)_λ\right )^\circ = \lim (\iQ_λ).
 \end{equation}
\end{enumerate}
\end{theo}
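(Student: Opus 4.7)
The plan is to construct both functors explicitly, verify they take values in the stated categories, establish the adjunction by a direct hom-set computation, and deduce the localization / fully faithful properties formally.

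First I would construct $L$: the slice category $\sP/\ulM\sC$ is cofiltered by Thm.~\ref{thm:pullback}, so $L(\sP) := (\sX)_{\sX \in \sP/\ulM\sC}$ is a well-defined pro-object of $\ulM\sC$. To check it is $\ulMtau$-local, a $\ulMtau$-covering $\{\sU_i \to \sV\}_i$ together with a pro-morphism $L(\sP) \to \sV$ is represented by $\sP \to \sX \to \sV$ for some $\sX \in \sP/\ulM\sC$; pulling the covering back yields a $\ulMtau$-covering $\{\sX \times_\sV \sU_i \to \sX\}_i$ in $\ulMSCH_\sS$. A normal-form reduction (Cor.~\ref{coro:normalForm}) together with the $\ulPtau$- and $\uS$-locality of $\sP$ then produces a lift $\sP \to \sX \times_\sV \sU_i$ for some $i$, giving the desired refinement of pro-morphisms.

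Next I would construct the right adjoint: given a $\ulMtau$-local pro-object $(\sQ_\lambda)_\lambda \in \Pro(\ulM\sC)$, I represent the transition morphisms by ambient minimal ones via calculus of fractions, placing the system in $\Pro(\ulPSch_\sS^{\min})$. Then I apply Lem.~\ref{lemm:ZarLocIsAff} (the hypothesis (Op) holds for both $\Sch^{\min}$ and $\Sm^{\min}$) to secure affine transition morphisms, and in the $(\rqfh, \Sch)$-case I further apply Lem.~\ref{lemm:NoethQfDom} (using the Noetherian hypothesis on $\iS$) to secure schematically dominant ones. Then $\sP := \amblim_\lambda \sQ_\lambda$ exists in $\ulPSCH_\sS$ by Lem.~\ref{lemm:limitExists}, is $\uS$-local by Prop.~\ref{prop:prouSLocLim}, and is $\ulPtau$-local by Prop.~\ref{prop:proFlatLocLim} (or Prop.~\ref{prop:prorqfhLocLim} in the $\rqfh$-case).

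The adjunction then follows from the chain of identifications
\[
\hom_{\Pro(\ulM\sC)}(L(\sP), (\sQ_\lambda)) \;=\; \lim_\lambda \colim_{\sX \in \sP/\ulM\sC} \hom_{\ulMSCH_\sS}(\sX, \sQ_\lambda) \;=\; \lim_\lambda \hom_{\ulMSCH_\sS}(\sP, \sQ_\lambda) \;=\; \hom_{\ulMSCH_\sS}(\sP, \amblim \sQ_\lambda),
\]
where the middle identification is formal (the pair $(\sQ_\lambda, \id_{\sQ_\lambda})$ lies in $\sP/\ulM\sC$ once we have a map $\sP \to \sQ_\lambda$, and different factorizations are reconciled by fibre products in $\sP/\ulM\sC$), and the final identification is Prop.~\ref{prop:limXYMsch} applied to our nice pro-representative. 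The claim that $L$ is a localization, equivalently that the right adjoint is fully faithful, reduces to verifying that the counit $L\amblim(\sQ_\lambda) \to (\sQ_\lambda)$ is an isomorphism in $\Pro(\ulM\sC)$; this amounts to checking that $(\sQ_\lambda)$ is cofinal in $(\amblim \sQ_\lambda)/\ulM\sC$, which is once more Prop.~\ref{prop:limXYMsch}. Finally, the interior formula \eqref{eq:limIntlim} is immediate from the construction: minimality of the transition morphisms gives $\iQ_\mu \cong \iQ_\lambda \times_{\hQ_\lambda} \hQ_\mu$, so $\lim \iQ_\lambda = (\lim \hQ_\lambda) \setminus \lim \mQ_\lambda$, which is the interior of $\sP$.

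The hard part will be the second step: choosing a pro-representative with sufficiently well-behaved transition morphisms so that $\amblim$ exists in $\ulPSCH$ and inherits both local properties. Affineness is handled by Lem.~\ref{lemm:ZarLocIsAff}, while the $\rqfh$-case requires the subtler schematic dominance (Lem.~\ref{lemm:NoethQfDom}), for which the Noetherian hypothesis on $\iS$ is essential, mirroring the pathology in Rem.~\ref{rema:qfhNoGood}.
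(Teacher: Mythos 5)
Your overall architecture matches the paper's: the left adjoint is the slice/corepresentable pro-object, the right adjoint is $\amblim$ after securing affine minimal transition morphisms via Lem.~\ref{lemm:ZarLocIsAff}, locality transfers via Prop.~\ref{prop:prouSLocLim}, Prop.~\ref{prop:proFlatLocLim}, Prop.~\ref{prop:prorqfhLocLim}, and everything hinges on a hom-descent statement of the form $\colim_\lambda \hom(\sQ_\lambda, \sY) \cong \hom(\amblim \sQ_\lambda, \sY)$. But there is a gap in exactly that hinge. Prop.~\ref{prop:limXYMsch} (and Prop.~\ref{prop:qcqs-limcolim}) requires either (i) schematically dominant transition morphisms or (ii) $\hY \to \hS$ of finite presentation, and objects of $\ulM\sC$ only have $\hY \to \hS$ of finite \emph{type}. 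In case (F) the interior $\iS$ is not assumed Noetherian, so Lem.~\ref{lemm:NoethQfDom} is unavailable and (i) cannot be arranged; you must instead use Prop.~\ref{prop:allIsoToAmbEtale} to replace each target $\sY$ (whose interior is smooth, hence flat of finite presentation over $\iS$) by one with $\hY \to \hS$ flat of finite presentation, up to admissible blowups that are invisible because source and target are $\uS$-local. You never invoke this, so your appeal to Prop.~\ref{prop:limXYMsch} in the smooth cases is unjustified as written. Symmetrically, in case (N) you restrict Lem.~\ref{lemm:NoethQfDom} to the $\rqfh$ column, but schematic dominance is needed for \emph{all} topologies in the $\Sch^\min$ row for the same reason (finite type $\neq$ finite presentation); this is precisely why hypothesis (N) appears in every entry of that row, not just under $\rqfh$. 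Your closing paragraph misattributes the role of the Noetherian hypothesis.

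Two smaller points. First, your final identification $\lim_\lambda \hom(\sP, \sQ_\lambda) = \hom(\sP, \amblim \sQ_\lambda)$ is maps \emph{into} the limit, which is the universal property of $\amblim$ in $\ulPSCH_\sS$ (Lem.~\ref{lemm:limitExists}) combined with $\uS$-locality of $\sP$ to identify ambient and non-ambient hom-sets; Prop.~\ref{prop:limXYMsch} is the opposite direction (maps out of the limit) and is the right citation only for the counit/cofinality step. Second, your proof that $L(\sP)$ is $\ulMtau$-local via Cor.~\ref{coro:normalForm} is fine but heavier than needed: since $\sP$ is $\uS$-local, $\ulPtau$-coverings and \abb{}s lift directly, which is all the paper uses.
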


\begin{proof}
First we construct the right adjoint. It is the composition of the equivalence 
\begin{equation} \label{eq:PtauMtauProPro}
\left \{ 
\begin{array}{c}
\ulPtau\textrm{-local } \uS\textrm{-local }\\
\textrm{ objects of } \\
\Pro(\ulP\sC)
\end{array}
\right \}
\cong
\left \{ 
\begin{array}{c}
\ulMtau\textrm{-local } \\
\textrm{ objects of } \\
\Pro(\ulM\sC)
\end{array}
\right \}
\end{equation}
of Prop.~\ref{prop:proAdjoint}, and the categorical limit 
$\amblim$ of $\ulPSCH_\sS$, cf.Lem.~\ref{lemm:limitExists}, Lem.~\ref{lemm:limitExistsExists}. 
Since all morphisms in $\ulP\sC$ are minimal, to know that the limit exists it suffices to know that the transition morphisms can be assumed to be affine.
This is what Lem.~\ref{lemm:ZarLocIsAff} says. Note that we already have Eq.\eqref{eq:limIntlim} at this stage.

Next we check that the image lands in the correct full subcategory of $\ulPSCH_\sS^\min$. Given a $\ulPtau$-local $\uS$-local pro-object $\sQ = (\sQ_θ)_θ$, set $\sP := \amblim \sQ_θ$.
Then Prop.~\ref{prop:prouSLocLim} says 
$\sP$ is $\uS$-local if and only if $\sQ$ is $\uS$-local. 
In the case $\tau = \Zar, \Nis, \et, \fppf$, Proposition~\ref{prop:proFlatLocLim} says that 
$\sP$ is $\ulPtau$-local if and only if $\sQ$ is $\ulPtau$-local. 
In the case $\tau = \rqfh$,  
Proposition~\ref{prop:prorqfhLocLim} says that 
$\sP$ is $\ulPtau$-local if and only if $\sQ$ is $\ulPtau$-local 
(we can assume the transition morphisms are schematically dominant by Lem.~\ref{lemm:NoethQfDom}). 

The claim that $L$ is a localization follows from the fully faithfulness of the right adjoint $\amblim$.
To show fully faithfulness of $\amblim$, we claim that for $\sQ$ and $\sR$ on the right, we have 
\[ 
\hom_{\ulPSCH_\sS}(\amblim \sQ_θ, \amblim \sR_ρ)
\cong
\lim_ρ \colim_θ 
\hom_{\ulPSch_\sS}(\sQ_θ, \sR_ρ),
\]
cf.Eq.\eqref{eq:PtauMtauProPro}. Since $\amblim$ is the categorical limit in $\ulPSCH_\sS$, it suffices to show that for every $ρ$ we have 
\[ 
\hom_{\ulPSCH_\sS}(\amblim \sQ_θ, \sR_ρ)
\cong
\colim_θ 
\hom_{\ulPSCH_\sS}(\sQ_θ, \sR_ρ).
\]
In the case (F), the morphism $\iR_ρ \to \iS$ is flat and of finite presentation, so since both $\amblim \sQ_θ$ and $(\sQ_θ)_θ$ are $\uS$-local by Prop.~\ref{prop:allIsoToAmbEtale} below, we can assume that $\hR_ρ \to \hS$ is of finite presentation. Then the 
desired isomorphism follows from Prop.~\ref{prop:qcqs-limcolim}. 

In the case (N), Lemma \ref{lemm:NoethQfDom} allows us to assume the transition morphisms of $(\sQ_θ)_θ$ are schematically dominant, and, again, the 
desired isomorphism follows from Prop.~\ref{prop:qcqs-limcolim}. 

Next we construct the left adjoint. Here we use the description of pro-objects in terms of corpresentable functors, Thm.\ref{theo:proBasics}. The left adjoint will be
\[ \sP \mapsto L\sP := \hom_{\ulPSCH_\sS^\min}(\sP, -) \in \Fun(\ulM\sC, \Set). \]
As a functor on $\ulP\sC$, the functor $L\sP$ is indeed a pro-object, since $\ulP\sC$ has finite limits and the inclusion $\ulP\sC \to \ulPSCH_\sS^\min$ commutes with them, Prop.~\ref{prop:minFibPro}, Lem.\ref{lemm:proComDiag}. When $\sP$ is $\uS$-local, the functor $L\sP$ factors uniquely through the localisation $\ulP\sC {\to} \ulM\sC$. Clearly, $L\sP$ is $\ulMtau$-local whenever $\sP$ is $\ulPtau$-local. So we do indeed get a functor between the correct categories. To show that $L$ is the right adjoint to $\amblim$, it suffices to show that for $\sP$ on the left and $\sQ = (\sQ_θ)_θ$ on the right, we have
\[ 
\hom_{\ulPSCH_\sS^\min}(\sP, \amblim \sQ_θ) \cong \hom_{\Fun(\ulM\sC, \Set)}(L\sQ, L\sP),
\]
where $L\sQ := ``\colim" \hom_{\ulPSch_\sS^\min}(\sQ_θ, -)$. But this is obvious since $``\colim"$ means colimit in the category of functors. 
\end{proof}

\begin{prop} \label{prop:allIsoToAmbEtale}
Let $\sS$ be a qcqs modulus pair, %
and $\sX \to \sS$ a minimal ambient morphism of finite type %
such that $\iX \to \iS$ is flat and of finite presentation. %
Then there exists a minimal ambient morphism $\sY \to \sT$ %
such that $\hY \to \hT$ is flat of finite presentation, and {\aab}s fitting into a commtuative square
\[ \xymatrix{
\sY \ar@{-->}[r]^{adm.bu} \ar@{-->}[d]_{tot. ppf} & \sX \ar[d]^{int. ppf} \\
\sT \ar@{-->}[r]^{adm.bu} & \sS.
} \]
\end{prop}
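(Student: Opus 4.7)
My plan closely mirrors the argument used in the proof of Theorem~\ref{thm:etaleHeavy}\eqref{thm:etaleHeavy:fppfm}, essentially a Raynaud--Gruson flatification applied inside the modulus. The point is that while $\hX \to \hS$ is only of finite type (not of finite presentation), the obstruction to finite presentation lies in a closed immersion that is an isomorphism over $\iS$, so after flatification it will disappear for formal reasons.

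First, I would use Proposition~\ref{prop:cannotLimitBlowups} to factor $\hX \to \hS$ as $\hX \stackrel{cl.imm}{\to} \hX_\lambda \to \hS$ where $\hX_\lambda \to \hS$ is of finite presentation. Since $\sX \to \sS$ is minimal, i.e.\ $\mX = \mS|_{\hX}$, part~(3) of the cited proposition lets us assume $\mX_\lambda = \mS|_{\hX_\lambda}$, so the closed immersion $\hX \to \hX_\lambda$ is an isomorphism over $\iS$.

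Next I would apply Raynaud--Gruson flatification (\cite[Thm.5.2.2]{RG71} or \cite[Tag 0815]{stacks-project}) to the finite presentation morphism $\hX_\lambda \to \hS$ together with the quasi-compact open $\iS \subseteq \hS$ over which it is already flat of finite presentation (its base change being $\iX \to \iS$). This yields a blowup $\hT \to \hS$ with center inside $\mS$ such that the strict transform $\hX_\lambda^{st} \to \hT$ is flat of finite presentation. Since the center is inside $\mS$ and $\iS \to \hS$ is quasi-compact (Lem.~\ref{lem:qc-interior}), equipping $\hT$ with $\mT := \mS|_{\hT}$ produces an abstract admissible blowup $\sT \to \sS$, and $\mS|_{\hX_\lambda^{st}}$ is still an effective Cartier divisor since $\hX_\lambda^{st} \to \hT$ is flat, giving a minimal ambient morphism $\sX_\lambda^{st} \to \sT$ with total space flat of finite presentation.

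Now form the strict transform $\hY := \hX^{st} \to \hT$ of the closed immersion $\hX \to \hX_\lambda$. This comes with a closed immersion $\hY \to \hX_\lambda^{st}$ which is an isomorphism over $\iS$ (hence over $\iT$). The key step, which I expect to be the main technical point, is to show $\hY \to \hX_\lambda^{st}$ is in fact an isomorphism; this is precisely the argument $(I)$ in the proof of Theorem~\ref{thm:etaleHeavy}: both $\mS|_{\hY}$ and $\mS|_{\hX_\lambda^{st}}$ are effective Cartier divisors (the former because it coincides with ${Y}^{\infty}$, the latter by flatness just noted), so both $\iS \times_{\hS} \hY \to \hY$ and $\iS \times_{\hS} \hX_\lambda^{st} \to \hX_\lambda^{st}$ are schematically dominant; since the sources coincide (both equal $\iX$), the schematically dominant closed immersion $\hY \to \hX_\lambda^{st}$ is an isomorphism (\cite[Tag 080D]{stacks-project}).

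Finally, set $\sY := \sX_\lambda^{st}$. Then $\sY \to \sT$ is the required minimal ambient morphism with $\hY \to \hT$ flat of finite presentation, and $\sY \to \sX$ is the pullback of $\sT \to \sS$ along $\sX \to \sS$, hence an abstract admissible blowup by Lemma~\ref{lemm:ambtimesAab} (or directly: $\hY \to \hX$ is the strict transform of an admissible blowup along a dominant map, so is proper with $\iY \iso \iX$). The commutativity of the square is automatic from the construction of strict transforms.
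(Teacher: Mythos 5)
Your proposal is correct and follows essentially the same route as the paper's proof: factor $\hX \to \hS$ through a finite presentation model via Prop.~\ref{prop:cannotLimitBlowups}, flatify with Raynaud--Gruson to get the \abb{} $\sT \to \sS$, and then identify the two strict transforms by observing that the closed immersion between them is schematically dominant (the paper phrases this as the surjection $\OO_{\hW^{st}} \to \OO_{\hX^{st}}$ embedding into the isomorphism on interiors, which is the same argument). No gaps.
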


\begin{proof}
By Prop.~\ref{prop:cannotLimitBlowups} there is a factorisation $\hX \to \hW \to \hS$ with $\hX \to \hW$ a closed immersion, which is an isomorphism over $\iS$, and $\hW \to \hX$ a morphism of finite presentation. Then by Raynuad-Gruson flatification, \cite[Thm.5.2.2]{RG71} or \cite[Tag 0815]{stacks-project}, since $\hW \to \hS$ is flat and of finite presentation over $\iS$, there exists an {\abb} $\sT \to \sS$ such that the strict transform $\hW^{st} \to \hT$ is flat and of finite presentation. So we have the following commutative diagram of strict transforms on the left.
\[ \xymatrix{
\hX^{st} \ar[r] \ar[d] & \hX \ar[d] &&
\sX^{st} \ar[r] \ar[d] & \sX \ar[dd] \\
\hW^{st} \ar[r] \ar[d] & \hW \ar[d] &&
\sW^{st} \ar[d] &   \\
\hT \ar[r] & \hS &&
\sT \ar[r] & \sS
} \]
Since $\hW^{st} \to \hT$ is flat, $\hS|_{\hW^{st}} = \hT|_{\hW^{st}}$ is an effective Cartier divisor, so we actually have the diagram of ambient minimal morphisms of modulus pairs on the right. To finish, it suffices to prove that $\hX^{st} \to \hW^{st}$ is an isomorphism. As a strict transform of a closed immersion, it is a closed immersion, and moreover it induces an isomorphism $(X^{st})^\circ \cong (W^{st})^\circ$. So $\OO_{\hW^{st}} \to \OO_{\hX^{st}}$ is a surjective morphism which embeds into an isomorphism $\OO_{(X^{st})^\circ} \cong \OO_{(W^{st})^\circ}$. Therefore $\OO_{\hW^{st}} \to \OO_{\hX^{st}}$ is an injective surjection, or in other words, $\hX^{st} \cong \hW^{st}$.
\end{proof}

\section{Further remarks}

\begin{rema}
Notice that in Thm.~\ref{theo:locProIsRep} we restricted to categories of minimal morphisms in such statements. This is sufficient to get a conservative family of fibre functors on topoi such as $\Shv_{\ulMNis}(\ulMSm_{\sS})$, but not to get all fibre functors. This is because there are fibre functors of $\Shv_{\ulMNis}(\ulMSm_{\sS})$ whose corresponding pro-object has modulus which does not stabilise. We have avoided writing about this for the ease of the reader, but its entirely possible to obtain statements such as: there is an equivalence of categories between fibre functors of $\Shv_{\ulMNis}(\ulMSm_{\sS})$ and pro-objects $(\sP_λ)_λ$ of $\ulMSm_{\sS}$ such that 
\begin{enumerate}
 \item all $\hP_λ$ are affine,
 \item $\lim \hP_λ$ is the spectrum of a hensel local ring,
 \item $\lim \iP_λ$ is the spectrum of a hensel local ring (but not necessarily open in $\lim \hP_λ$! Only pro-open.),
 \item $\overline{\{p\}}$ is the spectrum of a hensel valuation ring where $p$ is the closed point of $\lim \hP_λ$.
 \item $\lim \hP_λ = \ol{\{p\}} \sqcup_{\{p\}} \lim \iP_λ$ in the category of schemes.
\end{enumerate}
The problem comes from the existence of valuation rings whose totally ordered set of prime ideals does not admit arbitrary intersections.
\end{rema}

\begin{rema}
In all the above statements we have always insisted on working in $\ulPSCH$ instead of $\ulMSCH$. This is because the equivalence between $\ulPtau$-local $\uS$-local pro-objects of $\ulPSch_\sS$ and $\ulM\tau$-local pro-objects of $\ulMSch_\sS$ (for example) does not respect the subcategories $\ulPSch_\sS \subseteq \Pro(\ulPSch_\sS)$ and $\ulMSch_\sS \subseteq \Pro(\ulMSch_\sS)$.

For example, consider any complete discrete valuation ring $R$ with uniformiser $\pi$ which admits a finite field extension $L / \Frac(R)$ such that the integral closure $A \subseteq L$ (necessarily another complete discrete valuation ring) of $R$ in $L$ is \emph{not} of finite type over $R$. Then consider any finite $R$-algebra $\breve{A}$ such that $\Frac(\breve{A}) = L$. In this case, $(\Spec(\breve{A}), (\pi))$ will be $\ulMNis$-local as an object of $\ulMSch_{R}$ but not $\uS$-local as an object of $\ulPSch_{R}$ (cf. also Lem.~\ref{lemm:finiteulMNislocal} which this kind of situation actually does arise naturally in the course of working towards Prop.~\ref{prop:CechCycles}---a necessary piece of the motivic foundations). The $\uS$-local pro-object of $\ulPSCH$ corresponding to $(\Spec(\breve{A}), (\pi))$ comes from the collection of sub-$R$-algebras of $A$ of finite type containing $\pi$.
\end{rema}


\chapter{Modulus sites} \label{sec:sites}

\section{Ambient sites}\label{sec:ambsites} 

In this subsection, we discuss various (big and small) sites on the category of modulus pairs with ambient morphisms. 
As a preliminary, we recall various sites of schemes. 
For a scheme $S$, 
\begin{enumerate}
 \item $\SCH_S$ is the category of all $S$-schemes, 
 \item $\SCH_S^{\qcqs}$ is the category of qcqs $S$-schemes, 
 \item $\Sch_S$ is the category of finite type separated $S$-schemes, 
 \item $\Fppf_S$ is the category of $S$-schemes which are flat of finite presentation.
 \item $\Sm_S$ is the category of smooth separated $S$-schemes.
 \item $\Et_S$ is the category of \'etale $S$-schemes.
 \item $\Op_S$ is the category of open immersions to $S$.
 \item $\Qf_S$ is the category of $S$-schemes which can be written as a finite composition of open immersions and finite morphisms.%
 \footnote{By Zariski's Main Theorem, \cite
 [05K0]{stacks-project}, when $S$ is qcqs this is equivalent to the category of quasi-finite $S$-schemes.}
\end{enumerate}

\begin{defi} \label{defi:schSites}
Let $S$ be a scheme, let $\sC$ be one of the categories in the left column, let $τ$ be one of the classes in the top row, cf.~Def.~\ref{defi:coveringsSch}, and suppose that the $(\sC, τ)$-entry has a star.
\[
\begin{array}{cc|cc|c|c}
{_\sC} \diagdown{^τ} & \Zar & \Nis & \et & \fppf & \rqfh \\
\SCH_S                 & \ast & \ast & \ast& \ast  & \ast  \\
\SCH_S^\qcqs           & \ast & \ast & \ast& \ast  & \ast  \\
\Sch_S                 & \ast & \ast & \ast& \ast  & \ast  \\ \hline 
\Fppf_S                & \ast & \ast & \ast& \ast  &       \\
\Sm_S                  & \ast & \ast & \ast&       &       \\
\Et_S                  & \ast & \ast & \ast&       &       \\
\Op_S                  & \ast &      &     &       &       \\ \hline
\Qf_S                  &      &      &     &       & \ast  \\
\end{array}
\]
Then the $τ$-topology on $\sC$ is the topology generated by $τ$-coverings. 
\end{defi}

Similarly, we consider various categories over a modulus pair $\sS$. Recall that for any property $(P)$ of a morphism of schemes, we say an ambient morphism $\sX \to \sS$ has property $(P)$ if $\hX \to \hS$ has property $(P)$, cf. Def.~\ref{def:ambientmorph}.

\begin{defi} \label{defi:ambSitesCat} Let $\sS$ be a modulus pair.
\begin{enumerate}
 \item[Big sites.]
 \item $\ulPSCH_\sS$ is the category of all ambient $\sS$-modulus pairs, with commutative triangles of ambient morphisms as morphisms.

 \item $\ulPSCH_\sS^{\qcqs}$ is the full subcategory of $\ulPSCH_\sS$ of $\sS$-modulus pairs whose structure morphism is qcqs.

 \item[Medium sites.]
 \item $\ulPSch_\sS$ is the full subcategory of $\ulPSCH_\sS$ of $\sS$-modulus pairs whose structure morphism is finite type and separated.
 
 \item $\ulPSm_\sS$ is the full subcategory of $\ulPSch_\sS$ of those ambient $\sS$-modulus pairs $\sX \to \sS$ such that $\iX \to \iS$ is smooth.
 
 \item $\ulPSch_\sS^{\min}$ is the full subcategory of $\ulPSch_\sS$ consisting of those morphisms $\sX \to \sS$ which is minimal. 
    
 \item $\ulPSm_\sS^{\min}$ is the full subcategory of $\ulPSm_\sS$ consisting of those morphisms $\sX \to \sS$ which is minimal. 
 
 \item[Small sites.] 
 \item \label{item:SmallSites} Let $\sC$ be one of the symbols $\Qf$, $\Fppf$, $\Et$, $\Op$. Then $\ulP\sC_\sS$ is the full subcategory of $\ulPSch_\sS$ of those ambient $\sS$-modulus pairs $\sX \to \sS$ whose structural morphism is minimal, and $\iX \to \iS$ is in $\sC_{\iS}$.
\end{enumerate}
\end{defi}

\begin{defi} \label{defi:ambSites}
Let $\sS$ be a modulus pair, %
let $\sC$ be one of the categories in the left column, let $τ$ be one of the classes in the top row, cf.~Def.~\ref{defi:coveringsSch}, and suppose that the $(\sC, τ)$-entry has a star.
\[
\begin{array}{cc|cc|c|c}
{_\sC} \diagdown{^τ} & \Zar & \Nis & \et & \fppf & \rqfh \\
\ulPSCH_\sS                 & \ast & \ast & \ast& \ast  & \ast  \\
\ulPSCH_\sS^{\qcqs}           & \ast & \ast & \ast& \ast  & \ast  \\
\ulPSch_\sS^{\min}                & \ast & \ast & \ast& \ast  & \ast  \\ 
\ulPSm_\sS^{\min}                  & \ast & \ast & \ast&       &       \\ \hline 
\ulPQf_\sS                  &      &      &     &       & \ast  \\
\ulPFppf_\sS                & \ast & \ast & \ast& \ast  &       \\
\ulPEt_\sS                  & \ast & \ast & \ast&       &       \\
\ulPOp_\sS                  & \ast &      &     &       &       \\ 
\end{array}
\]
Then the $\ulPtau$-topology on $\sC$ is the topology generated by $\ulPtau$-coverings.
\end{defi}

\begin{rema}
The most prominent combinations in the motivic framework are the $\ulPNis$- and $\ulPet$-topologies on $\ulPSm_\sS$.
\end{rema}

\section{Non-ambient sites}

Next, we will equip non-ambient categories with various topologies. 
Recall that the category $\ulMSCH$ is defined by 
\[
\ulMSCH := \ulPSCH[\ul{\Sigma}^{-1}],
\]
where $\ul{\Sigma}$ is the class of abstract admissible blow-ups (see Definition \ref{defn:Sigma}). 

The following definition is the $\ulM$ version of Def.~\ref{defi:ambSitesCat}.

\begin{defi} \label{defi:modSitesCat} Let $\sS$ be a modulus pair.
\begin{enumerate}
 \item[Big sites.]
 \item $\ulMSCH_\sS$ is the category of all $\sS$-modulus pairs, with commutative triangles as morphisms.

 \item $\ulMSCH_\sS^{\qcqs}$ is the full subcategory of $\ulMSCH_\sS{}$ of $\sS$-modulus pairs whose structure morphism is %
isomorphic to an ambient %
qcqs %
morphism.

 \item[Medium sites.]
 \item $\ulMSch_\sS{}$ is the full subcategory of $\ulMSCH_\sS{}$ whose of $\sS$-modulus pairs whose structure morphism is %
isomorphic to an ambient %
finite type and separated %
morphism.
 
 \item $\ulMSm_\sS$ is the full subcategory of $\ulMSch_\sS$ of those $\sS$-modulus pairs $\sX \to \sS$ such that $\iX \to \iS$ is smooth.

 \item $\ulMSch^\min_\sS{}$ is the full subcategory of $\ulMSch_\sS{}$ consisting of those objects $\sX \to \sS$ which are isomorphic to a minimal ambient morphism.
   
 \item $\ulMSm^\min_\sS$ is the full subcategory of $\ulMSm_\sS{}$ consisting of those objects $\sX \to \sS$ which are isomorphic to a minimal ambient morphism.
 
 \item[Small sites.] 
 \item Let $\sC$ be one of the symbols $\Qf$, $\Fppf$, $\Et$, $\Op$. Then $\ulM\sC_\sS$ is the full subcategory of $\ulMSch_\sS$ of those $\sS$-modulus pairs whose structure morphism is 
isomorphic (in the category of morphisms of $\ulMSch$) to an ambient minimal, of finite type, separated morphism $\sX' \to \sS'$ such that $\hX' \to \hS'$ is in $\sC_{\hS'}$.
\end{enumerate}
\end{defi}

\begin{lemma}\label{lemm:SmallSitesObj} 
Let $\sS$ be a modulus pair, and $\sC = \Fppf, \Et, \Op, \Qf$. Then, for any object $\sX \to \sS$ of $\ulM\sC_{\sS}$, there exists a commutative diagram 
\[\xymatrix{
\sY \ar[r]^{adm.b.u.} \ar[d]_{\sC_{\hT}\ \ni} & \sX \ar[d]^{\in\ \ulM\sC_\sS} \\
\sT \ar[r]_{adm.b.u.} & \sS
}\]
where $s$ and $t$ are {\aab}s, and $g$ is an ambient minimal morphism such that $\ol{g} : \hY \to \hT$ is an object of $\sC_{\hT}$.
\end{lemma}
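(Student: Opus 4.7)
By definition of $\ulM\sC_{\sS}$, the object $\sX \to \sS$ is isomorphic in the arrow category $\ulMSch^{\to}$ to some ambient minimal morphism $f' : \sX' \to \sS'$ of finite type and separated with $\overline{f}' : \hX' \to \hS'$ in $\sC_{\hS'}$. Unpacking, this means there are isomorphisms $u : \sX \to \sX'$ and $v : \sS \to \sS'$ in $\ulMSch$ satisfying $vf = f'u$. The plan is to represent these isomorphisms by roofs whose legs are both abstract admissible blowups, and then to produce $\sY$ as an ambient product.

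First I would apply the calculus of right fractions (Proposition~\ref{prop:cal-frac}) to $v$, obtaining a roof $\sS \xleftarrow{\sigma} \sT \xrightarrow{\psi} \sS'$ with $\sigma \in \ul{\Sigma}$. The central technical point is then that, since $v$ is invertible in $\ulMSch$, one can arrange by iterated refinement (comparing with a roof representing $v^{-1}$ and applying Proposition~\ref{prop:cal-frac} to compose to identity) that $\psi$ itself lies in $\ul{\Sigma}$: the interior $\psi^\circ$ is already an isomorphism because $v^\circ$ and $\sigma^\circ$ are, and after refining $\sT$ sufficiently one secures properness and minimality of $\psi$. Set $t := \sigma$, and similarly represent $u$ by $\sX \xleftarrow{\alpha} \tilde\sX \xrightarrow{\beta} \sX'$ with $\alpha, \beta \in \ul{\Sigma}$, refining so as to be compatible with $\sT$ via $\sigma \circ (\tilde\sX \to \sT) = f \circ \alpha$ and $\psi \circ (\tilde\sX \to \sT) = f' \circ \beta$ in $\ulPSCH$.

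Next I would set $\sY := \sT \ambtimes_{\sS'} \sX'$. Since $f'$ is minimal, Lemma~\ref{lemm:minProduct} gives $\sY = (\hY, \mT|_{\hY})$, where $\hY$ is the scheme-theoretic image of $\iT \times_{\iS'} \iX' \to \hT \times_{\hS'} \hX'$, so that $g : \sY \to \sT$ is minimal by construction. To verify $\overline{g} \in \sC_{\hT}$: the base change $\hT \times_{\hS'} \hX' \to \hT$ of $\overline{f}'$ lies in $\sC_{\hT}$ by stability of $\sC$ under base change. For $\sC \in \{\Op, \Et, \Fppf\}$ this base change is flat, and since $\iX' = \iS' \times_{\hS'} \hX'$ by minimality of $f'$, flat pullback preserves scheme-theoretic density, so $\hY = \hT \times_{\hS'} \hX'$; for $\sC = \Qf$, $\hY$ is a closed subscheme of a quasi-finite separated morphism over $\hT$, hence itself in $\Qf_{\hT}$.

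Finally, since both $\psi$ and $f'$ lie in $\ul{\Sigma}$, the canonical morphism $\sY \to \sX'$ is itself an abstract admissible blowup: iso on interiors via $\iT \xrightarrow{\sim} \iS'$, proper as the base change of $\overline{\psi}$, and minimal since $\mY = \mT|_{\hY} = \mS'|_{\hY} = \mX'|_{\hY}$. The universal property of $\sY$ as the categorical fibre product (Proposition~\ref{prop:minFibPro}) then provides a canonical morphism $\tilde\sX \to \sY$ over $\sT$ and $\sX'$; since both $\tilde\sX \to \sX'$ and $\sY \to \sX'$ are adm blowups with the same interior and $\tilde\sX \to \sY$ is minimal with isomorphism on interiors, a properness argument (using that $\sY \to \sX'$ is separated and $\tilde\sX \to \sX'$ is proper) shows $\tilde\sX \to \sY$ is itself an adm blowup, allowing identification of $\tilde\sX$ with $\sY$ after a final compatible refinement of $\sT$ and $\sY$. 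Taking $s := \alpha$ then yields the required diagram, commuting in $\ulMSch$ by construction. The main obstacle is the representation claim used twice in the second paragraph; once granted, the remaining steps are formal applications of Lemma~\ref{lemm:minProduct} and Proposition~\ref{prop:minFibPro}.
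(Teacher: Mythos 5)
Your proposal has a genuine gap, and it sits exactly where the paper's two-line proof does its work. The paper deduces the lemma from Theorem~\ref{thm:etaleHeavy} (for $\Op$, $\Et$, $\Fppf$) and Theorem~\ref{thm:finHeavy} (for $\Qf$): these ``heaviness'' results are what allow one to trade an abstract admissible blowup sitting \emph{above} a morphism whose total-space map lies in $\sC$ for an abstract admissible blowup of the \emph{base} together with a new morphism still in $\sC$. You never invoke them, and the place where they are indispensable is your final paragraph. The base-change part of your construction is fine: $\sY:=\sT\ambtimes[{\sS'}]\sX'$, Lemma~\ref{lemm:minProduct}, stability of $\sC$ under base change, and flat pullback preserving schematic density correctly handle modifications of the base. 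But you must still connect $\sY$ to $\sX$ by an \aab{}. The isomorphism $u:\sX\to\sX'$ is realized, at best, by a span of \aab{}s $\sX\leftarrow\tilde\sX\to\sX'$, and $\tilde\sX\to\sX'$ and $\sY\to\sX'$ are two \emph{different} \aab{}s over $\sX'$; any common refinement $\sY''$ carries a proper modification $\sY''\to\sY$ on the source side, and the composite $\hY''\to\hY\to\hT$ is then no longer \'etale (resp.\ an open immersion, flat of finite presentation, quasi-finite) --- a blowup followed by an \'etale map is not \'etale. Converting that source-\aab{} back into an \aab{} of $\sT$ so as to restore the class $\sC$ on total spaces is precisely the content of Theorems~\ref{thm:etaleHeavy} and~\ref{thm:finHeavy} (Raynaud--Gruson flatification in the flat cases, quasi-compactness of the Riemann--Zariski space in the $\Qf$ case). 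Your ``final compatible refinement of $\sT$ and $\sY$'' silently assumes this can be done for free; it cannot, and there is no reason $\tilde\sX$ and $\sY$ should be identifiable without it.

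There is a second, smaller problem in your second paragraph. You flag as ``the central technical point'' that the isomorphism $v:\sS\to\sS'$ can be represented by a roof both of whose legs lie in $\ul{\Sigma}$, but the mechanism you offer --- ``after refining $\sT$ sufficiently one secures properness and minimality of $\psi$'' --- cannot work: if $\sT'\to\sT$ is an \aab{}, then $\ol{T}'\to\ol{T}$ is proper and surjective, so $\ol{T}'\to\ol{S}'$ being universally closed already forces $\ol{T}\to\ol{S}'$ to be universally closed. Refinement cannot create properness. The claim itself is likely salvageable, but by a different argument: unwinding the two identities $\psi\circ\psi^{-1}=\id$ and $\psi^{-1}\circ\psi=\id$ via calculus of fractions and Corollary~\ref{cor:coincide} exhibits $\ol{\psi}$ as a surjective factor of a proper surjection, hence universally closed, and similarly forces minimality; this needs to be written out rather than attributed to refinement. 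In short, the proposal replaces the paper's appeal to its heaviest theorems with formal manipulations that do not suffice.
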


\begin{proof}
When $\sC = \Op, \Et, \Fppf$, this is just a rephrasing of Theorem \ref{thm:etaleHeavy} (4), (6), (7).
The case $\sC = \Qf$ follows from Theorem \ref{thm:finHeavy}.
\end{proof}

\begin{defi} \label{defi:mtop}
Let $\sS$ be a modulus pair, let $\sC$ be one of the categories in the left column, cf.~Def.\ref{defi:modSitesCat}, let $τ$ be one of the classes in the top row, cf.~Def.~\ref{defi:coveringsSch}, and suppose that the $(\sC, τ)$-entry has a star.
\begin{equation}\label{table*}\begin{gathered}
\begin{array}{cc|cc|c|c}
{_\sC} \diagdown{^τ} & \Zar & \Nis & \et & \fppf & \rqfh \\
\ulMSCH_\sS                 & \ast & \ast & \ast& \ast  & \ast  \\
\ulMSCH_\sS^{\qcqs}           & \ast & \ast & \ast& \ast  & \ast  \\
\ulMSch_\sS                 & \ast & \ast & \ast& \ast  & \ast  \\ 
\ulMSm_\sS                  & \ast & \ast & \ast&       &       \\
\ulMSch^\min_\sS                 & \ast & \ast & \ast& \ast  & \ast  \\ 
\ulMSm^\min_\sS                  & \ast & \ast & \ast&       &       \\ \hline 
\ulMQf_\sS                  &      &      &     &       & \ast  \\
\ulMFppf_\sS                & \ast & \ast & \ast& \ast  &       \\
\ulMEt_\sS                  & \ast & \ast & \ast&       &       \\
\ulMOp_\sS                  & \ast &      &     &       &       \\ 
\end{array}
\end{gathered}\end{equation}
Then the $\ulMτ$-topology on $\sC$ is the topology generated by $\ulPtau$-coverings. That is, families $\{\sU_i \to \sX\}_{i \in I}$ of ambient minimal morphisms such that $\{\hU_i \to \hX\}_{i \in I}$ is a $τ$-covering.
\end{defi}

\begin{rema} \label{rema:induced}
In other words, the $\ulMtau$-topology is the finest topology such that $\ulPSCH_\sS \to \ulMSCH_\sS$ is continuous where $\ulMSCH_{\hS}$ is equipped with the $\ulPtau$-topology, cf.Rem.~\ref{rem:woheaviness}. 
\end{rema}

\begin{rema}
The most prominent combination in the motivic framework is the $\ulMNis$-topology on $\ulMSm_\sS$.
\end{rema}

\begin{rema}
We will see in Cor.~\ref{coro:normalForm} below that in all cases of Def.~\ref{defi:ambSites}, up to refinement and isomorphism, every covering is a composition of a single $\ulPtau$ covering and an \abb.
\end{rema}

\begin{prop} \label{prop:subLocalisation}
Let $\sS$ be a modulus pair, and let $\sC$ denote one of the following symbols: $\SCH$, $\SCH^{\qcqs}$, $\Sch$, $\Sm$, $\Sch^\min$, $\Sm^\min$, $\Qf$, $\Fppf$, $\Et$, $\Op$.
 Then there exists a canonical identification
 \[
 \ulM\sC_{\sS} \cong \ulP\sC_{\sS} [\uS^{-1}]. 
 \]
where $\ul{\Sigma}$ means $\ulP\sC_\sS \cap \ul{\Sigma}$ by abuse of notation.
\end{prop}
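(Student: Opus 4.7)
My strategy is to build a comparison functor $\Phi_\sC : \ulP\sC_\sS[\uS^{-1}] \to \ulM\sC_\sS$ and show it is fully faithful and essentially surjective. The functor arises by composing the inclusion $\ulP\sC_\sS \hookrightarrow \ulPSCH_\sS$ with the localisation $\ulPSCH_\sS \to \ulMSCH_\sS$, observing that the image lies in $\ulM\sC_\sS$ (objects of $\ulP\sC_\sS$ tautologically satisfy the defining property of $\ulM\sC_\sS$), and passing to the universal property of the localisation.

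\textbf{Calculus of right fractions and fully faithfulness.} I would first check that $(\ulP\sC_\sS, \uS \cap \ulP\sC_\sS)$ admits a calculus of right fractions. Axioms (1), (2), (4) of Prop.~\ref{prop:cal-frac} transfer verbatim from $\ulPSCH$; axiom (3) asks that the completion $\sY'$ of a roof $\sX' \to \sX \leftarrow \sY$ (with $\sX' \to \sX$ an aab) constructed in that proof remain in $\ulP\sC_\sS$. This reduces to the observation that the defining property of $\ulP\sC_\sS$---namely minimality of $\sX \to \sS$ together with a property of $\iX \to \iS$ (smoothness, étaleness, flatness of finite presentation, open immersion, or quasi-finiteness) or of $\hX \to \hS$ (finite type separated, qcqs)---is preserved under composition with an aab on the source, because an aab is itself minimal, leaves the interior unchanged, and is proper (hence finite type and separated). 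With calculus of right fractions in hand, hom-sets in $\ulP\sC_\sS[\uS^{-1}]$ are colimits over roofs in $\ulP\sC_\sS$. Using Cor.~\ref{coro:invertSliceMSCH} to compute hom-sets in $\ulMSCH_\sS$ via roofs in $\ulPSCH_\sS$, fully faithfulness of $\Phi_\sC$ reduces to the same stability property: any roof $\sX \stackrel{\uS}{\leftarrow} \sX'' \to \sY$ in $\ulPSCH_\sS$ between objects of $\ulP\sC_\sS$ automatically has $\sX'' \in \ulP\sC_\sS$, and likewise the standard equivalence relation on roofs can be verified within $\ulP\sC_\sS$.

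\textbf{Essential surjectivity.} For the big sites $\SCH$, $\SCH^{\qcqs}$, $\Sch$, and the medium sites $\Sm$, $\Sch^\min$, $\Sm^\min$: an object $\sX \in \ulM\sC_\sS$ is by definition isomorphic in $\ulMSCH$ (in the category of morphisms over $\sS$) to an ambient structural morphism $\sX' \to \sS'$ with the required property, where the induced $\sS' \to \sS$ is necessarily an aab. Composing yields an ambient morphism $\sX' \to \sS$ which again lies in $\ulP\sC_\sS$: minimality is stable under composition of minimal morphisms (and aab's are minimal), $\iX'$ is unchanged so the interior property persists, and qcqs/finite-type/separated are all stable under composition with an aab. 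For the small sites $\Qf$, $\Fppf$, $\Et$, $\Op$, I invoke Lem.~\ref{lemm:SmallSitesObj}, which produces aab's $\sY \to \sX$ and $\sT \to \sS$ together with an ambient minimal $\sY \to \sT$ satisfying $\hY \to \hT \in \sC_{\hT}$. The composite $\sY \to \sT \to \sS$ then lies in $\ulP\sC_\sS$ by the same stability arguments, and $\sY \cong \sX$ in $\ulMSch_\sS$ via the aab $\sY \to \sX$.

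\textbf{Main obstacle.} The calculus of fractions, fully faithfulness, and essential surjectivity for the big and medium sites are formal once one tracks the definitions. The real content is essential surjectivity for the small sites, where Lem.~\ref{lemm:SmallSitesObj}---and through it the heaviness theorems Thm.~\ref{thm:etaleHeavy} and Thm.~\ref{thm:finHeavy}, together with the Raynaud--Gruson flatification and relative Riemann--Zariski input---does the heavy lifting. Without that input one could, a priori, have objects of $\ulM\sC_\sS$ with no representative whose ambient morphism itself lies in $\sC$.
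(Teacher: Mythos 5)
Your proof is correct and follows essentially the same route as the paper's: both hinge on the observation (the paper's explicit Claim) that precomposing an object of $\ulP\sC_\sS$ with an abstract admissible blowup lands back in $\ulP\sC_\sS$, which gives surjectivity on roofs; the paper gets injectivity slightly more cheaply via the faithful functor to $\SCH(\iX,\iY)$ (Lemma~\ref{lem:coincidence2}) rather than by re-verifying the roof equivalence relation inside $\ulP\sC_\sS$, but the content is the same. One remark on your closing paragraph: your location of the ``main obstacle'' is off. The paper's definition of $\ulM\sC_\sS$ for the small sites (Def.~\ref{defi:modSitesCat}) already \emph{posits} an ambient minimal representative $\sX' \to \sS'$ with $\hX' \to \hS'$ in $\sC_{\hS'}$, so essential surjectivity really is ``by definition'' after composing with the {\aab} $\sS' \to \sS$ and noting that minimality transports the property $\sC$ to the interior; invoking Lemma~\ref{lemm:SmallSitesObj} (and through it Thm.~\ref{thm:etaleHeavy}/\ref{thm:finHeavy}) is harmless but unnecessary here --- that machinery earns its keep in Prop.~\ref{prop:SmallSitesApprox} and the normal form for coverings, not in this proposition.
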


\begin{proof}
Let $\sC$ denote one of the following symbols: $\SCH$, $\SCH^{\qcqs}$, $\Sch$, $\Sm$, $\Sch^\min$, $\Sm^\min$, $\Qf$, $\Fppf$, $\Et$, $\Op$. 
First note that the natural functor $\ulP\sC_{\sS} \to \ulM\sC_{\sS}$ induces a functor $\ulP\sC_{\sS}[\ul{\Sigma}^{-1}] \to \ulM\sC_{\sS}$. 
It suffices to show that this is fully faithful and essentially surjective. 

First we prove the fully faithfulness. For any two objects $\sX \to \sS$ and $\sY \to \sS$ in $\ulP\sC_{\sS}$, the map 
\[
\hom_{\ulP\sC_{\sS}[\ul{\Sigma}^{-1}]} (\sX,\sY) \to \hom_{\ulM\sC_{\sS}} (\sX,\sY)
\]
is injective since the composite \[\hom_{\ulP\sC_{\sS}[\ul{\Sigma}^{-1}]} (\sX,\sY) \to \hom_{\ulM\sC_{\sS}} (\sX,\sY) \to \hom_{\SCH} (\iX,\iY)\] is injective by Lemma \ref{lem:coincidence2}. To show the surjectivity, note that any morphism $f \in \hom_{\ulMSCH_{\sS}} (\sX,\sY)$ is represented by a diagram of ambient $\sS$-morphisms $\sX \xleftarrow{g} \sX' \xrightarrow{} \sY$ with $g \in \ul{\Sigma}$.
It suffices to show that $\sX' \to \sS$ is an object of $\ulP\sC_{\sS}$.
In other words, we need the following claim.

\begin{claim}
If $\sX \to \sS$ is an object in $\ulP\sC_{\sS}$ and if $\sX' \to \sX$ belongs to $\ul{\Sigma}$, then $\sX' \to \sS$ is an object of $\ulP\sC_{\sS}$. 
\end{claim}

\begin{proof}
If $\sC = \SCH$, there is nothing to prove. 
If $\sC = \SCH^{\qcqs}$, this follows from the fact that any morphism in $\ul{\Sigma}$ is quasi-compact and quasi-separated (as a proper morphism). 
The case $\sC = \Sch$ follows from that any proper morphism is of finite type and separated.
The case $\sC = \Sm$ follows from that any morphism in $\ul{\Sigma}$ induces an isomorphism on the interior.
The same argument works for $\sC = \Sch^\min, \Sm^\min$.
When $\sC = \Qf$, $\Fppf$, $\Et$, $\Op$, then the assertion follows from that any morphism in $\ul{\Sigma}$ is minimal and induces an isomorphism on the interiors. 
\end{proof}
Thus we have proven the fully faithfulness.
Finally, the essential surjectivity follows by definition of $\ulM\sC$. This finishes the proof.
\end{proof}

\begin{prop}\label{prop:SmallSitesApprox}
Let $\sS$ be a qcqs modulus pair, and $\sC$ one of the symbols $\Op$, $\Et$, $\Fppf$, $\Qf$. 
Then there is a canonical equivalence of categories
\[ \ulM\sC_\sS{} \cong \varinjlim_{\sS' \to \sS} \sC_{\hS'}. \]
where the colimit is over all {\aab}s.

Moreover, if $\tau$ is a class of coverings such that the $(\sC, \tau)$-entry in Table~\ref{table*} has a star, then the above is an equivalence of sites when the left hand side is equipped with the $\ulM\tau$-topology, and each $\sC_{\hS'}$ on the right is equipped with the $\tau$-topology.
\end{prop}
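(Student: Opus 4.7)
The plan is to construct an explicit functor $F \colon \varinjlim_{\sS' \to \sS} \sC_{\hS'} \to \ulM\sC_\sS$, verify it is an equivalence, and then match coverings on both sides. For any {\aab} $\sS' \to \sS$ and any $X \in \sC_{\hS'}$, the pullback $\mS'|_X$ is an effective Cartier divisor (since $X \to \hS'$ is flat of finite presentation when $\sC = \Op, \Et, \Fppf$, and, for $\sC = \Qf$, we replace $X$ by the scheme-theoretic closure of $X \cap \iS'$, which is a standard construction in the same spirit as Lem.~\ref{lem:pb-bu}), yielding an object $(X, \mS'|_X) \to \sS'$ of $\ulM\sC_\sS$. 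Pullback along an {\aab} $\sS'' \to \sS'$ gives a compatible system, hence a functor $F$.

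Essential surjectivity of $F$ is immediate from Lem.~\ref{lemm:SmallSitesObj}: any $\sX \in \ulM\sC_\sS$ receives an \aab{} from some $\sY = (\hY, \mT|_{\hY})$ with $\sT \to \sS$ an \aab{} and $\hY \to \hT$ in $\sC_{\hT}$, so $\sX \cong F(\sT, \hY)$ in $\ulM\sC_\sS$. For fully faithfulness, use Prop.~\ref{prop:subLocalisation} to identify $\ulM\sC_\sS \cong \ulP\sC_\sS[\uS^{-1}]$ and the calculus of right fractions of Prop.~\ref{prop:cal-frac}. Any morphism $F(\sS'_1, X_1) \to F(\sS'_2, X_2)$ is represented by a roof $\sX_1 \xleftarrow{s} \sZ \xrightarrow{g} \sX_2$ with $s \in \uS$; applying Lem.~\ref{lemm:SmallSitesObj} to $\sZ$ and composing gives a refinement whose vertex is of the form $(\hZ', \mS'_3|_{\hZ'})$ for some \aab{} $\sS'_3 \to \sS$ dominating $\sS'_1$ and $\sS'_2$, with $\hZ' \in \sC_{\hS'_3}$. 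Hence the morphism comes from $\sC_{\hS'_3}$. Two roofs representing the same morphism admit a common refinement in $\uS$, and the same trick descends that refinement to some $\sC_{\hS'_4}$, proving injectivity.

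For the site part, $F$ sends a $\tau$-covering $\{U_i \to X\}$ in $\sC_{\hS'}$ to the family $\{(U_i, \mS'|_{U_i}) \to (X, \mS'|_X)\}$, which is a $\ulPtau$-covering and therefore an $\ulMtau$-covering by definition. Conversely, by Cor.~\ref{coro:normalForm} any $\ulMtau$-covering of $F(\sS', X)$ is refined by one of the shape $\{\sU_i \to \sY\}$, where $\sY \to F(\sS', X)$ is an \aab{} and $\{\hU_i \to \hY\}$ is a $\tau$-covering; applying Thm.~\ref{thm:etaleHeavy} (for $\sC = \Op, \Et, \Fppf$) or Thm.~\ref{thm:finHeavy} (for $\sC = \Qf$) produces an \aab{} $\sS'' \to \sS'$ and a $\tau$-covering in $\sC_{\hS''}$ refining it after pullback. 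Thus the two topologies pull back to each other through $F$.

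The main technical obstacle is the fully faithfulness step: it is not enough to descend a single roof to some $\sC_{\hS'}$; one must also descend the equivalence relation identifying two roofs, which requires iterating Lem.~\ref{lemm:SmallSitesObj} together with the calculus of fractions and using cofinality of the filtered system of \aab{}s $\sS' \to \sS$. Once this bookkeeping is handled, the covering translation is comparatively straightforward, being a direct consequence of Thm.~\ref{thm:etaleHeavy} and Thm.~\ref{thm:finHeavy}.
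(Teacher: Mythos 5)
Your proposal is correct and follows the same overall architecture as the paper: the same functor $(\hX \to \hS') \mapsto (\hX, \mS'|_{\hX})$, essential surjectivity via Lem.~\ref{lemm:SmallSitesObj} (equivalently, directly from Def.~\ref{defi:modSitesCat}), and the same covering translation via Prop.~\ref{prop:heavyRefine}, Thm.~\ref{thm:etaleHeavy} and Thm.~\ref{thm:finHeavy}. The one place where you diverge is fully faithfulness, and it is worth noting that what you identify as ``the main technical obstacle'' --- descending the equivalence relation identifying two roofs --- is a non-issue in the paper's argument. The paper observes that each $\hom_{\sC_{\hS_1}}(\hX_1,\hY_1)$ and the target $\hom_{\ulM\sC_\sS}(\sX_0,\sY_0)$ all embed compatibly into $\hom_{\SCH}(\iX_0,\iY_0)$ (this is Cor.~\ref{cor:coincide} and Lem.~\ref{lem:coincidence2}(2): a morphism of modulus pairs is determined by its interior), so the comparison map \eqref{eq:colimsCtoC} is \emph{automatically} injective and only surjectivity needs an argument, which is again Lem.~\ref{lemm:SmallSitesObj}. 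Your roof-bookkeeping route can be completed (the common refinement of two roofs forces equality of the descended morphisms by Lem.~\ref{lem:coincide}, since the refinement is schematically dominant on total spaces), but it buys nothing over the interior-functor shortcut. On the other side of the ledger, you are more careful than the paper on one point: for $\sC = \Qf$ the pullback $\mS'|_{\hX}$ need not be an effective Cartier divisor when $\hX \to \hS'$ involves finite morphisms, and your fix of passing to the scheme-theoretic closure of $\hX \times_{\hS'}\iS'$ (in the spirit of Lem.~\ref{lemm:schRqfhGivesmpRqfh}) is the right one; the paper's construction of the functor silently elides this.
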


\begin{proof}
For any {\aab} $\sS' \to \sS$, we consider a functor 
\[
\sC_{\hS'} \to \ulM\sC_{\sS}, \quad (\hX \to \hS') \mapsto (\sX \to \sS' \to \sS),
\]
where $\sX:= (\hX, (S')^\infty \times_{\hS'} \hX)$, which sends any $\sS'$-morphism $\sX \to \sY$ to itself (regarded as an $\sS$-morphism).
Letting $\sS' \to \sS$ run over {\aab}s, we obtain the induced functor 
\[
\varinjlim_{\sS' \to \sS} \sC_{\hS'} \to \ulM\sC_{\sS}.
\]
We claim that this is an equivalence of categories. 
The essential surjectivity follows immediately from Definition \ref{defi:modSitesCat}).
To show that it is fully faithful, take any {\aab} $\sS_0 \to \sS$ and arbitrary objects $\hX_0 \to \hS_0$ and $\hY_0 \to \hS_0$ in $\sC_{\hS_0}$, and consider the map
\begin{equation}\label{eq:colimsCtoC}
\colim_{\sS_1 \to \sS_0} \hom_{\sC_{\hS_1}} (\hX_1 , \hY_1)  \to \hom_{\ulM\sC_{\sS}} (\sX_0,\sY_0),
\end{equation}
where $\sX_0 := (\hX_0 , \mS \times_{\hS_0} \hX_0)$ and $\sY_0 := (\hY_0 , \mS \times_{\hS_0} \hY_0)$, and $\hX_1$, $\hY_1$ denote the ambient spaces of the ambient products $\sX_1 := \sX_0 \ambtimes[\sS_0] \sS_1$, $\sY_1 := \sY_0 \ambtimes[{\sS_0}] \sS_1$ respectively (note that, in general, $\hX_1$ etc. can be different from $\hX_0 \times_{\hS_0} \hS_1$ etc.).
By definition, the map \eqref{eq:colimsCtoC} sends a morphism $\ol{f} : \hX_0 \to \hY_0$ (which is the same as an ambient minimal morphism $f : \sX_0 \to \sY_0$) to a diagram $f \ambtimes[{\sS_0}] \sS_1 : \sX_1 \to \sY_1$.
The map \eqref{eq:colimsCtoC} is injective since the source and the target can be regarded as subsets of $ \hom_{\SCH} (\iX_1,\iX_1) \cong \hom_{\SCH} (\iX_0,\iX_0)$.
The surjectivity follows from Lemma \ref{lemm:SmallSitesObj}.
Thus, we have proven the equivalence of categories. 

Finally, we prove that $F$ is an equivalence of sites. It suffices to show that for any $\ulM\tau$-covering $\{\sV \to \sU\}$ in $\ulM\sC_{\sS}$, there exist an abstract admissible blow up $p : \sS' \to \sS$, an object $\hU' \to \hS'$ in $\sC_{\hS'}$ and a covering $\{\hV' \to \hU'\}$ such that the covering $\{\sV' \to \sU' \to \sU\}$ refines $\{\sV \to \sU\}$, where $\sU'$ and $\sV'$ are the modulus pairs with ambient spaces $\hU'$, $\hV'$ whose modulus is induced from that of $\sS'$.
Since $\ulM\tau$-topology is generated by $\ulP\tau$-covering, we may assume that $\sV \to \sU$ is a $\ulP\tau$-covering. 
By applying Lemma \ref{lemm:SmallSitesObj} to $\sU \to \sS$, there exists a commutative diagram 
\[\xymatrix{
\sU' \ar[r] \ar[d] & \sU \ar[d] \\
\sS' \ar[r] & \sS
}\]
where the horizontal arrows are {\aab}s, and $\sU' \to \sS'$ is a minimal ambient morphism such that $\hU' \to \hS'$ belongs to $\sC_{\hS'}$.
Then we can take $\hV' := \hV \times_{\hU} \hU'$. This finishes the proof.
\end{proof}

\begin{coro}
Let $\sS$ be a qcqs modulus pair. Then $\ulMOp_\sS$ equipped with the $\ulMZar$-topology is equivalent as a site to the relative Riemann-Zariski space $RZ_{\iS}(\hS)$.
\end{coro}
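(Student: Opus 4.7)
The plan is to derive the corollary directly from Prop.~\ref{prop:SmallSitesApprox} applied with $\sC = \Op$ and $\tau = \Zar$. That proposition gives a canonical equivalence of sites
\[ \ulMOp_\sS \cong \varinjlim_{\sS' \to \sS} \Op_{\hS'}, \]
where the filtered colimit ranges over all \aab{}s $\sS' \to \sS$ and each $\Op_{\hS'}$ is equipped with the Zariski topology. It therefore suffices to identify this colimit, as a site, with the site of quasi-compact open subsets of $RZ_{\iS}(\hS)$.

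Recall that $RZ_{\iS}(\hS) := \varprojlim_{\sS' \to \sS} \hS'$ is taken in the category of locally ringed spaces, where the limit ranges over all \aab{}s (cf.\ the discussion preceding Thm.~\ref{thm:finHeavy}). Since $\sS$ is qcqs and every $\hS' \to \hS$ is proper, each $\hS'$ is a qcqs scheme and is therefore a spectral space with spectral transition maps. The underlying topological space of $RZ_{\iS}(\hS)$ is then the inverse limit in the category of spectral spaces (which agrees with the inverse limit in topological spaces in this setting). By the standard theory of limits of spectral spaces, the lattice of quasi-compact open subsets of $RZ_{\iS}(\hS)$ is canonically identified with the filtered colimit of the lattices of quasi-compact open subsets of the $\hS'$. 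Under our convention in Def.~\ref{defi:coveringsSch}, morphisms in $\Op_{\hS'}$ are open immersions of finite presentation, which on the qcqs schemes $\hS'$ are exactly the quasi-compact open immersions; this gives the identification on the level of underlying categories.

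To upgrade this identification to an equivalence of sites, one checks that $\Zar$-coverings correspond on both sides. A $\Zar$-covering in the colimit $\varinjlim \Op_{\hS'}$ is, up to refinement, a finite family of quasi-compact open immersions $\{\hU_i \to \hU\}$ defined on some $\hS'$ which is jointly surjective there. Since surjectivity and quasi-compactness are preserved under the projections to the inverse limit, such a family yields a Zariski covering of the corresponding quasi-compact open of $RZ_{\iS}(\hS)$. Conversely, any Zariski covering of a quasi-compact open in $RZ_{\iS}(\hS)$ descends, by Prop.~\ref{prop:descendFlatCoverings} applied to the pro-system $(\hS')$, to a Zariski covering at some finite stage $\hS'$.

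The main point of care will be the comparison of topologies, and more precisely the claim that the topology of $RZ_{\iS}(\hS)$ as a locally ringed space coincides with the topology of the inverse limit in spectral spaces; this is part of Temkin's framework and is essentially the content of \cite[\S 3]{Tem11} together with the quasi-compactness statement \cite[Prop.~3.1.5]{Tem11} already invoked in the proof of Thm.~\ref{thm:finHeavy}. Once this is in place, the two displayed equivalences compose to give the desired equivalence of sites.
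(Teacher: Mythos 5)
Your proof is correct and follows essentially the same route as the paper: the paper's own (one-line) proof likewise combines Prop.~\ref{prop:SmallSitesApprox} with the definition of $RZ_{\iS}(\hS)$ as $\varprojlim_{\sS' \to \sS}\hS'$, with your spectral-space and covering-descent details being exactly what that one line leaves implicit.
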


\begin{proof}
This follows from the definition, since by definition $RZ_{\iS}(\hS) = \varprojlim_{\sS' \to \sS} \hS'$ where the limit is in the category of topological spaces and indexed by admissible blowups.
\end{proof}

\section{Induced topologies on $\sC[S^{-1}]$}

As we mentioned in Rem.~\ref{rema:induced}, the $\ulMτ$-topologies are induced by the $\ulPtau$-toplogies. We show here that every covering can be refined by the image of a composition $\to \to \to \dots \to$ of abstract admissible blowups and $\ulPtau$-coverings, Cf.Rem.~\ref{rem:woheaviness}. In fact, due to the results in Chap.~\ref{chap:coverings}, \S \ref{sec:heaviness} it suffices to use a single $\ulPtau$-covering, and a single abstract admissible blowup.

First we recall the precise definition of induced topology.

\begin{defn} \label{defn:inducedTopology}
Let $u: \sC \to \sC'$ be a functor between categories. If $\sC$ is equipped with a topology, then the \emph{induced topology} $τ'$ on $\sC'$ is the coarsest topology (i.e., fewest covering families) making the functor $u: \sC \to \sC'$ continuous. In other words, the coarsest topology such that for every $τ'$-sheaf $F$ the $\sC$ presheaf $F \circ u$ is a $τ$-sheaf.
\end{defn}

\begin{rema}
Another way of stating this is via the left Kan extension/restriction adjunction
\[ u^*: \PSh(C) \rightleftarrows \PSh(C'): u_*. \]
The induced topology on $C'$ corresponds (uniquely) to the maximal subtopos $\Shv (C')$ of $\PSh(C')$ which fits into a commutative square
\[ \xymatrix{
\PSh(C) & \ar[l]_-{u_*} \PSh(C') \\
\Shv(C) \ar[u] & \ar@{-->}[l] \Shv(C') \ar[u] 
} \]
\end{rema}

\begin{prop} \label{prop:heavyRefine}
Suppose that $\sC$ is a category, $S$ is a class of morphisms equipped with a right calculus of fractions, and $τ$ is a topology on $\sC$. Equip $\sC[S^{-1}]$ with the induced topology, Def.~\ref{defn:inducedTopology}.

If $S$ is ``heavier'' than $τ$ in the sense that for every $τ$-covering $\{V_i \to X\}_{i \in I}$ and family $\{s_i: V_i' \to V_i\} \subseteq S$, there exists some $t: X' \to X$ in $S$, a $τ$-covering $\{W_j' \to X'\}_{j \in J}$, and factorisations
\[ \xymatrix@!=0pt{
W_j' \ar[dr]_-{``\in" τ} \ar[rrrr] &&&& V_{i_j}' \ar[dl]^-{\in S} \\
& X' \ar[dr]_-{\in S} && V_{i_j} \ar[dl]^-{``\in" τ} \\
&& X,
} \]
then families in $\sC[S^{-1}]$ of the form
\begin{equation} \label{equa:normalForm}
\{ W_i' \stackrel{f_i}{\to} X' \stackrel{s}{\to} X \}_{i \in I} 
\end{equation}
where $\{W_i' \stackrel{f_i}{\to} X'\}_{i \in I}$ is a $τ$-covering from $\sC$ and $s \in S$, generate the induced topology on $\sC[S^{-1}]$. In other words, the families \eqref{equa:normalForm} are coverings in $\sC[S^{-1}]$, and \emph{every} covering family of $\sC[S^{-1}]$ is refinable by one of the form \eqref{equa:normalForm}.
\end{prop}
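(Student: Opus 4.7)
The plan is to prove the two assertions implicit in the statement: (a) every family of form \eqref{equa:normalForm} is a covering in the induced topology $\tau_{\mathrm{ind}}$ on $\sC[S^{-1}]$, and (b) every $\tau_{\mathrm{ind}}$-covering is refinable by one of form \eqref{equa:normalForm}. Writing $u : \sC \to \sC[S^{-1}]$ for the localization and $\tau'$ for the class of families in $\sC[S^{-1}]$ of form \eqref{equa:normalForm} together with their refinements, the goal reduces to showing $\tau' = \tau_{\mathrm{ind}}$.

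For (a), observe that given $\{W_i' \to X' \stackrel{s}{\to} X\}_{i}$ as in \eqref{equa:normalForm}, the family $\{W_i' \to X'\}$ is a $\tau$-covering in $\sC$, hence its image under $u$ is a $\tau_{\mathrm{ind}}$-covering of $u(X')$ (images of $\tau$-coverings are forced to be $\tau_{\mathrm{ind}}$-coverings by the universal property of the induced topology, since a presheaf failing the sheaf condition on such an image would pull back along $u$ to a presheaf failing the sheaf condition on the original $\tau$-covering). Since $s \in S$, the morphism $u(s)$ is an isomorphism in $\sC[S^{-1}]$, so composing with it preserves the property of being a covering, giving $\tau' \subseteq \tau_{\mathrm{ind}}$.

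For (b), I would show that $\tau'$ is itself a Grothendieck pretopology on $\sC[S^{-1}]$ making $u$ continuous, which by minimality of $\tau_{\mathrm{ind}}$ (and the inclusion $\tau' \subseteq \tau_{\mathrm{ind}}$ from (a)) forces $\tau' = \tau_{\mathrm{ind}}$. The nontrivial axioms are stability under pullback and under composition. For pullback: given a normal-form family $\{W_i' \to X' \stackrel{s}{\to} X\}$ and a morphism $Y \to X$ in $\sC[S^{-1}]$ represented by a roof $Y \stackrel{t_0}{\leftarrow} Y_0 \stackrel{g_0}{\to} X$, the right calculus of fractions applied to $s$ and $g_0$ produces $Y_0 \stackrel{t_1}{\leftarrow} Y_1 \stackrel{g_1}{\to} X'$ with $t_1 \in S$; then the $\tau$-covering $\{W_i' \to X'\}$ pulls back along $g_1$ in $\sC$ to a $\tau$-covering of $Y_1$, and composing with $t_0 t_1 \in S$ exhibits a pullback in normal form. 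For composition: if $\{W_i' \to X' \stackrel{s}{\to} X\}$ is composed with normal-form refinements $\{W_{ij}'' \to W_i'^{(j)} \stackrel{s_{ij}}{\to} W_i'\}_{j \in J_i}$ at each $W_i'$, the heaviness hypothesis applied to the $\tau$-covering $\{W_i' \to X'\}$ and the family $\{s_{ij}\} \subseteq S$ produces a single $t : X'' \to X'$ in $S$ and a $\tau$-covering $\{Z_k \to X''\}$ with compatible factorizations $Z_k \to W_{i_k}'^{(j_k)}$; pulling the $\tau$-coverings $\{W_{i_k j_k}'' \to W_{i_k}'^{(j_k)}\}$ back along these factorizations and composing with $st : X'' \to X$ in $S$ yields a normal-form refinement of the composite family.

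The main obstacle is precisely the composition axiom, since a composite of families of form \eqref{equa:normalForm} has several $S$-morphisms scattered across the individual branches and is not immediately in normal form. The heaviness condition was imported into the statement for exactly this purpose: it consolidates all the branch-wise $S$-morphisms $s_i : V_i' \to V_i$ into a single $t : X' \to X$ in $S$ refined by a compatible $\tau$-covering. Pullback stability is a milder application of the calculus of right fractions, and the continuity of $u$ for $\tau'$ is then immediate from the construction.
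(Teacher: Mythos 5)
Your proposal is correct and follows essentially the same route as the paper: show normal-form families are coverings in the induced topology, verify that they form a topology by using the heaviness hypothesis for the composition/locality axiom and the right calculus of fractions for pullback stability, and conclude by minimality of the induced topology. The only point to phrase carefully is the pullback step: since $\sC$ need not have fibre products, ``pulling back the $\tau$-covering along $g_1$'' should be read as pulling back the covering \emph{sieve}, which yields a covering family of $Y_1$ together with factorisations through the $W_i'$ --- exactly as in the paper's (Stab) verification.
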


\begin{rema}\label{rem:woheaviness}
The below proof also shows that in the absence of the ``heaviness'' hypothesis, the topology on $\sC[S^{-1}]$ is generated by compositions $
\stackrel{f_0}{\to}
\stackrel{s_0}{\to}
\stackrel{f_1}{\to}
\stackrel{s_1}{\to}
\dots
\stackrel{f_n}{\to}
\stackrel{s_n}{\to}
$
of families in $\sC$ where each $s_i$ is a subfamily of $S$ and $f_i$ is a $τ$-covering. 
\end{rema}

\begin{rema}
If we add the hypothesis that morphisms in $S$ are categorical monomorphisms, then there is an alternative more heuristic proof, using the fact that $\PSh(\sC[S^{-1}])$ is the category of sheaves on $\sC$ for the topology whose coverings are singletons $\{s\}$ with $s \in S^{-1}$. Cf.~Prop.\ref{prop:locShv}, Prop.~\ref{prop:locPSh}. 
\end{rema}

\begin{proof}
Let $τ_{ind}$ denote the induced topology on $\sC[S^{-1}]$, and $τ_{ref}$ the collection of those sieves containing families of the form \eqref{equa:normalForm}. We will show that $τ_{ind} = τ_{ref}$.

First recall that the left Kan extension $loc^*: \PSh(\sC) \to \PSh(\sC[S^{-1}])$ can be identified with the functor $a_S: \PSh(\sC) \to \PSh_{S\textrm{-loc}}(\sC)$; $a_SF(X) = \varinjlim_{X' \to X \in S} F(S')$, where $\PSh_{S\textrm{-loc}}(\sC) \subseteq \PSh(\sC)$ is the full subcategory of those presheaves which send elements of $S$ to isomorphisms, Prop.~\ref{prop:locPSh}. It follows from this description that $loc^*$ preserves all finite limits and colimits. In particular, it preserves monomorphisms, so the image of a sieve is a sieve, and the image of sieve generated by a family of morphisms is the sieve generated by the image of those morphisms, that is, $loc^*(im(\sqcup V_i \to X)) = im(\sqcup loc^*V_i \to loc^*X)$.

Now since the functor $\sC \to \sC[S^{-1}]$ is continuous, the left Kan extension of any covering sieve is a covering sieve, \cite[Expos{\'e} III, Prop.1.2(i$\Rightarrow$ii)]{SGA41}.\footnote{Note that a sieve is bicovering as a morphism of presheaves if and only if it is a covering sieve, \cite[Expos{\'e} II, Prop.5.3(i$\Rightarrow$iibis)]{SGA41}.} So the left Kan extension of the sieve generated by a $\tau$-covering $\{W_i' \stackrel{f_i}{\to} X'\}_{i \in I}$ is a covering sieve of $τ_{ind}$. Furthermore, any sieve isomorphic to a covering sieve is a covering sieve. So sieves containing families of the form \eqref{equa:normalForm} are certainly covering sieves of $\sC[S^{-1}]$. That is, $τ_{ref} \subseteq τ_{ind}$.

By definition, $τ_{ind}$ is the coarsest topology making $\sC \to \sC[S^{-1}]$ continuous. So to show $τ_{ref} = τ_{ind}$, it now suffices to show that $τ_{ref}$ is a topology, and that $τ_{ref}$ makes $\sC \to \sC[S^{-1}]$ continuous.

To show that $τ_{ref}$ is a topology we check the two conditions of Prop.~\ref{prop:coverage} below. The condition (Loc) follows immediately from the hypothesis in the statement and the condition (Stab). For the condition (Stab) it suffices to check the following. 
For any family $\{W'_i \to X'\}_{i \in I}$ in $τ$, morphism $X' \stackrel{s}{\to} X$ in $S$, and any morphism $Z \stackrel{t^{-1}}{\to} Y \stackrel{g}{\to} X$ of $\sC[S^{-1}]$ with $t \in S$, $g \in \sC$, there exists a $τ$-covering family $\{V'_j \to Y'\}_{j \in J}$, some function $J \to I$, a morphism $\stackrel{s'}{\to}$ in $S$, and commutative diagrams as below.
\[ \xymatrix@!=6pt{
V'_j \ar@{=}[d] \ar[r]^{} & Y' \ar[r]^{ts' \in S} \ar@{=}[d] & Z \ar[d]^-{t^{-1}} \\
V'_j \ar[d] \ar[r]^{} 
& Y' \ar@{..>}[r]^{s' \in S} \ar@{..>}[d] & Y \ar[d]^g  \\
W_{i_j}' \ar[r]_{} & X' \ar[r]_{s \in S} & X .
} \]
The lower right square comes from the calculus of fractions, and the family $\{V'_j \to Y'\}_{j \in J}$, function $J \to I$ and lower left squares come from the pullback property of the covering sieves of $τ$.

Finally, we want to show that $τ_{ref}$ makes $\sC \to \sC[S^{-1}]$ continuous. But this follows from \cite[Expos{\'e} III, Prop.1.2(ii$\Rightarrow$i)]{SGA41}.
\end{proof}

\begin{prop} \label{prop:coverage}
Let $C$ be a category equipped with a class $\sP$ of families of morphisms $\{U_i \to X\}_{i \in I}$. Let $\sT$ denote the family of sieves $im(\sqcup U_i \to X)$ associated to families in $\sP$.
\begin{enumerate}
 \item For $\sT$ to satisfy the locality axiom of a Grothendieck topology it suffices that $\sP$ satisfy: 
 \item[(Loc)] given $\sP$-families $\{V_{ij} \to U_i \}_{j \in J_i}$ and $\{U_i \to X\}_{i \in I}$ there exists an $\sP$-family $\{W_k \to X\}_{k \in K}$ a function $K \to \sqcup_{i \in I} J_i$ and factorisations
 \[ W_k \to V_{i_k j_k} \to X \]

 \item For $\sT$ to satisfy the base change axiom of a Grothendieck topology it suffices that $\sP$ satisfy: 
 \item[(Stab)]for every $\sP$-family $\{U_i \to X\}_{i \in I}$ and morphism $Y \to X$ there exists a $\sP$-family $\{V_j \to Y\}_{j \in J}$, a function $J \to I$ and commutative squares
 \[ \xymatrix{
V_j \ar[d] \ar[r] & Y \ar[d] \\
U_{j_i} \ar[r] & X
 } \]
\end{enumerate}
In particular, if $\sP$ satisfies (Loc) and (Stab) then $\sT$ together with all maximal sieves $\id_X$ forms a Grothendieck topology. 
\end{prop}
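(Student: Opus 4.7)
The plan is to verify the three standard axioms of a Grothendieck topology (identity, stability, locality) for the collection $\widetilde{\sT}$ consisting of all maximal sieves together with all sieves containing some element of $\sT$. The identity axiom is immediate by assumption. The work is in verifying (2) for the base change axiom and (1) for the locality axiom; both reduce to straightforward applications of the hypotheses (Stab) and (Loc) respectively.

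First I would tackle (2). Take a generating sieve $\sigma = \mathrm{im}(\sqcup_{i \in I} U_i \to X) \in \sT$ associated to a family $\{U_i \to X\}_{i \in I} \in \sP$, together with an arbitrary morphism $g \colon Y \to X$. Apply (Stab) to produce a $\sP$-family $\{V_j \to Y\}_{j \in J}$, a function $J \to I$, and commutative squares over $g$. Each composition $V_j \to Y \to X$ factors as $V_j \to U_{j_i} \to X$, so the morphism $V_j \to Y$ lies in the pullback sieve $g^\ast \sigma$. Hence $\mathrm{im}(\sqcup V_j \to Y) \in \sT$ is contained in $g^\ast \sigma$, showing that $g^\ast \sigma$ is a covering sieve in the sense of $\widetilde{\sT}$.

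For (1), suppose $\sigma = \mathrm{im}(\sqcup_{i} U_i \to X) \in \sT$ and $\tau$ is a sieve on $X$ such that for every morphism $f \in \sigma$, the pullback $f^\ast \tau$ lies in $\widetilde{\sT}$. In particular, for each generator $f_i \colon U_i \to X$, either $f_i^\ast \tau$ is maximal (so $f_i \in \tau$) or it contains some $\mathrm{im}(\sqcup_{j \in J_i} V_{ij} \to U_i) \in \sT$ coming from a family $\{V_{ij} \to U_i\}_j \in \sP$. In the maximal case, set $J_i = \{\ast\}$ and $V_{i\ast} = U_i$ with the identity morphism. Applying (Loc) to the nested data produces a $\sP$-family $\{W_k \to X\}_{k \in K}$ with factorisations $W_k \to V_{i_k j_k} \to U_{i_k} \to X$. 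Since each $V_{ij} \to U_i$ lies in $f_i^\ast \tau$, each composite $V_{ij} \to U_i \to X$ lies in $\tau$; because $\tau$ is a sieve, each $W_k \to X$ then lies in $\tau$. Hence $\mathrm{im}(\sqcup W_k \to X) \in \sT$ is contained in $\tau$, placing $\tau$ in $\widetilde{\sT}$.

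The main (mild) obstacle is the bookkeeping required to handle the case where some $f_i^\ast \tau$ is maximal rather than being generated by an element of $\sP$: strictly speaking (Loc) is only postulated for $\sP$-families, so one must either augment $\sP$ with the trivial refinements $\{X \xrightarrow{\id} X\}$ (which preserves all hypotheses) or partition the index set into those $i$ where $f_i \in \tau$ already and those where a nontrivial refinement is available, applying (Loc) to the latter and adjoining the $U_i$ from the former. Either bookkeeping choice works, and no genuine obstruction arises.
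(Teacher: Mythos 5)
Your write-up is the intended unwinding of the definitions: the paper's own proof of this proposition is the single sentence ``Everything follows directly from the definitions,'' so there is no competing argument to compare against, and your verifications of the base change axiom from (Stab) and of the main line of the locality axiom from (Loc) are the right ones. The extension from generating sieves in $\sT$ to arbitrary sieves of $\widetilde{\sT}$ (via monotonicity of pullback, resp. restricting attention to the generators $f_i$) is also handled correctly.

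The one real issue is the ``mixed'' case in the locality axiom, which you correctly flag but do not correctly close. Your first fix --- adjoining $\{X \xrightarrow{\id} X\}$ to $\sP$ ``which preserves all hypotheses'' --- is asserted, not proved, and is not automatic: the augmented (Loc) must now handle an original family $\{U_i \to X\}$ refined by $\{\id_{U_i}\}$ at some indices and by genuine $\sP$-families at others, and the original (Loc) says nothing about such mixed inputs. Your second fix (adjoin the $U_i$ with $f_i \in \tau$ to a (Loc)-output) produces a family that need not belong to $\sP$, so its image need not lie in $\sT$. Indeed, statement (1) taken literally in isolation fails: with objects $X, U_1, U_2, V$, morphisms $f_1\colon U_1 \to X$, $f_2\colon U_2\to X$, $g\colon V\to U_2$ and $\sP = \{\{f_1,f_2\},\{g\}\}$, the condition (Loc) holds vacuously (no $\sP$-family has target $U_1$ or $V$), yet the sieve $\tau=\{f_1, f_2g\}$ has $h^\ast\tau$ covering for every $h$ in $im(f_1\sqcup f_2)$ while containing no element of $\sT$. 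The clean repair, available exactly where the proposition is used (the ``in particular'' with both axioms), is to invoke (Stab) applied to the member $f_i\colon U_i\to X$ itself: this produces \emph{some} $\sP$-family $\{V_{ij}\to U_i\}_j$, and since $f_i^\ast\tau$ is maximal it contains $im(\sqcup_j V_{ij}\to U_i)$ for free, so (Loc) applies with honest $\sP$-families at every index. In all the sites of the paper (where identity families are coverings) none of this matters, but if you state the fix, state that one.
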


\begin{proof}
Everything follows directly from the definitions.
\end{proof}

\begin{coro} \label{coro:normalForm}
Let $\sS$ be a qcqs modulus pair, and let $\tau$ be one of $\Zar$, $\Nis$, $\et$, $\fppf$, or $\rqfh$. Then any $\ul{\textrm{M}}\tau$-covering in $\ulMSch_\sS$ is refinable by one of the form
\[ \{ \sU_i \to \sY \to \sX\}_{i \in I} \]
where $\sY \to \sX$ is an \abb{} and $\{\sU_i \to \sX\}_{i \in I}$ is an $\ulPtau$-covering.
\end{coro}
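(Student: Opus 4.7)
The plan is to deduce this corollary directly from the abstract localization machinery of Prop.~\ref{prop:heavyRefine}, applied to $\sC = \ulPSch_\sS$, $S = \ul{\Sigma} \cap \ulPSch_\sS$, and $\tau = \ulPtau$. Indeed, by Prop.~\ref{prop:subLocalisation} we have an identification $\ulMSch_\sS \cong \ulPSch_\sS[\ul{\Sigma}^{-1}]$, and by construction (Def.~\ref{defi:mtop}, Rem.~\ref{rema:induced}) the $\ulMtau$-topology on $\ulMSch_\sS$ is the topology induced from the $\ulPtau$-topology on $\ulPSch_\sS$ under this localization. Once the ``heaviness'' hypothesis of Prop.~\ref{prop:heavyRefine} is verified, that proposition immediately yields the desired refinement: every $\ulMtau$-covering of $\sX$ is refinable by a composition of a single \aab{} $\sY \to \sX$ and a $\ulPtau$-covering $\{\sU_i \to \sY\}_{i \in I}$.

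Thus the real task is to prove the heaviness assertion: given a $\ulPtau$-covering $\{\sV_i \to \sX\}_{i \in I}$ and a family of \aab{}s $\{\sV_i' \to \sV_i\}_{i \in I}$, produce an \aab{} $\sX' \to \sX$, a $\ulPtau$-covering $\{\sW_j' \to \sX'\}_{j \in J}$, and factorisations $\sW_j' \to \sV_{i_j}' \to \sV_{i_j} \to \sX$ compatible with $\sW_j' \to \sX' \to \sX$. This is essentially the content of the ``heaviness'' theorems proved in Section~\ref{sec:heaviness}. For $\tau \in \{\Zar, \Nis, \et, \fppf\}$, since $\hX$ is qcqs, we first refine to a finite subfamily and form the disjoint union $\sV := \sqcup_{i \in I} \sV_i$ and $\sV' := \sqcup_{i \in I} \sV_i'$ (which exist by Prop.~\ref{prop:finitePCoprod}); then $\hV \to \hX$ satisfies the corresponding hypothesis of Thm.~\ref{thm:etaleHeavy} (for $\tau = \Zar$ use case \eqref{thm:etaleHeavy:fppfm} since open immersions are flat of finite presentation), and $\sV' \to \sV$ is an \aab{}. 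Applying Thm.~\ref{thm:etaleHeavy} produces an \aab{} $\sX' \to \sX$ together with an ambient morphism $\sW \to \sX'$ which is an \aab{} over $\sV'$ and has $\hW \to \hX'$ of the correct type. The decomposition $\iW = \sqcup_i \iV_i' \times_{\iX} \iX'$ induced from $\iV' = \sqcup \iV_i'$ then lifts, after a further \aab{} (absorbed into $\sX' \to \sX$) via Prop.~\ref{prop:finiteCoprod}(2), to a disjoint union decomposition $\sW \cong \sqcup_j \sW_j'$ of total spaces, giving the required $\ulPtau$-covering $\{\sW_j' \to \sX'\}$.

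For the $\tau = \rqfh$ case, we use Lem.~\ref{lemm:pqrfhIsFinZar} to express the given $\ulPrqfh$-covering as a finite composition of $\ulPZar$-coverings and $\ulPfin$-coverings. We then induct on the length of this composition: the Zariski pieces are handled exactly as above via Thm.~\ref{thm:etaleHeavy}, and the finite pieces are handled by Thm.~\ref{thm:finHeavy}, which is the analogous heaviness result for finite surjective morphisms. At each stage the newly-produced \aab{} of the base can be pulled back (via the ambient product, Prop.~\ref{prop:coveringAmbtimes} and Lem.~\ref{lemm:ambtimesAab}) to propagate through the remaining compositions, so that the whole diagram is rewritten in the desired normal form.

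The main obstacle is bookkeeping: Thm.~\ref{thm:etaleHeavy} and Thm.~\ref{thm:finHeavy} are stated for a single ambient morphism $\sV \to \sX$, but our input is a family. The reduction to a finite subfamily using quasi-compactness of $\hX$, combined with the existence of finite coproducts in $\ulPSch$ (Prop.~\ref{prop:finitePCoprod}) and the splitting-up-to-blowup statement Prop.~\ref{prop:finiteCoprod}(2), is what allows us to absorb the family into a single morphism, apply the heaviness theorem, and then recover the individual components of the output covering. Once this is done, the whole proof is a mechanical verification of the hypotheses of Prop.~\ref{prop:heavyRefine}.
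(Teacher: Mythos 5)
Your proposal is correct and follows essentially the same route as the paper: the paper's proof is precisely ``verify the heaviness hypothesis of Prop.~\ref{prop:heavyRefine} using Thm.~\ref{thm:etaleHeavy} for $\Zar$, $\Nis$, $\et$, $\fppf$ and Thm.~\ref{thm:finHeavy} for $\rqfh$,'' with the family-versus-single-morphism bookkeeping left implicit. Your explicit reduction (finite subfamily by quasi-compactness, pass to the coproduct, apply the heaviness theorem, split back up) is the right way to fill that in; the only quibble is that the ``further \aab{}'' at the end is unnecessary and slightly misplaced --- since the ambient morphism $\hW \to \sqcup_i \hV_i'$ lands in a disjoint union, $\hW$ already decomposes into matching clopen pieces, so no appeal to Prop.~\ref{prop:finiteCoprod}(2) is needed and the resulting family stays a $\ulPtau$-covering.
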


\begin{proof}
It suffices to check that the hypotheses of Prop.~\ref{prop:heavyRefine} are satisfied. For $\tau = $  $\Zar$, $\Nis$, $\et$, $\fppf$ this was done in Thm.~\ref{thm:etaleHeavy} and for $\rqfh$ this was done in Thm.~\ref{thm:finHeavy}.
\end{proof}

\section{Subcanonicity of the {$\protect \ulMfppf$}-topology}

It is well-known that, on the category of schemes, the fpqc topology is subcanonical.
In other words, any representable presheaf is a sheaf for fpqc topology.
We have the analogous result for the $\ulMfppf$-topology.

\begin{cor}[Subcanonicity]\label{cor:fppf-subcanonical}
The $\ulMfppf$-topology on $\ulMSCH^\qcqs$ is subcanonical.
In particular, any topology on $\ulMSCH^\qcqs$ coarser than $\ulMfppf$ is subcanonical. 
\end{cor}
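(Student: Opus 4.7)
By Cor.~\ref{coro:normalForm}, every $\ulMfppf$-covering is refinable by one of the form $\{\sU_i \to \sY_0 \to \sX\}_{i \in I}$ where $\sY_0 \to \sX$ is an abstract admissible blowup and $\{\sU_i \to \sY_0\}_{i \in I}$ is a $\ulPfppf$-covering. Abstract admissible blowups are invertible in $\ulMSCH$, so the sheaf axiom for them is automatic, and it suffices to check the sheaf condition for $\ulPfppf$-coverings on $\ulMSCH^\qcqs$. Fix $\sY \in \ulMSCH^\qcqs$; one must show that for any $\ulPfppf$-covering $\{\sU_i \to \sX\}_{i \in I}$, a compatible family of morphisms $\varphi_i : \sU_i \to \sY$ amalgamates uniquely to some $\varphi : \sX \to \sY$. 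Since each $\sU_i \to \sX$ is ambient minimal with $\hU_i \to \hX$ flat, the categorical fibre product $\sU_{ij} := \sU_i \times_\sX \sU_j$ in $\ulMSCH$ coincides with the ambient product $\sU_i \ambtimes_\sX \sU_j$ and has total space $\hU_i \times_{\hX} \hU_j$, by Prop.~\ref{prop:minFibPro} and Lem.~\ref{lemm:minProduct}.

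Uniqueness follows from Lem.~\ref{lem:coincidence2}: any morphism in $\ulMSCH$ is determined by its restriction to the interior, and fpqc-subcanonicity on the category of schemes pins down the amalgamated interior morphism $\iX \to \iY$. For existence, choose an ambient representative of $\sY$ and represent each $\varphi_i$ by a roof $\sU_i \xleftarrow{s_i} \sU'_i \xrightarrow{f_i} \sY$ with $s_i \in \uS$. The compatibility of the $\varphi_i$ on each $\sU_{ij}$, together with the calculus of fractions (Prop.~\ref{prop:cal-frac}) and Lem.~\ref{lem:coincide}, allows one to refine the $s_i$ further so that the pullbacks $\hU'_i \times_{\hU_i} \hU_{ij}$ and $\hU'_j \times_{\hU_j} \hU_{ij}$ coincide as $\hY$-schemes, producing genuine fppf-descent data along $\{\hU_i \to \hX\}$. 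Now apply classical fppf-descent in two stages: first descend the proper morphisms $\hU'_i \to \hU_i$ to a proper morphism $\hX' \to \hX$, which is necessarily an isomorphism over $\iX$, so that $\sX' := (\hX', \mX|_{\hX'}) \to \sX$ is an abstract admissible blowup; second, descend the $f_i$ to a morphism of schemes $\hX' \to \hY$. Admissibility of $\sX' \to \sY$ is then verified fppf-locally via Lem.~\ref{lem:moduluscond-fpqc} applied to the covering $\{\hU'_i \to \hX'\}$, and the roof $\sX \xleftarrow{\sim} \sX' \to \sY$ is the desired $\varphi$.

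The ``In particular'' assertion is automatic, since any coarser topology has fewer covering sieves, so a representable presheaf which is a sheaf for $\ulMfppf$ remains a sheaf for every coarser topology. The principal technical difficulty is the coherent refinement step above: the various abstract admissible blowups $\sU'_i \to \sU_i$ must be simultaneously adjusted, across the index set $I$, to yield a single descent datum for schemes equipped with a morphism to $\hY$. This is where the qcqs hypothesis is genuinely used, so that Prop.~\ref{prop:descendAdmBlowup} and the finite-presentation approximations of Prop.~\ref{prop:cannotLimitBlowups} are available to compare the different refinements.
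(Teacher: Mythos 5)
Your skeleton agrees with the paper's proof at the beginning and the end: reduce to $\ulPfppf$-coverings via the normal form of coverings, get uniqueness by passing to interiors and invoking classical subcanonicity for schemes, amalgamate on total spaces by classical fppf descent, and verify the modulus condition fppf-locally via Lem.~\ref{lem:moduluscond-fpqc}. The gap is in the middle of your existence argument. You represent each $\varphi_i$ by a roof $\sU_i \xleftarrow{s_i} \sU'_i \to \sY$ and then propose to ``descend the proper morphisms $\hU'_i \to \hU_i$ to a proper morphism $\hX' \to \hX$'' along the fppf covering. Fppf descent of schemes is \emph{not} effective for general proper, non-affine morphisms; it is effective for affine or quasi-affine morphisms, or in the presence of a relatively ample line bundle compatible with the descent datum, and none of these is available here, since abstract admissible blowups are merely proper morphisms that are isomorphisms over the interior. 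A second, related problem is that the compatibility of the $\varphi_i$ over $\sU_{ij}$ is an equality of morphisms in $\ulMSCH$, hence by calculus of fractions only an agreement after a further, a priori uncontrolled, refinement of each $\sU'_i$; promoting this to a strict cocycle of isomorphisms $\hU'_i\times_{\hU_i}\hU_{ij}\cong \hU'_j\times_{\hU_j}\hU_{ij}$ over $\hY$ is precisely the strictification problem that Prop.~\ref{prop:descendAdmBlowup} and Prop.~\ref{prop:cannotLimitBlowups} (which concern pro-objects and filtered limits, not gluing along coverings) do not solve.

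The paper avoids both difficulties by never descending the blowups at all. Given the family of abstract admissible blowups $\sU'_\lambda \to \sU_\lambda$ sitting above the $\ulPfppf$-covering, Thm.~\ref{thm:etaleHeavy}\eqref{thm:etaleHeavy:fppfc} (Raynaud--Gruson flatification) produces a \emph{single} abstract admissible blowup $\sU' \to \sU$ of the base together with a $\ulPfppf$-covering of $\sU'$ dominating everything; this is packaged in Prop.~\ref{prop:heavyRefine}. Replacing $\sU$ by $\sU'$, all the $f_\lambda$ become ambient over one common base, and classical subcanonicity of the fppf topology on total spaces applies directly, followed by the modulus check with Lem.~\ref{lem:moduluscond-fpqc} exactly as you do. To repair your argument, replace the descent-of-blowups step by this appeal to Thm.~\ref{thm:etaleHeavy} (or Cor.~\ref{coro:normalForm}); as written, that step does not go through.
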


\begin{proof}
Let $\sX = (\hX,\mX) \in \ulMSCH^\qcqs$ be a modulus pair, and $h_{\sX}$ the presheaf on $\ulMSCH^\qcqs$ represented by $\sX$.
Moreover, fix a covering $\{u_\lambda : \sU_\lambda \to \sU\}_{\lambda \in \Lambda}$ for the $\ulMfppf$-topology.
It suffices to show that
\[
h_{\sX} (\sU) \xrightarrow{i} \prod_{\lambda \in \Lambda} h_{\sX} (\sU_{\lambda}) \overset{a_1}{\underset{a_2}{\rightrightarrows}} \prod_{\lambda, \mu \in \Lambda} h_{\sX} (\sU_{\lambda} \times_{\sU} \sU_{\mu})
\]
is an equaliser diagram of sets, where $a_i$ is induced by the $i$-th projection for $i=1,2$.
Note that, via the faithful functor $\ulMSCH \to \SCH; \sY \to \iY$, the above diagram is embedded into the following diagram 
\[
h_{\iX} (\iU) \xrightarrow{i^\o} \prod_{\lambda \in \Lambda} h_{\iX} (\iU_{\lambda}) \overset{a_1^\o}{\underset{a_2^\o}{\rightrightarrows}} \prod_{\lambda, \mu \in \Lambda} h_{\iX} (\iU_{\lambda} \times_{\iU} \iU_{\mu}),
\]
which is an equaliser since the family of morphisms $\{\iU_\lambda \to \iU\}_{\lambda}$ is a $\fppf$-covering (see Remark \ref{rem:woheaviness}). In particular, $i$ is injective since $i^\o$ is.

Take an element $(f_\lambda)_\lambda \in  \prod_{\lambda \in \Lambda} h_{\sX} (\sU_{\lambda})$ satisfying $a_1 (f_\lambda)_\lambda = a_2 (f_\lambda)_\lambda$. 
By calculus of fractions, we may assume that $f_\lambda : \sU_\lambda \to \sX$ is ambient for each $\lambda$.
Moreover, by Proposition \ref{prop:heavyRefine} and Theorem \ref{thm:etaleHeavy}, there exists a refinement of $\{\sU_\lambda \to \sU\}_\lambda$ of the form $\{\sU'_\lambda \to \sU' \to \sU\}_\lambda$, where $\{\sU'_\lambda \to \sU'\}_\lambda$ is a $\ulMfppf^\amb$-covering and $\sU' \to \sU$ belongs to $\ul{\Sigma}$.
Therefore, replacing $\{\sU_\lambda \to \sU\}_\lambda$ by $\{\sU'_\lambda \to \sU'\}_\lambda$, we may assume that $\{\sU_\lambda \to \sU\}_\lambda$ is a $\ulMfppf^\amb$-covering. 
Then, the first diagram above induces 
\[
h_{\hX} (\hU) \xrightarrow{\ol{i}} \prod_{\lambda \in \Lambda} h_{\hX} (\hU_{\lambda}) \overset{}{\underset{}{\rightrightarrows}} \prod_{\lambda, \mu \in \Lambda} h_{\hX} (\hU_{\lambda} \times_{\hU} \hU_{\mu}),
\]
which is an equaliser since $\{\hU_\lambda \to \hU\}_\lambda$ is a covering for the $\fppf$-topology known to be subcanonical. 
Moreover, the element $(f_\lambda)_\lambda$ induces an element $(\ol{f}_\lambda)_\lambda \in \prod_{\lambda \in \Lambda} h_{\hX} (\hU_{\lambda})$, and therefore a unique element $\ol{f} \in h_{\hX} (\hU)$.

We are reduced to showing that the morphism $\ol{f} : \hU \to \hX$ satisfies the modulus condition, i.e., that $\mX \times_{\hX} \hU \subset \mU$. 
But by Lemma \ref{lem:moduluscond-fpqc}, this follows from the modulus condition of $f_\lambda$. 
This finishes the proof.
\end{proof}

\begin{remark}
We do not know whether $\ulMfpqc$-topology is subcanonical. The proof strategy of neither Thm.~\ref{thm:etaleHeavy} nor Thm.~\ref{thm:finHeavy} seems to work for fpqc morphisms.
\end{remark}

\begin{coro}
Let $\ulMAGP^{\amb} \subset \ulMSCH^\amb$ denote the full subcategory whose objects are of the form $(\Spec(A), (f))$ for some ring $A$ and nonzero divisor $f$. Let $τ = \ulMZar, \ulMNis, \ulMet$ or $\ulMfppf$ and let $\alpha$ denote the topology on $\ulMAGP^{\amb}$ induced by the functor $\ulMAGP^{\amb} \to \ulMSCH$.\footnote{So coverings can be detected using points, since a family of morphisms of representables is refinable by a covering if and only if its is jointly epimorphic, if and only if its image under a conservative family of fibre functors is jointly surjective.} Then Yoneda induces a fully faithful embedding
\[ \ulMSCH^\qcqs \subseteq \Shv_{\alpha}(\ulMAGP^{\amb}). \]
\end{coro}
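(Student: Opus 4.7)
The plan is to exploit the subcanonicity of the $\tau$-topology on $\ulMSCH^\qcqs$ (Cor.~\ref{cor:fppf-subcanonical}, which covers $\ulMfppf$ and hence any coarser topology such as $\ulMZar$, $\ulMNis$, $\ulMet$), together with the observation that every qcqs modulus pair admits an $\alpha$-covering by objects of $\ulMAGP^{\amb}$. First I would verify that the Yoneda functor $\sX \mapsto h_{\sX}|_{\ulMAGP^{\amb}}$ lands in $\Shv_{\alpha}(\ulMAGP^{\amb})$: by Cor.~\ref{cor:fppf-subcanonical} the presheaf $h_\sX$ is a $\tau$-sheaf on $\ulMSCH^\qcqs$, and since $\alpha$ is designed so that the inclusion $\ulMAGP^{\amb} \hookrightarrow \ulMSCH^\qcqs$ is continuous, its restriction to $\ulMAGP^{\amb}$ remains a sheaf. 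The key density property follows from qcqs-ness of $\hX$: choose a finite open affine cover of $\hX$, refine (still finitely) so that $\mX$ becomes globally principal on each piece, and we obtain a $\ulPZar$-covering $\{(\Spec A_i, (f_i)) \to \sX\}_i$ with each term in $\ulMAGP^{\amb}$, which is a fortiori an $\alpha$-covering.

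For faithfulness, suppose $f,g \colon \sX \rightrightarrows \sY$ in $\ulMSCH^\qcqs$ induce the same natural transformation on $\ulMAGP^{\amb}$. Applying this equality to the elements $(\sU_i \to \sX) \in h_\sX(\sU_i)$ arising from an affine cover $\{\sU_i \to \sX\}_i$ as above gives $f \circ (\sU_i \to \sX) = g \circ (\sU_i \to \sX)$ for every $i$, and the sheaf property of $h_\sY$ on $\ulMSCH^\qcqs$ for the $\tau$-covering $\{\sU_i \to \sX\}_i$ then forces $f = g$.

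For fullness, given a natural transformation $\eta \colon h_{\sX}|_{\ulMAGP^{\amb}} \to h_{\sY}|_{\ulMAGP^{\amb}}$, I set $g_i := \eta_{\sU_i}(\sU_i \to \sX) \in \hom_{\ulMSCH}(\sU_i, \sY)$ for each $\sU_i$ in the affine cover. To glue the $g_i$ into a morphism $\sX \to \sY$ via the sheaf property of $h_\sY$, I must show that $g_i$ and $g_j$ agree on each overlap $\sU_{ij} := \sU_i \ambtimes_\sX \sU_j$ (which exists by Thm.~\ref{thm:pullback}). The difficulty here is that $\sU_{ij}$ may fail to be affine, so $\eta$ cannot be evaluated on $\sU_{ij}$ directly; instead I would cover each $\sU_{ij}$ by further affines $\{\sV_{ijk} \to \sU_{ij}\}$ in $\ulMAGP^{\amb}$, and use naturality of $\eta$ along the ambient morphisms $\sV_{ijk} \to \sU_i$ and $\sV_{ijk} \to \sU_j$ to get $g_i|_{\sV_{ijk}} = \eta_{\sV_{ijk}}(\sV_{ijk} \to \sX) = g_j|_{\sV_{ijk}}$. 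Two applications of the sheaf property of $h_\sY$---first to $\{\sV_{ijk} \to \sU_{ij}\}$, then to $\{\sU_i \to \sX\}$---produce the desired $f \colon \sX \to \sY$, and an analogous cover-and-glue argument verifies $\eta = h_f|_{\ulMAGP^{\amb}}$. The main obstacle is precisely this gluing step across non-affine overlaps: it is the only part of the argument that is not purely formal, and it relies crucially on having the sheaf property of $h_\sY$ available on the full site $\ulMSCH^\qcqs$, not merely on $\ulMAGP^{\amb}$.
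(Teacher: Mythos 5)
The paper states this corollary without proof, so there is no written argument to compare yours against; the intended route is evidently subcanonicity (Cor.~\ref{cor:fppf-subcanonical}) combined with the fact that every qcqs modulus pair admits a finite $\ulPZar$-covering by objects of $\ulMAGP^{\amb}$, and your proposal is the natural hands-on unpacking of exactly that. The sheaf-landing step, the density step, the faithfulness argument, and the construction of $f$ in the fullness argument --- including the correct treatment of the possibly non-affine overlaps $\sU_i \ambtimes_\sX \sU_j$ --- are all sound.

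The one real gap is the closing sentence ``an analogous cover-and-glue argument verifies $\eta = h_f$''. That step is \emph{not} analogous to the earlier ones: it is the only point where you must evaluate $\eta$ at a morphism $a \colon \sT \to \sX$ of $\ulMSCH$ which need not be ambient, whereas $\eta$ is natural only with respect to the ambient morphisms of $\ulMAGP^{\amb}$. Writing $a = b \circ s^{-1}$ with $s \colon \sT'' \to \sT$ in $\ul{\Sigma}$ and $b$ ambient, you cannot pull $\{\sU_i \to \sX\}$ back to $\sT$ inside $\ulMAGP^{\amb}$ without first passing through $\sT''$, whose total space is proper over $\hT$ and in general not affine. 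The repair is to take a finite affine cover $\{\sW_m \to \sT''\}$ on which the modulus is principal: the composites $\sW_m \to \sT'' \to \sT$ are ambient minimal morphisms of $\ulMAGP^{\amb}$ forming an $\alpha$-covering of $\sT$ (their images are jointly epimorphic in $\Shv_\tau(\ulMSCH)$ because $s$ becomes invertible there), each $a|_{\sW_m}$ is now ambient, and after a further $\ulPZar$-refinement it factors through some $\sU_i$ via an ambient morphism of $\ulMAGP^{\amb}$, to which naturality of $\eta$ does apply; $\alpha$-separatedness of $h_\sY|_{\ulMAGP^{\amb}}$ then yields $\eta_\sT(a) = f \circ a$. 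Without an argument of this shape the fullness claim is incomplete; with it, your proof goes through.
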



\section{Fibre functors}

In this section our goal is to describe fibre functors of various categories of sheaves using pro-objects in general, and modulus pairs in nice cases.
In particular, the description of a conservative family of points with respect to the Nisnevich topology on $\ulMSm_{\sS}$ for $\sS$ qcqs is given in Proposition~\ref{prop:conservativeLocalPairs}.

\begin{prop} \label{prop:conservativeLocalPairs}
Let $\sS$ be a qcqs modulus pair,
let $\sC$ be one of the categories in the left column, 
let $τ$ be one of the classes in the top row, 
and suppose that the $(\sC, τ)$-entry is non-empty.
\[
\begin{array}{cc|cc|c|c}
{_\sC} \diagdown{^τ}    & \Zar & \Nis & \et & \fppf & \rqfh \\
\Sch             & (N)  & (N)  & (N) & (N)   & (N)   \\ 
\Sm              & (F)  & (F)  & (F) &       &       \\ \hline 
\Sch^\min        & (N)  & (N)  & (N) & (N)   & (N)   \\ 
\Qf              &      &      &     &       & (N)   \\ 
\Sm^\min         & (F)  & (F)  & (F) &       &       \\ 
\Fppf            & (F)  & (F)  & (F) & (F)   &       \\
\Et              & (F)  & (F)  & (F) &       &       \\
\Op              & (F)  &      &     &       &       \\ 
\end{array}
\]
In case (N) assume $\iS$ is Noetherian.

Then a morphism of presheaves 
\[ F \to G \]
on $\ulM\sC_\sS$ becomes an isomorphism of $\ulMtau$-sheaves if and only if 
\[ F(\sP) \to G(\sP) \]
is an isomorphism for every 
$\ulP\tau$-local $\uS$-local modulus pair $\sP \in \ulPSCH_\sS$.
Here, we implicitly use the left Kan extension of $F$ to $\ulMSCH_\sS$. 
\end{prop}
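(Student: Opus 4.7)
The strategy is to invoke the general topos-theoretic criterion that a morphism of presheaves becomes an isomorphism of sheaves if and only if it induces an isomorphism at every point of the topos, i.e., at every fibre functor. By \cite[IV.6.8.7]{SGA41}, the fibre functors of $\Shv_{\ulMtau}(\ulM\sC_\sS)$ are classified by $\ulMtau$-local pro-objects of $\ulM\sC_\sS$, the stalk at such a pro-object being the associated filtered colimit. Thus the task reduces to showing that, for every $\ulPtau$-local $\uS$-local modulus pair $\sP$, the value of the left Kan extension of $F$ at $\sP$ agrees with the stalk at the corresponding fibre functor, and that the resulting family of points is conservative on $\Shv_{\ulMtau}(\ulM\sC_\sS)$.

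For the identification of $F(\sP)$ with a stalk, Theorem~\ref{theo:locProIsRep} is the bridge. Given such $\sP$, the left adjoint $L$ of that theorem sends it to the pro-object indexed by the slice $\sP / \ulM\sC_\sS$, so the value of the left Kan extension at $\sP$ is by definition the stalk of $F$ at the fibre functor associated to $L\sP$. Conversely, for a $\ulMtau$-local pro-object $(\sQ_\lambda)$ with affine minimal (and, where needed, schematically dominant) transition morphisms, the ambient limit $\amblim \sQ_\lambda$ is a $\ulPtau$-local $\uS$-local modulus pair by Propositions~\ref{prop:prouSLocLim}, \ref{prop:proFlatLocLim}, and \ref{prop:prorqfhLocLim}, and the full faithfulness of $\amblim$ together with Proposition~\ref{prop:limXYMsch} identifies the stalk $\varinjlim_\lambda F(\sQ_\lambda)$ with $F(\amblim \sQ_\lambda)$.

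The $(\Rightarrow)$ direction is now immediate: an isomorphism of sheaves is an isomorphism on every stalk, and the above identifies these with the values $F(\sP) \to G(\sP)$. For $(\Leftarrow)$, it suffices to show that the family of fibre functors $L\sP$, as $\sP$ ranges over $\ulPtau$-local $\uS$-local modulus pairs, is conservative. By Lemmas~\ref{lemm:ZarLocIsAff} and~\ref{lemm:NoethQfDom}, any $\ulMtau$-local pro-object of $\ulM\sC_\sS$ is pro-isomorphic to one whose transition morphisms are of the form required to apply Theorem~\ref{theo:locProIsRep}; its $\amblim$ is then a modulus pair $\sP$ of the desired form, at which the hypothesis applies. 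Putting this together with the stalk identification above yields the implication.

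The main obstacle will be choosing the good representative of each $\ulMtau$-local pro-object so that the hypotheses of Theorem~\ref{theo:locProIsRep} and Proposition~\ref{prop:limXYMsch} are met on the nose, and checking this uniformly across all entries in the table; in particular the $(\Qf, \rqfh)$ entry relies on Proposition~\ref{prop:descendPQfhCoverings} to descend coverings through limits, which requires the Noetherian interior hypothesis. A secondary subtlety is that $\sP$ is allowed to lie in $\ulPSCH_\sS$ rather than $\ulPSCH_\sS^\min$ to which Theorem~\ref{theo:locProIsRep} applies directly; but the morphism $\sP \to \sS$ plays no role in the stalk computation, so this flexibility in the statement introduces no extra content beyond the minimal case.
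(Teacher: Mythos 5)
Your handling of the lower portion of the table (the categories of minimal morphisms) is essentially the paper's argument, but one step is left implicit that must be made explicit: "isomorphism on all stalks implies isomorphism of sheaves" is not a general topos-theoretic fact, it requires the topos to have enough points. For the sites at hand this is supplied by Deligne's theorem (Thm.~\ref{theo:deligne}), which is why the paper checks that $\sS$ qcqs forces coverings to be refinable by finite families and that $\ulM\sC_\sS$ has fibre products. With that in place, Prop.~\ref{prop:fibArePro} and Thm.~\ref{theo:locProIsRep} identify the fibre functors with $F \mapsto F(\sP)$ exactly as you describe.

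The genuine gap is in the top two rows ($\Sch$ and $\Sm$). You assert that every $\ulMtau$-local pro-object of $\ulM\sC_\sS$ is pro-isomorphic to one covered by Thm.~\ref{theo:locProIsRep}, citing Lemmas~\ref{lemm:ZarLocIsAff} and~\ref{lemm:NoethQfDom}; but those lemmas, and Thm.~\ref{theo:locProIsRep} itself, are stated only for full subcategories of $\ulPSch_\sS^{\min}$. A pro-object of $\ulMSm_\sS$ has transition morphisms that need not be minimal over $\sS$, and its modulus need not stabilise down the tower. The paper's own ``Further remarks'' immediately after this proposition point out that $\Shv_{\ulMNis}(\ulMSm_\sS)$ has fibre functors whose associated pro-objects have non-stabilising modulus and therefore do \emph{not} arise from any modulus pair $\sP$. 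So the claim ``every fibre functor is of the form $L\sP$'' is false for the big sites, and your conservativity argument collapses exactly where it is needed. The paper instead bootstraps: having settled the minimal/small cases, it uses that for each $\sX \in \ulMSm_\sS$ the restriction $\PSh(\ulMSm_\sS) \to \PSh(\ulMSm_\sX^{\min})$ commutes with $\ulMNis$-sheafification (cocontinuity), so $F \to G$ is an isomorphism of sheaves iff every $F|_\sX \to G|_\sX$ is, and then shows $F|_\sX(\sP) = F(\sP)$ for local $\sP \in \ulPSCH_\sX^{\min}$ by an initiality argument on the indexing categories $\sP/\ulMSm_\sX^{\min} \to \sP/\ulMSm_\sS$, using Lem.~\ref{lemm:minMinMin}. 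Some such reduction to the minimal small sites is needed to cover the $\Sch$ and $\Sm$ rows; it does not follow from the classification of local pro-objects alone.
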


\begin{proof}
The $(\Rightarrow)$ direction is an easy exercise.\footnote{Use the definition of left Kan extension, and the facts that (i) an element $s \in F(\sX)$ is mapped to zero in the sheafification if and only if $s|_\sY = 0$ for some covering $\sY \to \sX$, (ii) $F \to G$ becomes a surjective morphism of sheaves if and only if for every $s \in G(\sX)$ there is a covering $\sY \to \sX$ such that $s|_{\sY}$ is in the image of $F(\sY) \to G(\sY)$, and (iii) $\sP$ factors through coverings.}

For $(\Leftarrow)$, first consider the the cases in the lower portion of the table, in which case the morphism in $\ulP \sC$ are minimal. Since $\sS$ is qcqs, coverings families are always refineable by covering families consisting of a finite number of morphisms. Moreover, $\ulM\sC_\sS$ admits fibre products. Hence, by Deligne's theorem, Thm.~\ref{theo:deligne} the collection of fibre functors on $\Shv_{\ulMtau}(\ulM\sC_\sS)$ is conservative. By Prop.~\ref{prop:fibArePro}, the category of fibre functors is equivalent to the category of $\ulMtau$-local pro-objects in $\ulM\sC$. Then by Thm.~\ref{theo:locProIsRep}, $\ulMtau$-local pro-objects are forgetful functors from under categories $\sP / \ulM\sC$ for $\ulPtau$-local $\uS$-local $\sP$. 
Notice that in these cases, it suffices to consider $\sP \to \sS$ which are minimal (we will use this observation soon). 

Now we use the cases in the lower portion of the table to prove the cases in the upper portion. We give the proof for $\sC = \ulMSm_\sS$, $\tau = \Nis$ since this case is of most interest motivically, but the other cases work verbatim.

For each $\sX \in \sC$ the functor $\ulMSm_\sX^\min \to \ulMSm_\sS$ %
is cocontinuous for the $\ulMNis$-topology. That is, restriction 
$\PSh(\ulMSm_\sS) \to \PSh(\ulMSm_\sX^\min)$; $F \mapsto F|_\sX$ 
commutes with sheafification. So the following are equivalent
\begin{enumerate}
 \item $F \to G$ becomes an isomorphism of $\ulMNis$-sheaves.
 \item $F|_{\sX} \to G|_{\sX}$ becomes an isomorphism of $\ulMNis$-sheaves for all $\sX \in \ulMSm_\sS$.
\end{enumerate}
By the previous case, we know that (2) is equivalent to:
\begin{enumerate} \setcounter{enumi}{2}
 \item $F|_{\sX}(\sP) \to G|_{\sX}(\sP)$ is an isomorphism for every \textit{minimal} $\ulPNis$-local $\uS$-local modulus pair $\sP \in \ulPSCH_\sX^\min$.
\end{enumerate}
So it suffices to show that for a given presheaf $F$ on $\ulMSm_\sS$, modulus pair $\sX \in \ulMSm_\sS$, and 
$\ulPNis$-local $\uS$-local 
$\sP \in \ulPSCH^\min_\sX$ we have $F(\sP) = F|_X(\sP)$. For this it suffices to show that the functor $\sP / \ulMSm^\min_\sX \to \sP / \ulMSm_\sS$ between indexing categories is initial cf.\cite[Tag 04E6, 04E7, 09WP]{stacks-project}. 

The functor $\sP / \ulMSm_\sX \to \sP / \ulMSm_\sS$ is initial due to the existence of a right adjoint, and the fact that $\sP / \ulMSm_\sX$ is filtered. The functor $\sP / \ulMSm_\sX^\min \to \sP / \ulMSm_\sX$ is initial by fully faithfulness and because $\sP \to \sX$ is minimal, cf.~Lem.~\ref{lemm:minMinMin}.
\end{proof}

\begin{lemm} \label{lemm:minMinMin}
Suppose that $\sX \to \sY \to \sZ$ are ambient morphisms of modulus pairs such that $\sX \to \sZ$ is minimal. Then there exists a quasi-compact open immersion $\hU \to \hY$ and a factorisation $\sX \to \sU \to \sY \to \sZ$ such that both $\sX \to \sU$ and $\sU \to \sZ$ are minimal, where $\sU = (\hU, \mY|_{\hU})$.
\end{lemm}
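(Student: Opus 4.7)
The plan is to carve $\hU$ out of $\hY$ by removing the ``extra'' part of $\mY$ beyond the pullback of $\mZ$. Concretely, the strategy has four steps.

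First, I would compare the three divisors on $\hY$. Admissibility of $\sY \to \sZ$ means $\sI_{\mY} \subseteq \sI_{\mZ|_{\hY}}$, so locally, if $\mZ|_{\hY}$ is generated by a non-zero-divisor $g$ and $\mY$ by a non-zero-divisor $f$, we can write $f = gh$. A short check shows that $h$ is also a non-zero-divisor (if $hk=0$ then $fk=ghk=0$, hence $k=0$), so the locally principal ideal $(h)$ glues to an effective Cartier divisor $E$ on $\hY$ with $\mY = \mZ|_{\hY} + E$ as divisors.

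Second, I would translate the minimality hypothesis on $\sX \to \sZ$, which says $\mX = \mZ|_{\hX}$. Combined with the admissibility of $\sX \to \sY$, which gives $\mX \geq \mY|_{\hX} = \mZ|_{\hX} + E|_{\hX}$, we obtain $E|_{\hX} = 0$. In particular, the image of $\hX \to \hY$ avoids the support $|E|$.

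Third, I would define $\hU := \hY \setminus |E|$ and check that the open immersion $\hU \hookrightarrow \hY$ is quasi-compact. The argument is the same as in Lemma~\ref{lem:qc-interior}: the question is local on $\hY$, and locally $E$ is cut out by a single element, so $\hU$ is covered by a standard affine open which is itself affine and hence quasi-compact.

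Finally, I would verify the factorisation properties. By construction $\hX \to \hY$ factors (uniquely) through $\hU$. On $\hU$ we have $E|_{\hU} = 0$ by definition, so $\mY|_{\hU} = \mZ|_{\hU} + E|_{\hU} = \mZ|_{\hU}$, which is exactly the minimality of $\sU := (\hU, \mY|_{\hU}) \to \sZ$. For $\sX \to \sU$, admissibility is inherited from $\sX \to \sY$, and minimality follows from $\mY|_{\hX} = \mZ|_{\hX} = \mX$. I do not expect any serious obstacle here; the only subtle point is the verification in the first step that $h$ is a non-zero-divisor, which is what makes $E$ an honest effective Cartier divisor and allows the quasi-compactness argument to go through.
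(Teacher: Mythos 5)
Your proposal is correct and follows essentially the same route as the paper: decompose $\mY = \mZ|_{\hY} + E$ using admissibility of $\sY \to \sZ$, deduce $E|_{\hX}=0$ from minimality of $\sX\to\sZ$, and take $\hU = \hY\setminus|E|$. You in fact supply two small verifications the paper leaves implicit (that the local quotient $h$ is a non-zero-divisor, so $E$ is genuinely an effective Cartier divisor, and that $\hU\hookrightarrow\hY$ is quasi-compact), so no gaps remain.
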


\begin{proof}
By definition of admissible, there exists an effective Cartier divisor $D$ on $\hY$ such that $\mZ|_{\hY} = \mY + D$. Set $\hU = \hY \setminus D$. Then $\sU \to \sZ$ is certainly minimal. Moreover, by minimality of $\sX \to \sZ$ we have 
\[ \mX \geq \mY|_{\hX} = \mZ|_{\hX} + D|_{\hX} \geq \mZ|_{\hX} = \mX, \]
so we conclude that $D|_{\hX} = 0$, and therefore $\hX \to \hY$ factors through $\hU$, and $\sX \to \sU$ is also minimal.
\end{proof}

\begin{rema} \label{rema:canAssumeNoethInt}
With the same proof, and only slightly more work, one can show that in the $(\Sm, \Nis)$-case of Prop.~\ref{prop:conservativeLocalPairs}, it suffices to consider those 
$\ulPNis$-local $\uS$-local 
$\sP$ such that $\iP = \Spec(\OO_{X, x}^h)$ for some $x \in \iX$, $\sX \in \ulMSm_\sS$. In particular, if $\iS$ is noetherian (resp. regular), then it suffices to consider $\sP$ with $\iP$ noetherian (resp. regular). 

A similar observation applies to all cases in the top of the table.
\end{rema}


\section{Cocontinuity of ${\protect\ul{\omega}}$.} \label{sec:extending}

In this section we show for various topologies that coverings of schemes can (up to refinement) be extended to coverings of modulus pairs. This implies that $\ul{\omega}: \ulMSch_{\sS} \to \Sch_{\iS}$ and its variations is \emph{cocontinuous},%
\marginpar{\fbox{cocontinuous}}%
\index{cocontinuous} %
\cite[{\S}III.2]{SGA41}, for such topologies and consequently, $\ul{\omega}_*: \Shv(\Sch_{\iS}) \to \Shv(\ulMSch_{\sS})$ is exact, and right Kan extension preserves sheaves, (so $\ul{\omega}_*$ admits a right adjoint), \cite[{\S}III Prop.2.3]{SGA41}.

\begin{prop}[cf. \cite{kami}, \cite{kmsy1}] \label{prop:extendZar}
Let $\sS$ be a qcqs modulus pair and 
$\{\iV_i \to \iS\}_{i \in I}$ 
a Zariski covering. Then there exists an admissible blowup 
$\sY \to \sS$ 
and a 
Zariski 
covering 
$\{\hW_j \to \hY\}_{j \in J}$ 
such that 
$\{\iW_j \to \iS\}$ refines $\{\iV_i \to \iS\}$.
\end{prop}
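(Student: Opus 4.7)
The plan is to extend each $\iV_i$ to a quasi-compact open of $\hS$ and then blow up $\hS$ along an auxiliary ideal supported inside $|\mS|$, so that the extensions are forced to cover after blow-up. First, since $\iS$ is quasi-compact (Lem.~\ref{lem:qc-interior}), I can extract a finite sub-family $\{\iV_1, \dots, \iV_n\}$ of $\{\iV_i\}$ still covering $\iS$; it suffices to extend this finite family. By Def.~\ref{defi:coveringsSch}, each $\iV_k \to \iS$ is an open immersion of finite presentation, so $\iZ_k := \iS \setminus \iV_k$ is cut out by a finite type quasi-coherent ideal $\sI_k \subset \OO_{\iS}$.

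Since $\hS$ is qcqs and $\iS \to \hS$ is a quasi-compact open immersion (Lem.~\ref{lem:qc-interior}), standard Stacks Project ideal-extension results (cf.~\cite[Tag 01PJ]{stacks-project}) furnish for each $k$ a finite type quasi-coherent ideal $\hJ_k \subset \OO_{\hS}$ with $V(\hJ_k) \cap \iS = \iZ_k$ set-theoretically. Set $\hV_k := \hS \setminus V(\hJ_k)$, a quasi-compact open of $\hS$ with $\hV_k \cap \iS = \iV_k$. Let $\hJ := \sum_{k=1}^n \hJ_k$, still of finite type. Since $\{\iV_k\}$ covers $\iS$, the closed subset $V(\hJ) = \bigcap_k V(\hJ_k)$ satisfies $V(\hJ) \cap \iS = \emptyset$, hence is contained in $|\mS|$. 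Now let $\pi : \hY := \Bl_{\hJ}(\hS) \to \hS$ be the blowup along $\hJ$. By Lem.~\ref{lem:pb-bu}, the pullback $\mY := \mS|_{\hY}$ is an effective Cartier divisor, and since $V(\hJ) \subseteq |\mS|$, the proper morphism $\pi$ is an isomorphism over $\iS$; this exhibits $\sY := (\hY, \mY) \to \sS$ as an abstract admissible blowup in the sense of Def.~\ref{defn:Sigma}.

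Finally, set $\hW_k := \pi^{-1}(\hV_k) \subseteq \hY$. To check that $\{\hW_k \to \hY\}_{k=1}^{n}$ is a Zariski covering, I would examine the standard affine charts of the blowup: locally $\hJ$ is generated by finitely many sections $\{a_{k,j}\}$ with $a_{k,j} \in \hJ_k$, and $\hY$ is covered by the corresponding charts $D_+(a_{k,j})$ on which $a_{k,j}$ becomes a unit. Then $a_{k,j} \in \hJ_k$ forces $\hJ_k \cdot \OO_{D_+(a_{k,j})}$ to be the unit ideal, so $V(\hJ_k) \cap D_+(a_{k,j}) = \emptyset$ and consequently $D_+(a_{k,j}) \subseteq \hW_k$. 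Since the $D_+(a_{k,j})$ jointly cover $\hY$, so do the $\hW_k$, and each $\hW_k \to \hY$ is a quasi-compact open immersion as a base change of $\hV_k \to \hS$. As $\iY \cong \iS$ via $\pi$, the induced family $\{\iW_k \to \iS\}$ coincides with $\{\iV_k \to \iS\}$ and thus refines the original covering. The main technical point I expect to be the ideal-extension step, which is where the qcqs hypothesis on $\sS$ enters essentially; the remainder of the argument is formal manipulation of blowups and Cartier divisors.
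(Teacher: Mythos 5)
Your setup---extracting a finite subfamily, extending the ideals cutting out $\iZ_k = \iS\setminus\iV_k$ to finite type quasi-coherent ideals $\hJ_k\subset\OO_{\hS}$, and blowing up $\hJ=\sum_k\hJ_k$, whose vanishing locus lies inside $|\mS|$ so that $\sY\to\sS$ is an abstract admissible blowup---is sound and is essentially the same reduction the paper makes (the paper phrases it via closed subschemes $\ol{\iZ_k}=V(\hJ_k)$ and blows up $F=\bigcap_k\ol{\iZ_k}$, whose ideal is exactly $\sum_k\hJ_k$). The gap is in the final covering step. On the chart $D_+(a_{k,j})$ of $\Bl_{\hJ}(\hS)$ the section $a_{k,j}$ does \emph{not} become a unit: it becomes a generator of the invertible exceptional ideal $\hJ\cdot\OO_{\hY}$. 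Since $\hJ_k\cdot\OO_{\hY}\subseteq\hJ\cdot\OO_{\hY}=(a_{k,j})\subseteq\hJ_k\cdot\OO_{\hY}$ on that chart, one gets $\hJ_k\cdot\OO_{D_+(a_{k,j})}=(a_{k,j})$, the exceptional ideal rather than the unit ideal, and $V(\hJ_k\cdot\OO_{\hY})$ contains the entire exceptional divisor $E$ for \emph{every} $k$. Hence the opens $\hW_k=\pi^{-1}(\hV_k)=\hY\setminus V(\hJ_k\cdot\OO_{\hY})$ all miss $E$ and do not cover $\hY$. Concretely: take $\hS=\AA^2$, $\mS=V(xy)$, $\iV_1=\iS\cap D(x-y)$, $\iV_2=\iS\cap D(x+y)$ in characteristic $\neq 2$; then $\hJ=(x,y)$, $\hY=\Bl_{(x,y)}\AA^2$, and each of $\pi^{-1}(V(x-y))$, $\pi^{-1}(V(x+y))$ is the total transform of a line through the origin, which contains $E\cong\PP^1$, so neither $\hW_1$ nor $\hW_2$ meets $E$.

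The repair is precisely what the paper does: replace total transforms by \emph{strict} transforms. Let $\hA_k$ be the strict transform of $V(\hJ_k)$ under $\pi$. On the chart where some $a_{k,j}\in\hJ_k$ generates $\hJ\cdot\OO_{\hY}$, the strict transform $\hA_k$ is empty ($a_{k,j}$ vanishes on $V(\hJ_k)$ but generates the exceptional ideal there), so $\bigcap_k\hA_k=\varnothing$; and $\hA_k\cap\iY=\iZ_k$ because $\pi$ is an isomorphism over $\iS$. The complements $\hY\setminus\hA_k$ then give the required Zariski covering with interiors $\iV_k$. Everything in your write-up before the definition of the $\hW_k$ can be kept verbatim.
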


\begin{rema}
The proof strategy used below for the other topologies is equally valid in this case, but we instead give a proof along the lines of \cite{kami} and \cite{kmsy1} which doesn't use points.
\end{rema}

\begin{proof}
First notice that we can replace the original cover with a refinement. In particular, lets assume all $\iV_i$ are affine with quasi-compact inclusions, so the inclusions $\iV_i \cap \iV_{i'} \to \iV_i$ are also all quasi-compact. Since $\sS$ is quasi-compact, we can also assume there are only finitely many $\iV_i$.

Now note that it's enough to prove: If 
$\{\iZ_i \to \iS\}_{i = 1}^n$ %
is finite collection of coherent closed subschemes satisfying 
$\cap_{i \in I} \iZ_i = \varnothing$, %
then there exists an admissible blowup $\sY \to \sS$ and a collection of closed subspaces
$\{\hA_i \to \hY\}_{i = 1}^n$ %
satisfying 
$\cap \hA_i = \varnothing$ and %
$\hA_{i} \cap \iY = \iZ_i$. 

Since $\iS$ is quasi-compact, and $\hS$ qcqs, we can find closed subschemes $\ol{{\iZ_i}}$ in $\hS$ whose defining sheaf of ideas is coherent, such that $\ol{{\iZ_i}} \cap \iS = \iZ_i$, \cite[Prop.6.9.14(i)]{EGAI}. Set $F = \cap_{i \in I} \ol{{\iZ_i}}$, let $\hY := \Bl_{\hS} F \to \hS$ be the blowup of $\hS$ in $F$ (which will be proper since all $\ol{{\iZ_i}}$ and therefore $F$ is defined by a coherent sheaf of ideals), and $\hA_i = \Bl_{\ol{{\iZ_i}}} F \to \ol{{\iZ_i}}$ be the strict transforms of the $\ol{{\iZ_i}}$. We claim that this is a solution.

Indeed, it suffices to consider the affine case, say 
$\hS = \Spec(R)$, $\iS = \Spec(R[f^{-1}])$, and 
$\iZ_i = \Spec(R[f^{-1}] / \langle g_{i1}, \dots, g_{im} \rangle)$. We can assume the generators of the ideal are in $R \subseteq  R[f^{-1}]$, so the ideal of 
$\ol{{\iZ_i}}$ is $\langle g_{i1}, \dots, g_{im} \rangle \subseteq R$, the ideal of 
$F = \cap \ol{{\iZ_i}}$ is $\langle g_{ij} \rangle_{i,j} \subseteq R$ and the blowup is covered by affines of the form 
$R[\tfrac{g_{11}}{g_{ij}}, \dots, \tfrac{g_{nm}}{g_{ij}}] \subseteq R[\tfrac{1}{g_{ij}}]$. %
The strict transform $\hA_k$ of $\ol{{\iZ_k}}$ in this affine is empty if $k = i$ (because then $g_{ij}$ is zero on $\ol{{\iZ_k}}$) and otherwise is cut out by $\langle \tfrac{g_{k1}}{g_{i1}}, \dots, \tfrac{g_{km}}{g_{im}}\rangle$. However, just the former observation is enough to conclude that the intersection $\cap \hA_i$ of the strict transforms is empty in every one of these covering affines, and therefore is globally empty. Next, since $\cap \iZ_i = \varnothing$, the locus of the blowup is contained in the modulus $F = \cap \ol{{\iZ_i}} \subseteq V(f) = \mS$, so $\hA_{i} \cap \iY = \iZ_i$. So as long as $\mS|_{\Bl_{\hS} F}$ is an effective Cartier divisor, we have indeed defined an admissible abstract blowup. To see this, recall that 
$R[\tfrac{g_{11}}{g_{ij}}, \dots, \tfrac{g_{nm}}{g_{ij}}] \subseteq R[\tfrac{1}{g_{ij}}]$, 
and so $f$ being a nonzero divisor in $R$, passes to $f$ being a nonzero divisor in these latter two rings.\end{proof}

\begin{coro} \label{coro:ZarCocont}
For any qcqs modulus pair $\sS$ the restriction functors
\begin{align*}
\ul{\omega}_*: \Shv_{\Zar}(\SCH^{\qcqs}_{\iS}) &\to \Shv_{\ulMZar}(\ulMSCH^{\qcqs}_{\sS}) \\
\ul{\omega}_*: \Shv_{\Zar}(\Sch_{\iS}) &\to \Shv_{\ulMZar}(\ulMSch_{\sS}) \\
\ul{\omega}_*: \Shv_{\Zar}(\Sm_{\iS}) &\to \Shv_{\ulMZar}(\ulMSm_{\sS}) \\
\ul{\omega}_*: \Shv(\iS_{\Zar}) &\to \Shv_{\ulMZar}(\ulMOp_{\sS}) \\
\ul{\omega}_*(F)(\sX) &= F(\iX) 
\end{align*}
are exact.
\end{coro}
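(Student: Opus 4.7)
The plan is to reduce each of the four statements to the general formalism of \cite[{\S}III.2]{SGA41}: if $u : \sC \to \sC'$ is a \emph{cocontinuous} functor between sites, then the restriction functor $u_\ast : \Shv(\sC') \to \Shv(\sC)$ is exact (in fact admits a right adjoint). So the task is to verify cocontinuity for each of the four variants of $\ulomega$. Unfolding the definition, this means: for every object $\sX$ of the source category and every covering sieve $R$ of $\iX$ in the target, the sieve on $\sX$ consisting of morphisms $\sZ \to \sX$ whose induced morphism $\iZ \to \iX$ lies in $R$ should be a $\ulMZar$-covering sieve.

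First I would treat the main case $\ulomega : \ulMSch_\sS \to \Sch_{\iS}$. Fix $\sX \in \ulMSch_\sS$ and a Zariski covering $\{\iV_i \to \iX\}_{i \in I}$. Since $\sS$ is qcqs and $\hX \to \hS$ is finite type and separated (hence qcqs), $\sX$ is itself a qcqs modulus pair, so Proposition~\ref{prop:extendZar} applies with $\sS$ replaced by $\sX$. It yields an admissible blowup $\sY \to \sX$ and a Zariski covering $\{\hW_j \to \hY\}_{j \in J}$ such that $\{\iW_j \to \iX\}_j$ refines $\{\iV_i \to \iX\}_i$. Setting $\sW_j := (\hW_j, \mY|_{\hW_j})$, the composite family $\{\sW_j \to \sY \to \sX\}_j$ is the composition of a $\ulPZar$-covering with an \abb{}, hence an $\ulMZar$-covering of $\sX$, and by construction it lies in the sieve $\ulomega^{-1}(R)$. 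This proves cocontinuity.

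For the variants, the argument is identical once we check that the constructed cover lies in the appropriate subcategory. In the case of $\SCH^{\qcqs}_{\sS}$ there is nothing to check, as $\sY \to \sX$ (being an \abb{}) and the $\sW_j \to \sY$ (Zariski) are both qcqs. For $\ulMSm_\sS$ we use that an \abb{} is an isomorphism on interiors, so $\iW_j$ is a Zariski open of $\iX$; combined with smoothness of $\iX \to \iS$ this gives smoothness of $\iW_j \to \iS$. For the small site $\ulMOp_\sS$, any $\sX \in \ulMOp_\sS$ may by Proposition~\ref{prop:SmallSitesApprox} be represented by an ambient minimal morphism $\hX' \to \hS'$ which is an open immersion for some \abb{} $\sS' \to \sS$; then the covering sieve on $\iX$ corresponds to a covering sieve on the open subscheme $\iX \subseteq \iS$, which after performing the above construction relative to $\sS'$ yields a cover by open immersions into $\hS'$ (after possibly further admissible blowup), staying inside $\ulMOp_\sS$ via Proposition~\ref{prop:SmallSitesApprox}.

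The only mildly delicate point is matching the qcqs hypothesis in Proposition~\ref{prop:extendZar} with each subcategory: once $\sS$ is qcqs, every object of the four source categories in question is automatically qcqs, so the proposition can always be applied with the object as new base. The conclusion that $\ulomega_\ast$ is exact (and admits a right adjoint, namely the right Kan extension along $\ulomega$, which automatically preserves sheaves) then follows formally from \cite[{\S}III Prop.~2.3]{SGA41}.
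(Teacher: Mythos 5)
Your proposal is correct and follows exactly the route the paper intends: Proposition~\ref{prop:extendZar} gives cocontinuity of $\ul{\omega}$ (applied with each object $\sX$ playing the role of the base, which is legitimate since every object of the four source categories over a qcqs $\sS$ is itself qcqs), and exactness of $\ul{\omega}_*$ then follows from \cite[{\S}III Prop.~2.3]{SGA41}, as announced in the preamble of Section~\ref{sec:extending}. Your additional checks that the constructed covers land in the subcategories $\ulMSm_\sS$ and $\ulMOp_\sS$ are correct details that the paper leaves implicit.
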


\begin{prop} [cf. \cite{kami}] \label{prop:extendNis}
Let $\sS$ be a modulus pair with $\hS$ qcqs and $\{V^\circ_i \to S^\circ\}$ a Nisnevich covering. Then there exists an admissible blowup $\sY \to \sS$ and a Nisnevich covering $\{\ol{W}_j \to \ol{Y}\}$ such that $\{W^\circ_j \to S^\circ\}$ refines $\{V^\circ_i \to S^\circ\}$.
\end{prop}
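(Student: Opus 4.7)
The strategy mirrors the Zariski case (Prop.~\ref{prop:extendZar}), with the additional difficulty of extending étale morphisms across the modulus. First I would use quasi-compactness of $\hS$ and of $S^\circ \hookrightarrow \hS$ (Lem.~\ref{lem:qc-interior}) to refine the given cover to a finite Nisnevich covering $\{V^\circ_i \to S^\circ\}_{i=1}^n$ of étale morphisms of finite presentation. For each $i$ I would extend $V^\circ_i \to S^\circ$ to a morphism $\hV_i \to \hS$ of finite presentation using \cite[Prop.~6.9.16]{EGAI} and \cite[Thm.~8.8.2]{EGAIV3}, then replace $\hV_i$ by the scheme-theoretic closure of $V^\circ_i$ so that (by Lem.~\ref{lem:pb-dense}) the pullback of $\mS$ is an effective Cartier divisor. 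This produces a minimal ambient morphism $\sV_i \to \sS$ of modulus pairs whose interior recovers the $i$-th piece of the original cover.

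Next, I would apply Prop.~\ref{prop:allIsoToAmbEtale} to each $\sV_i \to \sS$ and pass to a common admissible blowup $\sY \to \sS$ dominating each of the resulting ones (the category of admissible blowups being filtered), producing strict transforms $\hW_i \to \hY$ that are flat of finite presentation. By construction each $\hW_i \to \hY$ agrees with $V^\circ_i \to S^\circ$ above the dense open $S^\circ \subseteq \hY$, and is therefore étale there.

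The hard part is to promote flatness to étaleness and to verify the Nisnevich splitting condition on all of $\hY$. The étale locus of each $\hW_i \to \hY$ is open, contains $W^\circ_i$, and its complement is closed in $\hW_i$ and disjoint from $W^\circ_i$; the trouble is that restricting to this open may destroy joint surjectivity onto $\hY$. To handle this I would invoke the conservative family of $\ulPNis$-local $\uS$-local modulus pairs given by Prop.~\ref{prop:conservativeLocalPairs}: for any such $\sP \to \sY$ the interior $P^\circ$ is hensel local (Prop.~\ref{prop:bigNisPoint}), hence the base change of the original covering admits a section. By Prop.~\ref{prop:qcqs-limcolim} this section descends to some ambient neighborhood in the pro-system defining $\sP$, and quasi-compactness of the relative Riemann--Zariski space (as exploited in Thm.~\ref{thm:finHeavy}) lets me glue finitely many such neighborhoods into a single further admissible blowup of $\sY$ together with genuinely étale pieces lifting each $\hW_i$. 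The most delicate step will be controlling the interaction between the étale loci and the modulus uniformly across the tower of admissible blowups, in a manner parallel to the proof of Thm.~\ref{thm:finHeavy}.
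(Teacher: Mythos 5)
Your first half (extend each $V_i^\circ \to S^\circ$ to a finite presentation morphism over $\hS$, close up, then flatify via Prop.~\ref{prop:allIsoToAmbEtale}) is a dead end: a morphism that is flat of finite presentation and \'etale over the dense open $S^\circ$ does \emph{not} become \'etale after any further admissible blowup of the base, because ramification along the modulus is an obstruction carried by the source, which modifications of $\ol{Y}$ cannot remove (consider a finite flat degree-two cover ramified exactly along the modulus: every strict transform stays ramified). So the strict transforms $\hW_i$ you build have to be discarded entirely; ``promoting flatness to \'etaleness'' is not a technical step you deferred, it is false, and the genuinely \'etale pieces over the modulus must come from somewhere else. (Raynaud--Gruson flatification does enter the paper's argument, but only inside the heaviness result Thm.~\ref{thm:etaleHeavy}, to commute an \aab{} past an already-\'etale covering --- never to manufacture \'etaleness.)

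The fix you sketch in your last paragraph is in fact the paper's entire proof, and the one point your sketch leaves out is exactly the one that makes it work. The paper works in the small site $\ulMEt_\sS$ with the $\ulMNis$-topology: by Deligne's theorem (Thm.~\ref{theo:deligne}), Prop.~\ref{prop:fibArePro} and Thm.~\ref{theo:locProIsRep}, a conservative set of points is given by $\ulPNis$-local $\uS$-local modulus pairs $\sP = \amblim \sP_\lambda$ with $(\sP_\lambda)_\lambda$ a pro-object of $\ulMEt_\sS$; since $\lim \iP_\lambda$ is hensel local (Prop.~\ref{prop:bigNisPoint}), it factors through some $V_i^\circ$, and this factorisation descends to a finite stage $\iP_\lambda \to V_i^\circ$ because $V_i^\circ \to S^\circ$ is of finite presentation. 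The key fact is that each $\sP_\lambda$ is \emph{by definition} an object of $\ulMEt_\sS$, hence \'etale on total spaces after an admissible blowup of $\sS$ --- no upgrade from flatness is ever required. The family $\{\sP_\lambda \to \sS\}$ is then an $\ulMNis$-covering (every point factors through it) whose interiors refine $\{V_i^\circ \to S^\circ\}$, and the packaging into a \emph{single} admissible blowup $\sY \to \sS$ followed by an honest Nisnevich covering $\{\ol{W}_j \to \ol{Y}\}$ is not done by hand with the Riemann--Zariski space in the style of Thm.~\ref{thm:finHeavy}; it is exactly the normal-form statement Cor.~\ref{coro:normalForm}, i.e.\ Prop.~\ref{prop:heavyRefine} combined with Thm.~\ref{thm:etaleHeavy}\eqref{thm:etaleHeavy:Nis}. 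Your ``most delicate step'' of controlling \'etale loci across the tower of blowups dissolves once the covering is built out of objects of $\ulMEt_\sS$ from the start.
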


\begin{rema} \label{rema:extendqfh}
The same proof works for $\Zar$ (resp. $\fppf$). Replace Prop.~\ref{prop:bigNisPoint} with 
Prop.~\ref{prop:bigZarPoint} (resp. Prop.\ref{prop:bigFppfPoint}) and replace $\ulMEt_\sS$ with $\ulMOp_\sS$ (resp. $\ulMFppf_\sS$). 
\end{rema}

\begin{rema} \label{rema:etNoGood}
The {\'e}tale version of this result is not true. There are no nontrivial $\ulMet$-coverings of $(\Spec(\C[[t]]), (t))$.
\end{rema}

\begin{proof}
Consider $\ulMEt_\sS$ equipped with the $\ulMNis$-topology. Recall that Deligne's theorem gives a (small) conservative set $\Phi$ of fibre functors of $\Shv_{\ulMNis}(\ulMEt_\sS)$, and that formal arguments show that every fibre functor comes from a $\ulMNis$-local pro-object of $\ulMEt_\sS$, Prop.\ref{prop:fibArePro}. 

For each $(\sP_\lambda) \in \Phi$, the scheme $\lim \iP_λ$ is the spectrum of a hensel local ring, cf.Thm.~\ref{theo:locProIsRep}, Prop.~\ref{prop:bigNisPoint}. Therefore the morphism $\lim \iP_λ \to S^\circ$ factors through some $V^\circ_i$. Since $\iV_i \to \iS$ is of finite presentation, we can descend this morphism to some $\iP_λ \to \iV_i$, \cite[Prop.8.13.1]{EGAIV3}.

Make such a choice of $λ$ for all elements of $\Phi$ and consider the induced family of morphisms of modulus pairs 
$\{\sP_\lambda \to \sS\}_{\sP \in \Phi}$. 
This is a $\ulMNis$-covering, since each pro-object in $\Phi$ factors through it, and the family $\{P_\lambda^\circ \to \sS^\circ\}_{P \in \Phi}$ refines $\{V^\circ_i \to S^\circ\}$ by construction. Since $\{P_\lambda \to \sS\}_{P \in \Phi}$ is a $\ulMNis$-covering, by Prop.~\ref{prop:heavyRefine} and Thm.~\ref{thm:etaleHeavy} there is a refinement of $\{P_\lambda \to \sS\}_{P \in \Phi}$ of the form $\{\ol{W}_j \to \ol{Y} \to \ol{S}\}$ in the statement.
\end{proof}

\begin{coro} \label{coro:NisCocont}
For any qcqs modulus pair $\sS$ the restriction functors
\begin{align*}
\ul{\omega}_*: \Shv_{\fppf}(\SCH^{\qcqs}_{\iS}) &\to \Shv_{\ulMfppf}(\ulMSCH^{\qcqs}_{\sS}), \\
\ul{\omega}_*: \Shv_{\Nis}(\SCH^{\qcqs}_{\iS}) &\to \Shv_{\ulMNis}(\ulMSCH^{\qcqs}_{\sS}), \\
\ul{\omega}_*: \Shv_{\fppf}(\Sch_{\iS}) &\to \Shv_{\ulMfppf}(\ulMSch_{\sS}), \\
\ul{\omega}_*: \Shv_{\Nis}(\Sch_{\iS}) &\to \Shv_{\ulMNis}(\ulMSch_{\sS}), \\
\ul{\omega}_*: \Shv_{\Nis}(\Sm_{\iS}) &\to \Shv_{\ulMNis}(\ulMSm_{\sS}), \\
\ul{\omega}_*: \Shv_{\fppf}(\Fppf_{\iS}) &\to \Shv_{\ulMfppf}(\ulMFppf_{\sS}), \\
\ul{\omega}_*: \Shv_{\Nis}(\Et_{\iS}) &\to \Shv_{\ulMNis}(\ulMEt_{\sS}), \\
\ul{\omega}_*(F)(\sX) &= F(\iX) 
\end{align*}
are exact.
\end{coro}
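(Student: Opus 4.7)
The plan is to derive the corollary as a direct application of the standard result \cite[Expos{\'e} III, Prop.2.3]{SGA41}: for any \emph{cocontinuous} functor $u : C \to C'$ between sites, the restriction $u_s : \Shv(C') \to \Shv(C)$ defined by $(u_sF)(X) = F(u(X))$ is well-defined on sheaves and is exact (it commutes with finite limits because it is the restriction of the exact presheaf-level restriction, and with arbitrary colimits because it admits a right adjoint). Thus, once I establish that $\ul{\omega} : \sX \mapsto \iX$ is cocontinuous for each topology appearing in the list, the conclusion follows mechanically.

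First I would verify cocontinuity of $\ul{\omega}$ for each $\tau \in \{\Zar,\Nis,\fppf\}$. By definition, $\ul{\omega}$ is cocontinuous for $\tau$ if for every modulus pair $\sX$ and every $\tau$-covering sieve $R$ on $\iX$, the pullback sieve $\ul{\omega}^{-1}R$ on $\sX$ is a $\ulMτ$-covering sieve. Given a generating family $\{\iV_i \to \iX\}_{i \in I}$ of $R$, Propositions \ref{prop:extendZar} and \ref{prop:extendNis} (and their fppf-analogue indicated in Remark \ref{rema:extendqfh}) produce an abstract admissible blowup $\sY \to \sX$ together with a $\tau$-covering $\{\hW_j \to \hY\}_{j \in J}$ of the total space, such that $\{\iW_j \to \iX\}_{j \in J}$ refines $\{\iV_i \to \iX\}_{i \in I}$. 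Equipping each $\hW_j$ with the pulled-back modulus gives a $\ulMτ$-covering $\{\sW_j \to \sX\}_{j \in J}$ whose image under $\ul{\omega}$ generates a sieve contained in $R$, so $\ul{\omega}^{-1}R$ contains a $\ulMτ$-covering sieve and is therefore itself covering.

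For the small-site variants ($\ulMSm$, $\ulMFppf$, $\ulMEt$, $\ulMOp$), one only needs to observe that the refinements $\{(\hW_j,\mY|_{\hW_j}) \to \sX\}$ produced by the extension propositions consist of minimal ambient morphisms whose total-space components are open immersions (resp.\ \'etale, flat of finite presentation), hence whose interiors remain in $\Op_{\iX}$ (resp.\ $\Et_{\iX}$, $\Fppf_{\iX}$), and smoothness of interiors is preserved because admissible blowups induce isomorphisms on interiors. So the refinements land in the appropriate small site, and the cocontinuity argument goes through unchanged.

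The main technical content is already absorbed into Propositions \ref{prop:extendZar} and \ref{prop:extendNis}, whose proofs rest on the description of $\ulMτ$-local pro-modulus pairs from Chapter~\ref{chap:locProMod} together with quasi-compactness (to reduce an infinite cover produced from fibre functors to a finite, hence single-blowup, refinement). Once this is in hand, the corollary is purely formal. I would also remark explicitly that the absence of the \'etale topology from the list is essential and not an oversight: Remark \ref{rema:etNoGood} shows that $\ul{\omega}$ fails to be cocontinuous for $\et$, so the analogous statement for $\Shv_{\et} \to \Shv_{\ulMet}$ cannot be deduced by this method (and is in fact false).
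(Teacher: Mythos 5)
Your proposal is correct and follows exactly the route the paper intends: the extension results Prop.~\ref{prop:extendZar}, Prop.~\ref{prop:extendNis} and Rem.~\ref{rema:extendqfh} give cocontinuity of $\ul{\omega}$ for each listed topology (applied to every object $\sX$ of the relevant site, not just $\sS$), and then \cite[{\S}III Prop.2.3]{SGA41} yields exactness of $\ul{\omega}_*$, which is precisely the mechanism announced at the start of Section~\ref{sec:extending}. Your added remarks on the small-site cases and on the failure for the \'etale topology (cf.~Rem.~\ref{rema:etNoGood}) are consistent with the paper and do not change the argument.
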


Next, we will prove an analogous result for $\rqfh$-topology.

\begin{lemm} \label{lemm:extendCl}
Let $\sS$ be a modulus pair and $\{\iV_i \to \iS\}_{i \in I}$ a jointly surjective family of closed immersions. Then there exists a family $\{\sV_i \to \sS\}$ of ambient minimal morphisms such that $\{\hV_i \to \hS\}_{i \in I}$ is a jointly surjective family of closed immersions and the interior of $\sV_i$ is $\iV_i$.
\end{lemm}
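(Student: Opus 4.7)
The plan is to construct $\sV_i$ by taking scheme-theoretic closures. Specifically, I would define $\hV_i$ to be the scheme-theoretic closure of the locally closed subscheme $\iV_i \subseteq \hS$, so that $\hV_i \hookrightarrow \hS$ is a closed immersion and the open immersion $\iV_i \hookrightarrow \hV_i$ is schematically dense by construction. Then I would set $\mV_i := \mS \times_{\hS} \hV_i$ and $\sV_i := (\hV_i, \mV_i)$, obtaining an ambient morphism $\sV_i \to \sS$ which is minimal tautologically.

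Next I need three verifications. First, that $\mV_i$ is actually an effective Cartier divisor on $\hV_i$: this is exactly what Lemma \ref{lem:pb-dense} gives me, since $\iV_i = \hV_i \times_{\hS} \iS$ is schematically dense in $\hV_i$ by our choice of $\hV_i$. Second, that the interior of $\sV_i$ equals $\iV_i$: this is immediate from the fact that $\iV_i \subseteq \iS$ is disjoint from $\mS$, so $\hV_i \setminus \mV_i = \hV_i \setminus (\mS \cap \hV_i) = \iV_i$ set-theoretically, and both sides carry the induced reduced-free scheme structure. Third, that $\sV_i \to \sS$ is minimal, which holds by our definition $\mV_i = \mS \times_{\hS} \hV_i$.

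The only point that requires a moment's thought is joint surjectivity of $\{\hV_i \to \hS\}_{i \in I}$, since $I$ may be infinite and a union of infinitely many closed subsets of $\hS$ need not be closed. The observation here is that by Proposition \ref{prop:always-dense}, $\iS$ is topologically dense in $\hS$, so any point $x \in \hS$ is a specialization of some generic point $\eta \in \iS$. By joint surjectivity of the original family, $\eta \in \iV_i$ for some $i$, hence $\eta \in \hV_i$, and since $\hV_i$ is closed under specialization we conclude $x \in \hV_i$. Thus $\cup_i \hV_i = \hS$ set-theoretically, which is all we require for a jointly surjective family of morphisms.

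This is a short proof and I do not anticipate any real obstacle; the main subtlety is just remembering to invoke topological density of $\iS$ to handle points of $\mS$, as the argument on the generic fibre alone does not cover them.
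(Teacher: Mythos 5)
Your construction is identical to the paper's: take $\hV_i$ to be the scheme-theoretic closure of $\iV_i$ in $\hS$ and equip it with the pulled-back modulus; minimality and the identification of the interior are then immediate, and for the Cartier condition your appeal to Lemma~\ref{lem:pb-dense} is the same computation the paper does by hand ($f$ is invertible in $A[f^{-1}]/I_i$, hence a nonzero divisor in the subring $\mathrm{im}(A \to A[f^{-1}]/I_i)$). Where you genuinely diverge is joint surjectivity. The paper argues algebraically that $\bigcap_i \ker(A \to A[f^{-1}]/I_i)$ consists of nilpotents; for an infinite index set this literally only shows that $\bigcup_i \hV_i$ is \emph{dense} in $\hS$, whereas your specialization argument gives surjectivity on the nose for arbitrary $I$, so it is the more robust of the two. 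One caveat on your version: topological density of $\iS$ by itself does \emph{not} imply that every point of $\hS$ is a specialization of a point of $\iS$ --- a dense open can miss a generic point when there are infinitely many irreducible components. What you actually need, and what is true here, is that every generic point of $\hS$ lies in $\iS$: locally $\mS$ is cut out by a nonzero divisor, and an element of a minimal prime is always a zero divisor, so the local equations of $\mS$ avoid all minimal primes. With that one-sentence repair (rather than a bare citation of Proposition~\ref{prop:always-dense}), your argument is complete.
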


\begin{proof}
Let $\hV_i := \ul{\Spec}(im(\OO_{\hS} {\to} \OO_{\iV_i}))$ be the scheme theoretic closure of $\iV_i$ in $\hS$. It is enough to show that $\{ \hV_i \to \hS\}$ is jointly surjective and $\mS|_{\hV_i}$ are effective Cartier divisors. Both of these conditions can be checked locally on $\hS$ so let $\hS = \Spec(A)$, $\mS = \Spec(A/f)$ and $\iV_i = \Spec(A[f^{-1}]/I_i)$. Clearly, $f$ is a nonzero divisor in $im(A \to A[f^{-1}]/I_i) \subseteq A[f^{-1}]/I_i$, since it's invertible in $A[f^{-1}]/I_i$. For surjectivity, we want to see that $\cap ker(A \to A[f^{-1}]/I_i)$ contains only nilpotents of $A$. If $a$ is in this intersection, then for all $i$ we have $a \in I_i$, but $\{\iV_i \to \iS\}$ is surjective, so $a$ is nilpotent in $A[f^{-1}]$, so there is some $n, m$ with $f^na^m = 0$ in $A$, but $f$ is a nonzero divisor of $A$, so $a$ is nilpotent in $A$.
\end{proof}

\begin{lemm} \label{lemm:extendFS}
Let $\sS$ be a qcqs modulus pair and $\iT \to \iS$ a finite surjective morphism. Then there exists an ambient morphism $\sT \to \sS$ such that $\hT \to \hS$ is finite surjective, and $\iT \to \iS$ is its interior.
\end{lemm}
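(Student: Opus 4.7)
The plan is to mimic the strategy of Lem.~\ref{lemm:extendCl}, constructing $\hT$ as the relative spectrum over $\hS$ of a suitably chosen finite $\OO_{\hS}$-subalgebra of $\iota_{*}\OO_{\iT}$, where $\iota: \iT \to \iS \hookrightarrow \hS$ denotes the composition. Note that $\iS \to \hS$ is a qc open immersion by Lem.~\ref{lem:qc-interior} and $\iT \to \iS$ is finite (hence affine), so $\iota$ is qcqs and $\iota_{*}\OO_{\iT}$ is a quasi-coherent $\OO_{\hS}$-algebra, which by \cite[Cor.6.9.9]{EGAI} is the filtered union of its $\OO_{\hS}$-subalgebras of finite type. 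The goal is to find such a subalgebra $\sB \subseteq \iota_{*}\OO_{\iT}$ which is (i) integral (hence finite) over $\OO_{\hS}$, (ii) satisfies $\sB|_{\iS} = (\iT \to \iS)_{*}\OO_{\iT}$, and (iii) has no local $\mS$-torsion so that $\mS|_{\ul{\Spec}_{\hS}\sB}$ is an effective Cartier divisor.

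First I would handle the affine model situation $\hS = \Spec(A)$, $\mS = V(f)$ with $f$ a nonzero divisor, and $\iT = \Spec(B')$ with $A[f^{-1}] \to B'$ finite surjective. Choose finitely many $A[f^{-1}]$-module generators $x_{1},\dots,x_{n}$ of $B'$. Each $x_i$ satisfies a monic equation over $A[f^{-1}]$; multiplying through by $f^{Nd_{i}}$ with $N$ large enough to clear denominators, one checks that $y_{i} := f^{N} x_{i}$ is integral over $A$. Then $B_{0} := A[y_{1},\dots,y_{n}]$ is finite over $A$ with $B_{0}[f^{-1}] = B'$. Passing to $B := B_{0} / \{f\text{-torsion}\}$ we keep $B$ finite over $A$ and still have $B[f^{-1}] = B'$, while $f$ becomes a nonzero divisor in $B$. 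The resulting ambient minimal modulus pair $\sT := (\Spec B, V(f))$ has interior $\Spec(B') = \iT$, and $\Spec B \to \Spec A$ is surjective because its image is closed (finite morphism) and contains the dense open $D(f) = \iS$, where density of $D(f)$ follows from $f$ being a nonzero divisor in $A$.

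Globalization of this affine construction will be the main obstacle, since the choice of $\sB$ above is not canonical and local choices need to be amalgamated. The cleanest path is to work directly inside $\iota_{*}\OO_{\iT}$: the $\OO_{\hS}$-subalgebra $\sI \subseteq \iota_{*}\OO_{\iT}$ consisting of elements integral over $\OO_{\hS}$ restricts on $\iS$ to the integral closure of $\OO_{\iS}$ in $(\iT\to\iS)_{*}\OO_{\iT}$, and this equals $(\iT\to\iS)_{*}\OO_{\iT}$ itself (since $\iT \to \iS$ is finite). By \cite[Prop.6.9.14]{EGAI} applied to the finitely many generators of $(\iT\to\iS)_{*}\OO_{\iT}$ as $\OO_{\iS}$-algebra — these extend to sections of $\iota_{*}\OO_{\iT}$ after multiplying by a high enough power of a local equation of $\mS$, and after clearing denominators as in the affine step they become integral over $\OO_{\hS}$ — one obtains a finite $\OO_{\hS}$-subalgebra $\sB \subseteq \iota_{*}\OO_{\iT}$ with $\sB|_{\iS} = (\iT\to\iS)_{*}\OO_{\iT}$. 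Finally, quotienting $\sB$ by the subsheaf of sections killed by some local equation of $\mS$ (this is a qc subsheaf since $\hS$ is qcqs and can be checked locally on a finite affine cover) yields an $\OO_{\hS}$-subalgebra, still finite over $\OO_{\hS}$ and still restricting to $(\iT\to\iS)_{*}\OO_{\iT}$ on $\iS$, in which local equations of $\mS$ are nonzero divisors. Setting $\hT := \ul{\Spec}_{\hS}(\sB)$ and $\sT := (\hT,\mS|_{\hT})$ gives the desired ambient morphism. Surjectivity of $\hT \to \hS$ is again argued by closed image plus density of $\iS$, using that $\iS \to \hS$ is schematically dense (Prop.~\ref{prop:always-dense}) so in particular topologically dense in each affine open.

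The delicate point in the globalization is ensuring that lifts of local generators and local torsion-killing operations assemble into a single globally defined finite $\OO_{\hS}$-subalgebra; I expect the proof to invoke the quasi-compactness of $\hS$ and the standard EGA approximation results \cite[Cor.6.9.9, Prop.6.9.14]{EGAI} to reduce to finitely many local choices, then take the subalgebra generated by their common extension inside $\iota_{*}\OO_{\iT}$.
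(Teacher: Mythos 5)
Your proposal is correct and follows essentially the same route as the paper: take the integral closure of $\OO_{\hS}$ in $\iota_*\OO_{\iT}$, extract a finite sub-$\OO_{\hS}$-algebra $\sB$ with $\sB|_{\iS} = (\iT\to\iS)_*\OO_{\iT}$ via \cite[Prop.6.9.14(i)]{EGAI}, set $\hT = \ul{\Spec}(\sB)$, and deduce surjectivity from closedness of the image plus density of $\iS$. The only superfluous step is the quotient by $\mS$-torsion: since $\sB$ is a subsheaf of $\iota_*\OO_{\iT}$ and local equations of $\mS$ are invertible on $\iT$, they already act injectively on $\sB$.
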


\begin{proof}
Let $\sA$ be the integral closure of $\OO_{\hS}$ in $\OO_{\iT}$ (as sheaves of $\OO_{\ol{S}}$-algebras). Since $\hS$ is qcqs, there exists a coherent sub-$\OO_{\hS}$-algebra $\sB$ containing generators of the ideal of $\mS$, such that $\sB|_{\iS} = \OO_{\iT}$, \cite[Prop.6.9.14(i)]{EGAI}. Set $\hT = \ul{\Spec}(\sB)$. Since $\sB$ is a sub-algebra of $\OO_{\iT}$, any local generator of $\mS$ remains a nonzero divisor in $\sB$, so $\mS|_{\hT}$ is an effective Cartier divisor. To check surjectivity, it suffices to note that $\iS$ is dense in $\hS$ by Prop.~\ref{prop:always-dense},
since finite morphisms satisfy the going up property.
\end{proof}

\begin{prop} \label{prop:rqfhExtend}
Let $\sS$ be a qcqs modulus pair with $\hS$ qcqs and $\{\iX_i \to \iS\}_{i \in I}$ a $\rqfh$-covering. Then there exists a finite composition of $\ulPrqfh$-coverings and \abb{}s $\{\sV_i \to \sS\}_{j \in J}$ such that $\{\iV_i \to \iS\}_{j \in J}$ refines $\{\iX_i \to \iS\}_{i \in I}$.
\end{prop}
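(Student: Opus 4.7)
The plan is to induct on the depth of the $\rqfh$-covering. By Def.~\ref{defi:coveringsSch}(7), the given family can be written as an iterated composition
\[
\{\iX_{i_0 i_1 \dots i_n} \to \dots \to \iX_{i_0} \to \iS\}
\]
where each ``layer'' $\{\iX_{i_0 \dots i_j} \to \iX_{i_0 \dots i_{j-1}}\}$ is either a Zariski covering or a finite covering. Since both Zariski and finite coverings are stable under pullback, and any finite composition of $\ulPrqfh$-coverings and \aab{}s stays qcqs (each step is at worst proper or of finite type over the previous qcqs pair), it will suffice to establish a \emph{one-layer} extension result, which I can then iterate: given a qcqs $\sS$ and a single-layer refinement $\{\iV_k \to \iS\}$ on interiors, I want to produce an extension consisting of a $\ulPrqfh$-covering followed by an \aab.

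For the Zariski one-layer case, I can appeal directly to Prop.~\ref{prop:extendZar}: it gives an \aab{} $\sY \to \sS$ and a Zariski covering $\{\sW_j \to \sY\}$ (which is a $\ulPZar$-covering, hence a $\ulPrqfh$-covering) refining $\{\iV_k \to \iS\}$. For the finite one-layer case, $\{\iV_k \to \iS\}_{k \in K}$ is a jointly surjective family of finite morphisms. By quasi-compactness of $\iS$ (using Lem.~\ref{lem:qc-interior}), I may refine to a finite sub-index set $K$, so that $\iT := \sqcup_{k \in K} \iV_k \to \iS$ is itself a single finite surjective morphism. Lem.~\ref{lemm:extendFS} produces an ambient minimal morphism $\sT \to \sS$ with $\hT \to \hS$ finite surjective and interior $\iT$. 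Since $\iT = \sqcup_k \iV_k$, by Prop.~\ref{prop:finiteCoprod}(2) there is an \aab{} $\sqcup_{k} \sT_k \to \sT$ with $\iT_k \cong \iV_k$. Then $\{\sT_k \to \sT \to \sS\}_{k \in K}$ is the composition of an \aab{} with a family whose total-space family $\sqcup \hT_k \to \hS$ is finite surjective, so it is a $\ulPfin$-covering, and hence a $\ulPrqfh$-covering.

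Now I iterate. Given the full $n$-layer $\rqfh$-covering, I extend the outermost layer $\{\iX_{i_0} \to \iS\}$ by the one-layer case, obtaining $\{\sU^{(1)}_l \to \sS\}_{l \in L_1}$ expressed as a composition of at most one $\ulPrqfh$-covering and one \aab, together with a choice of factorisation $\iU^{(1)}_l \to \iX_{i_0(l)}$ for each $l$. Each $\sU^{(1)}_l$ is qcqs. Pulling back the next layer of the $\rqfh$-covering along $\iU^{(1)}_l \to \iX_{i_0(l)}$ yields a jointly surjective Zariski-or-finite family on $\iU^{(1)}_l$ (since pullback preserves both classes), to which I apply the one-layer case again to $\sU^{(1)}_l$. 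Concatenating the resulting compositions of $\ulPrqfh$-coverings and \aab{}s over all $l$ (and using that the collection $L_1$ can be assumed finite at each stage by quasi-compactness) produces the desired refinement after $n$ iterations.

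The main technical obstacle will be bookkeeping the quasi-compactness hypotheses needed to invoke Lem.~\ref{lemm:extendFS} and Prop.~\ref{prop:extendZar} at each stage of the iteration: namely, verifying that after an \aab{} and a one-step finite or Zariski extension, each piece of the new family remains qcqs so that the inductive hypothesis still applies. This is taken care of because \aab{}s are proper (hence preserve qcqs), finite coverings produce finite morphisms on total spaces (hence preserve qcqs), and Zariski coverings can always be refined to finite subfamilies by qc of the base. Once this is verified, the layer-by-layer iteration goes through with no further surprises.
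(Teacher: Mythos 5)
Your overall layer-by-layer reduction and your treatment of the Zariski layers (via Prop.~\ref{prop:extendZar}) match the paper's proof. The genuine gap is in the finite case, at the step ``by quasi-compactness of $\iS$ \dots\ I may refine to a finite sub-index set $K$''. Quasi-compactness gives finite subcovers of \emph{open} covers; a jointly surjective family of finite morphisms has \emph{closed} images, and over a merely qcqs (non-Noetherian) interior there need be no finite jointly surjective subfamily. The paper's own Rem.~\ref{rema:qfhNoGood} supplies a counterexample: for $A = k[x_1,x_2,\dots]/\langle x_ix_j : i\neq j\rangle$ the irreducible components $V(I_m)$ with $I_m = \langle x_i : i \neq m\rangle$ form a jointly surjective family of closed immersions of $\Spec(A)$ admitting no finite jointly surjective subfamily. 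Since the proposition assumes only that $\hS$ is qcqs (the Noetherian hypothesis on the interior only enters in later sections), your reduction to a single finite surjective morphism $\sqcup_{k\in K}\iV_k \to \iS$ is not available; without it, neither Lem.~\ref{lemm:extendFS} (which needs a \emph{finite} morphism) nor the iterated use of Prop.~\ref{prop:finiteCoprod}(2) applies.

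The paper avoids this by never aggregating the family: for each $i$ separately it factors $\iX_i \to \iS$ through its scheme-theoretic image $\iZ_i$, extends the whole (possibly infinite) jointly surjective family of closed immersions $\{\iZ_i \to \iS\}$ to closed immersions of total spaces by Lem.~\ref{lemm:extendCl}, and then applies Lem.~\ref{lemm:extendFS} to each finite surjection $\iX_i \to \iZ_i$ individually; joint surjectivity of $\{\hX_i \to \hS\}$ then follows from that of $\{\hZ_i \to \hS\}$, with no finiteness of the index set needed. If you were willing to assume $\iS$ Noetherian (so finitely many members already hit all the finitely many generic points and hence cover), your disjoint-union-then-split argument via Lem.~\ref{lemm:extendFS} and Prop.~\ref{prop:finiteCoprod}(2) does go through and is a legitimate alternative; but as a proof of the proposition as stated it has a real hole.
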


\begin{rema}
We could also have used the strategy of the Nisnevich case above, but then we would have to assume $\iS$ is Noetherian, since this is needed in Thm.~\ref{theo:locProIsRep} in the $\rqfh$-case. 
\end{rema}

\begin{proof}
By definition $\rqfh$-coverings are compositions of finitely many Zariski and finite coverings. So it suffices to treat these two cases. The Zariski case was treated above, Prop.~\ref{prop:extendZar}, so suppose that $\{\iX_i \to \iS\}_{i \in I}$ is a jointly surjective family of finite morphisms. Setting $\iZ_i := im(\iX_i)$ we obtain a jointly surjective family of closed immersions $\{\iZ_i \to \iS\}_{i \in I}$. By Lem.~\ref{lemm:extendCl} this family is the interior of a family $\{\sZ_i \to \sS\}_{i \in I}$ of minimal ambient morphisms such that $\{\hZ_i \to \hS\}_{i \in I}$ is a jointly surjective family of closed immersions. Then by Lem.~\ref{lemm:extendFS} for each $i$, there exists an ambient minimal morphism $\sX_i \to \sZ_i$ whose interior is $\iX_i \to \iZ_i$ and such that $\hX_i \to \hZ_i$ is finite and surjective. Hence, $\{\sX_i \to \sS\}_{i \in I}$ is a finite covering whose interior is $\{\iX_i \to \iS\}_{i \in I}$.
\end{proof}

\begin{coro} \label{coro:qfhCocont}
For any qcqs modulus pair $\sS$ the restriction functors
\begin{align*}
\ul{\omega}_*: \Shv_{\rqfh}(\SCH^{\qcqs}_{\iS}) &\to \Shv_{\ulMrqfh}(\ulMSCH^{\qcqs}_{\sS}) \\
\ul{\omega}_*: \Shv_{\rqfh}(\Sch_{\iS}) &\to \Shv_{\ulMrqfh}(\ulMSch_{\sS}) \\
\ul{\omega}_*: \Shv_{\rqfh}(\Qf_{\iS}) &\to \Shv_{\ulMrqfh}(\ulMQf_{\sS}) \\
\ul{\omega}_*(F)(\sX) &= F(\iX) 
\end{align*}
are exact.
\end{coro}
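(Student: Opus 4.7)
The plan is to deduce the corollary from cocontinuity of the functor $\ulomega$ together with the general formalism of \cite[\S III.2]{SGA41}. Recall that by \cite[\S III Prop.~2.3]{SGA41}, if $\ulomega : \sC \to \sD$ is a cocontinuous morphism of sites, then the restriction functor $\ulomega_* : \Shv(\sD) \to \Shv(\sC)$ is exact and admits a right adjoint. So it suffices to verify cocontinuity in each of the three cases.

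Concretely, cocontinuity in the first case amounts to the following assertion: for every $\sX \in \ulMSCH^{\qcqs}_\sS$ and every $\rqfh$-covering $\{\iU_i \to \iX\}_{i \in I}$ in $\SCH^{\qcqs}_{\iS}$, there exists an $\ulMrqfh$-covering $\{\sV_j \to \sX\}_{j \in J}$ in $\ulMSCH^{\qcqs}_\sS$ such that $\{\iV_j \to \iX\}_{j \in J}$ refines $\{\iU_i \to \iX\}_{i \in I}$. Since $\sX$ is again a qcqs modulus pair (the structural morphism to $\sS$ being qcqs and $\sS$ being qcqs), this is exactly the content of Prop.~\ref{prop:rqfhExtend} applied to $\sX$ in place of $\sS$. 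The analogous statements for the other two rows follow the same pattern: apply Prop.~\ref{prop:rqfhExtend} to objects $\sX$ in $\ulMSch_\sS$ or $\ulMQf_\sS$; the only thing to check is that the refining covering can be chosen to lie in the relevant subcategory. For $\ulMSch_\sS$ this is automatic because the coverings produced in the proof of Prop.~\ref{prop:rqfhExtend} are compositions of {\aab}s, Zariski coverings, and finite coverings, all of which preserve the property of being separated and of finite type over $\sS$. For $\ulMQf_\sS$, the produced refinement is a composition of Zariski coverings (which preserve the $\Qf$ condition) and finite coverings (idem), and one combines these with Prop.~\ref{prop:SmallSitesApprox} to see that each term of the refinement can be represented by an object of $\ulMQf_\sS$.

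Finally, the explicit formula $\ulomega_*(F)(\sX) = F(\iX)$ is just the definition of the restriction along the forgetful functor $\sX \mapsto \iX$, and is already established in the analogous Cor.~\ref{coro:ZarCocont} and Cor.~\ref{coro:NisCocont}.

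The main point to verify carefully is the compatibility with the subcategory constraints in the second and third rows, in particular the case of $\ulMQf_\sS$: one must check that after performing the extension construction of Lem.~\ref{lemm:extendCl} and Lem.~\ref{lemm:extendFS} followed by the Zariski extension of Prop.~\ref{prop:extendZar}, the resulting total spaces remain quasi-finite over $\hS$ (up to admissible blowup of $\sS$). This is where Prop.~\ref{prop:SmallSitesApprox} is used: the produced covering lives over an admissible blowup $\sS' \to \sS$, and the quasi-finiteness of $\hV_j \to \hS'$ follows from the explicit constructions, since compositions of open immersions and finite morphisms over $\hS'$ remain in $\Qf_{\hS'}$. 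Everything else is a direct citation of the already-established results.
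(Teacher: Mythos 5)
Your proof is correct and follows exactly the route the paper intends: the corollary is left without explicit proof there, but the section's introduction spells out the same logic (extension of coverings $\Rightarrow$ cocontinuity of $\ul{\omega}$ $\Rightarrow$ exactness of $\ul{\omega}_*$ by \cite[{\S}III Prop.~2.3]{SGA41}), with Prop.~\ref{prop:rqfhExtend} applied to each object $\sX$ in place of $\sS$ supplying the refinement. Your additional care about the refinements landing in the subcategories $\ulMSch_\sS$ and $\ulMQf_\sS$ is a worthwhile check that the paper leaves implicit, and your verification of it is sound.
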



\section{Finite homotopy dimension and hypercompleteness} \label{sec:hypercompleteness}

In Voevodsky's theory of motives, Zariski and Nisnevich topologies played a fundamental role. 
In this section, we will see that the topologies $\ulMZar$ and $\ulMNis$ satisfy some properties which are analogous to those of Zariski and Nisnevich topologies. 
Moreover, we will prove that the non-ambient small sites $\sX_{\ulMZar}$ and $\sX_{\ulMNis}$ can be ``approximated'' by the ambient sites, and we will deduce some finiteness properties (e.g., finite homotopy dimension and hypercompleteness) of $\sX_{\ulMZar}$ and $\sX_{\ulMNis}$ from those of ambient sites. 

In the following, we need to employ the language of $\infty$-categories.
In particular, we make a small remark about Grothendieck topologies on $\infty$-categories. 

\begin{remark}\label{rem:Gtop-infty-ordinary}
For any $\infty$-category $\sC$, there exists a natural bijection between the collection of Grothendieck topologies on $\sC$ and Grothendieck topologies on the homotopy category $h\sC$.
This is basically because choosing covering sieves amounts to choosing full ($\infty$)-subcategories, i.e., collection of objects. 
In particular, if $\sC = N(\sD)$ for an ordinary category $\sD$, then choosing a Grothendieck topology on $\sC$ is equivalent to choosing one on $\sD$ since $hN(\sD) = \sD$.
See \cite[Rem.~6.2.2.3]{HTT} for more details.
\end{remark}

\subsection{Ambient approximation of non-ambient small sites}

In the following, we restrict our attention to finitary sites, cf.Def.~\ref{def:finitarysites}.
By Rem.~\ref{rem:Gtop-infty-ordinary}, defining a Grothendieck topology on an ordinary site $\sC$ is equivalent to defining a Grothendieck topology on the nerve of $\sC$ (More generally, a Grothendieck topology on an $\infty$-category is just a Grothendieck topology on the homotopy category $h\sC$).
So, we can always regard a site in ordinary sense as a site in $\infty$-sense. 

\begin{defi}\label{def:finitarysites}
A \emph{finitary site} is a small $\infty$-category $\sC$ with all finite limits with a Grothendieck topology such that every covering sieve admits a refinement which is generated by a finite number of elements. 
\end{defi}

In the following, for a modulus pair $\sS$ and $\tau \in \{\Zar,\Nis,\et\}$, we write $\sS_{\ulM\tau} := (\ulM\sC_{\sS},\tau)$ for the small site introduced in Def.~\ref{defi:modSitesCat} and Def.~\ref{defi:mtop}, where we set $\sC = \Op$ when $\tau = \Zar$ and $\sC = \Et$ otherwise.

\begin{lemma}
If $\sS$ is a qcqs modulus pair, then the following assertions hold:
\begin{enumerate}
\item The underlying category of $\sX_{\ulM\sigma}$ admits all finite limits. 
\item $\sX_{\ulM\sigma}$ is a finitary site.
\end{enumerate}
\end{lemma}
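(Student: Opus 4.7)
The plan is to verify the two assertions separately, leveraging the approximation machinery developed for the small modulus sites. Throughout, $\sC = \Op$ if $\sigma = \Zar$ and $\sC = \Et$ if $\sigma \in \{\Nis, \et\}$.

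For (1), rather than passing through the filtered colimit description of Prop.~\ref{prop:SmallSitesApprox}, I would build finite limits concretely inside $\ulMSCH_{\sX}$, which admits all finite limits by Thm.~\ref{thm:pullback}. The identity $\mathrm{id}_{\sX}$ is the terminal object. For fibre products, given two objects $\sY, \sZ \in \ulM\sC_{\sX}$, I would first apply Lem.~\ref{lemm:SmallSitesObj} (and, after pulling one of the resulting blowups back over the other, restrict to a common base $\sX' \to \sX$) to represent both by ambient minimal morphisms $\sY' \to \sX'$ and $\sZ' \to \sX'$ with $\hY' \to \hX'$ and $\hZ' \to \hX'$ lying in $\sC_{\hX'}$. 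Since $\hY' \to \hX'$ is flat and both structure maps are minimal, Prop.~\ref{prop:minFibPro} together with Lem.~\ref{lemm:minProduct} identifies the fibre product in $\ulMSCH_{\sX'}$ with the ambient product, whose total space is the honest scheme-theoretic fibre product $\hY' \times_{\hX'} \hZ'$ equipped with the induced modulus. This total space lies in $\sC_{\hX'}$ since $\sC$ is closed under fibre products of schemes, so the result lies in $\ulM\sC_{\sX}$. The terminal object combined with fibre products then yields all finite limits.

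For (2), I would invoke the ``normal form'' result, Cor.~\ref{coro:normalForm}: any $\ulM\sigma$-covering of an object $\sY \in \ulM\sC_{\sX}$ admits a refinement of the shape $\{\sU_i \to \sY' \to \sY\}_{i \in I}$ where $\sY' \to \sY$ is a single abstract admissible blowup and $\{\sU_i \to \sY'\}_{i \in I}$ is a $\ulP\sigma$-covering, i.e., $\{\hU_i \to \hY'\}_{i \in I}$ is a $\sigma$-covering of schemes. Since $\sX$ is qcqs and $\sY \to \sX$ is of finite type, $\hY$ is qcqs, and hence so is $\hY'$ (the blowup of a qcqs scheme is qcqs). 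Because $\hY'$ is quasi-compact, there is a finite subfamily $\{\hU_{i_1}, \dots, \hU_{i_n}\}$ which still covers $\hY'$; the corresponding finite subfamily of $\{\sU_i \to \sY' \to \sY\}$ is the desired finite refinement.

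The only real point of care is ensuring that when forming fibre products in $\ulM\sC_{\sX}$, the ambient product computed in $\ulMSCH$ agrees with the naive fibre product of total spaces rather than requiring a further blowup. This is precisely the minimality-plus-flatness condition of Lem.~\ref{lemm:minProduct}, which is supplied by the definition of the small sites $\ulMOp_{\sX}$ and $\ulMEt_{\sX}$. All other steps are immediate consequences of the finite-type/qcqs hypotheses and the already-established refinement theorems.
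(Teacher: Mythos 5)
Your proof is correct and takes essentially the same route as the paper: finite limits come from the categorical fibre product of Thm.~\ref{thm:pullback}, and the finitary property comes from the refinability results (Prop.~\ref{prop:heavyRefine}, Thm.~\ref{thm:etaleHeavy}) combined with quasi-compactness of the total spaces. You usefully fill in a detail the paper's one-line proof leaves implicit, namely that after reducing to minimal representatives over a common admissible blowup via Lem.~\ref{lemm:SmallSitesObj}, flatness plus minimality (Lem.~\ref{lemm:minProduct}, Prop.~\ref{prop:minFibPro}) shows the fibre product is computed on total spaces and hence stays inside the small site.
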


\begin{proof}
(1) follows from Thm.~\ref{thm:pullback}. 
(2) follows from the quasi-compactness of $\hX$ and the refinability results (Proposition \ref{prop:heavyRefine} and Theorem \ref{thm:etaleHeavy}).
\end{proof}

The goal of this subsection is the following reult.

\begin{prop}\label{prop:amb-approx}
Let $\tau \in \{\Zar, \Nis, \et\}$, and $\sS$ a qcqs modulus pair. 
Then there exists an equivalence in the $\infty$-category of finitary sites
\[
F : \varinjlim_{p:\sS' \to \sX} N(\hS'_{\tau}) \xrightarrow{\sim} N(\sS_{\ulM\tau}),
\]
where $\sS' \to \sS$ runs over {\aab}s of $\sS$ and $N$ denotes the nerve functor. 
\end{prop}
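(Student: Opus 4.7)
The plan is to bootstrap from the 1-categorical equivalence established in Prop.~\ref{prop:SmallSitesApprox}, which identifies $\ulM\sC_\sS \cong \varinjlim_{\sS' \to \sS} \sC_{\hS'}$ as ordinary sites, where $\sC = \Op$ for $\tau = \Zar$ and $\sC = \Et$ otherwise. Since the nerve of an ordinary site is 1-truncated, the $\infty$-categorical content of the statement should be essentially formal once the framework is set up correctly.

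First I would check that the source and target are indeed objects of the $\infty$-category of finitary sites. Each $\hS'$ is qcqs (being proper over the qcqs scheme $\hS$), so $\hS'_\tau$ is a finitary site in the classical sense and admits all finite limits. The transition functors $\hS'_\tau \to \hS''_\tau$ associated to an \aab{} $\sS'' \to \sS'$ are base change functors $\hS'' \times_{\hS'} (-)$, hence preserve finite limits and send $\tau$-coverings to $\tau$-coverings, so they define morphisms of finitary sites. The indexing category of \aab{}s is essentially small and filtered (the latter uses Prop.~\ref{prop:descendAdmBlowup} since any two \aab{}s are dominated by a third). Then I would invoke the general principle that filtered colimits in finitary sites, restricted to diagrams of 1-truncated objects, are computed as the nerve of the 1-categorical filtered colimit of underlying categories equipped with the topology generated by the images of covering sieves. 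This rests on two standard facts: filtered colimits of categories with finite limits along functors preserving them retain finite limits (since a finite diagram lifts to some term in the colimit), and the nerve functor commutes with filtered colimits into $\infty$-$\mathbf{Cat}$.

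The substantive step is then to identify this colimit with $N(\sS_{\ulM\tau})$. The underlying 1-categorical equivalence is exactly Prop.~\ref{prop:SmallSitesApprox}. For the topology, the colimit topology is generated by images of $\tau$-coverings from the various $\hS'_\tau$; unwinding Def.~\ref{defi:mtop}, these are precisely the $\ulP\tau$-coverings arising from individual \aab{}s, and the refinement Thm.~\ref{thm:etaleHeavy} combined with Prop.~\ref{prop:heavyRefine} shows that every $\ulM\tau$-covering is refinable by such a family, so the two topologies coincide. The main obstacle I anticipate is purely formal bookkeeping: verifying that the $\infty$-category of finitary sites behaves as expected on 1-truncated objects so that the $\infty$-categorical colimit in finitary sites matches the 1-categorical computation. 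The substantive mathematics is already contained in Prop.~\ref{prop:SmallSitesApprox} and the heaviness/refinement results of Chap.~\ref{chap:coverings}.
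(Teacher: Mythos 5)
Your proposal is correct and follows essentially the same route as the paper: both reduce to the $1$-categorical equivalence of sites in Prop.~\ref{prop:SmallSitesApprox} together with the fact that the nerve functor commutes with filtered colimits of $1$-categories (which the paper justifies via the Joyal model structure and \cite[Cor.~3.9.8]{cisinski_2019}, and which you assert as a standard fact), with the topologies matched up via the correspondence between Grothendieck topologies on an $\infty$-category and on its homotopy category. The covering-refinement bookkeeping you redo at the end is already absorbed into the statement that Prop.~\ref{prop:SmallSitesApprox} is an equivalence of \emph{sites}, so no new argument is needed there.
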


\begin{proof}
In Prop.~\ref{prop:SmallSitesApprox}, we have already constructed an equivalence of (1-)sites
\[
\varinjlim_{p:\sS' \to \sX} \hS'_{\tau} \xrightarrow{\sim} \sS_{\ulM\tau}.
\] 
Therefore, by Rem.~\ref{rem:Gtop-infty-ordinary}, it suffices to show an equivalence of $\infty$-categories
\[
N\left( \varinjlim_{p:\sS' \to \sX} \hS'_{\tau} \right) \simeq \varinjlim_{p:\sS' \to \sX} N(\hS'_{\tau}).
\]
Recalling that the transition morphism in the colimit on the right hand side is induced by functors of $1$-categories, the $\infty$-categorical colimit $\varinjlim_p N(\sC_p)$ can be computed by the homotopy colimit in $\mathbf{sSet}$ with respect to Joyal model structure. 
Since small filtered colimits of simplicial sets preserve categorical weak equivalences (=weak equivalence in Joyal's model structure) by \cite[Corollary 3.9.8]{cisinski_2019}, small filtered colimits (i.e., level-wise colimit of sets) already exhibits homotopy colimits in Joyal's model structure. 
Therefore, we have \[(\varinjlim_p N(\sC_p))_n = \varinjlim_p (N(\sC_p)_n) = \varinjlim_p \Hom_{\mathbf{Cat}} (\Delta[n], \sC_p) =^
\dag \Hom_{\mathbf{Cat}} (\Delta[n], \sC) = N(\sC)_n,\]
where $=^\dag$ follows from the compactness of $\Delta[n]$ and $\varinjlim_p \sC_p = \sC$ in $\mathbf{Cat}$.
This finishes the proof.
\end{proof}

\subsection{Finite homotopy dimension and hypercompleteness}

\begin{defi}
A finitary site $\sC$ is called \emph{excisive} if the full subcategory $\Shv(\sC) \subset \PSh(\sC)$ is closed under filtered colimits. 
\end{defi}

\begin{exam} \label{exam:ZarNisFinExc}
For any qcqs modulus pair $\sX$, the finitary sites $\hX_{\Zar}$ and $\hX_{\Nis}$ are excisive.
This follows from the fact that Zariski and Nisnevich topologies are generated by nice cd-structures (cf. \cite[Thm.~3.2.5]{AHW15}
). 
\end{exam}

\begin{prop}\label{prop:aprox-homdim}
Suppose $\sC_i$ is a filtered system of finitary excisive sites with colimit $\sC$ (in the infinity category of small infinity categories).
If each $\infty$-topos $\Shv(\sC_i)$ has homotopy dimension $\leq d$, then so does $\Shv(\sC)$.
\end{prop}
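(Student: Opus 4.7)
The strategy is to identify $\Shv(\sC)$ with the cofiltered limit $\lim \Shv(\sC_i)$ in the $\infty$-category of $\infty$-topoi (the limit being cofiltered since the system of sites is filtered and continuous functors induce geometric morphisms in the opposite direction), and then invoke the Clausen--Mathew result \cite[Cor.3.11]{2019arXiv190506611C} which says that cofiltered limits of $\infty$-topoi preserve the bound ``homotopy dimension $\leq d$''.

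\textbf{Step 1 (geometric morphisms).} For each $i \leq j$ the transition functor $u_{ij}: \sC_i \to \sC_j$ preserves finite limits and coverings, hence defines a morphism of sites; it induces a geometric morphism $\Shv(\sC_j) \to \Shv(\sC_i)$ whose inverse image functor is the left Kan extension along $u_{ij}$ followed by sheafification. The analogous statement holds for the structural functors $\sC_i \to \sC$. Combining these, we obtain a canonical comparison
\[
\Phi : \Shv(\sC) \longrightarrow \lim_{i} \Shv(\sC_i)
\]
in the $\infty$-category of $\infty$-topoi.

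\textbf{Step 2 (comparison is an equivalence).} This is the crux of the argument. Since the system of sites is filtered and each $\sC_i$ is finitary, every covering sieve in $\sC$ is the image of a covering sieve from some $\sC_i$ (use Prop.~\ref{prop:amb-approx}-style arguments at the level of abstract sites). Excisiveness of each $\sC_i$ ensures that $\Shv(\sC_i) \subseteq \PSh(\sC_i)$ is closed under filtered colimits, so sheafification on each $\sC_i$ commutes with filtered colimits. From these two facts one deduces that a presheaf $F$ on $\sC$ is a sheaf if and only if its restriction to each $\sC_i$ is a sheaf, and that the restriction functors exhibit $\Shv(\sC)$ as the cofiltered limit of the $\Shv(\sC_i)$ in $\infty$-topoi. (Equivalently: evaluation at representables $\Shv(\sC)^{op} \to \mathrm{Space}$ factors through each $\Shv(\sC_i)$, and the universal property of the limit matches objectwise.)

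\textbf{Step 3 (apply Clausen--Mathew).} Having identified $\Shv(\sC) \simeq \lim_i \Shv(\sC_i)$ as a cofiltered limit in $\infty$-topoi, and given that each $\Shv(\sC_i)$ has homotopy dimension $\leq d$ by hypothesis, \cite[Cor.3.11]{2019arXiv190506611C} immediately yields that $\Shv(\sC)$ also has homotopy dimension $\leq d$.

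\textbf{Main obstacle.} The hard part is Step 2: carefully checking that the comparison $\Phi$ is an equivalence of $\infty$-topoi and not merely an equivalence on underlying $1$-categories of $0$-truncated objects. This requires using both the filteredness of the system of sites (to descend sheaf conditions and covers to a single $\sC_i$) and excisiveness (to interchange sheafification with the filtered colimit computing $\sC$). Everything else is formal, packaged by the Clausen--Mathew theorem.
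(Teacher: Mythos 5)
Your proposal succeeds only because the citation at the end already carries the whole statement: the proposition as formulated here is (up to wording) exactly Clausen--Mathew's \cite[Cor.~3.11]{2019arXiv190506611C}, which is stated for filtered colimits of finitary excisive sites, and the paper's entire proof is the one-line citation of that corollary. Measured against this, your plan misallocates the difficulty in two ways. First, Steps 1--2, which you flag as the crux, reconstruct the identification $\Shv(\sC)\simeq\lim_i\Shv(\sC_i)$; but that identification is an ingredient internal to Clausen--Mathew's own proof, not something you need to supply in order to invoke their corollary. (Also, appealing to ``Prop.~\ref{prop:amb-approx}-style arguments'' is misplaced: that proposition concerns the specific modulus sites and sits downstream of the present abstract statement, so it is not available as a tool here.) Second, and more seriously if Steps 1--2 were meant as genuine content rather than padding: Step 3 is where the real work lives, and as written it is a gap. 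Corollary 3.11 is not the assertion that an arbitrary cofiltered limit of $\infty$-topoi of homotopy dimension $\leq d$ again has homotopy dimension $\leq d$; that is not a formal consequence of having a limit presentation, since a $d$-connective object of the limit need not be pulled back from any stage and its global sections are not obviously controlled by the stages. The substance of the Clausen--Mathew argument is precisely that finitariness plus excisiveness make global sections on $\Shv(\sC)$ commute with the relevant filtered colimits, so that a $d$-connective sheaf on $\sC$ is exhibited as a filtered colimit of (sheafified) restrictions from the $\sC_i$ and inherits a global section from some stage. Your ``immediately yields'' conceals exactly this step. The clean version of your write-up is simply: the hypotheses match those of \cite[Cor.~3.11]{2019arXiv190506611C} verbatim, and the conclusion is that corollary's conclusion.
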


\begin{proof}
This is proven in \cite[Corollary 3.11]{2019arXiv190506611C}. 
\end{proof}

\begin{thm}
For any qcqs modulus pair $\sX$ such that the ambient space $\hX$ has Krull dimension $\leq d$, the $\infty$-topoi $\Shv (\hX_{\Zar})$, $\Shv (\hX_{\Nis})$, $\Shv (\sX_{\ulMZar})$ and $\Shv (\sX_{\ulMNis})$ have homotopy dimension $\leq d$.
In particular, they are hypercomplete, and have cohomological dimension $\leq d$.
\end{thm}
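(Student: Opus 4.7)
The theorem splits naturally into two parts: the classical topoi $\Shv(\hX_\Zar)$, $\Shv(\hX_\Nis)$ on the one hand, and the modulus topoi $\Shv(\sX_{\ulMZar})$, $\Shv(\sX_{\ulMNis})$ on the other. For the classical part, my plan is to invoke directly the Clausen--Mathew theorem: for a qcqs scheme $Y$ of Krull dimension $\leq d$, the Zariski and Nisnevich $\infty$-topoi $\Shv(Y_\Zar)$, $\Shv(Y_\Nis)$ have homotopy dimension $\leq d$. Since $\hX$ is assumed qcqs of Krull dimension $\leq d$, this immediately handles $\Shv(\hX_\Zar)$ and $\Shv(\hX_\Nis)$.

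For the modulus topoi, the strategy is to reduce to the classical case via the ambient approximation. By Proposition~\ref{prop:amb-approx}, we have an equivalence in the $\infty$-category of finitary sites
\[
N(\sX_{\ulM\tau}) \simeq \varinjlim_{\sX' \to \sX} N(\hX'_\tau),
\]
where $\tau \in \{\Zar,\Nis\}$ and the colimit is over abstract admissible blowups of $\sX$. By Example~\ref{exam:ZarNisFinExc}, each site $\hX'_\tau$ is finitary and excisive. Thus Proposition~\ref{prop:aprox-homdim} applies once we verify the uniform bound: each $\Shv(\hX'_\tau)$ has homotopy dimension $\leq d$. By the classical Clausen--Mathew result, this reduces to checking that $\hX'$ is qcqs of Krull dimension $\leq d$.

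Quasi-compactness and quasi-separatedness of $\hX'$ follow from properness of $\hX' \to \hX$ and the assumption that $\hX$ is qcqs. The Krull dimension bound is the main technical point I anticipate needing care: the morphism $\hX' \to \hX$ is proper, minimal, and restricts to an isomorphism on the schematically dense interior $\iX$. From the latter property, generic points of irreducible components of $\hX'$ map bijectively onto those generic points of components of $\hX$ that meet $\iX$, and by the valuative criterion together with properness, any chain of specializations in $\hX'$ projects to a chain of the same length in $\hX$ (the key point being that the fibers over the generic points of components are trivial, so specializations cannot collapse). Hence $\dim \hX' \leq \dim \hX \leq d$, as required. Applying Proposition~\ref{prop:aprox-homdim} then yields homotopy dimension $\leq d$ for $\Shv(\sX_{\ulM\tau})$.

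Finally, the passage from ``homotopy dimension $\leq d$'' to ``hypercomplete and cohomological dimension $\leq d$'' is formal: an $\infty$-topos of finite homotopy dimension is automatically hypercomplete, and the bound on homotopy dimension descends to a bound on cohomological dimension (for any discrete abelian sheaf) by truncation arguments standard in $\infty$-topos theory (see e.g.\ \cite[\S 7.2]{HTT}). The main obstacle I expect is the Krull dimension bookkeeping for abstract admissible blowups of non-Noetherian qcqs schemes; once this is established, the rest of the argument is a clean assembly of Proposition~\ref{prop:amb-approx}, Example~\ref{exam:ZarNisFinExc}, and Proposition~\ref{prop:aprox-homdim}.
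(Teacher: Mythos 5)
Your overall route is exactly the paper's: the classical topoi are handled by Clausen--Mathew, the modulus topoi by combining the ambient approximation $N(\sX_{\ulM\tau}) \simeq \varinjlim N(\hX'_\tau)$ (Prop.~\ref{prop:amb-approx}) with excisiveness (Example~\ref{exam:ZarNisFinExc}) and the preservation of homotopy dimension under filtered colimits of finitary excisive sites (Prop.~\ref{prop:aprox-homdim}), and the final assertions by the standard facts from \cite[\S 7.2]{HTT}. So there is no divergence in strategy.

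The one place you go beyond the paper --- checking that each total space $\hX'$ of an abstract admissible blowup is qcqs of Krull dimension $\leq d$, so that Clausen--Mathew applies uniformly across the filtered system --- is a point the paper leaves implicit, and you are right to flag it. However, your justification of the dimension bound is not correct as stated. You claim that because the fibres over the generic points are trivial, ``specializations cannot collapse,'' so a chain of specializations in $\hX'$ maps to a chain of the same length in $\hX$. This fails already for the blowup of $\AA^2$ at the origin: the chain (generic point of the exceptional divisor) $\rightsquigarrow$ (closed point of the exceptional divisor) lives entirely in the fibre over the origin, so its image in $\AA^2$ is a single point. Triviality of the \emph{generic} fibres only prevents collapse at the first step of a chain starting from a generic point; a proper morphism that is an isomorphism over a dense open can still have positive-dimensional fibres elsewhere, and chains inside those fibres do collapse. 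In the Noetherian universally catenary setting the bound $\dim \hX' \leq \dim \hX$ is instead recovered from the dimension formula (the local rings satisfy $\dim \OO_{\hX',x'} \leq \dim \OO_{\hX, x}$ for a proper birational map of integral schemes); in the general qcqs setting a genuinely different argument is needed, and your proposed one does not supply it. The rest of the write-up (qcqs-ness of $\hX'$ from properness, the hypercompleteness and cohomological dimension consequences) is fine.
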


\begin{proof}
The $\infty$-topoi $ \Shv (\hX_{\Zar})$ and $\Shv (\hX_{\Nis})$ have homotopy dimension $\leq d$ by \cite[Theorem 3.12, 3.17]{2019arXiv190506611C}. 
Then, Proposition \ref{prop:amb-approx}, \ref{exam:ZarNisFinExc} and \ref{prop:aprox-homdim} imply that $\Shv (\sX_{\ulMZar})$ and $\Shv (\sX_{\ulMNis})$ have homotopy dimension $\leq d$.
The second assertion follows from the fact that any $\infty$-topos locally of homotopy dimension $\leq n$ for some integer $n$ is hypercomplete \cite[Cor.7.2.1.12]{HTT}, and the last assertion is by \cite[Cor. 7.2.2.30]{HTT}.
\end{proof}


\chapter{Relative cycles with modulus} \label{chap:relCyc}

\section{Relative cycles without modulus}

First let's recall some of the general Suslin--Voevodsky theory of relative cycles. 

In broad strokes, according to Suslin--Voevodsky, a relative cycle (of dimension $r$) of a separated morphism of finite presentation $X \to S$ should be a formal sum 
\[ \sum_{i = 1}^\nu n_iZ_i \]
 of closed integral subschemes $Z \subseteq X$ (such that generically $Z \to S$ has relative dimension $r$) which has a ``well-defined'' specialisation to the fibre $X_s = s \times_S X$ at every point $s \in S$. Here ``well-defined'' means invariant of the ``direction'' that you approach $s$ from. This ``direction'' is formalised using discrete valuation rings in Suslin--Voevodsky's notion of ``fat points'', \cite[Def.3.1.1]{SVRelCyc}. 

An alternative, equivalent approach, is to ask that pullbacks  
\[ f^\circledast: z(X/S, r) \to z(S' \times_S X/S', r) \]
along arbitrary morphisms $S' \to S$ are well-defined, not just inclusions of points. 
One simple way of formalising this alternative approach to the theory is to begin with three simple demands on the $\sum n_i Z_i \in z(X/S, r)$ and the $f^\circledast$.
\begin{axio} \label{axio:relativeCycles} \ 
\begin{enumerate}
 \item[(Gen)] The generic point of each $Z_i$ must lie over some generic point of $S$.
 \item[(Red)] If $i: S_{\red} \to S$ is the canonical inclusion, then $i^\circledast$ is the canonical identification. 
 \item[(Pla)] If $k$ is a field and $f: \Spec(k) \to S$ is morphism whose image lies in the flat locus of $\sqcup Z_i \to S$, then 
pull-back is performed by scheme-theoretic fibre product, then counting multiplicity. More concretely,
 \[ f^\circledast\sum n_iZ_i = \sum n_im_{ij} \overline{\{w_{ij}\}} \]
where $w_{ij}$ are the generic points of $k \times_S Z$ and $m_{ij} = \length (\OO_{k \times_S Z, w_{ij}})$. 
\end{enumerate}
\end{axio}

\begin{property} \label{prop:relativeCycles}
Two easy consequences of the axioms (Gen), (Red), (Pla) are:
\begin{enumerate}
 \item[(Bir)] If $f: S' \to S$ is a proper birational morphism between reduced schemes, then $f^\circledast$ is the canonical identification. (Pullback along the generic points).
 \item[(EdC)] If $f: \Spec(L) \to \Spec(k)$ is a finite field extension then $f^\circledast$ is injective.
\end{enumerate}
\end{property}

In fact, the above three axioms (Gen), (Red), (Pla) uniquely determine pull-backs for a general morphism $f: S' \to S$ as follows:
\begin{enumerate}
 \item By (Red) we can assume $S'$ and $S$ are reduced. 
 \item Each component of $f^\circledast\sum n_iZ_i$ will be flat over any generic point of $S'$ so we can assume $S'$ is $Spec(k)$ for some field $k$. 
 \item By Raynaud--Gruson flatification and (Bir), up to replacing $k$ with some extension $L$ and using (EdC), we can assume each $Z_i \to S$ is flat. In particular,  the image of $f$ lies in the flat locus of each $Z_i \to S$.
 \item Use (Pla).
\end{enumerate}

So if we make the reasonable demands (Gen), (Red), (Pla), our hands are tied for all $f^{\circledast}$. 

\begin{defi}
The presheaf $z(X/S, r)= z(- \times_S X/-, r)$%
\marginpar{\fbox{$z(X/S, r)(-)$}}%
\index[not]{$z(X/S, r)(-)$} %
on the category of finite presentation $S$-schemes as the largest presheaf satisfying (Gen), (Red), and (Pla), (and such that for fields, the $Z_i$ have dimension $r$). 
\end{defi}

More generally, there are four main flavours 
\marginpar{\fbox{$z_{equi}(X/S, r)$}}%
\index[not]{$z_{equi}(X/S, r)$} %
\marginpar{\fbox{$c(X/S, r)$}}%
\index[not]{$c(X/S, r)$} %
\marginpar{\fbox{$c_{equi}(X/S, r)$}}%
\index[not]{$c_{equi}(X/S, r)$} %
\[ z(X/S, r), \qquad z_{equi}(X/S, r), \qquad c(X/S, r), \qquad c_{equi}(X/S, r) \]%
of groups of relative cycles according to various requirements on the components $Z_i$. To wit, $c$ vs $z$ depends whether $Z \to S$ is required to be proper or not, and $(-)_{equi}$ demands $Z \to S$ be equidimensional, cf.\cite[Def.3.1.3, and the paragraph after Lem.3.3.9]{SVRelCyc}.

When the base is noetherian and regular, we get the full free abelian group.

\begin{thm}[{\cite[Cor.3.4.5, 3.4.6]{SVRelCyc}}]
If $S$ is regular and noetherian (e.g., $S$ is a dvr), the group $z_{equi}(X/S, r)$ is the free abelian group generated by points in the generic fibres of $X \to S$ whose closures in $X$ are equidimensional of relative dimension $r$ over $S$.

The group $c_{equi}(X/S, r)$ is the free abelian group generated by points in the generic fibres of $X \to S$ whose closures in $X$ are proper and equidimensional of relative dimension $r$ over $S$.
\end{thm}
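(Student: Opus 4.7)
The containment $z_{equi}(X/S, r) \subseteq \bigoplus \ZZ \cdot [Z]$, with $Z$ ranging over closed integral subschemes of $X$ whose closure is equidimensional of relative dimension $r$ over $S$ (and whose generic point lies over a generic point of $S$), is immediate from the definition of $z_{equi}$: every element is by definition such a $\ZZ$-linear combination, and distinct irreducible $Z$'s are linearly independent because they are determined by their generic points. So the whole content of the theorem is the reverse containment, namely that for each such elementary cycle $[Z]$ and for every morphism $f \colon S' \to S$ of finite presentation, there is a well-defined pullback $f^\circledast [Z] \in z_{equi}(X \times_S S'/S', r)$ satisfying the three axioms (Gen), (Red), (Pla). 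Once this pullback is constructed and the axioms verified, one obtains an embedding of the free abelian group into $z(-\times_S X/-, r)$, landing inside $z_{equi}$ by the equidimensionality of the output.

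The plan is to exploit that $S$ regular noetherian forces $\OO_{S,s}$ to have finite global dimension for every $s \in S$, so the Serre--Tor alternating sum is finite and defines an intrinsic multiplicity. Concretely, for $s \in S$ set
\[
 (i_s)^\circledast [Z] := \sum_{w} \Bigl( \sum_{i \geq 0} (-1)^i \length_{k(s)} \Tor_i^{\OO_{S,s}}\bigl(k(s), \OO_{Z,w}\bigr) \Bigr) [\overline{\{w\}}],
\]
where $w$ ranges over the finitely many generic points of $Z \times_S \Spec k(s)$ lying over the closed point. For a general finite type $f \colon S' \to S$, reduce by (Red) to $S'$ reduced, then (following the deduction recalled in the paragraph just before the theorem) define $f^\circledast [Z]$ by first using Raynaud--Gruson flatification to make $Z \times_S S' \to S'$ flat away from a nowhere dense closed subset after an admissible blow-up, applying (Pla) there, and extending to the remaining points via the displayed Tor formula together with (EdC). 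The assignment is forced to be unique by the three axioms, as in the argument right after Axiom 6.0.1.

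The axioms (Gen) and (Red) are visibly built into the construction: $w$ lies over the generic point of $\Spec k(s)$ by choice, and passage to reduced schemes does not alter residue fields or the rings $\OO_{Z,w}$. The axiom (Pla) is the crucial verification: if $f \colon \Spec k \to S$ lands in the flat locus of $Z \to S$, then $\OO_{Z,w}$ is flat over $\OO_{S, f(\mathrm{pt})}$, so all higher Tors vanish and the formula collapses to $\sum_w \length(\OO_{k \times_S Z, w}) \cdot [\overline{\{w\}}]$, which is exactly what (Pla) demands. The resulting cycle is again equidimensional of dimension $r$ because each $\overline{\{w\}} \to \Spec k(s)$ is a generic point of the equidimensional scheme $Z \times_S \Spec k(s)$ (equidimensionality is preserved under base change from a regular scheme for morphisms of finite type, by combining Chevalley's theorem on upper semicontinuity of fibre dimension with the equidimensionality hypothesis on $Z$).

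The main technical obstacle is checking functoriality: for a composition $S'' \xrightarrow{g} S' \xrightarrow{f} S$ one must verify $(fg)^\circledast = g^\circledast f^\circledast$ on every $[Z]$. By (Red) and (EdC) this reduces to the case $S'' = \Spec L$, $S' = \Spec k$ for a field extension and a fat point $\Spec k \to S$, where it becomes a base-change identity for Serre's Tor-multiplicities. Here the change-of-rings spectral sequence $\Tor_p^{\OO_{S',s'}}(L, \Tor_q^{\OO_{S,s}}(k(s'), \OO_{Z,w})) \Rightarrow \Tor_{p+q}^{\OO_{S,s}}(L, \OO_{Z,w})$ applies, and the finite global dimension of $\OO_{S,s}$ and $\OO_{S',s'}$ guarantees that taking alternating sums of lengths collapses the spectral sequence to the desired equality. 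This is the longest verification in the argument; the $c_{equi}$ statement then follows at once because properness is stable under base change, so the subgroup generated by $[Z]$ with $Z \to S$ proper is preserved by every $f^\circledast$.
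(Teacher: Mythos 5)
The statement you were asked to prove is not proved in the paper at all: it is quoted from Suslin--Voevodsky \cite[Cor.~3.4.5, 3.4.6]{SVRelCyc} as background for the chapter on relative cycles, so there is no internal argument to compare against. Judged on its own terms, your proposal correctly isolates the content (the inclusion of the free abelian group into $z_{equi}$, i.e.\ the existence of compatible pullbacks for an equidimensional cycle over a regular base) and the governing invariant (an Euler characteristic of Tor's over the regular local rings $\OO_{S,s}$, which by Serre's theorem on Koszul homology is the Samuel multiplicity $e(\m_s\OO_{Z,w};\OO_{Z,w})$). This is essentially the route of \cite[\S 3.4--3.5]{SVRelCyc}.

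However, two steps do not hold as written. First, the displayed multiplicity takes $\length_{k(s)}$ of $\operatorname{Tor}_i^{\OO_{S,s}}(k(s),\OO_{Z,w})$; these are modules over the Artinian local ring $\OO_{Z_s,w}$, whose residue field $k(w)$ is in general an infinite extension of $k(s)$, so their $k(s)$-length is infinite. The length must be taken over $\OO_{Z_s,w}$. Second, and decisively, the whole theorem rests on showing that this Euler characteristic coincides with the specialisation produced by \emph{every} fat point $\Spec k \to \Spec R \to S$, since those specialisations are what the axioms force; your uniqueness claim presupposes this. The change-of-rings spectral sequence you invoke is misstated (the inner term should be $\operatorname{Tor}_q^{\OO_{S,s}}(\OO_{S',s'},\OO_{Z,w})$, not $\operatorname{Tor}_q^{\OO_{S,s}}(k(s'),\OO_{Z,w})$), and even corrected it does not collapse merely because of finite global dimension. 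What is actually needed is the identity $k\otimes^{\mathbf{L}}_R(R\otimes^{\mathbf{L}}_{\OO_{S,s}}\OO_{Z,w}) \simeq k\otimes^{\mathbf{L}}_{\OO_{S,s}}\OO_{Z,w}$ together with the verification that the correction terms $\operatorname{Tor}_q^{\OO_{S,s}}(R,\OO_{Z,w})$ for $q>0$ are $R$-torsion (because $Z\to S$ is flat over the generic point of $S$) and that $R$-torsion modules contribute zero to $\chi^R(k,-)$ over the discrete valuation ring $R$. This vanishing argument is where regularity of $S$ and independence of the fat point are actually secured, and it is absent from your write-up; without it the argument does not close.
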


For $S$ not regular, a priori the groups of relative cycles will be subgroups of such free groups.

As with usual cycle groups these groups of relative cycles are functorial in $X$ (for fixed $S$) contravariantly for flat morphisms and covariantly for proper morphisms, \cite[Cor.3.6.3, Lem.3.6.4, Prop.3.6.5]{SVRelCyc}. Specifically, proper push-forward along a morphism $f$ sends a formal sum $\sum n_i z_i$ (of points $z_i$ in generic fibres) to the formal sum
\[ f_*(\sum n_iz_i) = \sum n_i [k(z_i):k(f(z_i))] f(z_i) \] 
where $[k(z_i):k(f(z_i))]$ is set to zero if $k(z_i) / k(f(z_i))$ is not finite. 

Using this additional functoriality, given three $S$-schemes $W, X, Y$, one can define a ``composition'' of cycles
\[ 
- \circ- : 
c_{equi}(X{\times_S}Y / X, 0)
\times 
c_{equi}(W{\times_S}X / W, 0) 
\to 
c_{equi}(W{\times_S}Y / W, 0).
\]
Explicitly, given $\alpha = \sum n_i A_i \in c_{equi}(W{\times_S}X / W, 0)$ and $\beta \in c_{equi}(X{\times_S}Y / X, 0)$, let $\phi_i: A_i \to W{\times_S}X \to X$ be the composition of the canonical morphisms. 
Then the cycle $\beta \in c_{equi}(X{\times_S}Y / X, 0)$ gives rise to cycles 
\[ (\phi_i)^\circledast \beta \in c_{equi}(A_i {\times_S}Y / A_i, 0). \]
It turns out these cycles actually lives in $c_{equi}(A_i {\times_S}Y / W, 0)$ and using the morphisms $\theta_i: A_i{\times_S}Y \to W{\times_S}Y$ we get a cycle
\[  \beta \circ \alpha := \sum_i n_i (\theta_i)_*(\pi\iota_i)^\circledast \beta, \]
or more explicitly, 
\begin{equation} \label{eq:compOne}
 \beta \circ \alpha := \sum_i n_i (A_i{\times_S}Y \to W{\times_S}Y)_*(A_i{\to}X)^\circledast \beta.
\end{equation}%
\marginpar{\fbox{composition of cycles}}%
\index{composition of cycles} %
In the notation of \cite{SVRelCyc} this would rather be written as 
\begin{equation} \label{eq:compTwo}
 \beta \circ \alpha := \pi'_*Cor_{W{\times}X{\times}Y/W{\times}X}(cycl(\pi)(\beta), \alpha) 
\end{equation}
where $\pi: W\times X \to X$ and $\pi': W \times X \times Y \to W \times Y$ are the canonical projections, and we have written $\times$ instead of ${\times_S}$. The operation $Cor(-,-)$ is discussed further around Equation~\ref{equa:Cor}.

\begin{defi}
Let $S$ be a separated Noetherian scheme. We will write $\Cor_S$%
\marginpar{\fbox{$\Cor_S$}}%
\index[not]{$\Cor_S$} %
for the category whose objects are separated $S$-schemes of finite type, and 
\[ \hom_{\Cor_S}(X, Y) = c_{equi}(X{\times_S}Y / X, 0), \]
with composition given as in \eqref{eq:compOne} and \eqref{eq:compTwo}.
\end{defi}

The category $\Cor_S$ is equipped with a monoidal structure which is the usual fibre product of schemes on objects, and on morphisms can be calculated as follows, \cite[After Lem.1.4.15, Before Rem.2.1.3]{Ivo05}. Here we start omitting $\times_S$ for readability. Suppose $\alpha \in c_{equi}(XY/X, 0)$ and $\beta \in c_{equi}(X'Y'/X', 0)$ are two morphisms in $\Cor_S$. Consider the cycle theoretic pullback of $\alpha$ to some $\sum n_iA_i \in c_{equi}(XYX'/XX', 0)$ and consider the canonical morphisms $\phi_i: A_i \to XYX' \stackrel{proj.}{\to} X'$. Pulling back $\beta$ along this morphism we get $\phi_i^\circledast \beta \in c_{equi}(A_iY'/A_i, 0)$ which we can pushforward along the closed immersion $\theta_i: A_iY' \to XYX'Y'$ to get a cycle which, one can check, actually lies in $c_{equi}(XYX'Y' / XY, 0)$. Or in other words, $(\theta_i)_*\phi_i^\circledast \beta$ is a morphism $X \times_S Y \to X' \times_S Y'$ of $\Cor_S$. All in all,
\[ \alpha \otimes \beta := \sum n_i (\theta_i)_*\phi_i^\circledast \beta \in c_{equi}(XYX'Y' / XY, 0) \]
or more explicitly,
\begin{equation}
\alpha \otimes \beta = \sum n_i (A_iY' \to XYX'Y')_* (A_i \to X')^\circledast \beta. 
\end{equation}
In the notation of \cite{SVRelCyc} this would rather be written as
\begin{equation}
\alpha \otimes \beta = Cor(cycl(\pi')(\beta), cycl(\pi)(\alpha)). 
\end{equation}
where $\pi: XX' \to X$, and $\pi': XYX' \to Y$ are the canonical projections. The operation $Cor(-,-)$ is discussed further around Equation~\ref{equa:Cor}.

Of course there are a number of axioms to check to ensure that the above definitions of composition and tensor product actually define a monoidal category. These are checked in \cite[Section 2.1]{Ivo05} in notation similar to \cite{SVRelCyc}, and also in \cite{CD19} in a re-interpretted form.

\section{Relative cycles with modulus}

Now we will develop the theory of relative cycles for modulus pairs. 

\begin{defn} \label{defn:presheavesOfRelCyc}
Let $\sX \to \sS \in \ulPSCH$ be an ambient morphism of finite type of 
qc separated modulus pairs with Noetherian interior. 
\begin{enumerate}
 \item A \emph{relative cycle}%
\marginpar{\fbox{relative cycle}}%
\index{relative cycle} %
 is an element of $c_{equi}(X^\circ / S^\circ, 0)$.

\item A relative cycle $\sum n_i Z_i \in c_{equi}(X^\circ / S^\circ, 0)$ is \emph{left proper}%
\marginpar{\fbox{left proper}}%
\index{relative cycle!left proper} %
 if each $\ol{Z}_i \to \ol{S}$ is proper. Here $\ol{Z}_i$ is the closure of $Z_i$ in $\ol{X}$. 

 \item Suppose $\ol{S}$ is the spectrum of a valuation ring, $S^\circ = \ol{S} \setminus S^\infty$ is the generic point, let $\sum n_iz_i \in c_{equi}(X^\circ / S^\circ, 0)$ be a relative cycle, and let $\widetilde{z}_{ij}$ be the spectra of the valuation rings of the extensions of the valuation of $S^\circ$ to $z_i$. We say $\alpha$ is \emph{admissible}%
\marginpar{\fbox{admissible}}%
\index{relative cycle!admissible} %
if for every\footnote{Of course, if $\alpha$ is not left proper, there may be some $i,j$ which do not admit such a diagram.} $i, j$, and every diagram of the form
\[ \xymatrix{
z_i \ar[r] \ar[d] & \widetilde{z}_{ij} \ar[r] \ar[d] & \ol{X} \ar[dl]  \\
S^\circ \ar[r] & \ol{S} & 
} \]
we have $X^\infty|_{\widetilde{z}_{ij}} = S^\infty|_{\widetilde{z}_{ij}}$.

 \item In general a relative cycle is \emph{admissible} if for every minimal (ambient) morphism $f: \sP \to \sS$ with $\hP$ the spectrum of a valuation ring and $\iP$ is the open point, the cycle $f^\circledast \sum n_iZ_i$ is admissible.
\end{enumerate}
We will write 
\[ \mc(\sX/\sS) \]%
\marginpar{\fbox{$\mc(\sX/\sS)$}}%
\index[not]{$\mc(\sX/\sS)$} %
for the group of left proper admissible relative cycles.
\end{defn}

\begin{rema}
Many variations on the above (4) are possible: we relax it by requiring that $\iP$ is actually a point of $\iS$, or alternatively, requiring $k(\iP)$ to be algebraically closed, Lem.~\ref{lemm:admAlgClo}, or requiring that the valuations be henselian, or strictly henselisan, \dots
\end{rema}

\begin{rema}
The fact that there is an equality $\mS|_{\widetilde{z}_{ij}} = \mX|_{\widetilde{z}_{ij}}$ rather than the inequality $\geq$ is explained heuristically by Cor.~\ref{coro:cartSquaQfhRelCyc} which describes $\ZZtr(\sX/\sS)$ as the intersection of $(\ZZ \hom_{\ulMSch_{\sS}}(-, \sX))_{\ulMrqfh}(\sS)$ %
and $\ZZtr(\iX/\iS)$ in $\ZZ_{\qfh}(\iX/\iS)(\iS)$.
See also \S \ref{sec:KMSY-comparison} below.
\end{rema}

\begin{prop}[Functoriality] \label{prop:cycFun}
Suppose that $f: \sT \to \sS, \sX \to \sS \in \ulPSCH$ are ambient morphisms of modulus pairs and $\sZ \in \mc(\sX/\sS) \subseteq c_{equi}(\iX / \iS, 0)$. Then the relative cycle theoretic pullback $f^\circledast \sZ \in c_{equi}(\iT \times_{\iS} \iX / \iT, 0)$ is both left proper and admissible with respect to $\sT \ambtimes[\sS] \sX \to \sT$. In other words, the cycle theoretic pullback
\[ f^\circledast: 
c_{equi}(\sX^\circ / \sS^\circ, 0) \to 
c_{equi}(\iT \times_{\sS^\circ} \iX / \iT, 0) \]
descends to a morphism of subgroups
\[ f^\circledast: \mc(\sX/\sS)  \to \mc(\sT \ambtimes[\sS] \sX / \sT) \]  
\end{prop}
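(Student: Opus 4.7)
The plan is to verify that both defining properties of $\mc$---left properness and admissibility---are preserved by the Suslin-Voevodsky pullback $f^\circledast$. Writing $\sZ = \sum n_i Z_i$, the cycle $f^\circledast \sZ$ is represented in $c_{equi}(\iT \times_\iS \iX / \iT, 0)$ by a $\ZZ$-linear combination of generic points $w_\alpha$ lying over the various $Z_i$; by Lem.~\ref{lemm:ambtimesProper}\eqref{lemm:lemm:ambtimesProper:Int}, $\iT \times_\iS \iX$ is exactly the interior $\iY$ of $\sY := \sT \ambtimes_\sS \sX$, so $f^\circledast \sZ$ is a candidate element of $\mc(\sY/\sT)$. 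Left properness is then routine: the closure of each $w_\alpha$ in $\hT \times_\hS \hX$ lies inside $\hT \times_\hS \overline{Z_i}$, which is proper over $\hT$ as a base change of $\overline{Z_i} \to \hS$; and since $\hY \to \hT \times_\hS \hX$ is proper (a composite of a blowup and a closed immersion by Construction~\ref{cons:admBlowup}), the closure of $w_\alpha$ in $\hY$ is proper over $\hT$ as well.

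The hard part will be admissibility. Given a minimal test morphism $g : \sP \to \sT$ with $\hP = \Spec R$ a valuation ring and $\iP$ its generic point, I must verify that $g^\circledast f^\circledast \sZ = (fg)^\circledast \sZ$ satisfies the condition of Def.~\ref{defn:presheavesOfRelCyc}(3) relative to $\sP \ambtimes_\sT \sY \to \sP$. For a component $w$ and a valuation-ring extension $\widetilde{w} = \Spec \widetilde{R}$ with lift $\widetilde{w} \to \sP \ambtimes_\sT \sY$, the condition is $\mY|_{\widetilde{w}} = \mP|_{\widetilde{w}}$, the restrictions being taken along the two projections. Now $\mP|_{\widetilde{w}} = \mT|_{\widetilde{w}}$ by minimality of $g$, and the formula $\mY = \mT|_\hY + \mX|_\hY - E$ from Construction~\ref{cons:admBlowup}, combined with a direct computation on the valuation ring $\widetilde{R}$---where the two-generator ideal defining the blowup centre becomes principal, so the blowup is trivial over $\widetilde{w}$ and lands in the chart where the exceptional divisor is cut out by the minimum of the input divisors---gives $E|_{\widetilde{w}} = \min(\mT|_{\widetilde{w}}, \mX|_{\widetilde{w}})$ and hence $\mY|_{\widetilde{w}} = \max(\mT|_{\widetilde{w}}, \mX|_{\widetilde{w}})$. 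Thus the admissibility condition reduces to the inequality $\mX|_{\widetilde{w}} \leq \mT|_{\widetilde{w}}$.

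To close this inequality I would apply admissibility of $\sZ$ itself to the minimal test pair $\sQ := (\widetilde{w}, \mS|_{\widetilde{w}})$, with modulus pulled back along the composite $\widetilde{w} \to \hP \to \hT \to \hS$. Minimality of $g$ and the modulus condition $\mT \geq f^*\mS$ together force the closed point of $\widetilde{w}$ into $\mS$, so $\mS|_{\widetilde{w}}$ is a non-trivial effective Cartier divisor, $\iQ$ is the generic point, and $\sQ \to \sS$ is a legitimate minimal valuation-ring test object over $\sS$. The component $w$ and the map $\widetilde{w} \to \hX$ (obtained from $\widetilde{w} \to \sP \ambtimes_\sT \sY \to \hY \to \hT \times_\hS \hX \to \hX$) then fit into an admissibility diagram for the component of $q^\circledast \sZ$ corresponding to $w$, yielding $\mX|_{\widetilde{w}} = \mS|_{\widetilde{w}}$; combined with $\mS|_{\widetilde{w}} \leq \mT|_{\widetilde{w}}$ from the modulus condition for $f$, this gives the required inequality. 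The main obstacle---and the reason the blowup version of the ambient product is the correct construction for this theory---is the valuation-ring computation identifying $E|_{\widetilde{w}}$ with the minimum of $\mT|_{\widetilde{w}}$ and $\mX|_{\widetilde{w}}$: this is what transmutes the Suslin-Voevodsky equality $\mX = \mS$ on valuation-ring extensions (coming from admissibility of $\sZ$) into the inequality $\mX \leq \mT$ that the blowup defining $\sY$ requires in order to descend admissibility.
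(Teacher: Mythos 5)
Your left-properness argument is the same as the paper's: push the closure of a component into $\hT \times_{\hS} \ol{Z}_i$ and use that $\hY \to \hT \times_{\hS} \hX$ is proper. For admissibility the two proofs genuinely diverge: the paper dispatches it in one sentence via $(fg)^\circledast = g^\circledast f^\circledast$, never touching the modulus $\mY = \mT|_{\hY} + \mX|_{\hY} - E$ of the ambient product, whereas you unwind the definition completely. Your computation that $E|_{\widetilde{w}} = \min(\mT|_{\widetilde{w}}, \mX|_{\widetilde{w}})$ on a valuation ring, hence $\mY|_{\widetilde{w}} = \max(\mT|_{\widetilde{w}}, \mX|_{\widetilde{w}})$ and the condition to be checked is exactly $\mX|_{\widetilde{w}} \leq \mT|_{\widetilde{w}}$, is correct and supplies content that the paper's one-liner suppresses.

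The gap is in the final step. You assert that minimality of $g$ together with $\mT \geq f^*\mS$ forces the closed point of $\widetilde{w}$ into $\mS$, so that $\sQ = (\widetilde{w}, \mS|_{\widetilde{w}})$ is a legitimate test object for Def.~\ref{defn:presheavesOfRelCyc}(4). The implication runs the wrong way: $\mT \geq f^*\mS$ gives $V(f^*\mS) \subseteq V(\mT)$, so the closed point is forced into $\mT$ but not into $\mS$. Indeed $\mS|_{\widetilde{w}}$ can be the empty divisor (take $\mS = \varnothing$ and $\sT = (\AA^1_k, \{0\})$), and even when it is nonzero its complement in $\Spec(\widetilde{R})$ can be strictly larger than the generic point when $\widetilde{R}$ has rank $>1$; in either case $\iQ$ is not the open point and admissibility of $\sZ$ cannot be quoted for $\sQ$. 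The repair is to move the test from $\widetilde{w}$ to the component $z_i$ of $\sZ$ lying under $w$: the lift $\widetilde{w} \to \hX$ factors through the closure $\ol{Z}_i$ and, since $\widetilde{R}$ is integrally closed in $k(w) \supseteq k(z_i)$, through the normalisation $\widetilde{Z}_i$; the identity $\mS|_{\widetilde{Z}_i} = \mX|_{\widetilde{Z}_i}$ of Lemma~\ref{prop:KMSYadmissibility} (the paper's reformulation of admissibility of $\sZ$) then restricts along $\widetilde{w} \to \widetilde{Z}_i$ to give $\mX|_{\widetilde{w}} = \mS|_{\widetilde{w}} \leq \mT|_{\widetilde{w}}$. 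With that substitution your argument closes.
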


\begin{proof}
Let $\sB = \sT \ambtimes[\sS] \sX$. In other words, $\ol{B}$ is the blowup of $\ol{T} \times_{\ol{S}} \ol{X}$ at the intersection of the pullbacks of $T^\infty$ and $X^\infty$, and $B^\infty = T^\infty + X^\infty - E$. Notice that $B^\circ = T^\circ \times_{S^\circ} X^\circ$. 

Let $\sW = f^\circledast \sZ \in c_{equi}(B^\circ / T^\circ, 0)$. Note that the support of $\sW$ is contained in the pullback of the support of $\sZ$, \cite[Lem.3.3.6]{SVRelCyc}. 
In other words, if $\sZ = \sum n_iz_i$ and $\sW = \sum m_jw_j$ then for each $j$, there is some $i$ such that $z_i$ specialises to $g(w_j)$ where $g$ is the morphism $g: B^\circ \to X^\circ$. It follows that if $Z, W$ are the closures of $z_i, w_j$ in $\ol{X}, \ol{B}$ respectively, then we obtain a commutative diagram
\[ \xymatrix@R=12pt{
W \ar[d] \ar[r] & Z \ar[d] \\
\ol{B} \ar[r] \ar[d] & \ol{X} \ar[d] \\
\ol{T} \ar[r] & \ol{S}.
} \]
If $\sZ$ is left proper, then $Z \to \ol{S}$ is proper, so $\ol{T} \times_{\ol{S}} Z \to \ol{T}$ is proper. Since $W \to \ol{B}$ is a closed immersion, $W \to \ol{T} \times_{\ol{S}} Z$ is proper, and it follows that $W \to \ol{T}$ is proper. Hence, $\sW$ is also left proper.

Admissibility is straight-forward since for every $\sP$ with $\ol{P}$ the spectrum of a valuation ring, each morphism $g: \sP \to \sT$ gives rise to a morphism $fg: \sP \to \sS$. So since $(fg)^\circledast \sZ = g^\circledast f^\circledast \sZ$ it follows that $f^\circledast \sZ$ is admissible if $\sZ$ is.
\end{proof}

\begin{lemm}[Admissible blowup invariance in the source] \label{lemm:cycSourceAdmBu}
Let $\sX \to \sS$ be a (separated, finite type) ambient morphism, and $\sX' \to \sX$ an admissible blowup and $\sZ \in c_{equi}(X^\circ / S^\circ, 0) = c_{equi}((X')^\circ / (S')^\circ, 0)$ a relative cycle. 
\begin{enumerate}
 \item $\sZ$ is left proper for $\sX$ if and only if it is left proper for $\sX'$.
 \item $\sZ$ is admissible for $\sX$ if and only if it is admissible for $\sX'$.
\end{enumerate}
Consequently, 
\[ \mc(\sX'/S) = \mc(\sX/S). \]
\end{lemm}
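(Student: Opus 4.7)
The plan is to exploit the single observation that an \aab{} $\sX' \to \sX$ induces an isomorphism $\iX' \cong \iX$, so $\sZ = \sum n_i z_i$ is literally the same element of $c_{equi}(\iX/\iS,0) = c_{equi}(\iX'/\iS,0)$; only its closures change. The entire proof then reduces to a fiberwise application of the valuative criterion for properness together with the equality $\mX' = \mX|_{\hX'}$.

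For (1), let $\overline{Z}_i$ (resp.\ $\overline{Z}'_i$) denote the closure of $z_i$ in $\hX$ (resp.\ $\hX'$). I would first check that the restriction $\overline{Z}'_i \to \overline{Z}_i$ of $\hX' \to \hX$ is proper and surjective: properness holds because $\overline{Z}'_i$ is closed in $\overline{Z}_i \times_{\hX} \hX'$, which is proper over $\overline{Z}_i$ as a pullback of $\hX' \to \hX$; surjectivity holds because its image is closed and contains the schematically dense point $z_i$. Then in the factorisation $\overline{Z}'_i \to \overline{Z}_i \to \hS$, $\overline{Z}'_i \to \hS$ is proper iff $\overline{Z}_i \to \hS$ is proper: $(\Leftarrow)$ is composition of proper morphisms, while $(\Rightarrow)$ uses the standard fact (\cite[Tag 03GN]{stacks-project}) that the target of a proper surjection is proper over the base, provided it is itself separated and of finite type, which is automatic as $\overline{Z}_i \hookrightarrow \hX \to \hS$ is separated of finite type.

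For (2), fix any minimal ambient $f : \sP \to \sS$ with $\hP$ the spectrum of a valuation ring and $\iP$ its generic point. By Lem.~\ref{lemm:ambtimesAab} the induced morphism $\sT' := \sP \ambtimes_\sS \sX' \to \sT := \sP \ambtimes_\sS \sX$ is again an \aab{}, and since $\sP \to \sS$ is minimal, Lem.~\ref{lemm:minProduct} gives $\iT' = \iP \times_{\iS} \iX' = \iP \times_{\iS} \iX = \iT$. Hence $f^\circledast \sZ$ is one well-defined cycle in $c_{equi}(\iT/\iP,0) = c_{equi}(\iT'/\iP,0)$, and we have reduced checking admissibility for $\sX$ versus $\sX'$ to checking Def.~\ref{defn:presheavesOfRelCyc}(3) for $\sT$ versus $\sT'$ over the valuation-base $\sP$. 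The spectra $\widetilde{z}_{ij}$ of the extensions of the valuation on $\iP$ to points of $f^\circledast\sZ$ depend only on those points and on the valuation, so they coincide in both setups. By the valuative criterion applied to the proper morphism $\hT' \to \hT$, every compatible diagram $z'_i \to \widetilde{z}_{ij} \to \hT$ lifts uniquely to a diagram $z'_i \to \widetilde{z}_{ij} \to \hT'$, and conversely any diagram into $\hT'$ descends by composition. Under this bijection the equality $\mT' = \mT|_{\hT'}$ yields $\mT|_{\widetilde{z}_{ij}} = \mT'|_{\widetilde{z}_{ij}}$, so the two conditions $\mT|_{\widetilde{z}_{ij}} = \mP|_{\widetilde{z}_{ij}}$ and $\mT'|_{\widetilde{z}_{ij}} = \mP|_{\widetilde{z}_{ij}}$ are identical.

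The final conclusion $\mc(\sX'/\sS) = \mc(\sX/\sS)$ then follows formally from (1) and (2). The main obstacle, such as it is, is purely bookkeeping: one must verify that the pullback along $\sP \to \sS$ commutes correctly with passage to the \aab{} so that admissibility over $\sS$ reduces to admissibility over the valuation base $\sP$; the actual content is just the valuative criterion and the identity $\mT' = \mT|_{\hT'}$.
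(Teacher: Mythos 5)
Your proof is correct and follows essentially the same route as the paper's: part (1) via properness and surjectivity of $\overline{Z}'_i \to \overline{Z}_i$ together with descent of universal closedness along surjections, and part (2) via the valuative-criterion bijection between diagrams into $\hX$ and $\hX'$ combined with the minimality identity $\mX' = \mX|_{\hX'}$. The only difference is that you make explicit the bookkeeping step (that pulling back along $\sP \to \sS$ commutes with the abstract admissible blowup, via Lem.~\ref{lemm:ambtimesAab} and Lem.~\ref{lemm:minProduct}) which the paper compresses into the sentence ``it suffices to consider the case that $\ol{S}$ is a valuation ring''; this is a welcome clarification, not a divergence.
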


\begin{proof}
 (1): Let $Z', Z$ be the closures of an integral component $Z_i$ of $\sZ$ in $\ol{X}'$, $\ol{X}$ respectively. 
Since $\ol{X}', \ol{X} \to \ol{S}$ are both separated and finite type, the same is true of $Z', Z \to \ol{S}$. Since $\sX' \to \sX$ is an admissible blowup, $\ol{X}' \to \ol{X}$ is a proper surjective morphism, and therefore the induced morphism $Z' \to Z$ is also proper and surjective, and in particular, is surjective and universally closed. Hence, $Z' \to \ol{S}$ is universally closed if and only if $Z \to \ol{S}$ is universally closed.

(2): It suffices to consider the case that $\ol{S}$ is a valuation ring and $S^\circ$ is the generic point. Let $z$ be an integral component of $\sZ$.
The question is about diagrams
\[ \xymatrix{
z \ar[r] \ar[d] & Z \ar[r] \ar[d] & \ol{X} \ar[dl] && z \ar[r] \ar[d] & Z' \ar[r] \ar[d] & \ol{X}' \ar[dl] \\
S^\circ \ar[r] & \ol{S} &  && S^\circ \ar[r] & \ol{S}
} \]
such that $Z, Z'$ are spectrums of an extension of the valuation ring $\OO(\ol{S}) \subseteq k(\ol{S})$ to $k(z)$. 
We are trying to show 
$X^\infty|_{Z} = S^\infty|_{Z}$ for all such diagrams on the left if and only if 
$X'^\infty|_{Z'} = S^\infty|_{Z'}$ for all such diagrams on the right. Here $X'^\infty = X^\infty|_{\ol{X}'}$.

Notice that any given diagram on the right induces one on the left by composition with $\ol{X}' \to \ol{X}$ and any diagram on the left induces one on the right by the valuative criterion for properness. Moreover, this correspondence is bijective, so we are actually only dealing with diagrams of the form
\[ \xymatrix{
z \ar[r] \ar[d] & Z \ar[r] \ar[d] & \ol{X}' \ar[r] & \ol{X} \ar[dll]  \\
S^\circ \ar[r] & \ol{S} & &&
} \]
Hence, its clear that $X^\infty|_{Z} = S^\infty|_{Z}$ if and only if $X'^\infty|_{Z} = S^\infty|_{Z}$ since $X'^\infty = X^\infty|_{\ol{X}'}$.
\end{proof}

\begin{prop}[Admissible blowup invariance in the target] \label{prop:cycTarAdmBu}
Suppose that $f: \sT \to \sS, \sX \to \sS$ are ambient morphisms of modulus pairs with $\sX \to \sS$ separated and finite type, and $\sT \to \sS$ an admissible blowup. Then the induced morphism
\[ \mc(\sX / \sS) \to \mc(\sT \ambtimes[\sS] \sX / \sT) \]
is an isomorphism.
\end{prop}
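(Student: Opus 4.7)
The plan is to show that the underlying group of relative cycles does not change, and then verify separately that the properties ``left proper'' and ``admissible'' are preserved in both directions under the identification.

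First, since $\sT \to \sS$ is an abstract admissible blowup, $\iT \to \iS$ is an isomorphism and $\sT \to \sS$ is minimal. Combined with Lem.~\ref{lemm:minProduct} this gives $\iY \cong \iT \times_{\iS} \iX \cong \iX$ where $\sY := \sT \ambtimes_{\sS} \sX$, so we have an equality of ambient cycle groups
\[ c_{equi}(\iX/\iS,0) \;=\; c_{equi}(\iY/\iT,0), \]
and by Prop.~\ref{prop:cycFun} the map $f^\circledast$ in the statement is the identity map after restricting to this common subset. Hence it suffices to show that an element $\sZ \in c_{equi}(\iX/\iS,0)$ is left proper and admissible for $\sX \to \sS$ if and only if it is so for $\sY \to \sT$.

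For left properness, let $z_i$ be an integral component of $\sZ$ and let $Z \subseteq \hX$, $Z' \subseteq \hY$ be its scheme theoretic closures. The total space $\hY$ sits inside $\hT \times_{\hS} \hX$ as a closed subscheme, so the induced morphism $\hY \to \hX$ is proper and, being an isomorphism on the dense open $\iY = \iX$, is also surjective when restricted to $Z' \to Z$. From this I would deduce both directions using standard facts: if $Z \to \hS$ is proper then $Z' \to \hS$ is proper (as a composition of proper morphisms) and factors through the separated $\hT$, hence $Z' \to \hT$ is proper; conversely, if $Z' \to \hT$ is proper then $Z' \to \hS$ is proper, and since $Z' \to Z$ is proper and surjective and $Z, \hS$ are separated, $Z \to \hS$ is proper by the standard descent lemma for properness along proper surjections.

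The heart of the argument is admissibility, which is where I expect any subtlety to lie. Using the valuative criterion of properness applied to $\hT \to \hS$ together with $\iT \stackrel{\sim}{\to} \iS$, I would first establish a bijection
\[ \hom_{\ulPSCH}(\sP, \sT) \;\stackrel{\sim}{\longrightarrow}\; \hom_{\ulPSCH}(\sP, \sS) \]
for every $\sP = (\Spec R, (\pi))$ with $R$ a valuation ring and $\iP$ its generic point: any $\sP \to \sS$ restricts to $\iP \to \iS \stackrel{\sim}{\leftarrow} \iT$ and then extends uniquely to $\hP \to \hT$ by the valuative criterion, and the modulus condition transfers because $\mT = \mS|_{\hT}$. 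For such a $\sP$ the two iterated ambient products agree, namely $\sP \ambtimes_{\sT} \sY \cong \sP \ambtimes_{\sS} \sX$, by Cor.~\ref{coro:ambtimesambtimes} applied to the chain $\sP \to \sT \to \sS$ with $\sT \to \sS$ minimal. Consequently the test cycles $(\sP \to \sT)^\circledast \sZ$ and $(\sP \to \sS)^\circledast \sZ$ coincide as elements of the same cycle group on the same modulus pair, and the condition $\mY|_{\widetilde{z}} = \mT|_{\widetilde{z}}$ (which reduces to $\mX|_{\widetilde{z}} = \mS|_{\widetilde{z}}$ after pulling back to the final total space) holds on one side if and only if it holds on the other. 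Putting the two equivalences together yields the claim.
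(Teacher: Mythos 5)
Your proof is correct and follows essentially the same route as the paper: identify the cycle groups via $\iY \cong \iX$, transfer left properness using properness and surjectivity of $Z' \to Z$ in both directions, and reduce admissibility to valuation-ring test pairs via the valuative criterion. The only (harmless) divergence is in the last step, where you identify $\sP \ambtimes_{\sT} \sY$ with $\sP \ambtimes_{\sS} \sX$ directly via Cor.~\ref{coro:ambtimesambtimes}, whereas the paper observes that the comparison map is an abstract admissible blowup and invokes source-invariance (Lem.~\ref{lemm:cycSourceAdmBu}); both are valid.
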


\begin{proof}
Certainly, since $\sT^\circ = \sS^\circ$ we have $(\sT \times_\sS \sX)^\circ = \sX^\circ$, so $c_{equi}(\sX^\circ / \sS^\circ, 0) = c_{equi}((\sT \times_\sS \sX)^\circ / \sT^\circ, 0)$.

For left properness, let $Z \subseteq \sX^\circ$ be the support of a cycle in this group. Let $Z_1$ be its closure in $\ol{\sX}$ and $Z_2$ the closure in $\ol{B}$, the total space of the modulus pullback $\ol{T} \times_{\ol{S}} \ol{X}$. This produces the following diagram.
\[ \xymatrix{
Z_2 \ar[d]  \ar[rr] && Z_1 \ar[d] \\
\ol{B} \ar[r] & \ol{\sT} \times_{\ol{S}} \ol{X} \ar[r] \ar[d]_{\circ} & \ol{X} \ar[d]^\circ \\
& \ol{T} \ar[r] & \ol{S} 
} \] 
Here, all morphisms except those labelled $\circ$ are proper. If $Z_1 \to \ol{S}$ is proper, then it follows that $Z_2 \to \ol{S}$ and therefore $Z_2 \to \ol{T}$ is proper. On the other hand, suppose $Z_2 \to \ol{T}$ is proper. Then $Z_2 \to \ol{S}$ is proper. But $Z_2 \to Z_1$ is surjective, so it follows that $Z_1 \to \ol{S}$ is proper. So our cycle $Z$ is left proper with respect to $\sX / \sS$ if and only if its left proper with respect to $\sT \ambtimes[\sS] \sX / \sT$. 

For admissibility, first note, as above, that by the valuative criterion for properness, morphisms $\sP \to \sT$ with $\ol{P}$ a valuation ring are in canonical bijection with morphisms $\sP \to \sS$. So we are reduced to the question of whether admissibility is independent of the total space; the two spaces in question being $\ol{P} \times_{\ol{S}} \ol{X}$ and $\ol{P} \times_{\ol{T}} \ol{B}$. However, $\sP \ambtimes[\sT] \sB \to \sP \ambtimes[\sS] \sX$ is an admissible blowup, so the result follows from Lem.~\ref{lemm:cycSourceAdmBu}. 
\end{proof}

\begin{coro}
The presheaves $\mc(\sX/\sS)$ on $\ulPSCH_\sS$ are well-defined for morphisms $\sX/\sS$ in $\ulMSCH$, and descend to presheaves on $\ulMSCH_\sS$.
\end{coro}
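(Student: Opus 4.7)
The plan is to combine the two preceding results, Lem.~\ref{lemm:cycSourceAdmBu} (source invariance under admissible blowup) and Prop.~\ref{prop:cycTarAdmBu} (target invariance under admissible blowup), with the compatibility of the ambient product with abstract admissible blowups (Lem.~\ref{lemm:ambtimesAab} and Cor.~\ref{coro:ambtimesambtimes}). The two assertions in the statement correspond to the two variables: the ``object'' $\sX$ of the presheaf, which we first allow to live in $\ulMSCH$; and the argument $\sT$ of the presheaf, which we want to allow to live in $\ulMSCH_\sS$.

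First I would fix notation. Given $\sX \to \sS$ in $\ulMSCH$ represented by a roof $\sX \xleftarrow{\sigma} \sX' \xrightarrow{f'} \sS$ in $\ulPSCH$ with $\sigma \in \uS$ (Prop.~\ref{prop:cal-frac}), define the candidate presheaf on $\ulPSCH_\sS$ by $\sT \mapsto \mc(\sT \ambtimes[\sS] \sX'/\sT)$, with transition maps the cycle pullbacks of Prop.~\ref{prop:cycFun}. For well-definedness I would compare two representatives $\sX \xleftarrow{\sigma_i} \sX'_i \to \sS$ ($i=1,2$) by a common refinement $\sX \xleftarrow{\sigma} \sX'_3 \to \sS$ (calculus of fractions, Prop.~\ref{prop:cal-frac}), so that $\sX'_3 \to \sX'_i$ is again in $\uS$. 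Lem.~\ref{lemm:ambtimesAab} then implies $\sT \ambtimes[\sS] \sX'_3 \to \sT \ambtimes[\sS] \sX'_i$ is an \aab{} over $\sT$, and Lem.~\ref{lemm:cycSourceAdmBu} yields canonical equalities $\mc(\sT \ambtimes[\sS] \sX'_3/\sT) = \mc(\sT \ambtimes[\sS] \sX'_i/\sT)$ compatible with the pullbacks along morphisms $\sT_1 \to \sT_2$ in $\ulPSCH_\sS$. This identifies the two candidate presheaves canonically.

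For the second assertion, I would show that the presheaf just constructed inverts $\uS_\sS := \uS \cap \ulPSCH_\sS$, which by the universal property of localisation (Cor.~\ref{coro:invertSliceMSCH}) implies that it factors through $\ulMSCH_\sS$. Given an \aab{} $t: \sT' \to \sT$ in $\ulPSCH_\sS$, the associativity of the ambient product for a minimal morphism (Cor.~\ref{coro:ambtimesambtimes}) gives a canonical isomorphism $\sT' \ambtimes[\sT](\sT \ambtimes[\sS] \sX') \cong \sT' \ambtimes[\sS] \sX'$. Applying Prop.~\ref{prop:cycTarAdmBu} to the ambient morphism $\sT \ambtimes[\sS] \sX' \to \sT$ and the \aab{} $t$ shows that the pullback
\[
\mc(\sT \ambtimes[\sS] \sX'/\sT) \to \mc(\sT' \ambtimes[\sT](\sT\ambtimes[\sS]\sX')/\sT') = \mc(\sT' \ambtimes[\sS] \sX'/\sT')
\]
is an isomorphism, as required.

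There is no real obstacle; it is purely bookkeeping that chains together the previously established invariance results with the associativity of the ambient product. The only point requiring a little care is to ensure the source-invariance isomorphisms of Lem.~\ref{lemm:cycSourceAdmBu} are compatible with the functoriality of Prop.~\ref{prop:cycFun}, which is immediate from the construction since both are induced by inclusions of subgroups of $c_{\mathrm{equi}}$-groups of schemes.
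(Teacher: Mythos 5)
Your proposal is correct and follows the same route as the paper, which simply cites Lem.~\ref{lemm:cycSourceAdmBu} for invariance under admissible blowup of the source and Prop.~\ref{prop:cycTarAdmBu} for invariance under admissible blowup of the target. You have merely written out the bookkeeping (calculus of fractions, compatibility of $\ambtimes$ with \aab{}s) that the paper leaves implicit.
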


\begin{proof}
Invariance under admissible blowup of the source is Lem.~\ref{lemm:cycSourceAdmBu} and invariance under admissible blowup of the target is Prop.~\ref{prop:cycTarAdmBu}.
\end{proof}

Recall that given two separated morphisms $Y \to X \to S$, there are morphisms of groups (in fact presheaves) \cite[Cor.3.7.5]{SVRelCyc}
\[ Cor_{Y/X}: c_{equi}(Y/X, 0) \otimes c_{equi}(X/S, 0) \to c_{equi}(Y/S, 0). \]%
\marginpar{\fbox{$Cor_{Y/X}$}}%
\index[not]{$Cor_{Y/X}$} %
This is defined as 
\begin{equation} \label{equa:Cor}
 Cor_{Y/X}(\sW, \sZ) = \sum n_i (Z_i \times_X Y {\to} Y)_*(Z_i {\to} X)^\circledast \sW 
\end{equation}
where $\sZ = \sum n_iZ_i \in c_{equi}(X/S, 0)$ and $\sW \in c_{equi}(Y/X, 0)$.
Furthermore, given $g: T \to S$ we have \cite[Thm.3.7.3]{SVRelCyc}
\begin{equation} \label{eq:gstarCor}
g^\circledast Cor_{Y/X}(\sW, \sZ) = Cor_{Y/X}((g {\times_S} X)^\circledast \sW, g^\circledast \sZ).
\end{equation}

\begin{prop}[Correspondence homomorphisms]
Suppose that $\sY \to \sX \to \sS$ are two ambient separated finite type morphisms of modulus pairs, and suppose $\sW \in c_{equi}(Y^\circ/X^\circ, 0), \sZ \in \otimes c_{equi}(X^\circ/S^\circ, 0)$ are two relative cycles.
\begin{enumerate}
 \item If $\sW$ and $\sZ$ are left proper, then so is $Cor_{Y/X}(\sW, \sZ)$.
 \item If $\sW$ and $\sZ$ are admissible, then so is $Cor_{Y/X}(\sW, \sZ)$.
\end{enumerate}
\end{prop}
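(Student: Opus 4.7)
A contribution to $Cor_{Y/X}(\sW, \sZ)$ has the form $\phi_{*}\bigl((Z_i {\to} X^\circ)^\circledast \sW\bigr)$, where $\phi \colon Z_i \times_{X^\circ} Y^\circ \to Y^\circ$ is the projection. Since $Z_i \hookrightarrow X^\circ$ is a closed immersion, so is $\phi$, and likewise its natural extension $\ol\phi \colon \ol{Z_i} \times_{\ol X} \ol{Y} \to \ol Y$. A component $u$ of $(Z_i {\to} X^\circ)^\circledast \sW$ is supported on $Z_i \times_{X^\circ} W_j$ for some $j$ by \cite[Lem.~3.3.6]{SVRelCyc}, so its closure $\ol u$ in $\ol{Z_i} \times_{\ol X} \ol Y$ lies in $\ol{Z_i} \times_{\ol X} \ol{W_j}$. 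Left properness of $\sW$ makes $\ol{W_j} \to \ol X$ proper, so $\ol{Z_i} \times_{\ol X} \ol{W_j} \to \ol{Z_i}$ is proper by base change; composing with $\ol{Z_i} \to \ol S$ (proper by left properness of $\sZ$) shows $\ol u \to \ol S$ is proper. Its image $\ol\phi(\ol u) = \ol{\phi(u)}$ is then proper over $\ol S$, establishing (1).

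\textbf{Reduction for (2).} Let $f \colon \sP \to \sS$ be any minimal ambient morphism with $\ol P$ a valuation ring and $P^\circ$ its generic point. By Eq.~\eqref{eq:gstarCor} one has $f^\circledast Cor_{Y/X}(\sW, \sZ) = Cor\bigl((f {\times_S} X)^\circledast \sW, f^\circledast \sZ\bigr)$, and Prop.~\ref{prop:cycFun} ensures both pulled-back cycles remain left proper and admissible, now over $\sP \ambtimes[\sS] \sY \, / \, \sP \ambtimes[\sS] \sX$ and $\sP \ambtimes[\sS] \sX \, / \, \sP$ respectively. Replacing $(\sS, \sX, \sY)$ by $(\sP, \sP \ambtimes[\sS] \sX, \sP \ambtimes[\sS] \sY)$ reduces us to the case $\ol S = \Spec R$ for a valuation ring $R$ with $S^\circ$ its generic point, after which we must verify the equality $Y^\infty|_{\tilde y} = S^\infty|_{\tilde y}$ of Def.~\ref{defn:presheavesOfRelCyc}(3), for every component $y$ of $Cor_{Y/X}(\sW, \sZ)$ and every extension $\tilde y \to \ol Y$ of $R$ to $k(y)$.

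\textbf{Main argument for (2).} Write $y = \phi(u)$ for a component $u$ of $(Z_i {\to} X^\circ)^\circledast \sW$, and let $z$ be the generic point of $Z_i$. Since $\phi$ is a closed immersion, $k(u) = k(y) \supseteq k(z)$. Restricting $\tilde y$ along $k(z) \subseteq k(y)$ gives an extension $\tilde z$ of $R$ to $k(z)$, and admissibility of $\sZ$ forces $X^\infty|_{\tilde z} = S^\infty|_{\tilde z}$, the map $\tilde z \to \ol X$ being unique by the valuative criterion of separatedness applied to $\ol{Z_i} \hookrightarrow \ol X$. Form the auxiliary modulus pair $\sT_z := (\tilde z, X^\infty|_{\tilde z})$; by construction $\sT_z \to \sX$ is a minimal ambient morphism with interior $T_z^\circ = z$. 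By Prop.~\ref{prop:cycFun} the cycle $(\sT_z {\to} \sX)^\circledast \sW$ is admissible in $\mc(\sT_z \ambtimes[\sX] \sY \, / \, \sT_z)$ and contains $u$ among its components. Applying Def.~\ref{defn:presheavesOfRelCyc}(3) directly to this pullback and to $\tilde y$ (viewed via $k(z) \subseteq k(u)$ as an extension of $\tilde z$) yields
\begin{equation*}
(\sT_z \ambtimes[\sX] \sY)^\infty|_{\tilde y} = T_z^\infty|_{\tilde y}.
\end{equation*}
By Lem.~\ref{lemm:minProduct}, minimality of $\sT_z \to \sX$ identifies $(\sT_z \ambtimes[\sX] \sY)^\infty$ with the restriction of $Y^\infty$ to the appropriate total space, so the left side equals $Y^\infty|_{\tilde y}$, while the right side equals $X^\infty|_{\tilde z}|_{\tilde y} = S^\infty|_{\tilde y}$ by the equality already obtained from $\sZ$. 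This is the equality (2) demands.

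\textbf{Expected obstacle.} The delicate point is the bookkeeping around $\sT_z \ambtimes[\sX] \sY$ and the scheme-theoretic closures involved: one must verify that the two a priori distinct maps $\tilde y \to \ol X$, obtained respectively via $\tilde y \to \ol Y$ and via $\tilde y \to \tilde z$, coincide, so that ``$X^\infty|_{\tilde y}$'' is unambiguous, and that $\tilde y$ genuinely factors through the scheme-theoretic image that defines $\sT_z \ambtimes[\sX] \sY$. Both coherences follow from valuative separatedness, but the diagrams must be assembled carefully. A secondary nuisance is the possibility that distinct pairs $(i, u)$ produce the same $y$; this is handled by applying the argument to each contribution separately before summing.
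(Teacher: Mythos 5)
Your proof is correct and follows essentially the same route as the paper's: part (1) by composing/comparing closures through $\ol{Z_i}\times_{\ol X}\ol Y$, and part (2) by reducing to a valuation-ring base, forming the auxiliary minimal modulus pair on the extension of the valuation to $k(z)$ with modulus $X^\infty|_{\tilde z}=S^\infty|_{\tilde z}$ (the paper's $\sV$, your $\sT_z$), and then invoking admissibility of $\sW$ over it. The coherence issues you flag at the end are exactly the ones the paper resolves via the valuative criteria, so nothing is missing.
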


\begin{proof}
Let $\sZ = \sum n_i Z_i$, write $\ol{Z}_i$ for the closure of $Z_i$ in $\ol{X}$, set $Z^\infty_i = X^\infty|_{\ol{Z}_i}$ and $\sZ_i = (\ol{Z}_i, Z^\infty_i)$. 

(1): By Prop.~\ref{prop:cycFun}, since $\sW$ is left proper, the cycles $(Z_i {\to} X)^\circledast \sW$ are left proper considered as relative cycles of $\sZ_i \times_\sX \sW \to \sZ_i$. In other words, if $w_{ij} \subseteq Z_i \times_{X^\circ} Y^\circ$ are the integral components with closure $\ol{W}_{ij} \subseteq \overline{\sZ_i \times_\sX \sW}$, then the morphisms $\ol{W}_{ij} \to \ol{Z}_i$ are proper. This implies that the morphisms $\ol{W}_{ij} \to \ol{S}$ are proper. Now $Cor_{Y/X}(\sW, \sZ)$ has integral components consisting of (some subset of) the $w_{ij}$ considered as closed subschemes of $Y^\circ$. If $\overline{W}_{ij}'$ are the closures in $\ol{Y}$, since the two morphisms $\overline{\sZ_i \times_\sX \sW} \to \ol{Z}_i \times_{\ol{X}} \ol{W} \to \ol{W}$ are proper, we obtain proper surjective morphisms $\ol{W}_{ij} \to \ol{W}_{ij}'$. It now follows that the $\ol{W}_{ij}' \to \ol{S}$ are also proper.

(2): By \eqref{eq:gstarCor} it suffices to consider the case when $\ol{S}$ is the spectrum of a valuation ring and $S^\circ$ is the generic point. Let $w \in Y$ be any integral component of $Cor_{Y/X}(\sW, \sZ)$. So we are considering diagrams of the form
\[ \xymatrix{
w \ar[r] \ar[d] & W \ar[r] \ar[d] & \ol{Y} \ar[dl]  \\
S^\circ \ar[r] & \ol{S} & 
} \]
such that $W$ is the spectrum of an extension of the valuation ring $\OO(\ol{S}) \subseteq k(\ol{S})$ to $k(w)$, we wish to have 
\begin{equation} \label{eq:YVWcond}
Y^\infty|_{W} = S^\infty|_{W}. 
\end{equation} 
As each $Z_i \times_X Y {\to} Y$ is a closed immersion, this $w$ is an integral component of some $(Z_i {\to} X^\circ)^\circledast \sW$. As such, it lies over some $Z_i$ and taking the spectrum $V$ of the induced extension of valuation rings we get a diagram of the form
\[ \xymatrix{
w \ar[r] \ar[d] & W \ar[r] \ar[d] & \ol{Y} \ar[d]  \\
Z_i \ar[r] \ar[d] & V \ar[r] \ar[d] & \ol{X} \ar[dl] \\
S^\circ \ar[r] & \ol{S} & 
} \]
Since $\overline{Z}_i \to \ol{S}$ is finite, and $\OO(V)$ is integrally closed in $\OO(Z_i)$, we get a morphism $V \to Z_i$ compatible with the two morphisms to $\ol{X}, S$, enlarging our diagram to
\[ \xymatrix{
w \ar[r] \ar[d] \ar@{}[dr]|{(B)}& W \ar[r] \ar[d] & V \times_{\ol{X}} \ol{Y} \ar[dl] \ar[r] & \overline{Z}_i \times_{\ol{X}} \ol{Y} \ar[dl] \ar[r] & \ol{Y} \ar[ld]  \\
Z_i \ar[r] \ar[d] \ar@{}[dr]|{(A)} & V \ar[r]^{\textrm{bir.}} \ar[d] & \ol{Z}_i \ar[r] \ar[dl] & \ol{X} \ar[dll] \\
S^\circ \ar[r] & \ol{S} & &&
} \]
Now lets start considering moduli. Since $\sZ$ is admissible, using (A) we have
\[ S^\infty|_{V} = X^\infty|_{V}. \]
So lets equip $V$ with this modulus; that is, 
\[ V^\infty := S^\infty|_{V}. \]
In particular, $V {\to} \ol{X}$ induces a minimal morphism of modulus pairs $\sV \to \sX$. 
Now since $\sW$ is admissible, using (B) we have $Y^\infty|_{W} = V^\infty|_{W}$. But then we obtain
\[ Y^\infty|_{W} = V^\infty|_{W} = S^\infty|_{V}|_{W} = S^\infty|_{W} \]
which is precisely the condition \eqref{eq:YVWcond} we were searching for.
\end{proof}

\begin{coro} \label{coro:corOk}
Suppose that $\sY \to \sX \to \sS$ are composable morphisms in $\ulMSCH$ representable by separated finite type ambient morphisms. Then the correspondence morphisms
\[ Cor_{Y^\circ/X^\circ}: c_{equi}(Y^\circ/X^\circ, 0) \otimes c_{equi}(X^\circ/S^\circ, 0) \to c_{equi}(Y^\circ/S^\circ, 0) \]
preserve the subgroups of left proper admissible elements. In other words, they induce morphisms%
\marginpar{\fbox{$Cor_{\sY/\sX}$}}%
\index[not]{$Cor_{\sY/\sX}$}%
\[ Cor_{\sY/\sX}: \mc(\sY/\sX) \otimes \mc(\sX/\sS) \to \mc(\sY/\sS). \]
\end{coro}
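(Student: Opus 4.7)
The plan is to reduce to the preceding proposition (the unlabeled one just above Cor.~\ref{coro:corOk}), which already establishes precisely the preservation of left-properness and admissibility under $Cor_{Y/X}$ when $\sY \to \sX \to \sS$ are \emph{ambient} separated finite type morphisms. So there is nothing new to prove at the level of cycles; what remains is to explain why a pair of arbitrary composable morphisms in $\ulMSCH$ can be replaced by ambient representatives in a way that does not depend on choices.

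First I would unwind the data of two composable morphisms $\sY \to \sX$ and $\sX \to \sS$ in $\ulMSCH$ via the calculus of fractions Prop.~\ref{prop:cal-frac}: each is represented by a roof with an \abb{} on the left. Writing $\sX \xleftarrow{t} \sX' \xrightarrow{h} \sS$ for the second roof and pulling $\sY \to \sX$ back along $t$ using Prop.~\ref{prop:cal-frac}(3), I would arrange ambient representatives $\sY'' \xrightarrow{g} \sX' \xrightarrow{h} \sS$ of the two morphisms, together with an \aab{} $\sY'' \to \sY$. Since $\sY'' \to \sX'$ and $\sX' \to \sS$ are both ambient separated finite type, the preceding proposition applies verbatim and yields an element of $\mc(\sY''/\sS)$.

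Next I would identify this image with an element of $\mc(\sY/\sS)$ by means of the blowup-invariance statements already established. Explicitly, Lem.~\ref{lemm:cycSourceAdmBu} identifies $\mc(\sY/\sX) = \mc(\sY''/\sX)$ and $\mc(\sY/\sS) = \mc(\sY''/\sS)$, while Prop.~\ref{prop:cycTarAdmBu} identifies $\mc(\sY''/\sX) = \mc(\sY'' \ambtimes_{\sX} \sX' /\sX')$ and similarly for $\mc(\sX/\sS) = \mc(\sX'/\sS)$. Each of these identifications is induced by the underlying bijection on relative cycles of interiors, and since the construction \eqref{equa:Cor} of $Cor$ is expressed entirely in terms of cycle-theoretic pullback and proper pushforward on interiors, these identifications intertwine the various correspondence homomorphisms; this is the routine diagram-chase I would carry out to certify well-definedness.

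The only mildly subtle point—and the one I would be most careful about—is checking that the induced $Cor_{\sY/\sX}$ does not depend on the choice of ambient representatives: any two such choices are dominated by a third via a further common \aab{}, at which stage the admissible-blowup invariance Lem.~\ref{lemm:cycSourceAdmBu} and Prop.~\ref{prop:cycTarAdmBu} gives the required compatibility. Once that is verified, the corollary is a formal consequence of the ambient case, and the proof concludes.
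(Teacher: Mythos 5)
Your proposal is correct and matches the paper's intended argument: the paper states Cor.~\ref{coro:corOk} with no written proof, treating it as an immediate consequence of the preceding Proposition (Correspondence homomorphisms) together with the blowup-invariance results Lem.~\ref{lemm:cycSourceAdmBu} and Prop.~\ref{prop:cycTarAdmBu}, which is exactly the reduction you carry out. Your write-up simply makes explicit the choice of compatible ambient representatives via calculus of fractions and the independence-of-choices check that the paper leaves to the reader.
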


\begin{lemm} \label{lemm:cycPush}
Suppose that $\sY \stackrel{f}{\to} \sX \to \sS$ are separated finite type ambient morphisms and consider the proper push-forward, \cite[Cor.3.6.3]{SVRelCyc}
\[ f_*: c_{equi}(Y^\circ/S^\circ, 0) \to c_{equi}(X^\circ/S^\circ, 0). \]
If $\sZ \in c_{equi}(Y^\circ/S^\circ, 0)$ is left proper, then $f_* \sZ$ is left proper. Furthermore, in this case if $\sZ$ is admissible then so is $f_*\sZ$.
\end{lemm}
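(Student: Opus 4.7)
The plan is to treat the two claims separately: left properness follows by a routine appeal to the classical result that a morphism from a proper scheme to a separated scheme over a common base is proper, while admissibility is the delicate part and will be reduced, via the base change formula for proper push-forward \cite[Cor.3.6.3]{SVRelCyc}, to faithfully flat descent along an extension of valuation rings.

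Write $\sZ = \sum n_i z_i$ with $\overline{z_i} \subset \hY$ the closure of $z_i$. The integral components of $f_*\sZ$ are among the $f(z_i) \in \iX$, with closures $\overline{f(z_i)} \subseteq \hX$. Since $\overline{z_i} \to \hS$ is proper and $\hX \to \hS$ is separated, the induced $\hS$-morphism $\overline{z_i} \to \hX$ is proper (Stacks Tag~01W6). Its image is therefore closed and coincides with $\overline{f(z_i)}$, so the composition $\overline{f(z_i)} \to \hX \to \hS$ is universally closed, and being also of finite type and separated, is proper. This gives the left properness of $f_*\sZ$.

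For admissibility, let $g: \sP \to \sS$ be minimal with $\hP$ the spectrum of a valuation ring $R$ and $\iP$ the generic point. By the base change formula \cite[Cor.3.6.3]{SVRelCyc},
\[ g^\circledast f_*\sZ \;=\; f'_* \, g'^\circledast \sZ, \]
where $f'$, $g'$ denote the evident base changes of $f$, $g$. Any integral component $z'$ of $g^\circledast f_*\sZ$ is thus the image $f'(w)$ of some integral component $w$ of $g'^\circledast \sZ$, with $k(z') \subseteq k(w)$ a finite field extension. Given a valuation extension $\widetilde{z}' = \Spec(R')$ of $R$ to $k(z')$ fitting the admissibility diagram, Chevalley's extension theorem for valuations supplies an extension $\widetilde{w} = \Spec(R'')$ of $R'$ to $k(w)$ and a commutative tower
\[ \xymatrix@R=10pt@C=12pt{
w \ar[r] \ar[d] & \widetilde{w} \ar[r] \ar[d] & \hP {\times_{\hS}} \hY \ar[d] \\
z' \ar[r] \ar[d] & \widetilde{z}' \ar[r] \ar[d] & \hP {\times_{\hS}} \hX \ar[d] \\
\iP \ar[r] & \hP \ar@{=}[r] & \hP.
} \]
Admissibility of $\sZ$ for $\sY/\sS$ gives $\mY|_{\widetilde{w}} = \mS|_{\widetilde{w}}$. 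Combining this with the ambient modulus inequalities $\mY \geq \mX|_{\hY}$ and $\mX \geq \mS|_{\hX}$ yields
\[ \mS|_{\widetilde{z}'}|_{\widetilde{w}} \;\geq\; \mX|_{\widetilde{z}'}|_{\widetilde{w}} \;\geq\; \mS|_{\widetilde{z}'}|_{\widetilde{w}}, \]
so the two locally principal ideals cutting out $\mX|_{\widetilde{z}'}$ and $\mS|_{\widetilde{z}'}$ agree after pullback to $\widetilde{w}$. Since $R' \to R''$ is an extension of valuation rings, $R''$ is torsion-free over $R'$, hence flat; being a local ring homomorphism it is faithfully flat. Faithfully flat descent of ideals then gives $\mX|_{\widetilde{z}'} = \mS|_{\widetilde{z}'}$, which is the admissibility condition for $f_*\sZ$.

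The main obstacle is the admissibility step, and within it the two points requiring care: first, the precise matching of integral components of $g^\circledast f_*\sZ$ with images under $f'$ of components of $g'^\circledast \sZ$ via the base change formula; second, the verification that the equality $\mX|_{\widetilde{z}'}|_{\widetilde{w}} = \mS|_{\widetilde{z}'}|_{\widetilde{w}}$ descends along $R' \to R''$, which ultimately rests on faithfully flat descent for quasi-coherent ideals.
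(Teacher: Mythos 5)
Your left-properness argument is fine and essentially the paper's (the paper factors through the surjection $\ol{Z} \to \ol{W}$ rather than through the closed image in $\hX$, but both come to the same thing), and your final descent step via faithful flatness of the extension $R' \to R''$ of valuation rings is a correct alternative to the paper's observation that such an extension induces an injection of value groups. The admissibility half, however, has a genuine gap at the one step that actually carries the weight of the lemma: the existence of the arrow $\widetilde{w} \to \hP \times_{\hS} \hY$ in your commutative tower. Chevalley's extension theorem only produces the valuation ring $R'' \subseteq k(w)$ extending $R'$; it does not give $R''$ a center on $\hP \times_{\hS} \hY$. To extend $\Spec(k(w)) \to \hP \times_{\hS} \hY$ over $\Spec(R'')$ you must apply the valuative criterion to the closure $\ol{\{w\}}$ of $w$ in $\hP \times_{\hS} \hY$, and that closure is universally closed over $\hP$ only because it sits inside $\hP \times_{\hS} \ol{\{z\}}$ with $\ol{\{z\}} \to \hS$ proper --- i.e., only because $\sZ$ is \emph{left proper}. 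Without that hypothesis the map can simply fail to exist (the closure of $w$ can ``run off to infinity'' over the closed point of $\hP$), and your argument never invokes left properness anywhere in the admissibility part; the two ``points requiring care'' you list at the end do not include this one.

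This is not a pedantic omission: the paper's own proof constructs exactly this lift, justifies it by the valuative criterion using the properness of $\ol{z} \to \ol{w}$ established in the first half, and remarks in a footnote that this use of left properness ``is the entire reason it is considered at all as a property.'' So the fix is short --- insert the sentence that $\ol{\{w\}} \subseteq \hP \times_{\hS} \ol{\{z\}}$ is proper over $\hP$ (equivalently, that $\ol{\{w\}} \to \ol{\{z'\}}$ is proper), hence the valuative criterion supplies the dashed arrow --- but as written the proposal asserts the crucial diagram rather than proving it.
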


\begin{proof}
If $W \subseteq X^\circ$ is an integral component of $f_*\sZ$, then there is some integral component $Z$ of $\sZ$ with induced morphism $Z \to W$ which is finite. If $\ol{Z}$ and $\ol{W}$ are the respective closures in $\ol{Y}$ and $\ol{X}$, then we get an induce morphism $\ol{Z} \to \ol{W}$ of integral schemes which is dominant. The morphism $\ol{Z} \to \ol{S}$ is proper by assumption, so $\ol{Z} \to \ol{W}$ is also proper. Since it is dominant, it is surjective, and therefore it follows that $\ol{W} \to \ol{S}$ is also proper.

For admissibility it suffices to consider the case that $\ol{S}$ is the spectrum of a valuation ring and $S^\circ$ is the generic point since $(-)^\circledast (-)_* = (-)_* (-)^\circledast$, \cite[Prop.3.6.2]{SVRelCyc}. Suppose this is the case and consider a diagram of the form
\[ \xymatrix{
w \ar[r] \ar[d] & W \ar[r] \ar[d] & \ol{X} \ar[dl]  \\
S^\circ \ar[r] & \ol{S} & 
} \]
such that $w$ is an integral component of $f_*\sZ$, the scheme $W$ is the spectrum of an extension of the valuation ring $\OO(\ol{S}) \subseteq k(\ol{S})$ to $k(w)$. 
We want to have $X^\infty|_{W} = S^\infty|_{W}$. By definition of the pushforward $f_*$, there exists some integral component $z in Y$ of $\sZ$ and a finite morphism $z \to w$. Extending the valuation ring of $W$ to $k(z)$ and taking the spectrum produces an extended diagram of the form
\[ \xymatrix{
z \ar[r] \ar[d] & V \ar@{-->}[r] \ar[d] & \ol{Y} \ar[d] \\
w \ar[r] \ar[d] & W \ar[r] \ar[d] & \ol{X} \ar[dl]  \\
S^\circ \ar[r] & \ol{S} . & 
} \]
The dashed morphism exists by the valuative criterion for properness since we have seen above that $\ol{z} \to \ol{w}$ is proper, where the closures are taken in $\ol{Y}$ and $\ol{X}$.%
\footnote{The use of left properness here is the entire reason it is considered at all as a property.} %
Now since $\OO(V) / \OO(W)$ is an extension of valuation rings, it induces an \textit{inclusion} of value groups. Hence, $S^\infty|_W = X^\infty|_W$ if and only if $S^\infty|_V = X^\infty|_V$. But $\sZ$ is admissible so 
\[ S^\infty|_V = Y^\infty|_V \geq X^\infty|_V \geq S^\infty|_V. \]
Hence, $S^\infty|_V = X^\infty|_V$.
\end{proof}

Recall that if $X, Y, Z$ where separated finite type $S$-schemes, the composition of cycles
\[ c_{equi}(XY/X, 0) \otimes c_{equi}(YZ/Y, 0) \to c_{equi}(XZ/X, 0) \]
is defined as 
\begin{equation} \label{eq:corrCompDef}
(\alpha, \beta) \mapsto (XYZ{\to}XZ)_*Cor_{XYZ/XY}\biggl ( (XY{\to}Y)^\circledast\beta, \alpha \biggr )
\end{equation}
(i.e., $Cor(\beta, \alpha)$ but decorated with the obvious morphisms necessary to put all cycles in the appropriate groups).

\begin{thm}
Suppose that $\sX, \sY, \sZ$ are $\sS$-modulus pairs representable by ambient morphisms which are separated and of finite type. Then the composition of cycles
\[ c_{equi}(\iX \iY/\iX, 0) \otimes c_{equi}(\iY \iZ/\iY, 0) \to c_{equi}(\iX \iZ/ \iX, 0) \]
preserves the subgroups of left proper admissible cycles. In other words, there are induced morphisms
\[ \mc(\sX\sY/\sX) \otimes \mc(\sY\sZ/\sY) \to \mc(\sX\sZ/\sX). \]
\end{thm}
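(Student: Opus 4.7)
The plan is to reduce the statement to the three preservation results already established in this chapter, namely Proposition~\ref{prop:cycFun} for pullbacks, Corollary~\ref{coro:corOk} for the correspondence homomorphism $Cor$, and Lemma~\ref{lemm:cycPush} for proper push-forward. The definition of the composition \eqref{eq:corrCompDef} is a composite of exactly these three operations, so the result should follow by checking that each operation takes the relevant left proper admissible subgroup into the next.

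First I would unpack notation: for $\sS$-modulus pairs $\sX,\sY,\sZ$ of the required sort, all the products $\sX\sY$, $\sX\sY\sZ$, etc.\ are interpreted as categorical fibre products in $\ulMSCH_\sS$, hence by Theorem~\ref{thm:pullback} they are computed by the ambient product $\ambtimes_\sS$. The key compatibility between the cycle-theoretic composition formula and these ambient products is Lemma~\ref{lemm:ambtimesProper}\eqref{lemm:lemm:ambtimesProper:Int}, which identifies $(\sX \ambtimes_\sS \sY)^\circ$ with $\iX \times_\iS \iY$. In particular, the interiors of all modulus pairs appearing in the composition formula~\eqref{eq:corrCompDef} coincide with the scheme-theoretic fibre products on which Suslin--Voevodsky's composition is defined, so there is no ambiguity in the underlying cycles.

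Given $\alpha \in \underline{c}(\sX\sY/\sX)$ and $\beta \in \underline{c}(\sY\sZ/\sY)$, I would proceed in three steps, following \eqref{eq:corrCompDef}. Step (a): apply Proposition~\ref{prop:cycFun} to the ambient morphism $\sX\sY \to \sY$ to conclude that $(\iX\iY \to \iY)^\circledast \beta$ lies in $\underline{c}(\sX\sY\sZ/\sX\sY)$, using that $\sX\sY \ambtimes_\sY (\sY\sZ) \cong \sX\sY\sZ$ as categorical fibre products in $\ulMSCH_\sS$. Step (b): apply Corollary~\ref{coro:corOk} with the tower $\sX\sY\sZ \to \sX\sY \to \sX$ to see that $Cor_{\sX\sY\sZ/\sX\sY}((\sX\sY \to \sY)^\circledast\beta, \alpha)$ lies in $\underline{c}(\sX\sY\sZ/\sX)$. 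Step (c): apply Lemma~\ref{lemm:cycPush} to the ambient morphism $\sX\sY\sZ \to \sX\sZ$ (which is proper, being a base change of $\hY \to \hS$) to conclude that the push-forward lands in $\underline{c}(\sX\sZ/\sX)$.

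The main conceptual step, and the only place where one might expect trouble, is step~(c): to invoke Lemma~\ref{lemm:cycPush} one must check that $\sX\sY\sZ \to \sX\sZ$ is indeed representable by a separated finite-type ambient morphism whose total-space underlying map is proper. This follows from the construction of the ambient product together with Lemma~\ref{lemm:ambtimesProper}\eqref{lemm:lemm:ambtimesProper:ft},\eqref{lemm:lemm:ambtimesProper:3}, since $\hY \to \hS$ is proper (we are essentially projecting away the $\hY$-factor in a triple fibre product, which is the pullback of $\hY \to \hS$). Granting this, the three preservation results stack to give the desired morphism
\[
\underline{c}(\sX\sY/\sX) \otimes \underline{c}(\sY\sZ/\sY) \to \underline{c}(\sX\sZ/\sX),
\]
and since it agrees with the Suslin--Voevodsky composition on interiors by construction, there is nothing further to verify.
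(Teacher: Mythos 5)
Your proposal is correct and follows the paper's own proof exactly: it decomposes the composition via \eqref{eq:corrCompDef} and applies Prop.~\ref{prop:cycFun}, Cor.~\ref{coro:corOk}, and Lem.~\ref{lemm:cycPush} to the three constituent operations in the same order. The only quibble is your step (c): the claimed properness of $\hX\hY\hZ \to \hX\hZ$ is both unnecessary (Lem.~\ref{lemm:cycPush} only requires separated finite-type ambient morphisms, left properness being a property of the cycle rather than of the projection) and not quite justified as stated, since $\hY \to \hS$ is only assumed separated of finite type, not proper.
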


\begin{proof}
This follows from the definition \eqref{eq:corrCompDef} since $(XY{\to}Y)^\circledast$, $Cor_{XYZ/XY}$, and $(XYZ{\to}XZ)_*$ preserve the subgroups by Prop.~\ref{prop:cycFun}, Cor.~\ref{coro:corOk}, and Lem.~\ref{lemm:cycPush} respectively.
\end{proof}

Recall that if $X, Y, X', Y'$ are separated $S$-schemes of finite type, the tensor product of cycles 
\[ c_{equi}(XY/X, 0) \otimes c_{equi}(X'Y'/X', 0) \to c_{equi}(XYX'Y'/XX', 0) \]
is defined as
\begin{equation} \label{equa:CorTens}
(\alpha, \beta) \mapsto Cor_{XYX'Y'/XYX'} \biggl ( (XYX' {\to} X')^\circledast(\beta), (XX' {\to} X)^\circledast(\alpha) \biggr ). 
\end{equation}

\begin{thm}
Suppose that $\sX, \sY, \sX', \sY'$ are $\sS$-modulus pairs representable by ambient morphisms which are separated and of finite type. Then the tensor product of cycles
\[ c_{equi}(\iX \iY/\iX, 0) \otimes c_{equi}(X^{\prime \o}Y^{\prime \o}/X^{\prime \o}, 0) \to c_{equi}(\iX \iY X^{\prime \o} Y^{\prime \o}/\iX X^{\prime \o}, 0) \]
preserves the subgroups of left proper admissible cycles. In other words, there are induced morphisms
\[ \mc(\sX \sY/\sX) \otimes \mc(\sX' \sY'/\sX') \to \mc(\sX \sY \sX' \sY'/\sX \sX'). \]
\end{thm}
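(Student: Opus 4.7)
My plan is to imitate the proof given just above for the composition of correspondences (the theorem immediately preceding), since the tensor product is defined by the same three ingredients: a cycle-theoretic pullback along a projection, a $Cor$-operation, and (in the composition case) a proper pushforward. Concretely, writing $\sX\sY$ for $\sX \times_\sS \sY$ (which exists in $\ulMSCH_\sS$ by Thm.~\ref{thm:pullback} and is represented by the ambient product on minimal pieces), I would unpack the formula \eqref{equa:CorTens} step by step and check that each building block preserves the subgroup of left proper admissible cycles, invoking the results already established.

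First, $(XX' {\to} X)^\circledast(\alpha)$: here $\alpha \in \mc(\sX\sY/\sX)$ and the morphism $\sX\sX' \to \sX$ is the canonical projection; the pullback lives a priori in $c_{equi}$ of $(XX') \times_X (XY) = XYX'$ over $XX'$. By Prop.~\ref{prop:cycFun}, the pullback descends to a homomorphism $\mc(\sX\sY/\sX) \to \mc((\sX\sX') \ambtimes_\sX (\sX\sY)/\sX\sX')$, and using Cor.~\ref{coro:ambtimesambtimes} / the uniqueness of the categorical fibre product in $\ulMSCH_\sS$ (Thm.~\ref{thm:pullback}), the target may be identified with $\mc(\sX\sY\sX'/\sX\sX')$. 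Symmetrically, $(XYX' {\to} X')^\circledast(\beta)$ with $\beta \in \mc(\sX'\sY'/\sX')$ produces, by Prop.~\ref{prop:cycFun}, an element of $\mc((\sX\sY\sX') \ambtimes_{\sX'} (\sX'\sY')/\sX\sY\sX') = \mc(\sX\sY\sX'\sY'/\sX\sY\sX')$.

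Second, feeding these two cycles into the correspondence homomorphism of Cor.~\ref{coro:corOk} along the chain $\sX\sY\sX'\sY' \to \sX\sY\sX' \to \sX\sX'$, I obtain an element of $\mc(\sX\sY\sX'\sY'/\sX\sX')$, which is exactly the target claimed in the statement. So the composite map $\alpha \otimes \beta \mapsto Cor_{XYX'Y'/XYX'}\bigl((XYX'{\to}X')^\circledast \beta,\ (XX'{\to}X)^\circledast\alpha\bigr)$ restricts to the asserted morphism.

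I do not expect any serious obstacle; the proof is essentially a bookkeeping exercise, and the only subtlety is being careful that the ``geometric'' products appearing in Suslin--Voevodsky's formula \eqref{equa:CorTens} on interiors match, under the identification $(\sX\sY)^\circ = \iX \times_{\iS} \iY$ (Lem.~\ref{lemm:ambtimesProper}\eqref{lemm:lemm:ambtimesProper:Int}), with the categorical products in $\ulMSCH_\sS$. This is what makes it legitimate to apply Prop.~\ref{prop:cycFun} and Cor.~\ref{coro:corOk} -- whose modulus-pair statements require naming the ambient/modulus structure on the source and target -- to the formula, which a priori only knows about underlying schemes.
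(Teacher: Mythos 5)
Your proposal is correct and follows the same route as the paper: the proof is exactly to unpack the definition \eqref{equa:CorTens} and observe that the two cycle-theoretic pullbacks preserve left proper admissible cycles by Prop.~\ref{prop:cycFun} and that the correspondence homomorphism does so by Cor.~\ref{coro:corOk}. The extra care you take in matching the interior products with the categorical products in $\ulMSCH_\sS$ is implicit in the paper's one-line argument but is the right thing to check.
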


\begin{proof}
This follows from the definition \eqref{equa:CorTens} since $Cor_{XYX'Y'/XYX'}$, $(XYX' {\to} Y)^\circledast$, and $(XX' {\to} X)^\circledast$ preserve the subgroups by Prop.~\ref{prop:cycFun} and Cor.~\ref{coro:corOk}.
\end{proof}

\begin{coro}
Let $\sS$ be a separated modulus pair. Then the composition \eqref{eq:corrCompDef} and tensor product \eqref{equa:CorTens} define a structure of monoidal category whose objects are separated finite type $\sS$-modulus pairs. Moreover, writing%
\marginpar{\fbox{$\ulMCor_\sS$}}%
\index[not]{$\ulMCor_\sS$}%
\[ \ulMCor_\sS \]
for this category, the interior $\sX \mapsto X^\circ$ functor induces functor 
\[ (-)^\circ: \ulMCor_\sS \to \Cor_{S^\circ} \]
which is faithful, essentially surjective, and monoidal. 
\end{coro}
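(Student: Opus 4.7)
The symmetric monoidal structure on $\Cor_{\iS}$ provided by \eqref{eq:corrCompDef} and \eqref{equa:CorTens} is well-established in the classical theory of relative cycles. The two theorems immediately preceding the Corollary show that composition and tensor product restrict from $c_{equi}(\iX \iY / \iX, 0)$ to the subgroups $\mc(\sX \sY / \sX)$. Hence to endow $\ulMCor_\sS$ with the claimed monoidal structure it remains to realize the identities and the structural isomorphisms (associator, unitors, braiding) by cycles in the $\mc$-subgroups; all monoidal axioms then transfer from $\Cor_{\iS}$ along the injective hom-set inclusions $\mc(\sX\sY/\sX) \hookrightarrow c_{equi}(\iX\iY/\iX, 0)$.

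My first step is to verify that the graph of a minimal ambient morphism $f : \sX \to \sY$ of separated finite type $\sS$-modulus pairs defines an element of $\mc(\sX \ambtimes_\sS \sY / \sX)$. Since $\hY \to \hS$ is separated, the graph $\Gamma_{\ol{f}} : \hX \hookrightarrow \hX \times_\hS \hY$ is a closed immersion, and its restriction to interiors is the closed immersion $\iX \hookrightarrow \iX \times_\iS \iY$, giving an element of $c_{equi}(\iX \iY/\iX, 0)$. Left properness follows because the closure of the graph in the total space of $\sX \ambtimes_\sS \sY$ is the strict transform of $\Gamma_{\ol{f}}$ under the blowup appearing in Construction~\ref{cons:admBlowup}, which maps properly to $\hX$. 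For admissibility: given a minimal ambient test morphism $g : \sP \to \sX$ with $\hP$ the spectrum of a valuation ring and $\iP$ its generic point, the composition $fg$ is again minimal, so the section $\hP \to$ total space of $\sP \ambtimes_\sS \sY$ obtained from the universal property of $\ambtimes$ (Thm.~\ref{thm:pullback}) lies in the chart of the blowup in which the pullback of $(\sP \ambtimes_\sS \sY)^\infty = D_P + D_Y - E$ is equal to $\mP$, a direct calculation using the key identity $(fg)^*\mY = g^*f^*\mY = g^*\mX = \mP$. In particular $[\Delta_\iX] \in \mc(\sX \ambtimes_\sS \sX / \sX)$ is the identity of $\sX$ in $\ulMCor_\sS$, being the graph of the (minimal) identity $\id_\sX$.

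Applying the same graph construction to the structural isomorphisms of modulus pairs obtained from the universal property of $\ambtimes$ in Thm.~\ref{thm:pullback}---the associativity $\sX \ambtimes_\sS (\sY \ambtimes_\sS \sZ) \cong (\sX \ambtimes_\sS \sY) \ambtimes_\sS \sZ$, the left and right unit isomorphisms $\sS \ambtimes_\sS \sX \cong \sX \cong \sX \ambtimes_\sS \sS$, and the swap $\sX \ambtimes_\sS \sY \cong \sY \ambtimes_\sS \sX$---produces cycles in the relevant $\mc$-subgroups realising the associator, unitors, and braiding on $\ulMCor_\sS$, with $\sS$ as the monoidal unit. Each such isomorphism is, after an abstract admissible blowup of the source, a minimal ambient morphism, and Lem.~\ref{lemm:cycSourceAdmBu} handles invariance under such blowups. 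Combined with the preservation of subgroups by composition and tensor product, this equips $\ulMCor_\sS$ with the claimed symmetric monoidal structure.

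The functor $(-)^\circ : \ulMCor_\sS \to \Cor_{\iS}$ is then tautologically functorial and monoidal, since on hom-groups it is given by the subgroup inclusion, which is injective and hence faithful. For essential surjectivity, any separated finite type $\iS$-scheme $X$ is the interior of the modulus pair $\sX := (X, \varnothing)$ with structure morphism $X \to \iS \hookrightarrow \hS$: the empty closed subscheme is an effective Cartier divisor (locally $V(1)$ with $1$ a nonzero divisor), the modulus condition $\mS \times_\hS X \subseteq \varnothing$ holds because $X \to \iS$ avoids $\mS$ set-theoretically, and $\iX = X$ by construction. The principal technical obstacle is the admissibility computation in the second paragraph, which requires carefully unpacking Construction~\ref{cons:admBlowup} to verify the equality of Cartier divisors on $\hP$ and to argue that the section from the universal property lands in the correct chart of the blowup; everything else is formal bookkeeping inherited from the monoidal structure on $\Cor_{\iS}$.
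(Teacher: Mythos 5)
Your proposal is correct and follows the route the paper takes implicitly: the corollary is stated without proof, resting on the two preceding theorems (closure of the subgroups $\mc(\sX\sY/\sX)$ under composition and tensor) together with the monoidal axioms already verified for $\Cor_{\iS}$, which transfer along the injective hom-inclusions exactly as you say. The one ingredient you work out explicitly---that graphs of morphisms of modulus pairs (identities, associators, unitors, braiding) define left proper admissible cycles---is precisely the content of the paper's own Lemma~\ref{lem:graphfuncmod} in the following section, and your computation $(fg)^*\mY = g^*f^*\mY = g^*\mX = \mP$ is the same sandwich argument given there, where minimality is in fact not needed since $\mP \geq g^*\mX \geq g^*f^*\mY \geq (fg)^*p^*\mP = \mP$ already forces equality for an arbitrary $\sS$-morphism.
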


\section{The graph functor}

For any finite separated morphism of Noetherian schemes $X \to S$, let $\ZZ(X/S)$ denote the free abelian group generated by $\hom_S(S, X)$. This is functorial in $S$ (i.e., associated to $T \to S$ we have $\ZZ(X/S) \to \ZZ(T \times_S X/T)$, and there is a canonical morphism 
\[ \ZZ(X/S) \to \ZZtr(X/S) \]
defined by sending $f: S \to X$ to $\sum f(s_i)$ where the sum is over the generic points of $S$. This morphism is an inclusion when $S$ is reduced.\footnote{Indeed, if $S$ is reduced, then it's isomorphic to the closure of the images of its generic points in $X$.}
Moreover, for a topology $\tau$ on the category of schemes over $S$, we denote by $\ZZ_\tau (X/S)$ the $\tau$-sheafification of $\ZZ (X/S)$.

\begin{defi} \label{defi:ZZsXsS}
Let $\sX \to \sS$ be a morphism of modulus pairs with Noetherian interior. We write%
\marginpar{\fbox{$\ZZ(\sX/\sS)$}}%
\index[not]{$\ZZ(\sX/\sS)$}%
\[ \ZZ(\sX/\sS) = \bigoplus_i \ZZ \hom_{\ulMSCH_\sS}(\sS_i, \sX) \]
where the sum is indexed by the connected components of $\sS$.

Moreover, for any topology $\tau$ on $\ulMSCH_{\sS}$, let $\ZZ_\tau (\sX/\sS)$ %
\marginpar{\fbox{$\ZZ_\tau (\sX/\sS)$}}%
\index[not]{$\ZZ_\tau (\sX/\sS)$}%
 denote the $\tau$-sheafification of $\ZZ (\sX/\sS)$.
\end{defi}

\begin{rema}
Since the interior is Noetherian, there are finitely many connected components of $\iS$. There may be fewer connected components of $\hS$ for some specific model of $\sS$, but up to admissible blowup, the connected components of $\hS$ and $\iS$ both agree, and are in bijection with the connected components of $\sS$, cf. the proof of Prop.~\ref{prop:extendZar}.
\end{rema}

\begin{lemm}\label{lem:graphfuncmod}
Let $\sX \to \sS$ be a separated finite type ambient morphism of modulus pairs with Noetherian interior. %
The above inclusions induce a morphism 
\[ \ZZ(\sX/\sS) \subseteq \ZZtr(\sX/\sS). \]
which is an inclusion if $\hS$ is reduced.
\end{lemm}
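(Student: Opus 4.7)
The plan is to define the map by sending $f \in \hom_{\ulMSCH_\sS}(\sS_i, \sX)$ to the cycle $Z_f \in c_{equi}(\iX/\iS, 0)$ whose integral components are the images $f^\circ(\eta_{ij})$ of the finitely many generic points $\eta_{ij}$ of $\iS_i$, each with multiplicity one. Since $f^\circ \colon \iS_i \to \iX$ is a section of the separated morphism $\iS_i \times_\iS \iX \to \iS_i$, it is a closed immersion, and each $f^\circ(\eta_{ij})$ has residue field $k(\eta_{ij})$. Well-definedness follows from Lem.~\ref{lem:coincidence2}(1); extending $\ZZ$-linearly gives the candidate homomorphism. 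It remains to verify that (i) $Z_f \in \mc(\sX/\sS)$ and (ii) the homomorphism is injective when $\hS$ is reduced.

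For left properness, pick an ambient roof $\sS_i \xleftarrow{s \in \ul\Sigma} \sS_i' \xrightarrow{g} \sX$. The inclusion $\hS_i \hookrightarrow \hS$ is a closed immersion (by the schematic-closure construction in the proof of Prop.~\ref{prop:finiteCoprod}) and $s$ is proper, so $\hS_i' \to \hS$ is proper; using separatedness of $\hX \to \hS$, the morphism $\bar g \colon \hS_i' \to \hX$ factors as its graph into $\hS_i' \times_\hS \hX$ (a closed immersion) followed by a base change of the proper $\hS_i' \to \hS$, and is therefore proper. Its scheme-theoretic image in $\hX$ is proper over $\hS$ and contains the closures of the $f^\circ(\eta_{ij})$.

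For admissibility, let $h \colon \sP \to \sS$ be a minimal ambient morphism with $\hP = \Spec(R)$ a valuation ring and $\iP = \Spec(\Frac(R))$ its generic point; if $h^\circ$ does not factor through $\iS_i$ then $h^\circledast Z_f = 0$, so assume otherwise. Prop.~\ref{prop:minFibPro} then yields a graph $\Gamma \colon \sP \to \sP \ambtimes_\sS \sX$ in $\ulMSCH_\sP$ whose interior is a section, so the unique integral component of $h^\circledast Z_f$ has residue field $\Frac(R)$, the only valuation extension is $\tilde z = \hP$ itself, and admissibility reduces to $(\sP \ambtimes_\sS \sX)^\infty|_{\hP} = \mP$ as Cartier divisors on $\hP$ (the pullback interpreted via the valuative-criterion extension of $\Gamma^\circ$, available by left properness). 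Represent $\Gamma$ by an ambient roof $\sP \xleftarrow{t \in \ul\Sigma} \sP' \to \sP' \ambtimes_\sS \sX$; then $ht$ is minimal, and combining this with ambience of $\sX \to \sS$ and of $fh|_{\sP'}$ gives the sandwich
\[ \mP|_{\hP'} \;=\; \mS|_{\hP'} \;\le\; \mX|_{\hP'} \;\le\; \mP|_{\hP'}, \]
whence $\mX|_{\hP'} = \mP|_{\hP'}$, i.e., $(\sP' \ambtimes_\sS \sX)^\infty|_{\hP'} = \mP|_{\hP'}$. Since $R$ is a domain, $(\sP \ambtimes_\sS \sX)^\infty|_{\hP}$ is a Cartier divisor on $\hP$ equal to $n \cdot \mP$ for some $n \ge 1$; pulling back to $\hP'$ forces $n = 1$.

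For injectivity when $\hS$ is reduced: cycles coming from distinct source components have disjoint supports in $\iX$, so it suffices to fix $i$. If $f_\alpha, f_\beta \in \hom_{\ulMSCH_\sS}(\sS_i, \sX)$ give the same cycle, then $f_\alpha^\circ(\eta_{ij}) = f_\beta^\circ(\eta_{ij})$ as points with residue field $k(\eta_{ij})$ for every $j$; reducedness of $\iS_i$ together with separatedness of $\iX \to \iS$ force $f_\alpha^\circ = f_\beta^\circ$, and Lem.~\ref{lem:coincidence2}(2) then yields $f_\alpha = f_\beta$. The subtlest step is admissibility, where one must reconcile the valuative-criterion extension to the original valuation base $\hP$ with an ambient representative on a blowup $\hP'$; Cartier-divisor multiplicity descent through the blowup is what makes the equality descend from $\hP'$ back to $\hP$.
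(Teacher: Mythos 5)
Your proof is correct and follows essentially the same route as the paper's: the cycle attached to $f$ is the sum of the images of the generic points, left properness comes from the proper (closed) image of the total space, admissibility reduces via functoriality to a valuation-ring base $\sP$ where the sandwich $\mP \geq f^*\mX \geq f^*p^*\mP = \mP$ forces the required equality, and injectivity for reduced $\hS$ is the footnoted observation that a reduced scheme is the closure of its generic points. The one cosmetic quibble is your claim that $(\sP \ambtimes_{\sS} \sX)^\infty|_{\hP} = n\cdot\mP$ for an integer $n$ (false for non-discrete value groups), but it is also unnecessary: an abstract admissible blowup $\hP'\to\hP$ of a valuation-ring pair whose interior is the generic point is an isomorphism (and in any case pullback of Cartier divisors along it is injective), so the equality of divisors descends from $\hP'$ to $\hP$ regardless.
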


\begin{rema}
Note that $\hS$ is reduced iff. $\iS$ is reduced, since $\OO_{\hS} \to \OO_{\iS}$ is injective, cf. Prop.~\ref{prop:always-dense}.
\end{rema}

\begin{proof}
Let $f: \sS \to \sX$ be an $\sS$-morphism, and $\sum f(s_i) \in \ZZtr(\iX/\iS)$ the corresponding cycle. We want to show this cycle is admissible and left proper. We can assume that $f$ and $p$ are ambient. Then since $\cup \overline{\{f(s_i)\}} = f(\hS) \subseteq \hX$ (since all morphisms in question are separated), we have left properness. Since everything is functorial (cf. Axiom~\ref{axio:relativeCycles}(Pla)), to prove admissibility, it suffices to consider the case where $\hS$ is the spectrum of a valuation ring, and $\iS$ is the generic point. But then $\hS$ is the unique extension of $\ol{S}$ through the extension of fields $k(\iS) / k(\iS)$, and we have $\mX|_{\hS} = \mS$ by virtue of the factorisation $\sS \stackrel{f}{\to} \sX \stackrel{p}{\to} \sS$, since $\mS \geq f^*\mX \geq f^*p^*\mS = \mS$.
\end{proof}

\begin{cor}
Let $\sS$ be a noetherian separated modulus pair. 
Then there exists a natural functor $\ulMSch_{\sS} \to \ulMCor_{\sS}$ which sends any modulus pair to itself and fits into the following commutative diagram:
\[\xymatrix{
\ulMSch_{\sS} \ar[r] \ar[d] &  \ulMCor_{\sS} \ar[d] \\
\Sch_{\iS} \ar[r] & \Cor_{\iS},
}\]
where the vertical arrows are the faithful functors given by sending a modulus pair $\sX$ to its interior $\iX$, and the bottom horizontal arrow is the usual graph functor. 
\end{cor}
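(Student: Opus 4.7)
The plan is to construct the functor on morphisms via the graph construction, then deduce functoriality from the classical (non-modulus) case using faithfulness of the interior functor.

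First I would define the functor on objects as the identity. For the action on morphisms, given an $\sS$-morphism $f: \sX \to \sY$ in $\ulMSch_\sS$, I would form the graph $\Gamma_f: \sX \to \sX \times_\sS \sY$, namely the morphism in $\ulMSch_\sX$ induced by the pair $(\id_\sX, f)$, where the fibre product exists by Theorem~\ref{thm:pullback}. Applying Lemma~\ref{lem:graphfuncmod} with base $\sX$ to the morphism $\Gamma_f$ (and using that $\iX$ is Noetherian since $\sX \to \sS$ is of finite type and $\iS$ is Noetherian) yields an element
\[ [\Gamma_f] \in \mc(\sX \times_\sS \sY / \sX) = \hom_{\ulMCor_\sS}(\sX, \sY). \]
The hypothesis that $\sX \times_\sS \sY \to \sX$ be separated and of finite type (needed to apply the lemma) holds by Lemma~\ref{lemm:ambtimesProper}.

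Next I would check functoriality. The key observation is that the interior $(-)^\circ: \ulMCor_\sS \to \Cor_{\iS}$ is faithful (by the last displayed corollary in the section on relative cycles with modulus and by Corollary~\ref{cor:coincide}). By construction the element $[\Gamma_f]$ maps under $(-)^\circ$ to the classical graph cycle of $f^\circ: \iX \to \iY$. Since the classical graph functor $\Sch_{\iS} \to \Cor_{\iS}$ is known to be a functor, we have $[\Gamma_{gf}]^\circ = [\Gamma_g]^\circ \circ [\Gamma_f]^\circ$ in $\Cor_{\iS}$ for any composable pair $\sX \xrightarrow{f} \sY \xrightarrow{g} \sZ$. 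By faithfulness of $(-)^\circ$, this forces $[\Gamma_{gf}] = [\Gamma_g] \circ [\Gamma_f]$ in $\ulMCor_\sS$. Preservation of identities is immediate.

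The commutative square in the statement then follows tautologically: both compositions $\ulMSch_\sS \to \Cor_{\iS}$ send a modulus pair to its interior and a morphism $f$ to the graph cycle of $f^\circ$, by our very construction of the lift.

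The main obstacle would appear to be verifying functoriality directly via the explicit formulas of \S 8 for composition and pushforward of cycles, which would be tedious. The trick above sidesteps this entirely: once the element $[\Gamma_f]$ is seen to exist in $\mc(\sX \times_\sS \sY / \sX)$, functoriality is automatic via faithfulness of the interior functor, reducing everything to the already-known classical graph functor on $\Sch_{\iS}$.
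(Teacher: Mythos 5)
Your proposal is correct and takes essentially the same route as the paper: the paper's proof consists of citing Lemma~\ref{lem:graphfuncmod} for existence and observing that the other three functors in the square are faithful, which is precisely the faithfulness-of-the-interior device you use to reduce functoriality to the classical graph functor on $\Sch_{\iS}$. You have simply written out the details (graph via the fibre product of Theorem~\ref{thm:pullback}, applicability of the lemma over the base $\sX$, and the functoriality check) that the paper leaves implicit.
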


\begin{proof}
The existence is a direct consequence of Lemma \ref{lem:graphfuncmod}.
The faithfulness follows from that the other three functors in the above commutative diagram are faithful.
\end{proof}

\section{{$\protect \ulMrqfh$}-morphisms}

In this section we consider the possibility of representing cycles by morphisms, at least after $\ulMrqfh$-sheafification.

\begin{rema}
In this section, all presheaves are considered as living in $\PSh(\ulMSch_{\sS})$ (for the appropriate $\sS$), however we will see later that this is not strictly necessary, cf.Thm.\ref{theo:sheafificationHasTransfers}.
\end{rema}

Recall the following result of Suslin--Voevodsky from \cite{SVRelCyc}.
For a topology $\tau$ on $\Sch_S$ and an object $X \to S$ in $\Sch_S$, define $\Z_\tau (X/S)$ to be the $\tau$-sheafification of the presheaf which sends $U$ to the free abelian group $\Z \hom_S (U,X)$.

\begin{theo}[{\cite[Thm.4.2.12]{SVRelCyc}}] \label{theo:SVqfh}
Let $X \to S$ be a separated finite type morphism of separated Noetherian schemes. %
Then the canonical morphism
\[ \ZZ_\qfh(X/S) \to \ZZtr(X/S, 0)_{\qfh} \]
is an isomorphism in $\PSh(\Sch_S)$.
\end{theo}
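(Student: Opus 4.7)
The plan is to verify that the canonical map is an isomorphism on $\qfh$-stalks, since both source and target are $\qfh$-sheaves on $\Sch_S$ by construction. For this I would use the identification of $\qfh$-local schemes from Lem.~\ref{lemm:LaicdLocalSchemes}: they are precisely the spectra $P = \Spec(A)$ of local absolutely integrally closed domains. So after reducing to stalks at such $P$ (sitting over $S$), the claim becomes that the canonical morphism
\[
\ZZ \hom_P(P, X_P) \;\longrightarrow\; c_{\mathrm{equi}}(X_P/P, 0)
\]
is a bijection, where $X_P := P \times_S X$. Here I use compatibility of $c_{\mathrm{equi}}(X/S,0)$ with the pro-system defining the $\qfh$-stalk, which rests on the continuity properties (base change + limit) of the cycle presheaf established in \cite[\S 3]{SVRelCyc}; identifying this technical compatibility is the main obstacle, and it should follow from writing $A = \varinjlim A_\lambda$ as a filtered colimit of finitely presented $\OO_S$-algebras along $\qfh$-covers.

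Granted the stalk computation, the key structural observation is the following. Let $\alpha = \sum n_i z_i \in c_{\mathrm{equi}}(X_P/P, 0)$, with each $z_i$ a point of the generic fibre $X_\eta$ and closure $\overline{\{z_i\}} = \Spec(B_i) \subseteq X_P$. The equidimensionality of $\alpha$ and properness of the closure force $\Spec(B_i) \to P$ to be finite (proper of relative dimension zero). Now $B_i \otimes_A \Frac(A) = k(z_i)$ is a finite extension of $\Frac(A)$, which is algebraically closed since $A$ is absolutely integrally closed; hence $k(z_i) = \Frac(A)$. Thus $B_i$ is a finite $A$-subalgebra of $\Frac(A)$, integral over $A$; but $A$ is already integrally closed in $\Frac(A)$, so $B_i = A$. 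Each $\overline{\{z_i\}}$ therefore gives a section $f_i : P \to X_P$, and because each $f_i$ is flat of length one, axiom (Pla) of the cycle theory matches multiplicities exactly: $\alpha = \sum n_i [f_i]$.

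Surjectivity of the map at $P$ follows. For injectivity at $P$, note that distinct sections $f_i : P \to X_P$ have distinct (integral) graphs, hence distinct support points in $X_\eta$; so a $\ZZ$-linear combination $\sum n_i [f_i]$ vanishing in $c_{\mathrm{equi}}(X_P/P,0)$ must already have all $n_i = 0$. Combining surjectivity and injectivity on every $\qfh$-stalk with the fact that both sides are $\qfh$-sheaves yields the asserted isomorphism in $\PSh(\Sch_S)$. The hardest step, as noted, is the clean identification of stalks of $c_{\mathrm{equi}}(X/S, 0)$ at a $\qfh$-point with $c_{\mathrm{equi}}(X_P/P, 0)$; once that is in place, the rest of the argument is essentially a structural statement about local absolutely integrally closed domains.
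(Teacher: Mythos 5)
This theorem is quoted from Suslin--Voevodsky; the paper offers no proof of its own beyond the citation to \cite[Thm.4.2.12]{SVRelCyc}, so the relevant comparison is with the original argument. The structural core of your proposal is correct and is the stalk-theoretic shadow of what Suslin--Voevodsky actually do: over a local absolutely integrally closed domain $A$, a point $z_i$ of the generic fibre whose closure $\Spec(B_i)$ is finite over $\Spec(A)$ has $k(z_i)=\Frac(A)$ because $\Frac(A)$ is algebraically closed, and then $B_i=A$ because $A$ is integrally closed in its fraction field, so every finite relative cycle is a $\ZZ$-combination of graphs of sections, with multiplicities matched by (Pla). That observation is sound, as is the use of Lem.~\ref{lemm:LaicdLocalSchemes} (legitimate here since $\rqfh=\qfh$ on Noetherian schemes).

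The genuine gap is exactly the step you defer: identifying the $\qfh$-stalk of $\ZZtr(X/S,0)$ at such a point with $c_{equi}(X_P/P,0)$. A local absolutely integrally closed domain of positive dimension is never Noetherian (its localization at a height-one prime would be a discrete valuation ring with algebraically closed fraction field, which is impossible), so $c_{equi}(X_P/P,0)$ is not even defined in the Suslin--Voevodsky framework, and the continuity statements you would want to invoke --- \cite[Prop.9.3.9]{CD19}, i.e.\ Thm.~\ref{theo:limitCycles} here --- require the limit scheme to be Noetherian; this is precisely why Cor.~\ref{coro:fibCorOk} carries that hypothesis and why Chapter~\ref{chap:shTr} restricts to Noetherian interiors. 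Saying the comparison ``should follow from writing $A=\varinjlim A_\lambda$'' begs the question: what must be shown is that a relative cycle defined at some finite Noetherian stage of the pro-system becomes a sum of graphs after a further refinement \emph{within the pro-system}, and that is the entire content of the theorem. The actual proof never passes to the limit: one reduces $\qfh$-locally (using $S_{\red}\to S$, platification and proper birational coverings) to $S$ integral and normal with each component $Z_i$ finite surjective over $S$, and then pulls back along the normalization $S'$ of $S$ in a normal closure of the compositum of the extensions $k(Z_i)/k(S)$ --- a single finite surjective, hence $\qfh$, covering --- over which each $Z_i\times_S S'$ splits into copies of $S'$, i.e.\ into graphs, with multiplicities accounting for inseparability degrees. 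Your absolutely-integrally-closed-point picture is the limit of this pseudo-Galois argument, but as written it does not constitute a proof; to salvage it you would have to carry out the splitting at finite level, at which point you have reproduced the Suslin--Voevodsky argument.
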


Since $\ZZtr(X/S)$ is $\qfh$-separated\footnote{This follows easily from Axiom~\ref{axio:relativeCycles}.} the canonical morphism $\ZZtr(X/S) \to \ZZtr(X/S, 0)_{\qfh}$ is an inclusion. Combining this with the above isomorphism we obtain an inclusion
\[ \ZZtr(X/S, 0) \subseteq \ZZ_\qfh(X/S). \]

The above state of affairs also holds in the modulus world.
For a topology $\tau$ on $\Sch_S$ and an object $\sX \to \sS$ in $\ulMSch_S$, define $\Z_\tau (\sX/\sS)$ to be the $\tau$-sheafification of the presheaf which sends $\sU$ to the free abelian group $\Z \hom_{\sS} (\sU,\sX)$.

\begin{prop} \label{prop:Zqfhtrmodulus}
Let $\sX \to \sS$ be a separated finite type ambient morphism of qc separated modulus pairs with Noetherian interior. %
Then the canonical morphism 
\[ \ZZ_{\ulMrqfh}(\sX/\sS) \subseteq \ZZtr(\sX/\sS)_{\ulMrqfh} \]
is an isomorphism.
\end{prop}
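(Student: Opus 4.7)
The plan is to prove injectivity and surjectivity of the canonical map separately. For injectivity, Lemma~\ref{lem:graphfuncmod} shows that $\ZZ(\sX/\sS) \hookrightarrow \ZZtr(\sX/\sS)$ is a monomorphism of presheaves, and since $\ulMrqfh$-sheafification is exact, this remains a monomorphism after sheafification. For surjectivity, it suffices to show that every section $\sZ \in \ZZtr(\sX/\sS)(\sU)$ locally lies in the image of $\ZZ(\sX/\sS)$: we must produce an $\ulMrqfh$-cover $\sV \to \sU$ on which $\sZ|_{\sV}$ becomes a $\ZZ$-linear combination of graphs of ambient morphisms $\sV_j \to \sX$. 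To produce such a decomposition on the interior, I apply Theorem~\ref{theo:SVqfh} to $\sZ^\o \in \ZZtr(\iX/\iS)(\iU)$: there is a $\qfh$-cover $V^\o \to \iU$ with $\sZ^\o|_{V^\o} = \sum n_j [f_j^\o]$ for morphisms $f_j^\o : V^\o \to \iX$. Then Proposition~\ref{prop:rqfhExtend} produces an $\ulMrqfh$-cover $\sV \to \sU$ whose interior refines $V^\o \to \iU$, and we may assume $\iV = V^\o$.

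Next, I extend each graph $f_j : \iV \to \iX$ to an ambient admissible morphism $\sV_j \to \sX$ defined on an abstract admissible blowup $\sV_j \to \sV$. Note that each component $[f_j]$ is an element of $\mc(\sV \ambtimes_{\sS} \sX/\sV)$, since both left properness and admissibility are checked integral component by component. The candidate $\sV_j$ is constructed as follows: let $\hV_j \subseteq \hV \ambtimes_{\sS} \hX$ be the scheme-theoretic closure of the graph $\Gamma_{f_j} \cong \iV$, and equip it with the modulus $\mV|_{\hV_j}$, which is an effective Cartier divisor by Lemma~\ref{lem:pb-dense} since $\Gamma_{f_j} \hookrightarrow \hV_j$ is scheme-theoretically dense. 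The projection $\hV_j \to \hV$ is proper by the left properness of $[f_j]$, an isomorphism over $\iV$ by construction, and surjective because its image is closed and contains the topologically dense subset $\iV$ (Proposition~\ref{prop:always-dense}). Hence $\sV_j \to \sV$ is an abstract admissible blowup. The composition $\hV_j \hookrightarrow \hV \ambtimes_{\sS} \hX \to \hX$ gives a candidate ambient morphism $\sV_j \to \sX$ whose interior recovers $f_j$; the only remaining check is the modulus condition $\mX|_{\hV_j} \leq \mV|_{\hV_j}$.

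The main technical obstacle is precisely this modulus inequality, and the key point is that it is exactly what admissibility of the cycle $[f_j]$ encodes. By the valuative criterion for comparing effective Cartier divisors, it suffices to verify $v_R(\mX|_R) \leq v_R(\mV|_R)$ for every morphism $\Spec R \to \hV_j$ from the spectrum of a DVR $R$. We may assume the closed point maps into $\mV|_{\hV_j}$, since otherwise the morphism factors through $\iV$ and the inequality is trivial; then $\sP := (\Spec R, \mV|_R)$ is a valuation pair with $\iP$ the generic point, and the induced map $\sP \to \sV$ is a minimal ambient morphism. Applying Definition~\ref{defn:presheavesOfRelCyc}(4) to $[f_j] \in \mc(\sV \ambtimes_{\sS} \sX/\sV)$ at this $\sP$, and noting that for the graph cycle the only non-trivial extension of $R$ to $k(\iP) = \Frac(R)$ is $R$ itself, the admissibility condition reduces to $(\sV \ambtimes_{\sS} \sX)^\infty|_R = \mV|_R$. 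Unwinding the Construction~\ref{cons:admBlowup} formula $(\sV \ambtimes_{\sS} \sX)^\infty = D_V + D_X - E$ where $E$ is the intersection of $D_V$ and $D_X$, a direct computation at the DVR gives $v_R((\sV \ambtimes_{\sS} \sX)^\infty|_R) = \max(v_R(\mV|_R), v_R(\mX|_R))$, so the admissibility equality yields exactly $v_R(\mX|_R) \leq v_R(\mV|_R)$, as required. Consequently each $\sV_j \to \sX$ is ambient and admissible, its graph lies in $\ZZ(\sX/\sS)(\sV_j)$, and summing the graphs with coefficients $n_j$ produces a section of $\ZZ_{\ulMrqfh}(\sX/\sS)(\sV)$ mapping to $\sZ|_{\sV}$, completing the proof of surjectivity.
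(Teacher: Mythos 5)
Your overall strategy is the same as the paper's: reduce surjectivity to Suslin--Voevodsky's theorem on the interior, extend the resulting $\qfh$-cover to an $\ulMrqfh$-cover via Prop.~\ref{prop:rqfhExtend}, and then use admissibility of the relative cycle at valuation-ring points to verify the modulus condition for the candidate graphs. Your replacement of the paper's Raynaud--Gruson flatification step by directly taking the scheme-theoretic closure of the graph as an abstract admissible blowup $\sV_j \to \sV$ is a legitimate simplification, since $\ZZ(\sX/\sS)$ is built from $\hom_{\ulMSCH_\sS}$ and therefore accepts morphisms defined on blowups. Two points need repair, however. First, for injectivity: Lemma~\ref{lem:graphfuncmod} only gives that $\ZZ(\sX/\sS) \to \ZZtr(\sX/\sS)$ is injective when $\hS$ is \emph{reduced}, so you cannot simply sheafify a presheaf monomorphism; you must first observe, as the paper does, that every $\ulMrqfh$-covering is refinable by one with reduced (indeed locally integral) total space, and check injectivity there.

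Second, and more seriously, the step ``by the valuative criterion for comparing effective Cartier divisors, it suffices to verify $v_R(\mX|_R) \leq v_R(\mV|_R)$ for every DVR $R$'' is not valid in this generality: the total space $\hV_j$ is neither Noetherian nor normal, and containment of effective Cartier divisors cannot be detected on discrete valuation rings alone (the paper's own detection statement, Prop.~\ref{prop:detectEqualOnValRing}, uses arbitrary valuation rings \emph{and} requires an a priori one-sided inequality, concluding equality from equality). The correct route, which your computation already essentially contains, is: set $D_1 := (\sV \ambtimes[\sS] \sX)^\infty|_{\hV_j}$ and $D_2 := \mV|_{\hV_j}$; then $D_1 \geq D_2$ a priori because $\sV \ambtimes[\sS] \sX \to \sV$ is ambient, admissibility of the graph cycle gives $D_1|_{\hP} = D_2|_{\hP}$ for every minimal morphism $\sP \to \sV_j$ with $\hP$ the spectrum of a valuation ring and $\iP$ its generic point, and Prop.~\ref{prop:detectEqualOnValRing} then yields $D_1 = D_2$ globally, whence $\mX|_{\hV_j} \leq \mV|_{\hV_j}$ by Lemma~\ref{lem:rephrasing}. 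With these two repairs the argument goes through and coincides in substance with the paper's proof.
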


\begin{proof}
Since $\ulMrqfh$-locally every modulus pair is locally integral, we consider only the case $\hS$ is integral.

Since %
$\ZZ(\sX/\sS) \to \ZZtr(\sX/\sS)$ %
is injective for reduced $\hS$, and every $\ulMrqfh$-covering is refinable by one with reduced total space, 
$\ZZ_{\ulMrqfh}(\sX/\sS) \to \ZZ_{\tr}(\sX/\sS)_{\ulMrqfh}$ is injective.

Now consider the following commutative square of abelian groups
\[ \xymatrix{
\ZZ(\sX/\sS) \ar[r] \ar[d]_{|\cap} & \ZZtr(\sX/\sS) \ar[d]^{|\cap} \\
\ZZ(\iX/\iS) \ar[r]        & \ZZtr(\iX/\iS) 
} \]
where the two horizontal morphisms are inclusions since we are assuming $\hS$ is reduced.
Suppose that $a \in \ZZ_{\tr}(\sX/\sS)_{\ulMrqfh}(\sS)$ is a section. Choose a $\ulMrqfh$-covering $\sT \to \sS$ such that $a|_{\sT}$ is the image of some $\alpha \in \ZZ_{\tr}(\sX/\sS)(\sT) \subseteq \ZZ_{\tr}(\iX/\iS)(\iT)$. Now due to the isomorphism $\ZZ_\qfh(X/S) \cong \ZZtr(X/S, 0)_{\qfh}$ of Theorem~\ref{theo:SVqfh}, there is a $\qfh$-covering $\iU \to \iT$ such that $\alpha|_{U}$ is the image of some $\sum n_i f_i \in \ZZ(\iX/\iS)(\iU)$. By Prop.~\ref{prop:rqfhExtend}, possibly refining $\iU \to \iT$ we can assume that this latter covering is the interior of a $\ulMrqfh$-covering $\sU \to \sT$, and we can assume that $\hU$ is locally integral. Pulling back $\alpha$, we now have $\sum n_i z_i \in \ZZ_{\tr}(\sX/\sS)(\sU)$, such that each $\ol{\{z_i\}} \to \hU$ is proper and an isomorphism over a connected component of $\iU$. Replacing $\sU$ with an admissible blowup of $\sU$, by Raynaud-Gruson flatification we can assume that the $\ol{\{z_i\}} \to \hU$ are also flat. This implies\footnote{Flat implies constant relative dimension, which is zero (or $-1$) in this case, so quasi-finite, then quasi-finite + proper implies finite, so there are finite flat of degree one, i.e., locally an isomorphism, or empty.} that each $\ol{\{z_i\}}$ is isomorphic to a connected component $\hU_i$ of $\hU$. Finally, we want to check that each of the isomorphisms $\ol{\{z_i\}} \cong \hU_i$ defines a morphism of modulus pairs $\sU_i \to \sU \times_{\sS} \sX =: \sY$. That is, we want to know that $\mU \geq \mY|_{\hU}$. We know that the relative cycle $\sum n_iz_i \in \ZZtr(\sY/\sU)$ is admissible (since it's the cycle theoretic pullback of an admissible relative cycle), so for every minimal morphism $\sP \to \sU$ such that $\hP$ is the spectrum of a valuation ring and $\iP$ is the generic point, the pullback to $\sP$ is admissible. Since each $\ol{\{z_i\}} \cap \iY \to \iU_i$ is an isomorphism, this pullback is just the scheme-theoretic pullback, cf. Axiom~\ref{axio:relativeCycles}(Pla), inside $\hP \times_{\hU} \hY$, cf. Lemm.~\ref{lemm:minProduct}. So admissibility says precisely that $\mU|_{\hP} = \mY|_{\hP}$. But since this holds for all such $\sP \to \sU$, we deduce that $\mU \geq \mY|_{\hU}$, Prop.~\ref{prop:detectEqualOnValRing}.
\end{proof}

\begin{lemm} \label{lemm:ZardetectPropAmb}
Let $\sS$ be a qc separated modulus pair with Noetherian interior. Suppose that $\sT \to \sS$ is a $\ulPZar$-covering. Let $\sX / \sS$ be separated of finite type, and $\alpha \in \mc(\iX/\iS)$ be such that $f^\circledast\alpha$ is left proper (resp. admissible). Then $\alpha$ is left proper (resp. admissible). 

Consequently, the square of abelian groups
\[ \xymatrix{
\mc(\sX/\sS) \ar@{}[r]|\subseteq \ar[d] & \mc(\iX/\iS) \ar[d] \\
\mc(\sT \times_\sS\sX/\sT) \ar@{}[r]|\subseteq & \mc(\iT \times_{\iS} \iX/\iS)
} \]
is cartesian.
\end{lemm}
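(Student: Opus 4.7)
The plan is to treat left-properness and admissibility separately, exploiting the particularly simple shape of a $\ulPZar$-covering: by Lem.~\ref{lemm:minProduct}, since $\sT\to\sS$ is minimal and $\hT\to\hS$ is flat, the ambient product is just
\[ \sT\ambtimes_{\sS}\sX \;=\; (\hT\times_{\hS}\hX,\ \mX|_{\hT\times_{\hS}\hX}), \]
so the underlying schemes pull back naively. Write $\alpha=\sum n_i z_i\in c_{equi}(\iX/\iS,0)$, let $\ol{Z}_i\subseteq\hX$ be the closure of $z_i$, and note that since $\iT\to\iS$ is flat (open immersion), axiom (Pla) identifies $f^\circledast\alpha$ with the scheme-theoretic pullback of $\alpha$ along $\iT\times_{\iS}\iX\to \iX$.

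For left-properness, the closure of an integral component of $f^{\circledast}z_i$ in $\hT\times_{\hS}\hX$ is a component of $\hT\times_{\hS}\ol{Z}_i$. By hypothesis each such component is proper over $\hT$, so $\hT\times_{\hS}\ol{Z}_i\to\hT$ is proper. Decomposing $\hT=\sqcup\hT_j$ into its components (each an open of $\hS$), properness is Zariski-local on the base, and $\{\hT_j\to\hS\}$ is a Zariski cover by hypothesis. Hence $\ol{Z}_i\to\hS$ is proper, giving left-properness of $\alpha$.

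For admissibility, let $\sP\to\sS$ be a minimal ambient morphism with $\hP$ the spectrum of a valuation ring and $\iP$ its generic point. Since $\hT\to\hS$ is a surjection by open immersions and $\hP$ is local, the morphism $\hP\to\hS$ lifts to some component $\hT_j$, giving a morphism $\sP\to\sT_j$ that is automatically minimal (both $\mT_j$ and $\mP$ equal $\mS|_{(-)}$). By Cor.~\ref{coro:ambtimesambtimes} applied with both $\sT_j\to\sS$ and $\sP\to\sT_j$ minimal, there is a canonical identification
\[ \sP\ambtimes_{\sT_j}\bigl(\sT_j\ambtimes_{\sS}\sX\bigr)\ \cong\ \sP\ambtimes_{\sS}\sX, \]
under which $(\sP\to\sS)^\circledast\alpha$ corresponds to $(\sP\to\sT_j)^\circledast(f_j^\circledast\alpha)$. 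Since $f_j^{\circledast}\alpha$ is admissible by hypothesis, its pullback along the minimal morphism from a valuation ring $\sP\to\sT_j$ is admissible in the strong sense of Def.~\ref{defn:presheavesOfRelCyc}(3). As $\sP\to\sS$ was arbitrary, $\alpha$ is admissible.

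The final assertion is then formal: the horizontal inclusions in the square are monomorphisms of abelian groups, and the claim just proven says that an element of $\mc(\iX/\iS)$ whose image in $c_{equi}(\iT\times_{\iS}\iX/\iT,0)$ actually lies in the subgroup $\mc(\sT\ambtimes_{\sS}\sX/\sT)$ must lie in $\mc(\sX/\sS)$. This is exactly the cartesianness of the square. The main technical point is the identification of the two iterated ambient products in the admissibility step—without it, one would be comparing admissibility conditions on a priori different total spaces—but this is exactly what Cor.~\ref{coro:ambtimesambtimes} supplies.
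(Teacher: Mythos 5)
Your proof is correct and follows the same route as the paper's (much terser) argument: left-properness is reduced to the fact that properness of $\ol{Z}_i\to\hS$ can be checked Zariski-locally on $\hS$, and admissibility to the fact that any minimal $\sP\to\sS$ from the spectrum of a valuation ring factors through some member of the Zariski cover because $\hP$ is local. The extra details you supply (the flat case of Lem.~\ref{lemm:minProduct}, the iterated-product identification via Cor.~\ref{coro:ambtimesambtimes}, and compatibility of $\circledast$ with composition) are exactly the right justifications for the steps the paper leaves implicit.
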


\begin{proof}
Left properness follows from properness of morphisms being checked Zariski locally. Admissibility follows from the fact that any minimal morphism $\sP \to \sS$ with $\hP$ the spectrum of a valuation ring and $\iP$ the generic point will factor through $\sT$. 
\end{proof}

\begin{lemm} \label{lemm:admAlgClo}
Let $\sX \to \sS$ be a separated finite type morphism of modulus pairs with Noetherian interior. %
A relative cycle $\alpha \in c_{equi}(\iX/\iS, 0)$ is admissible if and only if $f^\circledast\alpha$ is admissible for every morphism $f: \sP \to \sS$ such that $\hP$ is the spectrum of a valuation ring with \emph{algebraically closed fraction field}, and $\iP$ the generic point.
\end{lemm}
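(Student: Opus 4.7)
The forward direction is immediate: it is a special case of Definition~\ref{defn:presheavesOfRelCyc}(4). So I focus on the converse. The idea is to reduce any ``test diagram'' coming from a valuation modulus pair $\sQ \to \sS$ to a test diagram coming from an algebraically closed one, by extending the valuation ring.

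Fix a minimal ambient morphism $g: \sQ \to \sS$ with $\hQ = \Spec R$ the spectrum of a valuation ring with fraction field $K$ and $\iQ = \Spec K$, write $g^\circledast\alpha = \sum n_i z_i$, and fix an integral component $z = z_i$ with residue field $K_z$, an extension $\widetilde{R}_z \subseteq K_z$ of $R$, and an extension $h: \Spec \widetilde{R}_z \to \hX$ of $z \to \iX \hookrightarrow \hX$ as in Definition~\ref{defn:presheavesOfRelCyc}(3). The goal is to show $\mX|_{\Spec \widetilde{R}_z} = \mS|_{\Spec \widetilde{R}_z}$.

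The first step is to enlarge. Choose an algebraic closure $\widetilde{K}$ of $K_z$ and, by standard valuation theory, extend $\widetilde{R}_z$ to a valuation ring $\widetilde{R} \subseteq \widetilde{K}$ with $\widetilde{R} \cap K_z = \widetilde{R}_z$. Since $\sQ \to \sS$ is admissible, the pullback of a local generator of $\mS$ is a nonzero element of the integral domain $\widetilde{R}$, so $\widetilde{\sP} := (\Spec \widetilde{R}, \mS|_{\Spec \widetilde{R}})$ is a well-defined modulus pair with algebraically closed fraction field, and the chosen inclusion $\widetilde{R}_z \hookrightarrow \widetilde{R}$ assembles into a minimal ambient morphism $f: \widetilde{\sP} \to \sS$ factoring through $\sQ$.

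The second step is to match integral components. By Prop.~\ref{prop:cycFun} we have $f^\circledast\alpha = (\widetilde{\sP}\to \sQ)^\circledast g^\circledast \alpha$. The inclusion $\sigma: K_z \hookrightarrow \widetilde{K}$ corresponds to a surjection $\widetilde{K} \otimes_K K_z \twoheadrightarrow \widetilde{K}$ and hence to a canonical integral component $w$ of $f^\circledast z$ with $k(w) = \widetilde{K}$, such that the composition $w \to z \to \iX$ is the map $\Spec \widetilde{K} \xrightarrow{\sigma^\ast} \Spec K_z \to \iX$. Because $k(w) = \widetilde{K}$, the only extension of $\widetilde{R}$ to $k(w)$ is $\widetilde{R}$ itself, and the composition $\Spec \widetilde{R} \to \Spec \widetilde{R}_z \xrightarrow{h} \hX$ is an extension of $w \to \iX \to \hX$. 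Thus this $w$ together with $\widetilde{R}$ provides an instance of the admissibility condition for $f^\circledast\alpha$, and by hypothesis we conclude $\mX|_{\Spec \widetilde{R}} = \mS|_{\Spec \widetilde{R}}$.

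The final step is to descend from $\widetilde{R}$ back to $\widetilde{R}_z$. Local generators $a_X, a_S \in \widetilde{R}_z$ of the two effective Cartier divisors $\mX|_{\Spec \widetilde{R}_z}$ and $\mS|_{\Spec \widetilde{R}_z}$ differ by some $u = a_X/a_S \in K_z$; the equality on $\widetilde{R}$ says that $u \in \widetilde{R}^\times$, hence $u, u^{-1} \in \widetilde{R} \cap K_z = \widetilde{R}_z$, so $u \in \widetilde{R}_z^\times$. This gives the required equality of divisors on $\Spec \widetilde{R}_z$. The only subtle point in the argument is the bookkeeping in the second step---ensuring the integral component of $f^\circledast z$ that we use corresponds, via the right embedding $K_z \hookrightarrow \widetilde{K}$, to the given $(z,\widetilde{R}_z,h)$---which is precisely what was arranged by choosing $\widetilde{R}$ to extend $\widetilde{R}_z$.
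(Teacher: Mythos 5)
Your proof is correct and follows essentially the same route as the paper: extend the test valuation ring to one with algebraically closed fraction field, observe that the pulled-back cycle has a canonical component with residue field equal to that closure (so the extension of valuation rings is forced), apply the hypothesis there, and descend the equality of divisors using that the value group of $\widetilde{R}_z$ injects into that of $\widetilde{R}$ (your $\widetilde{R}\cap K_z=\widetilde{R}_z$ step). The only cosmetic difference is that the paper first base-changes to the algebraic closure of $k(\iP)$ and then extends the valuation, whereas you extend the given $\widetilde{R}_z$ directly; your component-matching bookkeeping is, if anything, more careful than the paper's.
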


\begin{proof}
Suppose $f: \sP \to \sS$ is a morphism as in the statement, but without requiring the fraction field to be algebraically closed (i.e., a morphism as in Def.~\ref{defn:presheavesOfRelCyc}(4)). We want to show $f^\circledast\alpha$ is admissible. 

Choose an algebraic closure of the fraction field $k^a / k(\iP)$, set $\iQ := \Spec(k^a)$, and let $g: \iQ \to \iP$ be the induced morphism. 
Let $f^\circledast \alpha = \sum n_i z_i$, and $g^\circledast f^\circledast \alpha = \sum n_i m_{ij} w_{ij}$, cf. Axiom~\ref{axio:relativeCycles}(Pla), so there are commutative squares 
\[ \xymatrix{
w_{ij} \ar[d] \ar[r] & z_{ij} \ar[d] \\
\iQ \ar[r] & \iP.
} \]
Since $k(\iQ) = k^a$ is algebraically closed and $k(w_{ij})/k(\iQ)$ is a finite extension, we have $w_{ij} = \iQ$ for all $i,j$. Now given an extension $\widetilde{z}_{ij}$ of the valuation of $\iP$ to $z_{ij}$, extend it further to $Q^\circ$, and let $\sQ \to \sP$ be the induced minimal morphism of modulus pairs. As such, if there is a factorisation $\widetilde{z}_{ij} \to \hP \times_{\hS} \hX \to \hS$, we obtain a diagram of the form
\[ \xymatrix@R=6pt{
\iQ \ar[rr] \ar[dd] \ar[dr] && \hQ \ar[r] \ar[dd]|\hole \ar[dr] & \hQ \times_{\hS} \hX \ar[ddl]|(0.32)\hole|(0.52)\hole  \ar[dr] \\
&z_i \ar[rr] \ar[dd] && \widetilde{z}_{ij} \ar[r] \ar[dd] & \hP \times_{\hS} \hX \ar[ddl]  \\
\iQ \ar[rr]|\hole \ar[dr] && \ol{Q} \ar[dr] & \\
& \iP \ar[rr] && \ol{P} & 
} \]
(Note that by Lemma~\ref{lemm:minProduct} we have $\sP \times_{\sS} \sX = (\hP \times_{\hS} \hX, \mX|_{\hP \times_{\hS} \hX})$ and similar for $Q$). %
Now since $\hQ$ is the spectrum of a valuation ring with algebraically closed fraction field, by hypothesis $g^\circledast f^\circledast \alpha$ is admissible. Hence, %
$\mS|_{\hQ} = \mX|_{\hQ}$. %
Now $\hQ \to \widetilde{z}_{ij} \to \hP$ is a tower of extensions of valuation rings, and so the induced morphisms of Cartier divisors (i.e., value groups) are injective. So this latter equality implies $\mS|_{\widetilde{z}_{ij}} = \mX|_{\widetilde{z}_{ij}}$.
\end{proof}

\begin{prop} \label{prop:FindetectPropAmb}
Suppose that $\sT \to \sS$ is a minimal ambient morphism with $f: \hT \to \hS$ a finite and surjective morphism between integral Noetherian schemes. Let $\sX / \sS$ be separated of finite type, and $\alpha \in \mc(\iX/\iS)$ be such that $f^\circledast\alpha$ is left proper (resp. admissible). Then $\alpha$ is left proper (resp. admissible). 

Consequently, the square of abelian groups
\[ \xymatrix{
\mc(\sX/\sS) \ar@{}[r]|\subseteq \ar[d] & \mc(\iX/\iS) \ar[d] \\
\mc(\sT \times_\sS\sX/\sT) \ar@{}[r]|\subseteq & \mc(\iT \times_{\iS} \iX/\iT)
} \]
is cartesian.
\end{prop}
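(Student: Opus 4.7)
The plan is to treat left properness and admissibility separately, exploiting the finite-surjectivity of $\hT \to \hS$ between integral Noetherian schemes in lieu of the Zariski-local reasoning used in Lem.~\ref{lemm:ZardetectPropAmb}.

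For \textbf{left properness}, I would fix an integral component $Z_i$ of $\alpha$ with closure $\ol Z_i \subseteq \hX$, and let $w_{ij} \subseteq \iT \times_{\iS} \iX$ denote the components of $f^\circledast \alpha$ supported over $Z_i$. Their closures $\ol W_{ij}$ in the total space $\hW$ of $\sT \ambtimes_\sS \sX$ (which, by Lem.~\ref{lemm:minProduct}, embeds as a closed subscheme of $\hT \times_{\hS} \hX$) lie inside $\hT \times_{\hS} \ol Z_i$. The projection $\ol W_{ij} \to \ol Z_i$ is finite (being a base change of $\hT \to \hS$) and dominant (since $w_{ij}$ lies over the generic point $z_i$); a finite dominant morphism with integral target is surjective. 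By hypothesis $\ol W_{ij} \to \hT$ is proper, so composing with the finite (hence proper) $\hT \to \hS$ gives $\ol W_{ij} \to \hS$ proper. Since $\ol Z_i \to \hS$ is separated of finite type and $\ol W_{ij} \to \ol Z_i$ is surjective, properness of $\ol W_{ij} \to \hS$ descends to properness of $\ol Z_i \to \hS$, settling left properness.

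For \textbf{admissibility}, I would use Lem.~\ref{lemm:admAlgClo} to reduce to checking admissibility against morphisms $g: \sP \to \sS$ where $\hP$ is a valuation ring with \emph{algebraically closed} fraction field and $\iP$ is the generic point. The core construction is to lift $g$ through $f$. I would consider the finite surjective base change $\hP \times_{\hS} \hT \to \hP$ and pick an irreducible component $\hQ_0$ dominating $\hP$; such a component exists by going-up since $\hP$ is integral. Then $\hQ_0 \to \hP$ is finite and dominant with function field a finite extension of the algebraically closed field $k(\iP)$, so $k(\hQ_0) = k(\iP)$; since $\hP$ is normal (being a valuation ring), the finite birational morphism $\hQ_0 \to \hP$ is an isomorphism. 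Composing the resulting section $\hP \xrightarrow{\sim} \hQ_0 \hookrightarrow \hP \times_{\hS} \hT$ with the projection to $\hT$ yields $\ol h: \hP \to \hT$ lifting $\ol g$, and minimality of $\sT \to \sS$ gives $\mT|_{\hP} = \mS|_{\hP} \leq \mP$, so $\ol h$ promotes to an ambient morphism $h: \sP \to \sT$ with $f \circ h = g$. Functoriality of cycle pullback gives $g^\circledast \alpha = h^\circledast (f^\circledast \alpha)$, and Prop.~\ref{prop:cycFun} together with Cor.~\ref{coro:ambtimesambtimes} (to identify $\sP \ambtimes_\sT (\sT \ambtimes_\sS \sX)$ with $\sP \ambtimes_\sS \sX$) imply that $h^\circledast (f^\circledast \alpha)$ is admissible as a cycle on $\sP \ambtimes_\sS \sX / \sP$. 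Since this holds for every such $g$, Lem.~\ref{lemm:admAlgClo} yields admissibility of $\alpha$.

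The \textbf{cartesian square} then follows formally: any $\gamma \in \mc(\iX/\iS)$ whose image in $\mc(\iT \times_{\iS} \iX/\iT)$ lies in the subgroup $\mc(\sT \ambtimes_\sS \sX/\sT)$ has $f^\circledast \gamma$ left proper and admissible, hence so is $\gamma$. The most delicate step in this outline is the lifting argument inside admissibility: it relies critically on the algebraically-closed-fraction-field reduction (to collapse the residual field extension) combined with normality of the valuation ring $\hP$ (to collapse any residual integral extension). The left-properness step, by contrast, is essentially a surjectivity-plus-properness book-keeping exercise and should present no real obstacle.
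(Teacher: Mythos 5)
Your proof is correct and follows essentially the same route as the paper's: left properness by descending properness along the surjection $\ol{W}_{ij} \to \ol{Z}_i$ of closures, and admissibility by reducing via Lem.~\ref{lemm:admAlgClo} and lifting the test morphism $\sP \to \sS$ through $\sT$. Your explicit construction of the lift (a component of $\hP \times_{\hS} \hT$ collapsing to $\hP$ by normality and the algebraically closed fraction field) is just a hands-on version of the paper's one-line appeal to properness of $\hT \to \hS$, and if anything it fills in the generic-point-lifting step that the paper leaves implicit.
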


\begin{proof}
\textit{Left properness.} Write $\alpha = \sum n_iz_i$. By the axiom (Pla) in the above discussion, we see that $f^\circledast\alpha = \sum n_im_{ij}w_{ij}$ where the $w_{ij}$ are the points of $t \times_s z_i$, and $m_{ij}$ various multiplicities, where $t, s$ are the generic points of $\ol{T}, \ol{S}$ respectively. Considering the closures in the respective ambient spaces, for each $ij$ we obtain a commutative square
\[ \xymatrix{ 
\ol{\{w_{ij}\}} \ar[r] \ar[d] & \ol{\{z_i\}} \ar[d] \\
\hT \ar[r] & \hS .
} \]
Here, $\ol{\{w_{ij}\}} \to \hT \to \hS$ are proper, and $\ol{\{w_{ij}\}} \to \ol{\{z_i\}}$ surjective, and $\ol{\{z_i\}} \to \hS$ is finite type and separated, so it follows that $\ol{\{z_i\}} \to \hS$ is proper.

\textit{Admissibility.} We leverage Lem.~\ref{lemm:admAlgClo}. Let $\sP \to \sS$ be a morphism from a modulus pair such that $\hP$ is the spectrum of a valuation ring with algebraically closed fraction field, and $\iP$ is the open point. We want to show that $\alpha|_{\sP}$ is admissible. Since $\hT \to \hS$ is proper, there is a factorisation
\[ \sP \stackrel{g}{\to} \sT \stackrel{f}{\to} \sS. \]
Then since $f^\circledast\alpha$ is admissible, $g^\circledast f^\circledast\alpha$ is admissible.
\end{proof}

\begin{coro} \label{coro:qfhpullbackDetectsMCycles}
Let $\sS$ be a qc separated modulus pair with Noetherian interior, and $\sX \to \sS$ a finite type separated morphism. %
A relative cycle $\alpha \in \mc(\iX/\iS)$ is in $\mc(\sX/\sS)$ if and only if $f^\circledast\alpha \in \mc(\sX/\sS)(\sT) \subseteq \ZZtr(\iX/\iS)(\iT)$ for some $\ulMrqfh$-covering $f: \sT \to \sS$.
\end{coro}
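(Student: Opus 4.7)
The direction $(\Rightarrow)$ is immediate from Prop.~\ref{prop:cycFun}, which shows that cycle-theoretic pullback preserves both the left-proper and admissibility subgroups. So the content is in $(\Leftarrow)$: given a $\ulMrqfh$-covering $f:\sT \to \sS$ such that $f^\circledast \alpha$ is left proper and admissible over $\sT$, we must show $\alpha$ itself is left proper and admissible over $\sS$.

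\emph{Step 1: Put $f$ in normal form.} By Cor.~\ref{coro:normalForm}, up to refinement we may assume $f$ factors as a $\ulPrqfh$-covering $\{\sU_i \to \sY\}$ followed by an abstract admissible blowup $\sY \to \sS$. By Prop.~\ref{prop:cycTarAdmBu} (invariance of $\mc(-/-)$ under admissible blowups in the base), $\alpha \in \mc(\sX/\sS)$ if and only if its image in $\mc(\sY \ambtimes_\sS \sX/\sY)$ lies in that subgroup, so we may replace $\sS$ by $\sY$ and assume $f$ is a $\ulPrqfh$-covering. Using Prop.~\ref{prop:finiteCoprod}, we may further replace the family $\{\sU_i\}$ by its coproduct and assume $f$ is a single minimal ambient morphism whose underlying total-space map is $\rqfh$-covering.

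\emph{Step 2: Decompose into Zariski and finite stages.} By Lem.~\ref{lemm:pqrfhIsFinZar}, every $\ulPrqfh$-covering is a finite composition of $\ulPZar$-coverings and $\ulPfin$-coverings. Iterating, it suffices to treat each stage separately: if the pullback of $\alpha$ along a $\ulPZar$-covering (resp.~$\ulPfin$-covering) is in $\mc$, then so is $\alpha$. The Zariski stage is exactly Lem.~\ref{lemm:ZardetectPropAmb}.

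\emph{Step 3: The finite stage.} This is the step whose hypotheses are not quite the ones already proven, since Prop.~\ref{prop:FindetectPropAmb} is stated for integral Noetherian total spaces. The strategy is to reduce to that setting using the Noetherianity of $\iS$. Pulling back along a $\ulPZar$-cover, we may assume $\hS$ is affine, and since cycle admissibility and left properness are insensitive to nilpotents (both conditions are about closures and valuation-ring pullbacks, both inspecting the reduced structure), we may replace $\hS$ and $\hT$ by their reductions. Because $\iS$ is Noetherian, it has finitely many irreducible components; taking scheme-theoretic closures in $\hS$ (cf.~Prop.~\ref{prop:always-dense}) gives a closed covering of $\hS$ by finitely many integral pieces, and we upgrade this to a modulus-theoretic covering by minimal morphisms as in Lem.~\ref{lemm:extendCl}. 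We then replace $\sT$ by its base change and apply Prop.~\ref{prop:FindetectPropAmb} componentwise: on each integral component of $\hS$, the preimage in $\hT$ is a finite surjective morphism, and restricting further to an integral component of $\hT$ over each gives a finite surjective morphism of integral Noetherian schemes to which Prop.~\ref{prop:FindetectPropAmb} applies.

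\emph{Main obstacle.} The delicate point is Step~3: Prop.~\ref{prop:FindetectPropAmb} is phrased in the integral Noetherian case, and bridging to the qc separated / Noetherian interior setting requires the reduction via closed components sketched above. The admissibility half is cleaner once one invokes Lem.~\ref{lemm:admAlgClo} to pass to algebraically closed fraction fields and the valuative criterion for properness to lift $\sP \to \sS$ along $\hT \to \hS$; the left-properness half reduces to the elementary fact that surjective proper pushforward preserves properness, applied to the closures of the components of $\alpha$ and $f^\circledast\alpha$.
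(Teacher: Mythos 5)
Your proof is correct and follows essentially the same route as the paper: decompose the $\ulMrqfh$-covering into abstract admissible blowups, $\ulPZar$-coverings, and $\ulPfin$-coverings, then apply the three detection results Prop.~\ref{prop:cycTarAdmBu}, Lem.~\ref{lemm:ZardetectPropAmb}, and Prop.~\ref{prop:FindetectPropAmb}. Your Step~3 is more careful than the paper's one-line citation, since you notice that Prop.~\ref{prop:FindetectPropAmb} is stated for integral Noetherian total spaces and supply the reduction to irreducible components of the (Noetherian) interior; this is a genuine refinement (note only that your pieces are integral but still need not have Noetherian total space — harmless, since the proof of Prop.~\ref{prop:FindetectPropAmb} never uses Noetherianity of $\hS$, but worth flagging).
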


\begin{proof}
``Only if'' follows from functoriality which was proved in the previous section. For ``If'' suppose that $f^\circledast \alpha$ is left proper and admissible for some $\ulMrqfh$-covering. Since the $\ulMrqfh$-topology is generated by admissible blowups, $\ulPZar$-coverings, and $\ulPfin$-coverings, we can assume that $f$ is a finite composition of morphisms of this form. But left properness and admissibility is detected by $\ulPZar$-coverings, Lem.~\ref{lemm:ZardetectPropAmb}, finite surjective morphisms, Prop.~\ref{prop:FindetectPropAmb}, and admissible blowups, Prop.~\ref{prop:cycTarAdmBu}.
\end{proof}

\begin{coro} \label{coro:cartSquaQfhRelCyc}
Let $\sS$ be a qc separated modulus pair with Noetherian interior, and $\sX \to \sS$ a finite type separated morphism. %
Then we have a Cartersian square of inclusions in $\PSh(\ulMSch_{\sS})$
\[ \xymatrix{
\ZZtr(\sX/\sS) \ar@{}[r]|\subseteq \ar@{}[d]|{|\cap} & 
\ZZtr(\iX/\iS) \ar@{}[d]|{|\cap} \\
\ZZ_{\ulMrqfh}(\sX/\sS) \ar@{}[r]|\subseteq & 
\ZZ_{\qfh}(\iX/\iS)
} \]
\end{coro}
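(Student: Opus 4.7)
The plan is to first verify that all four maps in the square are well-defined inclusions, and then deduce the Cartesian property from the ``local-to-global'' statement Cor.~\ref{coro:qfhpullbackDetectsMCycles}.

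For the inclusions: the top horizontal inclusion $\ZZtr(\sX/\sS) \subseteq \ZZtr(\iX/\iS)$ is by construction, since $\mc(\sX/\sS)$ is by Def.~\ref{defn:presheavesOfRelCyc} a subgroup of $c_{equi}(\iX/\iS,0)$. The right vertical inclusion $\ZZtr(\iX/\iS) \subseteq \ZZ_\qfh(\iX/\iS)$ is Suslin-Voevodsky, Thm.~\ref{theo:SVqfh}, combined with the fact that $\ZZtr$ is $\qfh$-separated. For the left vertical inclusion, Prop.~\ref{prop:Zqfhtrmodulus} identifies $\ZZ_{\ulMrqfh}(\sX/\sS)$ with $\ZZtr(\sX/\sS)_{\ulMrqfh}$; injectivity of $\ZZtr(\sX/\sS) \to \ZZtr(\sX/\sS)_{\ulMrqfh}$ amounts to $\ulMrqfh$-separatedness of $\ZZtr(\sX/\sS)$, which follows because $\iS$ is Noetherian (so the interior of a $\ulMrqfh$-covering is a $\qfh$-covering, cf.~Rem.~\ref{rema:qfhvsrqfh}) and $\ZZtr(\iX/\iS)$ is $\qfh$-separated. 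For the bottom inclusion, observe that via the cocontinuity result Cor.~\ref{coro:qfhCocont}, the presheaf $\omega_* \ZZ_\qfh(\iX/\iS)$ is a $\ulMrqfh$-sheaf on $\ulMSch_\sS$; hence the map $\ZZtr(\sX/\sS) \subseteq \ZZtr(\iX/\iS) \subseteq \ZZ_\qfh(\iX/\iS)$ factors through the sheafification to give $\ZZ_{\ulMrqfh}(\sX/\sS) \to \ZZ_\qfh(\iX/\iS)$, and injectivity is checked by working locally and using injectivity of $\ZZtr(\iX/\iS) \subseteq \ZZ_\qfh(\iX/\iS)$.

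For the Cartesian property, it suffices to show: given $\sU \in \ulMSch_\sS$ and a section $\alpha \in \ZZtr(\iX/\iS)(\iU)$ whose image $\bar\alpha$ in $\ZZ_\qfh(\iX/\iS)$ lies in the subgroup $\ZZ_{\ulMrqfh}(\sX/\sS)(\sU)$, the cycle $\alpha$ already lies in $\ZZtr(\sX/\sS)(\sU)$, i.e. $\alpha \in \mc(\sU \ambtimes_\sS \sX/\sU)$. By Prop.~\ref{prop:Zqfhtrmodulus}, there exists a $\ulMrqfh$-covering $f: \sT \to \sU$ and an element $\gamma \in \ZZtr(\sX/\sS)(\sT) = \mc(\sT \ambtimes_\sU \sX/\sT)$ whose image in $\ZZ_{\ulMrqfh}(\sX/\sS)(\sT)$ agrees with $\bar\alpha|_\sT$. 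Both $\gamma$ and $f^\circledast \alpha$ sit inside $\ZZtr(\iT \ambtimes_{\iU} \iX/\iT)$, and they have the same image in $\ZZ_\qfh(\iX/\iS)(\iT)$; since the latter inclusion is injective we conclude $\gamma = f^\circledast \alpha$. In particular $f^\circledast \alpha$ is left proper and admissible.

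At this point we apply Cor.~\ref{coro:qfhpullbackDetectsMCycles}, which asserts precisely that left properness and admissibility are detected by pullback along a $\ulMrqfh$-covering; so $\alpha \in \mc(\sU \ambtimes_\sS \sX/\sU) = \ZZtr(\sX/\sS)(\sU)$, as required. The main conceptual ingredient is therefore Cor.~\ref{coro:qfhpullbackDetectsMCycles}, which was itself built out of the descent statements Lem.~\ref{lemm:ZardetectPropAmb}, Prop.~\ref{prop:FindetectPropAmb}, and Prop.~\ref{prop:cycTarAdmBu}; no additional obstacle is expected.
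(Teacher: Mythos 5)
Your proposal is correct and follows essentially the same route as the paper: the inclusions are established from Thm.~\ref{theo:SVqfh}, Prop.~\ref{prop:Zqfhtrmodulus} and $\qfh$-separatedness, and the Cartesian property is reduced, exactly as in the paper, to the detection statement Cor.~\ref{coro:qfhpullbackDetectsMCycles} applied to a representative $\alpha$ whose pullback along a suitable $\ulMrqfh$-covering lands in $\ZZtr(\sX/\sS)$. The only (welcome) difference is that you spell out the identification $\gamma = f^\circledast\alpha$ via injectivity into $\ZZ_\qfh(\iX/\iS)$ and work over a general $\sU \in \ulMSch_\sS$, both of which the paper leaves implicit.
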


\begin{rema}
The left vertical morphism sends a cycle $\alpha \in \ZZtr(\sX/\sS)$ to $f^\circledast \alpha \in ker\left (\ZZ(\sX/\sS)(\sT) \to \ZZ(\sX/\sS)(\sT \times_{\sS} \sT)\right )$ for an appropriate $\ulMrqfh$-covering $f:\sT \to \sS$, which exists by Theorem \ref{theo:SVqfh}. The non-modulus version is analogous.
\end{rema}

\begin{proof}
The inclusion $\ZZtr(\sX/\sS) \to \ZZtr(\iX/\iS)$ is by definition. The vertical morphisms are from the isomorphisms 
\[ \ZZ_\qfh(X/S) \cong \ZZ_\tr(X/S)_{\qfh}, \qquad \qquad  \qquad \ZZ_{\ulMrqfh}(\sX/\sS) \cong \ZZtr(\sX/\sS)_{\ulMrqfh} \]
of Theorem \ref{theo:SVqfh} and Proposition \ref{prop:Zqfhtrmodulus} respectively, and they are inclusions because $\ZZtr(\iX/\iS)$ is $\qfh$-separated. Finally, the morphism $\ZZ(\sX/\sS) \to \ZZ(\iX / \iS)$ is a monomorphism whenever $\hS$ is locally integral, so it becomes an inclusion after sheafification.

For Cartesianness, consider an element $s \in \ZZ_{\qfh}(\iX/\iS)$ which is in the left and top subgroups. Since its in the top subgroup we can represent it as some $\alpha \in \ZZtr(\iX/\iS)$, and since it's in the left subgroup, there is an $\ulMrqfh$-covering $f: \sT \to \sS$ such that $f^\circledast\alpha$ is in $\Ztr(\sX / \sS)(\sT)$. But then the original cycle is in $\ZZtr(\sX/\sS)$ by Coro.~\ref{coro:qfhpullbackDetectsMCycles}.
\end{proof}

If we use rational coefficients, the presheaves studied above collapse to a single presheaf.

\begin{prop}[{\cite[Prop.3.3.14,Prop.4.2.7]{SVRelCyc}}] \label{prop:QQqfhTr}
Let $X \to S$ be a finite type separated morphism of Noetherian separated schemes. Then $\QQtr(X/S)$ is a $\qfh$-sheaf, isomorphic to $\QQ_{\qfh}(X/S)$.
\end{prop}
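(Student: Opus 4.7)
The plan is to deduce both assertions from Theorem~\ref{theo:SVqfh}: once we know $\QQtr(X/S)$ is already a $\qfh$-sheaf, the isomorphism $\QQ_{\qfh}(X/S) \cong \QQtr(X/S)_{\qfh} \cong \QQtr(X/S)$ will follow, since sheafification is the identity on sheaves. So the content is to verify the sheaf property.

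First I would reduce to checking descent along coverings of a particularly simple form. Every $\qfh$-covering is a composition of Zariski coverings and finite surjective coverings, so it suffices to treat these two cases separately (plus the separation property). For Zariski coverings, the sheaf property for $\QQtr(X/S)$ is formal from Axioms~\ref{axio:relativeCycles}: a relative cycle is determined by its components $Z_i$, which are closed in $X$, and both the (Red), (Gen) and (Pla) conditions are Zariski-local on $S$. Separation in general follows because, working component by component, two relative cycles which agree after any surjective pullback $T \to S$ have the same support, and the multiplicities agree by (EdC) / (Pla) at the generic points.

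The heart of the argument is descent along a finite surjective morphism $f: T \to S$ where, after localization and stratification, we may assume $S$ is integral and $f$ is finite, surjective, and generically flat of some degree $d > 0$. Suppose $\beta \in \QQtr(X \times_S T/T)$ satisfies $p_1^\circledast \beta = p_2^\circledast \beta$ in $\QQtr(X \times_S T \times_S T / T \times_S T)$. By Suslin--Voevodsky's covariant functoriality for proper pushforward along $f \times \id_X$, we obtain a cycle $f_*\beta \in \QQtr(X/S)$, and the key claim is that
\[
\alpha := \tfrac{1}{d}\, f_*\beta \in \QQtr(X/S)
\]
satisfies $f^\circledast \alpha = \beta$. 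This is exactly where rational coefficients are essential: over the generic point $\eta$ of $S$, the composition $f^\circledast f_*$ acts as multiplication by $d$ on the generic fibre (since $T_\eta \to \eta$ is finite flat of degree $d$), and the descent condition together with (Gen) forces $\beta$ to be determined by its restriction to $T_\eta$. The equality $f^\circledast\alpha = \beta$ then propagates to all of $T$ using the $\QQtr$-formalism (specializations are determined by fat points, cf.\ Axiom~\ref{axio:relativeCycles}(Pla),(Red),(Bir)).

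The main obstacle, and where I would spend the most care, is the generic-degree argument: verifying that $f^\circledast f_*\beta = d\cdot \beta$ requires compatibility of the $f_*$ and $f^\circledast$ operations of \cite{SVRelCyc}, and this is why one must pass to a stratification where $f$ is flat. To handle the general case, one stratifies $S$ into locally closed subschemes over which $f$ becomes flat of constant degree, applies the argument on each stratum, and glues using the already-established Zariski descent together with the uniqueness of relative cycles (separation). With this in place, the sheaf condition for arbitrary $\qfh$-coverings follows by the usual composition arguments, which completes both parts of the statement.
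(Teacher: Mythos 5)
The paper does not actually prove this statement: it is quoted from Suslin--Voevodsky and the entire ``proof'' is the citation to \cite[Prop.3.3.14, Prop.4.2.7]{SVRelCyc}. So your proposal is a reconstruction of the external argument, and in outline it is the right one --- deducing the identification $\QQtr(X/S)\cong\QQ_{\qfh}(X/S)$ from Theorem~\ref{theo:SVqfh} once the sheaf property is known, and proving descent along a finite surjective $f\colon T\to S$ by the trace construction $\alpha=\tfrac1d f_*\beta$ is exactly how Suslin--Voevodsky obtain $\QQ$-linear $\qfh$-descent. Two steps, however, do not work as written.

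First, the assertion that $f^\circledast f_*$ ``acts as multiplication by $d$ \dots\ since $T_\eta\to\eta$ is finite flat of degree $d$'' is false without the cocycle condition: for finite flat $f$ the composite is computed by base change (\cite[Prop.3.6.2]{SVRelCyc}) as $f^\circledast f_*\beta=(p_1)_*p_2^\circledast\beta$, which is the ``sum of conjugates'' of $\beta$ and equals $d\beta$ only after substituting $p_2^\circledast\beta=p_1^\circledast\beta$ and then applying the degree formula $(p_1)_*p_1^\circledast=d$. Flatness alone does not give it (take $S$ a field, $T$ a separable extension, $\beta$ not Galois-invariant). Second, and more seriously, the reduction to the flat locus by ``stratifying $S$ into locally closed subschemes \dots\ and gluing using the already-established Zariski descent'' does not make sense: a stratification by locally closed subschemes is not a Zariski covering, and a Zariski sheaf does not allow you to glue a section over an open stratum with one over its closed complement. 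The mechanism actually used is Raynaud--Gruson platification of $f$ by a blowup of $S$, combined with invariance of relative cycles under proper birational base change (Property (Bir) of Property~\ref{prop:relativeCycles}) and the fact that over a reduced base a relative cycle is determined by its restriction to the generic fibres. If you replace the stratification step by platification plus (Bir), and justify the degree computation via the base-change formula and the cocycle condition, the plan goes through.
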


The same is true in the modulus world.

\begin{coro} \label{coro:mqfhSheafRep}
Let $\sX \to \sS$ be a finite type separated morphism of Noetherian separated modulus pairs. Then $\QQtr(\sX/\sS)$ is a $\ulMrqfh$-sheaf, isomorphic to $\QQ_{\ulMrqfh}(\sX/\sS)$.
\end{coro}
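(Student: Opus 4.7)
The plan is to deduce this from the cartesian square of Corollary~\ref{coro:cartSquaQfhRelCyc} (with $\QQ$-coefficients) combined with the classical Suslin--Voevodsky result Proposition~\ref{prop:QQqfhTr}.

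First, I would tensor the whole of Corollary~\ref{coro:cartSquaQfhRelCyc} with $\QQ$. Since tensoring with $\QQ$ is exact and commutes with sheafification (as $\QQ$ is a filtered colimit of finitely presented flat $\ZZ$-modules), we obtain a cartesian square of inclusions in $\PSh(\ulMSch_\sS)$
\[ \xymatrix{
\QQtr(\sX/\sS) \ar@{}[r]|\subseteq \ar@{}[d]|{|\cap} &
\QQtr(\iX/\iS) \ar@{}[d]|{|\cap} \\
\QQ_{\ulMrqfh}(\sX/\sS) \ar@{}[r]|\subseteq &
\QQ_{\qfh}(\iX/\iS).
} \]
By Proposition~\ref{prop:QQqfhTr} applied to $\iX \to \iS$, the right vertical inclusion $\QQtr(\iX/\iS) \subseteq \QQ_{\qfh}(\iX/\iS)$ is an isomorphism (after restriction via $\ulomega$; here I am implicitly using that the restriction $\ulomega_*$ commutes with $\qfh$-sheafification in the relevant sense, cf. Corollary~\ref{coro:qfhCocont}, so that the pullback of $\QQ_{\qfh}(\iX/\iS)$ along $\ulomega$ computes the corresponding object).

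Once the right vertical arrow is an isomorphism, cartesianness of the square forces the left vertical inclusion
\[ \QQtr(\sX/\sS) \subseteq \QQ_{\ulMrqfh}(\sX/\sS) \]
to be an isomorphism as well. Since the right hand side is by definition a $\ulMrqfh$-sheaf, this shows $\QQtr(\sX/\sS)$ is already a $\ulMrqfh$-sheaf and coincides with $\QQ_{\ulMrqfh}(\sX/\sS)$.

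The only subtle point I anticipate is the careful identification of $\QQtr(\iX/\iS)$ viewed as a presheaf on $\ulMSch_\sS$ (via $\ulomega$) with the classical $\qfh$-sheaf $\QQ_{\qfh}(\iX/\iS)$, so that Proposition~\ref{prop:QQqfhTr} applies verbatim; this will be handled by the cocontinuity of $\ulomega$ for the $\rqfh$/$\qfh$-topologies established in Corollary~\ref{coro:qfhCocont}, which guarantees that $\ulomega_*$ is exact and in particular commutes with the relevant sheafifications. Everything else is formal.
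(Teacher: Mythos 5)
Your proposal is correct and follows exactly the paper's own argument: tensor the Cartesian square of Corollary~\ref{coro:cartSquaQfhRelCyc} with $\QQ$, observe that the right vertical inclusion becomes the isomorphism of Proposition~\ref{prop:QQqfhTr}, and conclude by cartesianness that the left vertical inclusion is an isomorphism. The extra remarks on exactness of $-\otimes\QQ$ and the role of $\ulomega_*$ are harmless elaborations of the same route.
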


\begin{proof}
Tensoring the Cartesian square of Cor.~\ref{coro:cartSquaQfhRelCyc} with $\QQ$, the right hand vertical inclusion becomes the isomorphism of Prop.~\ref{prop:QQqfhTr}, and so therefore the pullback $\QQtr(\sX/\sS) \to \QQ_{\ulMrqfh}(\sX/\sS)$ is also an isomorphims of presheaves.
\end{proof}

\section{Comparison with [KMSY] correspondences}\label{sec:KMSY-comparison}

The goal of this section is to prove the following theorem, which relates the modulus condition on cycles in the sense of Kahn-Miyazaki-Saito-Yamazaki and the modulus condition in the sense of Definition \ref{defn:presheavesOfRelCyc} above. 

\begin{thm}\label{thm:rel-cor}
Suppose that $k$ is a field and $\sX, \sY$ are modulus pairs over $k$ with integral total space and $k$-smooth interior. Then
\begin{equation}\label{eq:KMSY-KM} \hom_{\ulMCor_k}(\sX, \sY) \cong \mc(\sX \times_k \sY / \sX), \end{equation}
where $\ulMCor_k$%
\marginpar{\fbox{$\ulMCor_k$}}%
\index[not]{$\ulMCor_k$}%
 denotes the category of modulus correspondences in \cite{kmsy1} and \cite{kmsy2}.
Moreover, these isomorphisms are compatible with composition.
\end{thm}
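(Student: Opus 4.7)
The plan is to identify both sides of (\ref{eq:KMSY-KM}) as subgroups of the Suslin--Voevodsky group $c_{equi}(\iX \times_k \iY/\iX, 0)$ and show that these subgroups coincide. Both the KMSY hom-group and $\mc(\sX \times_k \sY/\sX)$ consist of finite correspondences satisfying a modulus-type condition on each integral component; only the formulation of the modulus condition differs. Since both conditions are component-wise, and since left-properness in the paper's sense together with quasi-finiteness of $Z \to \iX$ forces $\bar Z \to \hX$ to be finite (Zariski's Main Theorem), it suffices to fix a single integral $Z \subseteq \iX \times_k \iY$ with closure $\bar Z \subseteq \hX \times_k \hY$ finite over $\hX$ and compare the two modulus conditions.

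Then I would carry out the key local computation relating the admissibility of Def.~\ref{defn:presheavesOfRelCyc} with the normalization inequality of KMSY. Since $(\Spec k, \emptyset)$ is a terminal object with empty divisor, Lem.~\ref{lemm:minProduct} and Cor.~\ref{coro:ambtimesambtimes} identify $\sX \times_k \sY$ with the ambient product; its total space is the blow-up of $\hX \times_k \hY$ along $p_X^{-1}\mX \cap p_Y^{-1}\mY$ with modulus $p_X^*\mX + p_Y^*\mY - E$. Given a minimal ambient morphism $f: \sP = (\Spec R, \mP) \to \sX$ from a rank-one valuation pair with $\iP$ the generic point, a component $z_{ij}$ of $f^\circledast[Z]$, and an extension of valuations $\tilde z_{ij}$, the admissibility equation $(\sP \ambtimes_\sX (\sX \times_k \sY))^\infty|_{\tilde z_{ij}} = \mP|_{\tilde z_{ij}}$ can be rewritten using that Cartier divisors on a valuation ring form a totally ordered group and intersection equals minimum: it reduces to $\mP|_{\tilde z_{ij}} \geq p_Y^*\mY|_{\tilde z_{ij}}$. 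Since $f$ is minimal, $\mP|_{\tilde z_{ij}} = p_X^*\mX|_{\tilde z_{ij}}$, so we obtain the KMSY-type inequality $p_X^*\mX|_{\tilde z_{ij}} \geq p_Y^*\mY|_{\tilde z_{ij}}$.

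Next I would translate between the family of valuation-ring inequalities and the inequality on $\bar Z^N$. The universal property of normalization shows that every morphism $\Spec R \to \bar Z$ from a valuation ring whose generic point lies in $\iX \times_k \iY$ factors uniquely through $\bar Z^N$. Conversely, every codimension-one point of the normal integral scheme $\bar Z^N$ has DVR local ring and supplies such a test. Thus the inequality of effective Cartier divisors $p_X^*\mX \geq p_Y^*\mY$ on $\bar Z^N$ is equivalent to the inequality holding on every such valuation ring, which is exactly the paper's admissibility condition for the single integral cycle $[Z]$. Together with the reduction of the first paragraph, this identifies the two subgroups of $c_{equi}(\iX \times_k \iY/\iX, 0)$.

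Finally, compatibility with composition is formal. Both the KMSY composition law and the paper's composition, namely $Cor_{\sX\sZ/\sX}$ of Cor.~\ref{coro:corOk} followed by pushforward along $\sX \times \sY \times \sZ \to \sX \times \sZ$, are given by identical Suslin--Voevodsky cycle operations, and each of these operations preserves the subgroup $\mc$ by Prop.~\ref{prop:cycFun}, Cor.~\ref{coro:corOk} and Lem.~\ref{lemm:cycPush}. The main obstacle is therefore the second paragraph: keeping careful bookkeeping of the blow-up modulus $p_X^*\mX + p_Y^*\mY - E$ as it interacts with pullback along valuation-ring tests, and the clean passage (third paragraph) between ``inequality on all valuation rings through the generic fibre'' and ``inequality on the normalization''.
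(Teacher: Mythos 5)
Your overall architecture matches the paper's: both sides are identified inside $c_{equi}(\iX \times_k \iY/\iX,0)$, the comparison is reduced to a single integral component $Z$, left-properness is matched directly, and the heart of the matter is the equivalence between the valuation-theoretic admissibility of Definition~\ref{defn:presheavesOfRelCyc} and the KMSY inequality on the normalisation of $\ol{Z}$ --- which is exactly the paper's Proposition~\ref{prop:KMSYadmissibility}, combined with Lemma~\ref{lem:rephrasing} to convert the equality of divisors defining admissibility into the inequality $p_X^*\mX \geq p_Y^*\mY$. Your local computation on a valuation ring (the modulus of $\sX\times_k\sY$ becomes the maximum of the two pullbacks in the totally ordered value group) is a correct pointwise version of that lemma, and your use of codimension-one points for the direction ``admissible $\Rightarrow$ inequality on the normalisation'' is legitimate here, since over a field the normalisation is Noetherian and normal and algebraic Hartogs applies (the paper instead intersects over \emph{all} valuation rings of $k(Z)$ containing the normalisation, which is what its more general Proposition~\ref{prop:KMSYadmissibility} requires).

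The gap is in the converse direction. You assert that ``the universal property of normalization shows that every morphism $\Spec R \to \ol{Z}$ from a valuation ring whose generic point lies in $\iX\times_k\iY$ factors uniquely through'' the normalisation. This is only automatic when the generic point of $\Spec R$ maps to the generic point of $Z$, for then $R$ is integrally closed in $k(Z)$ and hence contains the integral closure of $\OO_{\ol{Z}}$. But admissibility quantifies over minimal $\sP\to\sX$ centred at \emph{arbitrary} points of $\iX$; the resulting test ring $\widetilde z_{ij}$ has fraction field $k(z_{ij})$, an extension of the residue field $k(\zeta)$ of a possibly non-generic point $\zeta\in Z$, and a lift of $\Spec k(z_{ij})$ to the normalisation over $\zeta$ need not exist, because the residue fields of the normalisation above $\zeta$ can be algebraic extensions of $k(\zeta)$ not contained in $k(z_{ij})$. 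The paper's proof of Proposition~\ref{prop:KMSYadmissibility} repairs exactly this: it first extends the valuation to the algebraic closure of $k(w_{ij})$, obtains the factorisation through $\widetilde Z_i$ from the valuative criterion for the integral morphism $\widetilde Z_i\to\ol{Z}_i$, and then descends the divisor equality using that an extension of valuation rings induces an injection of value groups. You need this (or an equivalent) detour. A second, inessential slip: left-properness does not make $\ol{Z}\to\hX$ finite (Zariski's Main Theorem does not apply because the fibres over $\mX$ can be positive-dimensional); only properness holds, and only properness is needed.
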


Before proving the theorem, we prepare some lemmas. 

\begin{lemma} \label{prop:KMSYadmissibility}
Let $\sX \to \sS \in \ulPSCH$ be an ambient morphism of finite type of 
qc separated modulus pairs with Noetherian interior. 
Let $\alpha = \sum n_i Z_i \in c_{equi}(\iX / \iS, 0)$ be a relative cycle, and 
let $\widetilde{Z_i} \to \hX$ be the normalisation of the closure $\ol{Z_i}$ of $Z_i$ in $\hX$. The following are equivalent.
\begin{enumerate}
 \item $\alpha$ is admissible (as a relative cycle).
 \item $\mS|_{\widetilde{Z_i}} = \mX|_{\widetilde{Z_i}}$ for each $i$.
\end{enumerate}
\end{lemma}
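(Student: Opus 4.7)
Since $\sX \to \sS$ is ambient admissible, we always have $\mS|_{\widetilde{Z_i}} \leq \mX|_{\widetilde{Z_i}}$, so both conditions really express the reverse inequality, and the task is to compare it with the valuation-ring formulation of admissibility from Def.~\ref{defn:presheavesOfRelCyc}.

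The direction $(2)\Rightarrow(1)$ is the formal half. Fix a minimal $f:\sP\to\sS$ with $\hP = \Spec R$ a valuation ring and $\iP = \Spec \Frac R$, a component $z$ of $f^\circledast\alpha$, an extension $\widetilde{z}$ of the valuation to $k(z)$, and a factorisation $\widetilde{z}\to \hP\times_{\hS}\hX$. By the Suslin--Voevodsky containment of the support of a pullback cycle (\cite[Lem.~3.3.6]{SVRelCyc}), $z$ lies over some $Z_i$, so the composite $\widetilde{z}\to\hX$ has scheme-theoretic image inside $\ol{Z_i}$; because $\widetilde{z}$ is normal it lifts uniquely through the normalisation $\widetilde{Z_i}$. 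Pulling back the hypothesis $\mS|_{\widetilde{Z_i}} = \mX|_{\widetilde{Z_i}}$ along this lift gives $\mS|_{\widetilde{z}} = \mX|_{\widetilde{z}}$, which is exactly the condition demanded.

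For $(1)\Rightarrow(2)$ I would invoke Prop.~\ref{prop:detectEqualOnValRing} to reduce the inequality $\mX|_{\widetilde{Z_i}} \leq \mS|_{\widetilde{Z_i}}$ to checking it after pullback along every minimal morphism $g:\Spec R \to \widetilde{Z_i}$ from a valuation ring $R$ whose generic point lies in the (scheme-theoretically dense) preimage of $Z_i$ in $\widetilde{Z_i}$. Given such $g$, the composite $\Spec R\to\widetilde{Z_i}\to\ol{Z_i}\hookrightarrow\hX\to\hS$ sends $\Spec\Frac R$ into $\iS$, so $\mS$ pulls back to an effective Cartier divisor on $\Spec R$ and defines a minimal ambient morphism $f:\sP := (\Spec R,\mS|_R)\to \sS$. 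Since $\iP$ is a point, axiom (Pla) of Ax.~\ref{axio:relativeCycles} identifies the components of $f^\circledast Z_i$ with the generic points of $\iP\times_{\iS}Z_i$; the graph of $\iP \to \iX$ (through $\widetilde{Z_i}$) singles out one such component $w$ with $k(w)=\Frac R$, and $\Spec R$ is then a canonical extension $\widetilde{w}$ of the valuation endowed with the factorisation $\widetilde{w}\to \widetilde{Z_i}\to \hX$. Applying admissibility of $\alpha$ to this data yields $g^\ast(\mX|_{\widetilde{Z_i}}) = g^\ast(\mS|_{\widetilde{Z_i}})$, as required.

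The main obstacle is the second direction, specifically the careful bookkeeping needed to (a) correctly invoke Prop.~\ref{prop:detectEqualOnValRing} on the possibly non-noetherian normalisation $\widetilde{Z_i}$, restricting test morphisms to those avoiding the common Cartier locus, and (b) identify, via (Pla), the valuation ring $R$ with a bona fide extension $\widetilde{w}$ of a component of the pulled-back cycle, so that hypothesis (1) really does apply. Once these identifications are in place, the rest reduces to tracking pullbacks of Cartier divisors along compositions of valuation-ring extensions.
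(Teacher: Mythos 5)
Your overall shape is right—both directions come down to comparing the two divisors on valuation rings via the support containment \cite[Lem.~3.3.6]{SVRelCyc} and normality of $\widetilde{Z_i}$—but each direction has a genuine gap. In $(2)\Rightarrow(1)$, the step ``because $\widetilde{z}$ is normal it lifts uniquely through the normalisation $\widetilde{Z_i}$'' is not valid. The universal property of normalisation produces a lift only for \emph{dominant} morphisms from normal integral schemes, and $\widetilde{z}\to\ol{Z_i}$ is typically not dominant: the generic point $z$ of $\widetilde{z}$ maps to a point $\zeta$ of $Z_i$ lying over the image of $\iP$ in $\iS$, which in general is not the generic point of $Z_i$. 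Over such a $\zeta$ the fibre of $\widetilde{Z_i}\to\ol{Z_i}$ can consist of points whose residue fields are proper algebraic extensions of $k(\zeta)$ that do not embed into $k(z)$, so no lift of $z$ (hence none of $\widetilde{z}$) need exist, and when one exists it need not be unique. The paper's proof circumvents this by first extending the valuation to the algebraic closure of $k(w_{ij})$ (over an algebraically closed residue field the generic point does lift, since $\widetilde{Z_i}\to\ol{Z_i}$ is surjective), then applying the valuative criterion for the integral morphism $\widetilde{Z_i}\to\ol{Z_i}$ to lift the whole valuation ring, and finally descending the divisor equality back to $\widetilde{w}_{ijk}$ using that an extension of valuation rings induces an injection of value groups. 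All three steps are absent from your argument.

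In $(1)\Rightarrow(2)$, invoking Prop.~\ref{prop:detectEqualOnValRing} as a black box obliges you to verify the equality on valuation rings $\Spec R\to\widetilde{Z_i}$ centred at \emph{arbitrary} points of the preimage of $Z_i$, and there your identification of $\Spec R$ with an extension $\widetilde{w}$ of a component of $f^\circledast\alpha$ does not go through: the image of $\iP$ in $\iS$ need not lie in the flat locus of $Z_i\to\iS$, so (Pla) does not compute $f^\circledast\alpha$ there, and even where it does the graph point $w$ may fail to lie in the support of $f^\circledast\alpha$ (contributions from distinct $Z_j$ can cancel), in which case admissibility of $\alpha$ asserts nothing about it. You flag this as bookkeeping, but it is the mathematical heart of the direction and your route does not close it. The paper instead uses normality of $\widetilde{Z_i}$ directly: a normal domain equals the intersection of the valuation rings \emph{of its fraction field} containing it, and likewise for its group of units, so it suffices to test $f/g\in R^*$ on valuation rings $R$ of $k(Z_i)$; by (Gen) the generic point $z_i$ lies over a generic point $s$ of $\iS$, and by (Bir) it genuinely occurs with multiplicity $n_i$ in the restriction of $\alpha$ to $s$, so admissibility applies with no flatness or cancellation issues. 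If you restrict your test objects to valuation rings of the function field in this way, your argument essentially becomes the paper's; as written it has a hole.
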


\begin{proof}
(1) To check that $\mS|_{\widetilde{Z}_i} = \mX|_{\widetilde{Z}_i}$ it suffices to check Zariski locally on $\widetilde{Z}_i$ that $f/g \in \OO^*$ where $f, g$ are the local generators for $\mS|_{\widetilde{Z}_i}, \mX|_{\widetilde{Z}_i}$ respectively, and we are thinking of $f/g$ as an element of the fraction field $k(\widetilde{Z}_i)$. Any normal ring $A$ is the intersection $A = \cap_{R \subseteq \Frac(A)} R$ of the valuation rings of its fraction field that contain it, and similarly, $A^* = \cap_{R \subseteq \Frac(A)} R^*$. So to check $f/g \in \OO^*$ it suffices to check it on each valuation ring $R$. By Axiom~\ref{axio:relativeCycles}(Gen), the generic point $z_i$ of $\widetilde{Z}_i$ is over a generic point $s$ of $\iS$. Let $R' = R \cap k(s) \subseteq k(z_i)$ be the valuation ring of $k(s)$ induced by $R$. by Axiom~\ref{axio:relativeCycles}(Bir), the pullback of $\alpha$ to $s$ is the same formal sum $n_i z_i$ (we replace the $Z_i$ with their generic points), so we obtain the following diagram
 \[ \xymatrix{
z_i \ar[r] \ar[d] & \Spec(R) \ar[d] \ar[r] & \widetilde{Z}_i \ar[d] \ar[r] & \hX  \ar[dl] \\
s \ar[r] & \Spec(R') \ar[r] & \hS & 
 } \]
Since $\alpha$ is admissible, we have $\mS|_{\Spec(R)} = \mX|_{\Spec(R)}$.

(2) Let $f: \sP \to \sS$ be a minimal (ambient) morphism with $\hP$ the spectrum of a valuation ring and $\iP$ the open point, let $\sum m_{ij} w_{ij} = f^\circledast\alpha \in \ZZtr(\iX/\iS)(\iP)$ and let $\widetilde{w}_{ijk}$ be a valuation ring of $w_{ij}$ extending $\hP$ and consider a diagram
 \[ \xymatrix{
w_{ij} \ar[d] \ar[r] & \widetilde{w}_{ijk} \ar[d] \ar[r] & \overline{(\sP \times_{\sS} \sX)} \ar[r] \ar[dl] & \hX \ar[dl] \\
\iP \ar[r] & \hP \ar[r] & \hS
 } \]
The relative cycle $\alpha$ is admissible if and only if for every such diagram we have $\mP|_{\widetilde{w}_{ijk}} = (\sP \times_{\sS} \sX)^\infty|_{\widetilde{w}_{ijk}}$. Since $\sP \to \sS$ is minimal, the left hand side becomes $\mS|_{\widetilde{w}_{ijk}}$, and the right hand side is $\mX|_{\widetilde{w}_{ijk}}$, cf.Lem.~\ref{lemm:minProduct}.

Now we observe that $w_{ij} \to \hX$ factors through $Z_i$, \cite[Lem.3.3.6]{SVRelCyc}, so $\widetilde{w}_{ijk} \to \hX$ factors through $\ol{Z_i}$. Let $w_{ij}^a$ be the spectrum of the algebraic closure of $k(w_{ij})$, and $\widetilde{w}^a_{ijk}$ any extension of the valuation ring of $\widetilde{w}_{ijk}$ to $w_{ij}^a$. Since $\widetilde{Z}_i \to \ol{Z_i}$ is integral, we can extend our diagram:
 \[ \xymatrix{
&&& \widetilde{Z_i} \ar[d] \\
w_{ij}^a \ar[d] \ar[r] \ar@{-->}[rrru] & \widetilde{w}^a_{ijk} \ar[d] 
&& \ol{Z_i} \ar[d] \\
w_{ij} \ar[d] \ar[r] & \widetilde{w}_{ijk} \ar[d] \ar[r] \ar@{-->}[rru] & \overline{(\sP \times_{\sS} \sX)} \ar[r] \ar[dl] & \hX \ar[dl] \\
\iP \ar[r] & \hP \ar[r] & \hS
 } \]
Now by the valuative criterion for separatedness and universally closedness, \cite[01KF, 01WM]{stacks-project}, there exists a canonical factorisation
\[ \widetilde{w}^a_{ijk} \to \widetilde{Z}_i \]
making the above diagram commute. Consequently, 
\[ 
\mS|_{\widetilde{Z}_i} = \mX|_{\widetilde{Z}_i} 
\implies 
\mS|_{\widetilde{w}^a_{ijk}} = \mX|_{\widetilde{w}^a_{ijk}} 
\]
and to finish the proof, it suffices to show 
\[ 
\mS|_{\widetilde{w}^a_{ijk}} = \mX|_{\widetilde{w}^a_{ijk}} 
\implies 
\mS|_{\widetilde{w}_{ijk}} = \mX|_{\widetilde{w}_{ijk}} 
\]
But $\widetilde{w}_{ijk}^a \to \widetilde{w}_{ijk}$ is an extension valuation rings, so the induced morphism on Cartier divisors is nothing more than the corresponding inclusion of value groups $\Gamma_{\widetilde{w}_{ijk}} \subseteq \Gamma_{\widetilde{w}_{ijk}^a}$.
\end{proof}

We also need the following elementary

\begin{lemma}\label{lem:rephrasing}
Let $X$ be a scheme, $D_1,D_2$ effective Cartier divisors on $X$, and $E=D_1 \times_X D_2$. 
Assume that $E$ is an effective Cartier divisor on $X$.
Then the following conditions are equivalent:
\begin{enumerate}
\item $E=D_2$. 
\item The Cartier divisor $D_1 - D_2$ is effective.
\end{enumerate}
\end{lemma}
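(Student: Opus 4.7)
The plan is to reduce the statement to an elementary algebraic equivalence by working affine-locally on $X$, and then check both conditions unwind to the same divisibility statement in the local ring.

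First I would observe that both assertions are local on $X$: whether $E = D_2$ can be checked on an open cover (it is equality of closed subschemes), and whether a Cartier divisor is effective can also be checked locally (it is a property of the corresponding line bundle section, equivalently of a local generator of the associated invertible fractional ideal). So we may assume $X = \Spec(R)$, and that $D_1 = \Spec(R/d_1 R)$, $D_2 = \Spec(R/d_2 R)$ with $d_1, d_2 \in R$ nonzero divisors. Similarly, since $E$ is an effective Cartier divisor, after further Zariski-shrinking we may write $E = \Spec(R/eR)$ with $e \in R$ a nonzero divisor. The hypothesis that $E = D_1 \times_X D_2$ translates to the equality of ideals
\[ d_1 R + d_2 R = e R. \]

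Next I would translate each of (1) and (2) into algebra. Condition (1), $E = D_2$, means exactly $eR = d_2 R$. Combined with the hypothesis $d_1 R + d_2 R = eR$, this is equivalent to $d_1 \in d_2 R$, i.e.\ $d_2 \mid d_1$. Condition (2) says that the Cartier divisor $D_1 - D_2$, which is locally represented by the element $d_1 d_2^{-1}$ of the total ring of fractions of $R$, is effective. By definition this means there is a unit $u \in R^*$ and a nonzero divisor $e' \in R$ with $d_1 d_2^{-1} = u e'$; equivalently, $d_1 = u e' d_2$, which again is just $d_2 \mid d_1$.

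Both conditions therefore reduce to the same local algebraic statement $d_2 \mid d_1$, so (1) $\Leftrightarrow$ (2). The only point requiring a small amount of care is that the reductions need to be checked to be compatible across the Zariski cover, but since $E$ being an effective Cartier divisor guarantees that the local generators $e$ are well-defined up to units and transform consistently with $d_1, d_2$ on overlaps, no obstruction arises. I do not anticipate any difficulty: this is purely a bookkeeping lemma packaging the computation already implicitly used, for instance, in the proof of Lemma~\ref{lem:key}.
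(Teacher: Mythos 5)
Your proposal is correct and follows essentially the same route as the paper: reduce to the affine case with principal divisors $d_1, d_2, e$, translate the hypothesis to $d_1R + d_2R = eR$, and observe that both conditions unwind to $d_1 d_2^{-1} \in R$ in the total quotient ring. The paper presents this as a single chain of ideal equivalences, but the content is identical.
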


\begin{proof}
Since the problem is Zariski local on $X$, we may assume that $X= \Spec (R)$ is affine, and $D_1$, $D_2$, and $E$ are represented by nonzerodivisors $d_1$, $d_2$ and $e$ in $R$, respectively.
The equality $D_1 \times_X D_2 = E$ is equivalent to $d_1 R+ d_2 R=eR$.
Therefore, 
\begin{align*}
(1)
&\Longleftrightarrow eR = d_2 R \\
&\Longleftrightarrow d_1 R+ d_2 R = d_2 R \\
&\Longleftrightarrow d_1d_2^{-1} R+  R = R \\
&\Longleftrightarrow d_1d_2^{-1} \in R 
\Longleftrightarrow (2),
\end{align*}
where the sums are taken in the total quotient ring $\{\text{nonzerodivisors}\}^{-1}R$.
\end{proof}

Now we are ready to prove the main result of this subsection. 

\begin{proof}[Proof of Theorem \ref{thm:rel-cor}]
Recall that the fiber product $\sX \times_k \sY$ is represented by a modulus pair $P$ given by 
\begin{align*}
\hP &= \Bl_{X^\infty \times_k Y^\infty} (\hX \times_k \hY), \\
\mP &= \pi^* (\mX \times_k \hY) + \pi^* (\hX \times_k \mY) - E
\end{align*}
where $\pi : \hP \to \hX \times_k \hY$ denotes the blow up along $X^\infty \times_k Y^\infty$, and $E = \pi^{-1} (\mX \times_k \mY)$ the exceptional divisor. 

Both sides of \eqref{eq:KMSY-KM} are subgroups of 
\[
\hom_{\ulMCor}(\iX, \iY) = \mc(\iX \times_k \iY / \iX). 
\]
Take an integral cycle $Z$ in this group, and let $\ol{Z}$ (resp. $\ol{Z}'$) be the closure of $Z$ in $\ol{X} \times_k \ol{Y}$ (resp. $\hP$), and $\tilde{Z} \to \ol{Z}$ (resp. $\tilde{Z}' \to \ol{Z}'$) its normalisation. 
By the universal property of normalisation, the natural map $\ol{Z}' \to \ol{Z}$ which is induced by $\pi$ lifts to 
a proper surjective morphism $\tilde{Z}' \to \tilde{Z}$.

Note that $\ol{Z}$ is proper over $\ol{X}$ if and only if $\ol{Z}'$ is proper over $\ol{X}$.
Therefore, it suffices to show that $Z$ is admissible as a relative cycle if and only if it is admissible as a finite correspondence.
By Proposition \ref{prop:KMSYadmissibility}, the former condition is equivalent to $\mP |_{\tilde{Z}'} = \mX |_{\tilde{Z}'} = \pi^* (\mX \times_k \hY) |_{\tilde{Z}'}$.
By the definition of $\mP$ above, this is equivalent to $\pi^* (\hX \times_k \mY) |_{\tilde{Z}'} = E |_{\tilde{Z}'}$.
By Lemma \ref{lem:rephrasing} applied to $X = \tilde{Z}'$, $D_1 = \pi^* (\mX \times_k \hY) |_{\tilde{Z}'}$ and $D_2  = \pi^* (\hX \times_k \mY) |_{\tilde{Z}'}$, and by the equality $E |_{\tilde{Z}'}  = \pi^* (\mX \times_k \hY) |_{\tilde{Z}'} \times_{\tilde{Z}'} \pi^* (\hX \times_k \mY) |_{\tilde{Z}'}$, 
the last condition is equivalent to $\pi^* (\mX \times_k \hY) |_{\tilde{Z}'} \geq \pi^* (\hX \times_k \mY) |_{\tilde{Z}'}$.
Since $\tilde{Z}' \to \tilde{Z}$ is surjective, \cite[Lem.~2.2]{KP} implies that this is equivalent to $(\mX \times_k \hY) |_{\tilde{Z}} \geq (\hX \times_k \mY) |_{\tilde{Z}}$, which means $Z \in \ulMCor (\sX,\sY)$.
This finishes the proof.
\end{proof}

\section{Filtered limits of cycles}

The following result leads to one possible extension of the theory from Noetherian interior to arbitrary qc separated schemes.

\begin{theo}[{\cite[Prop.9.3.9]{CD19}}] \label{theo:limitCycles}
Let $S$ be a noetherian separated scheme, $Y \to S$ a separated morphism of finite type, and $(T_\lambda)$ a filtered system of separated finite type $S$-schemes with affine dominant transition morphisms. Then
\[ 
\ZZtr((\varprojlim T_\lambda) \times_S Y / (\varprojlim T_\lambda)) \cong \varinjlim \ZZtr(T_\lambda \times_S Y / T_\lambda). 
\]
\end{theo}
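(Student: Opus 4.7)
Write $T := \varprojlim_\lambda T_\lambda$, $Y_\lambda := T_\lambda \times_S Y$, and $Y_T := T \times_S Y$. By Prop.~\ref{prop:cycFun}, pullback along each projection $T \to T_\lambda$ induces a natural map $\Phi : \varinjlim_\lambda \ZZtr(Y_\lambda/T_\lambda) \to \ZZtr(Y_T/T)$; the plan is to show injectivity and surjectivity separately, leveraging the fact that each $T_\lambda$ is noetherian (being finite type over the noetherian $S$) even though $T$ itself may fail to be so.

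For injectivity, I would use the structure theorem recalled after Axiom~\ref{axio:relativeCycles}: since $T_\lambda$ is noetherian, each element of $\ZZtr(Y_\lambda/T_\lambda)$ is uniquely represented as a finite $\ZZ$-linear combination of points $z$ of $Y_\lambda$ lying over generic points of $T_\lambda$ whose closure is equidimensional of relative dimension zero. Pullback along $T \to T_\lambda$ is computed on each generic point of $T$ via Axiom~\ref{axio:relativeCycles}(Red),(Pla) after a Raynaud-Gruson flatification (and Axiom (Bir) to reduce the relevant field extensions). Dominance of the transition morphisms implies that the (finitely many) generic points of $T_\lambda$ each receive at least one generic point of $T$, and residue fields at those generic points of $T$ are filtered unions of those at generic points of $T_\mu$ for $\mu \geq \lambda$. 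If $\Phi(\alpha_\lambda) = 0$, one chases the support and multiplicities through this diagram, uses injectivity of pullback along a finite field extension (Property~\ref{prop:relativeCycles}(EdC)), and concludes that $\alpha_\lambda$ already restricts to zero on each generic point of some $T_\mu$, hence vanishes in $\ZZtr(Y_\mu/T_\mu)$.

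For surjectivity, take $\alpha \in \ZZtr(Y_T/T)$ and let $\overline{|\alpha|} \subseteq Y_T$ be the (finite) union of the closures of its integral components. Since $Y_T = \varprojlim Y_\lambda$ with affine transition morphisms and $Y_\lambda \to T_\lambda$ is of finite type, the standard limit results \cite[Thm.8.8.2,~Thm.8.10.5]{EGAIV3} allow us to descend $\overline{|\alpha|}$ to a closed subscheme $Z_\lambda \subseteq Y_\lambda$ of finite presentation with $\overline{|\alpha|} = T \times_{T_\lambda} Z_\lambda$ for some $\lambda$; after enlarging $\lambda$ we may arrange that $Z_\lambda \to T_\lambda$ is equidimensional of relative dimension zero on each component (using the characterization of equidimensional via finite presentation + fibre dimension, again descending from $T$). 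The integer multiplicities descend trivially, producing a candidate $\alpha_\lambda$. The key point is then to verify that this candidate lies in $\ZZtr(Y_\lambda/T_\lambda)$, i.e.\ that pullback along an arbitrary fat point $\Spec R \to T_\lambda$ produces a well-defined cycle. When the fat point factors through $T \to T_\lambda$ this follows from the corresponding property of $\alpha$; the general case is handled by further approximation, using that any morphism $\Spec R \to T_\lambda$ with $R$ a dvr can be approximated by a morphism factoring through some larger $T_\mu$ (since $R$ is noetherian of finite type over itself and the fat point involves only finitely much data).

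The main obstacle is this last step of surjectivity: verifying the pullback axioms of Axiom~\ref{axio:relativeCycles} for the descended cycle requires handling test valuation rings mapping only to $T_\lambda$, not to $T$. The resolution is to use that such a test morphism, after enlarging $\lambda$, can be factored through a finite-level quotient and therefore reduced to pullbacks of $\alpha$ on $T$; combined with the compatibility of Raynaud-Gruson flatification with filtered limits along dominant affine transition morphisms, this yields the required compatibility. Once this is in place, $\Phi$ is bijective and the stated isomorphism follows. (The same argument is carried out in detail in \cite[Prop.9.3.9]{CD19}.)
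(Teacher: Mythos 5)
First, a remark on the comparison itself: the paper does not prove Theorem~\ref{theo:limitCycles} at all — the result is imported wholesale from \cite[Prop.9.3.9]{CD19}, with the citation sitting in the theorem header and no argument given. So there is no in-paper proof to measure your sketch against; the only question is whether your reconstruction stands on its own, and as written I do not think it does. The injectivity half is essentially sound, with one cosmetic slip: the residue field extension $k(t)/k(\eta)$ at a maximal point $t$ of $T=\varprojlim T_\lambda$ lying over a generic point $\eta$ of $T_\lambda$ need not be finite (the fibres of $T_\mu \to T_\lambda$ can have positive dimension), so Property~\ref{prop:relativeCycles}(EdC) does not literally apply; you need injectivity of pullback along arbitrary field extensions, which does follow from Axiom~\ref{axio:relativeCycles}(Pla) because distinct points of a generic fibre have disjoint fibres after any extension.

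The surjectivity half has a genuine gap at exactly the step you flag as the main obstacle. Your proposed resolution — that "any morphism $\Spec R \to T_\lambda$ with $R$ a dvr can be approximated by a morphism factoring through some larger $T_\mu$" — is false: the transition morphisms run $T_\mu \to T_\lambda$, so such a factorisation is a lifting problem, and it already fails when $T_\mu \to T_\lambda$ is a localisation missing the image of the closed point of $\Spec R$ (e.g.\ $T_\lambda = \AA^1_k$, $T=\Spec k(x)$, and a fat point centred at the origin). More structurally, membership in $\ZZtr(Y_\mu/T_\mu)$ is a condition over \emph{all} of $T_\mu$ — all fat points, including those centred at points invisible to $T$ — so it cannot be deduced from the admissibility of $\alpha$ over $T$ by any factoring argument. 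Indeed the descended candidate at a fixed level genuinely need not be a relative cycle: take $T_\mu$ a nodal curve and $T$ its normalisation; a flat zero-cycle over $T$ descends to a finite cycle over $T_\mu$ that fails the fat-point compatibility at the node. One must therefore pass further up the tower, and showing that the condition is eventually achieved is precisely the content of \cite[Prop.9.3.9]{CD19}, which your sketch does not supply. A secondary issue: $\overline{|\alpha|}$ is a closed subscheme of the possibly non-noetherian $Y_T$ and need not be of finite presentation, so \cite[Thm.8.8.2]{EGAIV3} does not descend it directly; one should instead descend a finite, finitely presented overscheme and take scheme-theoretic images at finite levels, as is done elsewhere in the paper (cf.\ Prop.~\ref{prop:descendPQfhCoverings}).
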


It also helps us study fibres of cycle presheaves.

\begin{coro} \label{coro:fibCorOk}
Let $\sS$ be a separated modulus pair with Noetherian interior, and $(\sP_λ)_λ$ a pro-object of $\ulPSch_\sS$ with affine minimal transition morphisms, such that $\lim \iP_λ$ is again Noetherian. Then for any $\sX \in \ulMSch_\sS$ we have 
\[ 
\ZZtr(\sP \times_{\sS} \sX / \sP) 
\cong 
\colim 
\ZZtr(\sP_λ \times_{\sS} \sX / \sP_λ)
\] 
where $\sP = \amblim \sP_λ$.
\end{coro}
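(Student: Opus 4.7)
The strategy is to upgrade the scheme-theoretic limit statement (Thm.~\ref{theo:limitCycles}) to modulus pairs by first passing to interiors and then descending the modulus conditions (left-properness and admissibility) to a finite stage of the pro-system. As preparation, I would apply Lemma~\ref{lemm:NoethQfDom} to assume the transition morphisms of $(\sP_\lambda)$ are also schematically dominant (so in particular dominant on interiors), and invoke Lemma~\ref{lemm:pullbacklimamb} to identify $\sP \ambtimes_\sS \sX = \amblim_\lambda (\sP_\lambda \ambtimes_\sS \sX)$ in $\ulPSCH_\sS$, with affine minimal (schematically dominant) transition morphisms. Writing $T_\lambda$ for the total space of $\sP_\lambda \ambtimes_\sS \sX$ and $T$ for that of $\sP \ambtimes_\sS \sX$, we have $T = \varprojlim T_\lambda$.

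Applying Thm.~\ref{theo:limitCycles} to the interiors yields an isomorphism
\[ \ZZtr(\iP \times_{\iS} \iX / \iP) \cong \colim_\lambda \ZZtr(\iP_\lambda \times_{\iS} \iX / \iP_\lambda). \]
Functoriality of relative cycle pullback (Prop.~\ref{prop:cycFun}) produces the canonical comparison morphism $\psi: \colim_\lambda \ZZtr(\sP_\lambda \ambtimes_\sS \sX / \sP_\lambda) \to \ZZtr(\sP \ambtimes_\sS \sX / \sP)$. Since both sides embed into the corresponding interior cycle groups, and the map of interior cycle groups is an isomorphism, $\psi$ is automatically injective. Surjectivity reduces to the following: given $\alpha \in \ZZtr(\sP \ambtimes_\sS \sX / \sP)$, lift it via the interior isomorphism to some $\alpha_\lambda \in \ZZtr(\iP_\lambda \times_{\iS} \iX / \iP_\lambda)$, and show that its pullback $\alpha_\mu$ is left proper and admissible for some $\mu \geq \lambda$.

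For each integral component $Z$ of $\alpha$, let $Z_\mu$ denote the corresponding component of $\alpha_\mu$; its scheme-theoretic closure $\overline{Z}_\mu$ in $T_\mu$ is separated and of finite type over $\hP_\mu$, and (using schematic dominance) the inverse system $(\overline{Z}_\mu)_\mu$ has limit equal to $\overline{Z} \subseteq T$. For \emph{left-properness}: $\overline{Z} \to \hP$ is proper by hypothesis, so by the standard descent for proper morphisms in filtered limits of qcqs schemes with affine transitions (Prop.~\ref{prop:cannotLimitBlowups}(4), \cite[Tag~09ZR]{stacks-project}), $\overline{Z}_\mu \to \hP_\mu$ is proper for some $\mu$. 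For \emph{admissibility}: using the characterization of Lemma~\ref{prop:KMSYadmissibility}, it suffices to verify that the two locally principal effective Cartier divisors $(P_\mu \ambtimes_\sS X)^\infty|_{\widetilde{Z}_\mu}$ and $\hP_\mu^\infty|_{\widetilde{Z}_\mu}$ agree on the normalisation $\widetilde{Z}_\mu$ for large $\mu$. Admissibility of $\alpha$ yields this equality on $\widetilde{Z} = \varprojlim \widetilde{Z}_\mu$, and since this is an equality of finite-presentation data on a filtered limit, it descends by \cite[Thm.~8.8.2, Thm.~8.10.5]{EGAIV3}.

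The main technical point will be justifying that $\overline{Z}$ and $\widetilde{Z}$ are indeed the inverse limits of $\overline{Z}_\mu$ and $\widetilde{Z}_\mu$; this uses schematic dominance of the transition morphisms (so scheme-theoretic closures behave compatibly under pullback) and the fact that normalisation commutes with affine filtered inverse limits. Once that identification is in place, both the properness descent and the admissibility descent are routine applications of the EGA~IV~\S8 approximation machinery, and surjectivity of $\psi$ follows.
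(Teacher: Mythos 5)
Your proposal carries out the ``more direct proof'' that the paper's own proof only gestures at via the commutative square displayed at its end; the paper's actual argument is different and much shorter. It applies Cor.~\ref{coro:cartSquaQfhRelCyc} to express $\ZZtr(\sP_\lambda\times_{\sS}\sX/\sP_\lambda)$ as a fibre product of $\ZZtr(\iP_\lambda\times_{\iS}\iX/\iP_\lambda)$ and $\ZZ_{\ulMrqfh}(\sP_\lambda\times_{\sS}\sX/\sP_\lambda)$ inside $\ZZ_{\qfh}$, and then reduces the whole corollary to the purely presheaf-theoretic fact that $\rqfh$-sheafification commutes with this filtered colimit, which follows from the descent of $\rqfh$-coverings through the limit (Prop.~\ref{prop:descendPQfhCoverings}). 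That route deliberately sidesteps the two descent problems your argument must confront directly.

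The genuine gap in your version is the descent of left-properness. The results you cite (Prop.~\ref{prop:cannotLimitBlowups}(4), \cite[Tag~09ZR]{stacks-project}) approximate a single fixed finite-type morphism by finite-presentation ones with closed-immersion transitions; they do not say that if the limit of a filtered system $(\overline{Z}_\mu\to\hP_\mu)_\mu$ is proper then some finite stage is. The relevant statement in \cite[Thm.8.10.5(xii)]{EGAIV3} requires the morphisms to be of finite presentation and the system to be obtained by base change from one stage, and neither holds here: $\hP_\lambda$ is only of finite type over the possibly non-Noetherian $\hS$, and $\overline{Z}_\mu$ is a scheme-theoretic closure (a strict transform), not the pullback of $\overline{Z}_\lambda$. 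Without a substitute — e.g.\ the paper's own device of detecting left-properness after an $\ulMrqfh$-covering (Lem.~\ref{lemm:ZardetectPropAmb}, Prop.~\ref{prop:FindetectPropAmb}), where the cycle becomes a sum of graphs of morphisms and Prop.~\ref{prop:limXYMsch} applies — this step does not go through. Secondarily, the identifications $Z=\varprojlim Z_\mu$, $\overline{Z}=\varprojlim\overline{Z}_\mu$, $\widetilde{Z}=\varprojlim\widetilde{Z}_\mu$ that you flag need actual proofs (supports of cycle-theoretic pullbacks can shrink, and scheme-theoretic closure does not commute with non-flat base change), although these can be supplied using exactness of filtered colimits of rings and the fact that integral closure commutes with filtered colimits; granting them, your admissibility descent via Lem.~\ref{prop:KMSYadmissibility} is essentially sound.
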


\begin{rema}
We would like to use Coro.~\ref{coro:cartSquaQfhRelCyc} but we are obstructed by the fact that the interior of the limit will not in general be Noetherian. 
\end{rema}

\begin{proof}
By Cor.\ref{coro:cartSquaQfhRelCyc} it suffices to know that $\rqfh$-sheafification commutes with the colimit. This latter is a fact about general presheaves that has nothing to do with cycles. If follows from Prop.\ref{prop:descendPQfhCoverings}.

There is probably also a more direct proof using the commutative square
\[ 
\xymatrix{
\colim \ZZtr(\sP_λ \times_{\sS} \sX / \sP_λ) \ar[d] \ar[r]^-\subseteq & 
\colim \ZZtr(\iP_λ \times_{\iS} \iX / \iP_λ) \ar[d]^\cong \\
\ZZtr(\sP \times_{\sS} \sX / \sP) \ar[r]^-\subseteq & \ZZtr(\iP \times_{\iS} \iX / \iP).
}
\]
This finishes the proof.
\end{proof}


\chapter{Sheaves with transfers} \label{chap:shTr}

\section{Sheafification of presheaves with transfers}

\begin{defi}[Modulus sheaves with transfers]
Let $\sS$ be a qc separated modulus pair with Noetherian interior. 
Let $\ulMtau$ be a topology on $\ulMSch_\sS$. Then 
\[ \Shv_{\ulMtau}(\ulMCor_\sS) \]%
\marginpar{\fbox{$\Shv_{\ulMtau}(\ulMCor_\sS)$}}%
\index[not]{$\Shv_{\ulMtau}(\ulMCor_\sS)$}%
is the full subcategory of $\PSh(\ulMCor_\sS)$ of those presheaves whose restriction to $\ulMSch_\sS$ is a ${\ulMtau}$-sheaf. For a topology on $\ulMSm_\sS$, we make the analogous definition for%
\marginpar{\fbox{$\Shv_{\ulMtau}(\ulMSmCor_\sS)$}}%
\index[not]{$\Shv_{\ulMtau}(\ulMSmCor_\sS)$}%
\[ \Shv_{\ulMtau}(\ulMSmCor_\sS). \]
\end{defi}

\begin{prop} \label{prop:CechCycles}
Let $\sS$ be a qc separated modulus pair with Noetherian interior, 
let ${\ulMtau} = \ulMNis$ or $\ulMet$, 
and $\sV \to \sX$ is a ${\ulMtau}$-covering in $\ulMSch_\sS$. Then the sequence
\[ \dots 
\to \mc(\sV\times_\sX \sV /\sS) 
\to \mc(\sV/\sS) 
\to \mc(\sX/\sS) 
\to 0 \]
in $\PSh(\ulMSch_\sS)$ becomes exact after ${\ulMtau}$-sheafification.
\end{prop}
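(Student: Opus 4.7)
The plan is to verify exactness at every fibre functor of the $\ulMtau$-topology by applying Prop.~\ref{prop:conservativeLocalPairs} to each cohomology presheaf of the complex. This reduces the statement to showing, for every $\ulPtau$-local $\uS$-local modulus pair $\sP \in \ulPSCH_\sS$, the exactness of
\[ \cdots \to \mc(\sV' \times_{\sX'} \sV' / \sP) \to \mc(\sV' / \sP) \to \mc(\sX' / \sP) \to 0, \]
where $\sX' := \sX \ambtimes_\sS \sP$ and $\sV' := \sV \ambtimes_\sS \sP$, and where the pulled-back $\sV' \to \sX'$ remains an $\ulMtau$-covering by Prop.~\ref{prop:coveringAmbtimes}.

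The engine of the proof will be the decomposition Lem.~\ref{lemm:finiteulMNislocal}: any ambient minimal morphism $\sZ \to \sP$ with $\hZ \to \hP$ proper and $\iZ \to \iP$ finite is automatically ambient finite and decomposes, up to abstract admissible blowup of the source, into a finite disjoint union of $\ulMtau$-local modulus pairs. I would apply this as follows. Fix an integral component $Z$ of a cycle in any term $\mc(\sV' \times_{\sX'} \cdots \times_{\sX'} \sV' / \sP)$ of the complex; the scheme-theoretic closure $\ol{Z}$ in the corresponding total space, equipped with the restricted modulus, gives an ambient minimal closed immersion $\sZ \to \sV' \times_{\sX'} \cdots \times_{\sX'} \sV'$. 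Its structure morphism $\hZ \to \hP$ is proper by left properness, $\iZ \to \iP$ is finite by dimension zero, and the admissibility condition of Def.~\ref{defn:presheavesOfRelCyc}, in the reformulation of Prop.~\ref{prop:KMSYadmissibility}, ensures that after passing to the normalisation and an abstract admissible blowup the composite $\sZ \to \sP$ is itself minimal. Lem.~\ref{lemm:finiteulMNislocal} then decomposes $\sZ = \sqcup_j \sZ_j$ into $\ulMtau$-local pieces.

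For the surjection $\mc(\sV'/\sP) \to \mc(\sX'/\sP)$, given an integral component $Z_i$ with decomposition $(\sZ_{ij})_j$ obtained as above, the pulled-back covering $\sV' \ambtimes_{\sX'} \sZ_{ij} \to \sZ_{ij}$ admits a section by $\ulMtau$-locality of $\sZ_{ij}$, and its graph provides, through Lem.~\ref{lem:graphfuncmod}, an element of $\mc(\sV'/\sP)$ mapping to $Z_i$ up to the multiplicities introduced by the decomposition of $\hZ_i$. Exactness in higher degrees would be obtained by the classical Čech contracting homotopy: the chosen sections on each piece of each support assemble into an insertion operator in an additional coordinate, and the simplicial identities reduce, component by component in the support, to the identity on a given cycle. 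At each stage, Lem.~\ref{lemm:cycSourceAdmBu} and Prop.~\ref{prop:cycTarAdmBu} guarantee that the admissible blowups performed along the way do not alter the groups $\mc$.

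The principal obstacle will be the coordination of admissible blowups across the finitely many integral components of a cycle and across the finitely many terms of the Čech complex appearing in a given relation. One must perform the decompositions of Lem.~\ref{lemm:finiteulMNislocal} coherently, choose the sections on all $\sZ_{ij}$ compatibly with the face maps of the Čech complex, and verify that the resulting insertion operator is a genuine chain homotopy at the level of cycle groups. That the constructed lifts land in $\mc(-/\sP)$ and not merely in the ambient group $c_{equi}(\iV'/\iP, 0)$ is automatic, since they arise as graphs of morphisms of $\ulMtau$-local modulus pairs over $\sP$ and thus are admissible and left proper by construction.
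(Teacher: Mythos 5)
Your strategy is the same as the paper's in all essentials: reduce to a $\ulPtau$-local $\uS$-local base via Prop.~\ref{prop:conservativeLocalPairs} (together with Cor.~\ref{coro:fibCorOk}, which you use implicitly to identify the stalk with the cycle group over the limit pair), then use Lem.~\ref{lemm:finiteulMNislocal} to produce sections of the covering over finite pieces and contract the \v{C}ech complex. The one real difference is organizational, and it is exactly where your proposal stops short. The paper does \emph{not} decompose cycle by cycle: it first writes the entire augmented complex as a filtered colimit over closed subschemes $\ol{Z} \subseteq \ol{X}$ with $Z^\circ \to S^\circ$ finite and $\ol{Z} \to \ol{S}$ proper (every left proper relative $0$-cycle is supported on such a $\sZ$, and by Prop.~\ref{prop:KMSYadmissibility} admissibility is unchanged by this restriction). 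Since filtered colimits are exact, one may then assume $\sX$ itself is finite over the local base, whence by Lem.~\ref{lemm:finiteulMNislocal} it is a finite disjoint union of $\ulMtau$-local pairs and $\sV \to \sX$ has a single global section $s$; the homotopy is then just proper pushforward along $s \times_{\sX} \id : \sV^{\times_{\sX} k} \to \sV^{\times_{\sX} k+1}$, and all coherence across \v{C}ech degrees is automatic from the simplicial identities.

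The gap in your version is precisely the ``principal obstacle'' you name and do not resolve: choosing the decompositions and sections component by component and degree by degree, you never verify that the resulting insertion operators commute with the face maps, i.e.\ that they assemble into a chain homotopy on the cycle groups. This is not a cosmetic point --- without a single section defined uniformly on a support containing all components appearing in a given relation, the ``homotopy'' is not even a well-defined map. The repair is exactly the paper's move: any finite collection of cycles witnessing one exactness relation is supported on a single $\sZ$ as above, so restrict the whole complex to $\sZ$ first and only then decompose and choose a section. Two smaller remarks: your worry about ``multiplicities introduced by the decomposition'' is vacuous, since an integral component has irreducible interior and the decomposition of Lem.~\ref{lemm:finiteulMNislocal} is indexed by connected components of the interior, so each $\sZ_i$ yields a single local piece and the graph of the section pushes forward to $Z_i$ with multiplicity one; and you are right that minimality of $\sZ \to \sP$ (needed to invoke Lem.~\ref{lemm:finiteulMNislocal}) comes from admissibility of the cycle via Prop.~\ref{prop:KMSYadmissibility} after an abstract admissible blowup, which Lem.~\ref{lemm:cycSourceAdmBu} lets you perform harmlessly.
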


\begin{rema} \label{rema:noFppf}
The $\ulMrqfh$ 
version of this proposition follows from the fact that locally every correspondence is a morphism, cf.~Lem.~\ref{lemm:CechCyclesQfh} below. 

The proof also works in the $\ulMfppf$-case modulo the fact that we may have to deal with $\ulMfppf$-local modulus pairs which do not have Noetherian interior. So we cannot just blindly assume the base is $\ulMfppf$-local, we should instead work with the pro-system.
\end{rema}

\begin{proof}
We follow the proof of \cite[Prop.3.1.3]{VoeTri}. %
By Prop.\ref{prop:conservativeLocalPairs}, Rem.~\ref{rema:canAssumeNoethInt}, Cor.~\ref{coro:fibCorOk} it suffices to consider the case where $\sS$ is a $\ulPNis$-local $\uS$-local modulus pair. %
Up to isomorphism in $\ulMSch$, we can assume $\sX \to \sS$ is ambient. Now consider the filtered system of closed subschemes $\ol{Z} \subseteq \ol{X}$ such that $Z^\circ \to S^\circ$ is finite and $\ol{Z} \to \ol{S}$ is proper. The sequence in the statement is the filtered colimit of the corresponding subsequences ranging over such $\sZ$ (with $\sV$ replaced with $\sZ \times_\sX \sV$, clearly). So we can assume that $X^\circ \to S^\circ$ is finite and $\ol{X} \to \ol{S}$ is proper. By Lem.~\ref{lemm:finiteulMNislocal}, $\sX$ is a disjoint union of $\ulMtau$-local modulus pairs. As such, $\sV \to \sX$ admits a section $\sX \to \sV$. It follows that the sequence 
\[ \dots 
\to \mc(\sV \times_{\sX} \sV /\sS) 
\to \mc(\sV/\sS) 
\to \mc(\sX/\sS) 
\to 0 \]
is exact as a sequence of $\ulMtau$-sheaves on $\Sch_\sS$; an explicit chain homotopy is induced by $s \times_{\sX} \id: \sV^{\times_{\sX} k} \to \sV^{\times_{\sX} k+1}$ where $s$ is the section of $\sV \to \sX$.
\end{proof}

\begin{lemm} \label{lemm:CechCyclesQfh}
Let $\sS$ be a qc separated modulus pair with Noetherian interior, 
and $\sV \to \sX$ is a $\ulMrqfh$-covering in $\ulMSch_\sS$. Then the sequence
\[ \dots 
\to \ZZtr(\sV\times_\sX \sV /\sS) 
\to \ZZtr(\sV/\sS) 
\to \ZZtr(\sX/\sS) 
\to 0 \]
is exact as a sequence of $\ulMrqfh$-sheaves on $\ulMSch_\sS$.
\end{lemm}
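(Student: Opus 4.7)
The strategy, as suggested by Remark~\ref{rema:noFppf}, is to reinterpret the Čech complex of cycle presheaves as the free abelian Čech complex associated to an epimorphism of representable sheaves. Concretely, the plan will use that $\ulMrqfh$-sheafification kills the distinction between relative cycles and $\ZZ$-linear combinations of morphisms, so the problem reduces to a classical Čech resolution statement in any topos.

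The first step is to apply Proposition~\ref{prop:Zqfhtrmodulus}: for every separated finite type ambient morphism $\sY \to \sS$ one has a canonical identification $\ZZtr(\sY/\sS)_{\ulMrqfh} \cong \ZZ_{\ulMrqfh}(\sY/\sS)$. Since $\ulMrqfh$-sheafification is exact, it is enough to show that the sheafified complex
\[ \dots \to \ZZ_{\ulMrqfh}(\sV\times_\sX\sV/\sS) \to \ZZ_{\ulMrqfh}(\sV/\sS) \to \ZZ_{\ulMrqfh}(\sX/\sS) \to 0 \]
is exact. Next, I would observe that $\ZZ_{\ulMrqfh}(\sY/\sS)$ is nothing other than the free abelian $\ulMrqfh$-sheaf $\ZZ[h_\sY]_{\ulMrqfh}$ generated by the representable $h_\sY = \hom_{\ulMSch_\sS}(-,\sY)$: the definition of $\ZZ(\sY/\sS)$ as a sum over connected components of the base only differs from the naive free abelian presheaf on $h_\sY$ by identifications that become true locally for the Zariski (hence $\ulMrqfh$) topology. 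Moreover, by Theorem~\ref{thm:pullback} combined with the Yoneda embedding and the left exactness of sheafification, the iterated categorical fibre products $\sV^{\times_\sX k}$ in $\ulMSch_\sS$ induce the corresponding fibre products of representable sheaves, so $h_{\sV^{\times_\sX k}} = (h_\sV)^{\times_{h_\sX} k}$ as $\ulMrqfh$-sheaves of sets.

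The final step invokes a general fact in topos theory: if $A \to B$ is an epimorphism of sheaves of sets in any topos, then the augmented Čech complex
\[ \dots \to \ZZ[A\times_B A\times_B A] \to \ZZ[A\times_B A] \to \ZZ[A] \to \ZZ[B] \to 0 \]
of free abelian sheaves is exact. This is verified stalkwise, where a set-theoretic section of the surjection $A\to B$ provides an explicit contracting homotopy on the normalized chain complex. Since $\sV \to \sX$ is a $\ulMrqfh$-covering, the induced map $h_\sV \to h_\sX$ is an epimorphism of $\ulMrqfh$-sheaves, and applying the general fact with $A = h_\sV$, $B = h_\sX$ yields the desired exactness.

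I expect the only subtle point to be the identification in the second step between $\ZZ_{\ulMrqfh}(\sY/\sS)$ and $\ZZ[h_\sY]_{\ulMrqfh}$, namely the bookkeeping of connected components in Definition~\ref{defi:ZZsXsS}. This is harmless because any Zariski decomposition into clopen pieces is a $\ulMrqfh$-covering, so both presheaves have the same $\ulMrqfh$-sheafification; nevertheless writing this verification out cleanly is the main work.
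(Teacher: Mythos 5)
Your proposal is correct and follows essentially the same route as the paper: both proofs reduce via Proposition~\ref{prop:Zqfhtrmodulus} to the identification $\ZZtr(\sY/\sS)_{\ulMrqfh} \cong \ZZ_{\ulMrqfh}(\sY/\sS)$ and then invoke the standard exactness of the free abelian \v{C}ech complex of an epimorphism of sheaves of sets, established by a (stalkwise or local) section providing a contracting homotopy. Your write-up merely makes explicit the bookkeeping (connected components, compatibility of $h_{\sV^{\times_\sX k}}$ with fibre products of representables) that the paper leaves implicit.
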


\begin{proof}
In Prop.~\ref{prop:Zqfhtrmodulus} we saw that there are isomorphisms of sheaves 
\[ \ZZ_{\ulMrqfh}(\sY/\sS) \cong \ZZtr(\sY/\sS)_{\ulMrqfh} \]
for any $\sY \to \sS \in \Sch_\sS$, so the sequence in the statement is the complex associated to the image of \v{C}ech-hypercover of $\sV \to \sX$, which is well-known to be exact. For example, there exists a section $\ZZ_{\ulMrqfh}(\sX/\sS) \to \ZZ_{\ulMrqfh}(\sV/\sS)$ from which a contracting homotopy can be built as in the proof of Prop.~\ref{prop:CechCycles}.
\end{proof}

\begin{lemm} \label{lemm:canSquareCyc}
Let $\sS$ be a qc separated modulus pair with Noetherian interior, 
and suppose that $\sY \to \sX$ is a correspondence of finite type $\sS$-modulus pairs, and $\sV \to \sX$ is a $\ulMtau$ covering where $\ulMtau = \ulMNis$, $\ulMet$ or $\ulMrqfh$. Then there exists a covering $\sW \to \sY$ and a correspondence $\sW \xrightarrow{\beta} \sV$ making a commutative square
\[ \xymatrix{
\sW \ar[r] \ar[d] & \sV \ar[d] \\
\sY \ar[r] & \sX .
} \]
\end{lemm}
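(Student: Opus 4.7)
The plan is to deduce this lemma essentially as a translation of Proposition \ref{prop:CechCycles} (for $\ulMtau = \ulMNis, \ulMet$) and Lemma \ref{lemm:CechCyclesQfh} (for $\ulMtau = \ulMrqfh$) into the language of correspondences. The key point is that the data of a correspondence $\sY \to \sX$ is the same as a section of the presheaf $\mc(\sX/\sS)$ over $\sY$; namely,
\[ \hom_{\ulMCor_\sS}(\sY, \sX) \;=\; \mc(\sY \times_\sS \sX / \sY) \;=\; \mc(\sX/\sS)(\sY), \]
and similarly for the other hom-groups. Moreover, under this identification, the restriction map $\mc(\sX/\sS)(\sY) \to \mc(\sX/\sS)(\sW)$ along a morphism $\sW \to \sY$ corresponds to composition (in $\ulMCor_\sS$) with the graph cycle $[\sW \to \sY]$, and the map $\mc(\sV/\sS) \to \mc(\sX/\sS)$ of presheaves induced by $\sV \to \sX$ corresponds, on sections over $\sW$, to composition with $[\sV \to \sX]$.

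With this dictionary in place, I would argue as follows. The cited results show that the surjection $\mc(\sV/\sS) \to \mc(\sX/\sS)$ is an epimorphism of $\ulMtau$-sheaves on $\ulMSch_\sS$. By the standard sheaf-epimorphism criterion, this means that for the given section $\alpha \in \mc(\sX/\sS)(\sY) = \hom_{\ulMCor_\sS}(\sY,\sX)$ there exists a $\ulMtau$-covering $\sW \to \sY$ and a section $\beta \in \mc(\sV/\sS)(\sW) = \hom_{\ulMCor_\sS}(\sW,\sV)$ whose image in $\mc(\sX/\sS)(\sW)$ agrees with $\alpha|_\sW$. Translated back, this reads
\[ [\sV \to \sX] \circ \beta \;=\; \alpha \circ [\sW \to \sY] \qquad \text{in } \hom_{\ulMCor_\sS}(\sW, \sX), \]
which is exactly the commutativity of the desired square.

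The main step that requires care, and which I expect to be the only non-formal point, is verifying that the correspondence-theoretic composition really does agree with the functoriality of the presheaves $\mc(-/\sS)$: that for $\sW \to \sY$ in $\ulMSch_\sS$ the pullback map $\mc(\sY\times_\sS \sX/\sY) \to \mc(\sW \times_\sS \sX/\sW)$ coincides with post-composition by $[\sW \to \sY]$ in $\ulMCor_\sS$, and that the map $\mc(\sW \times_\sS \sV/\sW) \to \mc(\sW \times_\sS \sX/\sW)$ coincides with pre-composition by $[\sV \to \sX]$. Both of these follow from the very definition of composition in $\ulMCor_\sS$ as established in Corollary \ref{coro:corOk} and Proposition \ref{prop:cycFun}, together with the fact that composition with the graph of an ambient morphism reduces to the usual pullback/pushforward of relative cycles. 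Once these naturalities are in hand, the lemma is a direct application of the previous two results, with no further geometry required.
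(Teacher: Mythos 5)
Your proposal is correct and follows essentially the same route as the paper, whose proof simply cites Prop.~\ref{prop:CechCycles} and Lem.~\ref{lemm:CechCyclesQfh}: the sheafified surjection $\mc(\sV/\sS)\to\mc(\sX/\sS)$ lets you lift $\alpha|_{\sW}$ after passing to a covering, and the dictionary between sections of $\mc(\sX/\sS)$ and correspondences gives the commutative square. The only point worth keeping in mind is that a local lift a priori lives in the sheafification, so one further refinement (plus $\tau$-separatedness of $\ZZtr(\iX/\iS)$) is needed to land in the presheaf itself; this is routine and does not affect the argument.
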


\begin{proof}
This follows from Prop.~\ref{prop:CechCycles} and Lem.~\ref{lemm:CechCyclesQfh} or it could be proven directly in the same ways as those propositions are proven.
\end{proof}

\begin{thm} \label{theo:sheafificationHasTransfers}
Let $\sS$ be a qc separated modulus pair with Noetherian interior. 
Choose a $(\ast)$ in the table below and let $\sS, \sC$ be the first entry in the row and $\tau$ the first entry in that column. %
\[
\begin{array}{rcccccc}
{_\mathscr{S}}, &{_\mathscr{C}} \diagdown{^{τ}}    & \Zar & \Nis & \et & \fppf & \rqfh \\
\Sch_\sS, &\Cor_\sS             & (\ast)  & (\ast)  & (\ast) &    & (\ast)   \\ 
\Sm_\sS, &\SmCor_\sS              & (\ast)  & (\ast)  & (\ast) &       &       \\  
\end{array}
\]
The forgetful functor $\Shv_{\ulMtau}(\ulM\mathscr{C}) \to \PSh(\ulM\mathscr{C})$ admits a left adjoint $a^{tr}$ fitting into a commutative square
\[ \xymatrix{
\PSh(\ulM \mathscr{C}) \ar[r]^-{a^{tr}} \ar[d]_{forget} & \Shv_{\ulMtau}(\ulM\mathscr{C}) \ar[d]^{forget} \\
\PSh(\ulM \mathscr{S}) \ar[r]_-a & \Shv_{\ulMtau}(\ulM\mathscr{S})  \\
} \]
\end{thm}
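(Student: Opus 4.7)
The plan is to follow Voevodsky's classical argument that sheafification preserves transfers, adapted to the modulus setting, with the main technical inputs being Lemma~\ref{lemm:canSquareCyc} (lifting correspondences through coverings) and Proposition~\ref{prop:CechCycles} (exactness of the \v{C}ech complex of representable transfers after $\ulMtau$-sheafification). Given $F \in \PSh(\ulM\sC)$, I will define $a^{tr}F$ to have underlying $\ulMtau$-sheaf equal to $aF|_{\ulM\mathscr{S}}$ and then equip it with transfer maps. Once this is done the adjunction is formal: for any $G \in \Shv_{\ulMtau}(\ulM\sC)$, a morphism $F \to G$ in $\PSh(\ulM\sC)$ determines a morphism $aF \to G$ of underlying sheaves by the ordinary adjunction, and one checks that this is automatically compatible with transfers because both presheaves-with-transfers structures are built from the same data.

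To build the transfer action, represent sections of $aF$ using the two-fold $(-)^+$-construction: a section over $\sX$ is (locally) a compatible family $t \in \check{H}^0(\sV/\sX, F) = \ker\bigl(F(\sV) \rightrightarrows F(\sV\times_\sX \sV)\bigr)$ for some $\ulMtau$-covering $\sV \to \sX$. Given a finite correspondence $\alpha : \sY \to \sX$ in $\ulM\sC$, invoke Lemma~\ref{lemm:canSquareCyc} to obtain a $\ulMtau$-covering $\sW \to \sY$ and a correspondence $\beta : \sW \to \sV$ with $(\sV \to \sX)\circ \beta = \alpha \circ (\sW \to \sY)$ in $\ulM\sC$. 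Define $\alpha^*([t]) \in a^{tr}F(\sY)$ to be the class represented by $\beta^*(t) \in F(\sW)$. One then checks functoriality in $\alpha$ (for composition and identity) by taking common refinements of the chosen coverings on the source.

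The hard step, as always, is well-definedness, i.e.\ independence of the choice of lifting $(\sW, \beta)$ and of the representative $t$. Two choices $(\sW_1, \beta_1)$, $(\sW_2, \beta_2)$ can be pulled back to a common refinement $\sW \to \sY$, yielding two correspondences $\beta_1, \beta_2 \in \hom_{\ulMCor_\sS}(\sW, \sV)$ which have the same image in $\hom_{\ulMCor_\sS}(\sW, \sX)$. By Proposition~\ref{prop:CechCycles} (for $\ulMtau = \ulMNis, \ulMet$) or Lemma~\ref{lemm:CechCyclesQfh} (for $\ulMtau = \ulMrqfh$), the kernel of $\ZZtr(\sV/\sS) \to \ZZtr(\sX/\sS)$ is covered, after refinement of $\sW$, by sections of $\ZZtr(\sV\times_\sX \sV /\sS)$; thus $\beta_1 - \beta_2 = d_0\gamma - d_1\gamma$ for some correspondence $\gamma : \sW' \to \sV\times_\sX\sV$ over some refinement $\sW' \to \sW$. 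Applying this to $t$ and using that $t$ lies in the equaliser ($d_0^* t = d_1^* t$) shows $\beta_1^*(t) = \beta_2^*(t)$ in $aF(\sW')$. The $\Zar$-case of the top row and all cases of the bottom row reduce to the $\Nis$-case via the inclusion of topologies. This step, relying on the exactness established in Proposition~\ref{prop:CechCycles}, is the crux of the argument and is the only place where the restriction to $\ulMtau \in \{\ulMZar, \ulMNis, \ulMet, \ulMrqfh\}$ enters (cf.~Remark~\ref{rema:noFppf} for the fppf obstruction). The commutative square in the statement follows tautologically since, by construction, the underlying sheaf of $a^{tr}F$ is $aF$.
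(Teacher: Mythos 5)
Your proposal is correct and follows essentially the same route as the paper: the paper's proof simply invokes Lemma~\ref{lemm:canSquareCyc} and then cites Voevodsky's argument (\cite[Lem.3.1.6]{VoeTri}, alternatively \cite[Lem.10.3.7]{CD19}), which is exactly the construction you have written out, with Proposition~\ref{prop:CechCycles} and Lemma~\ref{lemm:CechCyclesQfh} supplying the well-definedness step. Your write-up is in effect an expansion of the citation the paper leaves implicit, so there is nothing to add.
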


\begin{rema}
The result is true for $τ = \fppf$ as well, but we do not include it for the reason explained in Remark~\ref{rema:noFppf}.
\end{rema}

\begin{proof}
With Lem.~\ref{lemm:canSquareCyc} in hand, the same proof as \cite[Lem.3.1.6]{VoeTri} works. Alternatively, the proof of \cite[Lem.10.3.7]{CD19} also works.
\end{proof}

\begin{coro}[{Cf.\cite[Prop.10.3.9]{CD19}}]
In the notation and assumptions of Thm.\ref{theo:sheafificationHasTransfers}, the category $\Shv_{\ulMtau}(\ulM\mathscr{C})$ is a Grothendieck abelian category, and the functor $\Shv_{\ulMtau}(\ulM\mathscr{C}) \to \Shv_{\ulMtau}(\ulM\mathscr{S})$ commutes with all limits and colimits.
\end{coro}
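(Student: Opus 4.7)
The plan is to follow the standard argument, modelled on \cite[Prop.10.3.9]{CD19}. We already have from Thm.~\ref{theo:sheafificationHasTransfers} a sheafification-with-transfers functor $a^{tr}$ sitting in a commutative square with the classical sheafification $a$, and a forgetful functor $u : \Shv_{\ulMtau}(\ulM\mathscr{C}) \to \Shv_{\ulMtau}(\ulM\mathscr{S})$ on the right. The proof will leverage the four-way interaction: forgetful to presheaves on both sides, and the sheafifications $a$, $a^{tr}$.

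First I would establish the Grothendieck abelian structure. The category $\ulM\mathscr{C}_\sS$ is additive (correspondences form a $\ZZ$-linear category), so additive presheaves $\PSh(\ulM\mathscr{C})$ form a Grothendieck abelian category with generating family the representables $\ZZtr(\sX/\sS)$ for $\sX \in \ulM\mathscr{C}$. Kernels in $\Shv_{\ulMtau}(\ulM\mathscr{C})$ are computed pointwise, since a pointwise kernel of a morphism between sheaves is again a sheaf and the addition of transfers is automatic. Cokernels are obtained by applying $a^{tr}$ to the presheaf cokernel; $a^{tr}$ is additive and a left adjoint, so it preserves all colimits. Generators on the sheaf side are the images $a^{tr}\ZZtr(\sX/\sS)$. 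For the AB5 axiom (exactness of filtered colimits), one computes filtered colimits as $a^{tr}$ of the pointwise colimit, and uses that pointwise filtered colimits in $\PSh(\ulM\mathscr{C})$ are exact; the only non-formal input is that $a^{tr}$ is exact, addressed in the last paragraph.

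Next I would verify the commutation properties of $u$. Limits in both $\Shv_{\ulMtau}(\ulM\mathscr{C})$ and $\Shv_{\ulMtau}(\ulM\mathscr{S})$ are computed as in the corresponding presheaf categories (both forgetful-to-presheaves functors are right adjoints, so preserve limits, and presheaf limits are pointwise). Since $u$ on the level of presheaves is just restriction along $\ulM\mathscr{S} \to \ulM\mathscr{C}$ — i.e., evaluation at the same modulus pairs — it commutes with pointwise limits. Hence $u$ commutes with limits. For colimits, a colimit in $\Shv_{\ulMtau}(\ulM\mathscr{C})$ is $a^{tr}$ of the pointwise presheaf colimit, and similarly on the $\mathscr{S}$-side with $a$. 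The commutative square of Thm.~\ref{theo:sheafificationHasTransfers} says $u \circ a^{tr} = a \circ u_{\PSh}$, so
\[
u(\mathrm{colim}_i F_i) \;=\; u\bigl(a^{tr}(\mathrm{colim}^{\PSh}_i F_i)\bigr) \;=\; a\bigl(u_{\PSh}\mathrm{colim}^{\PSh}_i F_i\bigr) \;=\; a(\mathrm{colim}^{\PSh}_i uF_i) \;=\; \mathrm{colim}_i u F_i,
\]
which establishes commutation with colimits.

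The main obstacle — and really the only non-formal step — is the exactness of $a^{tr}$. As a left adjoint it automatically preserves cokernels, so only left exactness (preservation of kernels) must be verified. The argument is: the forgetful functor $u : \Shv_{\ulMtau}(\ulM\mathscr{C}) \to \Shv_{\ulMtau}(\ulM\mathscr{S})$ is conservative and commutes with kernels (by the previous paragraph), so it suffices to check left exactness after applying $u$. Using the commutative square, $u \circ a^{tr} = a \circ u_{\PSh}$, and both $u_{\PSh}$ (evaluation, pointwise) and $a$ (classical sheafification) are exact, so the composite is exact, giving exactness of $a^{tr}$. Together with the previous paragraphs, this completes the verification that $\Shv_{\ulMtau}(\ulM\mathscr{C})$ is Grothendieck abelian and that $u$ is continuous and cocontinuous.
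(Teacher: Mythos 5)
Your argument is correct and is exactly the standard deduction the paper intends: the corollary is stated without proof, deferring to \cite[Prop.10.3.9]{CD19}, and everything follows formally from the commutative square of Thm.~\ref{theo:sheafificationHasTransfers} (existence of $a^{tr}$ compatible with $a$), just as you lay out. The one step you assert without justification — conservativity of the forgetful functor $u$ — is immediate from its faithfulness (the graph functor is essentially surjective, indeed the identity on objects) together with the exactness of $u$ you have already established, so there is no gap.
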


\section{qfh-sheaves with transfers}

Recall that the qfh topology on Noetherian schemes is generated by the Nisnevich topology and finite surjective morphisms.

\begin{thm} \label{thm:theo:qfhSchCorRat}
Let $\sS$ be a qc separated modulus pair with Noetherian interior. 
The canonical forgetful functor
\[ \Shv_{\ulMrqfh}(\ulMCor_\sS) \to \Shv_{\ulMrqfh}(\ulMSch_\sS) \]
is an equivalence of categories.
\end{thm}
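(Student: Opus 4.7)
The proof will mirror the classical Suslin--Voevodsky statement that every $\qfh$-sheaf of abelian groups acquires a canonical transfer structure. The essential input is Prop.~\ref{prop:Zqfhtrmodulus}, which gives a natural isomorphism
\[ \ZZ_{\ulMrqfh}(\sY/\sX) \;\cong\; \ZZtr(\sY/\sX)_{\ulMrqfh}, \]
together with its \v{C}ech-exact consequence Lem.~\ref{lemm:CechCyclesQfh} and the square-filling Lem.~\ref{lemm:canSquareCyc}. Combined, these say that every modulus correspondence is $\ulMrqfh$-locally representable as a $\ZZ$-linear combination of honest morphisms, and any two such local representatives agree after pulling back along a further covering.

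First I will construct a candidate quasi-inverse
\[ T : \Shv_{\ulMrqfh}(\ulMSch_\sS) \longrightarrow \Shv_{\ulMrqfh}(\ulMCor_\sS). \]
Given an abelian $\ulMrqfh$-sheaf $F$ and a correspondence $\alpha \in \hom_{\ulMCor_\sS}(\sX,\sY) = \mc(\sX{\times_\sS}\sY / \sX)$, I would define $\alpha^* : F(\sY) \to F(\sX)$ by choosing an $\ulMrqfh$-covering $\sU \to \sX$ on which $\alpha$ is represented by $\sum n_i \phi_i$ with $\phi_i : \sU \to \sX{\times_\sS}\sY$, and setting $\alpha^*$ to be the unique lift through the equaliser
\[ F(\sX) \;\hookrightarrow\; F(\sU) \rightrightarrows F(\sU{\times_\sX}\sU) \]
of the map $\sum n_i (\pr_\sY \phi_i)^* : F(\sY) \to F(\sU)$. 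Existence of the lift uses the $\ulMrqfh$-sheaf property of $F$; that the two restrictions to $F(\sU{\times_\sX}\sU)$ coincide on $\sum n_i (\pr_\sY \phi_i)^*$ follows from Lem.~\ref{lemm:CechCyclesQfh} applied inside $\ZZ_{\ulMrqfh}(\sX{\times_\sS}\sY/\sX)$: the two pulled-back cycles differ by something in the image of the first \v{C}ech differential, which kills any qfh-sheaf. Independence of the choice of covering $\sU$ is then immediate by passing to a common refinement.

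Second I will verify that these operators satisfy the presheaf axioms on $\ulMCor_\sS$: identities act as identities (trivial), and composition is preserved. For composition, given $\sX \xrightarrow{\alpha} \sY \xrightarrow{\beta} \sZ$ with local representatives $\alpha|_\sU = \sum n_i \phi_i$ and $\beta|_\sV = \sum m_j \psi_j$, I would use Lem.~\ref{lemm:canSquareCyc} to refine $\sU$ to a covering $\sU' \to \sU$ with $\sU$-morphisms $\phi_i' : \sU' \to \sV \times_\sY (\sX{\times_\sS}\sY)$ compatible with $\phi_i$; composing with $\psi_j$ then produces simultaneous local morphism-representatives for $\alpha$ and for the cycle-theoretic composition $\beta \circ \alpha$ of \eqref{eq:compOne}. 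Matching these with the definition of $\beta \circ \alpha$ via $\mathrm{Cor}_{-/-}$ in Cor.~\ref{coro:corOk} yields $(\beta \circ \alpha)^* = \alpha^* \circ \beta^*$.

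Third I will check that $T$ and the forgetful functor $U$ are mutually quasi-inverse. The composite $U \circ T = \id$ is immediate because $T$ does not alter the underlying presheaf on $\ulMSch_\sS$. For $T \circ U = \id$, given $F \in \Shv_{\ulMrqfh}(\ulMCor_\sS)$, the transfers produced by $T \circ U$ agree with the original ones on graphs of morphisms by functoriality of the graph functor of Lem.~\ref{lem:graphfuncmod}; additivity in $\alpha$ and qfh-separatedness of $F$ then force agreement on every correspondence.

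The main obstacle will be step two, namely the compatibility of the cycle-theoretic composition in $\ulMCor_\sS$ with the composition of the locally-defined transfer operators. The delicate point is that the pullback $\phi_i^{\circledast}\beta$ appearing in \eqref{eq:compOne} must be translated into a linear combination of actual $\sU'$-morphisms in a way that is compatible with the chosen local representative $\sum m_j \psi_j$ of $\beta$; here one exploits that the components of $\beta|_\sV$, being finite over $\sV$, are controlled qfh-locally by the graphs $\psi_j$, so that cycle-theoretic pullback along $\phi_i$ reduces qfh-locally to scheme-theoretic pullback of graphs, which is already known to be compatible with composition in $\ulMSch_\sS$.
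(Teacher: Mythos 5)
Your proposal is correct in strategy, but it takes a genuinely different route from the paper. You construct the quasi-inverse by hand: represent a correspondence $\ulMrqfh$-locally by a $\ZZ$-linear combination of morphisms, define the transfer operator by descent through the \v{C}ech equaliser, and then verify unitality, composition-compatibility, and that the two functors are mutually inverse. This is the classical Suslin--Voevodsky argument that qfh-sheaves carry canonical transfers, transplanted to the modulus setting, and all the inputs you cite (Prop.~\ref{prop:Zqfhtrmodulus}, Lem.~\ref{lemm:CechCyclesQfh}, Lem.~\ref{lemm:canSquareCyc}, the graph functor) are indeed available. The paper instead argues formally: since $\gamma\colon \ulMSch_\sS \to \ulMCor_\sS$ is essentially surjective, the forgetful functor $\gamma_*$ is faithful, and the whole theorem reduces to showing that the unit $\id \to \gamma_*\gamma^*$ of the left-Kan-extension adjunction becomes an isomorphism after $\ulMrqfh$-sheafification; as both adjoints preserve colimits this reduces to representables, i.e.\ to the single statement that $(\ZZ\sX)_{\ulMrqfh} \to (\mc\sX)_{\ulMrqfh}$ is an isomorphism, which is essentially Prop.~\ref{prop:Zqfhtrmodulus} (injectivity from exactness of sheafification, surjectivity because every correspondence is $\ulMrqfh$-locally a morphism, via SV Thm.~4.2.12 and Prop.~\ref{prop:rqfhExtend}). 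The trade-off is exactly at your step two: the paper's categorical framing makes the transfer structure on $\gamma^*F$ automatic, so the painful compatibility of cycle-theoretic composition with the locally-defined operators never has to be checked; your route must carry out that verification, which is where the real work lies, though your reduction to graphs via Lem.~\ref{lemm:canSquareCyc} is the right way to do it. Two small corrections to your write-up: the agreement of the two restrictions to $F(\sU\times_\sX\sU)$ is most cleanly seen not from Lem.~\ref{lemm:CechCyclesQfh} but from the injectivity of $\ZZ(-) \to \ZZtr(-)$ over reduced total spaces (Lem.~\ref{lem:graphfuncmod}) after refining so that $\sU\times_\sX\sU$ is reduced; and you should note explicitly that local representability of $\alpha$ by morphisms uses Prop.~\ref{prop:Zqfhtrmodulus} applied over $\sX$, which requires $\iX$ Noetherian --- this holds because $\sX$ is of finite type over $\sS$ and $\iS$ is Noetherian.
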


\begin{proof}
Since $\gamma: \ulMSch_\sS \to \ulMCor_\sS$ is essentially surjective, the forgetful functor 
\[ \gamma_* :\PSh(\ulMCor_\sS) \to \PSh(\ulMSch_\sS) \]
is faithful on presheaves, and therefore faithful on sheaves as well. So we just need to prove that the forgetful functor is essentially surjective and full. To prove this it suffices to show that the unit of the canonical adjunction $\id \to \gamma_* \gamma^*$ becomes an isomorphism after $\ulMrqfh$-sheafification.

Since both $\gamma^*$ and $\gamma_*$ preserve colimits and every sheaf is a colimit of representables, it suffices to show that for every $\sX$ the canonical morphism $(\ZZ \sX)_{\ulMrqfh} \to (\mc \sX)_{\ulMrqfh}$ is an isomorphism.

It is a monomorphism because $\ZZ \sX \to \mc \sX$ is a monomorphism, and sheafification is exact. To show it's an epimorphisms we show that for every $\alpha: \sY \to \sX$ in $\ulMCor_\sS$, there is a $\ulMrqfh$-covering $f: \sV \to \sY$ such that $\alpha \circ f$ is in $\ulMSch_\sS$. The non-modulus version is \cite[Thm.4.2.12]{SVRelCyc} which says precisely that $\ZZ \iX \to \ZZ_{\tr} \iX$ produces an isomorphism of qfh-sheaves on the category of finite type separated $\iS$-schemes. So for our $\alpha^\circ \in \hom_{\Cor_{\iS}}(\iY, \iX)$ there exists a qfh-covering $f^\circ: \iV \to \iY$ such that $\alpha^\circ \circ f^\circ$ is a morphism in $\Sch_{\iS}$. By Prop.~\ref{prop:rqfhExtend} we can extend $\iV \to \iY$ to a $\ulMrqfh$ covering $\sV \to \sY$. Now inserting the definitions, one sees that the condition that a morphism of $\ulMCor_\sS$ is in $\ulMSch_\sS$ can be checked on the interiors, so we confirm that $\alpha \circ f$ is in $\ulMSch_\sS$, since $\alpha^\circ \circ f^\circ$ is in $\Sch_{\iS}$.
\end{proof}

\begin{thm} \label{theo:qfhetNisCorRat}
The canonical functor 
\[ forget: \Shv_{\ulMrqfh}(\ulMCor_\sS, \QQ) \to \Shv_{{\ulMtau}}(\ulMCor_\sS, \QQ) \]
is an equivalence of categories for ${\ulMtau} = \ulMNis$ and $\ulMet$.
\end{thm}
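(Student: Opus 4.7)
The plan is to prove essential surjectivity of the forgetful functor, since it is visibly well-defined ($\ulMrqfh$ refines $\ulMtau$ as both $\ulPZar$-coverings and finite coverings generate it) and fully faithful (both sides are full subcategories of $\PSh(\ulMCor_\sS, \QQ)$). That is, I will show that every $\ulMtau$-sheaf with $\QQ$-linear transfers is automatically a $\ulMrqfh$-sheaf.

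First I would reduce to descent along a single finite surjective ambient morphism. By Cor.~\ref{coro:normalForm}, every $\ulMrqfh$-covering is refinable by a composition of a $\ulPrqfh$-covering and an \abb{}. Presheaves on $\ulMCor_\sS$ invert \abb{}s by construction of $\ulMSch_\sS$, so descent for the $\abb$-part is automatic. Lem.~\ref{lemm:pqrfhIsFinZar} then decomposes any $\ulPrqfh$-covering into $\ulPZar$- and $\ulPfin$-parts; the Zariski part is already subsumed by $\ulMtau$-descent (since $\ulPZar \subseteq \ulPtau$ for $\tau = \Nis, \et$). Replacing a finite covering $\{\sU_i \to \sX\}$ by the single morphism $\sqcup \sU_i \to \sX$, the problem reduces to showing that $F(\sX) \to F(\sU) \rightrightarrows F(\sU \times_\sX \sU)$ is an equaliser for every finite surjective ambient minimal morphism $f: \sU \to \sX$.

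Next comes the trace trick. Assuming that $\sX$ is integral so that $f$ has constant degree $d$ (the general case follows by decomposition into integral components, using Prop.~\ref{prop:cycFun} and Lem.~\ref{lemm:cycPush}), I would define the \emph{transpose correspondence} $f^t: \sX \to \sU$ in $\ulMCor_\sS$ as the image of $\Gamma_f \subseteq \sU \times_\sS \sX$ under the swap isomorphism, viewed as a cycle in $\mc(\sX \times_\sS \sU / \sX)$. Left properness holds because the projection $\Gamma_f^t \cong \sU \to \sX$ is the finite morphism $f$, and admissibility follows from the minimality of $f$ via the characterisation of admissibility in Prop.~\ref{prop:KMSYadmissibility}. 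A direct cycle-theoretic computation in $\ulMCor_\sS$ (following the formulas in Chap.~\ref{chap:relCyc}) then yields the two identities $f \circ f^t = d \cdot \id_\sX$ and $f^t \circ f = [\sU \times_\sX \sU]$, the latter being the fundamental cycle of $\sU \times_\sX \sU \hookrightarrow \sU \times_\sS \sU$ viewed as a correspondence $\sU \to \sU$ via the two projections.

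Finally, applying $F$ and using $\QQ$-linearity, the identity $F(f^t) \circ F(f) = d \cdot \id_{F(\sX)}$ gives the retraction $\tfrac{1}{d} F(f^t)$ of $F(f)$, hence injectivity. For the equaliser property, given $a \in F(\sU)$ with $p_1^* a = p_2^* a$ (where $p_1, p_2: \sU \times_\sX \sU \to \sU$ are the projections), I would set $b := \tfrac{1}{d} F(f^t)(a) \in F(\sX)$ and compute
\[ F(f)(b) = \tfrac{1}{d} F(f^t \circ f)(a) = \tfrac{1}{d} F([\sU \times_\sX \sU])(a) = \tfrac{1}{d} p_{1*}(p_2^* a) = \tfrac{1}{d} p_{1*}(p_1^* a) = a, \]
using the analogous correspondence identity $p_1 \circ p_1^t = d \cdot \id_\sU$ to deduce $p_{1*} p_1^* = d \cdot \id_{F(\sU)}$. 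The main obstacle I anticipate is the careful verification that $f^t$ is admissible as a relative modulus cycle (not merely left proper), which requires tracking the modulus equality $\mX|_{\widetilde{Z}} = (\sX \times_\sS \sU)^\infty|_{\widetilde{Z}}$ through the normalisation of $\Gamma_f^t$; a secondary subtlety is handling non-integral or disconnected bases cleanly, where the degree is only locally constant and one must combine contributions from each integral component.
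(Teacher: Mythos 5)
Your strategy --- reduce to a single finite surjective minimal morphism and then split $F(\sX)\to F(\sU)$ by a transpose correspondence and $\QQ$-linearity --- is the classical trace argument, and it is genuinely different from the paper's proof. The paper instead observes that the forgetful functor preserves all colimits (it preserves epimorphisms because for any $\ulMrqfh$-covering $\sY\to\sX$ the map $\QQ_{\tr}(\sY)\to\QQ_{\tr}(\sX)$ of $\ulMtau$-sheaves admits a section, and it preserves sums), so that the isomorphy of the unit $\id\to forget\circ a$ reduces to the case of representables; there it is exactly Cor.~\ref{coro:mqfhSheafRep}, i.e.\ the statement that $\QQ_{\tr}(\sX/\sS)$ is already an $\ulMrqfh$-sheaf, which rests on Suslin--Voevodsky's theorem \cite[Prop.4.2.7]{SVRelCyc}. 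The hard cycle-theoretic content is thus outsourced rather than redone.

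As written, however, your argument has a genuine gap at the construction of $f^t$. For $f^t$ to be a morphism $\sX\to\sU$ in $\ulMCor_\sS$ the transposed graph must define an element of $c_{equi}(\iX\times_{\iS}\iU/\iX,0)$, i.e.\ a \emph{relative} cycle over $\iX$, with well-defined pullbacks to every point of $\iX$ compatible with all fat points. Integrality of $\sX$ does not guarantee this. The standard counterexample: take $\iX$ a nodal curve and $\iU\to\iX$ its normalisation (finite, surjective, of degree $1$); the cycle $[\iU]\subseteq\iX\times\iU$ specialises to two different points of the fibre over the node depending on the branch along which one approaches it, so it is not a relative cycle and no transpose exists. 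Suslin--Voevodsky guarantee that the cycle associated to a closed subscheme finite and equidimensional over the base is a relative cycle with $\QQ$-coefficients only over a geometrically unibranch (e.g.\ normal) base, and with $\ZZ$-coefficients only over a regular base. The same problem recurs for $[\sU\times_\sX\sU]$ viewed as a correspondence $\sU\to\sU$, since $\sU$ is even less likely to be normal. Repairing this requires first reducing to normal interiors and pseudo-Galois coverings (showing, e.g., that normalisation is itself an $\ulMrqfh$-covering and bootstrapping), which is essentially re-proving the SV theorem that the paper invokes. Your treatment of admissibility, by contrast, is sound: minimality of $f$ does yield $\mX|_{\widetilde{Z}}=(\sX\times_{\sS}\sU)^\infty|_{\widetilde{Z}}$ via Prop.~\ref{prop:KMSYadmissibility}, so admissibility is not where the difficulty lies.
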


\begin{proof}
We want to show that the unit $\id \to forget \circ a$ is an isomorphism where
\[ a: \Shv_{{\ulMtau}}(\ulMCor_\sS, \QQ) \to \Shv_{\ulMrqfh}(\ulMCor_\sS, \QQ) \]
is the sheafification functor. For representable sheaves, this is precisely Cor.~\ref{coro:mqfhSheafRep}. Explicitly, for any separated finite type $\sS$-scheme $\sX$ the presheaf $\QQ_{\tr}(\sX)$ is automatically a $\ulMrqfh$-sheaf on $\ulMSch_\sS$.

Now, it follows that any $\ulMrqfh$-covering $\sY \to \sX$ produces a surjection of ${\ulMtau}$-sheaves $\Q_\tr (\sY) \to \Q_\tr (\sX)$.
Indeed, it has a section since we have
\[ \xymatrix{
\hom_{\Shv_{\ulMtau}(\ulMCor_{\sS}, \QQ)}(\QQ_{\tr}\sX, \QQ_{\tr}\sY) \ar@{}[r]|= \ar[d] & \hom_{\Shv_{\ulMrqfh}(\ulMCor_{\sS}, \QQ)}(\QQ_{\tr}\sX, \QQ_{\tr}\sY) \ar[d] \\
\hom_{\Shv_{\ulMtau}(\ulMCor_{\sS}, \QQ)}(\QQ_{\tr}\sX, \QQ_{\tr}\sX) \ar@{}[r]|= & \hom_{\Shv_{\ulMrqfh}(\ulMCor_{\sS}, \QQ)}(\QQ_{\tr}\sX, \QQ_{\tr}\sX)
} \]
and the right hand side is surjective because $\mcq(\sY) \to \mcq(\sX)$ is the image of the surjection $\sY \to \sX$ in $\Shv_{\ulMrqfh}(\ulMSch_\sS)$ by Prop.~\ref{prop:Zqfhtrmodulus}. {\color{blue} From this it follows that $forget$ preserves epimorphisms. } 
Since it also preserves sums, we discover that in fact it preserves all colimits. Since $a$ also preserves colimits, and every $F \in \Shv_{\ulMrqfh}(\ulMCor_\sS, \QQ)$ can be written as a colimit of representables, $F \to forget \circ a (F)$ being an isomorphism follows from the case $F$ is representable.
\end{proof}

Define $\ulMSmCor_\sS$ to be the full subcategory of $\ulMCor_\sS$ consisting of those objects $\sX \to \sS$ such that $\iX \to \iS$ is smooth. 
Notice that for $\tau = \Zar, \Nis, \et$, the topology $\ulM\tau$ on $\ulMCor_\sS$ induces one on $\ulMSmCor_\sS$.

\begin{coro}
The canonical functor 
\[ forget: \Shv_{\ulMNis}(\ulMSmCor_\sS, \QQ) \to \Shv_{\ulMet}(\ulMSmCor_\sS, \QQ) \]
is an equivalence of categories.
\end{coro}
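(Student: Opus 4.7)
My plan is to adapt the argument used in the proof of Theorem~\ref{theo:qfhetNisCorRat} to the smooth setting. The first observation is that any $\ulMNis$- or $\ulMet$-covering of an object $\sX \in \ulMSmCor_\sS$ automatically consists of objects in $\ulMSmCor_\sS$, since the defining condition (Nisnevich or étale on interiors) preserves smoothness. Consequently, for any $\sX \in \ulMSmCor_\sS$, the representable $\QQtr(\sX)$ on $\ulMSmCor_\sS$ is the restriction of the representable $\QQtr(\sX)$ on $\ulMCor_\sS$; the latter is a $\ulMrqfh$-sheaf by Corollary~\ref{coro:mqfhSheafRep}, and hence in particular a $\ulMet$-sheaf. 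It follows that $\QQtr(\sX)$, restricted to $\ulMSmCor_\sS$, is both a $\ulMNis$- and a $\ulMet$-sheaf.

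Next I would introduce the $\ulMet$-sheafification functor
\[ a_{\ulMet} : \Shv_{\ulMNis}(\ulMSmCor_\sS, \QQ) \to \Shv_{\ulMet}(\ulMSmCor_\sS, \QQ), \]
which exists and preserves transfers by Theorem~\ref{theo:sheafificationHasTransfers}, and is left adjoint to the evident inclusion $i$ of $\ulMet$-sheaves into $\ulMNis$-sheaves. The corollary is equivalent to saying that $i$ (or equivalently $a_{\ulMet}$) is an equivalence of categories, which reduces to showing that the unit $F \to i \circ a_{\ulMet}(F)$ is an isomorphism for every $F \in \Shv_{\ulMNis}(\ulMSmCor_\sS, \QQ)$. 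On representables $\QQtr(\sX)$ with $\sX \in \ulMSmCor_\sS$, this is immediate from the first paragraph, since $a_{\ulMet}(\QQtr(\sX)) \simeq \QQtr(\sX)$. To propagate this to arbitrary $F$, I would write $F$ as a colimit of representables and use that both $a_{\ulMet}$ and $i$ preserve colimits.

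The main obstacle is justifying that the inclusion $i$ preserves all colimits (computed in $\Shv_{\ulMNis}$). As in the proof of Theorem~\ref{theo:qfhetNisCorRat}, this reduces to showing that $i$ preserves epimorphisms; equivalently, that any $\ulMet$-covering $\sV \to \sX$ in $\ulMSmCor_\sS$ induces a surjection $\QQtr(\sV) \to \QQtr(\sX)$ already at the level of $\ulMNis$-sheaves. By Yoneda, this amounts to providing a section of the identity on $\QQtr(\sX)$ through $\QQtr(\sV)$ in the category of $\ulMet$-sheaves with transfers; such a section is constructed exactly as in the proof of Theorem~\ref{theo:qfhetNisCorRat}, using Proposition~\ref{prop:Zqfhtrmodulus} applied to the $\ulMrqfh$-cover obtained by composition with an appropriate refinement, together with the fact that $\QQtr(\sX)$ is already a $\ulMrqfh$-sheaf. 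Once colimit preservation is secured, the argument of the previous paragraph concludes the proof.
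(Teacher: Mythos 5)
Your proof is correct, but it takes a genuinely different route from the paper. The paper does not re-run the splitting argument on $\ulMSmCor_\sS$ at all: it uses fullness of $\ulMSmCor_\sS \to \ulMCor_\sS$ to identify $\Shv_{\ulMtau}(\ulMSmCor_\sS,\QQ)$ (for $\ulMtau = \ulMNis, \ulMet$) with the colimit-closure of the smooth representables inside $\Shv_{\ulMtau}(\ulMCor_\sS,\QQ)$ — the point being that $\ulMNis$- and $\ulMet$-coverings of smooth objects stay in that subcategory — and then simply transports the equivalence $\Shv_{\ulMNis}(\ulMCor_\sS,\QQ) \cong \Shv_{\ulMet}(\ulMCor_\sS,\QQ)$ of Theorem~\ref{theo:qfhetNisCorRat} across this identification. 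You instead work intrinsically on $\ulMSmCor_\sS$ and inline the mechanism of that theorem: representables are already $\ulMet$-sheaves, the inclusion of $\ulMet$-sheaves preserves colimits because $\ulMet$-coverings split rationally with transfers, and the unit of the adjunction is then an isomorphism on a colimit-generating family. Both arguments rest on the same engine (the rational splitting of $\ulMrqfh$-coverings via Proposition~\ref{prop:Zqfhtrmodulus}); yours avoids the Kan-extension bookkeeping at the cost of redoing the colimit-preservation argument. One point you should make explicit: the $\ulMrqfh$-refinement $\sW \to \sV \to \sX$ used to manufacture the section has non-smooth interior, so $\sW$ leaves $\ulMSmCor_\sS$. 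This is harmless — the resulting splitting is ultimately an element of $\hom_{\ulMCor_\sS}(\sX,\sV)\otimes\QQ = \hom_{\ulMSmCor_\sS}(\sX,\sV)\otimes\QQ$ by fullness, hence defines the required map $\QQtr(\sX) \to \QQtr(\sV)$ of $\ulMNis$-sheaves with transfers on $\ulMSmCor_\sS$ — but as written your sentence conceals the fact that the construction necessarily detours through the larger category, which is exactly the detour the paper's Kan-extension argument is designed to package once and for all.
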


\begin{rema}
This could also have been proved using the method from \cite[Thm.3.3.23]{CD19}.
\end{rema}

\begin{proof}
The functor $\ulMSmCor \to \ulMCor$ is full so the induced left Kan extension
\[ \PSh(\ulMSmCor) \to \PSh(\ulMCor) \]
is also full, and identifies the former with the smallest subcategory of the latter closed under colimits and containing the image of $\ulMSmCor$. Moreover, this property passes to the categories of sheaves, since $\ulMNis$ and $\ulMet$ coverings of objects of $\ulMSmCor$ are refinable by coverings contained in this subcategory. So it suffices to show 
\[ \Shv_{\ulMNis}(\ulMCor_\sS, \QQ) \cong \Shv_{\ulMet}(\ulMCor_\sS, \QQ), \]
but this follows from Thm.~\ref{theo:qfhetNisCorRat}.
\end{proof}


\chapter{Box product and comparison with [KMSY]}  \label{chap:box}

\section{Comparison with [KMSY]'s smooth modulus pairs}

In Kahn-Miyazaki-Saito-Yamazaki's paper \cite{kmsy1}, the category $\ulMSm_k$ of ``smooth modulus pairs over $k$'' is introduced. Let's write $\ulMSm_k^{\KMSY}$\marginpar{\fbox{$\ulMSm_k^{\KMSY}$}}\index[not]{$\ulMSm_k^{\KMSY}$} %
 for this category. We recall the definition briefly (see also \cite{nistopmod}).

\begin{defn}\label{def:ulMSm-kmsy}
The objects of $\ulMSm_k^{\KMSY}$ are those modulus pairs $\sX=({\hX},{\mX})$ such that
\begin{enumerate}
\item ${\hX}$ is separated and of finite type over $k$, and 
\item ${\iX} = {\hX} - {\mX}$ is smooth over $k$.
\end{enumerate}
In other words, objects of $\ulMSm_k^{\KMSY}$ are the same as the objects of $\ulPSm_k$.

A morphism $f : \sX \to \sY$ in [KMSY]'s category is a morphism between the \emph{interiors} $f^\o : {\iX} \to {\iY}$ which satisfies the following \emph{left properness condition}\marginpar{\fbox{left properness}}\index{left properness} %
 and \emph{modulus condition}\marginpar{\fbox{modulus condition}}\index{modulus condition} :
\begin{itemize}
\item (left properness) Let $\Gamma$ be the graph of the rational map ${\hX} \dashrightarrow {\hY}$ defined by $f^\o$, i.e., $\Gamma$ is the scheme theoretic closure of the graph of $f^\o$ in ${\hX} \times_k {\hY}$. Then the composite map $\Gamma \to {\hX} \times_k {\hY} \to {\hX}$ is proper.
\item (modulus) Let $\nu : \Gamma^N \to \Gamma$ be the normalisation of $\Gamma$. Then we have an inequality $\nu^{-1} ({\mX} \times_k {\hY}) \geq \nu^{-1} ({\hX} \times_k {\mY} )$, where we regard both sides as Weil divisors on $\Gamma^N$. 
Since $\Gamma^N$ is Noetherian and normal, the algebraic Hartogs lemma shows that this is equivalent to that $\nu^{-1} ({\mX} \times_k {\hY}) \supset \nu^{-1} ({\hX} \times_k {\mY} )$.
\end{itemize}
We can show that the usual composition of morphisms of schemes preserves these two conditions.
\end{defn}

\begin{prop}
There exists a natural fully faithful functor 
\[
\ulMSm_k^{\KMSY} \to \ulMSCH_k 
\]
which sends $\sX$ to $\sX$. This induces an equivalence 
\[ 
\ulMSm_k^{\KMSY} \cong \ulMSm_k
\]
with out category. 
\end{prop}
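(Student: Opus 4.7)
Both categories have the same objects (modulus pairs $(\hX,\mX)$ with $\hX$ separated of finite type over $k$ and $\iX$ smooth), so the functor is the identity on objects. Given a KMSY morphism $f : \sX \to \sY$ with interior $f^\o : \iX \to \iY$, let $\Gamma \subseteq \hX \times_k \hY$ be the scheme-theoretic closure of the graph of $f^\o$ and $\nu : \Gamma^N \to \Gamma$ its normalisation. Since $\iX$ is smooth, $\nu$ is an isomorphism over $\iX$, so $(\Gamma^N)^\o \cong \iX$. Put $\sX_f := (\Gamma^N,\, \nu^*(\mX|_\Gamma))$, an object of $\ulPSm_k$. The first projection $\sX_f \to \sX$ is minimal by construction; it is proper because $\Gamma \to \hX$ is proper by the KMSY left-properness condition and $\nu$ is finite. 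Hence $\sX_f \to \sX$ lies in $\uS$. The KMSY modulus condition $\nu^{-1}(\mX \times_k \hY) \geq \nu^{-1}(\hX \times_k \mY)$ is exactly the statement that the second projection $\sX_f \to \sY$ is ambient. The resulting roof $\sX \xleftarrow{\uS} \sX_f \to \sY$ in $\ulPSm_k$ defines a morphism in $\ulMSm_k = \ulPSm_k[\uS^{-1}]$ (Prop.~\ref{prop:subLocalisation}), and this is the image of $f$.

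\textbf{Formal properties.} By Lem.~\ref{lem:coincidence2} a morphism in $\ulMSCH_k$ is determined by its restriction to interiors. The roof constructed above restricts to $f^\o$ on interiors, which gives preservation of identities and compatibility with composition (both sides agree after passing to interiors, hence agree in $\ulMSCH_k$). Faithfulness is immediate since KMSY morphisms are themselves classified by their interiors. Essential surjectivity onto $\ulMSm_k$ is clear, since the functor is the identity on objects and every object of $\ulMSm_k$ is isomorphic to one in $\ulPSm_k$.

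\textbf{Fullness.} This is the only non-formal step. A morphism in $\ulMSm_k$ is represented by a roof $\sX \xleftarrow{s} \sX' \xrightarrow{g} \sY$ in $\ulPSm_k$ with $s \in \uS$, and we must check that $f^\o := g^\o \circ (s^\o)^{-1}$ satisfies KMSY's left properness and modulus conditions. Replacing $\sX'$ by the modulus pair on the normalisation of $\hX'$ (still in $\uS$ since $\iX \cong (\sX')^\o$ is smooth, hence $\hX'$ is generically normal), we may assume $\hX'$ is normal. Then $(s,g) : \hX' \to \hX \times_k \hY$ factors through $\Gamma$, and by normality of $\hX'$ through $\Gamma^N$. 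The morphism $\hX' \to \Gamma^N$ is proper (as a $\Gamma$-morphism whose composite with $\Gamma^N \to \Gamma$ is proper) and surjective (its image is closed and contains the dense open $\iX \subseteq \Gamma^N$); since $\hX' \to \Gamma^N \to \hX$ equals the proper morphism $s$, the morphism $\Gamma^N \to \hX$, and therefore $\Gamma \to \hX$, is universally closed and hence proper, which is the left-properness condition. For the modulus condition, ambientness of $g$ gives $s^*\mX \geq g^*\mY$ on $\hX'$; these are the pullbacks along $\hX' \to \Gamma^N$ of the Weil divisors $\nu^{-1}(\mX|_\Gamma)$ and $\nu^{-1}(\mY|_\Gamma)$ on the normal scheme $\Gamma^N$. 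Since $\hX' \to \Gamma^N$ is surjective between normal Noetherian schemes, the inequality descends by \cite[Lem.~2.2]{KP}, yielding the KMSY modulus condition. The principal technical obstacle is precisely this descent of a Cartier inequality on $\hX'$ to a Weil inequality on $\Gamma^N$; everything else is formal, and the smoothness of $\iX$ is used essentially only to arrange that $\hX'$ may be replaced by a normal model.
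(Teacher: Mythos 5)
Your proposal is correct and follows essentially the same route as the paper: the same roof construction $\sX \xleftarrow{\uS} (\Gamma^N, \mX|_{\Gamma^N}) \to \sY$ for the functor, reduction to interiors via Lemma~\ref{lem:coincidence2} for the formal properties, and descent of the divisor inequality via \cite[Lem.~2.2]{KP} for fullness. The only (cosmetic) difference is in how you produce a normal model dominating $\Gamma^N$ in the fullness step — you normalise $\hX'$ and invoke the universal property of normalisation, whereas the paper takes the normalised graph $\hX''$ of the rational map $\hX' \dashrightarrow \Gamma^N$; both yield the same diagram.
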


\begin{proof}
Let $\sX$ and $\sY$ be two objects in $\ulMSm_k$.
We define a map of sets
\begin{equation}\label{eq1.8}
\alpha : \ulMSm_k (\sX,\sY) \to \ulMSCH_k (\sX,\sY)
\end{equation}
as follows. Let $f \in \ulMSm_k (\sX,\sY)$.
It is by definition a morphism of schemes $f^\o : {\iX} \to {\iY}$ which satisfies the left properness condition and the modulus condition. 
Let $\Gamma$ be the graph of the rational map $\ol{f} : {\hX} \to {\hY}$ defined by $f^\o$, and let $\nu : \Gamma^N \to \Gamma$ be the normalisation. 
Set 
\[
\sX' := (\Gamma^N, {\mX} \times_{{\hX}} \Gamma^N ).
\]
Then the composite $\ol{p}:\Gamma^N \xrightarrow{\nu} \Gamma \xrightarrow{\pr_1} {\hX}$ defines a minimal morphism $(p:\sX' \to \sX) \in \ulPSCH_k (\sX',\sX)$.
Since $\nu$ is proper (since it is integral and is \textit{of finite type}), and since $\pr_1$ is proper by the left properness condition, so is $\ol{p}$.
Moreover, $\ol{p}$ is an isomorphism over ${\iX} \subset {\hX}$.
Therefore, $p \in \ul{\Sigma}$, and hence it is an isomorphism in $\ulMSCH_k$.

On the other hand, the composite $\ol{q} : \Gamma^N \xrightarrow{\nu} \Gamma \xrightarrow{\pr_2} {\hY}$ satisfies $\ol{q}^{-1} ({\mY}) \subset \ol{p}^{-1} ({\mX})$ by the modulus condition for $f$.
Therefore, $\ol{q}$ defines a morphism $q : \sX' \to \sY$ in $\ulPSCH_k$.
We define
\[
\alpha (f) := q \circ p^{-1} \in \ulMSCH_k (\sX,\sY) .
\]
Then we have $\alpha (f)^\o =f^\o$ by construction.

We check that $\alpha$ respects the composition.
Let $\sX \xrightarrow{f} \sY \xrightarrow{g} \sZ$ be a string of morphisms in $\ulMSm_k$.
Then we have $\alpha (gf)^\o = (gf)^\o =g^\o f^\o = \alpha (g)^\o \alpha (f)^\o = (\alpha (g) \alpha (f))^\o$.
Therefore, Lemma \ref{lem:coincidence2} (2) implies $\alpha (gf) = \alpha (g) \alpha (f)$, as desired. 
Thus, we obtain a functor 
\[
\alpha : \ulMSm_k \to \ulMSCH_k .
\]

Next, we prove that $\alpha$ is fully faithful, i.e., that for any $\sX,\sY \in \ulMSm_k$ the map \eqref{eq1.8} is bijective.
We define the inverse map $\beta$ as follows.

Let $f \in \ulMSCH_k (\sX,\sY)$.
We claim that $f^\o : {\iX} \to {\iY}$ defines a morphism $\beta (f) : \sX \to \sY$ in $\ulMSm_k$.
Indeed, by calculus of fractions, there are $(s:\sX' \to \sX) \in \ul{\Sigma}$ and $g\in \ulPSCH_k (\sX',\sY)$ such that $f=gs^{-1}$.
Note that ${\hX}'$ is separated and of finite type over $k$ since $\ol{s} : {\hX}' \to {\hX}$ is proper by definition of $\ul{\Sigma}$.
Let $\Gamma$, $\Gamma^N$, $\ol{p} : \Gamma^N \to {\hX}$ and $\ol{q} : \Gamma^N \to {\hY}$ be as above.
Then there exists a natural rational map ${\hX}' \dashrightarrow \Gamma^N$.
Let ${\hX}'' := \Gamma_1^N$ be the normalisation of the graph $\Gamma_1$ of this rational map.
Thus, we obtain a commutative diagram
\[\xymatrix{
{\hX}'' \ar[r]^{\ol{u}} \ar[d]_{\ol{t}} & \Gamma^N \ar[d]^{\ol{p}} \ar@/^10pt/[rdd]^{\ol{q}} \\
{\hX}' \ar[r]^{\ol{s}} \ar@/_10pt/[drr]_{\ol{g}} & {\hX} \\
&& {\hY}
}\]
where ${\hX}''$ is normal by definition.
Moreover, note that $\ol{u}$ is equal to the composite map
\[
{\hX}'' \xrightarrow{\text{normalisation}} \Gamma_1 \xrightarrow{\text{closed immersion}} {\hX}' \times_{{\hX}} \Gamma^N \xrightarrow{\text{projection}} \Gamma^N,
\]
where the first and second maps are obviously proper, and the third map is also proper by the properness of $\ol{s} : {\hX}' \to {\hX}$.
Therefore, $\ol{u}$ is proper, and hence $f^\o$ satisfies the left properness condition in Definition \ref{def:ulMSm-kmsy}.

Since $\ol{s}^{-1} {\mX} \supset \ol{g}^{-1} {\mY}$ by the modulus condition for $g$, the commutativity of the above diagram shows 
$\ol{u}^{-1} \ol{p}^{-1} {\mX} \supset \ol{u}^{-1} \ol{q}^{-1} {\mY}$.
Then \cite[Lemma 2.2]{KP} implies (by normality of ${\hX}''$ and $\Gamma^N$) that $\ol{p}^{-1} {\mX} \supset \ol{q}^{-1} {\mY}$, i.e. that $f^\o$ satisfies the modulus condition in Definition \ref{def:ulMSm-kmsy}.
This finishes the proof of the claim. 

Thus, we obtain a map $\beta : \ulMSCH_k (\sX,\sY) \to \ulMSm_k (\sX,\sY)$.
It suffices to show that $\alpha$ and $\beta$ are inverse to each other.
For any $f \in \ulMSm_k (\sX,\sY)$, we have $(\beta \alpha (f))^\o = f^\o$ by construction. Therefore, we have $\beta \alpha (f) = f$.
Conversely, for any $g \in \ulMSCH_k (\sX,\sY)$, we also have $(\alpha \beta (g))^\o = g^\o$ by construction. 
Therefore, Lemma \ref{lem:coincidence2} (2) shows $\alpha \beta (f) = f$. 
This finishes the proof.
\end{proof}

\section{The box product of modulus pairs}

We have proven in Theorem \ref{thm:pullback} that the category $\ulMSCH_{\sS}$ of modulus pairs admits all finite limits. In particular, it admits categorical product. Therefore, we already obtained a tensor structure of modulus pairs. 

In this section, we introduce another tensor structure on $\ulMSCH_{\sS}$, which we call \emph{box product}, and study its relation with categorical product. The box product will play an important role in the construction of the category of motives with modulus.

\section{The box product}\label{sec:boxtensor}

In this subsection, we introduce another tensor structure on $\ulMSCH$, which we call \emph{the box product}. 

\begin{defn}\label{def:tensor-S}
Let $\sS$ be a modulus pair, and let $f : \sX \to \sS$, $g : \sY \to \sS \in \ulPSm_{\sS}$ be ambient morphisms.  
Let $\hP_0$ be the scheme theoretic closure of the open immersion $\iX \times_{\iS} \iY \to \hX \times_{\hS} \hY$.
Then the same argument as in Construction \ref{cons:admBlowup} shows that the pullbacks of effective Cartier divisors $\mS |_{\hP}$, $\mX |_{\hP}$ and $\mY |_{\hP}$ are well-defined.
\footnote{For example, tso see that $\mS |_{\hP}$ is well-defined, it suffices to check that $f : \hP \to \hS$ satisfies the assumption in Lemma \ref{lem:pb-dense}. This is OK since $f^{-1}(\iS)$ contains $\iX \times_{\iS} \iY$ which is scheme theoretically dense in $\hP$ by Lemma \ref{lem:pb-sch-dense}.}

We define a modulus pair $\sX \boxtimes_{\sS} \sY$ as follows:\marginpar{\fbox{$\boxtimes$}}\index[not]{$\boxtimes$}
\[
\sX \boxtimes_{\sS} \sY := (\hP , \mX |_{\hP} + \mY |_{\hP} - \mS |_{\hP}),
\]
which we call the \emph{box product of $\sX$ and $\sY$ over $\sS$}\marginpar{\fbox{box product}}\index{product of modulus pairs!box product}.

Note that $(\sX \boxtimes_{\sS} \sY)^\o = \iX \times_{\iS} \iY$ since $\mS|_{\hP} \subset \mX|_{\hP}$ and $\mS|_{\hP} \subset \mY|_{\hP}$.
\end{defn}

\begin{rema}\ 
\begin{enumerate}
\item In general, the box product $\boxtimes_{\sS}$ is different from the categorical product $\times_{\sS}$. However, we will see in Lemma \ref{lem:boxtimes-to-times} that there is a canonical map from the former to the latter. 

\item If $\mS = \varnothing$ and if either of $\hX$ or $\hY$ is flat over $\hS$, then the box product is simply computed as
\[
\sX \boxtimes_{\sS} \sY := (\hX \times_{\hS} \hY , \mX \times_{\hS} \hY + \hX \times_{\hS} \mY).
\]
When $\hS$ is a spectrum of a field, then this definition coincides with the tensor product of modulus pairs introduced in \cite{kmsy1}. In {\it{loc. cit.}}, the tensor product is denoted by $\otimes$, but we will use $\boxtimes$ in order to make it clear that this product is different from the categorical product in $\ulMSCH$.

\item  In the definition of box product, we need to subtract $\mS |_{\hP}$ for various reasons. 
One important reason is that $\sS$ becomes a unit with this definition. 
Another reason is that for a modulus pair $\sX$ over $\sS$, the $0$ or $1$ embedding $\sX \hookrightarrow \sX \boxtimes_{\sS} \bcube_{\sS}$ becomes admissible (in fact, minimal).
\end{enumerate}
\end{rema}

The box product satisfies the obvious functorial property:

\begin{prop}
For any modulus pair $\sS$, the box tensor product defines a symmetric monoidal structure 
\[
-\boxtimes_{\sS} - : \ulPSCH_{\sS} \times \ulPSCH_{\sS} \to \ulPSCH_{\sS},
\]
with $\sS$ the unit object. 
Moreover, this descends to $\ulMSCH_{\sS}$: 
\[
-\boxtimes_{\sS} - : \ulMSCH_{\sS} \times \ulMSCH_{\sS} \to \ulMSCH_{\sS}.
\]
\end{prop}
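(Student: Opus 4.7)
The plan is to verify in turn: functoriality on $\ulPSCH_\sS$, the monoidal coherence data (unit, symmetry, associativity), and descent through the localisation $\ulMSCH_\sS = \ulPSCH_\sS[\ul{\Sigma}^{-1}]$.

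First I would verify functoriality. Given ambient morphisms $f:\sX \to \sX'$ and $g:\sY \to \sY'$ in $\ulPSCH_\sS$, writing $\hP, \hP'$ for the scheme theoretic closures appearing in Definition~\ref{def:tensor-S}, the map $\hX \times_\hS \hY \to \hX' \times_\hS \hY'$ carries $\iX \times_\iS \iY$ into $\iX' \times_\iS \iY' \subseteq \hP'$; since $\iX \times_\iS \iY$ is scheme theoretically dense in $\hP$ by Lemma~\ref{lem:pb-sch-dense}, I obtain a unique induced morphism $\hP \to \hP'$. The modulus condition $(\mX'|_\hP + \mY'|_\hP - \mS|_\hP) \leq (\mX|_\hP + \mY|_\hP - \mS|_\hP)$ then follows by pulling back the admissibility inequalities for $f$ and $g$ to $\hP$.

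Next I would verify the monoidal axioms. Unitality is direct: $\sS \boxtimes_\sS \sX$ has total space the closure of $\iS \times_\iS \iX = \iX$ inside $\hS \times_\hS \hX = \hX$, which is $\hX$ by Proposition~\ref{prop:always-dense}, and modulus $\mS|_\hX + \mX - \mS|_\hX = \mX$, so $\sS \boxtimes_\sS \sX = \sX$ on the nose. Symmetry is transported from the symmetry of $\times_\hS$. For associativity I would show that both iterations $(\sX \boxtimes_\sS \sY) \boxtimes_\sS \sZ$ and $\sX \boxtimes_\sS (\sY \boxtimes_\sS \sZ)$ agree with the modulus pair $(\hQ, \mX|_\hQ + \mY|_\hQ + \mZ|_\hQ - 2\mS|_\hQ)$, where $\hQ$ is the scheme theoretic closure of $\iX \times_\iS \iY \times_\iS \iZ$ in $\hX \times_\hS \hY \times_\hS \hZ$. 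The total spaces coincide because scheme theoretic closure is transitive through closed immersions (the intermediate spaces $\hP_1 \times_\hS \hZ$ and $\hX \times_\hS \hP_2$ each being closed in the ternary product), and the modulus formulas match by unfolding the definition twice; effectivity of the difference follows from $\mS|_\hQ \leq \mY|_\hQ$ and $\mS|_\hQ \leq \mZ|_\hQ$. Naturality of the associator, unitor and symmetry, together with the pentagon and triangle axioms, reduce to the uniqueness of morphisms induced between scheme theoretic closures, i.e.\ to Corollary~\ref{cor:coincide}.

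For descent to $\ulMSCH_\sS$, by the universal property of the calculus of right fractions (Proposition~\ref{prop:cal-frac}) it suffices to check that whenever $s:\sX' \to \sX$ lies in $\ul{\Sigma}$, the induced morphism $s \boxtimes_\sS \id_\sY : \sX' \boxtimes_\sS \sY \to \sX \boxtimes_\sS \sY$ also lies in $\ul{\Sigma}$ (and symmetrically in the second variable). The morphism on total spaces is induced from $\hX' \times_\hS \hY \to \hX \times_\hS \hY$ after passing to scheme theoretic closures, hence is proper because $\hX' \to \hX$ is. It is an isomorphism on interiors since $(s \boxtimes_\sS \id_\sY)^\circ = s^\circ \times_\iS \id$. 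Minimality follows from $\mX'|_{\hP'} = \mX|_{\hP'}$ (because $s$ is minimal), so the modulus of $\sX' \boxtimes_\sS \sY$ is the pullback of that of $\sX \boxtimes_\sS \sY$. The hardest step will be the associativity bookkeeping: identifying the iterated closures with the common ambient closure in the triple product, and cleanly deriving the cancelled modulus formula using the Cartier divisor arithmetic of Lemma~\ref{lem:key}; the remaining coherences are essentially forced by uniqueness of extensions along schematically dominant open immersions.
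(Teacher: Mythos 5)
Your proposal is correct and follows essentially the same route as the paper: functoriality via the induced map on scheme theoretic closures plus pullback of the admissibility inequalities, and descent by checking that $-\boxtimes_{\sS}\sY$ sends $\ul{\Sigma}$ to $\ul{\Sigma}$ (properness from properness of $\hX'\to\hX$, isomorphism on interiors, minimality from minimality of $s$). The only difference is that the paper dismisses unitality, associativity and symmetry as obvious, whereas you spell out the identification of both iterated products with the common closure in the triple product and the cancelled modulus $\mX|_{\hQ}+\mY|_{\hQ}+\mZ|_{\hQ}-2\mS|_{\hQ}$ — a worthwhile elaboration, and your argument for it is sound.
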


\begin{proof}
With an ambient morphism $\sX \to \sS$ fixed, we obtain an  endofunctor $\sX \boxtimes_{\sS} -$ on $\ulPSCH_{\sS}$.
Indeed, given a morphism $\sZ \to \sY$ in $\ulPSCH_{\sS}$, the natural morphism $\hX \times_{\hS} \hZ \to \hX \times_{\hS} \hY$ induces $p:\hP_{Z} \to \hP_{Y}$, where $\hP_{Z}$ (resp. $\hP_{Y}$) is the scheme theoretic closure of $\iX \times_{\iS} \iZ \to \hX \times_{\hS} \hZ$ (resp. $\iX \times_{\iS} \iY \to \hX \times_{\hS} \hY$).
Since we obviously have $p^\ast (\sX \boxtimes_{\sS} \sY)^\infty \subset (\sX \boxtimes_{\sS} \sZ)^\infty$ by the admissibility of $\sZ \to \sY$, we obtain an ambient morphism $\sX \boxtimes_{\sS} \sZ \to \sX \boxtimes_{\sS} \sY$.
We obviously have $\sS \boxtimes_{\sS} \sX = \sX = \sX \boxtimes_{\sS} \sS$. 
The associativity and symmetry of $\boxtimes_{\sS}$ are also obvious.

To prove the descent, it suffices to check that for any $\sX \to \sS$, the endofunctor $\sX \boxtimes_{\sS} -$ sends the class of admissible morphisms $\ul{\Sigma}$ to $\ul{\Sigma}$. 
Take $(\pi: \sZ \to \sY) \in \ul{\Sigma}$. 
Then, similarly as above, the proper birational morphism $\ol{\pi} : \hZ \to \hY$ induces a proper birational morphism $p:\hP_{Z} \to \hP_{Y}$, and we have $p^\ast (\sX \boxtimes_{\sS} \sY)^\infty = (\sX \boxtimes_{\sS} \sZ)^\infty$ by the minimality of $\pi$.
Therefore, the induced morphism $\sX \boxtimes_{\sS} \sZ \to \sX \boxtimes_{\sS} \sY$ belongs to $\ul{\Sigma}$, as desired.
\end{proof}

Moreover, we have:

\begin{lemma}\label{lem:boxtimes-to-times}
For any $\sX, \sY \in \ulMSCH_{\sS}$, there exists a unique morphism in $\ulMSCH_{\sS}$ of the form
\[
\sX \boxtimes_{\sS} \sY \to \sX \times_{\sS} \sY
\]
which induces $\id_{\iX \times_{\iS} \iY}$ on the interior. 
\end{lemma}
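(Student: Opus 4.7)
Uniqueness is immediate. If two morphisms $\sX \boxtimes_{\sS} \sY \rightrightarrows \sX \times_{\sS} \sY$ in $\ulMSCH_{\sS}$ both induce $\id_{\iX \times_{\iS} \iY}$ on the interior, then they must coincide by Lemma~\ref{lem:coincidence2}~(2). So the real task is existence, which I plan to produce from the universal property of the categorical fibre product supplied by Theorem~\ref{thm:pullback}.

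First I would reduce to the case that $\sX \to \sS$ and $\sY \to \sS$ are represented by ambient morphisms; this is legitimate since both $\boxtimes_{\sS}$ and $\times_{\sS}$ descend to $\ulMSCH_{\sS}$. Write $\hP_0$ for the scheme-theoretic closure of $\iX \times_{\iS} \iY$ in $\hX \times_{\hS} \hY$, so that by Def.~\ref{def:tensor-S} the total space of $\sX \boxtimes_{\sS} \sY$ is $\hP_0$, equipped with the modulus $\mX|_{\hP_0} + \mY|_{\hP_0} - \mS|_{\hP_0}$ (the relevant pullbacks being effective Cartier divisors by the remark made in Def.~\ref{def:tensor-S}).

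The crux of the argument is to promote the two canonical projections to ambient $\sS$-morphisms
\[ p : \sX \boxtimes_{\sS} \sY \to \sX, \qquad q : \sX \boxtimes_{\sS} \sY \to \sY, \]
whose underlying morphisms of schemes are the projections $\hP_0 \to \hX$ and $\hP_0 \to \hY$ induced by the inclusion $\hP_0 \subseteq \hX \times_{\hS} \hY$. The only nontrivial point is the modulus condition. For $p$, admissibility of $\sY \to \sS$ gives $\mY \geq \mS|_{\hY}$ on $\hY$, an inequality which passes to $\hP_0$, so that $\mY|_{\hP_0} - \mS|_{\hP_0}$ is an effective Cartier divisor and consequently
\[ \mX|_{\hP_0} + \mY|_{\hP_0} - \mS|_{\hP_0} \;=\; \mX|_{\hP_0} + (\mY|_{\hP_0} - \mS|_{\hP_0}) \;\geq\; \mX|_{\hP_0}. \]
The check for $q$ is symmetric. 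The compositions of $p$ and $q$ with the structure morphisms $\sX \to \sS$ and $\sY \to \sS$ agree, because both factor as $\hP_0 \hookrightarrow \hX \times_{\hS} \hY \to \hS$.

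Finally I would invoke the universal property of $\sX \times_{\sS} \sY$ from Theorem~\ref{thm:pullback}, applied to the pair $(p, q)$, to produce a (unique) morphism $\sX \boxtimes_{\sS} \sY \to \sX \times_{\sS} \sY$ in $\ulMSCH_{\sS}$ compatible with the two structural projections of $\sX \times_{\sS} \sY$. The induced morphism on interiors is determined by its compositions with the projections to $\iX$ and $\iY$, which by construction are the two projections of $\iX \times_{\iS} \iY$ onto its factors; hence this induced morphism is $\id_{\iX \times_{\iS} \iY}$, as required. There is no real obstacle here: the subtraction of $\mS|_{\hP_0}$ in the definition of $\sX \boxtimes_{\sS} \sY$ is precisely what is needed to make the projections admissible, after which the universal property of the categorical product does the rest.
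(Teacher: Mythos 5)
Your proof is correct, but it reaches the conclusion by a different route than the paper. The paper's proof is an explicit construction: it observes that the total space $\hQ$ of $\sX \times_{\sS} \sY$ is by definition the blowup $q\colon \hQ \to \hP$ of the total space $\hP$ of $\sX \boxtimes_{\sS} \sY$ along $\mX|_{\hP} \times_{\hP} \mY|_{\hP}$, and then exhibits the desired morphism as the roof $\sX \boxtimes_{\sS} \sY \leftarrow (\hQ, q^*\mP) \rightarrow \sX \times_{\sS} \sY$, the whole point being the divisor inequality $q^*\mP \geq \mQ$, which reduces to $\mS|_{\hQ} \leq E$ for $E$ the exceptional divisor. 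You instead avoid touching the blowup: you promote the two projections $\hP \to \hX$, $\hP \to \hY$ to admissible $\sS$-morphisms out of $\sX \boxtimes_{\sS} \sY$ (the inequalities $\mY|_{\hP} \geq \mS|_{\hP}$ and $\mX|_{\hP} \geq \mS|_{\hP}$ you check are exactly what makes $E \geq \mS|_{\hQ}$ hold in the paper's version, so the same admissibility of $\sX \to \sS$ and $\sY \to \sS$ is doing the work in both arguments) and then invoke the universal property of $\sX \ambtimes_{\sS} \sY$ from Theorem~\ref{thm:pullback}. Your approach is cleaner and shorter, and correctly handles uniqueness via Lemma~\ref{lem:coincidence2}~(2); what it gives up is the explicit representative of the morphism as a concrete roof over the blowup, which the paper reuses in the computations of the following section (e.g.\ Proposition~\ref{prop:tensor-fiber}). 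Note also that unwinding the proof of Theorem~\ref{thm:pullback} in your situation reproduces exactly the blowup $\hQ \to \hP$, so the two arguments are ultimately the same construction packaged at different levels of abstraction.
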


\begin{proof}
Since both sides of the map is functorial on $\ulMSCH_{\sS}$, we may assume that $\sX \to \sS$ and $\sY \to \sS$ are ambient by calculus of fractions. 
By definition, we have
\[
\hP := \ol{\sX \boxtimes_{\sS} \sY} = \text{the scheme theoretic closure of $\iX \times_{\iS} \iY \to \hX \times_{\hS} \hY$},
\]
\[
\hQ := \ol{\sX \times_{\sS} \sY} = \text{the blow up of $\hP = \ol{\sX \boxtimes_{\sS} \sY}$ along $\mX |_{\hP} \times_{\hP} \mY |_{\hP}$}.
\]
Therefore, we have a canonical blow-up $q:\hQ \to \hP$ which is an isomorphism over $\iX \times_{\iS} \iY$.
Set $\mP := (\sX \boxtimes_{\sS} \sY)^\infty$ and $\mQ := (\sX \times_{\sS} \sY)^\infty$.
To conclude the proof, it suffices to show that $q^* \mP \supset \mQ$ (Note the direction of the inclusion!).
This follows from the inclusion $\mS |_{\hQ} \subset E$, where $E=q^{-1} (\mX |_{\hP} \times_{\hP} \mY |_{\hP})$ denotes the exceptional divisor, which holds by the admissibility of $\sX \to \sS$ and $\sY \to \sS$.
\end{proof}

In some special cases, the box tensor product $\boxtimes_{\sS}$ is computed easily.

\begin{lemma}\label{lem:minimal-boxtensor}
Let $f : \sX \to \sS$ and $g : \sT \to \sS$ be ambient morphisms.

\begin{enumerate}
\item If $\mX \times_{\hS} \hY$ and $\hX \times_{\hS} \mY$ are effective Cartier divisors on $\hX \times_{\hS} \hY$, then $\mS \times_{\hS} (\hX \times_{\hS} \hY)$ is also an effective Cartier divisor, and we have
\[
\sX \boxtimes_{\sS} \sT = (\hX \times_{\hS} \hY, \mX \times_{\hS} \hY + \hX \times_{\hS} \mY - \mS \times_{\hS} (\hX \times_{\hS} \hY) ).
\]

\item Assume that $g$ is minimal and $\ol{g}$ is flat.
Then we have
\[
\sX \boxtimes_{\sS} \sT = (\hX \times_{\hS} \hT , \mX \times_{\hS} \hT) = \sX \times_{\sS} \sT.
\]
\end{enumerate}
\end{lemma}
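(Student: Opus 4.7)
For part (1), the plan is to show that the scheme-theoretic closure $\hP_0$ of Def.~\ref{def:tensor-S} coincides with $W := \hX \times_{\hS} \hT$, after which the formula in the statement reads off directly from the box product definition. Admissibility of $\sX \to \sS$ and $\sT \to \sS$ forces both $\iX \to \hS$ and $\iT \to \hS$ to factor through $\iS$, so the fibre products $\iX \times_{\iS} \iT$ and $\iX \times_{\hS} \iT$ coincide as open subschemes of $W$. Locally on affine charts, this common open subscheme is $W[1/(xt)]$ for local generators $x,t$ of $\mX|_W$ and $\mT|_W$; since $x$ and $t$ are nonzero divisors by hypothesis, so is $xt$, and the open complement $W[1/(xt)]$ is schematically dense in $W$. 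Hence $\hP_0 = W$. The fact that $\mS|_W$ is itself an effective Cartier divisor then follows from Lem.~\ref{lem:pb-dense} applied to the projection $W \to \hS$, once one notes that the preimage of $\iS$ in $W$ contains the already-schematically-dense subscheme $\iX \times_{\iS} \iT$ and hence has scheme-theoretic image equal to $W$. The modulus formula in Def.~\ref{def:tensor-S} then yields the claim.

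For part (2), I plan to reduce to part (1). By minimality, $\mT = \hT \times_{\hS} \mS$ and $\iT = \hT \times_{\hS} \iS$, and flatness of $\bar g$ makes $W \to \hX$ flat as the base change of $\bar g$, so $\mX|_W$ is immediately an effective Cartier divisor. To get $\mT|_W$ as an effective Cartier divisor, I apply Lem.~\ref{lem:pb-dense} to the projection $W \to \hT$: the preimage of $\iT$ under this projection is $\hX \times_{\hS} \iT$, and this contains $\iX \times_{\hS} \iT = \iX \times_{\iS} \iT$, already shown schematically dense in $W$ by the part (1) argument. Hence part (1) applies. A direct computation using minimality and associativity of fibre products shows
\[ \mT|_W = \hX \times_{\hS} \mT = \hX \times_{\hS} \hT \times_{\hS} \mS = \mS|_W \]
as closed subschemes of $W$, and therefore also as effective Cartier divisors. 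The box modulus thus collapses to $\mX|_W + \mT|_W - \mS|_W = \mX|_W$. The identification with the categorical fibre product $\sX \times_{\sS} \sT$ then comes from Prop.~\ref{prop:minFibPro} together with the flat case of Lem.~\ref{lemm:minProduct}, which together say that under minimality plus flatness, $\sT \ambtimes_{\sS} \sX = (W, \mX|_W)$ and represents the categorical fibre product.

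The main obstacle will lie in tracking which pullbacks of the moduli remain effective Cartier divisors, since this is what justifies the formal subtraction $-\mS|_W$ appearing in the box modulus. Lem.~\ref{lem:pb-dense} is the key tool throughout, and the actual bookkeeping amounts to identifying which open subschemes — the interior $\iX \times_{\iS} \iT$ versus the preimages of $\iS, \iX, \iT$ under various projections — are schematically dense in $W$. A distinct subtlety in part (2) is that $\hX \to \hS$ is not assumed flat, so $\mS|_W$ cannot simply be obtained by pulling back along $W \to \hX \to \hS$ via flatness of one factor; instead, minimality reroutes the argument through the flat morphism $\hT \to \hS$, and simultaneously provides the cancellation $\mT|_W = \mS|_W$ that collapses the box modulus to the ambient modulus of the categorical fibre product.
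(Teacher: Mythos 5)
Your part (1) is correct and is, in expanded form, the paper's own argument: the point is that $\iX\times_{\iS}\iT=\iX\times_{\hS}\iT$ is the complement in $W:=\hX\times_{\hS}\hT$ of the effective Cartier divisor $\mX|_W+\mT|_W$, hence schematically dense (this is your local $W[1/(xt)]$ computation, i.e.\ Prop.~\ref{prop:always-dense} applied to $(W,\mX|_W+\mT|_W)$), so the closure $\hP_0$ of Def.~\ref{def:tensor-S} is all of $W$, and Lem.~\ref{lem:pb-dense} applied to $W\to\hS$ gives the Cartierness of $\mS|_W$.

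Part (2), however, contains a circularity. You establish that $\mT|_W$ is an effective Cartier divisor by applying Lem.~\ref{lem:pb-dense} to $W\to\hT$, and you verify its density hypothesis by declaring $\iX\times_{\iS}\iT$ ``already shown schematically dense in $W$ by the part (1) argument.'' But that argument inverts $xt$, where $t$ is a local generator of $\mT|_W$, and it needs $t$ to be a nonzero divisor --- which is exactly the effective-Cartierness of $\mT|_W$ you are in the middle of proving. The conclusion is true and the repair is short. Either: (a) by minimality $\iT=\hT\times_{\hS}\iS$ and $\iX\to\hS$ factors through $\iS$, so $\iX\times_{\iS}\iT=\iX\times_{\hS}\hT=\iX\times_{\hX}W$, which is the base change along the flat morphism $W\to\hX$ of the quasi-compact schematically dense open immersion $\iX\to\hX$ (Prop.~\ref{prop:always-dense}, Lem.~\ref{lem:qc-interior}) and hence schematically dense --- this is verbatim the second half of the proof of Lem.~\ref{lemm:minProduct}; or (b), contrary to your closing remark that this route is unavailable, note that $\mS|_{\hX}$ is already an effective Cartier divisor on $\hX$ by Prop.~\ref{prop:pullbackOK} (admissibility of $f$), so $\mT|_W=\mS|_W=(\mS|_{\hX})|_W$ is a pullback along the flat $W\to\hX$ of an effective Cartier divisor and needs no density argument at all. (Option (b) is also what the paper's ``pullbackability of $\mX$ implies that of $\mT$ by minimality'' means: locally the generator of $\mS|_W$ divides that of $\mX|_W$, so being a nonzero divisor transfers down.) The remainder of your part (2) --- the identity $\mT|_W=\mS|_W$ collapsing the box modulus to $\mX|_W$, and the identification with $\sX\times_{\sS}\sT$ via Lem.~\ref{lemm:minProduct} and Prop.~\ref{prop:minFibPro} --- is correct.
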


\begin{proof}
(1): By Proposition \ref{prop:always-dense} and the assumptions, the open immersion $\iX \times_{\iS} \iY \to \hX \times_{\hS} \hY$ is scheme theoretically dense. 
The rest of the assertion follows immediately from this and by construction of $\boxtimes$. 

(2): This is immediate from (1), noting that the pullbackability of $\mX$ implies that of $\mT$ (by minimality of $\sT \to \sS$).
\end{proof}

\section{Local comparison of categorical product and box product}

In general, the morphism in Lemma \ref{lem:boxtimes-to-times} is not an isomorphism. 
However, the next proposition shows that in a Zariski local, $\bcube \otimes (-)$-invariant setting, it becomes an isomorphism. The technique is a generalisation of the one used in \cite[Lem.10]{KS19} to show that $(\PP^2, \PP^1)$ is contractible.

\begin{defi}
Let $\sX$ be a modulus pair and $\sL \in \Pic(\hX)$ a line bundle on $\hX$. We define \marginpar{\fbox{$\bcube_\sX(\sL)$}}\index[not]{$\bcube_\sX(\sL)$}
\[ \bcube_\sX(\sL) = (\PP(\sL \oplus \OO_{\hX}), X^\infty + \{\infty\}) \]
where we use the same symbol for ~$X^\infty$ and its pullback to $\PP(\sL \oplus \OO_{\hX})$, and $\{\infty\} = \PP(\sL) \subseteq \PP(\sL \oplus \OO_{\hX})$. 
\end{defi}

Consider the following three modulus pairs, associated to a pair $\sX = ({\hX}, {\mX})$. Cf. the diagram below.
\begin{enumerate}
 \item 
 \[ {\hX} \ambtimes \bcube = \sA = (Bl_{{\hX} {\times} \PP^1} ({\mX} \cap \{\infty\}), \pi^*{\mX} {+} \pi^*\{\infty\} {-} E). \]
 The total space is the blowup of ${\hX} {\times} \PP^1$ in the intersection of the two pullbacks ${\mX} \times \PP^1$ and $\{\infty\} \times {\hX}$, and the divisor is the sum of these two pullbacks minus the exceptional divisor. This is a model for ${\hX} \times \bcube$ in $\ulMSCH$. 
 \item 
 \[ \bcube_\sX(\sI_{{\mX}}) = \sB = ({\hB}, {\mB}) = (\PP(\sI_{{\mX}} \oplus \sO_{{\hX}}), {\mX} + \{\infty\}). \]
The total space is the $\PP^1$-bundle associated the the locally free $\sO_{{\hX}}$-module $\sI_{{\mX}} \oplus \sO_{{\hX}}$ where $\sI_{{\mX}}$ is the locally free rank one ideal sheaf associated to the effective Cartier divisor ${\mX}$. The divisor is the sum of the pullback of ${\mX}$ from the base ${\hX}$, and the divisor at infinity $\{\infty\} = \PP(\sI_{{\mX}} )$ associated to the canonical quotient $\sI_{{\mX}} \oplus \sO_{{\hX}} \to \sI_{{\mX}}$.
 \item 
 \[ \sC = (Bl_{{\hB}}({\mX} \cap \{0\}), \beta^*{\mB}). \]
In addition to the divisor $\{\infty\}$ on ${\hB}$, there is the divisor at zero $\{0\} = \PP(\sO_{{\hX}})$ associated to the other canonical quotient $\sI_{{\mX}} \oplus \sO_{{\hX}} \to \sO_{{\hX}}$. To obtain ${\hC}$, we blowup the intersection ${\mX} \cap \{0\}$. The divisor is the pullback of ${\mB}$ along the canonical projection $\beta: {\hC} \to {\hB}$.
\end{enumerate}

\begin{figure}
\[ \xymatrix@R=0pt{
& \sX \otimes \bcube & \ar[l]_-\pi {\hX} \ambtimes \bcube & = \sA \\
\sB = & \bcube_\sX(\sI_{{\mX}}) & \ar[l]^-\beta \sC
} \]
Warning: $\pi$ is not admissible!
\begin{center}
\includegraphics[width=10cm]{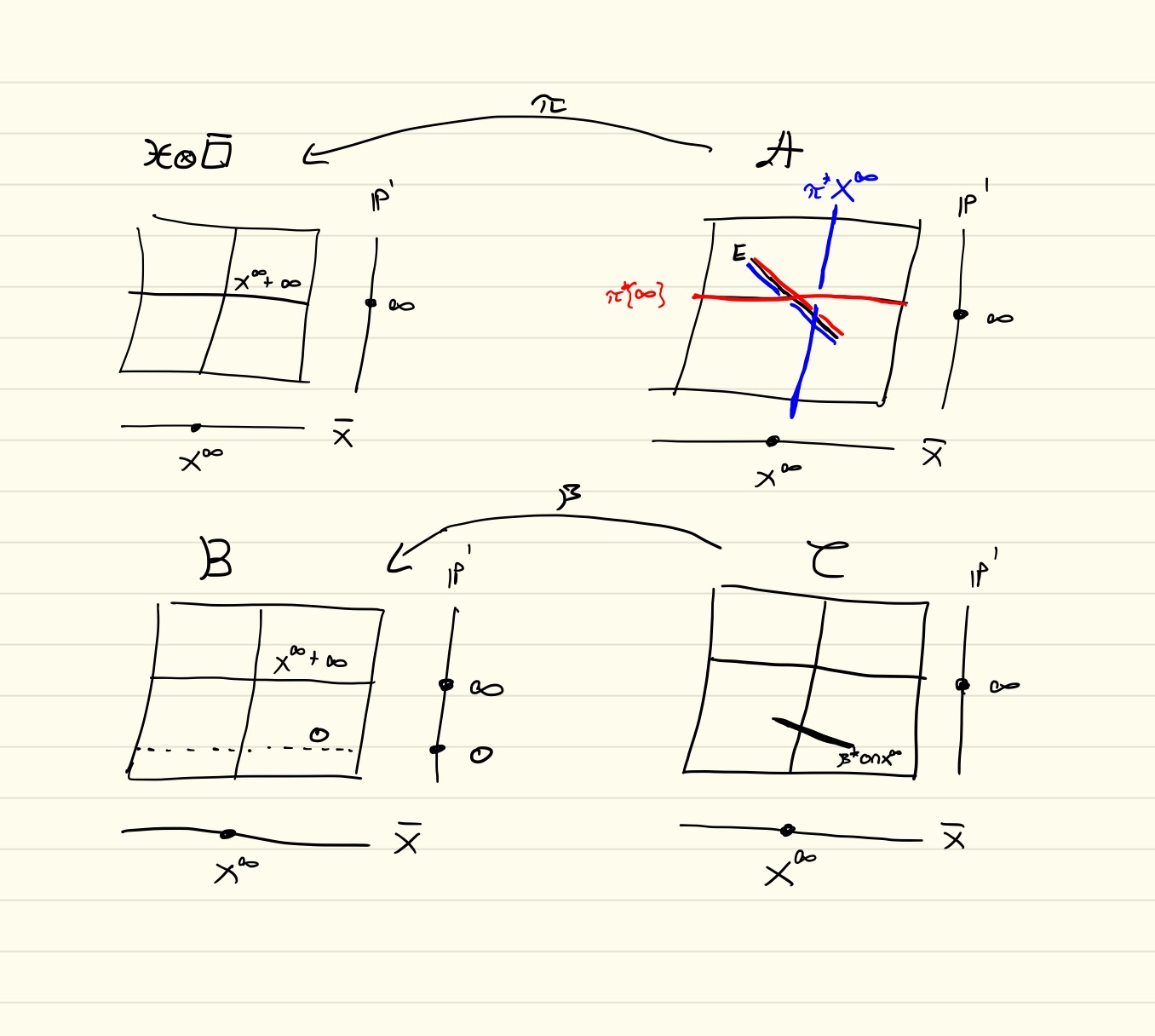}
\end{center}
\end{figure}

\begin{prop} \label{prop:AisoC}
There is an \emph{ambient} isomorphism $\sA \cong \sC$. That is, an isomorphism of schemes ${\hA} \cong {\hC}$ which identifies the divisors ${\mA}$ and ${\mC}$. Consequently, there is an admissible blowup
\[ \bcube\ambtimes {\sX}   \to \bcube_\sX(\sI_{{\mX}}) \]
and therefore, an isomorphism
\[ \bcube \times {\sX} \cong \bcube_\sX(\sI_{{\mX}}) \qquad \in \ulMSCH \]
\end{prop}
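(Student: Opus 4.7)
The plan is to construct the ambient isomorphism $\sA \cong \sC$ locally on $\hX$ and then observe that the local isomorphism is manifestly independent of the auxiliary choice made, so that it glues to a global isomorphism. First I would work on an affine open $U = \Spec A \subseteq \hX$ on which $\sI_{\mX}$ is generated by a nonzerodivisor $f$. Over such $U$ the scheme $\hA|_U$ is covered by the three affine charts
\[ \Spec A[a],\qquad \Spec A[\mu]\ (\text{with } b = f\mu),\qquad \Spec A[b,\nu]/(b\nu - f), \]
where $a,b$ are the usual coordinates on $\PP^1$ and the last two charts arise from the blowup of $\Spec A[b]$ at $V(f, b)$. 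Similarly, $\hC|_U$ is covered by
\[ \Spec A[\sigma]\ (\text{with } p = f\sigma),\qquad \Spec A[p,\tau]/(p\tau - f),\qquad \Spec A[q], \]
where $p$ and $q = 1/p$ are the coordinates on the two standard charts of $\hB|_U \cong \PP^1_U$ (relative to the trivialization $\sI_\mX|_U \cong A$ induced by $f$), and the first two charts arise from the blowup of $\Spec A[p]$ at $V(f, p)$.

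Next I would match these charts via the identifications
\[ a \leftrightarrow \sigma,\qquad \mu \leftrightarrow q,\qquad (b, \nu) \leftrightarrow (\tau, p), \]
where the swap of $\nu$ with $p$ (rather than with $\tau$) is essential. With the identifications in place, I would verify that the three pairwise overlap relations on the $\hA$ side, namely $fa\mu = 1$, the pair $b = 1/a$ with $\nu = fa$, and the pair $b = f\mu$ with $\mu\nu = 1$, correspond term-for-term to the relations $f\sigma q = 1$, the pair $\tau = 1/\sigma$ with $p = f\sigma$, and the pair $\tau = fq$ with $pq = 1$ on the $\hC$ side. The modulus divisors also match chart-by-chart: in the three paired charts $\mA$ is cut out by $f$, by $f\mu$, and by $b\nu = f$, while $\mC$ is cut out by $f$, by $fq$, and by $p\tau = f$, and the identifications convert each expression into the next.

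A priori the main subtlety would be that these charts depend on the local generator $f$, but the matching turns out to be manifestly independent of this choice. Under a change $f \mapsto uf$ with $u \in \Gamma(U, \OO^*)$, direct bookkeeping shows that $\mu$ and $q$ both rescale by $u^{-1}$, that $\nu$ and $p$ both rescale by $u$, and that $a, b, \sigma, \tau$ are all invariant; thus every pair in the identification scales by the same power of $u$. Consequently the local isomorphisms on overlapping affines $U \cap V$ agree, yielding a global ambient isomorphism $\sA \cong \sC$.

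Once the ambient isomorphism is in hand, the second half of the statement is formal. By construction the projection $\sC \to \sB := \bcube_\sX(\sI_\mX)$ is minimal, proper (as a blowup morphism), and an isomorphism over the interior (since the blowup center $\mX \cap \{0\}$ lies inside the modulus $\mB$), hence an abstract admissible blowup. Composing with the isomorphism $\sA \cong \sC$ produces an abstract admissible blowup $\bcube \ambtimes \sX = \sA \to \bcube_\sX(\sI_\mX)$. Since $\bcube \ambtimes \sX$ represents the categorical fibre product $\bcube \times \sX$ in $\ulMSCH$ by Theorem~\ref{thm:pullback}, and since abstract admissible blowups become invertible in $\ulMSCH$, the claimed isomorphism $\bcube \times \sX \cong \bcube_\sX(\sI_\mX)$ in $\ulMSCH$ follows.
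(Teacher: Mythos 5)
Your proof is correct and follows essentially the same route as the paper's: a direct computation on the three standard affine charts of each blowup, a term-by-term matching of generators, overlap relations and modulus divisors (your $a,\mu,(b,\nu)$ versus $\sigma,q,(\tau,p)$ are exactly the paper's subrings $R[t]$, $R[\tfrac{1}{ft}]$, $R[\tfrac{1}{t},tf]$ of $R[\tfrac{1}{f}][t,\tfrac{1}{t}]$), followed by the formal deduction via Theorem~\ref{thm:pullback} and invertibility of abstract admissible blowups. The only cosmetic difference is that the paper makes the charts \emph{equal} as subrings of $R[\tfrac{1}{f}][t,\tfrac{1}{t}]$ so that gluing and independence of the generator are automatic, whereas you verify the $f\mapsto uf$ rescaling explicitly; both handle the gluing correctly.
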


\begin{rema}
Note that the isomorphism above is not an equality on the interior. There is a twisting by the parameter for $\bcube$ which happens.
\end{rema}

\begin{proof}
The first observation is that the constructions of the schemes ${\hA}, {\hB}, {\hC}$ and the divisors ${\mA}, {\mB}, {\mC}$ are functorial in ${\hX}$ for open immersions ${\hU} \to {\hX}$. So it suffices to define isomorphisms $\sA \cong \sC$ in the case ${\hX}$ is affine and ${\mX}$ is globally free, which are functorial for open immersions. %
In the case ${\hX} = \Spec(R)$ is affine and ${\mX} = (f)$ is globally free,  all rings that we need can be considered as subrings of $R[\tfrac{1}{f}][t, \tfrac{1}{t}]$. This observation is more than bookkeeping or notation. It ensures that there is a canonical way to glue the various affine schemes together. %
An alternative point of view is to replace $R$ and $\Spec$ everywhere with $\sO_{{\hX}}$ and $\ul{\Spec}$. Then instead of all rings being subrings of $R[t, \tfrac{1}{tf}]$, all $\sO_{{\hX}}$-algebras are subalgebras of $\sO_{X^\circ}[t, \tfrac{1}{t}]$.

The scheme ${\hX} \times \PP^1$ has the standard open covering 
\[ 
 \Spec(R[t]), \qquad 
\Spec(R[\tfrac{1}{t}]). \] 
In this notation, the ideal of the closed subscheme ${\mX} \cap \{\infty\}$ is $(f, \tfrac{1}{t})$ (it is contained inside $\Spec(R[\tfrac{1}{t}])$ and does not intersect $\Spec(R[t])$). The blowup of the closed subscheme ${\mX} \cap \{\infty\} = (f, \tfrac{1}{t})$ then has the standard open covering\footnote{\label{footnote:blowupcalculation} Here we use the fact that for some ideal $I = (a, b)$ of a ring $A$, the blowup is open covered by the affine schemes of $A[\tfrac{a}{b}]$ and $A[\tfrac{b}{a}]$. In our case $A = R[\tfrac{1}{t}], a = f, b = \tfrac{1}{t}$.} 
\[ {\hA} \supset \qquad 
\Spec(R[t]), \qquad 
\Spec(R[\tfrac{1}{t}, tf]), \qquad 
\Spec(R[\tfrac{1}{t}, \tfrac{1}{ft}]). \]
So we have described ${\hA}$. Let is describe ${\mA}$. On the above three affines, 
\begin{enumerate}
 \item the divisor $\pi^*{\mX}$ is $(f)$ (on all three), 
 \item the divisor $\pi^*\{\infty\}$ is $(1), (\tfrac{1}{t}), (\tfrac{1}{t})$ respectively, and 
 \item the divisor $E$ is $(1), (\tfrac{1}{t}), (f)$ respectively. Indeed, on the second open we have $(\tfrac{1}{t}) = (tf \cdot \tfrac{1}{t}, \tfrac{1}{t}) = (f, \tfrac{1}{t})$ and on the third open $(f) = (f, f \cdot \tfrac{1}{ft}) = (f, \tfrac{1}{t})$. 
\end{enumerate}
Hence, the divisor ${\mA} = \pi^*{\mX} + \pi^*\{\infty\} - E$, on the above three opens, is respectively
\[ {\mA} = (f), (f), (\tfrac{1}{t}). \]
Now we turn to $\sB$ and $\sC$. We continue to consider all rings as subrings of $R[\tfrac{1}{f}][t, \tfrac{1}{t}]$. The scheme ${\hB}$ has the standard open covering 
\[ 
\Spec(R[ft]), \qquad 
\Spec(R[\tfrac{1}{ft}]). 
\] Of course, since we are assuming ${\mX}$ is gobally free, we could also have used $\Spec(R[t]), \Spec(R[\tfrac{1}{t}])$, but the presence of the coefficient $f$ is what produces the twisted bundle $\PP(\sI_{X^{\infty}} \oplus \sO_{{\hX}})$ instead of $\PP^1_{{\hX}}$ when we glue the locally principal pieces together. The divisor ${\mB} = {\mX} + \{\infty\}$ on these two open affines is $(f)$ and $(\tfrac{1}{t}) = (f \cdot \tfrac{1}{ft})$ respectively. The divisor $\{0\}$ on these two opens is $(ft)$ and $(1)$ respectively. The intersection $\{0\} \cap {\mX}$ therefore has ideal $(ft, f)$ on $\Spec(R[ft])$, and is disjoint from $\Spec(R[\tfrac{1}{ft}])$. The blowup ${\hC}$ has the standard open covering\footnote{Here we have used the same observation as Footnote~\ref{footnote:blowupcalculation} with $A = R[ft]$, $a = ft$, $b = f$ to construct the first two affines.}
\[
{\hC} \supset \qquad 
\Spec(R[ft, t]), \qquad
\Spec(R[ft, \tfrac{1}{t}]), \qquad 
\Spec(R[\tfrac{1}{ft}]). \] 
The divisor ${\mC} = \beta^*{\mB}$ on these three affines is, respectively,
\[ {\mC}: \qquad (f), \qquad (f), \qquad (\tfrac{1}{t}).  \]
At this point, one sees directly that ${\hC}$ (resp. ${\mC}$) is \emph{equal} to ${\hA}$ (resp. ${\mA}$). One also checks that all constructions are independent of the choice of generator of $(f)$, i.e., for any unit $u \in R^*$ replacing $f$ with $uf$ gives all the same rings. Also, everything is functorial in $R$. 
\end{proof}

\section{Compatibility between fiber product and box product}

For later use, we study further the relation between fiber product and box product. The following result shows that the box product preserves fiber product. 

\begin{prop}\label{prop:tensor-fiber}
Let $\sT \to \sS$ be a morphism in $\ulMSCH$, and let $\sY \to \sX$, $\sZ \to \sX$ be morphisms in $\ulMSCH_{\sS}$.
Then there exists a natural isomorphism in $\ulMSCH_{\sS}$:
\[
(\sY \times_{\sX} \sZ) \boxtimes_{\sS} \sT \xrightarrow{\cong} (\sY \boxtimes_{\sS} \sT) \times_{\sX \boxtimes_{\sS} \sT } (\sZ \boxtimes_{\sS} \sT).
\]
\end{prop}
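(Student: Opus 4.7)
By Prop.~\ref{prop:cal-frac} and Cor.~\ref{coro:invertSliceMSCH}, I may assume that $\sT \to \sS$, $\sY \to \sX$, and $\sZ \to \sX$ are represented by ambient morphisms, and I write $\sP := \sY \times_{\sX} \sZ$, which by Thm.~\ref{thm:pullback} is represented by the ambient product $\sY \ambtimes_{\sX} \sZ$ of Constr.~\ref{cons:admBlowup}. The comparison morphism $\alpha$ arises from functoriality of $\boxtimes_{\sS}$ applied to the two projections $\sP \to \sY$, $\sP \to \sZ$: these yield morphisms $\sP \boxtimes_{\sS} \sT \to \sY \boxtimes_{\sS} \sT$ and $\sP \boxtimes_{\sS} \sT \to \sZ \boxtimes_{\sS} \sT$, compatible over $\sX \boxtimes_{\sS} \sT$, and by the universal property of the fiber product (Thm.~\ref{thm:pullback}) they factor through
\[
\alpha : (\sY \times_{\sX} \sZ) \boxtimes_{\sS} \sT \to (\sY \boxtimes_{\sS} \sT) \times_{\sX \boxtimes_{\sS} \sT } (\sZ \boxtimes_{\sS} \sT).
\]
Computing interiors via Lem.~\ref{lemm:ambtimesProper}\eqref{lemm:lemm:ambtimesProper:Int} and Def.~\ref{def:tensor-S}, the morphism $\alpha^\circ$ is the canonical scheme-theoretic identification $(\iY \times_{\iX} \iZ) \times_{\iS} \iT \cong (\iY \times_{\iS} \iT) \times_{\iX \times_{\iS} \iT}(\iZ \times_{\iS} \iT)$.

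To show that $\alpha$ is an isomorphism in $\ulMSCH_\sS$, the plan is to construct a morphism $\beta$ in the opposite direction and then invoke Lem.~\ref{lem:coincidence2}(2): since both compositions $\alpha\beta$ and $\beta\alpha$ restrict to the identity on the interior (by the computation above), they will automatically equal the respective identities. For $\beta$: combining the two projections $\mathrm{RHS} \to \sY \boxtimes_{\sS} \sT \to \sY$ and $\mathrm{RHS} \to \sZ \boxtimes_{\sS} \sT \to \sZ$ (which are compatible over $\sX$ since both factor through $\sX \boxtimes_{\sS} \sT \to \sX$) with the universal property of $\sP = \sY \times_{\sX} \sZ$ produces $\mathrm{RHS} \to \sP$; combined with the projection $\mathrm{RHS} \to \sT$ coming from either box product factor, this yields a canonical morphism $\mathrm{RHS} \to \sP \times_{\sS} \sT$. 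The heart of the argument is to factor this through $\sP \boxtimes_{\sS} \sT$, i.e.~to produce a well-defined admissible ambient morphism $\beta : \mathrm{RHS} \to \sP \boxtimes_{\sS} \sT$ lifting the identity on interiors. On ambient spaces this is done by observing that the total space of $\mathrm{RHS}$ maps naturally to the scheme-theoretic closure $\hP_0$ of $\iP \times_{\iS} \iT$ inside $\hP \times_{\hS} \hT$ (by Prop.~\ref{prop:always-dense}, since the interiors of both sides coincide and are scheme-theoretically dense in the respective total spaces), and then verifying the modulus inequality.

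The main obstacle is precisely this admissibility check: tracking pullbacks of effective Cartier divisors through the various scheme-theoretic closures and blowups appearing in Constr.~\ref{cons:admBlowup} (for the fiber products on each side) and Def.~\ref{def:tensor-S} (for the box products), and verifying that the pullback of $(\sP \boxtimes_{\sS} \sT)^\infty = \mP|_{\hP_0} + \mT|_{\hP_0} - \mS|_{\hP_0}$ to the total space of $\mathrm{RHS}$ is dominated by $\mathrm{RHS}^\infty$. The key algebraic input is Lem.~\ref{lem:key}(2): given effective Cartier divisors $D_1, D_2 \leq D$ whose intersection $E := D_1 \cap D_2$ is itself an effective Cartier divisor, the combination $D_1 + D_2 - E$ also satisfies $D_1 + D_2 - E \leq D$. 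Applying this identity iteratively to the defining formulas for $\mP = \mY|_{\hP} + \mZ|_{\hP} - E_P$ (from the ambient product) and to the $-\mS|_{(-)}$ corrections (from each box product), and using Lem.~\ref{lem:pb-bu} to ensure pullbacks of the relevant divisors along the intermediate blowups remain effective Cartier, yields the required admissibility of $\beta$. Once this is secured, Lem.~\ref{lem:coincidence2}(2) finishes the proof and the isomorphism is visibly natural in $\sY$, $\sZ$, $\sX$, $\sT$, and $\sS$.
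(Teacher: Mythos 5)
Your overall architecture differs from the paper's and is in principle workable: you build an explicit two‑sided inverse $\beta$ and conclude via the faithfulness of the interior functor (Lem.~\ref{lem:coincidence2}(2)), whereas the paper never constructs an inverse. Instead it picks an ambient representative $g$ of the comparison map (after an \abb{} $p$), observes that both total spaces are proper over $(\hY\times_{\hX}\hZ)\times_{\hS}\hT$ and that $g^\circ$ is an isomorphism, and reduces everything to showing that $g$ is \emph{minimal} — i.e.\ to an \emph{equality} of the two divisors pulled back to a common total space — so that $g$ is an abstract admissible blowup and hence invertible in $\ulMSCH_\sS$. Both routes funnel into the same divisor computation, which is where your plan goes wrong.

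The gap is at what you call "the key algebraic input." Writing $E=\mY|_{\hP}\times_{\hP}\mZ|_{\hP}$, $F=(\sY\boxtimes_{\sS}\sT)^\infty|_{\hP}\times_{\hP}(\sZ\boxtimes_{\sS}\sT)^\infty|_{\hP}$ and $H=\mT|_{\hP}-\mS|_{\hP}$ (effective by admissibility of $\sT\to\sS$), expanding both moduli shows that the admissibility of your $\beta$ amounts to $F\leq E+H$. Lemma~\ref{lem:key}(2) cannot deliver this: it is a statement about the \emph{supremum} $D_1+D_2-E$ of two divisors being dominated by any common upper bound, and applying it to $D_1+H$, $D_2+H$ with upper bound $(D_1+D_2-E)+H$ produces exactly the \emph{reverse} inequality $E+H\leq F$ (which is the easy direction, already implicit in the construction of $\alpha$). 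What is actually needed is the identity $(D_1+H)\times_X(D_2+H)=(D_1\times_X D_2)+H$, i.e.\ distributivity of ideal products over sums, $I_1I_H+I_2I_H=(I_1+I_2)I_H$; this is the paper's Lemma~\ref{lem:DDH}, proved by a one‑line local computation and yielding $F=E+H$ on the nose. With that substitution your argument closes (and indeed gives equality of moduli, so your $\beta$ is minimal, not merely admissible); without it, the decisive inequality is unproved and the cited lemma points in the wrong direction.
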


\begin{proof}
It suffices to show that the natural morphism 
\[
f:(\sY \times_{\sX} \sZ) \boxtimes_{\sS} \sT \xrightarrow{} (\sY \boxtimes_{\sS} \sT) \times_{\sX \boxtimes_{\sS} \sT } (\sZ \boxtimes_{\sS} \sT),
\]
which is induced by the universal property of fiber product, is an isomorphism in $\ulMSCH$.
By calculus of fractions, we may assume that $\sT \to \sS$, $\sY \to \sX$ and $\sZ \to \sX$ are ambient, and that $\sY \times_{\sX} \sZ = \sY \ambtimes_{\sX} \sZ$.
Moreover, again by calculus of fractions, there exists a morphism $p:P \to (\sY \times_{\sX} \sZ) \boxtimes_{\sS} \sT$ which belongs to $\ul{\Sigma}$ such that the composite $g:=f \circ p$ is ambient.  

By construction, the above morphism $f$ induces on the interiors the identification
\[
(\iY \times_{\iX} \iZ) \times_{\iS} \iT = (\iY \times_{\iS} \iT) \times_{\iX \times_{\iS} \iT } (\iZ \times_{\iS} \iT),
\]
and that the ambient spaces of the source and the target of $f$ are proper over $(\hY \times_{\hX} \hZ) \times_{\hS} \hT = (\hY \times_{\hS} \hT) \times_{\hX \times_{\hS} \hT } (\hZ \times_{\hS} \hT)$.
Therefore, to prove that $f$ is an isomorphism in $\ulMSCH$, it suffices to show that the above ambient morphism $g$ is minimal.
That is, we need to check
\begin{equation}\label{eq:LR}
\ol{p}^* ((\sY \times_{\sX} \sZ) \boxtimes_{\sS} \sT)^\infty 
= \ol{g}^\ast ((\sY \boxtimes_{\sS} \sT) \times_{\sX \boxtimes_{\sS} \sT } (\sZ \boxtimes_{\sS} \sT))^\infty .
\end{equation}

We compute the left hand side of \eqref{eq:LR} as follows.
\begin{align*}
\text{(LHS)}
&=(\sY \times_{\sX} \sZ)^\infty |_{\hP} + \mT |_{\hP} - \mS |_{\hP} \\
&=(\mY |_{\hP} + \mZ |_{\hP} - E)
+ \mT |_{\hP} - \mS |_{\hP} \\
&= \mY |_{\hP} + \mZ |_{\hP} + \mT |_{\hP} - (E + \mS |_{\hP}),
\end{align*}
where we set $E = (\mX |_{\hP}) \times_{\hP} (\mY |_{\hP})$. 
Note that $E$ is an effective Cartier divisor on $\hP$ by construction. 
We compute the right hand side of \eqref{eq:LR} as follows.
\begin{align*}
\text{(RHS)}
&= (\sY \boxtimes_{\sS} \sT)^\infty |_{\hP} + (\sZ \boxtimes_{\sS} \sT)^\infty |_{\hP} - F \\
&= (\mY |_{\hP} + \mT |_{\hP} - \mS |_{\hP}) + (\mZ |_{\hP} + \mT |_{\hP} - \mS |_{\hP}) - F \\
&= \mY |_{\hP} + \mZ |_{\hP} + \mT |_{\hP} - (F+2\mS |_{\hP} - \mT |_{\hP}),
\end{align*}
where $F = (\sY \boxtimes_{\sS} \sT)^\infty |_{\hP} \times_{\hP} (\sZ \boxtimes_{\sS} \sT)^\infty |_{\hP}$, which is an effective Cartier divisor on $\hP$ by construction.
Therefore, we are reduced to proving the following claim.
\begin{claim}\label{claim:E-F}
$F = E + \mT |_{\hP} - \mS |_{\hP}$.
\end{claim}

For the proof of the claim, we need the following elementary lemma.

\begin{lemma}\label{lem:DDH}
Let $X$ be a scheme, $D_1,D_2,H$ effective Cartier divisors on $X$. 
Assume that $E:=D_1 \times_X D_2$ is also an effective Cartier divisor on $X$.
Then we have
\[
(D_1 + H) \times_{X} (D_2 + H) = E + H, 
\]
where $+$ denotes the sum of effective Cartier divisors. 
In particular, the left hand side is an effective Cartier divisor. 
\end{lemma}

\begin{proof}
This can be checked by an easy local computation. 
\end{proof}

We prove Claim \ref{claim:E-F} as follows:
\begin{align*}
F 
&= (\sY \boxtimes_{\sS} \sT)^\infty |_{\hP} \times_{\hP} (\sZ \boxtimes_{\sS} \sT)^\infty |_{\hP} \\
&= (\mY |_{\hP} + \mT |_{\hP} - \mS |_{\hP}) \times_{\hP} (\mZ |_{\hP} + \mT |_{\hP} - \mS |_{\hP}) \\
&=^\dag (\mY |_{\hP} \times_{\hP} \mZ |_{\hP}) + \mT |_{\hP} - \mS |_{\hP} \\
&= E + \mT |_{\hP} - \mS |_{\hP} ,
\end{align*}
where we obtain $=^\dag$ by Lemma \ref{lem:DDH} noting $\mT |_{\hP} - \mS |_{\hP}$ is an effective Cartier divisor by the admissibility of $\sT \to \sS$.
This finishes the proof.
\end{proof}


%

\appendix

\chapter{Miscellanea}



\section{Localisation as sheafification}

In this section we give two different ways of thinking about presheaves on localisations by identifying the three adjunctions below.

\begin{align*}
loc^*: \PSh(C) &\rightleftarrows \PSh(C[S^{-1}]) : loc_* \\
a_S: \PSh(C) &\rightleftarrows \PSh_{S\textrm{-loc}}(C) \\
a_\sigma: \PSh(C) &\rightleftarrows \Shv_\sigma(C) 
\end{align*}
Here, $loc^*$ is the left Kan extension along $C \to C[S^{-1}]$, 
, $loc_*$ is composition with $C \to C[S^{-1}]$, 
the functor $a_S$ is defined as 
\begin{equation} \label{eq:aS}
a_S: F \mapsto \left ( X \mapsto \varinjlim_{V \in S/X} F(V)\right )
\end{equation}
and $a_\sigma$ is sheafification where $\sigma$ is the topology whose covering sieves are those sieves containing an element of $S$. Finally, $\PSh_{S\textrm{-loc}}(C) \subseteq \PSh(C)$ is the full subcategory of presheaves which are \emph{$S$-local}\marginpar{\fbox{$S$-local}}\index{$S$-local} in the sense that $S$-local presheaves send elements of $S$ to isomorphisms.

\begin{prop} \label{prop:locPSh}
Suppose that $C$ is a category and $S$ is class of morphisms admitting a right calculus of fractions. Then the restriction 
$ loc_*: \PSh(C[S^{-1}]) \to \PSh(C)$
along the canonical functor $loc: C \to C[S^{-1}]$ induces an equivalence of categories 
\[ \PSh(C[S^{-1}]) \cong \PSh_{S\textrm{-loc}}(C). \]
In particular, $loc_*$ is fully faithful. Via this equivalence, the left Kan extension $loc^*: \PSh(C) \to \PSh(C[S^{-1}])$ is canonical identified with the functor $a_S$ defined in Eq.~\eqref{eq:aS}.
\end{prop}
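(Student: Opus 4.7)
The plan is to establish the equivalence by first showing $loc_*$ lands in $\PSh_{S\textrm{-loc}}(C)$ and is fully faithful there, then constructing an inverse via the universal property of $C[S^{-1}]$, and finally identifying $a_S$ with $loc^*$ by an adjointness argument.

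First I would check that $loc_*$ takes values in $S$-local presheaves: if $G \in \PSh(C[S^{-1}])$ and $s \in S$, then $loc(s)$ is an isomorphism in $C[S^{-1}]$, so $loc_*G(s) = G(loc(s))$ is an isomorphism of sets. For fully faithfulness, note that a presheaf on $C[S^{-1}]$ is a functor $C[S^{-1}]^\op \to \Set$; by the universal property of the localization such a functor corresponds uniquely to a functor $C^\op \to \Set$ sending morphisms in $S$ to isomorphisms, i.e., to an $S$-local presheaf. This already gives the equivalence $\PSh(C[S^{-1}]) \cong \PSh_{S\textrm{-loc}}(C)$ with $loc_*$ identified with the inclusion functor $\iota: \PSh_{S\textrm{-loc}}(C) \hookrightarrow \PSh(C)$.

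The main obstacle is showing that the formula $a_S F(X) := \varinjlim_{V \in S/X} F(V)$ genuinely defines a functor $\PSh(C) \to \PSh_{S\textrm{-loc}}(C)$ which is left adjoint to $\iota$. Functoriality in $X$ requires the right calculus of fractions: given any morphism $f: Y \to X$ in $C$, axiom (3) of Proposition~\ref{prop:cal-frac} (in the general categorical form) produces, for each $(V \stackrel{s}{\to} X) \in S/X$, a square completing to some $(V' \stackrel{s'}{\to} Y) \in S/Y$ with a morphism $V' \to V$; axiom (4) guarantees the induced map on colimits is independent of this choice, giving a well-defined $F(V) \to a_SF(Y)$ compatible over $S/X$, and hence $a_SF(X) \to a_SF(Y)$. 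To check that $a_SF$ is $S$-local, let $s: X' \to X$ be in $S$; composition with $s$ yields a functor $s_*: S/X' \to S/X$ which is cofinal because axiom (3) lets one refine any $V \to X \in S$ by some $V' \to X' \in S$ with a morphism $V' \to V$ over $X$, and axiom (4) ensures uniqueness up to further refinement. Cofinality of $s_*$ yields $a_SF(X) \xrightarrow{\sim} a_SF(X')$, as desired.

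Once $a_S$ is shown to land in $\PSh_{S\textrm{-loc}}(C)$, the adjunction $a_S \dashv \iota$ is straightforward: the canonical natural transformation $F \to \iota a_S F$ evaluated at $X$ is the structural map $F(X) \to \varinjlim_{V \in S/X} F(V)$, and when $F$ is already $S$-local this colimit is constant (the index category $S/X$ has $\id_X$ as a final object up to the fractions, making all transitions isomorphisms), so the unit is an isomorphism on $S$-local presheaves. This gives $a_S \dashv \iota$ by standard arguments, and since $loc^* \dashv loc_*$ with $loc_*$ identified with $\iota$, uniqueness of left adjoints yields a canonical natural isomorphism $loc^* \cong a_S$ under the equivalence $\PSh(C[S^{-1}]) \cong \PSh_{S\textrm{-loc}}(C)$.
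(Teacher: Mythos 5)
Your proposal is correct and follows the same route as the paper: the equivalence $\PSh(C[S^{-1}]) \cong \PSh_{S\textrm{-loc}}(C)$ is deduced from the universal property of the localisation, and the identification of $loc^*$ with $a_S$ is reduced to showing that $a_S$ is left adjoint to the inclusion of $S$-local presheaves, using the right calculus of fractions to verify that $a_SF$ is $S$-local. You spell out the details the paper leaves as "easy exercises" (cofinality via axiom (3), the unit being an isomorphism on $S$-local objects), and your conclusion of the adjunction via the unit plus uniqueness of left adjoints is an equivalent reformulation of the paper's direct check that $\hom(a_SF,G)=\hom(F,G)$ for $S$-local $G$.
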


\begin{proof}
The equivalence $\PSh(C[S^{-1}]) \cong \PSh_{S\textrm{-loc}}(C)$ follows from the universal property of $loc: C \to C[S^{-1}]$, so it suffices to show that $a_S$ is left adjoint to the inclusion. It's an easy exercise about right calculus of fractions to check that $a_SF$ sends elements of $S$ to isomorphisms, and it's also straight forward to check that if $G$ is $S$-local, then $\hom(a_SF, G) = \hom(F, G)$.
\end{proof}

\begin{prop} \label{prop:locShv}
Suppose that $C$ is a category and $S$ is class of morphisms admitting a right calculus of fractions. Suppose further that every morphism in $S$ is a categorical monomorphism, and let $\sigma$ be the topology whose coverings are sieves containing an element of $S$.

Then there is a canonical equality of subcategories 
\[ \Shv_\sigma(C) = \PSh_{S\textrm{-loc}}(C) \]
of $\PSh(C)$. %
Consequently, the two left adjoints to the two (equal) inclusions are equal. That is, sheafification is identified with $a_S$ from 

\end{prop}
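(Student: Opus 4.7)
The plan is to prove the equality of full subcategories $\Shv_\sigma(C) = \PSh_{S\text{-loc}}(C) \subseteq \PSh(C)$; the statement about left adjoints then follows formally from the uniqueness of adjoints.

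First I would verify that $\sigma$ is genuinely a Grothendieck topology. Stability under pullback uses the calculus of fractions: given a covering sieve $R$ on $X$ containing some $s : Y \to X$ in $S$, and any $f : Z \to X$, the calculus produces a commutative square
\[ \xymatrix@R=12pt{
W \ar[r]^g \ar[d]_t & Y \ar[d]^s \\
Z \ar[r]_f & X
} \]
with $t \in S$, and $t$ lies in $f^{-1}R$. Transitivity uses the fact that $S$ is closed under composition, and the maximal sieve is covering because $\id_X \in S$.

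Second, I would analyse the key consequence of the monomorphism hypothesis. For $s : Y \to X$ in $S$, the sieve $R_s \subseteq h_X$ generated by $s$ is exactly the image of $s_* : h_Y \to h_X$, and since $s$ is a categorical monomorphism, $s_*$ is a monomorphism of presheaves and induces a canonical isomorphism $h_Y \cong R_s$. Hence $\hom(R_s, F) \cong F(Y)$, and the sheaf condition for $R_s$ says precisely that $F(s) : F(X) \to F(Y)$ is a bijection. This immediately gives the implication $F \in \Shv_\sigma(C) \Rightarrow F \in \PSh_{S\text{-loc}}(C)$, since every $R_s$ is a covering sieve.

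The main technical step, and the one I expect to take a little thought, is the converse: if $F$ is $S$-local, then $F$ satisfies the sheaf condition for every $\sigma$-covering sieve $R$ on $X$, not merely for the minimal ones $R_s$. Pick some $s : Y \to X$ in $R \cap S$, so $R_s \subseteq R \subseteq h_X$. Injectivity of $F(X) \to \hom(R, F)$ is immediate, since the further restriction $\hom(R, F) \to \hom(R_s, F) = F(Y)$ composes with $F(X) \to \hom(R, F)$ to give the bijection $F(s)$. For surjectivity, take a compatible family $(t_f)_{f \in R}$; $S$-locality of $F$ produces the unique $u \in F(X)$ with $F(s)(u) = t_s$. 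To show $F(f)(u) = t_f$ for an arbitrary $f : Z \to X$ in $R$, apply the calculus of fractions to the cospan $Z \to X \leftarrow Y$ to obtain a commutative square as above with $t \in S$, so $sg = ft$. Using compatibility of the family for the morphisms $t : W \to Z$ and $sg = ft : W \to X$, and the fact that $F(t)$ is bijective, one deduces $F(f)(u) = t_f$. This closes the circle and gives the equality of subcategories.

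Finally, since both inclusions $\Shv_\sigma(C) \hookrightarrow \PSh(C)$ and $\PSh_{S\text{-loc}}(C) \hookrightarrow \PSh(C)$ are literally the same functor, their left adjoints $a_\sigma$ and $a_S$ (which exists by Prop.~\ref{prop:locPSh}) are canonically isomorphic. Thus, under the identification of Prop.~\ref{prop:locPSh}, sheafification for the topology $\sigma$ is computed by the explicit filtered colimit formula \eqref{eq:aS}.
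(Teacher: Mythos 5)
Your proof is correct and follows essentially the same approach as the paper: the key point in both is that, since each $s \in S$ is a categorical monomorphism, the sieve it generates is isomorphic to the representable presheaf, so the sheaf condition for that sieve is exactly $F(X) \cong F(Y)$. You are in fact more thorough than the paper, which only treats sieves generated by a single element of $S$ and leaves implicit the passage to arbitrary $\sigma$-covering sieves (and the verification that $\sigma$ is a topology, which the paper relegates to a remark about coverages); your explicit surjectivity argument via the calculus-of-fractions square and bijectivity of $F(t)$ fills this gap cleanly.
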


\begin{rema}
Notice that having a right calculus of fractions implies $S$ forms a ``coverage'' (a more general version of a pretopology applicable to categories without fibre products).
\end{rema}

\begin{proof}
Suppose that $F$ satisfies the sheaf condition, $f: Y \to X$ is in $S$, and consider the covering sieve $R = im(Y \to X) \subseteq X$ of some object $X \in C$. The sheaf condition is that 
\[ \hom_{\PSh}(X, F) = \hom_{\PSh}(R, F). \]
However, $f$ is a categorical monomorphism, so $R = Y$, so the sheaf condition becomes $F(X) = F(Y)$. That is, a presheaf $F$ is satisfies the sheaf condition if and only if it sends elements of $S$ to isomorphisms. 
\end{proof}


\section{Pro-objects} \label{chap:proObjects}

\subsection{Reminders} \label{sec:Proreminders}

\begin{defi}[{\cite[Def.I.2.7]{SGA41}}]
A category $I$ is \emph{cofiltered} if:
\begin{enumerate} \setcounter{enumi}{-1}
 \item $I$ is nonempty.
 \item For any two objects $a, b$ of $I$, there exists a diagram of the form $c {^\nearrow_\searrow}\overset{a}{\underset{b}{\vphantom{l}}}$.
 \item Every diagram of the form $j \overset{u}{\underset{v}{\rightrightarrows}} i$ fits into a diagram $k\stackrel{w}{\to} j \overset{u}{\underset{v}{\rightrightarrows}} i $ such that $uw = vw$.
\end{enumerate}
\end{defi}

\begin{defi}[{\cite[I.8.2.1]{SGA41}}]
A \emph{pro-object} $(P_{λ})_{λ \in \Lambda}$ in a category $\sC$ is a covariant functor $P_\cdot: \Lambda \to \sC$ from a cofiltered category $\Lambda$. 
\end{defi}

If the indexing category $\Lambda$ is essentially small, then we can associate to $(P_{λ})_λ$ the functor 
\[ L(P): \sC \to \Set; \qquad \qquad L(P)(\cdot) := ``\colim" \hom_{\sC}(P_λ, \cdot) \]
where $``\colim"$ means the colimit is taken in the category of functors. We will implicitly assume all pro-objects are indexed by essentially small categories. We will always only consider pro-objects with values in essentially small categories, but this is just for convenience.\footnote{We do have some statements 
using pro-objects in categories which are not essentially small, but those don't use any results from this section.}

\begin{defi}[{\cite[I.8.2.4]{SGA41}}]
A morphism of pro-objects $(P_λ)_λ \to (Q_μ)_μ$ is a natural transformation of the associated functors. That is,
\[ \hom_{\Pro(\sC)}(P, Q) := \hom_{\Fun(\sC, \Set)}(L(Q), L(P)), \]
\cite[Eq.I.8.2.4.3]{SGA41}. 
\end{defi}

As limits are calculated objectwise in the category of functors, a simple calculation shows that 
\[ \hom_{\Pro(\sC)}(P, Q) \cong \lim_μ \colim_λ \hom_\sC(P_λ, Q_μ), \]
\cite[Eq.I.8.2.5.1]{SGA41}. 
%
Hence, by definition we have a fully faithful embedding
\[ \Pro(\sC)^{\op} \hookrightarrow \Fun(\sC, \Set). \]
Notice the $(-)^\op$. This is for compatibility with coYoneda: 
\[ \sC^{\op} {\hookrightarrow} \Pro(\sC)^{\op} {\hookrightarrow} \Fun(\sC, \Set). \]

If $F: \sC \to \Set$ is a functor, we consider the comma category $F / \sC$ whose objects are natural transformations $F \to \hom_{\sC}(X, \cdot)$ and morphisms are commutative triangles $F \to \hom_{\sC}(X, \cdot) \to \hom_{\sC}(Y, \cdot)$, \cite[I.8.3.2]{SGA41}. This category is equipped with a canonical forgetful functor
\[ F / \sC \to \sC, \]
\cite[Eq.I.8.3.2.2]{SGA41}. The functor $F$ is always the limit of this functor,
\[ F(\cdot) \cong \lim_{X \in F / \sC} \hom_{\sC}(X, \cdot), \]
\cite[8.3.2]{SGA41}.

\begin{theo}[{\cite[Thm.8.3.3]{SGA41}}] \label{theo:proBasics}
Let $\sC$ be a small category and consider a functor $F: \sC \to \Set$. The following are equivalent:
\begin{enumerate}
 \item $F$ is in the image of $\Pro(\sC)^{\op} \hookrightarrow \Fun(\sC, \Set)$.
 \item The category $F / \sC$ (defined above) is filtered.
 \item (If $\sC$ admits finite colimits.) The functor $F$ preserves finite colimits.
\end{enumerate}
\end{theo}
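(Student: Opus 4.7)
Before sketching the argument, let me flag what I believe is a typo in condition (3): since each corepresentable functor $\hom(P,-)$ preserves limits (not colimits) and filtered colimits commute with finite limits in $\Set$, the correct statement should read ``$F$ preserves finite \emph{limits}'' (assuming $\sC$ admits finite \emph{colimits}, so that the notion of ``preserves finite limits'' gives non-trivial information via the opposite passage $\sC^{\op}$). I would proceed under this corrected reading, following the classical argument of SGA 4, Expos{\'e} I.

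For (1) $\Rightarrow$ (2), suppose $F = L(P)$ for a pro-object $P : \Lambda \to \sC$ with $\Lambda$ cofiltered. Unwinding the definition of $L$ together with Yoneda, an object of $F/\sC$ is a pair $(X, u)$ where $u$ is equivalent data to a compatible system of morphisms $X \to P_\lambda$ in $\sC$, indexed by $\lambda \in \Lambda^{\op}$. There is a canonical functor $\Lambda^{\op} \to F/\sC$ sending $\mu$ to $(P_\mu, \text{canonical system})$, where the canonical system uses the transition morphisms $P_\mu \to P_\lambda$ for $\mu \to \lambda$ in $\Lambda^{\op}$. The first step is to verify that this functor is cofinal (in the sense that comma categories under any $(X,u)$ are non-empty and connected); this is a direct consequence of the definition of a morphism of pro-objects and the cofilteredness of $\Lambda$. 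Since $\Lambda^{\op}$ is filtered and the functor is cofinal, $F/\sC$ inherits filteredness by checking the axioms: non-empty, any two objects have a cocone, any two parallel arrows are coequalized.

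For (2) $\Rightarrow$ (1), the forgetful functor $F/\sC \to \sC$, viewed as $(F/\sC)^{\op} \to \sC$, is exactly a pro-object whose indexing category is cofiltered by hypothesis. Denote this pro-object by $Q$. Then $L(Q)(Y) = \varinjlim_{(X,u) \in F/\sC} \hom_\sC(X, Y)$, and the canonical identification
\[
F(Y) \;\cong\; \varinjlim_{(X,u) \in F/\sC} \hom_\sC(X, Y)
\]
comes from the standard formula $F \cong \lim_{(X,u) \in F/\sC} h^X$ dualized to a filtered colimit over the dual index. The main technical check is that this colimit computes $F(Y)$; this reduces to verifying a universal property, using the filteredness of $F/\sC$ to control equivalence classes in the colimit.

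For (2) $\Leftrightarrow$ (3), assuming $\sC$ has finite colimits: the implication (2) $\Rightarrow$ (3) follows from step (2) $\Rightarrow$ (1), since $F = L(Q)$ is then a filtered colimit of corepresentables $\hom(X,-)$, each of which preserves finite limits, and filtered colimits commute with finite limits in $\Set$. The converse (3) $\Rightarrow$ (2) is the main obstacle, and the argument goes as follows: the filteredness axioms for $F/\sC$ translate exactly to preservation properties for $F$ against finite colimit diagrams in $\sC$ (equivalently, finite limit diagrams in $\sC^{\op}$). Concretely, non-emptiness of $F/\sC$ corresponds to $F$ being non-empty on the initial object; existence of cocones in $F/\sC$ over any two objects $(X_1, s_1), (X_2, s_2)$ corresponds to $F$ sending the coproduct $X_1 \sqcup X_2$ correctly (i.e.\ to the product $F(X_1) \times F(X_2)$, which is exactly preservation of the product in $\sC^{\op}$, i.e.\ of the coproduct in $\sC$); and the coequalizer axiom corresponds to $F$ sending coequalizers in $\sC$ to equalizers in $\Set$. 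Each of these is a direct unraveling, and the hardest bookkeeping is in the coequalizer case, where one must construct the equalizing object in $F/\sC$ using the assumption that $F$ preserves coequalizers (as finite colimits in $\sC$, equivalently, as finite limits in $\sC^{\op}$). The main obstacle throughout is keeping track of the variance carefully: every translation between pro-object language (cofiltered), comma categories (filtered), and finite-limit-preserving functors involves a dualization, and getting the directions consistent is where the work lies.
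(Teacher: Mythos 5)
You are right to be suspicious of the statement, and note first that the paper offers no proof here: Theorem~\ref{theo:proBasics} is quoted from SGA~4, Exp.~I, 8.3.3 (stated there for ind-objects), so the reference argument is that one, dualised. Your diagnosis of condition (3) is only half right. The correct dualisation is ``(if $\sC$ admits finite \emph{limits}) $F$ preserves finite \emph{limits}'' --- this is the form the paper actually uses later, in Lemma~\ref{lemm:proComDiag} and Theorem~\ref{theo:locProIsRep} --- whereas you keep the hypothesis ``$\sC$ admits finite colimits'', and your sketch of (3)$\Rightarrow$(2) then argues with coproducts and coequalizers of $\sC$. That is the wrong translation: for covariant $F$ in the image of $\Pro(\sC)^{\op}$ one has $F(X_1\times X_2)\cong F(X_1)\times F(X_2)$, and the cofilteredness axioms correspond to surjectivity of $F(X_1\times X_2)\to F(X_1)\times F(X_2)$ and of $F(\mathrm{Eq}(u,v))\to\mathrm{Eq}(F(u),F(v))$, not to $F(X_1\sqcup X_2)\cong F(X_1)\times F(X_2)$ --- the latter already fails for $F=\hom_\sC(P,-)$, which is a (constant) pro-object.

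The more serious gap is in (1)$\Rightarrow$(2). Taking the stated definition of $F/\sC$ at face value, an object over $X$ is an element of $\mathrm{Nat}(F,h^X)=\varprojlim_\lambda\hom_\sC(X,P_\lambda)$, i.e.\ a morphism in $\Pro(\sC)$ from the \emph{constant} pro-object $X$ to $P$. Your ``canonical functor $\Lambda^{\op}\to F/\sC$, $\mu\mapsto(P_\mu,\text{canonical system})$'' does not exist: the transition morphisms only supply maps $P_\mu\to P_\lambda$ for $\lambda$ comparable to $\mu$, not a system indexed by all of $\Lambda$, and the canonical morphism of pro-objects goes the other way, $P\to P_\mu$ (the class of $\id_{P_\mu}$ in $F(P_\mu)=\varinjlim_\lambda\hom_\sC(P_\lambda,P_\mu)$). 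Indeed, with this reading the implication is simply false: for $\sC$ the cofiltered poset $\cdots\to 2\to 1\to 0$ and $P=\id_\sC$, the functor $F=L(P)$ is constant with value a point, yet $\mathrm{Nat}(F,h^X)=\varnothing$ for every $X$, so $F/\sC$ is empty. The category the theorem needs (SGA~4's $\sC_{/F}$, dualised) is the category of \emph{elements} of $F$ --- pairs $(X,s\in F(X))$, equivalently natural transformations $h^X\to F$, equivalently morphisms $P\to X$ in $\Pro(\sC)$ --- with the condition that it be cofiltered (its opposite filtered); then $F\cong\varinjlim h^X$ over the opposite of that category by co-Yoneda, which is a colimit formula from the start and cannot be obtained by ``dualising'' the displayed limit formula as your step (2)$\Rightarrow$(1) suggests. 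Once the comma category and the variances are corrected, your overall architecture (cofinality of $\Lambda$ for (1)$\Rightarrow$(2), the forgetful functor as the pro-object for (2)$\Rightarrow$(1), commutation of filtered colimits with finite limits for (2)$\Rightarrow$(3), and the axiom-by-axiom translation for (3)$\Rightarrow$(2)) is exactly the SGA~4 proof; as written, however, the variance errors are fatal rather than cosmetic.
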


\begin{rema} \label{rema:underPro}
By the above theorem, any pro-object $(P_λ)_λ$ admits a canonical indexing category, namely, its undercategory $L(P)/\sC$. 
\end{rema}

\begin{rema}
In the ∞-category setting, Theorem~\ref{theo:proBasics} is a \emph{definition} of pro-objects, cf.\cite[Def.5.3.5.1]{HTT}. The ∞-category version of Thm.~\ref{theo:proBasics} is \cite[Cor.5.3.5.4]{HTT}.
\end{rema}


\subsection{Functoriality of pro-objects} \label{sec:ProObFun}

\begin{lemm} \label{lemm:proComDiag}
Given a functor $f: \sC \to \sD$ between small categories, the embedding in Thm.~\ref{theo:proBasics} fits into a commutative square
\begin{equation} \label{lemm:proComDiag:sq1}
\xymatrix{
\Pro(\sC)^{\op} \ar[r]\ar@{}@<0.5em>[r]^{\textrm{fully faith.}} \ar[d]_{\Pro(f)^{\op}} & \Fun(\sC, \Set) \ar[d]^{\textrm{left Kan extension}} \\
\Pro(\sD)^{\op} \ar[r]\ar@{}@<0.5em>[r]^{\textrm{fully faith.}} & \Fun(\sD, \Set)
} 
\end{equation}
Restriction induces a right adjoint to $\Pro(\sC)^{\op} \to \Pro(\sD)^{\op}$ forming a commutative square
\begin{equation} \label{lemm:proComDiag:sq2}
\xymatrix{
\Pro(\sC)^{\op} \ar[r]\ar@{}@<0.5em>[r]^{\textrm{fully faith.}}  & \Fun(\sC, \Set)  \\
\Pro(\sD)^{\op} \ar[r]\ar@{}@<0.5em>[r]^{\textrm{fully faith.}} \ar@{-->}[u] & \Fun(\sD, \Set) \ar[u]_{\textrm{restriction}}
}
\end{equation}
if and only if for every $X \in \sD$, the comma category $(X \downarrow f) = \hom_\sD(X, f(\cdot))/\sC$ is cofiltered. In particular, if $\sC$ has finite limits and $f$ commutes with them, then we get this commutative square.
\end{lemm}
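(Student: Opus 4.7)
The plan is to handle the two squares independently, reducing each to the fact (a consequence of co-Yoneda) that left Kan extension sends corepresentables to corepresentables.

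For the commutativity of \eqref{lemm:proComDiag:sq1}, both composites send a pro-object $P = (P_\lambda)_\lambda$ to a colimit of corepresentables in $\Fun(\sD, \Set)$. The top-right route yields $\mathrm{Lan}_f L(P)$, while the bottom-left route yields $L(\Pro(f)P) = ``\mathrm{colim}"_\lambda \hom_\sD(f(P_\lambda),-)$. Since $\mathrm{Lan}_f$ is a left adjoint, it preserves colimits, so it suffices to check agreement on a constant pro-object $\{X\}$, where the claim reduces to the standard identity $\mathrm{Lan}_f \hom_\sC(X,-) = \hom_\sD(f(X),-)$, which follows immediately from the Yoneda lemma. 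This step is essentially formal.

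For \eqref{lemm:proComDiag:sq2}, observe first that by the universal property of left Kan extension, the restriction $\mathrm{res}_f : F \mapsto F \circ f$ is right adjoint to $\mathrm{Lan}_f$ on all of $\Fun(\cdot,\Set)$. Consequently, if $\mathrm{res}_f$ carries the full subcategory $\Pro(\sD)^\op$ into $\Pro(\sC)^\op$, it automatically restricts to the desired right adjoint of $\Pro(f)^\op$; conversely, any right adjoint fitting into the claimed commutative square is forced to coincide with $\mathrm{res}_f$ by the full faithfulness of the embeddings. So the question reduces to showing that $F \circ f$ is a pro-object of $\sC$ whenever $F$ is a pro-object of $\sD$.

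This is then analysed via Theorem~\ref{theo:proBasics}. For necessity, test on a constant pro-object $F = \hom_\sD(X,-)$: an immediate Yoneda computation identifies the comma category $(F \circ f)/\sC$ with $(X \downarrow f)$ (modulo the convention on morphism direction), so this must satisfy the appropriate cofilteredness condition. For sufficiency, write a general pro-object as $F = ``\mathrm{colim}"_\mu \hom_\sD(Q_\mu,-)$; since restriction along $f$ commutes with colimits, $F \circ f = ``\mathrm{colim}"_\mu \hom_\sD(Q_\mu, f(-))$. The construction $G \mapsto G/\sC$ is compatible with this filtered colimit of functors, and the standard combinatorial lemma that a filtered-indexed colimit of appropriately directed categories is again appropriately directed yields the conclusion. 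The main technical obstacle will be verifying this compatibility of the comma construction with the colimit, i.e., keeping the filtered/cofiltered directions straight.

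Finally, when $\sC$ has finite limits and $f$ preserves them, I will verify directly that each $(X \downarrow f)$ is cofiltered: non-emptiness comes from the terminal object $\ast_\sC$, giving $(\ast_\sC, X \to f(\ast_\sC))$ and noting that $f(\ast_\sC)$ is terminal in $\sD$ since $f$ preserves the empty limit (so $X$ automatically maps to it); pairwise common predecessors of $(Y_1,\alpha_1),(Y_2,\alpha_2)$ come from $(Y_1 \times Y_2, (\alpha_1,\alpha_2))$ using $f(Y_1 \times Y_2) = f(Y_1) \times f(Y_2)$; and equalizers come analogously from $f$-preserved equalizers in $\sC$.
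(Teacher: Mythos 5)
Your proposal follows the same skeleton as the paper's (one-line) proof: square \eqref{lemm:proComDiag:sq1} by colimit-preservation of the left Kan extension plus co-Yoneda on corepresentables, square \eqref{lemm:proComDiag:sq2} by reducing to whether restriction preserves the pro-object subcategories and then invoking Thm.~\ref{theo:proBasics}, and the ``in particular'' by checking the three (co)filteredness axioms for $(X \downarrow f)$ directly from preservation of the terminal object, binary products, and equalizers. The necessity direction is fine (indeed $(F\circ f)/\sC = (X\downarrow f)$ is definitional for $F = \hom_\sD(X,-)$, given the paper's notation).

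The one step you should repair is the sufficiency mechanism. You claim that $G \mapsto G/\sC$ is ``compatible with'' the filtered colimit $F\circ f = \colim_\mu \hom_\sD(Q_\mu, f(-))$; but since objects of $G/\sC$ are natural transformations \emph{out of} $G$, one has $\mathrm{Nat}(\colim_\mu G_\mu, \hom_\sC(X,-)) \cong \lim_\mu \mathrm{Nat}(G_\mu,\hom_\sC(X,-))$, so the comma category of the colimit is a (pseudo)\emph{limit}, not a colimit, of the comma categories, and a cofiltered limit of filtered categories need not be filtered. The conclusion you want --- that the essential image of $\Pro(\sC)^{\op}$ in $\Fun(\sC,\Set)$ is closed under filtered colimits --- is true and standard (\cite[I.8.5.1]{SGA41}, or the dual of Kashiwara--Schapira's closure of $\Ind$ under filtered colimits), but it is proved by reindexing the double colimit over a Grothendieck construction of the comma categories, not by commuting the comma construction past the colimit. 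Cite that closure result, or run the reindexing argument, and the proof is complete.
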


\begin{rema} \label{rema:proComDiag}
Clearly, $X \in \sD$ is sent in \eqref{lemm:proComDiag:sq2} to the pro-object of $\sC$ indexed by $(X \downarrow f)$.
\end{rema}

\begin{proof}
The first part follows from the definition of left Kan extension as the unique colimit preserving functor compatible with coYoneda. The second part follows from Thm.~\ref{theo:proBasics}. For the ``in particular'' it suffices to show that $(X \downarrow f)$ is filtered. But this follows from $f$ sending finite limits to finite limits.
\end{proof}

\subsection{Pro-objects in localisations} \label{sec:liftProOb}

\begin{lemm} \label{lemm:commFilt}
Suppose that $\sC$ is a category, $S$ a class of morphisms satisfying a right calculus of fractions, and consider the canonical functor $loc: \sC \to \sC[S^{-1}]$. For any $X \in Ob(\sC) = Ob(\sC[S^{-1}])$ the comma category $(X \downarrow loc)$ is filtered.
\end{lemm}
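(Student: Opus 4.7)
The plan is to verify directly the three axioms of filteredness for $(X \downarrow \operatorname{loc})$, which, unpacking the paper's convention for $F/\sC$, has objects the pairs $(Y, \xi)$ with $Y \in \sC$ and $\xi : X \to \operatorname{loc}(Y)$ in $\sC[S^{-1}]$, morphisms induced by morphisms in $\sC$ making the obvious triangle commute in $\sC[S^{-1}]$. The idea is that the first two axioms are essentially the axioms (3) and (4) of the right calculus of fractions, applied to roof presentations of the structure morphisms. Non-emptiness is immediate: $(X, \operatorname{id}_X)$ is an object since $\operatorname{id}_X \in S$.

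For the common-bound axiom, given $(Y_1, \xi_1)$ and $(Y_2, \xi_2)$, represent each $\xi_i$ by a roof $X \xleftarrow{s_i} Z_i \xrightarrow{g_i} Y_i$ with $s_i \in S$. Apply axiom (3) of the right calculus of fractions to the span $Z_1 \xrightarrow{s_1} X \xleftarrow{s_2} Z_2$ (once with $s_1$ playing the role of the distinguished leg, once with $s_2$) to obtain $W$ with morphisms $u_i : W \to Z_i$ satisfying $s_1 u_1 = s_2 u_2$, where both legs may be arranged so that the common composite $s := s_1 u_1 = s_2 u_2 : W \to X$ lies in $S$ (using closure of $S$ under composition). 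Then both $\xi_i$ are represented by the roofs $X \xleftarrow{s} W \xrightarrow{g_i u_i} Y_i$, so the object $(W, \operatorname{loc}(s)^{-1})$ together with the morphisms $g_i u_i : W \to Y_i$ provides the required common bound in $(X \downarrow \operatorname{loc})$.

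For the coequaliser axiom, suppose $f_1, f_2 : Y \to Y'$ in $\sC$ induce parallel morphisms $(Y, \xi) \rightrightarrows (Y', \xi')$ in $(X \downarrow \operatorname{loc})$, so that $\operatorname{loc}(f_1) \circ \xi = \operatorname{loc}(f_2) \circ \xi$. Write $\xi$ as a roof $X \xleftarrow{s} Z \xrightarrow{g} Y$ with $s \in S$; the hypothesis then becomes the equivalence of roofs $[X \xleftarrow{s} Z \xrightarrow{f_1 g} Y'] = [X \xleftarrow{s} Z \xrightarrow{f_2 g} Y']$. Unwinding the equivalence relation on roofs produces $W$ together with $u_1, u_2 : W \to Z$ such that $s u_1 = s u_2 \in S$ and $f_1 g u_1 = f_2 g u_2$. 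Now invoke axiom (4) of the right calculus of fractions applied to the parallel pair $u_1, u_2$ (whose equalisation after $s$ exhibits them as "$s$-equal" with $s \in S$) to obtain $t : W' \to W$ in $S$ with $u_1 t = u_2 t$; setting $h := g u_1 t = g u_2 t$, one computes $f_1 h = f_1 g u_1 t = f_2 g u_2 t = f_2 h$, so that $h : W' \to Y$ coequalises $f_1$ and $f_2$. Paired with $\rho := \operatorname{loc}(s u_1 t)^{-1}$, this yields the required object $(W', \rho)$ and coequalising morphism in $(X \downarrow \operatorname{loc})$.

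The main technical subtlety is the interplay between the two axioms (3) and (4): axiom (3) produces a common pullback-like refinement of the roofs representing two given morphisms into $\operatorname{loc}(Y_i)$, while axiom (4) is needed to pass from the "roof-equivalence" of two compositions (a statement in $\sC[S^{-1}]$) to an actual equality of compositions in $\sC$, which is what the coequaliser axiom for filteredness demands. The verification is entirely formal once one is careful about tracking which legs of the various squares lie in $S$.
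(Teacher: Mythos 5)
Your proof is correct and follows essentially the same route as the paper's: non-emptiness via $\id_X$, a common refinement of the two roofs to a single left leg $s\in S$ using the Ore axiom, and a further refinement $t\in S$ using the cancellation axiom to upgrade roof-equivalence of $\operatorname{loc}(f_1)\circ\xi=\operatorname{loc}(f_2)\circ\xi$ to an honest equality $f_1h=f_2h$ in $\sC$. The paper compresses these two steps into "we can assume $s=s'$" and "up to refining $s$, we can assume $uw=vw$ in $\sC$"; your version just makes the roof bookkeeping explicit, and you verify the same (cofiltered-style) cone conditions that the paper does.
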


\begin{proof}
The identity $\id_X$ shows the category is nonempty. Given any two objects $X \stackrel{s}{\leftarrow} X' \to Y$ and $X \stackrel{s'}{\leftarrow} X' \to Y'$ of the comma category, since $S$ satisfies a right calculus of fractions, we can assume that $s = s'$. Then $X \stackrel{s}{\leftarrow} X'$ is an object of the comma category mapping to both $Y$ and $Y'$. Similarly, given any two morphisms $u, v: Y \rightrightarrows Y'$ of $\sC$ and a morphism $X \stackrel{s}{\leftarrow} X' \stackrel{w}{\to} Y$ of $\sC[S^{-1}]$ such that $uws^{-1} = vws^{-1}$ in $\sC[S^{-1}]$, up to refining $s$, we can assume that $uw = vw$ in $\sC$. Then $w$ is a morphism in the comma category from $X \stackrel{s}{\leftarrow} X'$ to $X \stackrel{s}{\leftarrow} X' \stackrel{w}{\to} Y$ such that $uw = vw$ in the comma category.
\end{proof}

\begin{prop} \label{prop:proAdjoint}
Suppose that $\sC$ is a category and $S$ a class of morphisms satisfying a right calculus of fractions. Then
\[ \Pro(\sC) \to \Pro(\sC[S^{-1}]) \]
admits a fully faithful 
\emph{left} adjoint. The essential image of the left adjoint consists of those pro-objects $\sP$ such that $\hom_{\Pro(\sC)}(\sP, -)$ sends elements of $\sS$ to isomorphisms.

In particular, if $S$ consists of categorical monomorphisms, and 
$\sC$ is equipped with a class of families of morphisms $\tau$ (possibly empty) then localisation induces an equivalence of categories
\[ 
\left \{
\begin{array}{c}
S\textrm{-local } τ\textrm{-local} \\
\textrm{pro-objects of } \sC
\end{array}
\right \}
\cong
\left \{
\begin{array}{c}
τ\textrm{-local } \\
\textrm{pro-objects of } \sC[S^{-1}]
\end{array}
\right \}
\]
\end{prop}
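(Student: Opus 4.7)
My plan is to transfer the whole question to functor categories via the fully faithful embeddings $\Pro(\sC)^{\op} \hookrightarrow \Fun(\sC, \Set)$ and $\Pro(\sC[S^{-1}])^{\op} \hookrightarrow \Fun(\sC[S^{-1}], \Set)$. By Lemma \ref{lemm:proComDiag}~\eqref{lemm:proComDiag:sq1}, the functor $\Pro(loc)^{\op}$ corresponds under these embeddings to left Kan extension $loc_!: \Fun(\sC, \Set) \to \Fun(\sC[S^{-1}], \Set)$ along $loc: \sC \to \sC[S^{-1}]$. Producing a left adjoint $L$ to $\Pro(loc)$ is equivalent to producing a right adjoint to $\Pro(loc)^{\op}$, and the natural candidate is restriction along $loc$ (the right adjoint to $loc_!$ on presheaf categories).

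The main technical step is to check that this restriction preserves pro-objects, i.e.\ sends $\Pro(\sC[S^{-1}])^{\op}$ into $\Pro(\sC)^{\op}$. By Lemma \ref{lemm:proComDiag}~\eqref{lemm:proComDiag:sq2} and Remark \ref{rema:proComDiag}, this reduces to the statement that for every $X \in \sC[S^{-1}]$ the comma category $(X \downarrow loc)$ has the correct type, which is precisely the content of Lemma \ref{lemm:commFilt}. Taking opposites in the resulting commutative square produces the desired left adjoint $L : \Pro(\sC[S^{-1}]) \to \Pro(\sC)$.

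Fully faithfulness of $L$ amounts to the unit $\id \to \Pro(loc) \circ L$ being an isomorphism, which under the embedding into functor categories is the opposite of the counit of $loc_! \dashv loc_*$ (in the notation of \S1, $loc_* = $ restriction). That counit is an isomorphism because restriction along $loc$ is fully faithful on all of $\PSh(\sC[S^{-1}])$ by Proposition~\ref{prop:locPSh}. The essential image is determined by the usual recipe: $\sP \in \Pro(\sC)$ lies in the image of $L$ iff the counit $L\Pro(loc)\sP \to \sP$ is an isomorphism, which under the embedding translates exactly to the associated functor $\hom_{\Pro(\sC)}(\sP, -)$ being $S$-local, again by Proposition~\ref{prop:locPSh}.

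Finally, for the ``in particular'', I would check directly from Definition~\ref{defi:tauLocal} that the bijection $\{S\text{-local pro-objects of }\sC\} \cong \{\text{pro-objects of }\sC[S^{-1}]\}$ preserves $\tau$-locality. Under the embedding, an $S$-local pro-object in $\sC$ is the same datum as its factorisation through $\sC[S^{-1}]$; since $loc$ is the identity on objects, a $\tau$-covering $\{U_i \to X\}_{i \in I}$ produces literally the same surjectivity condition on the associated functors on both sides. The monomorphism hypothesis on $S$ enters via Proposition~\ref{prop:locShv}, which lets us identify $S$-locality with sheafhood for the topology $\sigma$ generated by $S$ and thus combine $S$-local and $\tau$-local neatly as sheaves for a joint topology. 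The main obstacle is not conceptual but bookkeeping: keeping the variances (op vs.\ non-op) straight across the chain of embeddings and adjunctions, since ``pro'' sits inside $\Fun(\sC, \Set)$ in a variance-reversed manner compared to ``ind''.
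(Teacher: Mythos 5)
Your proposal is correct and follows essentially the same route as the paper: both pass through the embeddings $\Pro(-)^{\op} \hookrightarrow \Fun(-, \Set)$, identify the would-be left adjoint with restriction along $loc$, use Lemma \ref{lemm:commFilt} together with Lemma \ref{lemm:proComDiag}\eqref{lemm:proComDiag:sq2} to see that restriction preserves pro-objects, and read off fully faithfulness and the essential image from the universal property of localisation (your appeal to Proposition \ref{prop:locPSh} is just that property packaged). Your write-up is in fact slightly more explicit than the paper's on the unit/counit bookkeeping and on why $\tau$-locality is preserved in the ``in particular'' clause, which the paper leaves implicit.
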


\begin{proof}
Since the comma categories $(X \downarrow loc)$ are all filtered, cf.~Lem.~\ref{lemm:commFilt}, in addition to the square \eqref{lemm:proComDiag:sq1} of Lem.~\ref{lemm:proComDiag} we also get \eqref{lemm:proComDiag:sq2}. Note that due to the $(-)^{\op}$, restriction 
\[ restriction: \Fun(\sC[S^{-1}], \Set) \to \Fun(\sC, \Set) \]
corresponds to a left adjoint on the $\Pro(-)$'s. It is fully faithful by the universal property of localisation, and by the same property, we identify its image as those functors $F$ which send elements of $S$ to isomorphisms, or more importantly, the image of the induced fully faithful right adjoint functor $\Pro(\sC[S^{-1}]) \to \Pro(\sC)$ has as image those pro-objects $\sP$ such that $\hom_{\Pro(\sC)}(\sP, \cdot)$ sends elements of $S$ to isomorphisms. 

\end{proof}

\subsection{Fibre functors}

\begin{defi}[{\cite[Def.IV.6.2]{SGA41}}]
Let $\sC$ be a small category equipped with a topology $τ$. A \emph{fibre functor}\footnote{We insist on using ``fibre functors'' instead of ``points'' due to the widespread misuse of the term ``point''.} of the topos $\Shv_τ(\sC)$ is a functor $\Shv_τ(\sC) \to \Set$ which commutes with all colimits and finite limits.
\end{defi}

\begin{exam} \label{exam:topFibre}
Let $X$ be a topological space and $x \in X$. Then the functor $F \mapsto \colim_{x \in U} F(U)$ is a fibre functor of $\Shv(X)$. Notice that the pro-object $(U_λ)_{x \in U_λ}$ indexing the colimit is $τ$-local as a pro-object in the category of open subsets of $X$.
\end{exam}

The collection of fibre functors form a full subcategory of the category of all functors $\Shv_τ(\sC)$. 

We are interested in fibre functors because in nice situations they can be used to detect isomorphisms, (and therefore monomorphisms and epimorphisms) of sheaves.
The following result is due to Deligne (cf. \cite[Prop.~VI.9.0, Cor.~VI.9.5]{SGA42}, \cite[Thm.~7.44, Cor.~7.17]{Joh77}).

\begin{theo} \label{theo:deligne} 
Let $\sC$ be a small category admitting fibre products and equipped with a topology $τ$ such that every covering family is refinable by a covering family with finitely many elements. 

Then there exists a (small) conservative set of fibre functors of $\Shv_τ(\sC)$. 
That is, a set $\Phi$ such that a morphism 
\[ f: F \to G \]
of presheaves induces an isomorphism of sheaves if and only if 
\[ \phi(f) \]
is an isomorphism for every fibre functor $\phi \in \Phi$.
\end{theo}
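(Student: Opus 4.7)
The plan is to follow Deligne's classical proof of the completeness theorem for coherent topoi (SGA4~VI.9), for which the hypotheses in the statement (finite fibre products plus refinability of coverings by finite ones) are exactly what is needed. I will use the already‑mentioned correspondence (Prop.~\ref{prop:fibArePro}) between fibre functors of $\Shv_\tau(\sC)$ and $\tau$‑local pro‑objects of $\sC$, so the task becomes: produce a \emph{small set} of $\tau$‑local pro‑objects which is jointly conservative.

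First I would reduce the problem to distinguishing pairs of sections. A morphism of sheaves $f\colon F\to G$ is an isomorphism if and only if the induced maps on each $F(X)$ for $X\in\sC$ are bijections, because representables generate $\Shv_\tau(\sC)$. Thus it is enough to show: for every $X\in\sC$ and every pair of sections $s\neq t$ in some sheaf $F(X)$, there is a $\tau$‑local pro‑object $(P_\lambda)\to X$ with $s|_{P_\lambda}\neq t|_{P_\lambda}$ in the filtered colimit $\varinjlim F(P_\lambda)$. Since $\sC$ is small, there is only a set's worth of such pairs $(X,s,t)$ (bounded by pairs $(X,R)$ where $R$ is a sieve on $X$), and choosing one pro‑object per pair yields the required small set $\Phi$.

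The core step is the construction, which is a Henkin‑style Zorn argument. Given $s\neq t$ in $F(X)$, let $R_0\subseteq h_X$ be the sieve of morphisms $U\to X$ with $s|_U=t|_U$. Since $F$ is a sheaf and $s\neq t$, the sieve $R_0$ is \emph{not} a covering sieve. Consider the poset of sieves on $X$ containing $R_0$ that are not $\tau$‑covering. The hypothesis that every cover is refinable by a finite cover ensures that the union of a chain of non‑covering sieves is again non‑covering (any finite family lies in one chain member), so Zorn's lemma produces a maximal non‑covering sieve $R_{\max}\supseteq R_0$. Define an indexing category $\Lambda$ whose objects are morphisms $U\to X$ with $(U\to X)\notin R_{\max}$, with morphisms the $\sC$‑morphisms over $X$ whose composite with $X$ is still outside $R_{\max}$. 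Using fibre products in $\sC$ and the maximality of $R_{\max}$, one verifies $\Lambda$ is cofiltered: if $U,U'\to X$ both lie outside $R_{\max}$, then $U\times_X U'\to X$ cannot lie in $R_{\max}$ either, for otherwise the sieve $R_{\max}\cup\{U\to X\}$ would still be non‑covering, contradicting maximality (a similar maximality argument handles the equaliser clause). The resulting pro‑object $P=(U)_{U\in\Lambda}$ admits a canonical map to $X$, and by construction $s|_U\neq t|_U$ for all $U\in\Lambda$, so $s$ and $t$ remain distinct in $\varinjlim F(U)$.

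Finally, I would verify $\tau$‑locality: given a covering family $\{V_i\to U\}_{i\in I}$ in $\sC$ with $U\in\Lambda$, refine to a finite subcover (this is where refinability is used). If every $V_i\to X$ lay in $R_{\max}$, then by the covering axioms the sieve $R_{\max}$ would already contain $U\to X$, contradiction. So some $V_i\to X$ is outside $R_{\max}$, giving the required lift in $\Lambda$. The main obstacle is this Zorn step together with the careful verification of cofilteredness of $\Lambda$ and of $\tau$‑locality: both rely delicately on the interaction between maximality of $R_{\max}$ and the finite‑refinement/finite‑fibre‑product hypotheses, which is exactly the ``coherence'' input powering Deligne's theorem. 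Size is a minor issue handled as in the first paragraph.
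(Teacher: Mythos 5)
The paper itself gives no proof of this theorem; it simply cites Deligne (SGA4, Prop.~VI.9.0, Cor.~VI.9.5) and Johnstone. What you have written is a reconstruction of exactly that classical argument, and its architecture is sound: Zorn's lemma applied to non-covering sieves (with finite refinability guaranteeing that the union of a chain of non-covering sieves is non-covering), the cofiltered complement of a maximal non-covering sieve as a $\tau$-local pro-object, and the identification of fibre functors with $\tau$-local pro-objects from Prop.~\ref{prop:fibArePro}. So in substance you are following the same route as the source the paper relies on.

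Two steps in your write-up are looser than they should be. First, separating pairs of sections of sheaves only shows that the family of stalk functors is jointly \emph{faithful}; the inference ``thus it is enough to show [section separation]'' needs the standard supplementary fact that a jointly faithful family of inverse-image functors reflects isomorphisms (faithfulness plus preservation of kernel pairs and cokernel pairs reflects monos and epis, and a topos is balanced), or else you should index $\Phi$ by \emph{all} non-covering sieves $R_0$ on objects of $\sC$ — not only those of the form $\{U \to X : s|_U = t|_U\}$ — so that the points also detect failure of local surjectivity. Your parenthetical ``bounded by pairs $(X,R)$'' suggests you intend the latter, but you never run the surjectivity half of the argument. Second, your justification of cofilteredness is stated backwards: if $U\times_X U' \to X$ lay in $R_{\max}$, the correct contradiction is that maximality forces the sieve generated by $R_{\max}$ and $U\to X$ to be \emph{covering}, whence its pullback along $U'\to X$, which equals the pullback of $R_{\max}$, is covering, and then the saturation property of maximal non-covering sieves (if $c^*R_{\max}$ is covering then $c\in R_{\max}$, a consequence of the local-character axiom) gives $U'\to X\in R_{\max}$, contradiction. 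That same saturation property is what your phrase ``by the covering axioms'' is silently invoking in the $\tau$-locality check; it deserves to be stated and proved once. Both defects are repairable and do not affect the overall strategy.
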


Example~\ref{exam:topFibre} above is typical of fibre functors. 

\begin{prop}[{\cite[IV.6.8.7]{SGA41}}] \label{prop:fibArePro}
Let $\sC$ be a small category equipped with a topology $τ$. There is an equivalence of categories:
\[
\left \{ 
\begin{array}{c}
\textrm{ fibre functors } \\
\textrm{ of } \Shv_τ(\sC)
\end{array}
\right \}^{op}
\cong
\left \{
\begin{array}{c}
τ\textrm{-local } \\
\textrm{ pro-objects in } \sC
\end{array}
\right \}
\]
\end{prop}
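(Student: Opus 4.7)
The plan is to construct functors in both directions and verify they are mutually inverse.

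\emph{From fibre functors to pro-objects.} Given a fibre functor $\phi: \Shv_\tau(\sC) \to \Set$, let $a: \PSh(\sC) \to \Shv_\tau(\sC)$ denote sheafification and $y: \sC \to \PSh(\sC)$ the Yoneda embedding. Define
\[ F_\phi: \sC \to \Set, \qquad F_\phi(X) := \phi(ay(X)). \]
I will show $F_\phi$ corresponds to a pro-object by verifying that $F_\phi/\sC$ is filtered, using the characterisation of Theorem~\ref{theo:proBasics}. By Yoneda, an object of $F_\phi/\sC$ is a pair $(X, \xi)$ with $\xi \in F_\phi(X) = \phi(ay(X))$. The required filteredness conditions (non-emptiness, existence of cospans, coequalisation) will follow from $\phi$ preserving finite limits together with $a$ preserving finite limits: for instance, given $(X_1, \xi_1)$ and $(X_2, \xi_2)$, the pair $(\xi_1,\xi_2) \in \phi(ay(X_1)) \times \phi(ay(X_2)) = \phi(ay(X_1) \times ay(X_2))$ furnishes the needed cospan since fibre products of sheafified representables decompose locally as sheafified representables and $\phi$ preserves the resulting coproduct presentation. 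The resulting pro-object will be denoted $(P_\lambda)_{\lambda \in F_\phi/\sC}$ following Remark~\ref{rema:underPro}.

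\emph{$\tau$-locality of $(P_\lambda)$.} The next step is to verify that this pro-object is $\tau$-local in the sense of Definition~\ref{defi:tauLocal}. Given a $\tau$-covering $\{U_i \to X\}_{i \in I}$, the morphism $\sqcup ay(U_i) \to ay(X)$ is an epimorphism in $\Shv_\tau(\sC)$. Since $\phi$ preserves arbitrary colimits (in particular epimorphisms and coproducts), the map $\sqcup \phi(ay(U_i)) \to \phi(ay(X))$ is surjective; unwinding definitions this is precisely the $\tau$-locality condition.

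\emph{From $\tau$-local pro-objects to fibre functors.} In the reverse direction, given a $\tau$-local pro-object $(P_\lambda)_\lambda$, define
\[ \phi_P : \Shv_\tau(\sC) \to \Set, \qquad \phi_P(F) := \varinjlim_\lambda F(P_\lambda), \]
where $F$ on pro-objects is extended via the pro-Yoneda embedding $\Pro(\sC)^{\op} \hookrightarrow \Fun(\sC, \Set)$ as in Theorem~\ref{theo:proBasics}. The functor $\phi_P$ preserves finite limits because filtered colimits of sets commute with finite limits and finite limits of sheaves are computed pointwise. Preservation of arbitrary colimits is more delicate: colimits in $\Shv_\tau(\sC)$ are sheafifications of presheaf colimits, so I will use $\tau$-locality of $(P_\lambda)$ to see that evaluating on the $P_\lambda$ is insensitive to sheafification (this is the key use of the $\tau$-local hypothesis; every $\tau$-local covering of some $X$ is trivialised after passing far enough in the system), so $\phi_P$ commutes with sheafification and hence with all colimits.

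\emph{Inverse equivalences.} Finally I will check mutual inverseness. For $\phi$ a fibre functor, $\phi_{P_\phi}(ay(X)) = \varinjlim_\lambda \hom_\sC(P_\lambda,X) = F_\phi(X) = \phi(ay(X))$, and this extends by colimit preservation to all sheaves. Conversely, $F_{\phi_P}(X) = \varinjlim_\lambda \hom_\sC(P_\lambda, X) = L(P)(X)$ gives back the same pro-object. Naturality in morphisms of fibre functors (on one side) and of pro-objects (on the other) is then a straightforward check using the description of pro-morphisms via the associated corepresentable functors, and accounts for the $(-)^{\op}$ in the statement.

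The main technical obstacle is the verification that $\phi_P$ preserves colimits; equivalently, that evaluation at the pro-system $(P_\lambda)$ already sees the sheafification. This is exactly the content of $\tau$-locality combined with the fact that $\tau$-covering sieves are what sheafification inverts; in the presence of Deligne's theorem (Theorem~\ref{theo:deligne}), one can alternatively verify this pointwise-on-sheaves argument by reducing to representable targets, but the direct verification along the lines sketched above is cleaner and does not require Deligne's hypotheses on $\sC$.
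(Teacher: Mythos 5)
The paper does not actually prove this proposition: it is quoted directly from SGA~4 [IV.6.8.7], so there is no in-paper argument to compare yours against. Your sketch is essentially the classical SGA~4 proof, and its architecture is sound: $\phi \mapsto (X \mapsto \phi(ay(X)))$, pro-representability via filteredness of the category of elements, $\tau$-locality from preservation of epimorphisms, and the stalk functor $F \mapsto \varinjlim_\lambda F(P_\lambda)$ in the other direction, with the variance accounting for the $(-)^{\op}$. Two points are thinner than they should be. First, in the filteredness step, $ay(X_1)\times ay(X_2)$ is not a coproduct of sheafified representables but a general colimit of them, indexed by the category of elements of the presheaf $y(X_1)\times y(X_2)$ (i.e.\ by triples $(Z, Z\to X_1, Z\to X_2)$); it is this colimit that $\phi$ preserves, and the same device, applied to equalisers, handles the coequalisation axiom. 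Second, and more importantly, the lemma that the stalk at a $\tau$-local pro-object is insensitive to sheafification --- $\varinjlim_\lambda G(P_\lambda) \cong \varinjlim_\lambda (aG)(P_\lambda)$ for every presheaf $G$ --- is doing double duty: you need it not only for colimit-preservation of $\phi_P$, but also, silently, in your inverse-equivalence computation $\phi_{P_\phi}(ay(X)) = \varinjlim_\lambda \hom_\sC(P_\lambda, X)$, since $ay(X)$ evaluated at $P_\lambda$ is a priori the value of the sheafification, not of $y(X)$, unless the topology is subcanonical. That lemma should be proved explicitly (injectivity and surjectivity of $G_P \to (G^+)_P$, each step using the condition of Definition~\ref{defi:tauLocal} to lift transition morphisms $P_\mu \to P_\lambda$ through a covering of $P_\lambda$); once it is in place the rest of your argument goes through. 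You are also right that Deligne's theorem is not needed here --- in the paper it serves only to guarantee that enough such fibre functors exist, not to classify them.
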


Combining the above we obtain:

\begin{coro}
Let $\sC$ be a small category admitting fibre products and equipped with a topology $τ$ such that every covering family is refinable by a covering family with finitely many elements. 

Then a morphism of presheaves
\[ f: F \to G \]
of presheaves induces an isomorphism of sheaves if and only if 
\[ \colim_{λ} f(\sP_λ) \]
is an isomorphism for every $τ$-local pro-object $(\sP_λ)_λ$ of $\sC$.
\end{coro}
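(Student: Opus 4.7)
The plan is to combine Deligne's theorem (Thm.~\ref{theo:deligne}) with the identification of fibre functors and $\tau$-local pro-objects (Prop.~\ref{prop:fibArePro}), after first pinning down the explicit formula by which a $\tau$-local pro-object $(\sP_\lambda)_\lambda$ computes the corresponding fibre functor.

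First I would invoke Thm.~\ref{theo:deligne}: the hypotheses on $\sC$ and $\tau$ ensure the existence of a (small) conservative set $\Phi$ of fibre functors of $\Shv_\tau(\sC)$, so that the sheafification $a(f) : a(F) \to a(G)$ is an isomorphism if and only if $\phi(a(f))$ is an isomorphism of sets for each $\phi \in \Phi$. By Prop.~\ref{prop:fibArePro}, each $\phi$ corresponds canonically to a $\tau$-local pro-object $(\sP_\lambda)_\lambda$ of $\sC$, and conversely each such pro-object gives rise to a fibre functor.

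The core technical step is to establish the formula
\[
\phi(a(F)) \;\cong\; \colim_\lambda F(\sP_\lambda)
\]
for every presheaf $F$ on $\sC$ and every $\phi \in \Phi$ with associated pro-object $(\sP_\lambda)_\lambda$. The proof proceeds by first checking the formula on representables: by the construction in Prop.~\ref{prop:fibArePro}, the composite $\sC \to \Shv_\tau(\sC) \xrightarrow{\phi} \Set$ is precisely $X \mapsto \colim_\lambda \hom_\sC(\sP_\lambda, X)$. One then extends to general $F$ by writing $F$ as a colimit of representables and using that both sides commute with colimits in $F$: the left side because $\phi$ and $a$ are left adjoints (so they commute with colimits), and the right side tautologically because $\colim_\lambda$ commutes with colimits in $F$.

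With this formula in hand, the equivalence in the statement is immediate: by conservativity of $\Phi$, the morphism $a(f)$ is an isomorphism iff $\phi(a(f))$ is an isomorphism for every $\phi \in \Phi$, iff $\colim_\lambda f(\sP_\lambda)$ is an isomorphism for the associated pro-object, iff the same holds for every $\tau$-local pro-object (one direction is trivial; the other follows because $\Phi$ already suffices). I expect no real obstacle here: everything is formal given Thm.~\ref{theo:deligne} and Prop.~\ref{prop:fibArePro}, the only mildly delicate point being the bookkeeping that verifies the displayed formula on representables matches the definition of the pro-object attached to $\phi$.
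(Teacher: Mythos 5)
Your proof is correct and is exactly the argument the paper intends: the corollary is stated with no proof beyond the phrase ``Combining the above we obtain,'' meaning precisely the combination of Thm.~\ref{theo:deligne} with Prop.~\ref{prop:fibArePro} that you carry out, including the standard verification that the fibre functor of the sheafification of a presheaf is computed by the colimit over the associated $\tau$-local pro-object (checked on representables and extended by colimit-preservation). No gaps.
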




\section{Integral closure}

In the study of modulus pairs, it is often useful to take normalisation of ambient spaces (under some finiteness constraints). 
In this section, we study the behavior of such operation. 
In particular, we want to know ``$\sO_{\ol{X}}$ integrally closed in $\sO_{X^\circ}$'' is preserved by {\'e}tale morphisms (Corollary \ref{cor:etIntClo}).

\subsection{A characterisation of integral closures}

Recall that if $A$ is an integral domain with fraction field $B = Frac(A)$, then the normalisation $C$ can be expressed as
\[ C = \bigcap R \subseteq B \]
where the intersection is over all valuation rings $R$ of $B = \Frac(A)$ containing $A$. This is a special case of the following more general expression. Recall that a ring homomorphism $A \to C$ \emph{satisfies the valuative criterion for properness} if for every commutative square
\begin{equation} \label{equa:valCri}
\xymatrix{
K & \ar[l] B \ar[l] \\
R \ar[u]^{\cup|} & A \ar[u] \ar[l]
}
\end{equation}
such that $R$ is a valuation ring with fraction field $K$, there exists a diagonal morphism $B \to R$ making the two resultant triangles commute. Note uniqueness is automatic because $R \subseteq K$ is injective.

\begin{prop} \label{prop:intClos}
Suppose that $\phi:A \to B$ is any ring homomorphism, and let $C \subseteq B$ be the integral closure of $A$ in $B$. Then $A \to C \subseteq B$ is the largest sub-$A$-algebra of $B$ satisfying the valuative criterion for properness. More explicitly, we have 
\[ C = \bigcap (R \times_{K} B)  \]
where the intersection is over commutative squares as in \eqref{equa:valCri} (note $R \times_{K} B \subseteq B$ since $R \subseteq K$).
\end{prop}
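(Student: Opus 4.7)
The plan is to first establish the explicit formula $C = \bigcap (R \times_K B)$, and then deduce the ``largest subalgebra'' characterization as a formal consequence.

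The inclusion $C \subseteq \bigcap (R \times_K B)$ is direct: given $c \in C$ with a monic relation $c^n + \phi(a_1) c^{n-1} + \dots + \phi(a_n) = 0$ over $A$, applying any $f : B \to K$ from a square \eqref{equa:valCri} yields a monic equation for $f(c)$ with coefficients in $R$, so $f(c) \in R$ since valuation rings are integrally closed in their fraction fields.

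For the reverse inclusion I would argue by contrapositive: given $b \in B$ not integral over $A$, construct a square with $f(b) \notin R$. I would consider the multiplicative set $S \subseteq B$ generated by $b$ together with all evaluations $\{P(b) : P \in A[T] \text{ monic}\}$; a typical element is $b^N P_1(b)^{n_1} \cdots P_k(b)^{n_k}$, which equals the evaluation at $b$ of the monic polynomial $T^N P_1(T)^{n_1} \cdots P_k(T)^{n_k} \in A[T]$. Hence $0 \in S$ if and only if $b$ is integral over $A$. Non-integrality therefore gives $0 \notin S$, so $B_S \neq 0$, and there exists a prime $\mathfrak{q} \subseteq B$ disjoint from $S$. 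In particular, $b \notin \mathfrak{q}$ and $P(b) \notin \mathfrak{q}$ for every monic $P \in A[T]$, so in the domain $B/\mathfrak{q}$ the element $\bar b$ is nonzero and not integral over the image $\bar A$ of $A$. The classical argument for domains then applies: $\bar b^{-1}$ is a non-unit in $\bar A[\bar b^{-1}] \subseteq \Frac(B/\mathfrak{q})$ (a monic relation divided by $\bar b^n$ expresses $1$ as a $\bar A$-multiple of $\bar b^{-1}$, and clearing denominators recovers a monic relation, since $B/\mathfrak{q}$ is a domain), so Zorn's lemma yields a valuation ring $R \subseteq K := \Frac(B/\mathfrak{q})$ dominating a localization of $\bar A[\bar b^{-1}]$ at a maximal ideal containing $\bar b^{-1}$, giving $\bar b \notin R$. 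The composite $f : B \twoheadrightarrow B/\mathfrak{q} \hookrightarrow K$ then completes the desired square.

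With $C = \bigcap (R \times_K B)$ established, the ``largest subalgebra'' characterization is formal. On the one hand, $A \to C$ satisfies the valuative criterion because $A \to C$ is integral by construction, so $\Spec C \to \Spec A$ is universally closed \cite[Tag~01WM]{stacks-project}, which provides the required diagonal $C \to R$ for every square. On the other hand, if $D \subseteq B$ is any $A$-subalgebra for which $A \to D$ satisfies the valuative criterion, then for each square \eqref{equa:valCri} the restriction $f|_D : D \to K$ paired with the given $A \to R$ forms a square with $D$ in place of $B$; the valuative criterion produces a diagonal $D \to R$ which agrees with $f|_D$ by injectivity of $R \hookrightarrow K$, so $f(D) \subseteq R$ for every square, yielding $D \subseteq \bigcap (R \times_K B) = C$. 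The main obstacle is constructing the right prime $\mathfrak{q}$ in the contrapositive argument, which is handled cleanly by the multiplicative set $S$ generated by $b$ and the monic evaluations $P(b)$; once this reduction to the classical domain case is in place, the remaining valuation-theoretic construction is standard.
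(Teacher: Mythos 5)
Your proof is correct. It differs from the paper's in how the hard inclusion $\bigcap (R \times_K B) \subseteq C$ is obtained. The paper's proof is two lines: it cites the equivalence ``integral $\Leftrightarrow$ satisfies the (existence part of the) valuative criterion'' for ring maps \cite[Tags 01WM, 01KF]{stacks-project}, gets $C \subseteq \bigcap (R \times_K B)$ from integrality of $A \to C$, and for the converse asserts that any $b$ in the intersection makes $A \to C[b]$ satisfy the valuative criterion, hence integral, hence $b \in C$. You instead prove the reverse inclusion from scratch by contrapositive: the multiplicative set generated by $b$ together with the monic evaluations $P(b)$ produces a prime $\mathfrak{q}$ modulo which $\bar b$ is nonzero and still non-integral, reducing to the classical construction of a valuation ring of $\Frac(B/\mathfrak{q})$ omitting $\bar b$. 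In effect you have unwound the Stacks Project citation into an explicit argument. This buys self-containedness, and it also makes visible a point the paper's converse step glosses over: a valuative square for $C[b]$ need not extend to one for $B$, so membership in the displayed intersection does not \emph{formally} verify the criterion for $A \to C[b]$ --- the honest argument is exactly the contrapositive construction you give (which is also how the cited equivalence is proved). Your treatment of the forward inclusion (valuation rings are integrally closed in their fraction fields) and of the ``largest subalgebra'' statement (restrict each square to the subalgebra $D$ and use injectivity of $R \hookrightarrow K$ to identify the lift with $f|_D$) matches the paper's in substance.
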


\begin{proof}
First recall than a morphism of rings is integral if and only if it satisfies the valuative criterion \cite[01WM, 01KF]{stacks-project}. Certainly, $A \to C$ is integral by definition, so $C \subseteq R \times_{K} B$ for all squares \eqref{equa:valCri}. On the other hand, if $b \in B$ is in $\bigcap (R \times_{K} B)$, then $A \to C[b] \subseteq B$ satisfies the valuative criterion, so $A \to C[b]$ is integral, and therefore $b \in C$.
\end{proof}

\begin{rema} \label{rema:valCriIntSurj}
In the intersection of Prop.~\ref{prop:intClos}, it suffices to consider squares \eqref{equa:valCri} such that $R \otimes_A B \to K$ is surjective. Indeed, given a general square 
\begin{equation}
\xymatrix{
K' & \ar[l] B  \\
R' \ar[u]^{\cup|} & A \ar[u] \ar[l]
}
\end{equation}
The image $im(R \otimes_A B {\to} K')$ is a localisation $R_\p = im(R \otimes_A B {\to} K')$ of the valuation ring $R$ at some prime $\p$, since it's a subring of a field containing a valuation ring. Then considering the bicartesian\footnote{Bicartesianness is well-known by now, but see \cite[Lem.2.14]{HK18} for a proof.} square
\begin{equation}
\xymatrix{
k(\p) & \ar[l] R_\p \ar[l] \\
R/\p \ar[u]^{\cup|} & R \ar[u] \ar[l]
}
\end{equation}
one deduces that the valuative criterion for the square of $R/\p \subseteq k(\p)$ implies it for that of $R \subseteq K$. The morphism $(R/\p) \otimes_A B \to k(\p)$ is surjective because $R \otimes_A B \to R_\p \to k(\p)$ are surjective. 
\end{rema}

\begin{rema} \label{rema:valCriIntIso}
Remark~\ref{rema:valCriIntSurj} can be specialised to the case that $B = A[f^{-1}]$ is a localisation. In this case, it says that it suffices to consider squares \eqref{equa:valCri} such that $R[f^{-1}] \cong K$ (where we have confused $f \in A$ with its image $f \in R$).
\end{rema}

\begin{rema}
The category of squares \eqref{equa:valCri} in the valuative criterion is not filtered, but if we allow $R$ to be a finite product of semilocal Pr{\"u}fer domains, it becomes filtered (and the result stays the same). However, any limit is a filtered limit over finite limits, so this is perhaps not necessary depending on the desire application. Allowing these more general squares also means that the category becomes stable under {\"e}tale base change, which may be useful.
\end{rema}

\subsection{Integral closure is preserved by {\'e}tale morphisms}

\begin{prop}[{\cite[03GE]{stacks-project}}] \label{prop:closureetale}
Suppose that $\phi: A \to B$ is a ring homomorphism, and let $C \subseteq B$ be the integral closure of $A$ in $B$. Then for any {\'e}tale morphism $A \to A'$, the tensor product $A' \otimes_A C$ is the integral closure of $A'$ in $A' \otimes_A B$.
\end{prop}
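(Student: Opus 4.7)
The plan has two parts: the easy inclusion $A' \otimes_A C \subseteq D$ (where $D$ denotes the integral closure of $A'$ in $A' \otimes_A B$), and the reverse inclusion $D \subseteq A' \otimes_A C$. The easy direction is essentially formal: étale morphisms are flat, so $A' \otimes_A (-)$ preserves injections, giving $A' \otimes_A C \hookrightarrow A' \otimes_A B$; and integrality is stable under base change (a monic relation $c^n + a_{n-1} c^{n-1} + \dots + a_0 = 0$ with $a_i \in A$ becomes the same relation for $1 \otimes c$ over $A'$), so $A' \otimes_A C$ is integral over $A'$, hence contained in $D$.

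For the reverse inclusion, I would reduce to a local situation using the structure theorem for étale morphisms: Zariski-locally on $\Spec(A')$, the map $A \to A'$ factors as $A \to A[1/g] \to (A[1/g][x]/(f))[1/h]$ with $f$ monic and $f'$ invertible in the target (standard étale). Integral closure commutes with localization (a classical computation, which can also be seen from Prop.~\ref{prop:intClos} via the valuative criterion), handling the $A \to A[1/g]$ step as well as the final localization by $h$. It remains to treat the monogenic case $A' = A[x]/(f)$ with $f$ monic of degree $n$ and $f'(\alpha)$ a unit, where $\alpha = x \bmod f$.

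In this case $A'$ is a free $A$-module with basis $\{1, \alpha, \dots, \alpha^{n-1}\}$, and $A' \otimes_A B = B[x]/(f)$ has the same basis over $B$. Any $\beta \in A' \otimes_A B$ has a unique expansion $\beta = \sum b_i \alpha^i$ with $b_i \in B$, and the unit condition on $f'(\alpha)$ is equivalent to the trace form $\langle u, v \rangle = \Tr_{A'/A}(uv)$ being perfect. Letting $\{e_j\} \subseteq A'$ be the corresponding dual basis, and extending traces $B$-linearly, one has $b_i = \Tr_{(A' \otimes_A B)/B}(e_i \beta)$. The strategy is then to show that this trace lies in $C$ whenever $\beta$ is integral over $A'$.

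The main obstacle is precisely this last step. It follows from Cayley--Hamilton applied to multiplication by $e_i\beta$ on the finite free $B$-module $A' \otimes_A B$: integrality of $e_i \beta$ over $A$ (which itself follows from $e_i \in A'$ and $\beta \in D$, combined with the observation that $D$ is integral over $A$) implies that the coefficients of its characteristic polynomial over $B$ are integral over $A$, and in particular its trace lies in $C$. Alternatively, and perhaps more efficiently, one may simply invoke Stacks Project Tag 03GE, which establishes the stronger result that integral closure commutes with arbitrary smooth base change.
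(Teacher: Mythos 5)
The paper gives no argument for this proposition beyond the citation to [Stacks, 03GE], so your closing sentence already reproduces the paper's "proof". Your sketched self-contained argument follows the same route as the Stacks Project's (flat base change for the easy inclusion; reduction to standard étale; trace form for the hard inclusion), but two steps need repair before it is a proof.

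First, the reduction. In the standard étale presentation $A' = (A[1/g][x]/(f))[1/h]$ the derivative $f'(\alpha)$ is a unit only \emph{after} inverting $h$; the intermediate map $A[1/g] \to A[1/g][x]/(f)$ is not étale, and the conclusion of the proposition genuinely fails for it. For instance with $A = \ZZ$, $B = \QQ$, $f = x^2+3$, the integral closure of $A'' := \ZZ[x]/(x^2+3) = \ZZ[\sqrt{-3}]$ in $A'' \otimes_{\ZZ} \QQ = \QQ(\sqrt{-3})$ is $\ZZ[(1+\sqrt{-3})/2]$, strictly larger than $C \otimes_{\ZZ} A'' = \ZZ[\sqrt{-3}]$. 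Since étale maps are not Zariski-locally finite étale, you also cannot arrange for $f'(\alpha)$ to be a unit already in $A[1/g][x]/(f)$. So the monogenic step and the localization by $h$ cannot be treated independently, as your plan does. The standard repair is to prove, for an arbitrary monic $f$, the weaker statement $f'(\alpha)\cdot D_0 \subseteq C \otimes_A (A[x]/(f))$, where $D_0$ is the integral closure of $A[x]/(f)$ in $B[x]/(f)$ --- using the identity $f'(\alpha)\gamma = \sum_j \Tr(\gamma b_j)\alpha^j$ with $b_j$ the coefficients of $f(x)/(x-\alpha)$ --- and only then invert $h$, where $f'(\alpha)$ becomes a unit and the extra factor disappears.

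Second, the trace step. The assertion that $\gamma$ integral over $A$ implies $\Tr_{(B[x]/(f))/B}(\gamma) \in C$ is true but is not Cayley--Hamilton (which goes the other way), and over a general base ring $B$ it needs an argument. One clean route uses Prop.~\ref{prop:intClos}: it suffices to check membership in $R$ after every map $B \to K$ to a field $K$ containing a valuation ring $R$ under $A$; there $K[x]/(f)$ is finite over the field $K$, the eigenvalues of multiplication by $\gamma$ are roots of the monic polynomial over $R$ that $\gamma$ satisfies, hence integral over $R$, so their sum lies in $K$ and is integral over $R$, hence in $R$. With these two repairs the argument closes up; alternatively the citation to [Stacks, 03GE], as in the paper, suffices.
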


\begin{cor} \label{cor:etIntClo}
Suppose that $(\ol{X}, X^\infty)$ is an modulus pair such that $\sO_{\ol{X}}$ is integrally closed in $\sO_{X^\circ}$, and $\ol{Y} \to \ol{X}$ is an {\'e}tale morphism. Then $\sO_{\ol{Y}}$ is integrally closed in $\sO_{Y^\circ}$, where $Y^\circ = \ol{Y} \times_{\ol{X}} X^\circ$.
\end{cor}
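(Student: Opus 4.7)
The claim is essentially a direct application of Proposition~\ref{prop:closureetale}; the only work is to set up the right affine reduction and check that the localisation $Y^\circ = \ol Y \times_{\ol X} X^\circ$ matches the tensor product that appears there.

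First I would observe that the property ``$\sO_{\ol Y}$ is integrally closed in $\sO_{Y^\circ}$'' is Zariski local on $\ol Y$, since integral closure of a ring in an extension is computed locally (equivalently, being integrally closed is equivalent to a vanishing condition on a presheaf of modules on $\ol Y$, and one can check stalk-wise). Hence I can shrink $\ol X$ and $\ol Y$ so that both are affine, and in addition so that $X^\infty$ is cut out by a single non-zero-divisor $f \in A$, where $\ol X = \Spec A$. Write $\ol Y = \Spec A'$ with $A \to A'$ étale, and $X^\circ = \Spec A[f^{-1}]$.

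Next I would identify $Y^\circ$. Since $Y^\circ = \ol Y \times_{\ol X} X^\circ$ and all schemes are affine, $Y^\circ = \Spec(A' \otimes_A A[f^{-1}]) = \Spec(A'[f^{-1}])$. Note that since $\ol Y \to \ol X$ is flat, $f$ remains a non-zero-divisor on $A'$ (so in particular $A' \hookrightarrow A'[f^{-1}]$), which justifies speaking of ``integrally closed in'' without ambiguity.

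By the hypothesis of the corollary, $A$ is integrally closed in $A[f^{-1}]$, i.e.\ $A$ equals the integral closure of $A$ in $A[f^{-1}]$. Applying Proposition~\ref{prop:closureetale} to the ring map $\phi \colon A \to A[f^{-1}]$ with integral closure $C = A$, and to the étale morphism $A \to A'$, yields that $A' \otimes_A A = A'$ is the integral closure of $A'$ in $A' \otimes_A A[f^{-1}] = A'[f^{-1}]$. This is exactly the statement that $\sO_{\ol Y}$ is integrally closed in $\sO_{Y^\circ}$ on our affine chart, and since the affine charts cover $\ol Y$, we conclude in general.

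There is no real obstacle here; the only thing to be a little careful about is the implicit use of flatness of $\ol Y \to \ol X$ to ensure that $f$ stays a non-zero-divisor (so that $Y^\infty := X^\infty \times_{\ol X} \ol Y$ is still an effective Cartier divisor and $\sO_{\ol Y} \to \sO_{Y^\circ}$ is injective), and of the Zariski-locality of the ``integrally closed in'' condition to reduce to the affine case in the first place.
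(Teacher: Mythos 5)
Your proof is correct and follows exactly the route the paper intends: the corollary is stated without proof precisely because it is the affine-local translation of Proposition~\ref{prop:closureetale}, which is what you carry out (with the appropriate care about Zariski-locality and $f$ remaining a non-zero-divisor after the flat base change). Nothing is missing.
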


\subsection{Divisors which die in valuation rings die on an admissible blowup}

\begin{prop} \label{prop:detectZeroOnValRing}
Let $\sX$ be a modulus pair, and suppose that for every minimal morphism $\sP \to \sX$ such that $\hP$ is the spectrum of a valuation ring we have $\mP = 0$. Then $\mX = 0$.
\end{prop}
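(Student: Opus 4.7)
The plan is to argue by contrapositive: assuming $\mX \neq 0$, I will construct a morphism $\Spec R \to \hX$ from a valuation ring such that the pullback of $\mX$ is a nonzero effective Cartier divisor, contradicting the hypothesis. The tool is the standard existence theorem for valuation rings dominating a given local domain, and the proof is essentially a formal consequence of the ``universality'' of valuation rings.

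First I would localize. Suppose $\mX \neq 0$ and pick a point $x \in \hX$ in the support of $\mX$. Set $A := \OO_{\hX,x}$ with maximal ideal $\m_x$, and choose a local generator $f \in A$ for $\mX$ around $x$. Since $\mX$ is an effective Cartier divisor, $f$ is a nonzero divisor in $A$; since $x \in \mX$, we have $f \in \m_x$. As $f$ is a nonzero divisor, $A[f^{-1}] \neq 0$, so there exists a prime ideal $\p \subsetneq A$ with $f \notin \p$.

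The next step is to produce a valuation ring. The quotient $A/\p$ is a local integral domain, the image $\bar f$ of $f$ in $A/\p$ is nonzero, and $\bar f \in \m_x/\p$. Let $K := \Frac(A/\p)$. By the standard existence theorem for valuation rings (e.g.\ Bourbaki, \emph{Alg.\ Comm.}, Ch.~VI, \S 1, Thm.~2), there is a valuation ring $R \subseteq K$ dominating $A/\p$, that is, $A/\p \subseteq R$ and $\m_{A/\p} = \m_R \cap (A/\p)$. The composition
\[
A \twoheadrightarrow A/\p \hookrightarrow R
\]
yields a morphism $\hP := \Spec R \to \hX$ whose image contains $x$. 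The image of $f$ in $R$ is nonzero (it is nonzero already in $K$), lies in $\m_R$ (by domination), and is a nonzero divisor (since $R$ is a domain). Hence $\mX|_{\hP} = V(\bar f)$ is a nonzero effective Cartier divisor on $\hP$.

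Finally, setting $\sP := (\hP, \mX|_{\hP})$ gives a minimal ambient morphism $\sP \to \sX$ with $\mP \neq 0$, contradicting the hypothesis. There is no real obstacle in this argument; the only thing to verify carefully is that one can always find the prime $\p \not\ni f$ without any Noetherian hypothesis on $\hX$, which is handled by the nonvanishing of $A[f^{-1}]$.
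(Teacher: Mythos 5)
Your proof is correct, and it takes a genuinely different route from the paper's. The paper argues directly: it invokes its characterisation of the integral closure of $A$ in $A[f^{-1}]$ as the intersection $\bigcap(R\times_K A[f^{-1}])$ over valuative squares (Prop.~\ref{prop:intClos}), observes that the hypothesis forces $\tfrac{1}{f}$ to lie in every such $R\times_K A[f^{-1}]$, concludes that $\tfrac{1}{f}$ is integral over $A$, and then deduces that the schematically dense open immersion $\Spec(A[f^{-1}])\to\Spec(A)$ is integral, hence satisfies going up, hence is an isomorphism. You instead argue by contrapositive and exhibit a single explicit witness: a prime $\p\not\ni f$ strictly contained in $\m_x$, and a valuation ring of $\Frac(A/\p)$ dominating $A/\p$, in which $f$ becomes a nonzero nonunit. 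Both arguments ultimately rest on the same foundational fact (existence of a valuation ring dominating a local domain), but yours bypasses the integral-closure machinery and the ``integral open immersion is an isomorphism'' step entirely, which makes it shorter and more self-contained; the paper's formulation has the advantage of reusing Prop.~\ref{prop:intClos}, which it also needs verbatim for the companion statement Prop.~\ref{prop:detectEqualOnValRing}. All the details you flag (nonemptiness of the support when $\mX\neq 0$, existence of $\p$ without Noetherian hypotheses via $A[f^{-1}]\neq 0$, and the fact that domination puts $\bar f$ in $\m_R\setminus\{0\}$ so that $\mX|_{\hP}$ is a nonzero effective Cartier divisor and the morphism is minimal) check out.
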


\begin{proof}
By Prop.~\ref{prop:intClos} applied to $B = A[f^{-1}]$ we see that for any open affine $\Spec(A) \subseteq \hX$ such that $\mX$ is globally principle with generator $f \in A$ on $\Spec(A)$, the element $\tfrac{1}{f} \in A[f^{-1}]$ is integral over $A$. So $\Spec(A[f^{-1}]) \to \Spec(A)$ is an open immersion which satisfies the going up property, and is therefore an isomorphism.
\end{proof}

\begin{prop} \label{prop:detectEqualOnValRing}
Let $\sX$ be a modulus pair, $\mX \leq \mY$ an effective Cartier divisor on $\hX$, and suppose that for every minimal morphism $\sP \to \sX$ such that $\hP$ is the spectrum of a valuation ring and $\iP$ is the generic point, we have $\mX|_{\hP} = \mY|_{\hP}$. Then $\mX = \mY$.
\end{prop}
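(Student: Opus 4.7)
My plan is to deduce the proposition from a pointwise analysis: locally write $\mX = V(f)$, $\mZ := \mY - \mX = V(h)$, $\mY = V(fh)$ on an affine chart, and show that $h \in \m_x$ leads to a contradiction at every point $x \in \hX$. Setting $A := \OO_{\hX,x}$ and $\m := \m_x$, I will assume $h \in \m$ for contradiction and handle two cases.

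\textbf{Easy case.} If $f \notin \m$, then $\mX = 0$ near $x$, so the morphism $\Spec k(x) \to \hX$ defines a minimal morphism $\sP = (\Spec k(x), 0) \to \sX$ with $\iP$ the (trivially) generic point. The hypothesis then forces $V(\bar{f}) = V(\bar{g})$ as subschemes of $\Spec k(x)$, i.e.\ $g := fh$ is a unit in $k(x)$. But $h \in \m$ and $f$ a unit give $g \in \m$, a contradiction.

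\textbf{Main case.} Suppose $f \in \m$. I will construct a valuation ring $R$, and a quotient $R'$ of it, which satisfy the hypothesis of the proposition and witness a contradiction. First, since $g$ is a nonzero divisor the multiplicative set $\{g^n\}$ avoids $0$, so Zorn's lemma furnishes a prime $\q_0 \subseteq \m$ of $A$ with $g \notin \q_0$, hence $f, h \notin \q_0$. Second, Chevalley's theorem \cite[Tag 00IA]{stacks-project} applied to the local domain $\overline{A} := A/\q_0$ yields a valuation ring $R \subseteq \Frac\overline{A}$ dominating $\overline{A}$, so $\bar f, \bar h \in R$ are nonzero non-units. Third, let $\p \subseteq R$ be the largest prime not containing $\bar f$ (which exists because $\{\q \in \Spec R : \bar f \notin \q\}$ is nonempty, totally ordered, and closed under taking unions), and set $R' := R/\p$. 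By the maximality of $\p$, every prime of $R$ strictly containing $\p$ contains $\bar f$, so every nonzero prime of $R'$ contains $\bar f$; consequently $R'[1/\bar f] = \Frac R'$ and $\bar f$ is a nonzero non-unit in $R'$.

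Now I split on whether $\bar h \in \p$. If $\bar h \notin \p$: then $\bar g = \bar f\bar h \neq 0$ in the domain $R'$, and the pair $(\Spec R', V(\bar f))$ is a valid minimal morphism to $\sX$ with $\iP$ the generic point. The hypothesis gives the equality $(\bar f) = (\bar f \bar h)$ of ideals in $R'$; cancelling the nonzero divisor $\bar f$ shows $\bar h$ is a unit in $R'$, contradicting the fact that $\bar h \in \m_{R'}$ (inherited from the composition of local maps $\overline{A} \hookrightarrow R \twoheadrightarrow R'$). If $\bar h \in \p$: pass to the residue field $L := R_\p / \p R_\p$. Then $\bar f \notin \p$ becomes a unit in $L$ while $\bar h \in \p$ becomes $0$, so $\bar g = 0$ in $L$. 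The composition $A \to L$ defines a minimal morphism $(\Spec L, 0) \to \sX$ with generic interior, and the hypothesis forces $\bar g$ to be a unit in $L$—contradicting $\bar g = 0$. Either subcase contradicts the assumption $h \in \m$, so $h \notin \m_x$ for every $x$, and therefore $\mZ = 0$, i.e.\ $\mX = \mY$.

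\textbf{Main obstacle.} The delicate point is producing a valuation ring that satisfies the condition $R'[1/\bar f] = \Frac R'$ required to invoke the hypothesis; for valuation rings of high rank this is not automatic, and it is why I introduce the quotient $R/\p$ with $\p$ maximal among primes avoiding $\bar f$. The second subtle point is the dichotomy on $\bar h \bmod \p$: one branch uses the proposition's hypothesis at a higher-rank valuation while the other must fall back on the field case applied to the residue field $R_\p/\p R_\p$.
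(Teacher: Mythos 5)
Your argument is correct, but it takes a genuinely different route from the paper's. The paper deduces the statement in a few lines from Proposition~\ref{prop:intClos} (integral closure of $A$ in $B$ as the intersection of the $R \times_K B$ over valuative squares), applied to $B = A[g^{-1}]$: the hypothesis forces $f/g$ and $g/f$ into every relevant valuation ring, hence both are integral over $A$; since $g/f \in A$ already, $A \to A[f/g]$ is an integral, schematically dense open immersion, hence an isomorphism, so $(f)=(g)$. You instead argue pointwise by contradiction, manufacturing an explicit test object that violates the hypothesis: Chevalley's extension theorem produces a valuation ring $R$ dominating $\OO_{\hX,x}/\q_0$, and you then pass to $R' = R/\p$ with $\p$ maximal among primes avoiding $\bar f$. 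That quotient step is the real content of your version: it guarantees $R'[1/\bar f] = \Frac R'$, i.e.\ that the interior of your test pair is exactly the generic point, which is precisely the condition the hypothesis quantifies over. The paper handles this matching only implicitly (via Remark~\ref{rema:valCriIntIso} and the opening sentence identifying minimal test objects for $\mX$ and for $\mY$), whereas your dichotomy on $\bar h \bmod \p$ --- falling back to the residue field $R_\p/\p R_\p$ when $\bar h \in \p$ --- makes explicit a rank-reduction issue that the shorter proof glosses over. The trade-off is that your proof is self-contained (no appendix input) at the cost of the Zorn/Chevalley bookkeeping, while the paper's is shorter but leans on Proposition~\ref{prop:intClos}. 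One interpretive remark on your subcase $\bar h \in \p$: there $\mY|_{\Spec L}$ is all of $\Spec L$, so it is not an effective Cartier divisor; the contradiction goes through whether one reads the hypothesis ``$\mX|_{\hP} = \mY|_{\hP}$'' as an equality of closed subschemes or as the assertion that the Cartier pullback of $\mY$ is defined and equals $\mP$, so this is not a gap.
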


\begin{proof}
Since $\mX|_{\hP} = \mY|_{\hP}$, a morphism $\hP \to \hX$ induces a minimal morphism for $(\hX, \mX)$ if and only if induces a minimal morphism for $(\hX, \mY)$. By Prop.~\ref{prop:intClos} applied to $B = A[g^{-1}]$ we see that for any open affine $\Spec(A) \subseteq \hX$ such that $\mX, \mY$ are globally principal with respective generators $f, g \in A$ on $\Spec(A)$, the elements $\tfrac{g}{f}, \tfrac{f}{g} \in A[g^{-1}]$ are integral over $A$. Note, $\mX \leq \mY$ implies that in fact, $\tfrac{g}{f} \in A$, so $\Spec(A[\tfrac{f}{g}]) \to \Spec(A)$ is an integral dense open immersion, i.e., an isomorphism, so $\tfrac{f}{g}$ is in $A$ as well. Consequently, $\mX|_{\Spec(A)} = \mY|_{\Spec(A)}$ for every open in a covering of $\hX$, so $\mX = \mY$ globally.
\end{proof}

\bibliographystyle{amsalpha}
\bibliography{bib}

\newcommand{\etalchar}[1]{$^{#1}$}
\providecommand{\bysame}{\leavevmode\hbox to3em{\hrulefill}\thinspace}
\providecommand{\MR}{\relax\ifhmode\unskip\space\fi MR }
\providecommand{\MRhref}[2]{%
  \href{http://www.ams.org/mathscinet-getitem?mr=#1}{#2}
}
\providecommand{\href}[2]{#2}
\begin{thebibliography}{KMSY21b}

\bibitem[ABD{\etalchar{+}}66]{SGA3}
Michael Artin, Jean-Etienne Bertin, Michel Demazure, Alexander Grothendieck,
  Pierre Gabriel, Michel Raynaud, and Jean-Pierre Serre, \emph{Sch\'emas en
  groupes}, S\'eminaire de G\'eom\'etrie Alg\'ebrique de l'Institut des Hautes
  \'Etudes Scientifiques, Institut des Hautes \'Etudes Scientifiques, Paris,
  1963/1966.

\bibitem[AHW17]{AHW15}
Aravind Asok, Marc Hoyois, and Matthias Wendt, \emph{{Affine representability
  results in ${\mathbb{A}}^{1}$-homotopy theory, I: Vector bundles}}, Duke
  Mathematical Journal \textbf{166} (2017), no.~10, 1923 -- 1953.

\bibitem[BE03]{BE03}
Spencer Bloch and H\'el\`ene Esnault, \emph{An additive version of higher
  {C}how groups}, Ann. Sci. \'Ecole Norm. Sup. \textbf{(4) 36} (2003), no.~3,
  463--477.

\bibitem[Bin20]{Bin20}
Federico Binda, \emph{A motivic homotopy theory without {$\Bbb
  A^1$}-invariance}, Math. Z. \textbf{295} (2020), no.~3-4, 1475--1519.
  \MR{4125697}

\bibitem[BP{\O}20]{BPO}
Federico Binda, Doosung Park, and Paul~Arne {\O}stv{\ae}r, \emph{Triangulated
  categories of logarithmic motives over a field}, arXiv preprint
  arXiv:2004.12298 (2020).

\bibitem[BS19]{BS19}
Federico Binda and Shuji Saito, \emph{Relative cycles with moduli and regulator
  maps}, Journal of the Institute of Mathematics of Jussieu \textbf{18} (2019),
  no.~6, 1233--1293.

\bibitem[CD19]{CD19}
Denis-Charles Cisinski and Fr\'{e}d\'{e}ric D\'{e}glise, \emph{Triangulated
  categories of mixed motives}, Springer Monographs in Mathematics, Springer,
  2019.

\bibitem[Cis19]{cisinski_2019}
Denis-Charles Cisinski, \emph{Higher categories and homotopical algebra},
  Cambridge Studies in Advanced Mathematics, Cambridge University Press, 2019.

\bibitem[CM19]{2019arXiv190506611C}
Dustin {Clausen} and Akhil {Mathew}, \emph{{Hyperdescent and {\'e}tale
  K-theory}}, arXiv e-prints (2019), arXiv:1905.06611.

\bibitem[Fer03]{Fer03}
Daniel Ferrand, \emph{Conducteur, descente et pincement}, Bull. Soc. Math.
  France \textbf{131} (2003), no.~4, 553--585. \MR{2044495}

\bibitem[GD71]{EGAI}
Alexandre Grothendieck and J.~A. Dieudonn{\'e}, \emph{{\'E}l{\'e}ments de
  {G}{\'e}om{\'e}trie {A}lg{\'e}brique {I}}, Springer-Verlag, 1971.

\bibitem[GK15]{GK15}
Ofer Gabber and Shane Kelly, \emph{Points in algebraic geometry}, J. Pure Appl.
  Algebra \textbf{219} (2015), no.~10, 4667--4680. \MR{3346512}

\bibitem[Gro66]{EGAIV3}
Alexandre Grothendieck, \emph{{\'E}l{\'e}ments de {G}{\'e}om{\'e}trie
  {A}lg{\'e}brique: {IV.} {{\'E}}tude locale des sch{\'e}mas et des morphismes
  de sch{\'e}mas, {T}roisi{\`e}me partie}, Publ. Math. I.H.{\'E}.S. \textbf{28}
  (1966).

\bibitem[Gro67]{EGAIV4}
\bysame, \emph{{\'E}l{\'e}ments de {G}{\'e}om{\'e}trie {A}lg{\'e}brique: {IV.}
  {{\'E}}tude locale des sch{\'e}mas et des morphismes de sch{\'e}mas,
  {Q}uatri{\`e}me partie}, Publ. Math. I.H.{\'E}.S. \textbf{32} (1967).

\bibitem[HK18]{HK18}
Annette Huber and Shane Kelly, \emph{{D}ifferential forms in positive
  characteristic, {II}: cdh-descent via functorial {R}iemann--{Z}ariski
  spaces}, Algebra \& Number Theory \textbf{12} (2018), no.~3, 649--692.

\bibitem[IR16]{IR16}
Florian Ivorra and Kay R\"ulling, \emph{K-groups of reciprocity functors},
  Journal of Algebraic Geometry \textbf{26} (2016), no.~2, 199--278.

\bibitem[Ivo05]{Ivo05}
Florian Ivorra, \emph{R{\'e}alisation $l$-adique des motifs mixtes},
  https://perso.univ-rennes1.fr/florian.ivorra/, 2005, Th{\`e}se de doctorat,
  Mathématiques Paris 6.

\bibitem[Joh77]{Joh77}
P.~T. Johnstone, \emph{Topos theory}, Academic Press [Harcourt Brace
  Jovanovich, Publishers], London-New York, 1977, London Mathematical Society
  Monographs, Vol. 10. \MR{0470019}

\bibitem[Kel19]{Kel19}
Shane Kelly, \emph{A better comparison of cdh- and ldh-cohomologies}, Nagoya
  Math. J. \textbf{236} (2019), 183--213.

\bibitem[KL09]{KL08}
Amalendu Krishna and Marc Levine, \emph{Additive higher chow groups of
  schemes}, J. Reine Angew. Math. \textbf{619} (2009), no.~1, 75--140.

\bibitem[KM20]{kami}
Bruno Kahn and Hiroyasu Miyazaki, \emph{Topologies on schemes and modulus
  pairs}, Nagoya Math. J. (2020), 1–31.

\bibitem[KMSY20]{kmsy3}
Bruno Kahn, Hiroyasu Miyazaki, Shuji Saito, and Takao Yamazaki, \emph{Motives
  with modulus, {III}: Triangulated categories of motives with modulus over a
  field}, https://arxiv.org/abs/2011.11859, 2020.

\bibitem[KMSY21a]{kmsy1}
\bysame, \emph{{Motives with modulus, {I}: {M}odulus sheaves with transfers for
  non-proper modulus pairs}}, {Épijournal de Géométrie Algébrique}
  \textbf{{Volume 5}} (2021).

\bibitem[KMSY21b]{kmsy2}
\bysame, \emph{{Motives with modulus, {II}: {M}odulus sheaves with transfers
  for proper modulus pairs}}, {Épijournal de Géométrie Algébrique}
  \textbf{{Volume 5}} (2021).

\bibitem[KP12]{KP}
Amalendu Krishna and Jinhyun Park, \emph{Moving lemma for additive higher
  {C}how groups}, Algebra Number Theory \textbf{6} (2012), no.~2, 293--326.
  \MR{2950155}

\bibitem[KS16]{KS16}
Moritz Kerz and Shuji Saito, \emph{Chow group of 0-cycles with modulus and
  higher-dimensional class field theory}, Duke Math. J. \textbf{165} (2016),
  no.~15, 2811--2897.

\bibitem[KS20]{KS19}
Shane Kelly and Shuji Saito, \emph{Smooth blowup square for motives with
  modulus}, Bulletin Polish Acad. Sci. Math. (2020).

\bibitem[KSYR16]{KSY16}
Bruno Kahn, Shuji Saito, Takao Yamazaki, and Kay R\"ulling, \emph{Reciprocity
  sheaves, with two appendices by {K}ay {R}\"ulling}, Compositio Mathematica
  \textbf{152} (2016), no.~9, 1851--1898.

\bibitem[{Lur}06]{HTT}
Jacob {Lurie}, \emph{{Higher Topos Theory}}, arXiv Mathematics e-prints (2006),
  math/0608040.

\bibitem[Miy20]{nistopmod}
Hiroyasu Miyazaki, \emph{Nisnevich topology with modulus}, Ann. K-Theory
  \textbf{5} (2020), no.~3, 581--604. \MR{4132747}

\bibitem[Par09]{Park08}
Jinhyun Park, \emph{Regulators on additive higher chow groups}, American
  Journal of Mathematics \textbf{131} (2009), no.~1, 257--276.

\bibitem[RG71]{RG71}
Michel Raynaud and Laurent Gruson, \emph{Criteres de platitude et de
  projectivit{\'e}}, Inventiones mathematicae \textbf{13} (1971), no.~1-2,
  1--89.

\bibitem[Ros54]{Rosenlicht}
Maxwell Rosenlicht, \emph{{Generalized Jacobian Varieties}}, Annals of
  Mathematics \textbf{59} (1954), no.~3, 505--530.

\bibitem[RS21]{RS18}
Kay Rülling and Shuji Saito, \emph{Reciprocity sheaves and abelian
  ramification theory}, Journal of the Institute of Mathematics of Jussieu
  (2021), 1--74.

\bibitem[RSY19]{RSY19}
Kay Rülling, Rin Sugiyama, and Takao Yamazaki, \emph{Tensor structures in the
  theory of modulus presheaves with transfers}, arxiv:1911.05291, 2019.

\bibitem[Ryd10]{Ryd10}
David Rydh, \emph{Submersions and effective descent of \'{e}tale morphisms},
  Bull. Soc. Math. France \textbf{138} (2010), no.~2, 181--230. \MR{2679038}

\bibitem[Sch94]{Sch94}
Anthony~J Scholl, \emph{Classical motives}, Proc. Symp. Pure Math, vol.~55,
  1994, pp.~163--187.

\bibitem[Ser75]{SerreCFT}
Jean-Pierre Serre, \emph{Groupes alg\'ebriques et corps de classes},
  Publication de l'Institut de Mathématique de l'Université de Nancago, No.
  VII. Actualités Scientifiques et Industrielles, No. 1264, Hermann, 1975.

\bibitem[SGA72a]{SGA41}
\emph{Th\'{e}orie des topos et cohomologie \'{e}tale des sch\'{e}mas. {T}ome 1:
  {T}h\'{e}orie des topos}, Lecture Notes in Mathematics, Vol. 269,
  Springer-Verlag, Berlin-New York, 1972, S\'{e}minaire de G\'{e}om\'{e}trie
  Alg\'{e}brique du Bois-Marie 1963--1964 (SGA 4), Dirig\'{e} par M. Artin, A.
  Grothendieck, et J. L. Verdier. Avec la collaboration de N. Bourbaki, P.
  Deligne et B. Saint-Donat. \MR{0354652}

\bibitem[SGA72b]{SGA42}
\emph{Th\'{e}orie des topos et cohomologie \'{e}tale des sch\'{e}mas. {T}ome
  2}, Lecture Notes in Mathematics, Vol. 270, Springer-Verlag, Berlin-New York,
  1972, S\'{e}minaire de G\'{e}om\'{e}trie Alg\'{e}brique du Bois-Marie
  1963--1964 (SGA 4), Dirig\'{e} par M. Artin, A. Grothendieck et J. L.
  Verdier. Avec la collaboration de N. Bourbaki, P. Deligne et B. Saint-Donat.
  \MR{0354653}

\bibitem[{Sta}18]{stacks-project}
The {Stacks Project Authors}, \emph{\textit{Stacks Project}},
  \url{https://stacks.math.columbia.edu}, 2018.

\bibitem[SV00a]{SV00}
Andrei Suslin and Vladimir Voevodsky, \emph{{B}loch-{K}ato conjecture and
  motivic cohomology with finite coefficients}, The arithmetic and geometry of
  algebraic cycles, Springer, 2000, pp.~117--189.

\bibitem[SV00b]{SVRelCyc}
\bysame, \emph{Relative cycles and {C}how sheaves}, Cycles, transfers, and
  motivic homology theories, Ann. of Math. Stud., vol. 143, Princeton Univ.
  Press, Princeton, NJ, 2000, pp.~10--86. \MR{1764199}

\bibitem[Tem11]{Tem11}
Michael Temkin, \emph{Relative {R}iemann-{Z}ariski spaces}, Israel Journal of
  Mathematics \textbf{185} (2011), no.~1, 1.

\bibitem[Voe96]{Voe96}
Vladimir Voevodsky, \emph{Homology of schemes}, Selecta Math. (N.S.) \textbf{2}
  (1996), no.~1, 111--153. \MR{1403354}

\bibitem[Voe00]{VoeTri}
\bysame, \emph{Triangulated categories of motives over a field}, Cycles,
  transfers, and motivic homology theories, Ann. of Math. Stud., vol. 143,
  Princeton Univ. Press, Princeton, NJ, 2000, pp.~188--238. \MR{1764202}

\end{thebibliography}

\printindex
\printindex[not]

\end{document}